\newcommand{\bea}{\begin{eqnarray*}}
\newcommand{\eea}{\end{eqnarray*}}
\newcommand{\be}{\begin{eqnarray}}
\newcommand{\ee}{\end{eqnarray}}
\newcommand{\beq}{\begin{equation}}
\newcommand{\eeq}{\end{equation}}
\newcommand{\dn}{\stackrel{\cal D}{\longrightarrow}}
\newcommand{\prob}{\stackrel{\mathbb{P}}{\longrightarrow}}
\newcommand{\Z}{\mathbb{Z}}
\newcommand{\N}{\mathbb{N}}
\newcommand{\R}{\mathbb{R}}
\newcommand{\Cov}{\operatorname{Cov}}
\newcommand{\E}{\mathbb{E}}
\newcommand{\Var}{\operatorname{Var}}
\newcommand{\sinc}{\text{sinc}}
\newcommand{\Landau}{\mathcal{O}}
\newcommand{\cum}{\text{cum}}
\newcommand{\im}{\mathrm{i}}
\newcommand{\e}{\text{e}}
\newcommand*\tageq{\refstepcounter{equation}\tag{\theequation}}
\newtheorem{satz}{Theorem}[section]
\newtheorem{example}[satz]{Example}
\newtheorem{prop}[satz]{Proposition}
\newtheorem{rem}[satz]{Remark}
\newtheorem{assumption}[satz]{Assumption}
\newtheorem{theorem}[satz]{Theorem}
\newtheorem{corollary}[satz]{Corollary}
\newtheorem{lemma}[satz]{Lemma}
\newsavebox{\smlmat}
\savebox{\smlmat}{$\left(\begin{smallmatrix}\cos(\pi/4) & \sin(\pi/4) \\ -\frac{1}{3}\sin(\pi/4) & \frac{1}{3}\cos(\pi/4)\end{smallmatrix}\right)$}
\begin{document}

\begin{frontmatter}

\title{Balancing the edge effect and dimension of spectral spatial statistics under irregular sampling with applications to isotropy testing}

\begin{aug}
\author[A]{\fnms{Theresa} \snm{Eckle}\ead[label=e2]{}}
\and
\author[B]{\fnms{Anne} \snm{van Delft}\ead[label=e1,mark]{anne.vandelft@columbia.edu}}
\and
\author[A]{\fnms{Holger} \snm{Dette}\ead[label=e2]{holger.dette@rub.de}}
\address[B]{Department of Statistics, Columbia University, 1255 Amsterdam Avenue, New York, NY 10027, USA. \printead{e1}}

\address[A]{Ruhr-Universit\"at Bochum, Fakult\"at f\"ur Mathematik, 44780 Bochum, Germany \printead{e2}}
\end{aug}

\begin{abstract}
We investigate distributional properties of a class of spectral spatial statistics under irregular sampling of a  random field that is   defined  on $\mathbb{R}^d$, and use this to obtain a test for isotropy. Within this context, edge effects are well-known to create a bias in classical estimators commonly encountered in the analysis of spatial data. This bias increases with dimension $d$ and, for $d>1$, can become non-negligible in the limiting distribution of such statistics to the extent that a  nondegenerate distribution does not exist. We provide a general theory for a class of (integrated) spectral statistics that enables to 1) significantly reduce this bias and  2) that ensures that asymptotically Gaussian limits can be derived for $d \le 3$ for appropriately tapered versions of such statistics. 
We use this to address some crucial gaps in the literature, and demonstrate that tapering with a sufficiently smooth function is necessary to achieve such results. Our findings specifically shed a new light on a recent result in \cite{subbarao2017}. Our theory then is used to propose a novel test for 
isotropy. In contrast to most of the literature, which validates this assumption on a finite number of spatial locations (or a finite number of Fourier frequencies), we develop a test for isotropy  on the full spatial domain by means of its characterization in the frequency domain. 
More precisely, we derive an explicit expression for the minimum $L^2$-distance between the spectral density of the random field and its best approximation by a  spectral density of an isotropic process.  We prove asymptotic normality of an estimator of this quantity  in the mixed increasing domain  framework and use this result to derive an asymptotic level $\alpha$-test.

\end{abstract}

\begin{keyword}[class=MSC]
\kwd[Primary ]{62M30}
\kwd[; secondary ]{62M15}
\end{keyword}

\begin{keyword}
\kwd{Spatial processes}
\kwd{mixed increasing domain asymptotics}
\kwd{tapering}
\kwd{isotropy}
\end{keyword}
  
\end{frontmatter}

\section{Introduction} 
\label{sec1} 
\renewcommand{\theequation}{\thesection.\arabic{equation}}
\setcounter{equation}{0}

The analysis of spatial data is an important field of statistics, with many applications in various areas. While originally focus has been on random fields $\{Z(\bm{s}): \bm{s} \in \R^d\}$  that are observed on a regular lattice, more recently interest  has shifted in several fields of application to the assumption that the data is observed irregularly. In the irregularly sampling regime, it is assumed that a random field $Z=\{Z(\bm{s}):\bm{s}\in\R^d\}$ is observed at points $\bm{s}_1,\ldots, \bm{s}_n$ 
sampled from a random distribution yielding the observations $Z(\bm{s}_1),\ldots,Z(\bm{s}_n)$    \citep[see e.g.][and the references therein]{matsuda09,bandyopadhyay2017,subbarao2017}. Because this assumption is often more realistic in practice, focus is in this paper on the latter sampling scheme.

A central point of interest in  spatial data analysis is the covariance matrix of the underlying random field.
The estimation of this matrix becomes much easier under additional structural assumptions such as a parametric form, (second order) stationarity or isotropy. 
However, as several authors have pointed out, parametric or nonparametric estimation of covariance matrices in the spatial domain under the irregular sampling scheme is generally computationally intense and nonparametric estimators are generally not guaranteed to be nonnegative definite. There appears some consensus that these drawbacks are better tackled or can be avoided in the frequency domain
\citep[see, for example,][]{subbarao2017,fuentes07,matsuda09}. These papers mainly focus on a Whittle-type likelihood estimation approach but handle the irregular sampling differently. \cite{fuentes07} accounts for irregular sampling indirectly by assuming local smoothness properties, whereas  \cite{matsuda09} explicitly account for having irregular spatial locations. In fact, these authors 
investigate this problem within a more general context in which a definition of a spatial periodogram is proposed that enables  the development of asymptotic theory for frequency domain-based estimators for irregularly spaced data in which the authors specifically derive  asymptotic uncorrelatedness of the spatial DFT at distinct Fourier frequencies. 

However, and just as in the regularly sampled case, situations where the true function of interest is defined on a larger domain (such as an unbounded set) than the available domain from which we can sample in practice, require  a careful consideration of a phenomenon known as \textit{edge effects} \citep{tukey67,griffith83,ripley84,guyon82,cressie93}. 
This is an epithet for the estimation error that accumulates at the boundaries  and that creates a bias that grows with the number of surrounding edges, i.e., it grows with dimension $d$. Indeed, in practice the actual sampling region is bounded which leads to a loss of information about the process outside this region. It is well-known that this bias is  potentially non-disappearing for dimensions $d\ge 2$ for estimators both in the time and frequency domain. Not surprisingly, the edge effect problem has received significant attention in a wide variety of papers, and specifically in the context of (parametric) Whittle estimation \citep[see][and the references therein]{guyon82,dahlhauskuensch87,fuentes04,matsuda09,robinson07,crujeiras10}. A common technique to deal with edge effects is tapering \citep{brillinger81,dahlhaus83,dahlhaus88}, and tapering of an inconsistent estimator of the spectral density (spatial periodogram) has  been indeed considered in \cite{matsuda09}.
With the exception of the latter, 
the literature mentioned above focuses on the regularly sampled case, and the problem in the irregular case has sometimes even been overlooked altogether (see below). 

Our contribution in this paper is two-fold. For one thing, we propose a general theory to derive limiting distributional properties of integrated nonlinear functionals of the spatial spectral density under the irregular sampling regime in case the data is sampled on an unbounded domain. Specifically, for 
a class of estimators for expressions of the form 
\begin{align}
\int_{\R^d} g(\bm{\omega}) f^p(\bm{\omega})\, d\bm{\omega},  \label{intf}
\end{align}
where $g:\R^d\to\R$ is a bounded  function. 
This developed theory turned out to be necessary to arrive at our original intent: the development of a test for isotropy  for irregularly sampled data on an unbounded domain, which provides our second contribution. However, it also turned out to be necessary to provide a more general theory to address the lack of accounting for edge effects in existing work for the case $p=1$. Both goals are intricate and we shall start with a motivation of our test to epxlain the need for a general theory to deal with estimators for functionals of the form \eqref{intf} further. 

As mentioned above, the literature on spatial data usually relies heavily on having additional structural assumptions being satisfied. A key such assumption is  that the stationary random field, say $\{Z(\bm{s}): \bm{s} \in \R^d\}$,  is \textit{isotropic}, which means that 
the covariance function $c(\bm{h}) = \Cov[Z(\bm{s}),Z(\bm{s}+\bm{h})]$ is invariant under rotations, i.e.,
\begin{align} \label{isotropy_via_covariance_fct}
c(\bm{h})=c_0(\|\bm{h}\|)  ~~\text{for all } \bm{h} \in \R^d
\end{align}
for some function  $c_0:\R^+_0 \to\R$. \cite{guan_et_al_2004} demonstrate that erroneously assuming isotropy can have detrimental effects on the outcome of spatial predictions. Therefore, validity of the isotropy assumption  should be carefully verified before performing any further statistical analysis under this structural assumption.
Isotropy can furthermore equivalently be characterized in the frequency domain.  Indeed, a stationary random field $Z$  with existing spectral density, say $f$, is isotropic if and only if $f$ is rotationally symmetric, that is
\begin{align} \label{isotropy_via_spectral}
f(\bm{\omega})=f_0(\|\bm{\omega}\|)
~~\text{for all} ~\bm{\omega} \in \R^d~, 
\end{align}
for some function $f_0:\R^+_0 \to\R$. For reasons mentioned above, several authors proposed  to shift the testing problem to the frequency domain  \citep[see, for example,][]{fuentesreich2010,wellerhoeting16}. However,  testing for rotational invariance of the spectral density function is very challenging, leading several authors to consider weaker concepts than isotropy, and with the exception of \cite{vanhala_et_al_2018}, only study  the regularly sampled case. We refer to Section \ref{sec2} for a more detailed literature review on available isotropy tests.

Unlike any existing literature we are interested in developing a test for the irregularly sampled case that is not restricted to pre-specified directions and thus enables the detection of \textit{all} types of anisotropy. Our approach is based on  an estimator of the $L^2$-distance between $f$ and its best approximation by a function only depending on the norm of its  argument, that is 
\begin{align*} M_d:=\min_{g} \int_{\R^d} \big(f(\bm{\omega})-g(\|\bm{\omega}\|)\big)^2 \,d\bm{\omega},  \tageq \label{eq:Md}
\end{align*}
where the minimum is taken over all functions $g: \R^+_{0 }\to\R$. Similar arguments as in \cite{vandelft18}  show 
 that $M_d=0$,  if and only if isotropy holds.
    We derive an explicit expression of this quantity in terms of integrals of the spectral density, for which we propose an 
 appropriate estimator
 and prove its asymptotic normality (after 
 appropriate scaling  and centering). 
 In fact, this requires one for example to carefully study the properties of  estimators 
 for integrals of the form \eqref{intf} 
with  $p = 2$.  
 While perhaps the idea to use a minimum distance approach for the purpose of hypothesis testing is a natural one,  in the current context of spatial irregularly sampled data on an unbounded domain, 
it  is a highly nontrivial task to develop corresponding inference methods for this measure. 

To this end, we require an asymptotic framework in which the sampling region is allowed to become unbounded in the limit and such that the distances between the sampling locations converges to zero. This assumption is referred to as the \textit{mixed increasing domain framework} (MID) in the literature \citep[see e.g.,][]{bandyo09,subbarao2017}. We make this more precise in Assumption \autoref{assumption_on_sampling_scheme}. It should come as no surprise that we must account for various sources of  bias of which the order is dimension-dependent, including the one stemming from edge effects, as well as from integral approximations. Part of our contribution is to put forward a comprehensive theory that enables 1) to investigate and reduce the bias for higher dimensions in this MID framework, and 2) to derive asymptotically Gaussian limits for spatial spectral estimators within this framework. The theory we propose to do so is  reminiscent of the $L$-function theory approach in \cite{dahlhaus83}, but then for the highly nontrivial case of spatial data suitable to be applied in the MID framework. It is worth mentioning that the tapered spatial periodogram as considered in \cite{matsuda09} will be a special case of this framework. We demonstrate that tapering is necessary to ensure that dimensions $d > 1$ can be considered, and that for $d > 3$ the edge effect problem cannot be controlled in the MID context for any $p \ge 1$. 
It is also important to mention that recently \cite{subbarao2017} investigated a \textit{nontapered} estimator for the functional \eqref{intf} with  $p=1$ in the MID context and proved its asymptotic normality. However, the aforementioned serious bias problem as well as an additional bias caused by the chosen integral approximation using the Fourier frequencies, have been unfortunately overlooked by the author. Indeed, we verified with the author that the 
distributional result  \textit{only} holds true for dimension $d=1$;  
for $d>1$, the bias in  Theorem 4.1 (ii) of \cite{subbarao2017} is of higher order than the standard deviation in author's  Theorem 4.3. Upscaling the test statistic with the appropriate factor therefore 
only provides an asymptotically normal and consistent estimator if $d=1$. The theory and results put forward in our paper will enable to show that suitable nondegenerate test statistics for  functionals of  form \eqref{intf} with $p \ge 1$ can be obtained  for all $d\le 3$ provided the data is tapered and the integral  in \eqref{intf} is approximated appropriately. Our contribution thus immediately addresses the issue in \cite{subbarao2017}, but furthermore makes clear that the constraint $d\leq 3$ is, also in that case, necessary, and that results for $d>  3$ do not appear attainable.

\section{Best approximation by an isotropic spectral density} \label{sec2}
\renewcommand{\theequation}{\thesection.\arabic{equation}}
\setcounter{equation}{0}

In this section, we introduce a   measure for the deviation of a stationary process from isotropy in the spectral domain. We derive an explicit expression for this measure in terms of two integrals of the spectral density. Estimators of these quantites will be defined and analyzed in Section \ref{sec_iso_test}. 

To be precise, let $Z=\{Z(\bm{s}):\bm{s}\in \R^d\}$ denote a real-valued zero mean spatial random field 
with covariance function $
\Cov[Z(\bm{s_1}),Z(\bm{s_2})]= \E  [Z(\bm{s_1})\,\Z(\bm{s_2})]$
for all $\bm{s_1},\bm{s_2}\in\R^d$.
Under the assumption of second order  stationarity this function is only a function of the difference between two measurement locations. It is therefore translationally invariant and can  be denoted by $c(\bm{h})= \Cov[Z(\bm{s}),Z(\bm{s} + \bm{h})]$ for all $\bm{h}=(h_1,\ldots,h_d)^T\in\R^d$.
If furthermore 
$
c(\bm{h})=c_0(\|\bm{h}\|)$
for some function $c_0:\R^+\rightarrow\R$, i.e., if the covariance function is also invariant under rotations, then the process $Z$ is called \textit{isotropic}. Most spatial domain-based tests for isotropy exploit that  rotational invariance of the covariance function in \eqref{isotropy_via_covariance_fct} is equivalent to rotational invariance of the variogram  
$2\gamma(\bm{h}):=\Var[Z(\bm{s}+\bm{h})-Z(\bm{s})]$. We mention specifically
\cite{matheron61,cabana87,baczkowskimardia90, baczkowski90,jonalasinio01} for the regularly sampled case. More recently,  \cite{guan_et_al_2004} proposed a  test for  isotropy at a   finite set of locations that  makes use of the asymptotic joint normality of the sample variogram at different spatial lags. The test  can be applied both to processes on a grid and to processes on $\R^2$ with sampling locations generated from a homogeneous Poisson process. 
A similar testing method is proposed in  \cite{maitysherman12}. Further contributions to the problem of testing isotropy  are \cite{guan_et_al_2007}  and  \cite{bowman13}. 
While the above-mentioned references consider isotropy in the spatial domain, isotropy can furthermore equivalently be characterized in the frequency domain.  Indeed, if existing, the spectral distribution of  a stationary random field $Z$ has a density; 
\begin{align}  \label{det300}
f(\bm{\omega})=\int_{\R^d} \exp(-\im\bm{\omega}^T \textbf{\emph{h}})\, c(\textbf{\emph{h}}) \, d\textbf{\emph{h}}, \quad \bm{\omega}=(\omega_1,\ldots,\omega_d)^\top,
\end{align}
which then forms a Fourier pair with the covariance function. 
In particular, the random field $Z$ is isotropic if and only if $f$ is rotationally invariant, that is if and only if \eqref{isotropy_via_spectral} holds. Testing for rotational invariance of the spectral density function is very challenging, leading several authors to consider weaker concepts than isotropy. 
Exemplary, we mention the work of \cite{scacciamartin02, scacciamartin05,luzimmermann05} who  consider lattice processes in the two-dimensional case. 
Another procedure in the frequency domain which can be used for  testing isotropy without originally being designed for this  purpose  is proposed  by \cite{fuentes05},
who suggests to specify a set of frequencies with the same absolute values. or irregularly spaced observations, approaches to test for isotropy in the frequency domain are scarce.   
An exception is  \cite{vanhala_et_al_2018} in which a general empirical likelihood approach is proposed to assess the form of spatial covariance structures, and illustrate this general method with an application to testing (among others) for isotropy.
However, just like the literature in the regularly spaced setting, this method requires selection of a finite number of frequency vectors in $\R^d$ with equal norm. More recently, \cite{sahoo_et_al_2019} introduce an isotropy test for spherical data, which  uses the fact  that the spherical harmonic coefficients are uncorrelated if and only if the random field is isotropic. 

Unlike available literature, we aim to develop a test for the irregularly sampled case on unbounded domain that is not restricted to pre-specficied directions and thus enables the detection of \textit{all} types of anisotropy. This leads us to consider the  null hypothesis 
\begin{align} \label{null}
H_0: ~~ \exists \text{ a function } f_0:\R^+_0 \to\R 
~\text{ such that }~
f(\bm{\omega})=f_0(\|\bm{\omega}\|) ~~  \forall  \bm{\omega}\in\R^d. 
\end{align}
To avoid the direct estimation of $f$ and $f_{0 } $ we  use  a minimum distance approach and determine an explicit expression of $M_{d}$ as given in \eqref{eq:Md} in terms of 
integrals of the the spectral density $f$.
A similar idea  was used by  \cite{dette2011}  stationarity tesing in time series  and more recently by  \cite{bagchi18} and \cite{vandelft18} 
 for white noise and stationarity  testing in functional time series, respectively.

The following lemma provides an explicit expression of this distance measure $M_d$ in terms of integrals of the spectral density.

\begin{lemma} \label{two_integrals}
Let $\tilde{f}$ denote the spherical coordinate version of the function $f$,  that is $\tilde{f}(r,\theta) = f ( \phi ^{-1} ( (r,\theta)  )) $, where $\phi$ maps 
$\bm{\omega} \in \tau=[0,2\pi) \times [0,\pi)^{d-2}$ 
to its spherical coordinates $(r,\theta)$. Further, let 
$D(r,\theta)$ denote the spherical volume element, then  
\begin{align} \label{M_d=...}
M_d&=D_{1,d}-D_{2,d},
\end{align}
where 
\begin{align} \label{D_1}
D_{1,d} &:=\int_{\R^d}f^2(\bm{\omega}) \, d\bm{\omega} , \\
 \label{D_2}
 D_{2,d} & :=\frac{1}{\int_{\tau} D(1,\theta) d\theta} \int_{0}^{\infty} \Big (\int_{\tau} \tilde{f}(r,\theta) D(1,\theta)\, d\theta\Big  )^2 r^{d-1} dr 
\end{align}
In particular, we have for $d=2$
\begin{align} \label{D_22}
D_{2,2}=\frac{1}{2\pi} \int_{0}^{\infty} \Big  (\int_{\R^2} f(\bm{\omega}) J_0(r\|\bm{\omega}\|)\, d\bm{\omega} \Big  )^2 \, r \, dr,
\end{align}
where $J_0$ is the Bessel function of order $0$ \citep[see e.g.,][]{watson44}.
\end{lemma}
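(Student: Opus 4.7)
The strategy is to reduce the minimization over functions $g : \R_0^+ \to \R$ to a pointwise (in $r$) weighted least squares problem by switching to spherical coordinates. Using the transformation $\phi$ and the factorization $D(r,\theta) = r^{d-1} D(1,\theta)$ of the volume element, I rewrite
\begin{equation*}
\int_{\R^d} \bigl(f(\bm{\omega}) - g(\|\bm{\omega}\|)\bigr)^2 d\bm{\omega}
= \int_0^\infty \int_\tau \bigl(\tilde f(r,\theta) - g(r)\bigr)^2 D(1,\theta)\, d\theta\, r^{d-1} dr .
\end{equation*}
Since only $\tilde f$ depends on $\theta$, for each fixed $r$ the inner integral is a quadratic in the scalar $g(r)$, minimized uniquely by the weighted angular mean
\begin{equation*}
g^\ast(r) \;=\; \frac{\int_\tau \tilde f(r,\theta)\, D(1,\theta)\, d\theta}{\int_\tau D(1,\theta)\, d\theta}.
\end{equation*}

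Substituting $g^\ast$ back, expanding the square and using the defining property of $g^\ast$, the mixed term cancels one copy of the squared angular integral, which yields
\begin{equation*}
M_d = \int_0^\infty \int_\tau \tilde f(r,\theta)^2\, D(1,\theta)\, d\theta\, r^{d-1} dr \;-\; \int_0^\infty \frac{\bigl(\int_\tau \tilde f(r,\theta) D(1,\theta) d\theta\bigr)^2}{\int_\tau D(1,\theta) d\theta}\, r^{d-1} dr .
\end{equation*}
Recognising the first term as $D_{1,d}$ (since the change of variables is measure-preserving), this proves \eqref{M_d=...}.

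For $d=2$, one has $\tau = [0,2\pi)$ and $D(1,\theta) \equiv 1$, so the formula above immediately gives $D_{2,2} = 2\pi \int_0^\infty \bar f(r)^2\, r\, dr$ with $\bar f(r) := \tfrac{1}{2\pi}\int_0^{2\pi} \tilde f(r,\theta)\, d\theta$. To recast this in the Bessel form \eqref{D_22}, I convert the right hand side of \eqref{D_22} to polar coordinates in $\bm{\omega}$,
\begin{equation*}
\int_{\R^2} f(\bm{\omega}) J_0(r\|\bm{\omega}\|)\, d\bm{\omega} = 2\pi \int_0^\infty \bar f(\rho)\, J_0(r\rho)\, \rho\, d\rho \;=\; 2\pi\, (H\bar f)(r),
\end{equation*}
where $H$ is the order-zero Hankel transform, and then apply Parseval's identity for $H$ (its self-adjoint isometry property, see e.g.~\cite{watson44}) to conclude $\int_0^\infty (H\bar f)(r)^2\, r\, dr = \int_0^\infty \bar f(\rho)^2\, \rho\, d\rho$. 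Substituting yields exactly \eqref{D_22}.

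The only genuinely nontrivial step is the identification of the angular mean as a Hankel transform and the invocation of Hankel-Parseval in the $d=2$ case; the general part is pure calculus once the spherical substitution is in place. Measurability/integrability of $g^\ast$ (needed so that it is an admissible competitor in the minimization) follows from $f \in L^2(\R^d)$ together with Cauchy--Schwarz applied to the angular integral, which I would verify as a brief remark.
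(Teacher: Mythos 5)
Your proof is correct. The first part (the decomposition $M_d = D_{1,d}-D_{2,d}$ via the spherical change of variables, the identification of the minimizer $g^\ast$ as the weighted angular mean, and the vanishing of the cross term) is essentially identical to the paper's argument. For the $d=2$ identity \eqref{D_22}, however, you take a genuinely different and more direct route: you recognize $\int_{\R^2} f(\bm{\omega}) J_0(r\|\bm{\omega}\|)\,d\bm{\omega}$ as $2\pi$ times the order-zero Hankel transform of the angular mean $\bar f$ and then invoke the Hankel--Parseval isometry on $L^2(\R^+, r\,dr)$ to conclude $\int_0^\infty (H\bar f)(r)^2\, r\, dr = \int_0^\infty \bar f(\rho)^2\,\rho\, d\rho$, which gives \eqref{D_22} in two lines. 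The paper instead reduces \eqref{D_22} to the equivalent covariance-domain identity $D_{2,2}=2\pi\int_0^\infty\big(\int_0^{2\pi} c(r\cos\theta,r\sin\theta)\,d\theta\big)^2 r\,dr$, proves it by applying the two-dimensional Plancherel theorem to the radially symmetric function $k(\bm{x})=\int_0^{2\pi} f(\|\bm{x}\|\cos\theta,\|\bm{x}\|\sin\theta)\,d\theta$, and explicitly computes $\mathcal{F}k$ in polar coordinates --- in effect re-deriving the Hankel--Plancherel relation by hand. Your version is shorter and cleaner, at the cost of citing Hankel--Parseval as a known result (which is legitimate; it is classical, appears in the cited Watson reference, and the paper itself uses the Hankel transform and its relation to the radial Fourier transform in the proof of its Proposition on asymptotic variances). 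Your closing remark on measurability and integrability of $g^\ast$ and of $\bar f\in L^2(\R^+,r\,dr)$ via Cauchy--Schwarz on the angular integral is the right thing to check and suffices for the Parseval step.
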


\begin{proof}
Let 
\begin{align*}
g^\ast(r):=\frac{\int_{\tau} \tilde{f}(r,\theta) D(1,\theta)\, d\theta}{\int_{\tau} D(1,\theta)\, d\theta}
\end{align*}
and note that for an arbitrary function $g:\R^+ \to \R$ we have 
\begin{align*}
\int_{\R^d} \big(f(\bm{\omega})-g(\|\bm{\omega}\|)\big)^2 d\bm{\omega} &= 
\int_{0}^{\infty} \left[\int_{\tau} \big(\tilde{f}(r,\theta)-g(r)\big)^2 D(1,\theta) \,d\theta \right] r^{d-1} dr\\
&=\int_{0}^{\infty} \left[\int_{\tau} \big( \tilde{f}(r,\theta)-g^\ast(r)\big)^2 D(1,\theta)\, d\theta \right] r^{d-1} dr\\
&+2 \int_{0}^{\infty} \left[\int_{\tau} \big(\tilde{f}(r,\theta)-g^\ast(r)\big) \big(g^\ast(r)-g(r)\big) D(1,\theta)\, d\theta \right] r^{d-1} dr\\
&+\int_{0}^{\infty} \left[\int_{\tau} \big(g^\ast(r)-g(r)\big)^2 D(1,\theta)\, d\theta \right] r^{d-1} dr.
\end{align*}
By definition of the function $g^\ast$, it is easy to see that the second term equals zero, and therefore 
\begin{align*}
&\int_{\R^d}\big(f(\bm{\omega})-g(\|\bm{\omega}\|)\big)^2 d\bm{\omega}
\geq \int_{0}^{\infty} \left[\int_{\tau} \big( \tilde{f}(r,\theta)-g^\ast(r)\big)^2 D(1,\theta)\, d\theta \right] r^{d-1} dr \\
&\phantom{========}=\int_{\R^d} f^2(\bm{\omega})\, d\bm{\omega} - \frac{1}{\int_{\tau} D(1,\theta)\, d\theta}\int_{0}^{\infty} \left(\int_{\tau} \tilde{f}(r,\theta) D(1,\theta)\, d\theta \right)^2 r^{d-1} dr,
\end{align*}
where there is equality if and only if $g=g^\ast$ almost everywhere. Recalling the definition of $M_d$ in \eqref{eq:Md}, this yields \eqref{M_d=...}. In order to prove \eqref{D_22}, it suffices to show that
\begin{align} \label{to_show_for_D22}
D_{2,2}=2\pi \int_{0}^{\infty} \Big (\int_{0}^{2\pi} c(r \cos \theta, r \sin \theta)\, d\theta \Big )^2 r \, dr,
\end{align}
since it is easy to see that the Bessel function satisfies
\begin{align*}
J_0(r\|\bm{x}\|)=\frac{1}{2\pi} \int_{0}^{2\pi} \exp\big (\im r (\cos \theta , \sin \theta ) \bm{x}\big ) \, d\theta. 
\end{align*}
For a proof of \eqref{to_show_for_D22}, let $\bm{x}\in\R^2$ be arbitrary and define the radially symmetric function
\begin{align*}
k (\bm{x}):=\int_{0}^{2\pi} f(\|\bm{x}\| \cos \theta, \|\bm{x}\| \sin \theta)\, d\theta.
\end{align*}
Note that
\begin{align*}
\int_{\R^2} k^2 (\bm{x}) \, d\bm{x}
=2\pi \int_{0}^{\infty} \Big (\int_{0}^{2\pi} f(r\, \cos \theta, r\, \sin \theta) \, d\theta \Big )^2 r\, dr
\end{align*}
and therefore 
\begin{align*}
D_2
&=\frac{1}{(2\pi)^4} \int_{\R^2} [(\mathcal{F} k)(\bm{\xi})]^2\, d\bm{\xi}= \frac{1}{(2\pi)^4} \int_{0}^{\infty} \int_{0}^{2\pi} [(\mathcal{F} k) (r\, \cos \theta, r\, \sin \theta)]^2 \, r\, d\theta\, dr,
\end{align*}
where we used the Theorem of Plancherel and the fact that $k$ is real-valued and symmetric.
It is therefore sufficient to show
\begin{align} \label{to_show_lemma34}
(\mathcal{F} k) (r\, \cos \theta, r\, \sin \theta)=(2\pi)^2 \int_{0}^{2\pi} c(r\, \cos \phi, r\, \sin \phi)\, d\phi
\end{align}
for arbitrary $r> 0$, $\theta\in[0,2\pi)$.
By inserting polar coordinates, we get
\begin{align*}
(\mathcal{F} k)(r\, \cos \theta, r\, \sin \theta) 
&=\int_{0}^{\infty} \int_{0}^{2\pi} k(s\, \cos \phi, s\, \sin \phi) \exp\big (-\im \, r\, s 
\|  (\cos \theta , \sin \theta )^\top \|^2
\big) s\, d\phi\, ds\\
&=\int_{0}^{\infty} \int_{0}^{2\pi} k(s\, \cos \phi, s\, \sin \phi) \exp(-\im \, r\, s \cos(\theta-\phi))\, s\, d\phi\, ds.
\end{align*}
Substituting $\phi-\theta=\alpha$, the above expression equals
\begin{align*}
\int_{0}^{\infty} \int_{-\theta}^{2\pi - \theta} k(s\, \cos(\alpha+\theta), s\, \sin(\alpha+\theta)) \exp(-\im \, r\, s\, \cos \alpha)\, s\, d\alpha\, ds,
\end{align*}
such that due to the radial symmetry of the function $k$ and the $2\pi$-periodicity of the trigonometric finctions   
\begin{align*}
(\mathcal{F} k)(r\, \cos \theta, r\, \sin \theta)=\int_{0}^{\infty} k(s\, \cos \theta, s\, \sin \theta) \Big (\int_{0}^{2\pi} \exp(-\im \, r\, s\, \cos \alpha) \,  d\alpha\Big ) s\, ds.
\end{align*}
On the other hand, by definition of the covariance function, we get
\begin{align*}
(2\pi)^2 \int_{0}^{2\pi} c(r\, \cos \phi, r\, \sin \phi)\, d\phi &= \int_{0}^{2\pi}  \int_{\R^2} f(\bm{\omega}) \exp\big (\im \, r
(\cos \theta , \sin \theta )  \bm{\omega}\big ) \, d\bm{\omega} \, d\phi\\
&=\int_{0}^{\infty} \int_{0}^{2\pi} f(s \cos \theta, s \sin \theta) \Big (\int_{0}^{2\pi} \exp(\im \, r s \cos(\phi-\theta))\, d\phi\Big) s\, d\theta ds.
\end{align*}
Again using the index shift $\theta-\phi=\alpha$ and exploiting the $2\pi$-periodicity of cosinus gives
\begin{align*}
\int_{0}^{2\pi} \exp(\im \, r\, s\, \cos(\phi-\theta))\, d\phi = \int_{0}^{2\pi} \exp(-\im \, r\, s\, \cos \alpha)\, d\alpha,
\end{align*}
and therefore we also have
\begin{align*}
(2\pi)^2 \int_{0}^{2\pi} c(r \cos \phi, r \sin \phi)\, d\phi &= \int_{0}^{\infty} \Big (\int_{0}^{2\pi} f(s \cos \theta, s \sin \theta)\, d\theta \Big )\Big ( \int_{0}^{2\pi} \exp(-\im \,r  s \cos \alpha)\,  d\alpha\Big )s\, ds\\
&=\int_{0}^{\infty} k (s\, \cos \theta, s\, \sin \theta) \Big (\int_{0}^{2\pi} \exp(-\im \, r\, s\, \cos \alpha)\,  d\alpha\Big ) s\, ds.
\end{align*}
This yields \eqref{to_show_lemma34} and therefore completes the claim of the lemma.
\end{proof}

It follows from Lemma \ref{two_integrals}  that an estimator $\hat{M}_d$ of the population distance ${M}_d$ can be obtained by providing estimators of $ D_{1,d}$ and $D_{2,d}$. Observe that this involves estimating integrals of the form \eqref{intf}. Specifically, in order to derive a weak convergence result (Theorem \autoref{asymptotic_normality_M}) of a scaled version of $\hat{M}_d-{M}_d$, we must provide a wholesome approach to deal with edge effects in the (higher order) dependence structure of integrated spectral estimators within an appropriate asymptotic framework.

\section{Distributional properties of spectral estimators within the MID framework}
\label{sec3}
\renewcommand{\theequation}{\thesection.\arabic{equation}}
\setcounter{equation}{0}

In this section, we introduce a general theory to analyze spectral estimators within the MID framework under irregular sampling. We start by providing details on the main ingredient to the spatial spectral estimators, and explain its pecularities within the MID framework (defined below), which we then subsequently address by introducing a suitable so-called $L$-function theory that enables the derivation of distributional properties of such estimators.
Proofs of all results can be found in the Appendix.

Let  $Z(\bm{s}_1),\ldots,Z(\bm{s}_n)$ 
denote $n$ observations from the process $Z$
at randomly distributed 
locations $\{\bm{s}_j\}_{j=1}^n$  on a hypercube of length $\lambda:=\lambda(n)>0$. As mentioned in the introduction, we require  the  sampling region to become unbounded in the limit, while at the same time, we require that the distance between the sampling locations converges  to zero asymptotically. More specifically, we shall work under the following assumption.

\begin{assumption} 
{\rm (Sampling scheme and asymptotic framework) ~\ \label{assumption_on_sampling_scheme}
\begin{enumerate}
\item[(i)] For some $\lambda>0$, the random locations $\{\bm{s}_j\}$, $j=1,\ldots,n$, are independent uniformly distributed on the interval $[-\lambda/2,\lambda/2]^d$.
\item[(ii)] It holds that $\lambda\rightarrow \infty$ as $n\rightarrow \infty$, and $\lambda^d/n \rightarrow 0$. 
\end{enumerate}
}
\end{assumption}

Part (ii) is generally known as \textit{mixed increasing domain asymptotics} (MID). An important building block in the development of suitable estimates of the form \eqref{intf} is the \textit{spatial periodogram}, which is defined by 
\begin{align} \label{period}
I_{n,\lambda,d}(\bm{\omega}):=\Big\vert\frac{\lambda^{d/2}}{(2\pi)^{d/2}n} \sum_{j=1}^n Z(\bm{s}_j) \exp(\im \bm{s}_j^\top  \bm{\omega})\Big\vert^2, \qquad \bm{\omega}\in\R^d.
\end{align}
The latter can be shown to yield an asymptotically unbiased but inconsistent estimator of $f(\bm{\omega})$. Furthermore, 
 periodogram ordinates at different \textit{Fourier frequencies}
\begin{align} \label{Four_freq}
\bm{\omega}_{\bm{k},\lambda}:=\big(\frac{2\pi k_1}{\lambda},\ldots,\frac{2\pi k_d}{\lambda}\big)~,~~ \bm{k} =( k_1 ,\ldots,  k_d )^\top \in \mathbb{Z}^d
\end{align}  
are asymptotically uncorrelated as $\lambda \to \infty$ \citep[see][]{bandyopadhyay2017}. 
Thus, 
intuitively  the quantity $ D_{1,d}$ in \eqref{D_1} can be estimated by the statistic
\begin{align} \label{ad_hoc_1}
\Big (\frac{2\pi}{\lambda}\Big )^d \sum_{k_1,\ldots,k_d=-a}^a I_{n,\lambda,d}^p(\omega_{k_1,\lambda},\ldots,\omega_{k_d,\lambda}) 
\end{align}
 with $p=2$, 
provided that     $\lambda\to\infty$, $a\to\infty$ and $a/\lambda\to\infty$ as $n \to \infty$. 
However, some care is necessary in this argument due to our definition of the spectral density in \eqref{det300}.  Indeed, it can be shown that the estimator \eqref{ad_hoc_1} has to be multiplied with the factor  $\frac{1}{p}(2\pi)^{pd/2}$ to become asymptotically unbiased for the quantity $D_{1,d}$. Moreover, its standard deviation can be shown to be  of order $\Landau(1/\lambda^{d/2})$ and thus converges to $0$ as $\lambda \to \infty$ \citep[see][for a similar statement for the statistic \eqref{ad_hoc_1} with $p=1$]{subbarao2017}. A slightly more involved  argument yields that a consistent estimator of the quantity \eqref{D_22} is given by \\
\begin{align} \label{D_22_ad-hoc}
(2\pi)^{2d/2}\frac{1}{\lambda}\sum_{r=1}^a \omega_{r,\lambda} \Big[
\Big (\frac{2\pi}{\lambda}\Big )^2 \sum_{k_1,k_2=-a}^a  I_{n,\lambda,2}(\omega_{k_1,\lambda},\omega_{k_2,\lambda})\, J_0\big
(\omega_{r,\lambda}\, \| (\omega_{k_1,\lambda},
\omega_{k_2,\lambda})^\top \| \big)\Big]^2,
\end{align}

However, spatial periodogram-based estimators such as the above suffer in a detrimental way from the occurrence of edge effects. After scaling with the appropriate factor that ensures the statistic of choice is non-degenerate, the resulting bias becomes non-negligible for spatial periodogram-based estimators for dimension $d > 1$.

Another important source of bias for \textit{integrated} estimators of the above form stems from the Riemann approximation of the integral. In particular, using the Fourier frequencies in \eqref{Four_freq} corresponds to a Riemann sum approximation that 
 evaluates the function at the left endpoints of intervals. However, 
a better integral approximation is obtained using the midpoints of the intervals. More precisely, we will demonstrate in Section   \ref{sec_iso_test} that choosing the shifted Fourier frequencies
\begin{align} \label{shifted_Four_freq}
\tilde{\bm{\omega}}_{\bm{k},\lambda}:=\Big (\frac{2\pi k_1}{\lambda}+\frac{\pi}{\lambda},\ldots,\frac{2\pi k_d}{\lambda}+\frac{\pi}{\lambda}\Big )
\end{align}
will reduce the Riemann approximation error with an order that is crucial to ensure derivation of $\lambda^{d/2}$-consistent estimators for $d\ge 2$.

\subsection{Reducing edge effect bias}

It is well-known that tapering the data can help to reduce the bias stemming from edge effects \citep[see, for example,][]{dahlhauskuensch87,matsuda09}.  
More specifically, instead of the original data we consider building our statistics using the tapered data 
$$
h(\bm{s}_1/\lambda) Z(\bm{s}_1), \ldots, h(\bm{s}_n/\lambda) Z(\bm{s}_n),
$$ 
where $h:\R^d \rightarrow \R$ is a taper function, which we assume satisfies the following regularity conditions.
\begin{assumption} (Taper function)  \label{ass_on_h}
Let $\bm{s}=(s_1, \ldots , s_d)^\top $. We assume that
\vspace*{-5pt}
\begin{enumerate}
\item[(i)] $
h(\bm{s})=\prod_{i=1}^d h_i(s_i), $ 
where the  functions $h_1, \ldots , h_d:\mathbb{R}\to \mathbb{R}^+ $ are symmetric with compact support on $[-1/2,1/2]$;
\item[(ii)] $h_1, \ldots , h_d:\mathbb{R}\to \mathbb{R}^+ $ are twice differentiable on $\R$.
\end{enumerate}
\end{assumption}

If $h_i$ is the indicator function of the interval $[-\frac{1}{2},\frac{1}{2}]$, then Assumption \ref{ass_on_h}(i) gives the rectangular kernel, which will be  denoted by $h^{\text{rect}}$ in the following discussion.  
The rectangular window does not satisfy the regularity conditions given in Assumption \ref{ass_on_h}(ii). The latter are however satisfied by the important class of \emph{cosine windows} \label{cosine_windows} \citep[see, for example,][]{harris78}. Here, for $\alpha\in\N_{0}$, we define the cosine window
\begin{align} \label{cosine_window}
h_i(s_i) =
\begin{cases}
\cos^{\alpha}\left(\pi s_i\right), \qquad &\text{if } s_i\in [-\frac{1}{2},\frac{1}{2}] ,\\
0, \qquad &\text{else}
\end{cases}
\end{align}
($i=1, \ldots , d$) and denote the taper  
by $h_{\text{cos}^{\alpha}}(\bm{s}) 
= \prod_{i=1}^d \cos^{\alpha}\left(\pi s_i\right) I_{[-\frac{1}{2},\frac{1}{2}]}(s_i).
$
For $\alpha=0$ we obtain  the rectangular window. It is easy to see that for $i=1,\ldots,d$ and $\alpha\geq 1$, the function $h_{\text{cos}^{\alpha}}(s_i)$ is $(\alpha-1)$-times continuously differentiable on $\R$.  Therefore, Assumption \ref{ass_on_h} is satisfied for $h_{\text{cos}^{\alpha}} $ if $\alpha\geq 3$.

For a given taper $h$, define the \emph{tapered discrete Fourier transform (DFT)}
\begin{align}
\label{dft} 
J_{n,\lambda,d,h}(\bm{\omega}):=\frac{\lambda^{d/2}}{(2\pi)^{d/2}n\, H_{d,h}(\bm{0})^{\frac{1}{2}}}\sum_{j=1}^n h\left(\frac{\bm{s}_j}{\lambda}\right) Z(\bm{s}_j) \exp(\im \bm{s}_j^T \bm{\omega}),
\end{align}
and 
the \emph{tapered spatial periodogram}
\begin{align} \label{periodogram}
I_{n,\lambda,d,h}(\bm{\omega})&:=\left|J_{n,\lambda,d,h}(\bm{\omega})\right|^2
\end{align} 
where the normalizing constant is given by
\begin{align} \label{H(m)}
H_{d,h}(\bm{m}):=\int_{\left[-\frac{1}{2},\frac{1}{2}\right]^d} h^2(\bm{s}) \exp(-2\pi \im \bm{s}^T \bm{m})\, d\bm{s}, \quad \bm{m}=(m_1,\ldots,m_d)^T\in\Z^d~. 
\end{align}
Tapering the data with a twice differentiable taper function $h$ reduces the bias considerably. To see this, denote the frequency window of $h$ by
\begin{align} \label{Four_trafo_of_h}
B_{\lambda,d,h}(\bm{u}):= \int_{[-\lambda/2,\lambda/2]^d} h\left(\frac{\bm{s}}{\lambda}\right) \exp(-\im \bm{s}^T \bm{u})\, d\bm{s}.
\end{align}
Then it can be shown that  \citep[see equation \eqref{eq:cumJ2} below or ][]{matsuda09}
\begin{align}
\E\left[I_{n,\lambda,d,h}(\bm{\omega}_{\bm{k},\lambda})\right]&= \frac{n-1}{ (2\pi \lambda )^d   H_{d,h}(\bm{0})\,n} \int_{\R^d} f(\bm{u}-\bm{\omega}_{\bm{k},\lambda}) B_{\lambda,d,h}(\bm{u})^2\, d\bm{u} +\Landau \Big (\frac{\lambda^d}{n} \Big ). \tageq \label{eq:exptapspat}
\end{align}
 Above expression makes clear that the bias of the (tapered) periodogram depends on the tails of the frequency window $B_{\lambda,d,h}$; 
 smoother taper functions yield
 less heavy-tailed frequency windows \citep[see also][]{brillinger81}. Note that \eqref{eq:exptapspat} for the untapered spatial periodogram is simply given by
\begin{align*} 
\E\left[I_{n,\lambda,d, h^{\text{rect}}}(\bm{\omega}_{\bm{k},\lambda})\right]&=\frac{\lambda^d}{(2\pi)^d} \int_{\R^d} f\left(\bm{u}-\bm{\omega}_{\bm{k},\lambda}\right) \sinc^2\Big (\frac{\lambda \bm{u}}{2}\Big )\, d\bm{u}+\Landau \Big (\frac{\lambda^d}{n})+\Landau\Big (\frac{1}{n}\Big )   
\tageq \label{bias_sinc}
\end{align*}
where $\sinc(\bm{x}):=\prod_{i=1}^d \frac{\sin(x_i)}{x_i}$ denotes the sinc function. In addition, 
it holds under appropriate assumptions on the random field $Z$ that
\begin{align*}
\E\left[I_{n,\lambda,d,h}(\bm{\omega}_{\bm{k},\lambda})\right]=\begin{cases}  f(\bm{\omega}_{\bm{k},\lambda}) + \Landau\left(\frac{1}{\lambda}+\frac{\lambda^d}{n}\right), &\qquad \text{if } h=h^{\text{rect}},\\
f(\bm{\omega}_{\bm{k},\lambda}) + \Landau\left(\frac{1}{\lambda^2}+\frac{\lambda^d}{n}\right), &\qquad \text{if } h \text{ satisfies Assumption \ref{ass_on_h}},
\end{cases}\tageq \label{eq:mom1}
\end{align*}
We remark that the first moment of the spatial periodogram as given here is nothing else than the second order cumulant of the spatial DFT  defined in \eqref{dft}; see Example \autoref{ex:period}. Consequently, \eqref{eq:mom1} is a special case of the more general theory that we introduce below to obtain distributional properties.
It follows that after scaling with an order of the standard deviation, $\Landau(1/\lambda^{d/2})$, the bias for the choice of taper  $h=h^{\text{rect}}$ is not asymptotically negligible for $d \ge 2$ and blows up for $d\geq 3$.  As a consequence, such untapered estimators do not lend themselves for the construction of $\lambda^{d/2}$-consistent test statistics for any $d > 1$. Note that a taper that satisfies Assumption \ref{ass_on_h} will reduce the first bias term by an order $\Landau(1/\lambda)$. However, the edge effect bias cannot be reduced further by assuming stricter regularity conditions on $h$ than Assumption \ref{ass_on_h}.

 \subsection{An L-function theory for spectral cumulant spatial statistics}
 \label{sec_taper_proofs}
 In the previous subsection,  we showed that edge effect bias can be reduced by appropriately tapering the data.  In order to derive the distributional properties of spectral estimators, it is important to have a comprehensive setting to analyze the higher order cumulants of the spatial DFT of the tapered data in the MID framework. Specifically, for the statistical test proposed in this paper and to address the issue in \cite{subbarao2017},  a careful investigation of a specific class of integrated tapered spatial spectral estimates in the setting of irregularly sampled data is essential. 
We introduce a general theory here in order to so, and highlight that -- unlike in the time series setting where integrated untapered periodograms can provide suitable building blocks for asymptotically normal test statistics \citep[see e.g.,][]{Whittle53,Taniguchi80,FoxTaqqu86,Deo2000}-- a smooth taper is required to reduce the bias to become asymptotically negligible in statistics of the form \eqref{intf} for dimension $2\le d \le 3$. 
The theory proposed here is reminiscent of that in 
\cite{dahlhaus83, dahlhaus88, dahlhaus97} who introduces a class of so-called $L$\textit{-functions}, which are  used to considerably facilitate derivation of upper bounds of cumulants of time series statistics, and to establish weak convergence results \citep[see also][for a general theory to derive Gaussian limits in the locally stationary functional case] {vandelft18a}. For the current spatial framework with mixed increasing domain asymptotics and irregularly sampled data, we proceed in a similar fashion and define a suitable class of $L$-functions. Namely, a crucial ingredient to establish the distributional properties of estimates of \eqref{intf} are the functions

\begin{align} \label{L-fct}
L_{\lambda}^{(s)}(u):=\begin{cases}
\left(\frac{s}{\e}\right)^s \lambda, \qquad &|u|\leq \frac{\e^s}{\lambda},\\
\frac{\log^s(\lambda |u|)}{|u|}, \qquad &|u|>\frac{\e^s}{\lambda}
\end{cases}, \quad s\in\N_0 
\end{align}
and
\begin{align} \label{ell-fct}
\ell^{(s)}(u):=\frac{1}{\lambda} L_{\lambda}^{(s)}\left(\frac{u}{\lambda}\right)=\begin{cases}
\left(\frac{s}{\e}\right)^s, \qquad & |u|\leq \e^s,\\
\frac{\log^s(|u|)}{|u|}, \qquad & |u|>\e^s.
\end{cases}
\end{align}

Lemma \autoref{bounds_for_B}(i) below shows that $L_{\lambda}^{(0)}$ serves as an upper bound for  the function $B_{\lambda,1,h}$ 
in \eqref{Four_trafo_of_h}  (up to a constant), such that convolutions with these windows  can in turn be bounded by convolutions of $L$-functions. Two important properties of afore-introduced  functions are the following. 

\begin{lemma} \label{properties_of_ell}
For $p,q\in\N_0$, the functions $L_{\lambda}^{(s)}$ and $\ell^{(s)}$ satisfy the following:
\begin{enumerate}
\item[(i)] For $v\in\R$, it holds that
$\int_{\R} L_{\lambda}^{(p)}(u) L_{\lambda}^{(q)}(v-u)\, du \leq C(p,q)\, L_{\lambda}^{(p+q+1)}(v),$ for some constant $C(p,q)>0$ depending on $p$ and $q$ only.
\item[(ii)] For $r\in\Z$, it holds that $\sum_{m=-\infty}^{\infty} \ell^{(p)}(m) \ell^{(q)}(m+r)\leq C(p,q)\, \ell^{(p+q+1)}(r)$, for some constant $C(p,q)>0$ depending on $p$ and $q$ only.
\end{enumerate}
\end{lemma}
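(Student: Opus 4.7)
The plan is to prove (i) first by a scaling reduction and a case analysis on $|v|$, then to deduce (ii) by a straightforward sum--integral comparison. Since $L_\lambda^{(s)}(w) = \lambda\,\ell^{(s)}(\lambda w)$, the substitution $u \mapsto u/\lambda$ reduces (i) to the $\lambda$-independent inequality
\[
\int_\R \ell^{(p)}(u)\,\ell^{(q)}(w - u)\, du \ \le\  C(p,q)\, \ell^{(p+q+1)}(w), \qquad w \in \R,
\]
which is what I would establish directly. For small $|w|$, say $|w| \le 2\e^{p+q+1}$, the right-hand side equals the positive constant $((p+q+1)/\e)^{p+q+1}$, so it suffices to bound the left-hand side by a constant depending only on $p,q$. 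A quick check shows that $\ell^{(s)} \in L^2(\R)$ for every $s \in \N_0$, since the tail $\log^{2s}(|u|)/u^2$ is integrable at infinity and $\ell^{(s)}$ is bounded near the origin; Cauchy--Schwarz together with translation invariance of Lebesgue measure then yields the bound $\|\ell^{(p)}\|_2\,\|\ell^{(q)}\|_2$, which depends only on $p$ and $q$.

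For $|w| > 2\e^{p+q+1}$ I would split $\R$ into the three regions $A_1 = \{|u| \le |w|/2\}$, $A_2 = \{|u-w| \le |w|/2\}$, and $A_3 = \R \setminus (A_1 \cup A_2)$. On $A_1$ the inequality $|w-u| \ge |w|/2 > \e^q$ gives $\ell^{(q)}(w-u) \le 2\log^q(3|w|/2)/|w|$, while elementary integration of $\log^p(|u|)/|u|$ over $\e^p < |u| \le |w|/2$ produces $\int_{A_1} \ell^{(p)}(u)\,du \le C\log^{p+1}|w|$; multiplying yields a contribution of order $\log^{p+q+1}|w|/|w| = \ell^{(p+q+1)}(w)$. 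Region $A_2$ is handled by the symmetric roles of $u$ and $w-u$. On $A_3$ both $|u|$ and $|w-u|$ exceed $|w|/2$, and in fact $|w-u|$ is of the same order as $|u|$ there, so $\ell^{(p)}(u)\,\ell^{(q)}(w-u) \le C\log^{p+q}(|u|)/u^2$; integrating gives only $\Landau(\log^{p+q}|w|/|w|)$, which is dominated by the target bound.

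For (ii) the same case analysis carries over with sums replacing integrals, using that $\ell^{(s)}$ is bounded on $[-\e^s, \e^s]$ and monotonically decreasing on each half-line $\{|u| > \e^s\}$, so standard sum--integral comparison applies. Equivalently, one can invoke (i) at $\lambda = 1$ with $v = -r$ and exploit symmetry of $\ell^{(s)}$ in its argument; the sum--integral discrepancy is then controlled by a multiple of $\sup_u \ell^{(p)}(u)\,\ell^{(q)}(u+r)$, which admits the same bound $C\,\ell^{(p+q+1)}(r)$ by the preceding argument.

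The main technical obstacle is the bookkeeping in Case 2 of (i): one must track the logarithmic powers across all three integration regions and verify that the dominant contribution carries exactly $\log^{p+q+1}|w|$. Region $A_3$ is the most delicate, because a naive estimate using only $1/|w-u| \le 2/|w|$ would leave an integral of $\log^{p+q}(u)/u$ over an unbounded set and diverge; the correct bound requires using the decay of both factors simultaneously. Once this is handled, the passage from (i) to (ii) is routine.
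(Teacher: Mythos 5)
Your proof is correct, but it takes a genuinely different route from the paper's. The paper works with $L_{\lambda}^{(p)}$ directly and, for $|v|>\e^{p+q+1}/\lambda$, splits the line into six intervals $I_1,\ldots,I_6$; the dominant $\log^{p+q+1}$ comes from the middle interval via the partial-fraction identity $\tfrac{1}{u(v-u)}=\tfrac1v\bigl(\tfrac1u+\tfrac{1}{v-u}\bigr)$, while the two far-tail pieces ($I_4$, $I_6$) require an integration-by-parts argument combined with induction on $p+q$ starting from an explicit base case at $p=q=0$. You instead first exploit the scaling identity $L_\lambda^{(s)}(w)=\lambda\,\ell^{(s)}(\lambda w)$ to reduce to a $\lambda$-free convolution inequality for $\ell^{(p)}\ast\ell^{(q)}$, and then use only three regions: near $0$ and near $w$ you freeze one factor at $O(\log^{q}|w|/|w|)$ and integrate the other to $O(\log^{p+1}|w|)$, and on the complement both factors decay, giving an integrand of order $\log^{p+q}(|u|)/u^2$ whose tail integral is already $o$ of the target. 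This eliminates the induction and the base-case computation entirely, and your explicit warning that the far region cannot be handled by bounding only one factor is exactly the right diagnosis of where a naive argument breaks. Two cosmetic points: on $|w|\le 2\e^{p+q+1}$ the right-hand side \emph{equals} the constant $((p+q+1)/\e)^{p+q+1}$ only for $|w|\le \e^{p+q+1}$ and is merely bounded below by a positive constant on the remaining annulus (which is all you need); and on $A_1$ the sharp pointwise bound comes from evaluating $\log^q(x)/x$ at $x=|w|/2$, not $3|w|/2$, though your larger bound is still valid. For part (ii) the paper redoes the six-way decomposition for sums; your sum--integral comparison (or the direct discrete analogue of your three-region argument) works equally well, with the discrepancy controlled by the supremum of the piecewise-monotone summand. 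The paper's approach buys a template that transfers verbatim to the discrete case; yours buys brevity and makes transparent that the inequality is scale-invariant.
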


For $\bm{u}=(u_1,\ldots,u_d)^T\in\R^d$, we define
\begin{align} \label{ell-fct-multi}
L_{\lambda}^{(s)}(\bm{u}):=\prod_{i=1}^d L_{\lambda}^{(s)}(u_i), \qquad \ell^{(s)}(\bm{u}):=\prod_{i=1}^d \ell^{(s)}(u_i).
\end{align}

Lemma \autoref{bounds_for_B} summarizes important bounds that will be used to derive distributional properties of estimators of the form \eqref{intf} and for other estimators needed for the isotropy tests as developed in the next section. 

\begin{lemma} \label{bounds_for_B}
Let the taper function $h$ fulfill the requirements in Assumption \ref{ass_on_h}, part (i). Define the Fourier frequencies $\bm{\omega}_{\bm{m},\lambda}$ and the frequency window $B_{\lambda,d,h}$ as in \eqref{Four_freq} and \eqref{Four_trafo_of_h}, respectively. Furthermore, let $a\in\N$, $\bm{u}=(u_1,\ldots,u_d)^T\in\R^d$, $\bm{v}=(v_1,\ldots,v_d)^T\in\R^d$, and let $C>0$ denote a generic constant. 
Then, the following holds:
\begin{itemize}
\item[(i)] $|B_{\lambda,d,h}(\bm{u})|\leq C\, L_{\lambda}^{(0)}(\bm{u})$ 
\color{black}{
\item[(ii)] $\frac{1}{\lambda^d} \sum_{\bm{m}=-a}^a \int_{\R^d} \left| B_{\lambda,d,h}(\bm{u}) B_{\lambda,d,h}(\bm{\omega}_{\bm{m},\lambda}-\bm{u})\right|\, d\bm{u} \leq C\, \log^{2d}(a)$
\item[(iii)] $\frac{1}{\lambda^{2d}} \sum_{\bm{m}=-a}^{a} \int_{\R^{2d}} \left|B_{\lambda,d,h}(\bm{u}) B_{\lambda,d,h}(\bm{\omega}_{\bm{m},\lambda}-\bm{u}) B_{\lambda,d,h}(\bm{v}) B_{\lambda,d,h}(\bm{\omega}_{\bm{m},\lambda}-\bm{v})\right|\, d\bm{u} d\bm{v}\leq C$
\item[(iv)] Let $t\geq 2$ and $s\in\{0,\ldots,t-1\}$ be arbitrary. Then, for $c_1,\ldots,c_t\geq 1$ and $\bm{d}_1,\ldots,\bm{d}_t\in\Z^d$, we have
\begin{align*}
&\frac{1}{\lambda^{dt}}\sum_{\bm{m}_1,\ldots,\bm{m}_{t-1}=-a}^{a} \int_{\R^{d(t-s)}} \Big|\prod_{j=1}^{s} B_{\lambda,d,h}\big(\frac{c_j}{\lambda}(\bm{m}_j+\bm{d}_j)\big)
 \prod_{j=s+1}^{t-1} \big[B_{\lambda,d,h}(\bm{x}_j) B_{\lambda,d,h}\big(\bm{x}_j-\frac{c_j}{\lambda}(\bm{m}_j+\bm{d}_j)\big)\big]\\
&\phantom{==============} \times B_{\lambda,d,h}(\bm{x}_t)B_{\lambda,d,h}\big(\bm{x}_t+\frac{c_t}{\lambda}\big(\sum_{k=1}^{t-1} \bm{m}_k +\bm{d}_t\big)\big)\Big|\prod_{j=s+1}^t d\bm{x}_j\leq C(t),
\end{align*}
where $C(t)$ is some constant depending on $t$ only. Here, a product $\prod_{j=b}^c z_j$ with $b>c$ is defined as $1$.
\item[(v)] Under the same assumptions on the quantities $t$, $c_1,\ldots,c_t$ and $\bm{d}_1,\ldots,\bm{d}_t$ as in part (iv), it holds that
\begin{align*}
\frac{1}{\lambda^{dt}} \sum_{\bm{m}_1,\ldots,\bm{m}_{t-1}=-a}^{a} \Big|  \prod_{j=1}^{t-1} B_{\lambda,d,h}\big(\frac{c_j}{\lambda}(\bm{m}_j+\bm{d}_j)\big)\, B_{\lambda,d,h}\big(\frac{c_t}{\lambda}(\sum_{k=1}^{t-1} \bm{m}_k +\bm{d}_t)\big)\Big| \leq C(t),
\end{align*}
for some constant $C(t)$ depending on $t$ only.}
\end{itemize}
\end{lemma}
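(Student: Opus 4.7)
The plan is to prove the five parts in order, reducing (ii)--(v) to (i) followed by iterated applications of Lemma \ref{properties_of_ell}.

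For part (i), I would exploit that $h(\bm s)=\prod_{i=1}^d h_i(s_i)$ factorizes (Assumption \ref{ass_on_h}(i)), so $B_{\lambda,d,h}(\bm u)=\prod_{i=1}^d B_{\lambda,1,h_i}(u_i)$ and the bound reduces to the univariate estimate $|B_{\lambda,1,h_i}(u)|\le C\, L_\lambda^{(0)}(u)$. In the regime $|u|\le 1/\lambda$ the trivial inequality $|B_{\lambda,1,h_i}(u)|\le \lambda\|h_i\|_\infty$ already matches $L_\lambda^{(0)}(u)=\lambda$. For $|u|>1/\lambda$, either (a) for a smooth taper, one integration by parts is legitimate (compact support together with Assumption \ref{ass_on_h}(ii) forces $h_i(\pm 1/2)=0$) and yields $|B_{\lambda,1,h_i}(u)|\le \|h_i'\|_1/|u|$, or (b) for the rectangular taper one computes $B_{\lambda,1,h_i}(u)=2\sin(\lambda u/2)/u$ explicitly; both alternatives match $L_\lambda^{(0)}(u)=1/|u|$ in that regime.

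For (ii), apply (i) to replace every $B$-factor by $L_\lambda^{(0)}$; since $L_\lambda^{(0)}$ also factorizes over coordinates, the $\bm u$-integral splits into $d$ one-dimensional convolutions of $L_\lambda^{(0)}$, each bounded by $L_\lambda^{(1)}$ via Lemma \ref{properties_of_ell}(i). The resulting sum $(1/\lambda)\sum_{m=-a}^{a} L_\lambda^{(1)}(2\pi m/\lambda)$ is a harmonic-type sum of order $\log^2 a$, and its $d$-fold product gives $\log^{2d}(a)$. For (iii), applying (i) once and then Lemma \ref{properties_of_ell}(i) to each of the $\bm u$- and $\bm v$-integrals leaves $C\lambda^{-2d}\sum_{\bm m}[L_\lambda^{(1)}(\bm\omega_{\bm m,\lambda})]^2$. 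Per coordinate this reduces to $\sum_{m\ge 1}\log^2(m)/m^2<\infty$; the $\lambda^2$-growth of $[L_\lambda^{(1)}]^2$ near the origin cancels the $\lambda^{-2}$ prefactor per coordinate, producing the asserted constant bound.

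For (iv) and (v) I plan an induction on $t$. Apply (i) to every $B$-factor, then alternately integrate out $\bm x_{s+1},\ldots,\bm x_t$ using Lemma \ref{properties_of_ell}(i) and sum out $\bm m_1,\ldots,\bm m_{t-1}$ using Lemma \ref{properties_of_ell}(ii). Because the $L$- and $\ell$-functions factor across coordinates, the whole calculation can be carried out one coordinate at a time. Each integration or summation raises the level $s$ of the surviving $L$-function by one, while the prefactors of $1/\lambda$ attached to each $B$ balance the $\lambda$-growth of $L_\lambda^{(s)}$ near zero. Part (v) is simpler as there are no integrals, and relies exclusively on Lemma \ref{properties_of_ell}(ii); the constants $c_j\ge 1$ and integer shifts $\bm d_j$ only produce harmless rescalings and index translations, which the lemma accommodates uniformly.

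The hard part will be parts (iv) and (v), specifically the coupling created by the final window $B_{\lambda,d,h}\bigl(\bm x_t+c_t\lambda^{-1}(\sum_{k=1}^{t-1}\bm m_k+\bm d_t)\bigr)$, whose argument depends on \emph{all} outer indices. Consequently the $\bm m_j$-sums cannot be executed independently; each must be handled as a convolution of $\ell^{(\cdot)}$-functions via Lemma \ref{properties_of_ell}(ii). The bookkeeping required is to verify that, after $t-s$ integrations and $t-1$ summations, the final surviving $L$-function has a level depending only on $t$, so that the remaining expression is bounded uniformly in $a$ and $\lambda$ by a constant $C(t)$.
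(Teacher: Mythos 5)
Your proposal is correct and follows essentially the same route as the paper: part (i) via the product structure and a univariate integration-by-parts bound yielding $|B|\lesssim L_\lambda^{(0)}$, parts (ii)--(iii) by converting convolutions of $L_\lambda^{(0)}$ into $L_\lambda^{(1)}$ via Lemma \ref{properties_of_ell}(i) and summing the resulting logarithmic series, and parts (iv)--(v) by iterated application of Lemma \ref{properties_of_ell}(ii) coordinate-wise, using monotonicity of $\ell^{(s)}$ to absorb the factors $c_j\ge 1$ and handling the coupled last window through successive convolutions. The only minor imprecision is that each convolution step raises the $\ell$-level from $(p,q)$ to $p+q+1$ rather than "by one," but this does not affect the conclusion that the terminal level depends only on $t$.
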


Note that Lemma \ref{bounds_for_B} does not make any assumptions on the smoothness of the taper functions and also applies to rectangular tapers.
\begin{example}[The expectation of the spatial periodogram] \label{ex:period}
{\rm Before continuing we briefly relate this more general theory to what we presented in the previous subsection, which can be seen as a special case.
Indeed, under Assumption \autoref{ass_on_h}, the law of total cumulance and the Fourier transform of the covariance function yield
\begin{align*}
    &\cum\big(J_{n,\lambda,d,h}(\bm{\omega}),J_{n,\lambda,d,h}(\bm{\omega})\big) 
\\&=\frac{(\lambda)^d}{(2\pi)^d n^2 H_{d,h}(\bm{0})^2} \sum_{j_1, j_2: j_1\neq j_2} \E \Big  [h\left(\frac{\bm{s}_{j_1}}{\lambda}\right) h(\frac{\bm{s}_{j_2}}{\lambda}) \exp\big(\im (\bm{s}_{j_1}-\bm{s}_{j_2})^T {\bm{\omega}}\big)\, \E \big[Z(\bm{s}_{j_1}) Z(\bm{s}_{j_2})  \big|\, \bm{s}_{j_1},\bm{s}_{j_2}\big]\Big ]
\\& =\frac{(\lambda)^d}{(2\pi)^d n^2 H_{d,h}(\bm{0})^2} \sum_{j_1, j_2: j_1\neq j_2} \E \Big[h\left(\frac{\bm{s}_{j_1}}{\lambda}\right) h(\frac{\bm{s}_{j_2}}{\lambda}) \exp\big(\im (\bm{s}_{j_1}-\bm{s}_{j_2})^T {\bm{\omega}}\big)\, c(\bm{s}_{j_1}-\bm{s}_{j_2}) \Big ]
\\& = \frac{(n-1)(\lambda)^d}{ (2\pi)^d n H_{d,h}(\bm{0})^2}\int_{\R^{2d}} \Big( \frac{1}{\lambda^{2d}}  \int_{[-\lambda/2,\lambda/2]^{2d}} h\left(\frac{\bm{s}_{1}}{\lambda}\right) h\left(\frac{\bm{s}_{2}}{\lambda}\right)  \exp\big(\im(\bm{s}_1-\bm{s}_2)^T(\bm{x}+\bm{\omega}) \big)d\bm{s}_1 d\bm{s}_2 \Big)  f(\bm{x}) d \bm{x}
\\& = \frac{(n-1)}{ (2\pi)^d n H_{d,h}(\bm{0})^2 (\lambda)^d}\int_{\R^{d}}  \big\vert B_{\lambda,1,h}(\bm{x}+\bm{\omega})\big\vert^2  f(\bm{x}) d \bm{x}, \tageq \label{eq:cumJ2}
\end{align*}
which gives the first term in \eqref{eq:exptapspat}, up to a change of variables. Note that the second term in \eqref{eq:exptapspat} is a bias that can be suppressed by not allowing for $j_1=j_2$.}
\end{example}
To obtain further distributional properties we rely on the  derived bounds to control the higher order dependence structure. Specifically, a cumulant central limit theorem can be shown to hold. We refer to proofs in Appendix B of an application of these results to estimators for objects of the form \eqref{intf} for $p=2$.

We end this section with a final crucial lemma and a subsequent result for integrated spectral estimators. The lemma summarizes bounds on functionals of the frequency window $B_{\lambda,1,h}$, which  depend heavily on the differentiability properties of the taper function $h$, thereby shedding light ont he importance of usage of a smooth taper as we elaborate on below. 

\begin{lemma} \label{orders_of_B}
Let $m\in\Z$, define $\omega_{m,\lambda}$ and $B_{\lambda,1,h}$ as in \eqref{Four_freq} and \eqref{Four_trafo_of_h}, and let $C>0$ denote some generic constant. If the taper function $h$ satisfies the requirements from Assumption \ref{ass_on_h}, part (i), then we have
\begin{enumerate}
\item[(i)] \, \,
$
\int_{\R} \left|B_{\lambda,1,h}(u) B_{\lambda,1,h}(\omega_{m,\lambda}-u)\right| du \leq C \lambda \big(\mathrm{1}_{|m|\le 1}+\frac{\log(|m|)}{|m|}\mathrm{1}_{|m|>1} \big)
$ 
\end{enumerate}
If $h$ moreover satisfies Assumption \ref{ass_on_h}, part (ii), then 
\begin{enumerate}
\item[(ii)] 
$
\left|\int_{\R} B_{\lambda,1,h}(u) B_{\lambda,1,h}(\omega_{m,\lambda} - u) \, u \, du\right| \leq  \frac{C}{|m|} \mathrm{1}_{m\neq 0}. 
$
\item[(iii)]
$
\int_{\R} \left|B_{\lambda,1,h}(u) B_{\lambda,1,h}(\omega_{m,\lambda}-u)\, u \right|\, du \leq C,
$
\item[(iv)]
$
\int_{\R} \left|B_{\lambda,1,h}(u) B_{\lambda,1,h}(\omega_{m,\lambda}-u) \,u^2 \right| du \leq \frac{C}{\lambda}.
$
\end{enumerate}
\end{lemma}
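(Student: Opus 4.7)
The four bounds share a common structure: each is an integral of a convolution-like product of $B_{\lambda,1,h}$ with itself, possibly multiplied by a polynomial in $u$. My plan rests on three ingredients: (a) the $L$-function majorant $|B_{\lambda,1,h}(u)|\le C L_{\lambda}^{(0)}(u)$ from Lemma \ref{bounds_for_B}(i); (b) the convolution bound $\int L_{\lambda}^{(0)}(u)L_{\lambda}^{(0)}(v-u)\,du\le C L_{\lambda}^{(1)}(v)$ from Lemma \ref{properties_of_ell}(i); and (c) integration-by-parts identities that transfer powers of $u$ inside $B_{\lambda,1,h}(u)$ onto derivatives of $h$, namely $uB_{\lambda,1,h}(u)=-(\im/\lambda)B_{\lambda,1,h'}(u)$ and $u^{2}B_{\lambda,1,h}(u)=-B_{\lambda,1,h''}(u)/\lambda^{2}$. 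The second identity needs the boundary conditions $h(\pm 1/2)=h'(\pm 1/2)=0$, both forced by combining the compact support in Assumption \ref{ass_on_h}(i) with the twice differentiability on $\R$ in Assumption \ref{ass_on_h}(ii).

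For parts (i), (iii), (iv) the argument is a direct chain. In (i), combining (a) with (b) immediately gives $\int|B_{\lambda,1,h}(u)B_{\lambda,1,h}(\omega_{m,\lambda}-u)|\,du\le C L_{\lambda}^{(1)}(\omega_{m,\lambda})$, and inserting the definition of $L_{\lambda}^{(1)}$ yields the plateau value $C\lambda$ when $|m|\le 1$ (so that $|\omega_{m,\lambda}|\le e/\lambda$) and $C\lambda\log(2\pi|m|)/(2\pi|m|)$ otherwise, matching the claim. For (iii), identity (c) reduces the integral to $(1/\lambda)\int|B_{\lambda,1,h'}(u)B_{\lambda,1,h}(\omega_{m,\lambda}-u)|\,du$; since $h'$ is continuous, compactly supported, and vanishes at $\pm 1/2$, the proof of Lemma \ref{bounds_for_B}(i) applies verbatim to $B_{\lambda,1,h'}$, so the same chain produces $(C/\lambda)L_{\lambda}^{(1)}(\omega_{m,\lambda})$, which is uniformly $\mathcal{O}(1)$ in $m$. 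Part (iv) is the same scheme with the second-derivative identity: after bounding $|B_{\lambda,1,h''}|\le CL_{\lambda}^{(0)}$, one obtains $(C/\lambda^{2})L_{\lambda}^{(1)}(\omega_{m,\lambda})\le C/\lambda$.

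Part (ii) is the subtle one because there is no absolute value and we must exploit a cancellation rather than just bound pointwise. The plan is to compute the integral in closed form. Applying identity (c) once gives
$$\int uB_{\lambda,1,h}(u)B_{\lambda,1,h}(\omega_{m,\lambda}-u)\,du=-(\im/\lambda)(B_{\lambda,1,h'}*B_{\lambda,1,h})(\omega_{m,\lambda}).$$
Product-convolution duality of the Fourier transform turns this convolution into $2\pi$ times the Fourier transform of $h'(s/\lambda)h(s/\lambda)=\tfrac{1}{2}(h^{2})'(s/\lambda)$; after rescaling this equals $\pi\lambda\int(h^{2})'(t)e^{-2\pi\im mt}\,dt$. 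A single integration by parts, using $h^{2}(\pm 1/2)=0$, introduces a factor $2\pi\im m$ and yields $\int uB_{\lambda,1,h}(u)B_{\lambda,1,h}(\omega_{m,\lambda}-u)\,du=2\pi^{2}m\,H_{1,h}(m)$. It then remains to show $|H_{1,h}(m)|\le C/m^{2}$ for $m\ne 0$, which I would obtain by integrating by parts twice in the definition \eqref{H(m)}, the boundary terms vanishing thanks to $h^{2}(\pm 1/2)=0$ and $(h^{2})'(\pm 1/2)=2h(\pm 1/2)h'(\pm 1/2)=0$; this produces $H_{1,h}(m)=-(2\pi m)^{-2}\int(h^{2})''(t)e^{-2\pi\im mt}\,dt$, from which the bound is immediate.

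The main obstacle is the closed-form calculation in (ii): one must carefully track the complex factors and verify that the odd part of the integrand cancels, a consequence of the symmetry of $h$ and the evenness of $B_{\lambda,1,h}$. Once the identity $\int uB_{\lambda,1,h}(u)B_{\lambda,1,h}(\omega_{m,\lambda}-u)\,du=2\pi^{2}m\,H_{1,h}(m)$ is in hand, the rest reduces to the generic $L$-function machinery, which handles (i), (iii), (iv) in parallel; the role of Assumption \ref{ass_on_h}(ii) throughout is precisely to make the boundary terms in the successive integrations by parts vanish, which is exactly why the smooth taper (and not the rectangular one) is needed for parts (ii)--(iv).
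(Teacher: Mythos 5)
Your overall strategy -- integrate by parts to trade powers of $u$ for derivatives of $h$, use the convolution theorem, and control what remains with the $L$-function machinery -- is the same skeleton the paper uses, and parts (i) and (ii) are essentially the paper's proofs. For (i) the paper also just combines Lemma \ref{bounds_for_B}(i) with Lemma \ref{properties_of_ell}(i). For (ii) the paper likewise reduces the integral to $2\pi\big|\int_{-1/2}^{1/2}h'(s)h(s)e^{-2\pi\im ms}\,ds\big|$ via the convolution theorem and then integrates by parts once more to get $\frac{C}{|m|}$; your variant of writing $h'h=\tfrac12(h^2)'$ and landing on $2\pi^2 m\,H_{1,h}(m)$ together with $|H_{1,h}(m)|\le C/m^2$ is the same computation in different clothing (and it handles $m=0$ automatically, which the paper treats as a separate remark). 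Where you genuinely diverge is in (iii) and (iv): the paper changes variables $z=\lambda u$, splits the $z$-integral into $|z-2\pi m|\le 1$ and its complement, and in the tails integrates by parts twice on the \emph{undifferentiated} factor $\int h(s_2)e^{-\im s_2(2\pi m-z)}ds_2$ to produce $(2\pi m-z)^{-2}$ decay; you instead majorize both factors by $L^{(0)}_\lambda$ and invoke the convolution bound. For (iii) your route is fine and arguably cleaner: $|B_{\lambda,1,h'}(u)|\le CL^{(0)}_\lambda(u)$ only needs $h'(\pm1/2)=0$ and $h''\in L^1$, both available, and $\tfrac{C}{\lambda}L^{(1)}_\lambda(\omega_{m,\lambda})\le C$ uniformly in $m$.

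Part (iv) is where your argument has a gap. The majorant $|B_{\lambda,1,h''}(u)|\le CL^{(0)}_\lambda(u)$ requires the $1/|u|$ decay of the Fourier transform of $h''$, which via another integration by parts needs $h''$ of bounded variation (equivalently, morally, a third derivative) -- Assumption \ref{ass_on_h}(ii) only gives twice differentiability, so this step does not follow from the stated hypotheses. And you cannot drop the decay, since the trivial bound $|B_{\lambda,1,h''}|\le C\lambda$ paired with $\int_{\R}L^{(0)}_\lambda(v)\,dv=\infty$ gives nothing. Two repairs are available: (a) the paper's route, which after producing $h''$ in the first factor only ever uses $\int|h''(s_1)e^{-\im s_1 z}ds_1|\le\int|h''|$ and pushes all the decay onto the second, undifferentiated factor by integrating by parts twice there (using $h(\pm1/2)=h'(\pm1/2)=0$); or (b) Cauchy--Schwarz with Plancherel, $\int|B_{\lambda,1,h''}||B_{\lambda,1,h}|\,du\le\big(\int|B_{\lambda,1,h''}|^2\big)^{1/2}\big(\int|B_{\lambda,1,h}|^2\big)^{1/2}\le C\lambda$, which combined with your prefactor $\lambda^{-2}$ gives the claimed $C/\lambda$ without any extra smoothness. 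With either fix your part (iv) closes.
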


To make clear why twice differentiability of the taper function is required in for example part (iii), note that for $h=h^{\text{rect}}$ and $m=0$ we would get
\begin{align*}
\int_{\R} |B_{\lambda,1,h}(u) B_{\lambda,1,h}( - u) \,u| \, du = \lambda^2 \int_{\R} 
{\sinc}^2 \left(\frac{\lambda u}{2}\right)\, |u|\,du = 
\infty.
\end{align*}

A crucial consequence is a bound on the convoluted interplay of the functions  $B_{\lambda,d,h}$  with the spatial spectral density $f$. As emphasized in the remark below, this bound shows specifically the importance of tapering \textit{integrated} spectral estimates within the MID framework. 

\begin{prop} \label{Lemma F.2_SSR}
Let $\bm{m}=(m_1,\ldots,m_d)^T\in\Z^d$ and $b_{1,1},b_{1,2},\ldots,b_{d,1},b_{d,2}\in\R^+$ be arbitrary and set 
$b:=\max\{b_{1,1},b_{1,2},\ldots,b_{d,1},b_{d,2}\}.$
Furthermore, denote by $C>0, C_1>0$ and $C_2>0$ some generic constants, with $C$ being  independent of $b$. 
Let $f:\R^d\to\R$ be a positive and symmetric function which is twice differentiable with 
\begin{align*}
\sup_{\bm{\omega}\in\R^d} \left|\frac{\partial f(\bm{\omega})}{\partial \omega_i}\right| \leq C_1, \qquad  \sup_{\bm{\omega}\in\R^d} \left|\frac{\partial^2 f(\bm{\omega})}{\partial \omega_i \, \partial \omega_j}\right| \leq C_2, \qquad \text{and} \qquad \int_{\R^d} \left|\frac{\partial f(\bm{\omega})}{\partial \omega_i}\right| \, d\bm{\omega} < \infty 
\end{align*}
for $i,j=1,\ldots,d$. Then, assuming the taper function $h$ satisfies Assumption \ref{ass_on_h}, part (ii), the term 
\begin{align*}
&R:=\frac{1}{\lambda^d}\big\vert\int_{\R^d} B_{\lambda,d,h}(\bm{u})\, B_{\lambda,d,h}(\bm{\omega}_{\bm{m},\lambda}-\bm{u}) (\int_{-b_{1,1}}^{b_{1,2}} \ldots \int_{-b_{d,1}}^{b_{d,2}} g(\bm{v}) \left[f(\bm{u}-\bm{v})-f(\bm{v})\right]\, d\bm{v}) d\bm{u}\big\vert
\end{align*}
satisfies
\begin{align*}
(i) { \text{ if $g:\mathbb{R}^d \to \mathbb{R}$ is bounded }} 
& R \leq C\,\sup_{\bm{v}\in\R^d} |g(\bm{v})| \times \begin{cases} \frac{ b^d}{\lambda^2}, &\qquad \text{if  } m_1=\ldots=m_d=0,\\ 
\frac{1}{\lambda}+\frac{b^d}{\lambda^2}, &\qquad \text{if  }  \exists\, i\in\{1,\ldots,d\}: m_i\neq 0.  \end{cases} 
\\ (ii) \text{if $g:\R^d \rightarrow \R$ is integrable }
& R \leq C\,\int_{\R^d} |g(\bm{v})|\, d\bm{v} \times  \begin{cases} \frac{1}{\lambda^2}, &\qquad \text{if  } m_1=\ldots=m_d=0,\\
\frac{1}{\lambda}, &\qquad \text{if  }  \exists\, i\in\{1,\ldots,d\}: m_i\neq 0.  \end{cases}
\end{align*}
\end{prop}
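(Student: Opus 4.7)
The plan is to Taylor-expand $f(\bm u-\bm v)$ inside the inner integral so that the symmetry of $f$ kills the zeroth- and first-order contributions in $\bm u$ whenever $\bm m=\bm 0$, and to then use the product structure $B_{\lambda,d,h}(\bm u)=\prod_{i=1}^{d} B_{\lambda,1,h_{i}}(u_{i})$ together with Lemma~\ref{orders_of_B} to control each one-dimensional factor at the sharpest rate that Assumption~\ref{ass_on_h}(ii) makes available.

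First, expanding $f(\bm u-\bm v)$ around $-\bm v$ and using $f(-\bm v)=f(\bm v)$ and $\partial_i f(-\bm v)=-\partial_i f(\bm v)$ (both of which follow from the evenness of $f$), I obtain
$$
f(\bm u-\bm v)-f(\bm v)=-\sum_{i=1}^{d} u_i\,\partial_i f(\bm v)+\tfrac{1}{2}\sum_{i,j=1}^{d} u_i u_j\,\partial_i\partial_j f(\bm\xi_{\bm u,\bm v}),
$$
with $\bm\xi_{\bm u,\bm v}$ on the segment joining $-\bm v$ to $\bm u-\bm v$. Substituting this into $R$ splits it into a linear part $T_{1}$ and a quadratic remainder $T_{2}$, which I bound separately.

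For $T_{1}$, I factorize the $\bm u$-integral across coordinates. The $i$-th factor carries the weight $u_{i}$. When $m_{i}=0$ it vanishes, because the symmetry of $h_{i}$ forces $B_{\lambda,1,h_{i}}$ to be real and even, so the integrand $u_{i}\,B_{\lambda,1,h_{i}}(u_{i})B_{\lambda,1,h_{i}}(-u_{i})$ is odd; when $m_{i}\neq 0$ it is bounded by $C/|m_{i}|\leq C$ via Lemma~\ref{orders_of_B}(ii). The remaining $d-1$ coordinates each contribute at most $C\lambda$ by Lemma~\ref{orders_of_B}(i). Combined with the $1/\lambda^{d}$ prefactor, the $\bm u$-factor therefore equals $0$ when $\bm m=\bm 0$ and is of order $1/\lambda$ otherwise. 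Bounding the $\bm v$-integral by $\|g\|_{\infty}\int_{\R^d}|\partial_i f|\,d\bm v\leq C\|g\|_{\infty}$ in case (i), and by $\sup_{\bm v}|\partial_i f(\bm v)|\cdot\|g\|_{L^{1}}\leq C\|g\|_{L^{1}}$ in case (ii), yields exactly the $T_{1}$-orders required by the proposition.

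For $T_{2}$, the same coordinate-wise factorization applies. Diagonal terms ($i=j$) carry weight $u_{i}^{2}$ in the $i$-th coordinate, which is bounded by $C/\lambda$ via Lemma~\ref{orders_of_B}(iv); off-diagonal terms ($i\neq j$) carry single weights $u_{i}$ and $u_{j}$ in two distinct coordinates, each bounded by $C$ via Lemma~\ref{orders_of_B}(iii). In either sub-case the remaining factors contribute $C\lambda$ apiece by Lemma~\ref{orders_of_B}(i), so that together with the $1/\lambda^{d}$ prefactor the quadratic $\bm u$-integral is uniformly of order $1/\lambda^{2}$. The inner $\bm v$-integral is then controlled by $\sup_{\bm\xi}|\partial_i\partial_j f(\bm\xi)|\leq C_{2}$ times either the volume of the integration domain (which is at most $Cb^{d}$) in case (i), or $\|g\|_{L^{1}}$ in case (ii). Adding the bounds for $T_{1}$ and $T_{2}$ delivers the stated dichotomy. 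The main obstacle is the coordinate-wise bookkeeping: the $1/\lambda^{2}$ order in the case $\bm m=\bm 0$ is attainable only because the linear term vanishes by symmetry and because Lemma~\ref{orders_of_B}(iii)--(iv) deliver the sharp bounds $C$ and $C/\lambda$ on factors carrying $u_{i}$ and $u_{i}^{2}$ (rather than the $C\lambda$ and $C$ that a rectangular taper would force), rates whose proofs rely crucially on the twice-differentiability in Assumption~\ref{ass_on_h}(ii).
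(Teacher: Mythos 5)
Your proposal is correct and follows essentially the same route as the paper: the identical Taylor expansion of $f(\bm{u}-\bm{v})-f(\bm{v})$ exploiting the evenness of $f$, a split into the linear and quadratic parts, and coordinate-wise factorization of the $\bm{u}$-integral controlled by Lemma \ref{orders_of_B}(i)--(iv), with the $\bm{v}$-integral bounded via $\int|\partial_i f|<\infty$ (bounded $g$) or $\sup|\partial_i f|$, $\sup|\partial_i\partial_j f|$ (integrable $g$). The only cosmetic difference is that you justify the vanishing of the linear factor at $m_i=0$ by the oddness of $u_i B_{\lambda,1,h_i}(u_i)^2$, whereas the paper reads it off from the Fourier identity behind Lemma \ref{orders_of_B}(ii); these are the same fact.
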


\begin{rem} \label{rem_edge_effect}
\rm{
While technical, to appreciate the above result, it is important to compare it with the bound available in the untapered case \cite[see][for a proof of the case $d=1$]{subbarao_suppl}. That is, if $h=h^{\text{rect}}$ we have $
B_{\lambda,d,h}(\bm{u})=\lambda^d\, \prod_{i=1}^d \sinc\big(\frac{\lambda u_i}{2}\big)$, $\bm{u}=(u_1,\ldots,u_d)^T\in\R^d$
and find for 
$m_1=\ldots=m_d=0$,
\begin{align*}
&\phantom{\eqsim i}\frac{1}{\lambda^d}\Big|\int_{\R^d} B_{\lambda,d,h}(\bm{u})^2\Big(\int_{-b}^{b} \ldots \int_{-b}^b g(\bm{v}) \Big[f(\bm{u}-\bm{v})-f(\bm{v})\Big] d\bm{v}\Big) d\bm{u}\Big| 
= 
\Landau\Big(\frac{\log \lambda}{\lambda}\Big),
\end{align*}
which is considerably worse than the bounds in Proposition \ref{Lemma F.2_SSR} for $m_1=\ldots=m_d=0$. Indeed, weak convergence results for integrated spatial periodogram estimators in the MID framework will require tapering with a smooth taper to ensure that the bias is sufficiently small. The interested reader is referred to the proofs of Theorems \ref{expectation_theo} and \ref{expect_theo_sec_int}. 
}
\end{rem}

\section{Testing for isotropy  } \label{sec_iso_test}
\renewcommand{\theequation}{\thesection.\arabic{equation}}
\setcounter{equation}{0}

In this section, we define a  consistent estimate of the distance measure $M_d$ in \eqref{eq:Md} and prove its asymptotic normality. We will estimate the two terms  $D_{1,d}$ and $D_{2,d}$ defined  in Lemma \ref{two_integrals} and establish weak convergence of the corresponding estimates  (after appropriate normalization) to centered normal distributions for $d\leq 3$ ($D_{1,d}$)
and  $d \leq 2$ ($D_{2,d}$), respectively. 
These  results, which will be proved  in the Appendix,  require several technical assumptions, which are stated first. Following  \cite{subbarao2017} we define for 
given  constants $\delta>0$,  $C>0$,  a function   
$$\beta_\delta(\bm{\omega})=\prod_{i=1}^d \beta_\delta(\omega_i)~,  ~~~~
\bm{\omega}=(\omega_1,\ldots,\omega_d)^T\in\R^d ,
$$ 
where
\begin{align} \label{beta_fct}
\beta_\delta(\omega)=\begin{cases}
C, \quad &|\omega|\leq 1,\\
C \left|\omega\right|^{-\delta}, \quad & |\omega|>1
\end{cases}
\end{align}

\begin{assumption} \label{assumption_on_Z}
{\rm The random field 
$Z=\{Z(\bm{s}): \bm{s}\in\R^d\}$ is a second-order stationary, mean-square continuous centered  Gaussian process with  covariance function
$ c(  \bm{h} ) = 
\Cov\left[Z(\bm{s}),Z(\bm{s} + \bm{h} )\right]$ 
such that 
\begin{enumerate}
\item[(i)] $c$ is uniformly bounded and satisfies for some $\varepsilon>0$
\begin{align*} 
c(\bm{h})=\Landau\big (\|\bm{h}\|^{-(2+\varepsilon)}\big ) \quad \text{ as } \|\bm{h}\|\rightarrow \infty.
\end{align*} 
\item[(ii)] the spectral density of  $Z$ fulfills $f(\bm{\omega})\leq \beta_{1+\delta}(\bm{\omega})$ for some $\delta>2$ and is two times differentiable with partial derivatives satisfying 
\begin{align*}
\Big |\frac{\partial f(\bm{\omega})}{\partial \omega_i}\Big |\leq  \beta_{1+\delta}(\bm{\omega}) \qquad \text{and} \qquad \Big |\frac{\partial^2 f(\bm{\omega})}{\partial \omega_i \partial \omega_j}\Big |\leq  \beta_{1+\delta}(\bm{\omega}) 
\end{align*}
for $i,j=1,\ldots,d$. 
\end{enumerate}
}
\end{assumption}
The final assumption in this section concerns the relationship between the  parameters $\lambda$ and  $a$ in \eqref{ad_hoc_1} and \eqref{D_22_ad-hoc}.

\begin{assumption} ~\ \label{assumptions_on_a}
Let $a, \lambda, n\rightarrow \infty$. Then
\begin{itemize}
\item[(i)] $
\frac{a}{\lambda}\rightarrow \infty$
 and $\frac{a^d}{n}=\,\Landau(1)$
\item[(ii)]$ \frac{(\log a)^{2d}}{\lambda}\rightarrow 0$
\item[(iii)] $\frac{a^d}{\lambda^{d+1}}\to 0.$
\end{itemize}
\end{assumption}

\begin{example} \label{ex_matern}  
{\rm 
To illustrate Assumption \ref{assumption_on_Z}, consider the Mat\'{e}rn covariance function  
\begin{align*}
c_{\nu,\ell}(\bm{h})=\frac{2^{1-\nu}}{\Gamma(\nu)} \Big (\frac{\sqrt{2\nu} \|\bm{h}\|}{\ell}\Big)^\nu K_{\nu}\Big (\frac{\sqrt{2\nu}  \|\bm{h}\|}{\ell}\Big ),
\end{align*}
where $\Gamma$ is the Gamma function and $K_\nu$ denotes a  modified Bessel function \citep[see e.g.,][]{rasmussen06}.  
The  spectral density of the Mat\'{e}rn  covariance functions is given by 
\begin{align*}
f_{\nu,\ell}(\bm{\omega})=\frac{2^d \pi^{d/2} \Gamma(\nu+d/2) (2\nu)^\nu}{\Gamma(\nu) \ell^{2\nu}} \Big (\frac{2\nu}{\ell^2} + 4\pi^2 \|\bm{\omega}\|^2\Big )^{-(\nu+d/2)},
\end{align*}
which is  infinitely differentiable and a straightforward calculation shows  that 
Assumption \ref{assumption_on_Z} (ii)  is  satisfied if  $\nu>d$. Details are left to the reader.
}
\end{example}

Recalling the definition of the shifted Fourier frequencies in
\eqref{shifted_Four_freq} we estimate the term $D_{1,d} = \int_{\R^d}f^2(\bm{\omega}) d\bm{\omega} $ by 
\begin{align} \label{est_F}
\hat{D}_{1,d,\lambda,a}:=\frac{(2\pi\lambda)^d}{2n^4 H_{d,h}(\bm{0})^2} \sum_{\bm{k}=-a}^{a-1}  \sum_{(j_1,j_2,j_3,j_4)\in \mathcal{E}} & 
\prod_{k=1}^4  
h\left(\frac{\bm{s}_{j_k}}{\lambda}\right)  Z(\bm{s}_{j_k})\exp\big(\im(\bm{s}_{j_1}-\bm{s}_{j_2}+\bm{s}_{j_3} - \bm{s}_{j_4})^T \tilde{\bm{\omega}}_{\bm{k},\lambda}\big),
\end{align}
where $\sum_{\bm{k}=-a}^{a-1}$ denotes the multiple sum $\sum_{k_1=-a}^{a-1} \ldots \sum_{k_d=-a}^{a-1}$
and 
\begin{align} \label{set_E}
\mathcal{E}:=\{(j_1,j_2,j_3,j_4)\in\{1,\ldots,n\}^4: j_1,j_2,j_3,j_4 \text{ pairwise different}\}.
\end{align}
This estimate is motivated by the fact that that the spatial periodogram $I_{n,\lambda,d,h}$ in \eqref{periodogram}
 is asymptotically unbiased for $f(\bm{\omega})$. The 
 summation of the squares $I_{n,\lambda,d,h}^2 ( \tilde{\bm{\omega}}_{\bm{k},\lambda})$
then  yields a consistent estimate of the integral of the squared spectral density. Note that we evaluate the periodogram at the shifted Fourier frequencies \eqref{shifted_Four_freq} and   we only take the sum over the pairwise different elements in  \eqref{est_F}, which will not change the asymptotic results. Moreover, we emphasize that the 
statistic $\hat{D}_{1,d,\lambda,a}$ is real-valued, which 
follows from the facts  $\tilde{\omega}_{-k,\lambda}=-\tilde{\omega}_{k-1,\lambda}$ and 
\begin{align*}
\sum_{k=-a}^{a-1} \sin(s\, \tilde{\omega}_{k,\lambda}) 
= - \sum_{k=1}^a \sin(s\, \tilde{\omega}_{k-1,\lambda}) + \sum_{k=1}^a \sin(s\, \tilde{\omega}_{k-1,\lambda}) = 0.
\end{align*}
for any $s\in\R$. The following result establishes the asymptotic normality of $\hat{D}_{1,d,\lambda,a}$  (after scaling).

\begin{theorem} \label{corr_first_int}
If $d\leq 3$, Assumption \ref{assumption_on_sampling_scheme},  
 \ref{ass_on_h}, \ref{assumption_on_Z},  
Assumption \ref{assumptions_on_a}(i),(ii) are satisfied and 
the condition
\begin{align}  \label{further_ass_int_1}
\frac{\lambda^{d/2+1+2\delta}}{a^{1+2\delta}}\rightarrow 0.
\end{align}
holds,   then 
\begin{align*}
\frac{\lambda^{d/2}}{\tau_{1,d,\lambda,a}} \big(\hat{D}_{1,d,\lambda,a}-D_{1,d}\big) \dn \mathcal{N}(0,1) \qquad \text{as } a, \lambda, n \to \infty,
\end{align*}
where 
\begin{align}
\label{det3}
\tau_{1,d,\lambda,a}^2:=(2\pi)^d \sum_{\bm{m}=-2a+1}^{2a-1} \Big ( \frac{8\, H_{d,h}(\bm{m})^2}{H_{d,h}(\bm{0})^2}+\frac{2\, H_{d,h}(\bm{m})^4}{H_{d,h}(\bm{0})^4}\Big ) \int_{2\pi \max(-a,-a-\bm{m})/\lambda}^{2\pi\min(a,a-\bm{m})/\lambda}    f^4(\bm{\omega})\, d\bm{\omega}.
\end{align}.
\end{theorem}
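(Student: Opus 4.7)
The plan is to split
\begin{align*}
\hat D_{1,d,\lambda,a}-D_{1,d}
=\bigl(\E\hat D_{1,d,\lambda,a}-D_{1,d}\bigr)+\bigl(\hat D_{1,d,\lambda,a}-\E\hat D_{1,d,\lambda,a}\bigr)
\end{align*}
and to show (a) that $\lambda^{d/2}$ times the bias tends to zero, (b) that the variance of the centered part equals $\tau_{1,d,\lambda,a}^2/\lambda^d\bigl(1+o(1)\bigr)$, and (c) that all cumulants of order $k\ge 3$ of $\lambda^{d/2}(\hat D_{1,d,\lambda,a}-\E\hat D_{1,d,\lambda,a})/\tau_{1,d,\lambda,a}$ vanish in the limit. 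The stated normal convergence then follows by the method of cumulants.

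For step (a), since $Z$ is Gaussian and the quadruple sum runs over pairwise distinct indices, Isserlis's formula applied conditionally on $\bm s_{j_1},\ldots,\bm s_{j_4}$ expresses the integrand as a sum of three covariance-pair products, indexed by the pairings of $\{1,2,3,4\}$. Inserting the spectral representation of $c$ and averaging over the i.i.d.\ uniform locations turns each term into an integral against products of frequency windows $B_{\lambda,d,h}$: by the computation in Example~\ref{ex:period}, the pairing $(1,2)(3,4)$ delivers the diagonal contribution $(\E I_{n,\lambda,d,h}(\tilde{\bm\omega}_{\bm k,\lambda}))^2$, while the cross pairings $(1,3)(2,4)$ and $(1,4)(2,3)$ are controlled by Lemma~\ref{bounds_for_B}(ii)--(iii) and shown to be negligible after multiplication by $\lambda^{d/2}$. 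For the diagonal part I would write
\begin{align*}
\E\hat D_{1,d,\lambda,a}-D_{1,d}
=\Big(\frac{2\pi}{\lambda}\Big)^{\!d}\!\!\sum_{\bm k=-a}^{a-1}\!\!\Big[\bigl(\E I_{n,\lambda,d,h}(\tilde{\bm\omega}_{\bm k,\lambda})\bigr)^{\!2}-f^2(\tilde{\bm\omega}_{\bm k,\lambda})\Big]+R_{\mathrm{Riem}}+R_{\mathrm{trunc}}+o(\lambda^{-d/2}),
\end{align*}
where the edge-effect bracket is $\Landau(\lambda^{-2})$ uniformly in $\bm k$ by Proposition~\ref{Lemma F.2_SSR} with $g=f$ (crucially using Assumption~\ref{ass_on_h}(ii)); the midpoint Riemann-sum error $R_{\mathrm{Riem}}$ is $\Landau(\lambda^{-2})$ because $\tilde{\bm\omega}_{\bm k,\lambda}$ sits at the centre of each $(2\pi/\lambda)^d$-cell and $f^2$ is twice differentiable by Assumption~\ref{assumption_on_Z}(ii); and the truncation $R_{\mathrm{trunc}}=\int_{\max_i|\omega_i|>2\pi a/\lambda}f^2\,d\bm\omega$ is controlled by the polynomial decay of $f$ and condition~\eqref{further_ass_int_1}. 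The aggregated bias is therefore $\Landau(\lambda^{-2})$, and $\lambda^{d/2}\cdot\mathrm{bias}\to 0$ exactly when $d\le 3$.

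For steps (b) and (c) I would compute $\cum_k\bigl(\lambda^{d/2}\hat D_{1,d,\lambda,a}\bigr)$ by the product theorem for cumulants of Gaussian polynomials. Each $k$-th order cumulant decomposes as a sum over indecomposable pairings of the $4k$ Gaussian factors into $2k$ covariances; for every such pairing, inserting the spectral representation and averaging over the locations yields a $2kd$-dimensional integral whose integrand is a product of $2k$ frequency windows $B_{\lambda,d,h}$, summed over $k$ Fourier indices in $\{-a,\ldots,a-1\}^d$. Iterated application of Lemma~\ref{bounds_for_B}(iv)--(v) combined with Lemma~\ref{properties_of_ell} produces upper bounds of order $\lambda^{-(k-2)d/2}$ after the $\lambda^{kd/2}$ prefactor, so cumulants of order $k\ge 3$ vanish. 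For $k=2$ the indecomposable pairings between the two blocks of four indices split into two topological types: those that couple the blocks by a single frequency shift $\bm m$ produce, after summation over $\bm m$, the summand with coefficient $8H_{d,h}(\bm m)^2/H_{d,h}(\bm 0)^2$ in~\eqref{det3}, while the fully entangled ``four-cycle'' pairings produce the summand with coefficient $2H_{d,h}(\bm m)^4/H_{d,h}(\bm 0)^4$; in both cases the remaining Fourier sum compresses into the truncated $f^4$-integral appearing in~\eqref{det3}.

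The main obstacle will be the bias control in step (a): the edge-effect bracket has to be genuinely $\Landau(\lambda^{-2})$ rather than $\Landau(\lambda^{-1})$, since only the former survives the $\lambda^{d/2}$ rescaling for $d\in\{2,3\}$ (cf.\ Remark~\ref{rem_edge_effect}). This forces the joint use of the on-diagonal bound $\Landau(\lambda^{-2})$ and the off-diagonal bound $\Landau(\lambda^{-1})$ in Proposition~\ref{Lemma F.2_SSR} with the $m$-dependent decay of $B_{\lambda,1,h}$ summarized in Lemma~\ref{orders_of_B}, and Assumption~\ref{ass_on_h}(ii) is indispensable at exactly this step; without twice differentiability the rate in Lemma~\ref{orders_of_B}(ii)--(iv) degrades and the argument breaks down. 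The restriction $d\le 3$ then emerges from the balance $\lambda^{d/2}\cdot\Landau(\lambda^{-2})\to 0$, in agreement with the general discussion at the end of Section~\ref{sec3}.
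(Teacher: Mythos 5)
Your overall architecture coincides with the paper's: a three-part decomposition into (a) bias of $\hat D_{1,d,\lambda,a}$ against the truncated target $D_{1,d,\lambda,a}$ plus the truncation error $|D_{1,d}-D_{1,d,\lambda,a}|=\Landau(\lambda^{1+2\delta}/a^{1+2\delta})$ handled by \eqref{further_ass_int_1}, (b) identification of the variance via the two topological classes of indecomposable pairings (which indeed produce the coefficients $8H_{d,h}(\bm m)^2/H_{d,h}(\bm 0)^2$ and $2H_{d,h}(\bm m)^4/H_{d,h}(\bm 0)^4$, with multiplicities $32$ and $8$ respectively), and (c) the cumulant method with bounds of order $\lambda^{d-kd/2}$ obtained from Lemma~\ref{bounds_for_B}(iv)--(v); all of this matches the paper's Theorems~\ref{expectation_theo} and \ref{asymptotic_normality} and Proposition~\ref{approx_D1}.

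There is, however, a concrete error in your step (a). You assert that the pairing $(1,2)(3,4)$ alone delivers the main term and that \emph{both} cross pairings $(1,3)(2,4)$ and $(1,4)(2,3)$ are negligible. This is wrong, and it matters: the estimator carries the prefactor $\tfrac12$ in \eqref{est_F} precisely because \emph{two} of the three Gaussian pairings each contribute $\tfrac12\int f^2$. The signs in $\exp\bigl(\im(\bm s_{j_1}-\bm s_{j_2}+\bm s_{j_3}-\bm s_{j_4})^T\tilde{\bm\omega}_{\bm k,\lambda}\bigr)$ determine which pairing degenerates: the pairing $(1,4)(2,3)$ couples a ``$+$'' location with a ``$-$'' location in each covariance factor, so after the change of variables the frequency windows appear as $B(\bm u)^2$ centred at the origin and the term $E_3$ reproduces the full $\tfrac12\int_{[-2\pi a/\lambda,2\pi a/\lambda]^d}f^2$ with the same $\Landau(\lambda^{-2}+n^{-1})$ error as $E_1$. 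Only the pairing $(1,3)(2,4)$ couples two ``$+$'' (resp.\ two ``$-$'') locations; this forces the windows to be evaluated at $2\tilde{\bm\omega}_{\bm k,\lambda}=\bm\omega_{2\bm k+1,\lambda}$ away from their peak, and it is \emph{this} term ($E_2$ in the paper's notation) that Lemma~\ref{bounds_for_B}(iii) shows to be $\Landau(\lambda^{-d})$. With your bookkeeping the estimator would be asymptotically unbiased for $\tfrac12 D_{1,d}$ rather than $D_{1,d}$, so the claimed weak limit would be off by a deterministic drift of order $\lambda^{d/2}$. The fix is purely local --- restore $E_3$ as a main-order contribution and discard only $E_2$ --- after which the remainder of your argument, including the balance $\lambda^{d/2}\cdot\Landau(\lambda^{-2})\to0$ forcing $d\le 3$ and the indispensability of Assumption~\ref{ass_on_h}(ii) via Proposition~\ref{Lemma F.2_SSR}, is sound and agrees with the paper.
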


Note that the rectangular taper function 
does not satisfy Assumption \ref{ass_on_h}. In fact, for this taper  Theorem \ref{corr_first_int} only holds  for dimension $d=1$ as the  bias of the test statistic $\hat{D}_{1,d,\lambda,a}$ is of order $\Landau \big ( \frac{\log \lambda}{\lambda}  + \frac{1}{n} + (\frac{\lambda}{a})^{1+2\delta} \big )$,  see Remark \ref{rem_edge_effect} and the proof of Theorem \ref{expectation_theo}  and Proposition \ref{approx_D1} in the Appendix.  As $\tau_{1,d,\lambda,a}^2$ is of order $\Landau (1)$
(see Proposition \ref{asymptotic_variances} below)
this is of higher order than the order of the  standard deviation $\lambda^{d/2} $ for $d\geq 2$.

We now continue proving a similar statement for an
estimate of the quantity $D_{2,2}$ in Lemma \ref{two_integrals}. To be  precise, we consider  
\begin{align} \label{est_F_2}
\hat{D}_{2,2,\lambda,a} := \frac{1}{\lambda} \sum_{r=0}^{a -1} \tilde{\omega}_{r,\lambda} &\Big[\left(\frac{2\pi}{\lambda}\right)^4 \sum_{\bm{k}=-a}^{a-1} \sum_{\bm{\ell}=-a}^{a-1}  \frac{\lambda^4}{n^4 H_{2,h}(\bm{0})^2} \sum_{(j_1,j_2,j_3,j_4)\in\mathcal{E}} h\left(\frac{\bm{s}_{j_1}}{\lambda}\right)h\left(\frac{\bm{s}_{j_2}}{\lambda}\right) h\left(\frac{\bm{s}_{j_3}}{\lambda}\right)  h\left(\frac{\bm{s}_{j_4}}{\lambda}\right)\nonumber\\
&\phantom{=} 
\times Z(\bm{s}_{j_1}) Z(\bm{s}_{j_2}) Z(\bm{s}_{j_3}) Z(\bm{s}_{j_4})\exp\big(\im(\bm{s}_{j_1}-\bm{s}_{j_2})^T \tilde{\bm{\omega}}_{\bm{k},\lambda}\big) \exp\big(\im(\bm{s}_{j_3}-\bm{s}_{j_4})^T \tilde{\bm{\omega}}_{\bm{\ell},\lambda}\big)\nonumber\\
&\phantom{=} \times
J_0(\tilde{\omega}_{r,\lambda} \|\tilde{\bm{\omega}}_{\bm{k},\lambda}\|) J_0(\tilde{\omega}_{r,\lambda} \|\tilde{\bm{\omega}}_{\bm{\ell},\lambda}\|) \Big],
\end{align}
where the set $\mathcal{E}$ is defined in \eqref{set_E}.
Note that $\hat{D}_{2,2,\lambda,a}$ is real valued, because we have for arbitrary $\bm{s}\in\R^2$ and $r\in\R$ 
(note we write $\tilde{\bm{\omega}}_{\bm{k},\lambda}=\tilde{\bm{\omega}}_{\bm{k}}$)
\begin{align*}
 \sum_{\bm{k}=-a}^{a-1} \exp(\im \bm{s}^T \tilde{\bm{\omega}}_{\bm{k,\lambda}}) J_0(r\|\tilde{\bm{\omega}}_{\bm{k},\lambda}\|) 
& =\sum_{k_1,k_2=-a}^{a-1} \big[\cos(s_1\, \tilde{\omega}_{k_1,\lambda}) \cos(s_2\, \tilde{\omega}_{k_2,\lambda})  \\
&  ~~~~~~~~~~~~~~~~~~~~~~~~~~~~~
- \sin(s_1\, \tilde{\omega}_{k_1,\lambda}) \sin(s_2\, \tilde{\omega}_{k_2,\lambda})\big] J_0(r\|\tilde{\bm{\omega}}_{\bm{k},\lambda}\|)\\
&+\im \sum_{k_1,k_2=-a}^{a-1} \big[\cos(s_1 \, \tilde{\omega}_{k_1,\lambda}) \sin(s_2\, \tilde{\omega}_{k_2,\lambda}) \\
&  ~~~~~~~~~~~~~~~~~~~~~~~~~~~~~
+ \sin(s_1 \, \tilde{\omega}_{k_1,\lambda}) \cos(s_2\, \tilde{\omega}_{k_2,\lambda}) \big]  J_0(r\|\tilde{\bm{\omega}}_{\bm{k},\lambda}\|),
\end{align*}
and using $\tilde{\omega}_{-k}=-\tilde{\omega}_{k-1}$, it is easy to see that the imaginary part vanishes.

\begin{theorem} \label{corr_sec_int}
If  $d=2$ and Assumption \ref{ass_on_h}, \ref{assumption_on_Z},  \ref{assumption_on_sampling_scheme} and \ref{assumptions_on_a}  and the condition 
\begin{align} \label{further_assumption_2}
\frac{\lambda^{3+2\varepsilon}}{a^{2+2\varepsilon}}+\frac{\lambda^{2\delta-1}}{a^{2\delta-2}} + \frac{\lambda^{1+\delta}}{a^\delta}=o(1)
\end{align}
are satisfied, we have
\begin{align*}
\frac{\lambda}{\tau_{2,2,\lambda,a}} \big(\hat{D}_{2,2,\lambda,a}-D_{2,2}\big) \dn \mathcal{N}(0,1) \qquad \text{as } a, \lambda, n \to \infty, 
\end{align*}
where 
\begin{align} \nonumber 
\tau_{2,2,\lambda,a}^2&:=8 \sum_{\bm{m}=-2a+1}^{2a-1} \frac{H_{2,h}(\bm{m})^2}{H_{2,h}(\bm{0})^2} \int_{2\pi \max(-a,-a-\bm{m})/\lambda}^{2\pi\min(a,a-\bm{m})/\lambda} f^2(\bm{z})\\
&\phantom{=========} \Big(\int_{0}^{2\pi a /\lambda} \Big[\int_{[-2\pi a/\lambda,2\pi a/\lambda]^2} f(\bm{x})  J_0(r\|\bm{x}\|) J_0(r \|\bm{z}\|)\, d\bm{x}\Big] \, r\, dr\Big)^2  \, d\bm{z}.
\label{det6}
\end{align}.
\end{theorem}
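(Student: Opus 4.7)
The proof proceeds in the same overall spirit as Theorem~\ref{corr_first_int}: we decompose
\[
\hat{D}_{2,2,\lambda,a}-D_{2,2} = \bigl(\hat{D}_{2,2,\lambda,a}-\E\hat{D}_{2,2,\lambda,a}\bigr) + \bigl(\E\hat{D}_{2,2,\lambda,a}-D_{2,2}\bigr),
\]
show that the centering bias is $o(1/\lambda)$, identify $\tau_{2,2,\lambda,a}^2/\lambda^2$ as the leading variance, and conclude via a cumulant CLT by showing higher-order cumulants are negligible. Throughout, the Gaussianity of $Z$ together with the pairwise-different index restriction in $\mathcal{E}$ allows us to compute all moments explicitly; the technical work consists in bounding sums and integrals of frequency windows $B_{\lambda,2,h}$ by means of the tools in Section~\ref{sec3}.

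\textbf{Step 1 (bias).} Conditioning on the locations $\bm{s}_{j_1},\ldots,\bm{s}_{j_4}$ and applying Isserlis' theorem, the expectation of each four-fold $Z$-product inside $\hat{D}_{2,2,\lambda,a}$ decomposes into three pairings of covariances. After using the spectral representation of $c$ and integrating over the sampling locations as in Example~\ref{ex:period}, the dominant "aligned" pairing $(j_1,j_2)$--$(j_3,j_4)$ produces a product of two objects each of the type $\int B_{\lambda,2,h}(\bm{u})B_{\lambda,2,h}(\tilde{\bm{\omega}}_{\bm{k},\lambda}-\bm{u})f(\bm{u})d\bm{u}$. Replacing this by $f(\tilde{\bm{\omega}}_{\bm{k},\lambda})$ via Proposition~\ref{Lemma F.2_SSR} (applied with $g(\bm{v})=J_0(\tilde{\omega}_{r,\lambda}\|\bm{v}\|)$ which is bounded uniformly) shows that the edge-effect bias is $O(1/\lambda^2)$ per summand. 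The two cross-pairings are shown to be of smaller order using Lemma~\ref{bounds_for_B}(iii). One is then left with the deterministic discrepancy between $D_{2,2}$ and its double Riemann approximation over the $\bm{k}$-grid and the $r$-grid. Because we use the shifted (midpoint) frequencies $\tilde{\bm{\omega}}_{\bm{k},\lambda}$ and $\tilde{\omega}_{r,\lambda}$, and $f$ is twice differentiable by Assumption~\ref{assumption_on_Z}(ii), each Riemann error is of order $1/\lambda^2$; the truncation to $|\bm{k}|,r\le a$ is absorbed by the tail bound $f\le \beta_{1+\delta}$ and the Bessel decay $J_0(x)=O(|x|^{-1/2})$, and the resulting terms $\lambda^{3+2\varepsilon}/a^{2+2\varepsilon}$, $\lambda^{2\delta-1}/a^{2\delta-2}$ and $\lambda^{1+\delta}/a^\delta$ in \eqref{further_assumption_2} are precisely what is needed for the total bias to be $o(1/\lambda)$.

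\textbf{Step 2 (variance).} Regarding $\hat{D}_{2,2,\lambda,a}$ as a quartic form in the Gaussian vector $(Z(\bm{s}_j))$, its variance is a sum over the admissible pairings of the eight $Z$-factors. By the same argument as in Theorem~\ref{corr_first_int}, only the pairings that pair two $Z$'s from one factor with two $Z$'s from the other contribute at the leading order; the remaining pairings are controlled by Lemma~\ref{bounds_for_B}(iii). Carrying out the location integrations yields a sum of the form
\[
\tfrac{1}{\lambda^2}\sum_{\bm{k},\bm{\ell},\bm{k}',\bm{\ell}'}\sum_{r,r'}
\tilde{\omega}_{r,\lambda}\tilde{\omega}_{r',\lambda}
\, J_0(\tilde{\omega}_{r,\lambda}\|\tilde{\bm{\omega}}_{\bm{k},\lambda}\|)J_0(\tilde{\omega}_{r,\lambda}\|\tilde{\bm{\omega}}_{\bm{\ell},\lambda}\|)
\, J_0(\tilde{\omega}_{r',\lambda}\|\tilde{\bm{\omega}}_{\bm{k}',\lambda}\|)J_0(\tilde{\omega}_{r',\lambda}\|\tilde{\bm{\omega}}_{\bm{\ell}',\lambda}\|)
\, \Pi,
\]
where $\Pi$ collects the products of windows $B_{\lambda,2,h}$ evaluated at differences of these frequencies. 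Peeling off the convolutions via Proposition~\ref{Lemma F.2_SSR} replaces $\Pi$ by $f^2$ evaluated at the freed frequencies, while the translation structure of the windows produces the factor $H_{2,h}(\bm{m})^2/H_{2,h}(\bm{0})^2$ (Parseval) and the inner sum over $\bm{m}$. Converting the remaining Riemann sums in $\bm{k}',\bm{\ell}'$ and $r'$ to integrals yields exactly the expression $\tau_{2,2,\lambda,a}^2$ in \eqref{det6}.

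\textbf{Step 3 (higher cumulants and CLT).} For $q\ge3$, the $q$-th cumulant of $\lambda\hat{D}_{2,2,\lambda,a}$ is a sum over indecomposable partitions of a $4q$-fold Gaussian product, and, after the location integrations, a $q$-fold product over $(\bm{k}_i,\bm{\ell}_i,r_i)$ of chains of windows $B_{\lambda,2,h}$. Iterated application of Lemma~\ref{bounds_for_B}(iv),(v) together with the $L$-function convolution bound in Lemma~\ref{properties_of_ell}(i) gives a factor $\lambda^{-2(q-1)}$ after summation over $\bm{k}_i,\bm{\ell}_i$; since the $r$-sums contribute an additional factor bounded uniformly because $J_0$ is bounded and $f$ is integrable against $r$, the $q$-th cumulant is $O(\lambda^{-(q-2)})\to 0$ for $q\ge 3$. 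The cumulant CLT then yields asymptotic normality, and Slutsky's lemma combined with $\tau_{2,2,\lambda,a}^2=\Theta(1)$ (shown analogously to Proposition~\ref{asymptotic_variances}) finishes the proof.

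\textbf{Main obstacle.} By far the hardest part is Step 1: the Riemann approximation over $r$ now carries a weight $\tilde{\omega}_{r,\lambda}$ and interacts with both the Bessel kernels and with the edge-effect convolution, so the three error sources (truncation of $r$, truncation of $\bm{k},\bm{\ell}$, and edge-effect bias) have to be simultaneously balanced against the $\lambda^{-1}$ normalization. This balance is exactly what condition \eqref{further_assumption_2} enforces, and the calculation relies critically on the twice differentiability of the taper ensured by Assumption~\ref{ass_on_h}(ii), via Proposition~\ref{Lemma F.2_SSR}; without the latter, the edge-effect bias alone would already exceed $1/\lambda$ and no nondegenerate limit would be available for $d=2$.
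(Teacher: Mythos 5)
Your overall route is the paper's route: split off the deterministic truncation error $D_{2,2}-D_{2,2,\lambda,a}$ (whose order, $\Landau(\lambda^{2+2\varepsilon}/a^{2+2\varepsilon}+\lambda^{2\delta-2}/a^{2\delta-2}+\lambda^{\delta}/a^{\delta})$, is exactly what condition \eqref{further_assumption_2} controls after multiplication by $\lambda$), show the stochastic bias is $o(1/\lambda)$ via Proposition \ref{Lemma F.2_SSR} and midpoint Riemann sums, identify the variance from the single-restriction pairings, and finish with a cumulant CLT. Steps 1 and 2 are essentially correct in this spirit (one small misattribution: the term $\lambda^{3+2\varepsilon}/a^{2+2\varepsilon}$ comes from the decay $c(\bm{h})=\Landau(\|\bm{h}\|^{-(2+\varepsilon)})$ of the covariance function via a Plancherel-type argument on the truncated $r$-integral, not from pointwise Bessel decay).

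There is, however, a genuine gap in Step 3. You claim that ``the $r$-sums contribute an additional factor bounded uniformly because $J_0$ is bounded and $f$ is integrable against $r$,'' and conclude $\cum_q = \Landau(\lambda^{-(q-2)})$. This is false in general. The weighted sum $\frac{1}{\lambda}\sum_{r=0}^{a-1}\tilde{\omega}_{r,\lambda}\,J_0(\tilde{\omega}_{r,\lambda}\|\tilde{\bm{\omega}}_{\bm{k}}\|)J_0(\tilde{\omega}_{r,\lambda}\|\tilde{\bm{\omega}}_{\bm{\ell}}\|)$ is only $\Landau(1)$ when at least one of the frequencies $\tilde{\bm{\omega}}_{\bm{k}},\tilde{\bm{\omega}}_{\bm{\ell}}$ is \emph{unrestricted} by the partition, so that the sum over it (weighted by $f$) can be converted into an integral over $\R^2$ and the $r$-integral then bounded by $\int_{\R^2}|c(\bm{y})|\,d\bm{y}$ (this is the content of Lemma \ref{lemma_for_cumulants} and equation \eqref{finite}); when both frequencies are tied down by the indecomposable partition, the only available bound is the crude $|J_0|\le 1$, $\tilde{\omega}_{r,\lambda}\le 2\pi a/\lambda$, which gives $\Landau(a^2/\lambda^2)$ — a factor that \emph{grows}. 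The paper's bookkeeping (Lemmas \ref{lemma1}, \ref{lemma2} and Proposition \ref{order_q=2_sec_int}) shows that at most $q-2$ of the $q$ rows can be fully restricted, so the $q$-th cumulant of $\lambda\hat{D}_{2,2,\lambda,a}$ is $\Landau\big((a^2/\lambda^3)^{q-2}\big)$, not $\Landau(\lambda^{-(q-2)})$; its convergence to zero for $q\ge 3$ requires Assumption \ref{assumptions_on_a}(iii), i.e. $a^2/\lambda^{3}\to 0$, which your argument never invokes. Without this counting of restricted versus unrestricted rows, the higher-order cumulants are not controlled and the CLT does not follow; the same issue, in milder form, also governs the error terms in your variance calculation in Step 2.
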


The main result of this section refers to the estimator 
\begin{align}
    \label{hol1}
\hat{M}_{2,\lambda,a}:=\hat{D}_{1,2,\lambda,a}-\hat{D}_{2,2,\lambda,a}
\end{align}
of $M_2=D_{1,2}-D_{2,2}$ and shows  shows that  a normalized version of $\hat{M}_{2,\lambda,a} - M_2$
converges weakly to a standard normal distribution.

\begin{theorem} \label{asymptotic_normality_M}
If the  assumptions of  Theorem \ref{asymptotic_normality_secint} are satisfied and conditions \eqref{further_ass_int_1} and \eqref{further_assumption_2} hold for $d=2$, then
\begin{align*}
\frac{\lambda}{\tau_{\lambda,a}}\big (\hat{M}_{2,\lambda,a}-M_2\big )\dn \mathcal{N}(0,1) \qquad \text{as } a,\lambda,n\rightarrow\infty,
\end{align*}
where the normalizing factor is given by
\begin{align} \label{det1}
\tau_{\lambda,a}^2:=\tau_{1,2,\lambda,a}^2+\tau_{2,2,\lambda,a}^2-2\, \kappa_{1,2,\lambda,a} ~
\end{align}
$\tau_{1,2,\lambda,a}^2$ and $\tau_{2,2,\lambda,a}^2$ are defined in \eqref{det3} and \eqref{det6}, respectively, and 
\begin{align} \label{covariance_of_M}
\kappa_{1,2,\lambda,a}&:=16\pi \sum_{\bm{m}=-2a+1}^{2a-1} \frac{H_{2,h}(\bm{m})^2}{H_{2,h}(\bm{0})^2} \int_{0}^{2\pi a/\lambda} r \Big[\int_{[-2\pi a/\lambda,2\pi a/\lambda]^2} f(\bm{y}) J_0(r\|\bm{y}\|)\, d\bm{y}\nonumber\\
&\phantom{========}  \times  \int_{2\pi\max(-a,-a-\bm{m})/\lambda}^{2\pi\min(a,a-\bm{m})/\lambda} f(\bm{x})^3 J_0(r\|\bm{x}\|) \,d\bm{x} \Big] \, dr.
\end{align}
\end{theorem}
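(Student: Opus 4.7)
The plan is to deduce the result from joint asymptotic normality of the pair
$\bigl(\lambda(\hat{D}_{1,2,\lambda,a}-D_{1,2}),\ \lambda(\hat{D}_{2,2,\lambda,a}-D_{2,2})\bigr)$
via the Cram\'er--Wold device and then to apply the continuous linear functional $(x,y)\mapsto x-y$. The one-dimensional marginal CLTs for the two coordinates are furnished by Theorems \ref{corr_first_int} and \ref{corr_sec_int} (in dimension $d=2$), so what remains is joint convergence together with identification of the cross-covariance term.

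First, for arbitrary $(\alpha_1,\alpha_2)\in\R^2$ I would set
\begin{align*}
T_{\lambda,a}:=\alpha_1\lambda\bigl(\hat{D}_{1,2,\lambda,a}-D_{1,2}\bigr)+\alpha_2\lambda\bigl(\hat{D}_{2,2,\lambda,a}-D_{2,2}\bigr),
\end{align*}
and establish $T_{\lambda,a}\dn \mathcal{N}(0,V)$ with $V=\alpha_1^2\tau_{1,2,\lambda,a}^2+\alpha_2^2\tau_{2,2,\lambda,a}^2+2\alpha_1\alpha_2\tilde\kappa_{\lambda,a}$, where $\tilde\kappa_{\lambda,a}:=\Cov(\lambda\hat{D}_{1,2,\lambda,a},\lambda\hat{D}_{2,2,\lambda,a})$, by the method of cumulants. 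Because both $\hat{D}_{1,2,\lambda,a}$ and $\hat{D}_{2,2,\lambda,a}$ are polynomial (of degree four and eight in $Z$, respectively), conditioning on the locations $\{\bm{s}_j\}$ and applying Isserlis' theorem reduces every cumulant of $T_{\lambda,a}$ to a sum of pair-partition diagrams indexed by sampling points. The $L$-function bookkeeping of Section \ref{sec_taper_proofs}, together with Lemma \ref{bounds_for_B} and Proposition \ref{Lemma F.2_SSR}, then bounds each diagram by products of $B_{\lambda,2,h}$-convolutions, giving exactly the same cumulant-decay rates used to prove the marginal CLTs. Hence all joint cumulants of $T_{\lambda,a}$ of order $\geq 3$ vanish asymptotically under Assumption \ref{assumptions_on_a}, \eqref{further_ass_int_1} and \eqref{further_assumption_2}.

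The key new computation is the covariance $\tilde\kappa_{\lambda,a}$. Writing out the product $\hat D_{1,2,\lambda,a}\,\hat D_{2,2,\lambda,a}$, conditioning on the locations and applying Wick's theorem, the relevant pairings are those that couple at least one $Z$-factor from $\hat{D}_{1,2,\lambda,a}$ with one from $\hat{D}_{2,2,\lambda,a}$; diagrams internal to either factor reproduce $\E[\hat{D}_{1,2,\lambda,a}]\cdot\E[\hat{D}_{2,2,\lambda,a}]$ and cancel. The surviving diagonal-block contribution introduces the factor $H_{2,h}(\bm{m})^2/H_{2,h}(\bm{0})^2$ (as in the variance computations leading to \eqref{det3} and \eqref{det6}), a frequency $\tilde{\bm\omega}_{\bm k,\lambda}$ inherited from $\hat{D}_{1,2,\lambda,a}$, and the Bessel weights $J_0(\tilde\omega_{r,\lambda}\|\cdot\|)$ inherited from $\hat{D}_{2,2,\lambda,a}$. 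Replacing the Riemann sums in $\bm{k},\bm{\ell},r$ by their integral limits via Proposition \ref{Lemma F.2_SSR} together with the midpoint correction afforded by the shifted frequencies \eqref{shifted_Four_freq}, and then discarding the diagonal corrections arising from restricting the index set to $\mathcal{E}$, identifies $\tilde\kappa_{\lambda,a}$ with $\kappa_{1,2,\lambda,a}$ in \eqref{covariance_of_M} up to $o(1)$. Specializing $(\alpha_1,\alpha_2)=(1,-1)$ then yields $V=\tau_{\lambda,a}^2$ of \eqref{det1}, and Slutsky's theorem concludes.

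The main obstacle is precisely this cross-covariance calculation: one must track a triple Riemann approximation in $\bm k,\bm\ell,r$ uniformly, control the Bessel-weighted edge-effect bias through Proposition \ref{Lemma F.2_SSR}(i) (which is why twice-differentiability of the taper in Assumption \ref{ass_on_h}(ii) is essential), and verify that the diagonal corrections from the $j_1,\ldots,j_8$-pairings with coinciding indices are negligible under the MID rate $\lambda^d/n\to 0$. All of these errors are absorbed by the same assumptions that power the marginal CLTs, so no additional conditions beyond those of Theorems \ref{corr_first_int} and \ref{corr_sec_int} are required.
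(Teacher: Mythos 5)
Your proposal is correct and follows essentially the same route as the paper: by multilinearity of cumulants, checking all cumulants of the linear combination $\alpha_1\lambda(\hat{D}_{1,2,\lambda,a}-D_{1,2})+\alpha_2\lambda(\hat{D}_{2,2,\lambda,a}-D_{2,2})$ amounts to exactly the mixed-cumulant bounds $\lambda^q\cum_{q-p,p}[\hat{D}_{1,2,\lambda,a},\hat{D}_{2,2,\lambda,a}]\to 0$ for $q\ge 3$ and the cross-covariance identification $\lambda^2\cum[\hat{D}_{1,2,\lambda,a},\hat{D}_{2,2,\lambda,a}]=\kappa_{1,2,\lambda,a}+o(1)$, which is precisely what the paper establishes (via the indecomposable-partition/restriction counting adapted to rows belonging to $\hat{D}_1$ versus $\hat{D}_2$, and the $K_1,\ldots,K_4$ error terms controlled by the Riemann and edge-effect lemmas). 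The Cram\'er--Wold wrapper is a cosmetic generalization; the substantive steps you identify — the cross-covariance diagram count yielding the $16\pi$ prefactor, the triple Riemann approximation, and the negligibility of coinciding-index corrections under the MID rate — are the same ones the paper carries out.
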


We conclude this section with a statement regarding the asymptotic behaviour of the normalizing factors 
$\tau_{1,2,\lambda,a}^2$, $\tau_{2,2,\lambda,a}^2$, $\kappa_{1,2,\lambda,a}$  and 
a representation of the limit of $\tau^2_{\lambda , a} $ under the null hypothesis of isotropy, which is subsequently used to define a consistent asymptotic level $\alpha$ test for the hypothesis  \eqref{null}.

\begin{prop} \label{asymptotic_variances}
If  Assumption \ref{ass_on_h} and \ref{assumption_on_Z} are satisfied 
and   $a/\lambda\to\infty$ as $\lambda,a\to\infty$, then 
\begin{align}
\label{(i)}
\tau_1^2 &:=\lim_{\lambda,a\to\infty} \tau_{1,2,\lambda,a}^2 = (2\pi)^2 \sum_{\bm{m}=-\infty}^{\infty} \Big ( \frac{8\, H_{2,h}(\bm{m})^2}{H_{2,h}(\bm{0})^2}+\frac{2\, H_{2,h}(\bm{m})^4}{H_{2,h}(\bm{0})^4}\Big ) \int_{\R^2}    f^4(\bm{\omega})\, d\bm{\omega}, \\
\label{(ii)}
\tau_2^2 &:=\lim_{\lambda,a\to\infty} \tau_{2,2,\lambda,a}^2 = 8 \sum_{\bm{m}=-\infty}^{\infty} \frac{H_{2,h}(\bm{m})^2}{H_{2,h}(\bm{0})^2} \int_{\R^2} f^2(\bm{z})\\
\nonumber 
&\phantom{=========}  \times \Big(\int_{0}^{\infty} \Big[\int_{\R^2} f(\bm{x})  J_0(r\|\bm{x}\|)  J_0(r \|\bm{z}\|)\, d\bm{x}\Big] \, r\, dr\Big)^2  \, d\bm{z}, \\
\label{(iii)}
\kappa_{1,2}&:=\lim_{\lambda,a\to\infty} \kappa_{1,2,\lambda,a} = 16\pi \sum_{\bm{m}=-\infty}^{\infty} \frac{H_{2,h}(\bm{m})^2}{H_{2,h}(\bm{0})^2} \int_{0}^{\infty} r \Big[\int_{\R^2} f(\bm{y}) J_0(r\|\bm{y}\|)\, d\bm{y}\\
\nonumber 
&\phantom{======================} \times 
\int_{\R^2} f(\bm{x})^3 J_0(r\|\bm{x}\|) \,d\bm{x} \Big] \, dr.
\end{align}
In particular, 
\begin{align} \label{asymptotic_variance}
 \tau^2:= \lim_{\lambda,a\to\infty}  \tau_{\lambda,a}^2 =
\tau_1^2 + \tau_2^2 - 2\kappa_{1,2},
\end{align}
and this expression simplifies  to 
\begin{align} \label{det7}
\tau_{\text{H}_0}^2= 2\,  (2\pi)^2 \sum_{\bm{m}=-\infty}^{\infty} \frac{H_{2,h}(\bm{m})^4}{H_{2,h}(\bm{0})^4} \int_{\R^2} f^4(\bm{\omega})\, d\bm{\omega}. 
\end{align}
under the null hypothesis of isotropy.
\end{prop}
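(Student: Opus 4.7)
The plan is to establish \eqref{(i)}--\eqref{(iii)} by dominated convergence applied jointly on the counting measure on $\Z^2$ and the Lebesgue measure on the frequency/radius variables, and then to derive the isotropy simplification \eqref{det7} by a short algebraic manipulation that uses zero-order Hankel transform identities.

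Two ingredients justify the dominated convergence. First, under Assumption \ref{ass_on_h} the product taper $h^2 = h_1^2 h_2^2$ is twice continuously differentiable with both $h_i^2$ and $(h_i^2)'$ vanishing at $\pm 1/2$; two integrations by parts in \eqref{H(m)} give $|H_{2,h}(\bm{m})| \leq C \prod_{i=1}^{2}(1+m_i^2)^{-1}$, so both $\sum_{\bm{m}\in\Z^2} H_{2,h}(\bm{m})^2/H_{2,h}(\bm{0})^2$ and $\sum_{\bm{m}\in\Z^2} H_{2,h}(\bm{m})^4/H_{2,h}(\bm{0})^4$ are absolutely summable. Second, Assumption \ref{assumption_on_Z}(ii) gives $f \leq \beta_{1+\delta}$ with $\delta>2$, so $f \in L^k(\R^2)$ for every $k \geq 1$ and in particular $\int_{\R^2} f^4 < \infty$. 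For each fixed $\bm{m}$, as $a,\lambda\to\infty$ with $a/\lambda\to\infty$, the truncation rectangles $\prod_{i=1}^2 [2\pi\max(-a,-a-m_i)/\lambda,\, 2\pi\min(a,a-m_i)/\lambda]$ and the radial interval $[0,2\pi a/\lambda]$ exhaust $\R^2$ and $[0,\infty)$ respectively, giving pointwise convergence of each summand; combined with the dominations above, Lebesgue's theorem establishes \eqref{(i)}--\eqref{(iii)}.

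For the null-hypothesis simplification, suppose $f(\bm{\omega}) = f_0(\|\bm{\omega}\|)$ and denote the zero-order Hankel transform $(\mathcal{H}g)(r) := 2\pi \int_0^\infty g(\rho) J_0(r\rho)\, \rho\, d\rho$. Polar coordinates give $\int_{\R^2} f(\bm{x}) J_0(r\|\bm{x}\|)\,d\bm{x} = (\mathcal{H}f_0)(r)$, and the analogous identity with $f_0^3$ in place of $f_0$. Hankel inversion and Hankel--Plancherel (each derived from two-dimensional Parseval applied to radial functions) give
\[
\int_0^\infty (\mathcal{H}f_0)(r)\, J_0(r\|\bm{z}\|)\, r\,dr = 2\pi f(\bm{z}), \qquad \int_0^\infty (\mathcal{H}f_0)(r)\, (\mathcal{H}f_0^3)(r)\, r\,dr = 2\pi \int_{\R^2} f^4.
\]
Substituting the first into \eqref{(ii)} yields $\tau_2^2 = 8(2\pi)^2 \sum_{\bm{m}} H_{2,h}(\bm{m})^2/H_{2,h}(\bm{0})^2 \int_{\R^2} f^4$, and substituting the second into \eqref{(iii)} yields $\kappa_{1,2} = 8(2\pi)^2 \sum_{\bm{m}} H_{2,h}(\bm{m})^2/H_{2,h}(\bm{0})^2 \int_{\R^2} f^4$. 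Hence $\tau_2^2 - 2\kappa_{1,2}$ exactly cancels the $H_{2,h}(\bm{m})^2/H_{2,h}(\bm{0})^2$-contribution to $\tau_1^2$ in \eqref{(i)}, leaving $\tau^2 = 2(2\pi)^2 \sum_{\bm{m}} H_{2,h}(\bm{m})^4/H_{2,h}(\bm{0})^4 \int_{\R^2} f^4 = \tau_{\text{H}_0}^2$.

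The main technical obstacle is producing a pointwise dominant, uniform in $(\lambda,a)$, for the summand of \eqref{(ii)}, since the naive bound $|J_0|\leq 1$ only gives boundedness and fails to control the $r$-integral on the expanding interval $[0,2\pi a/\lambda]$. My plan is to exploit the smoothness built into Assumption \ref{assumption_on_Z}(ii): applying the same Hankel reduction to the angular average $\bar f_0(\rho) := (2\pi)^{-1} \int_0^{2\pi} f(\rho\cos\theta,\rho\sin\theta)\,d\theta$ of $f$, two integrations by parts in $\rho$ show $(\mathcal{H}\bar f_0)(r) = \Landau(r^{-2})$ at infinity, making the $r$-integrand $\Landau(r^{-4})$ for large $r$ and $\Landau(r)$ for small $r$, which supplies the integrable-in-$r$ dominant. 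Combined with the $\bm{m}$-summability $\sum H_{2,h}(\bm{m})^2/H_{2,h}(\bm{0})^2 <\infty$ already established, this produces a global summable dominant and completes the dominated convergence passage.
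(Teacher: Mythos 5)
Your overall strategy coincides with the paper's: the limits \eqref{(i)}--\eqref{(iii)} are obtained there by an explicit error decomposition (splitting off the tail of the $\bm{m}$-sum using $|H_{1,h}(m)|\lesssim m^{-2}$ from integration by parts, and the truncation errors of the frequency and radius integrals), which is just a quantitative form of your dominated-convergence argument with the same dominants; and your Hankel-transform computation for \eqref{det7} is, up to the normalizing constant $2\pi$ in the definition of $\mathcal{H}$, identical to the paper's (including the inversion and Plancherel identities and the resulting cancellation $\tau_2^2=\kappa_{1,2}=8(2\pi)^2\sum_{\bm{m}}H_{2,h}(\bm{m})^2/H_{2,h}(\bm{0})^2\int f^4$). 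That part of your argument is correct as written.

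There is one concrete loose end in your domination step for \eqref{(ii)} (and the analogous term in \eqref{(iii)}). The quantity you need to dominate uniformly in $(a,\lambda)$ is not $\int_0^{2\pi a/\lambda}(\mathcal{H}\bar f_0)(r)J_0(r\|\bm{z}\|)\,r\,dr$ but rather the same expression with the inner integral truncated to $[-2\pi a/\lambda,2\pi a/\lambda]^2$; the integrand itself depends on $(a,\lambda)$, so your decay estimate for the full-plane Hankel transform does not directly apply. You need to split the truncated inner integral as (full integral) $-$ (tail), bound the tail uniformly in $r$ by $\int_{([-2\pi a/\lambda,2\pi a/\lambda]^2)^c}\beta_{1+\delta}(\bm{x})\,d\bm{x}=\Landau((\lambda/a)^{\delta})$, and note that against $\int_0^{2\pi a/\lambda}r\,dr=\Landau((a/\lambda)^2)$ this contributes $\Landau((\lambda/a)^{\delta-2})\to 0$ precisely because Assumption \ref{assumption_on_Z}(ii) imposes $\delta>2$ (this is the paper's term $K_4$). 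Once that is added, your argument closes; I would also note that the $r$-integrability of the full-plane transform is obtained more directly from Assumption \ref{assumption_on_Z}(i), since $\int_{\R^2}f(\bm{x})J_0(r\|\bm{x}\|)\,d\bm{x}$ equals $2\pi$ times the angular average of $c$ on the circle of radius $r$, so that $\int_0^\infty|\cdot|\,r\,dr\lesssim\int_{\R^2}|c(\bm{y})|\,d\bm{y}<\infty$ and the tail is $\Landau((\lambda/a)^{\varepsilon})$, without any integration by parts against Bessel kernels.
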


By Theorem \ref{asymptotic_normality_M} and Proposition \ref{asymptotic_variances} we have 
$
\frac{\lambda}{\tau} (\hat{M}_{2,\lambda,a}-M_2 ) \dn \mathcal{N}(0,1) 
$
as $ a,\lambda,n\to\infty,$ where $\tau^2 $ is defined in 
\eqref{asymptotic_variance}, which simplifies to the expression $\tau_{\text{H}_0}^2$ in \eqref{det7} under the null hypothesis of isotropy. Therefore,  an asymptotic level $\alpha$ test for the hypotheses of isotropy ($M_2=0$) is obatained by  by rejecting the null hypothesis  for large  
values of the statistic 
$
\frac{\lambda}{\hat{\tau}_{\text{H}_0,\lambda,a}} \,\hat{M}_{2,\lambda,a}~,
$
where 
$\hat{\tau}^2_{\text{H}_0,\lambda,a} $ is an appropriate estimator of the asymptotic variance  $\tau_{\text{H}_0}^2$ under the null hypothesis of isotropy.  The final result of this section defines such an estimator and establishes its consistency.
For its formulation recall the definition 
$I_{n,\lambda,d,h}(\bm{\omega})$ of  the tapered spatial periodogram in  \eqref{periodogram}. 

\begin{theorem} \label{estimator_variance_H0}
Let Assumption \ref{ass_on_h}, Assumption \ref{assumption_on_Z}, Assumption \ref{assumption_on_sampling_scheme}, and Assumption \ref{assumptions_on_a}, part (i), hold true. Then, it follows that
\begin{align} \label{det8}
\hat{F}_{\lambda,a}:= \frac{1}{24} \Big (\frac{2\pi}{\lambda}\Big )^2 \sum_{\bm{k}=-a}^{a-1} I_{n,\lambda,2,h}(\tilde{\bm{\omega}}_{\bm{k},\lambda})^4 \prob \int_{\R^2} f^4(\bm{\omega})\, d\bm{\omega} \qquad \text{as } a,\lambda,n\to\infty.
\end{align}
In particular, under the null hypothesis of isotropy, the statistic
\begin{align} \label{var_estimate}
\hat{\tau}_{\text{H}_0,\lambda,a}^2 = 2\, (2\pi)^2 \hat{F}_{\lambda,a} \sum_{\bm{m}=-a}^{a-1} \frac{ H_{2,h}(\bm{m})^4}{H_{2,h}(\bm{0})^4}
\end{align}
is a consistent estimator of ${\tau}_{\text{H}_0}^2$.
\end{theorem}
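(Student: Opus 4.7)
The statement consists of two claims: consistency of $\hat{F}_{\lambda,a}$ for $\int_{\R^2} f^4(\bm{\omega})\, d\bm{\omega}$, and, as a consequence, consistency of $\hat{\tau}_{\text{H}_0,\lambda,a}^2$ for $\tau_{\text{H}_0}^2$. My plan is to handle the first via a standard first/second moment argument (convergence of the mean to the target and vanishing of the variance, followed by Chebyshev), and then to deduce the second via Slutsky together with a summability argument on the sequence $H_{2,h}(\bm{m})^4/H_{2,h}(\bm{0})^4$.

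For the expectation of $\hat{F}_{\lambda,a}$, I would first rewrite $I_{n,\lambda,2,h}(\tilde{\bm{\omega}}_{\bm{k},\lambda})^4 = |J_{n,\lambda,2,h}(\tilde{\bm{\omega}}_{\bm{k},\lambda})|^{8}$ and compute its expectation by the product-to-cumulant formula, applying the law of total cumulance and conditioning on the sampling locations exactly as in Example \ref{ex:period}. Since $Z$ is Gaussian by Assumption \ref{assumption_on_Z}, conditional cumulants of $J$ of order larger than two vanish, so $\E[|J(\bm{\omega})|^{8}]$ decomposes into a combinatorial sum over pairings of the four $J$'s and four $\bar{J}$'s, each pairing contributing a product of four second-order cumulants. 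The dominant contribution comes from the $4!=24$ pairings matching each $J$ to a $\bar{J}$, and each such pairing produces $\cum(J,\bar{J}) = f(\bm{\omega}) + o(1)$ by \eqref{eq:mom1} and \eqref{eq:cumJ2}; the pairings that involve $\cum(J,J)$ are of lower order because they carry an extra window factor $B_{\lambda,2,h}(2\tilde{\bm{\omega}}_{\bm{k},\lambda})$ that decays away from the origin and which is controlled by Lemma \ref{bounds_for_B}(i) together with Proposition \ref{Lemma F.2_SSR}. This yields $\E[I^4(\tilde{\bm{\omega}}_{\bm{k},\lambda})] = 24\, f^4(\tilde{\bm{\omega}}_{\bm{k},\lambda}) + r(\lambda,\bm{k})$ with $(2\pi/\lambda)^2 \sum_{\bm{k}} |r(\lambda,\bm{k})| \to 0$. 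The factor $1/24$ cancels the combinatorial constant, the weight $(2\pi/\lambda)^2$ together with the shifted frequencies turns the main term into a midpoint Riemann approximation on $[-2\pi a/\lambda,2\pi a/\lambda]^2$, and dominated convergence (using the decay bound on $f$ from Assumption \ref{assumption_on_Z}(ii), which ensures $f^4 \in L^1(\R^2)$) delivers $\E[\hat{F}_{\lambda,a}] \to \int_{\R^2} f^4(\bm{\omega})\, d\bm{\omega}$.

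For the variance I would start from
\begin{align*}
\Var(\hat{F}_{\lambda,a}) = \frac{1}{24^2}\Big(\frac{2\pi}{\lambda}\Big)^{4} \sum_{\bm{k},\bm{k}'=-a}^{a-1} \Cov\big(I^4(\tilde{\bm{\omega}}_{\bm{k},\lambda}),\, I^4(\tilde{\bm{\omega}}_{\bm{k}',\lambda})\big)
\end{align*}
and decompose each covariance into a sum over connected cumulant diagrams built from the sixteen $J, \bar J$ factors, using the moment-cumulant relation and Gaussianity of $Z$. Each connected diagram must contain at least one second-order cumulant linking $\tilde{\bm{\omega}}_{\bm{k},\lambda}$ and $\tilde{\bm{\omega}}_{\bm{k}',\lambda}$, producing a cross-frequency window factor of the form $B_{\lambda,2,h}(\tilde{\bm{\omega}}_{\bm{k},\lambda} \pm \tilde{\bm{\omega}}_{\bm{k}',\lambda})/\lambda^2$; all remaining factors are controlled by the bounds on $f$ from Assumption \ref{assumption_on_Z}. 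The diagonal $\bm{k}=\bm{k}'$ contribution is bounded by a constant per term, and the off-diagonal contributions are handled by repeated application of Lemma \ref{bounds_for_B}(iv)--(v), which turns the double sum over the shifted Fourier frequencies into a convergent quantity. Combined with Assumption \ref{assumption_on_sampling_scheme} and Assumption \ref{assumptions_on_a}(i) this yields $\Var(\hat{F}_{\lambda,a}) \to 0$, and Chebyshev's inequality then gives $\hat{F}_{\lambda,a}\prob\int_{\R^2} f^4(\bm{\omega})\, d\bm{\omega}$, which is \eqref{det8}.

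Finally, for \eqref{var_estimate}, the deterministic partial sum $\sum_{\bm{m}=-a}^{a-1} H_{2,h}(\bm{m})^4/H_{2,h}(\bm{0})^4$ converges to $\sum_{\bm{m}\in\Z^2} H_{2,h}(\bm{m})^4/H_{2,h}(\bm{0})^4$ by dominated convergence: Assumption \ref{ass_on_h}(ii), combined with two integrations by parts in the defining integral \eqref{H(m)}, implies that $|H_{2,h}(\bm{m})|$ decays coordinatewise at least as $1/(1+\|\bm{m}\|)^2$, so $|H_{2,h}(\bm{m})|^4$ is absolutely summable on $\Z^2$. Slutsky's theorem then combines this deterministic convergence with the consistency of $\hat{F}_{\lambda,a}$ to yield $\hat{\tau}_{\text{H}_0,\lambda,a}^2 \prob \tau_{\text{H}_0}^2$. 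The main obstacle is the variance bound in the previous paragraph: it requires a careful enumeration of the connected cumulant diagrams contributing to $\Cov(I^4(\tilde{\bm{\omega}}_{\bm{k}}),I^4(\tilde{\bm{\omega}}_{\bm{k}'}))$ and systematic use of the $L$-function bounds in Lemma \ref{bounds_for_B} to tame the resulting double sum over shifted Fourier frequencies in the MID regime.
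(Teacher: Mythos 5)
Your overall strategy coincides with the paper's proof: a first/second-moment argument plus Chebyshev for \eqref{det8}, with the expectation computed by the Gaussian pairing (cumulant) decomposition in which the $4!=24$ pairings of $J$'s with $\bar J$'s dominate while restricted pairings carry a decaying window factor of the form $B_{\lambda,2,h}(\bm{\omega}_{2\bm{k}+1,\lambda})$ controlled by Lemma \ref{bounds_for_B}; the variance bounded via indecomposable partitions of a $2\times 8$ table and the $L$-function estimates; and \eqref{var_estimate} obtained from the summability of $H_{2,h}(\bm{m})^4$ (integration by parts under Assumption \ref{ass_on_h}(ii)) together with Slutsky. The paper additionally makes explicit the split of the eightfold index sum into the part with pairwise different $j_1,\ldots,j_8$ and the remainder, showing the latter has expectation $\Landau(\lambda^2/n)$; this is the formal version of your appeal to Example \ref{ex:period}.

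One step would fail as literally written: in the variance bound you dispose of the diagonal by asserting that the $\bm{k}=\bm{k}'$ contribution is ``bounded by a constant per term.'' There are of order $a^2$ diagonal terms and the prefactor is $(2\pi/\lambda)^4$, so a uniform constant bound only yields $\Landau(a^2/\lambda^4)$, and under the stated hypotheses (only Assumption \ref{assumptions_on_a}(i), i.e.\ $a/\lambda\to\infty$ and $a^2/n=\Landau(1)$) this need not tend to zero --- take $a=\lambda^3$, $n=\lambda^6$. The repair uses exactly the ingredient you invoke elsewhere: the same pairing count gives $\Var\big(I_{n,\lambda,2,h}(\tilde{\bm{\omega}}_{\bm{k},\lambda})^4\big)\lesssim f^8(\tilde{\bm{\omega}}_{\bm{k},\lambda})$ up to negligible terms, and since $f\le\beta_{1+\delta}$ with $\delta>2$ the normalized sum $(2\pi/\lambda)^2\sum_{\bm{k}} f^8(\tilde{\bm{\omega}}_{\bm{k},\lambda})$ stays bounded, so the diagonal in fact contributes $\Landau(\lambda^{-2})$. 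This is precisely how the paper's computation of $\lambda^2\Var\big[\hat F_{\lambda,a}\big]$ absorbs the $\bm{m}=\bm{0}$ term of the restricted frequency sum.
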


The above results give rise to an asymptotic level $\alpha$-test for isotropy: We reject the null hypothesis whenever
\begin{align}
\label{test_stat_finite_sample_prop}
\frac{\lambda}{\hat{\tau}_{\text{H}_0,\lambda,a}} \,\hat{M}_{2,\lambda,a} > z_{1-\alpha},
\end{align}
where  $z_{1-\alpha}$ is  the $(1-\alpha)$-quantile of the standard normal distribution.

\section{Finite sample properties} \label{sec5}
\renewcommand{\theequation}{\thesection.\arabic{equation}}
\setcounter{equation}{0}

In this section, we present a  simulation study in which we investigate the finite sample performance of the test \eqref{test_stat_finite_sample_prop}. We start by introducing  slight modifications  of the estimators $
\hat{D}_{1,2,\lambda,a}$, $
\hat{D}_{2,2,\lambda,a}$, and $\hat{\tau}_{\text{H}_0,\lambda,a}^2$ in \eqref{est_F}, \eqref{est_F_2}, and \eqref{var_estimate}, respectively, which are easier to implement, and show that these modifications do  not change the asymptotic properties of the resulting test statistics.
Proofs of these results can be found in the Appendix.

For the statistic $\hat{D}_{1,2,\lambda,a}$, 
a computationally more tractable and asymptotically equivalent statistic is obtained by 
replacing the set  
$\mathcal{E}$ 
for the summation in \eqref{est_F} by the set  
\begin{align*}
\tilde{\mathcal{E}}:=\{(j_1,j_2,j_3,j_4)\in\{1,\ldots,n\}^4: j_1 \neq j_2, j_1 \neq j_4, j_2 \neq j_3, j_3\neq j_4\},
\end{align*}
which gives 
\begin{align*}
\hat{D}_{1,\lambda,a}^{\text{eff}}&:=\frac{(2\pi\lambda)^2}{2n^4 H_{2,h}(\bm{0})^2} \sum_{\bm{k}=-a}^{a-1} \sum_{(j_1,j_2,j_3,j_4)\in\tilde{\mathcal{E}}} h\Big (\frac{\bm{s}_{j_1}}{\lambda}\Big) \ldots h\Big (\frac{\bm{s}_{j_4}}{\lambda}\Big ) Z(\bm{s}_{j_1}) \ldots Z(\bm{s}_{j_4}) 
 \\ &\phantom{=================} \times
\exp\big(\im (\bm{s}_{j_1}-\bm{s}_{j_2}+\bm{s}_{j_3}-\bm{s}_{j_4})^T \tilde{\bm{\omega}}_{\bm{k},\lambda}\big).
\end{align*}
The next proposition shows how one can rewrite this estimator in such a way that eases its implementation. 
\begin{prop} \label{prop_umschreiben_D1}
It holds that
\begin{align*}
\hat{D}_{1,\lambda,a}^{\mathrm{eff}}& = \frac{(2\pi\lambda)^2}{2n^4 H_{2,h}(\bm{0})^2} \sum_{\bm{k}=-a}^{a-1} \Big \{  \Big (\Big |\sum_{j=1}^n h\Big (\frac{\bm{s}_j}{\lambda}\Big ) Z(\bm{s}_j) \exp(\im \bm{s}_j^T \tilde{\bm{\omega}}_{\bm{k},\lambda})\Big |^2 - \sum_{j=1}^n h^2\big(\frac{\bm{s}_j}{\lambda}\big) Z^2(\bm{s}_j)\Big )^2 \\
&\phantom{====}- \sum_{j_1=1}^n h^2\Big (\frac{\bm{s}_{j_1}}{\lambda}\Big ) Z^2(\bm{s}_{j_1}) \Big | \sum_{j\neq j_1} h\big(\frac{\bm{s}_j}{\lambda}\big) Z(\bm{s}_j) \exp(\im \bm{s}_j^T \tilde{\bm{\omega}}_{\bm{k},\lambda}) \Big |^2\\
&\phantom{====}-\sum_{j_1=1}^n h^2\Big (\frac{\bm{s}_{j_1}}{\lambda}\Big ) Z^2(\bm{s}_{j_1}) \Big (\Big |\sum_{j\neq j_1} h\big(\frac{\bm{s}_j}{\lambda}\big) Z(\bm{s}_j)\exp(\im \bm{s}_j^T \tilde{\bm{\omega}}_{\bm{k},\lambda})\Big |^2 - \sum_{j\neq j_1} h^2\big(\frac{\bm{s}_j}{\lambda}\big) Z^2(\bm{s}_j)\Big ) \Big \} .
\end{align*}
\end{prop}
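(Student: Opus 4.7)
The plan is to prove the claimed identity by a purely combinatorial inclusion–exclusion argument on the index set $\tilde{\mathcal{E}}$. To ease notation, for each Fourier frequency $\tilde{\bm{\omega}}_{\bm{k},\lambda}$ I would introduce the shorthand
\[
b_j(\bm{k}) := h(\bm{s}_j/\lambda)\,Z(\bm{s}_j)\,\exp(\im \bm{s}_j^T \tilde{\bm{\omega}}_{\bm{k},\lambda}),
\qquad
c_j := h^2(\bm{s}_j/\lambda)\,Z^2(\bm{s}_j) = |b_j(\bm{k})|^2,
\]
so that the inner sum over $\tilde{\mathcal{E}}$ in the definition of $\hat{D}_{1,\lambda,a}^{\mathrm{eff}}$ becomes
$\sum_{(j_1,j_2,j_3,j_4)\in\tilde{\mathcal{E}}} b_{j_1}(\bm{k})\,\overline{b_{j_2}(\bm{k})}\,b_{j_3}(\bm{k})\,\overline{b_{j_4}(\bm{k})}$.

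My first step is to remove the constraints $j_1 \neq j_4$ and $j_2 \neq j_3$ by inclusion–exclusion. With only the constraints $j_1 \neq j_2$ and $j_3 \neq j_4$ imposed, the sum factorises and gives
\[
\sum_{j_1 \neq j_2,\; j_3 \neq j_4} b_{j_1}\overline{b_{j_2}} b_{j_3}\overline{b_{j_4}} = \Big(\,\Big|\textstyle\sum_{j} b_j(\bm{k})\Big|^2 - \sum_j c_j\Big)^2,
\]
which produces the first term in the proposition. Next, I would compute the two ``extra'' terms that have to be subtracted. Setting $j_1 = j_4$ (while keeping $j_1\neq j_2$, $j_3\neq j_4$) uses $b_{j_1}\overline{b_{j_1}}=c_{j_1}$ and yields
\[
A := \sum_{j_1} c_{j_1}\,\Big|\textstyle\sum_{j\neq j_1} b_j(\bm{k})\Big|^2,
\]
and the same quantity arises when instead $j_2 = j_3$. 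Finally the intersection $j_1=j_4$ and $j_2=j_3$ gives
\[
C := \sum_{j_1\neq j_2} c_{j_1} c_{j_2} = \sum_{j_1} c_{j_1} \sum_{j\neq j_1} c_j.
\]
By inclusion–exclusion the sum over $\tilde{\mathcal{E}}$ therefore equals
$\big(|\sum_j b_j|^2 - \sum_j c_j\big)^2 - 2A + C$.

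To match the form printed in the proposition I would rewrite $-2A + C$ as $-A - (A - C)$ and observe that
$A - C = \sum_{j_1} c_{j_1}\big(\big|\sum_{j\neq j_1}b_j(\bm{k})\big|^2 - \sum_{j\neq j_1} c_j\big)$,
which is exactly the third bracket in the statement. Summing over $\bm{k}$ and inserting the normalising constant completes the identity. The argument is entirely combinatorial; the only point requiring care is the bookkeeping of which diagonals among the four indices appear with what multiplicities, and the recognition that setting $j_1=j_4$ and setting $j_2=j_3$ produce the same expression $A$ (up to relabelling), so that both corrections collapse into a single term of the form $-\sum_{j_1}c_{j_1}|\sum_{j\neq j_1}b_j(\bm{k})|^2$ minus the already corrected intersection.
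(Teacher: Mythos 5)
Your proposal is correct and follows essentially the same combinatorial route as the paper: both reduce the sum over $\tilde{\mathcal{E}}$ to the unconstrained-looking sum over $\{j_1\neq j_2,\,j_3\neq j_4\}$ and then correct for the diagonals $j_1=j_4$ and $j_2=j_3$. The only cosmetic difference is that the paper organizes the correction as a disjoint decomposition (subtracting the cases $j_2=j_3$ and $j_1=j_4,\ j_2\neq j_3$ separately, which lands directly on the three printed brackets), whereas you use standard inclusion--exclusion giving $-2A+C$ and then regroup as $-A-(A-C)$; the two bookkeepings are algebraically identical.
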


 The following result then justifies using the estimator $\hat{D}_{1,\lambda,a}^{\text{eff}}$ instead of $D_{1,2,\lambda,a}$.

\begin{prop} \label{prop_eff_D1}
Under the conditions of Theorem \ref{corr_first_int}  (with $d=2$), we have
\begin{align*}
\frac{\lambda}{\tau_{1,2,\lambda,a}} \big (\hat{D}_{1,\lambda,a}^{\mathrm{eff}}- D_{1,2 } \big ) \dn \mathcal{N}(0,1) \qquad \text{as } a,\lambda,n\rightarrow \infty,
\end{align*}
where $D_{1,2}$ 
 and $
\tau_{1,2,\lambda,a}$ are  defined in
\eqref{D_1} and  \eqref{det3}, respcetively.
\end{prop}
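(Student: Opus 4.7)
The plan is to reduce the claim to Theorem \ref{corr_first_int}. By Proposition \ref{asymptotic_variances}, $\tau_{1,2,\lambda,a}$ converges to a positive limit, so it suffices to show that the residual $R:=\hat{D}_{1,\lambda,a}^{\mathrm{eff}}-\hat{D}_{1,2,\lambda,a}$ satisfies $\lambda R \cp 0$; Theorem \ref{corr_first_int} together with Slutsky's theorem then transfers the Gaussian limit from $\hat{D}_{1,2,\lambda,a}$ to $\hat{D}_{1,\lambda,a}^{\mathrm{eff}}$.

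The residual $R$ is a sum over $\tilde{\mathcal{E}}\setminus\mathcal{E}$, and since $\tilde{\mathcal{E}}$ merely relaxes the constraints $j_1\neq j_3$ and $j_2\neq j_4$ in $\mathcal{E}$, it decomposes into three disjoint collision regimes: (i) $j_1=j_3$ with $j_2\neq j_4$; (ii) $j_2=j_4$ with $j_1\neq j_3$; (iii) $j_1=j_3$ and $j_2=j_4$; in each case subject to the remaining $\tilde{\mathcal{E}}$-constraints. These contain $O(n^3)$, $O(n^3)$ and $O(n^2)$ tuples, respectively, compared with $|\mathcal{E}|=O(n^4)$; under the $n^{-4}$ normalization each sub-sum therefore carries an additional factor of $1/n$ or $1/n^2$ relative to the main term.

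For each regime I would compute the mean and variance of the corresponding sub-sum. Gaussianity reduces $\E_Z[\,\cdot\mid\{\bm{s}_j\}]$ to sums of products of $c(\cdot)$ by Isserlis' theorem, now with pairings that include factors $Z(\bm{s}_{j_1})^2$ or $Z(\bm{s}_{j_2})^2$; writing $c$ as the inverse Fourier transform of $f$ and using $\E[h(\bm{s}_j/\lambda)\,e^{\im\bm{s}_j^T\bm{u}}]=\lambda^{-d}B_{\lambda,d,h}(\bm{u})$ (with $h^2$ in place of $h$ for the collapsed indices) reduces everything to convolutions of $B_{\lambda,d,h}$ and $B_{\lambda,d,h^2}$ evaluated at shifted Fourier frequencies. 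Lemma \ref{bounds_for_B}(ii)--(v) and the $L$-function estimates of Lemma \ref{properties_of_ell} then control these convolutions, and converting the $\bm{k}$-sum to its Riemann-integral approximation (legitimate under Assumption \ref{assumptions_on_a}) supplies a factor $\lambda^2$ rather than $a^2$. A careful count yields $\E[|R|]=O\!\big(1/(n\lambda^2)+1/n^2\big)$ and $\Var(R)=o(\lambda^{-2})$ under Assumption \ref{assumption_on_sampling_scheme}(ii), i.e.\ $\lambda^2/n\to 0$, and Chebyshev's inequality then gives $\lambda R \cp 0$.

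The main obstacle is the variance bookkeeping in cases (i) and (ii), because their cumulant expansions produce mixed sums involving $B_{\lambda,d,h^2}$ at frequencies of the form $2\tilde{\bm{\omega}}_{\bm{k},\lambda}$ or $2\tilde{\bm{\omega}}_{\bm{k},\lambda}-\tilde{\bm{\omega}}_{\bm{\ell},\lambda}$, which are not literally covered by Lemma \ref{bounds_for_B}. The resolution is to observe that $h^2$ satisfies Assumption \ref{ass_on_h} whenever $h$ does, so every bound in Section \ref{sec_taper_proofs} applies verbatim with $h$ replaced by $h^2$; no fundamentally new ingredient is needed beyond the $L$-function machinery already developed.
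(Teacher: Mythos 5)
Your proposal is correct in its essential mechanism and matches the paper's key insight: the only tuples added by passing from $\mathcal{E}$ to $\tilde{\mathcal{E}}$ are collisions $j_1=j_3$ or $j_2=j_4$, and since these indices carry frequencies of the \emph{same} sign, any pairing that puts them in one set forces a frequency restriction worth an extra factor $\lambda^{-d}$, which cancels the generic $\lambda^{d}/n$ cost of a collision and leaves the bias of the residual at $O(1/n)=o(\lambda^{-1})$. Where you diverge from the paper is in the treatment of the second-order structure: you propose to bound $\E[|R|]$ and $\Var(R)$ for the residual $R=\hat{D}_{1,\lambda,a}^{\mathrm{eff}}-\hat{D}_{1,2,\lambda,a}$ and conclude $\lambda R\cp 0$ via Chebyshev and Slutsky, whereas the paper observes that the cumulant bounds of Theorem \ref{asymptotic_normality} (equation \eqref{cumulant_orders}) were derived without ever using the pairwise-distinctness of $j_1,\ldots,j_4$, so they apply verbatim to $\hat{D}_{1,\lambda,a}^{\mathrm{eff}}$; consequently the variance and all higher cumulants of $\hat{D}^{\mathrm{eff}}$ already coincide with those of $\hat{D}_{1,2,\lambda,a}$ up to $o(1)$, and only the expectation of $R$ requires a separate collision analysis. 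The paper's route is more economical, as it avoids the variance bookkeeping for $R$ that you flag as the main obstacle; your route is workable but makes you re-derive bounds (with $B_{\lambda,d,h^2}$ in place of $B_{\lambda,d,h}$) that the paper gets for free. Your observation that $h^2$ inherits Assumption \ref{ass_on_h} from $h$ is exactly the resolution the paper uses (via the window $\tilde{B}$ in the proof of Lemma \ref{order_depending_on_number_of_restr}). One small inaccuracy: your claimed rate $\E[|R|]=O(1/(n\lambda^2)+1/n^2)$ is too optimistic — when the two colliding indices land in \emph{different} sets of the Gaussian pairing there is no same-sign restriction and the contribution is only $O(1/n)$, which is the rate the paper records; this does not harm the conclusion, since $\lambda/n\to 0$ follows from Assumption \ref{assumption_on_sampling_scheme}(ii).
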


Similarly, we can obtain a  computationally more efficient estimator of $\hat{D}_{2,\lambda,a}$ by replacing it with 
\begin{align*}
\hat{D}_{2,\lambda,a}^{\text{eff}}&:=\frac{1}{\lambda} \sum_{r=0}^{a-1} \tilde{\omega}_{r,\lambda} \Big[\Big (\frac{2\pi}{\lambda}\Big )^4 \sum_{\bm{k}=-a}^{a-1} \sum_{\bm{\ell}=-a}^{a-1} \frac{\lambda^4}{n^4 H_{2,h}(\bm{0})^2} \sum_{(j_1,j_2,j_3,j_4)\in \tilde{\tilde{\mathcal{E}}}} h\Big (\frac{\bm{s}_{j_1}}{\lambda}\Big )\ldots h\Big (\frac{\bm{s}_{j_4}}{\lambda}\Big ) Z(\bm{s}_{j_1}) \ldots Z(\bm{s}_{j_4})  \\
& ~~~~~~~~~~~~~~~  \times  \exp\big(\im (\bm{s}_{j_1}-\bm{s}_{j_2})^T \tilde{\bm{\omega}}_{\bm{k},\lambda}\big) \exp\big(\im(\bm{s}_{j_3}-\bm{s}_{j_4})^T \tilde{\bm{\omega}}_{\bm{\ell},\lambda}\big) J_0(\tilde{\omega}_{r,\lambda} \|\tilde{\bm{\omega}}_{\bm{k},\lambda}\|) J_0(\tilde{\omega}_{r,\lambda} \|\tilde{\bm{\omega}}_{\bm{\ell},\lambda}\|) \Big],
\end{align*}
where
\begin{align*}
\tilde{\tilde{\mathcal{E}}}:= \{(j_1,j_2,j_3,j_4)\in\{1,\ldots,n\}^4\, : \, j_1\neq j_2, j_3\neq j_4\}.
\end{align*}
In this case, it is easily seen that an efficient implementation simply requires one to note that we can write 
\begin{align}
\hat{D}_{2,\lambda,a}^{\text{eff}}&=
\label{alternative_Darstellung_D2_hat}
 \frac{(2\pi)^4}{\lambda} \sum_{r=0}^{a-1} \tilde{\omega}_{r,\lambda} \, \hat{c_0}^2(\tilde{\omega}_{r,\lambda}),
\end{align}
where 
\begin{align}
\hat{c_0} (\tilde{\omega}_{r,\lambda}) & = \sum_{\bm{k}=-a}^{a-1} \frac{1}{n^2 H_{2,h}(\bm{0})} \sum_{\substack{j_1,j_2=1 \\ j_1\neq j_2}}^n h\Big (\frac{\bm{s}_{j_1}}{\lambda}\Big ) Z(\bm{s}_{j_1})\, h\Big (\frac{\bm{s}_{j_2}}{\lambda}\Big ) Z(\bm{s}_{j_2}) \\
&   ~~~~~~~~~~~~~~~~~~~~~~~~~~~~~~~~~~~~~
\times \exp\big(\im (\bm{s}_{j_1}-\bm{s}_{j_2})^T \tilde{\bm{\omega}}_{\bm{k},\lambda}\big) J_0(\tilde{\omega}_{r,\lambda} \|\tilde{\bm{\omega}}_{\bm{k},\lambda}\|) ~. 
\label{det100}
\end{align}
 The next result establishes that $\hat{D}_{2,\lambda,a}^{\mathrm{eff}}$ has the same limiting distributional properties as  $\hat{D}_{2,\lambda,a}$, and thus that we can use the former to replace the latter.
\begin{prop} \label{prop_eff_D2}
Under the assumptions of Theorem \ref{corr_sec_int} we have
\begin{align*}
\frac{\lambda}{\tau_{2,2,\lambda,a}} \big (\hat{D}_{2,\lambda,a}^{\mathrm{eff}}- D_{2,2}\big ) \dn \mathcal{N}(0,1) \qquad \text{as } a,\lambda,n\rightarrow \infty,
\end{align*}
where $
D_{2,2,}$ and $
\tau_{2,2,\lambda,a}$ are defined in \eqref{D_22} and \eqref{det6}, respectively.
\end{prop}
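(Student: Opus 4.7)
Since Theorem \ref{corr_sec_int} already establishes that $\frac{\lambda}{\tau_{2,2,\lambda,a}}(\hat{D}_{2,2,\lambda,a}-D_{2,2})$ converges weakly to a standard normal distribution and $\tau_{2,2,\lambda,a}$ tends to a positive finite constant by Proposition \ref{asymptotic_variances}, the plan is to reduce the proposition to showing the remainder
\begin{align*}
R_{\lambda,a} := \hat{D}_{2,\lambda,a}^{\mathrm{eff}} - \hat{D}_{2,2,\lambda,a}
\end{align*}
is asymptotically negligible after multiplication by $\lambda$, i.e.\ $\lambda R_{\lambda,a}\cp 0$, and then invoke Slutsky's lemma. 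The only difference between the two estimators is the index set in the inner sum: $\tilde{\tilde{\mathcal{E}}}$ only forbids $j_1=j_2$ and $j_3=j_4$, whereas $\mathcal{E}$ requires all four indices pairwise distinct. Thus $R_{\lambda,a}$ is a finite sum of contributions over the ``cross-diagonal'' configurations $\{j_1=j_3\},\{j_1=j_4\},\{j_2=j_3\},\{j_2=j_4\}$ and their intersections (with $j_1\ne j_2,\,j_3\ne j_4$ retained).

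I would split $R_{\lambda,a}$ into the two generic blocks produced by inclusion--exclusion: (a) single-coincidence blocks with three free sampling indices (say $j_1=j_3$), and (b) double-coincidence blocks with only two free sampling indices (e.g.\ $j_1=j_3,\, j_2=j_4$). For block (a), conditional on the locations the integrand becomes $h^2(\bm{s}_{j_1}/\lambda)Z^2(\bm{s}_{j_1})\, h(\bm{s}_{j_2}/\lambda)h(\bm{s}_{j_4}/\lambda)Z(\bm{s}_{j_2})Z(\bm{s}_{j_4})$ times exponentials that recombine into Bessel-weighted DFT products. Taking expectations and using Gaussianity through the product theorem for cumulants, the contribution to the mean factors as a sum of products of (integrated) frequency windows of the form $B_{\lambda,2,h}$ convolved with $f$, on which I can apply Lemma \ref{bounds_for_B} and Proposition \ref{Lemma F.2_SSR}. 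The key accounting is: the three-index sum gives $O(n^3)$ terms against the $n^{-4}$ normalization and thus a factor $n^{-1}$, while the Bessel-Fourier summations together with the $r$-sum are controlled exactly as in the proof of Theorem \ref{corr_sec_int}. For block (b), one loses another factor $n^{-1}$ and the exponentials degenerate, yielding an even smaller contribution.

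The main obstacle will be to verify that the bookkeeping of prefactors produces $\E[\lambda R_{\lambda,a}]=o(1)$ and $\mathrm{Var}[\lambda R_{\lambda,a}]=o(1)$. Concretely, block (a) yields an expected value of order $\lambda\cdot n^{-1}$ (after the sums over $\bm{k},\bm{\ell},r$ are absorbed by the $L$-function bounds in Lemma \ref{bounds_for_B} parts (ii)--(iii) and the smoothing bound in Proposition \ref{Lemma F.2_SSR}), which is $o(1)$ by Assumption \ref{assumption_on_sampling_scheme}(ii) since $\lambda^d/n\to 0$ in dimension $d=2$ gives $\lambda/n = o(1/\lambda)$. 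Block (b) is of order $\lambda\cdot n^{-2}$ and therefore even smaller. The variance analysis proceeds by the cumulant expansion already developed for $\hat{D}_{2,2,\lambda,a}$ in the Appendix: every additional index coincidence collapses one summation range from $n$ to $1$, and since the corresponding four- and higher-order cumulant indecomposable-partition bounds carry factors of $L_\lambda^{(s)}$-type that Lemma \ref{properties_of_ell} tames, each term picks up an extra $\lambda^d/n$ compared with the leading variance $\tau_{2,2,\lambda,a}^2/\lambda^2$ and is hence $o(1/\lambda^2)$.

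Once both mean and variance bounds are in place, Chebyshev's inequality gives $\lambda R_{\lambda,a}\cp 0$. Combined with Theorem \ref{corr_sec_int} and Slutsky's lemma, the proposition follows. The genuinely hard step is the combinatorial/analytic control of the cross-diagonal cumulants under the MID scaling; all other computations are mild modifications of those already used to establish Theorem \ref{corr_sec_int}, and the same conditions \eqref{further_assumption_2} suffice.
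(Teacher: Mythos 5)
Your overall strategy coincides with the paper's: write $\hat{D}_{2,\lambda,a}^{\mathrm{eff}}=\hat{D}_{2,2,\lambda,a}+R_{2,\lambda,a}$, observe that the cumulant bounds of orders $q\ge 2$ carry over unchanged (the paper notes that the proof of \eqref{to_show_sec_int} never used pairwise distinctness of $j_1,\dots,j_4$), and reduce everything to $\E[R_{2,\lambda,a}]=o(\lambda^{-1})$. The variance treatment is fine. The gap is in your expectation bound for the single-coincidence block.

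You bound that block by pure index counting: $O(n^3)$ terms against the $n^{-4}$ normalization, hence a factor $n^{-1}$, hence $\lambda\,\E[R_{2,\lambda,a}]=O(\lambda/n)=o(1)$. This counting is not valid in the case that matters. When two coincident indices (say $j_1=j_3$) land in the \emph{same} set of the Gaussian pairing, the corresponding covariance degenerates to $c(\bm{0})$ and one of the location integrals over $[-\lambda/2,\lambda/2]^2$ (each normalized by $\lambda^{-2}$) disappears; the generic order change is therefore $\lambda^2/n$, not $1/n$ (this is exactly the content of Lemma \ref{order_depending_on_number_of_restr}). Multiplying by $\lambda$ gives $\lambda^3/n$, which is \emph{not} controlled by Assumption \ref{assumption_on_sampling_scheme}(ii) (only $\lambda^2/n\to0$ is assumed). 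So your argument, as written, does not close.

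The missing idea is the one the paper uses: because $\tilde{\tilde{\mathcal{E}}}$ already enforces $j_1\neq j_2$ and $j_3\neq j_4$, the only coincidences contributing to $R_{2,\lambda,a}$ are the cross-diagonal ones $j_1=j_3$, $j_1=j_4$, $j_2=j_3$, $j_2=j_4$, i.e.\ they always tie an index from the $\tilde{\bm{\omega}}_{\bm{k}}$-DFT to one from the $\tilde{\bm{\omega}}_{\bm{\ell}}$-DFT. Whenever such a pair sits in the same set of the pairing, it necessarily induces a restriction linking $\bm{k}$ and $\bm{\ell}$ in the frequency windows, which (as in the proof of Theorem \ref{expect_theo_sec_int}) buys an additional factor $a^2/\lambda^4$. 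The resulting bound $\E[R_{2,\lambda,a}]=\Landau\bigl(\tfrac{\lambda^2}{n}\cdot\tfrac{a^2}{\lambda^4}\bigr)$ is $o(\lambda^{-1})$ precisely because $a^2/\lambda^3\to0$ (Assumption \ref{assumptions_on_a}(iii)) and $\lambda^2/n\to0$; the cross-set coincidences give $\Landau(1/n)$ directly. Without identifying this forced frequency restriction, the single-coincidence block cannot be shown to be negligible under the stated assumptions.
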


Finally, to define a computationally efficient version of the test statistic \eqref{test_stat_finite_sample_prop},
we note that,
by definition of the tapered periodogram $I_{n,\lambda,2,h}(\bm{\omega})$ in \eqref{periodogram}, the variance estimator in \eqref{var_estimate} can be represented as 
\begin{align*}
\hat{\tau}_{\text{H}_0,\lambda,a}^2&=\frac{(2\pi)^4}{12\lambda^2} \sum_{\bm{m}=-a}^{a-1} \frac{H_{2,h}(\bm{m})^4}{H_{2,h}(\bm{0})^4} \sum_{\bm{k}=-a}^{a-1} \Big (\frac{\lambda^2}{n^2 H_{2,h}(\bm{0})} \Big|\sum_{j=1}^n h\big(\frac{\bm{s}_j}{\lambda}\big) Z(\bm{s}_j) \exp(\im \bm{s}_j^T \tilde{\bm{\omega}}_{\bm{k},\lambda})\Big|^2\Big)^4\\
&=\frac{(2\pi)^4 \lambda^6}{12n^8 H_{2,h}(\bm{0})^8} \sum_{\bm{m}=-a}^{a-1} H_{2,h}(\bm{m})^4 \sum_{\bm{k}=-a}^{a-1} \Big|\sum_{j=1}^n h\big(\frac{\bm{s}_j}{\lambda}\big) Z(\bm{s}_j) \exp(\im \bm{s}_j^T \tilde{\bm{\omega}}_{\bm{k},\lambda})\Big|^8.
\end{align*}
Observe that, in this case,    constraints on the combination of summands $j_1,\ldots,j_8$ are not necessary since the estimator $\hat{\tau}_{\text{H}_0,\lambda,a}^2$ only needs to be consistent and thus the bias is only required to be  asymptotically negligible. 
However, to improve finite sample performance it is generally   beneficial to implement a bias-corrected version of $\hat{\tau}_{\text{H}_0,\lambda,a}^2$. To this end, we make use of calculations that were already required  for the analysis of  $\hat{D}_{1,\lambda,a}^{\text{eff}}$ noting that  every term in $\hat{\tau}_{\text{H}_0,\lambda,a}^2$ with $j_k=j_l$ for some $k\neq l$ is of lower order $\Landau(\lambda^2/n)$ (see the proof of Theorem \ref{estimator_variance_H0}). Hence,  we can estimate $\tau_{\text{H}_0}^2$ consistently by the bias-corrected version 
\begin{align*}
&(\hat{\tau}_{\text{H}_0,\lambda,a}^{\text{bias-corr}})^2:=\frac{(2\pi)^4 \lambda^6}{12 n^8 H_{2,h}(\bm{0})^8} \sum_{\bm{m}=-a}^{a-1} H_{2,h}^4(\bm{m}) \sum_{\bm{k}=-a}^{a-1} \Big[\sum_{(j_1,\ldots,j_4)\in\tilde{\mathcal{E}}} h\big(\frac{\bm{s}_{j_1}}{\lambda}\big) \ldots h\big(\frac{\bm{s}_{j_4}}{\lambda}\big) Z(\bm{s}_{j_1}) \ldots Z(\bm{s}_{j_4}) \\
&\phantom{=========================} \times \exp\big(\im (\bm{s}_{j_1}-\bm{s}_{j_2}+\bm{s}_{j_3}-\bm{s}_{j_4})^T \tilde{\bm{\omega}}_{\bm{k},\lambda}\big)\Big]^2\\
&=\frac{(2\pi)^4 \lambda^6}{12n^8 H_{2,h}(\bm{0})^8} \sum_{\bm{m}=-a}^{a-1} H_{2,h}^4(\bm{m}) \sum_{\bm{k}=-a}^{a-1} \Big[ \Big(\Big  |\sum_{j=1}^n h\big(\frac{\bm{s}_j}{\lambda}\big) Z(\bm{s}_j) \exp(\im \bm{s}_j^T \tilde{\bm{\omega}}_{\bm{k},\lambda})\Big  |^2 - \sum_{j=1}^n h^2\big(\frac{\bm{s}_j}{\lambda}\big) Z^2(\bm{s}_j)\Big)^2 \\
&\phantom{======}- \sum_{j_1=1}^n h^2\big(\frac{\bm{s}_{j_1}}{\lambda}\big) Z^2(\bm{s}_{j_1}) \Big  | \sum_{j\neq j_1} h\big(\frac{\bm{s}_j}{\lambda}\big) Z(\bm{s}_j) \exp(\im \bm{s}_j^T \tilde{\bm{\omega}}_{\bm{k},\lambda}) \Big |^2\\
&\phantom{======}-\sum_{j_1=1}^n h^2\big(\frac{\bm{s}_{j_1}}{\lambda}\big) Z^2(\bm{s}_{j_1}) \Big(\Big  |\sum_{j\neq j_1} h\big(\frac{\bm{s}_j}{\lambda}\big) Z(\bm{s}_j)\exp(\im \bm{s}_j^T \tilde{\bm{\omega}}_{\bm{k},\lambda})\Big  |^2 - \sum_{j\neq j_1} h^2\big(\frac{\bm{s}_j}{\lambda}\big) Z^2(\bm{s}_j)\Big) \Big]^2.
\end{align*}
Indeed, note that this estimator has reduced bias compared to $\tau_{\text{H}_0}^2$ since we are suppressing effectively the bias terms of highest order by restricting the summands in this estimator to $\tilde{\mathcal{E}}$ (see also the comment below equation \eqref{eq:cumJ2}).

Using similar arguments as given in the proofs of Proposition \ref{prop_eff_D1}  and \ref{prop_eff_D2} we obtain (under the null hypothesis)
\begin{align} \label{modified_test_statistic}
\hat{T}_{2,\lambda,a}^{\rm eff} = 
\frac{\lambda}{\hat{\tau}_{\text{H}_0,\lambda,a}^{\text{bias-corr}}} \big (\hat{D}_{1,\lambda,a}^{\text{eff}}-\hat{D}_{2,\lambda,a}^{\text{eff}}\big )  \dn \mathcal{N}(0,1) \qquad \text{as } a,\lambda,n\rightarrow \infty ~. 
\end{align}
Therefore we propose to reject the null hypothesis, whenever
\begin{align}
\label{hol10}
\hat{T}_{2,\lambda,a}^{\rm eff} > z_{1-\alpha},
\end{align}
where $z_{1-\alpha}$ is the $(1-\alpha)$-quantile of the standard normal distribution. We illustrate the properties of this test by means of simulation study for a Gaussian process with covariance 
\begin{align}
\label{anisotropic_model}
c(\boldsymbol{h})=\text{exp}\big (- 4 \|A_r\boldsymbol{h}\|^2\big )~,  
\end{align}
where  the matrix $A_r$ is given by 
\begin{align*}
A_r=\begin{pmatrix}
1 & 0\\
0 & 1/r
\end{pmatrix}
\begin{pmatrix}
\cos(\pi/4) & \sin(\pi/4) \\
-\sin(\pi/4) & \cos(\pi/4)
\end{pmatrix} ~, 
\end{align*}
and  $r \geq  1$. Note that the case $r=1$ corresponds to the null hypothesis of isotropy, that is $c (\bm{h})=\exp (- 4 {\|\bm{h}\|^2}  )$. An increasing value of $r$ leads to a larger deviation from  isotropy. The locations $\bm{s}_1,\ldots,\bm{s}_n$ in the test statistic \eqref{modified_test_statistic} are sampled from a uniform distribution on the spatial domain $[-\lambda/2,\lambda/2]^2$.
\\
For the  estimators  we consider the
cosine window with  parameter $\alpha=3$ as taper function, which gives $ H_{2,h}(\bm{0})=\left(\frac{5}{16}\right)^2 $.
While the implementation of $D_{1,2}$
is straightforward, the estimation of  $D_{2,2}$ is more intricate.
Note that under the null hypothesis we have 
  We choose $\lambda=30$ and $a=80$ 
for the $\tilde{\bm{\omega}}_{\bm{k},\lambda}$ in \eqref{det100}. However, to improve the approximation of the integral 
we use $\lambda=300$ and $a=800$
for $\tilde{\omega}_{r,\lambda} $
in \eqref{alternative_Darstellung_D2_hat}.
To avoid  an accumulation of noise, the summation  in \eqref{alternative_Darstellung_D2_hat} is truncated, whenever $\hat{c}_0(\tilde{\omega}_{r,\lambda})$ is  smaller than  zero for the first time.
\\
 \begin{table}[t]
      \centering
     \begin{tabular}{c|c|c|c|c|c|c}
       $n$  &  $1000$ & $2000$ & $5000$ & $10000$  \\
        \hline
       $r=1$           & $0.184$  & $0.088$ &  $0.056$   & $0.054$  \\
       \hline
       $r=2$  & $0.180 $  &  $0.134$ & $0.146$ & $0.180$  \\
       \hline
       $r=3$  & $0.276$ & $0.278$ & $0.450$ & $0.498$\\
       \hline
       $r=4$   & $0.362$ &  $0.482$ & $0.606$  &$0.666$ \\
       \hline
     \end{tabular}
     \caption{\it Empirical  rejection probabilities of the test \eqref{hol10} with level $\alpha= 5\%$. The random filed is a Gaussian process with covariance kernel \eqref{anisotropic_model}. 
     The case $r=1$ corresponds to the null hypothesis of isotropy and $r=2,3,4$ to the alternative of an anisotropic model. }
     \label{det101}
     \end{table}
     In Table \ref{det101} we present the simulated rejection probabilities of  the test \eqref{hol10} based on  $500$ simulation runs.
   We observe that for an increasing sample size the test keeps its nominal level $\alpha = 5\%$ (first line in Table \ref{det101}). Note that the required sample sizes for this property are rather large, but such sample sizes are necessary  as we are in fact   testing  the hypotheses of isotropy on the full spatial domain $\R^2$ and do not restrict this investigation to a finite (relatively small) number of spatial locations. The cases   $r=2,3,4$ in Table \ref{det101}  
   represent the alternative  of an anisotropic covariance function. 
We observe that the rejection probabilities are increasing with $r$
(more deviation from isotropy) and  increasing  with the sample size $n$.

\bigskip\noindent
\textbf{Acknowledgements}  
The work of H. Dette and T. Eckle  has been partially supported  by the German Research Foundation (DFG), project number 45723897. Anne van Delft has been partially supported by NSF grant DMS-2311338.


\bibliography{Bibliography}

\begin{thebibliography}{}

\bibitem[Baczkowski, 1990]{baczkowski90}
Baczkowski, A.~J. (1990).
\newblock A test of spatial isotropy.
\newblock In Momirovi{\'{c}}, K. and Mildner, V., editors, {\em Compstat}, pages 277--282, Heidelberg. Physica-Verlag HD.

\bibitem[Baczkowski and Mardia, 1990]{baczkowskimardia90}
Baczkowski, A.~J. and Mardia, K.~V. (1990).
\newblock A test of spatial symmetry with general application.
\newblock {\em Communications in Statistics-Theory and Methods}, 19(2):555--572.

\bibitem[Bagchi et~al., 2018]{bagchi18}
Bagchi, P., Characiejus, V., and Dette, H. (2018).
\newblock A simple test for white noise in functional time series.
\newblock {\em Journal of Time Series Analysis}, 39(1):54--74.

\bibitem[Bandyopadhyay and Lahiri, 2009]{bandyo09}
Bandyopadhyay, S. and Lahiri, S.~N. (2009).
\newblock Asymptotic properties of discrete {F}ourier transforms for spatial data.
\newblock {\em Sankhya, Series A}, 71:221--259.

\bibitem[Bandyopadhyay and Subba~Rao, 2017]{bandyopadhyay2017}
Bandyopadhyay, S. and Subba~Rao, S. (2017).
\newblock A test for stationarity for irregularly spaced spatial data.
\newblock {\em J. R. Statist. Soc. B}, 79:95--123.

\bibitem[Bowman and Crujeiras, 2013]{bowman13}
Bowman, A.~W. and Crujeiras, R.~M. (2013).
\newblock Inference for variograms.
\newblock {\em Computational Statistics and Data Analysis}, 66:19--31.

\bibitem[Bracewell, 2000]{bracewell65}
Bracewell, R.~N. (2000).
\newblock {\em The Fourier transform and its applications, 3rd Edition}.
\newblock McGraw Hill.

\bibitem[Brillinger, 1981]{brillinger81}
Brillinger, D.~R. (1981).
\newblock {\em Time series: Data analysis and theory}.
\newblock Holden-Day, San Fransisco.

\bibitem[Caba{\~n}a, 1987]{cabana87}
Caba{\~n}a, E.~M. (1987).
\newblock Affine processes: a test of isotropy based on level sets.
\newblock {\em SIAM Journal on Applied Mathematics}, 47(4):886--891.

\bibitem[Cressie, 1993]{cressie93}
Cressie, N. A.~C. (1993).
\newblock {\em Statistics for spatial data}.
\newblock Wiley, New York.

\bibitem[Crujeiras and Fern\'{a}ndez-Casal, 2010]{crujeiras10}
Crujeiras, R.~M. and Fern\'{a}ndez-Casal, R. (2010).
\newblock On the estimation of the spectral density for continuous spatial processes.
\newblock {\em Statistics}, 44(6):587–600.

\bibitem[Dahlhaus, 1983]{dahlhaus83}
Dahlhaus, R. (1983).
\newblock Spectral analysis with tapered data.
\newblock {\em Journal of Time Series Analysis}, 4(3):163--175.

\bibitem[Dahlhaus, 1988]{dahlhaus88}
Dahlhaus, R. (1988).
\newblock Small sample effects in time series analysis: A new asymptotic theory and a new estimate.
\newblock {\em The Annals of Statistics}, 16(2):808--841.

\bibitem[Dahlhaus, 1997]{dahlhaus97}
Dahlhaus, R. (1997).
\newblock Fitting time series models to nonstationary processes.
\newblock {\em The Annals of Statistics}, 25(1):1--37.

\bibitem[Dahlhaus and K{\"u}nsch, 1987]{dahlhauskuensch87}
Dahlhaus, R. and K{\"u}nsch, H. (1987).
\newblock Edge effects and efficient parameter estimation for stationary random fields.
\newblock {\em Biometrika}, 74(4):877--882.

\bibitem[Deo and Chen, 2000]{Deo2000}
Deo, R. and Chen, W. (2000).
\newblock On the integral of the squared periodogram.
\newblock {\em Stochastic Processes and their Applications}, 85:159--176.

\bibitem[Dette et~al., 2011]{dette2011}
Dette, H., Preuss, P., and Vetter, M. (2011).
\newblock A measure of stationarity in locally stationary processes with applications to testing.
\newblock {\em Journal of the American Statistical Association}, 106(95):1113--1124.

\bibitem[Fox and Taqqu, 1986]{FoxTaqqu86}
Fox, R. and Taqqu, M. (1986).
\newblock Large-sample properties of parameter estimates for strongly dependent stationary gaussian time series.
\newblock {\em Ann. Statist.}, 10:517--532.

\bibitem[Fuentes, 2004]{fuentes04}
Fuentes, M. (2004).
\newblock Spectral methods to approximate the likelihood for irregularly spaced spatial data.
\newblock {\em Technical Report, North Carolina State University}.

\bibitem[Fuentes, 2005]{fuentes05}
Fuentes, M. (2005).
\newblock A formal test for nonstationarity of spatial stochastic processes.
\newblock {\em Journal of Multivariate Analysis}, 96:30--54.

\bibitem[Fuentes, 2007]{fuentes07}
Fuentes, M. (2007).
\newblock Approximate likelihood for large irregularly spaced spatial data.
\newblock {\em Journal of the American Statistical Association}, 102(477):321--331.

\bibitem[Fuentes and Reich, 2010]{fuentesreich2010}
Fuentes, M. and Reich, B. (2010).
\newblock Spectral domain.
\newblock {\em Handbook of Spatial Statistics}, pages 57--77.

\bibitem[Griffith, 1983]{griffith83}
Griffith, D.~A. (1983).
\newblock The boundary value problem in spatial statistical analysis.
\newblock {\em Journal of Regional Science}, 23(3):377--387.

\bibitem[Guan et~al., 2004]{guan_et_al_2004}
Guan, Y., Sherman, M., and Calvin, J.~A. (2004).
\newblock A nonparametric test for spatial isotropy using subsampling.
\newblock {\em Journal of the American Statistical Association}, 99(467):810--821.

\bibitem[Guan et~al., 2007]{guan_et_al_2007}
Guan, Y., Sherman, M., and Calvin, J.~A. (2007).
\newblock On asymptotic properties of the mark variogram estimator of a marked point process.
\newblock {\em Journal of Statistical Planning and Inference}, 137:148--161.

\bibitem[Guyon, 1982]{guyon82}
Guyon, X. (1982).
\newblock Parameter estimation for a stationary process on a $d$-dimensional lattice.
\newblock {\em Biometrika}, 69(1):95--105.

\bibitem[Harris, 1978]{harris78}
Harris, F.~J. (1978).
\newblock On the use of windows for harmonic analsis with the discrete {F}ourier transform.
\newblock {\em Proceedings of the IEEE}, 66(1):51--83.

\bibitem[Jona-Lasinio, 2001]{jonalasinio01}
Jona-Lasinio, G. (2001).
\newblock Modeling and exploring multivariate spatial variation: A test procedure for isotropy of multivariate spatial data.
\newblock {\em Journal of Multivariate Analysis}, 77:295--317.

\bibitem[Lu and Zimmerman, 2005]{luzimmermann05}
Lu, H. and Zimmerman, D.~L. (2005).
\newblock Testing for directional symmetry in spatial dependence using the periodogram.
\newblock {\em Journal of Statistical Planning and Inference}, 129:369--385.

\bibitem[Maity and Sherman, 2012]{maitysherman12}
Maity, A. and Sherman, M. (2012).
\newblock Testing for spatial isotropy under general designs.
\newblock {\em Journal of Statistical Planning and Inference}, 142(5):1081--1091.

\bibitem[Matheron, 1961]{matheron61}
Matheron, G. (1961).
\newblock Precision of exploring a stratified formation by boreholes with rigid spacing-application to a bauxite deposit.
\newblock {\em International Symposium of Mining Research, University of Missouri}, 1:407--422.

\bibitem[Matsuda and Yajima, 2009]{matsuda09}
Matsuda, Y. and Yajima, Y. (2009).
\newblock Fourier analysis of irregularly spaced data on $\mathbb{R}^d$.
\newblock {\em Journal of the Royal Statistical Society, Series B}, 71(1):191--217.

\bibitem[Rasmussen and Williams, 2006]{rasmussen06}
Rasmussen, C.~E. and Williams, C. K.~I. (2006).
\newblock {\em Gaussian Processes for Machine Learning}.
\newblock The MIT Press.

\bibitem[Ripley, 1984]{ripley84}
Ripley, B.~D. (1984).
\newblock Spatial statistics: Developments 1980-3.
\newblock {\em International Statistical Review}, 52(2):141--150.

\bibitem[Robinson, 2007]{robinson07}
Robinson, P.~M. (2007).
\newblock Nonparametric spectrum estimation for spatial data.
\newblock {\em Journal of Statistical Planning and Inference}, 137:1024--1034.

\bibitem[Sahoo et~al., 2019]{sahoo_et_al_2019}
Sahoo, I., Guinness, J., and Reich, B.~J. (2019).
\newblock A test for isotropy on a sphere using spherical harmonic functions.
\newblock {\em Statistica Sinica}, 29:1253--1276.

\bibitem[Scaccia and Martin, 2002]{scacciamartin02}
Scaccia, L. and Martin, R.~J. (2002).
\newblock Testing for simplification in spatial models.
\newblock {\em Compstat}, pages 581--586.

\bibitem[Scaccia and Martin, 2005]{scacciamartin05}
Scaccia, L. and Martin, R.~J. (2005).
\newblock Testing axial symmetry and separability of lattice processes.
\newblock {\em Journal of Statistical Planning and Inference}, 131:19--39.

\bibitem[Subba~Rao, 2018a]{subbarao2017}
Subba~Rao, S. (2018a).
\newblock Statistical inference for spatial statistics defined in the {F}ourier domain.
\newblock {\em Annals of Statistics}, 46(2):469--499.

\bibitem[Subba~Rao, 2018b]{subbarao_suppl}
Subba~Rao, S. (2018b).
\newblock Supplement to statistical inference for spatial statistics defined in the {F}ourier domain.

\bibitem[Taniguchi, 1980]{Taniguchi80}
Taniguchi, M. (1980).
\newblock On estimation of the integrals of certain functions of spectral density.
\newblock {\em J. Appl. Probab.}, 17:73--83.

\bibitem[Tukey, 1967]{tukey67}
Tukey, J.~W. (1967).
\newblock An introduction to the calculations of numerical spectrum analysis.
\newblock {\em Advanced Seminar on Spectral Analysis of Time Series}, Ed. B. Harris, Wiley, New York:25--46.

\bibitem[van Delft et~al., 2021]{vandelft18}
van Delft, A., Characieus, V., and Dette, H. (2021).
\newblock A nonparametric test for stationarity in functional time series.
\newblock {\em Statistica Sinica}, 31(3):1375--1395.

\bibitem[van Delft and Eichler, 2018]{vandelft18a}
van Delft, A. and Eichler, M. (2018).
\newblock Locally stationary functional time series.
\newblock {\em Electronic Journal of Statistics}, 12(1):107--170.

\bibitem[Van~Hala et~al., 2020]{vanhala_et_al_2018}
Van~Hala, M., Bandyopadhyay, S., Lahiri, S.~N., and Nordman, D.~J. (2020).
\newblock A general frequency domain method for assessing spatial covariance structures.
\newblock {\em Bernoulli}, 26(4):2463--2487.

\bibitem[Watson, 1966]{watson44}
Watson, G.~N. (1966).
\newblock {\em A Treatise on the theory of Bessel functions}.
\newblock Cambridge University Press.

\bibitem[Weller and Hoeting, 2016]{wellerhoeting16}
Weller, Z.~D. and Hoeting, J.~A. (2016).
\newblock A review of nonparametric hypothesis tests of isotropy properties in spatial data.
\newblock {\em Statistical Science}, 31(3):305--324.

\bibitem[Whittle, 1953]{Whittle53}
Whittle, P. (1953).
\newblock Estimation and information in stationary time series.
\newblock {\em Ark. Math}, 2:423--434.

\end{thebibliography}

\newpage

\appendix
\addcontentsline{toc}{section}{Appendices}
\renewcommand{\theequation}{\thesection.\arabic{equation}}
\setcounter{equation}{0}

\section{Proofs of results in Section \ref{sec3}}
For the ease of notation, we omit the indices $\lambda$, $d$ and $h$ for the frequency window and write $B_{\lambda,d,h}(\bm{u})=B(\bm{u})$. Furthermore, we denote $\bm{\omega}_{\bm{m},\lambda}=\bm{\omega_m}$. 
A first simple result in this respect is the convolution of the window with itself. 

\begin{lemma} \label{convolution_of_h}
Let the taper $h:\R^d\to\R^+$ be a bounded function satisfying Assumption \ref{ass_on_h}(i). Furthermore, define $B_{\lambda,d,h}$ as in \eqref{Four_trafo_of_h}.
For arbitrary $\bm{x}\in\R^d$, we have $$\int_{\R^d} B_{\lambda,d,h}(\bm{u}) B_{\lambda,d,h}(\bm{x}-\bm{u})\, d\bm{u}=(2\pi)^d \int_{[-\lambda/2,\lambda/2]^d} h^2\left(\frac{\bm{s}}{\lambda}\right) \exp(-\im \bm{s}^T \bm{x})\, d\bm{s}.$$ 
In particular, for $\bm{m}\in\Z^d$, $$\frac{1}{(2\pi\lambda)^d}\int_{\R^d} B_{\lambda,d,h}(\bm{u}) B_{\lambda,d,h}(\bm{\omega}_{\bm{m},\lambda}-\bm{u})  \, d\bm{u}= H_{d,h}(\bm{\bm{m}}),$$
with $H_{d,h}(\bm{m})$ and $\bm{\omega}_{\bm{m},\lambda}$ defined in \eqref{H(m)} and \eqref{Four_freq}, respectively.
\end{lemma}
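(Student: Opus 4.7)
The plan is to prove the first identity by direct unfolding of the definition of $B_{\lambda,d,h}$ and reduction to a Fourier inversion/Plancherel-type identity, and then obtain the second identity as an immediate specialisation at $\bm{x}=\bm{\omega}_{\bm{m},\lambda}$ combined with the change of variables $\bm{s}=\lambda \bm{t}$.

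More precisely, starting from
\begin{align*}
\int_{\R^d} B_{\lambda,d,h}(\bm{u})\, B_{\lambda,d,h}(\bm{x}-\bm{u})\, d\bm{u}
= \int_{[-\lambda/2,\lambda/2]^{2d}} h\!\left(\tfrac{\bm{s}_1}{\lambda}\right) h\!\left(\tfrac{\bm{s}_2}{\lambda}\right) e^{-\im \bm{s}_2^T \bm{x}} \left(\int_{\R^d} e^{-\im(\bm{s}_1-\bm{s}_2)^T \bm{u}}\, d\bm{u}\right) d\bm{s}_1\, d\bm{s}_2,
\end{align*}
I would identify the inner integral with $(2\pi)^d \delta(\bm{s}_1-\bm{s}_2)$ and collapse the double integral to a single one over the diagonal, yielding exactly $(2\pi)^d \int_{[-\lambda/2,\lambda/2]^d} h^2(\bm{s}/\lambda)\, e^{-\im\bm{s}^T\bm{x}}\, d\bm{s}$. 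A cleaner route that avoids distributional formalism is to note that $B_{\lambda,d,h}$ is the Fourier transform of the compactly supported bounded function $g(\bm{s}):=h(\bm{s}/\lambda)\mathbb{1}_{[-\lambda/2,\lambda/2]^d}(\bm{s})$, and $B_{\lambda,d,h}(\bm{x}-\cdot)$ is the Fourier transform (with appropriate sign) of $g(\cdot)e^{-\im\bm{s}^T\bm{x}}$; then the standard Parseval/convolution identity
\begin{align*}
\int_{\R^d} \widehat{g_1}(\bm{u})\, \widehat{g_2}(\bm{u})\, d\bm{u} = (2\pi)^d \int_{\R^d} g_1(\bm{s})\, g_2(-\bm{s})\, d\bm{s}
\end{align*}
(valid since $g$ is compactly supported and bounded, hence $\widehat{g}\in L^2$) delivers the result.

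For the second assertion, I would plug in $\bm{x}=\bm{\omega}_{\bm{m},\lambda}=2\pi\bm{m}/\lambda$ and perform the rescaling $\bm{s}=\lambda\bm{t}$, which transforms $\int_{[-\lambda/2,\lambda/2]^d} h^2(\bm{s}/\lambda) e^{-2\pi\im\bm{m}^T\bm{s}/\lambda}\, d\bm{s}$ into $\lambda^d H_{d,h}(\bm{m})$ by definition \eqref{H(m)}. Dividing by $(2\pi\lambda)^d$ yields the stated identity.

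The only mildly delicate point is justifying the exchange of orders of integration (or the Parseval step), since the product $B_{\lambda,d,h}(\bm{u})B_{\lambda,d,h}(\bm{x}-\bm{u})$ is not a priori in $L^1(\R^d)$. This is resolved by the Parseval route above, which requires only $g_1,g_2\in L^2$; alternatively one can introduce a Gaussian mollifier $e^{-\varepsilon\|\bm{u}\|^2}$, apply Fubini for $\varepsilon>0$, and pass to the limit $\varepsilon\downarrow 0$ using that the resulting function of $\bm{s}$ is continuous in $\bm{s}$ uniformly. Apart from this technicality, the argument is a direct computation.
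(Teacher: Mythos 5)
Your argument is correct and is essentially the paper's own: the paper computes the convolution by unfolding the definition of $B_{\lambda,d,h}$ and invoking the Fourier convolution theorem $(\mathcal{F}h)\circledast(\mathcal{F}h)=2\pi\,\mathcal{F}(h^2)$ (first for $d=1$, then by the product structure of the taper), which is the same identity as your Parseval step, merely evaluated pointwise rather than integrated. Your added care about integrability is sound but not an issue here, since $g\in L^2$ with compact support already gives $B\in L^2$ and hence $B(\cdot)B(\bm{x}-\cdot)\in L^1$ by Cauchy--Schwarz.
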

\begin{proof}

First consider the case $d=1$ and let $x\in\R$ be arbitrary. Using the convolution theorem for Fourier transforms, we directly obtain
\begin{align*}
\int_{\R} B(u) B(x-u)\, du
&=\lambda^2 \int_{\R} \Big(\int_{\frac{1}{2}}^{\frac{1}{2}} h(t_1) \exp(-\im \lambda t_1 u )\, dt_1\Big) \Big(\int_{\frac{1}{2}}^{\frac{1}{2}} h(t_2) \exp(-\im \lambda t_2 (x-u))\, dt_2\Big) du\\
&=\lambda \int_{\R} (\mathcal{F} h)(z) (\mathcal{F}h)(\lambda x-z) \, dz\\ &=2\pi\lambda \, \mathcal{F}(h^2)(\lambda x)=2\pi \int_{-\lambda/2}^{\lambda/2} h^2\left(\frac{s}{\lambda}\right) \exp(-\im s x)\, ds.
\end{align*}
The statement can easily be generalized to the case where $d>1$ by noting that
\begin{align*}
\int_{\R^d} B(\bm{u}) B(\bm{x}-\bm{u})\, d\bm{u} &= \prod_{i=1}^d \Big(\int_{\R} B(u_i) B(x_i-u_i)\, du_i\Big).
\end{align*}
\end{proof}

\subsection{Proof of Lemma \ref{properties_of_ell}}

Throughout this proof, we denote by $C(\cdot)>0$, $\tilde{C}(\cdot)>0$ and $\tilde{\tilde{{C}}}(\cdot)>0$ some generic and bounded constants which only depend on the argument(s) inside their brackets.
The proof of part (i) works in large parts analogously to the proof of Lemma $2$ in \cite{dahlhaus83}. 
\begin{enumerate}
\item[(i)] 
We assume without loss of generality $p\geq q$. 
Recalling the definition of $L_{\lambda}^{(s)}$ in \eqref{L-fct}, note that 
\begin{align*}
\int_{-\infty}^{\infty} L_{\lambda}^{(s)}(u)^2\, du 
&=\frac{2s^{2s}}{\e^s} \lambda + 2\lambda \int_{\e^s}^{\infty} \frac{\log^{2s}(x)}{x^2}\, dx \leq C(s)\, \lambda.
\end{align*}
Using the Cauchy-Schwarz inequality, it then follows for all $v\in\R$ that
\begin{align*} 
\int_{-\infty}^{\infty} L_{\lambda}^{(p)}(u) L_{\lambda}^{(q)}(v-u)\, du & \leq \sqrt{\int_{-\infty}^{\infty} L_{\lambda}^{(p)}(u)^2\, du \,\int_{-\infty}^{\infty} L_{\lambda}^{(q)}(v-u)^2\, du }\leq C(p,q)\, \lambda.
\end{align*}
This obviously proves the assertion if $|v|\leq \e^{p+q+1}/\lambda$, and it remains to show that
\begin{align*}
\int_{-\infty}^{\infty} L_{\lambda}^{(p)}(u) L_{\lambda}^{(q)}(v-u)\, du & \leq C(p,q) \frac{\log^{p+q+1}(\lambda |v|)}{|v|}
\end{align*}
for all $v\in\R$ with $|v|>\e^{p+q+1}/\lambda$.
We assume without loss of generality that $v>\e^{p+q+1}/\lambda$, since the other case can be proven analogously.
In particular, it then holds that $v>2\e^{p}/\lambda$. We make the decomposition
\begin{align*}
\int_{-\infty}^{\infty} L_{\lambda}^{(p)}(u) L_{\lambda}^{(q)}(v-u)\, du &= I_1+I_2+I_3+I_4+I_5+I_6,
\end{align*}
where
\begin{align*}
I_1:= \int_{0}^{\e^p/\lambda} L_{\lambda}^{(p)}(u)\, L_{\lambda}^{(q)}(v-u)\, du, \qquad I_2:= \int_{\e^p/\lambda}^{v-\e^q/\lambda} L_{\lambda}^{(p)}(u) L_{\lambda}^{(q)}(v-u)\, du,
\end{align*}
\begin{align*}
I_3:=\int_{v-\e^q/\lambda}^{v+\e^q/\lambda} L_{\lambda}^{(p)}(u) L_{\lambda}^{(q)}(v-u)\, du, \qquad I_4:=\int_{v+\e^q/\lambda}^{\infty} L_{\lambda}^{(p)}(u) L_{\lambda}^{(q)}(v-u)\, du,
\end{align*}
\begin{align*}
I_5:=\int_{-\e^p/\lambda}^0 L_{\lambda}^{(p)}(u) L_{\lambda}^{(q)}(v-u)\, du, \qquad \text{and} \qquad I_6:=\int_{-\infty}^{-\e^p/\lambda} L_{\lambda}^{(p)}(u) L_{\lambda}^{(q)}(v-u)\, du.
\end{align*}
Since the function $x \mapsto\log^q(x)/x$ is monotonically decreasing on the interval $[\e^q,\infty)$ and $\lambda v/2 >\e^q$, it holds that
\begin{align*}
I_1=\left(\frac{p}{\e}\right)^p \lambda\int_{0}^{\e^p/\lambda} \frac{\log^{q}(\lambda (v-u))}{v-u}\, du &\leq 2\left(\frac{p}{\e}\right)^p \lambda \int_{0}^{\e^p/\lambda} \frac{\log^q(\lambda v/2)}{v} \, du \leq C(p) \frac{\log^{q}(\lambda v)}{v}.
\end{align*}
Moreover,
\begin{align*}
I_2&=\int_{\e^p/\lambda}^{v-\e^q/\lambda} \frac{\log^p(\lambda u) \log^q(\lambda (v-u))}{u(v-u)}\, du\\ &\leq \log^{p+q}(\lambda v) \int_{\e^p/\lambda}^{v-\e^q/\lambda} \frac{1}{u(v-u)} \, du=\frac{\log^{p+q}(\lambda v)}{v} \left[\log\left(\frac{\lambda v -\e^q}{\e^q}\right)+\log\left(\frac{\lambda v-\e^p}{\e^p}\right)\right]\\
&\phantom{===================}\leq C(p,q) \frac{\log^{p+q+1}(\lambda v)}{v}.
\end{align*}
Concerning the term $I_3$, note that
\begin{align*}
I_3=\left(\frac{q}{\e}\right)^q \, \lambda\, \int_{v-\e^q/\lambda}^{v+\e^q/\lambda} \frac{\log^p(\lambda u)}{u}\, du &\leq \left(\frac{q}{e}\right)^q \lambda \log^p(\lambda v + \e^q) \int_{v-\e^q/\lambda}^{v+\e^q/\lambda} \frac{1}{u}\, du\\
&\leq 2  q^q \log^p(\lambda v + \e^q)  \frac{\lambda}{\lambda v-\e^q}.
\end{align*}
Note that $\lambda v >2\e^p$. Therefore, $\lambda v-\e^q$ is bounded away from $0$ and it follows that
\begin{align*}
\frac{\lambda v}{\lambda v-\e^q}\leq C(q),
\end{align*}
which yields
$I_3 \leq C(p,q) \frac{\log^{p}(\lambda v)}{v}$.
Now consider the term $I_4$.
First let $p=q=0$, in which case
\begin{align} \label{result_for_p=q=0}
I_4=\int_{v+1/\lambda}^{\infty} \frac{1}{u(u-v)}\, du &=\left[\frac{1}{v}(\log(u-v)-\log(u))\right]_{u=v+1/\lambda}^{\infty} \nonumber\\
&=\frac{1}{v}\log\left(\frac{v+1/\lambda}{1/\lambda}\right)=\frac{\log(1+\lambda v)}{v} \leq 2\, \frac{\log(\lambda v)}{v}.
\end{align}
For $p+q\geq 1$, using integration by parts yields
\begin{align*}
I_4&=\int_{v+\e^q/\lambda}^{\infty} \frac{\log^p(\lambda u) \log^q(\lambda (u-v))}{u(u-v)}\, du \leq \int_{v+\e^q/\lambda}^{\infty} \frac{\log^{p+q}(\lambda u)}{u(u-v)}\, du\\
&= \frac{1}{v} \log^{p+q}(\lambda v + \e^q) \log\left(\frac{\lambda v + \e^q}{\e^q}\right) + (p+q) \int_{v+\e^q/\lambda}^{\infty} \frac{\log^{p+q-1}(\lambda u)}{uv} \log\left(\frac{u}{u-v}\right)\, du\\
&\leq C(q) \frac{\log^{p+q+1}(\lambda v)}{v} + (p+q) \int_{v+\e^q/\lambda}^{\infty} \frac{\log^{p+q-1}(\lambda u)}{u(u-v)}\, du.
\end{align*}
For the last step of the above calculation, we used 
\begin{align*}
\log\left(\frac{u/v}{u/v-1}\right)\leq \frac{1}{u/v-1},
\end{align*}
which holds true since $\log(x)\leq x-1$ for all $x>0$.
Applying the result for $p=q=0$ in \eqref{result_for_p=q=0} and induction on $p+q$ yields
\begin{align*}
I_4 \leq C(p,q) \frac{\log^{p+q+1}(\lambda v)}{v}.
\end{align*}
For the term $I_5$, note that since the function $x \mapsto\log^q(x)/x$ is monotonically decreasing on $[e^q,\infty)$, we get
\begin{align*}
I_5=\left(\frac{p}{\e}\right)^p \lambda\, \int_{-\e^p/\lambda}^0 \frac{\log^q(\lambda (v-u))}{v-u}\, du\leq \left(\frac{p}{\e}\right)^p \lambda \int_{-\e^p/\lambda}^{0} \frac{\log^q(\lambda v)}{v}\, du \leq C(p) \frac{\log^q(\lambda v)}{v}.
\end{align*}
Finally, we obtain
\begin{align*}
I_6=-\int_{-\infty}^{-\e^p/\lambda} \frac{\log^p(-\lambda u) \log^q(\lambda(v-u))}{u(v-u)}\, du 
&=\int_{v+\e^p/\lambda}^{\infty} \frac{\log^p(\lambda(u-v))\log^q(\lambda u)}{u(u-v)}\, du.
\end{align*}
Analogously to the term $I_4$, we now get $I_6\leq C(p,q) \frac{\log^{p+q+1}(\lambda v)}{v}$.
$\hfill \Box$\\

\item[(ii)] 
Let $p,q\in\N_0$ and $r\in\Z$ be arbitrary, w.l.o.g. $r\geq 0$. We distinguish the cases $r\leq \e^{p+q+1}$ and $r>\e^{p+q+1}$. If $r\leq \e^{p+q+1}$, we have 
\begin{align*}
\sum_{m=-\infty}^{\infty} \ell^{(p)}(m) \ell^{(q)}(m+r)&\leq C(p,q)+\sum_{\substack{|m|>\e^{p},\\ |m+r|>\e^q}} \ell^{(p)}(m) \ell^{(q)}(m+r)\\
&=C(p,q) + \sum_{m>\max\{\e^p,\e^q-r\}} \frac{\log^p(m)}{m} \frac{\log^q(m+r)}{m+r} \\
&\phantom{===iiiii} + \sum_{\e^q-r < m < -\e^p} \frac{\log^p(-m)}{-m} \frac{\log^q(m+r)}{m+r} \\
&\phantom{===iiiii} + \sum_{m<\min\{-\e^p,-\e^q-r\}} \frac{\log^p(-m)}{-m} \frac{\log^q(-m-r)}{-m-r} \\
&\leq \tilde{C}(p,q)+2\sum_{m=1}^{\infty} \frac{\log^{p+q}(m+\e^{p+q+1})}{m^2} \leq \tilde{\tilde{{C}}}(p,q) \left(\frac{p+q+1}{\e}\right)^{p+q+1}.
\end{align*}
Now let $r>\e^{p+q+1}$. Then, we have
\begin{align*}
\sum_{m=-\infty}^{\infty} \ell^{(p)}(m) \ell^{(q)}(m+r)&=A+B+C+D+E+F,
\end{align*}
where
\begin{align*}
A:=\sum_{|m|\leq \lfloor \e^p\rfloor} \ell^{(p)}(m) \ell^{(q)}(m+r), \qquad B:=\sum_{m=\lceil \e^p\rceil}^r \ell^{(p)}(m) \ell^{(q)}(m+r),
\end{align*}
\begin{align*}
C:=\sum_{m=r+1}^{\infty} \ell^{(p)}(m) \ell^{(q)}(m+r), \qquad
D:=\sum_{m=-\lfloor \e^q\rfloor -r}^{-\lceil \e^p \rceil} \ell^{(p)}(m) \ell^{(q)}(m+r),
\end{align*}
\begin{align*}
E:=\sum_{m=-2r}^{-\lceil \e^q\rceil - r} \ell^{(p)}(m) \ell^{(q)}(m+r), \qquad \text{and} \qquad F:=\sum_{m=-\infty}^{-2r-1} \ell^{(p)}(m) \ell^{(q)}(m+r).
\end{align*}
We consider these six terms separately and start with the term $A$. 
Using that the function $x \mapsto\log^q(x)/x$ is monotonically decreasing on $[\e^q,\infty)$, we obtain
\begin{align*}
A=\sum_{m=-\lfloor \e^p\rfloor}^{\lfloor \e^p\rfloor} \left(\frac{p}{\e}\right)^p \frac{\log^q(m+r)}{m+r}&\leq \left(\frac{p}{\e}\right)^p\left(\sum_{m=0}^{\lfloor \e^p\rfloor}  \frac{\log^q(r)}{r}+\sum_{m=-\lfloor \e^p\rfloor}^{-1} \frac{\log^q(r)}{m+r}\right)\\
&\leq \left(\frac{p}{\e}\right)^p \frac{\log^q(r)}{r}\left(\lfloor \e^p\rfloor+1+\lfloor\e^p\rfloor\cdot \frac{r}{r-\lfloor \e^p\rfloor}\right).
\end{align*}
Since the function $x \mapsto\frac{x}{x-\lfloor \e^p\rfloor}$ is monotonically decreasing on $(\lfloor \e^p\rfloor,\infty)$, it follows that
\begin{align*}
A\leq \left(\frac{p}{\e}\right)^p \frac{\log^q(r)}{r}\left(\lfloor \e^p\rfloor +1+ \lfloor \e^p \rfloor \cdot \frac{\e^{p+q+1}}{\e^{p+q+1}-\lfloor\e^p\rfloor}\right)\leq C(p,q)\frac{\log^{p+q+1}(r)}{r}.
\end{align*}
Now consider the term $B$. 
Again using the monotonicity of the function $x \mapsto\log^q(x)/x$ on $[\e^q,\infty)$,
we have
\begin{align*}
B=\sum_{m=\lceil \e^p\rceil}^r \frac{\log^p(m)}{m} \frac{\log^q(m+r)}{m+r}
&\leq \log^{p}(r) \frac{\log^q(r)}{r} \sum_{m=\lceil \e^p\rceil}^r \frac{1}{m}\lesssim \frac{\log^{p+q+1}(r)}{r}.
\end{align*}
Concerning the term $C$, note that
\begin{align*}
C\leq \sum_{m=r+1}^{\infty} \frac{\log^{p+q}(m+r)}{m(m+r)}
&=\frac{1}{r^2}\sum_{j=1}^{\infty} \frac{\left(\log(r)+\log\left(2+\frac{j}{r}\right)\right)^{p+q}}{\left(1+\frac{j}{r}\right)\left(2+\frac{j}{r}\right)}\\
&\leq \frac{2^{p+q-1}}{r}\left(\frac{\log^{p+q}(r)}{r} \sum_{j=1}^{\infty} \frac{1}{\left(1+\frac{j}{r}\right)\left(2+\frac{j}{r}\right)}+\frac{1}{r}\sum_{j=1}^{\infty} \frac{\log^{p+q}\left(2+\frac{j}{r}\right)}{\left(1+\frac{j}{r}\right)\left(2+\frac{j}{r}\right)}\right)\\
&\leq \frac{1}{r}\left(\tilde{C}(p,q)\log^{p+q}(r)+\tilde{\tilde{C}}(p,q)\right),
\end{align*}
where we used $(x+y)^n\leq 2^{n-1} (x^n+y^n)$ for $x,y\in\R$ and $n\in\N_0$.
Now we consider term $D$
and make the decomposition
$
D=D_1+D_2$,
where
\begin{align*}
D_1:=\sum_{m=-\lfloor \e^q\rfloor -r}^{\lfloor \e^q \rfloor -r} \ell^{(p)}(m) \ell^{(q)}(m+r)\qquad \text{and} \qquad D_2:=\sum_{m=\lfloor \e^q \rfloor-r+1}^{-\lceil \e^p \rceil} \ell^{(p)}(m) \ell^{(q)}(m+r).
\end{align*}
Using similar arguments as above, we obtain
\begin{align*}
D_1 = \left(\frac{q}{\e}\right)^q \sum_{m=r-\lfloor \e^q \rfloor}^{r+\lfloor \e^q \rfloor} \frac{\log^p(m)}{m} &\leq \left(\frac{q}{\e}\right)^q (2\lfloor \e^q \rfloor +1) \frac{\log^p(r-\lfloor \e^q \rfloor)}{r-\lfloor \e^q \rfloor}
&\leq C(q) \frac{\log^p(r)}{r} \frac{\e^{p+q+1}}{\e^{p+q+1}-\lfloor \e^q\rfloor}.
\end{align*}
Moreover,
\begin{align*}
D_2&=\sum_{m=\lfloor \e^p \rfloor}^{r-\lfloor \e^q \rfloor -1} \frac{\log^p(m)}{m} \frac{\log^q(r-m)}{r-m}\leq D_{21}+D_{22},
\end{align*}
where
\begin{align*}
D_{21}:=\sum_{m=\lfloor \e^p \rfloor}^{\lfloor r/2 \rfloor} \frac{\log^p(m)}{m} \frac{\log^q(r-m)}{r-m} \qquad \text{and} \qquad D_{22}:= \sum_{m=\lfloor r/2 \rfloor +1}^{r-\lfloor \e^q \rfloor -1} \frac{\log^p(m)}{m} \frac{\log^q(r-m)}{r-m}.
\end{align*}
Using similar arguments as above, it follows that
\begin{align*}
D_{21} \leq \frac{2 \log^q(r/2)}{r} \sum_{m=\lfloor \e^p \rfloor}^{\lfloor r/2 \rfloor} \frac{\log^p(m)}{m}&\leq \frac{2 \log^q(r/2) \log^p(r)}{r} \sum_{m=\lfloor \e^p \rfloor}^{\lfloor r/2 \rfloor} \frac{1}{m}\lesssim \frac{\log^{p+q+1}(r)}{r}
\end{align*}
and
\begin{align*}
D_{22} &\leq \frac{2\log^p(r)}{r} \sum_{m=\lfloor r/2 \rfloor +1}^{r-\lfloor \e^q \rfloor -1} \frac{\log^q(r-m)}{r-m}=\frac{2 \log^p(r)}{r} \sum_{z=\lfloor \e^q \rfloor +1}^{r-\lfloor r/2 \rfloor -1} \frac{\log^q(z)}{z}\lesssim \frac{\log^{p+q+1}(r)}{r}.
\end{align*}
It remains to deal with the terms $E$ and $F$. Since
\begin{align*}
E=\sum_{m=-2r}^{-\lceil \e^q \rceil -r} \frac{\log^p(-m)}{-m} \frac{\log^q(-m-r)}{-m-r}=\sum_{z=\lceil \e^q \rceil}^{r} \frac{\log^p(z+r)}{z+r} \frac{\log^q(z)}{z},
\end{align*}
the term $E$ can be dealt with analogously as the term $B$. Moreover,
\begin{align*}
F=\sum_{m=-\infty}^{-2r-1} \frac{\log^p(-m)}{-m} \frac{\log^q(-m-r)}{-m-r}=\sum_{z=r+1}^{\infty} \frac{\log^p(z+r)}{z+r} \frac{\log^q(z)}{z},
\end{align*}
so the term $F$ can be dealt with in the same way as the term $C$.
\end{enumerate}

\subsection{Proof of Lemma \ref{bounds_for_B}}

\begin{enumerate}
\item[(i)]
If $d=1$, we directly obtain
\begin{align*}
|B(u)|=\lambda \left| \int_{\frac{1}{2}}^{\frac{1}{2}} h(s) \exp(-\im \lambda s u)\, ds\right|\leq \lambda \int_{\frac{1}{2}}^{\frac{1}{2}} |h(s)|\, ds
\end{align*}
for all $u\in\R$. Furthermore, for $u\neq 0$, integration by parts gives
\begin{align*}
|B(u)|
&=\left|\frac{1}{\im u} \int_{\frac{1}{2}}^{\frac{1}{2}} h'(s) \exp(-\im \lambda s u)\, ds\right| \leq \frac{1}{|u|} \int_{\frac{1}{2}}^{\frac{1}{2}} \left|h'(s)\right|\, ds.
\end{align*}
In particular, we obtain
\begin{align*}
\left|B(u)\right|\leq K \begin{cases}
\lambda, \qquad & |u|\leq \frac{1}{\lambda}\\
\frac{1}{|u|}, \qquad & |u|>\frac{1}{\lambda}
\end{cases}\quad =K \, L_{\lambda}^{(0)}(u),
\end{align*}
where $K:=\max\{\int_{\frac{1}{2}}^{\frac{1}{2}} |h(s)|\, ds, \int_{\frac{1}{2}}^{\frac{1}{2}} \left|h'(s)\right|\, ds\}$.
The cases $d>1$ can be shown analogously by noting that
\begin{align*}
\left|B(\bm{u})\right|&= \prod_{i=1}^d \left| \int_{-\lambda/2}^{\lambda/2} h_i\left(\frac{s_i}{\lambda}\right) \exp(-\im s_i u_i)\, ds_i \right|\lesssim \prod_{i=1}^d L_{\lambda}^{(0)}(u_i)= L_{\lambda}^{(0)}(\bm{u}).
\end{align*}

\item[(ii)] From part (i) and Lemma \ref{properties_of_ell}, part (i), we obtain
\begin{align*}
\frac{1}{\lambda^d} \sum_{m_1,\ldots,m_d=-a}^a \int_{\R^d} \left| B(\bm{u}) B(\bm{\omega_m}-\bm{u})\right|\, d\bm{u}&\lesssim \frac{1}{\lambda^d} \sum_{m_1,\ldots,m_d=-a}^a \int_{\R^d} L_{\lambda}^{(0)}(\bm{u}) L_{\lambda}^{(0)}(\bm{\omega}_{\bm{m}}-\bm{u})\, d\bm{u}\\
&\lesssim \prod_{i=1}^d \left(\frac{1}{\lambda} \sum_{m_i=-a}^{a} L_{\lambda}^{(1)}(\omega_{m_i})\right)\\
&=\prod_{i=1}^d \left(\frac{1}{\e}+2\sum_{m_i=1}^a \frac{\log(2\pi m_i)}{2\pi m_i}\right)\lesssim \log^{2d}(a),
\end{align*}
where for the last step we used
\begin{align*}
\sum_{m_i=1}^a \frac{\log(2\pi m_i)}{m_i}\leq \log(2\pi a) \sum_{m_i=1}^a \frac{1}{m_i}\lesssim \log^2(a).
\end{align*}

\item[(iii)] From part (i) and from Lemma \ref{properties_of_ell}, part (i), we have
\begin{align*}
&\phantom{=i}\int_{\R^{2d}} \left|B(\bm{u}) B(\bm{\omega_m}-\bm{u}) B(\bm{v}) B(\bm{\omega_m}-\bm{v})\right|\, d\bm{u} d\bm{v}\\
&\lesssim \int_{\R^{d}} L_{\lambda}^{(0)}(\bm{u}) L_{\lambda}^{(0)}(\bm{\omega_m}-\bm{u}) \, d\bm{u} \int_{\R^{d}} L_{\lambda}^{(0)}(\bm{v}) L_{\lambda}^{(0)}(\bm{\omega_m}-\bm{v})\, d\bm{v}\lesssim \prod_{i=1}^d \left( L_{\lambda}^{(1)}(\omega_{m_i})\right)^2,
\end{align*}
which yields
\begin{align*}
\frac{1}{\lambda^{2d}} \sum_{m_1,\ldots,m_d=-a}^a \int_{\R^{2d}} \left|B(\bm{u}) B(\bm{\omega_m}-\bm{u}) B(\bm{v}) B(\bm{\omega_m}-\bm{v})\right| d\bm{u} d\bm{v}\lesssim \prod_{i=1}^d \left(\frac{1}{\lambda^2} \sum_{m_i=-a}^a \left(L_{\lambda}^{(1)}(\omega_{m_i})\right)^2 \right).
\end{align*}
Note that
\begin{align*}
\frac{1}{\lambda^2} \sum_{m_i=-a}^a \left(L_{\lambda}^{(1)}(\omega_{m_i})\right)^2
&=\frac{1}{\e^2}+\frac{2}{(2\pi)^2} \sum_{m_i=1}^a \frac{\log^2(2\pi m_i)}{m_i^2},
\end{align*}
and the last expression is bounded by a constant since the integral $\int_1^{\infty} \frac{\log^2(2\pi x)}{x^2} \, dx$ converges. This yields the claim.

\item[(iv)] We will first prove the statement for the case $s=0$. The other cases can be proven in the same way and we will shortly explain how to do so at the end of this proof.
Let $t\geq 2$, $c_1,\ldots,c_t\geq 1$, and $\bm{d}_1,\ldots,\bm{d}_t\in\Z^d$ be arbitrary.
 Using part (i) and Lemma \ref{properties_of_ell}, part (i), it then follows that 
\begin{align*}
&\phantom{=i}\frac{1}{\lambda^{dt}}\sum_{\bm{m}_1,\ldots,\bm{m}_{t-1}=-a}^{a} \int_{\R^{dt}} \Bigg|\prod_{j=1}^{t-1} \left[B(\bm{x}_j) B\left(\bm{x}_j-\frac{c_j(\bm{m}_j+\bm{d}_j)}{\lambda}\right)\right]\\
&\phantom{============iiii} B(\bm{x}_t)B\left(\bm{x}_t+\frac{c_t\left(\sum_{k=1}^{t-1} \bm{m}_k +\bm{d}_t\right)}{\lambda}\right)\Bigg|\prod_{j=1}^t d\bm{x}_j \\
&\lesssim \frac{1}{\lambda^{dt}} \sum_{\bm{m}_1,\ldots,\bm{m}_{t-1}=-a}^{a} \prod_{i=1}^d  \Bigg\{\prod_{j=1}^{t-1}\left[ \int_{\R} L_{\lambda}^{(0)} (x_{ji}) L_{\lambda}^{(0)}\left(x_{ji}-\frac{c_j(m_{ji}+d_{ji})}{\lambda}\right) dx_{ji}\right]\\
&\phantom{===========i==} \int_{\R}  L_{\lambda}^{(0)}(x_{ti}) L_{\lambda}^{(0)}\left(x_{ti}+\frac{c_t \left(\sum_{k=1}^{t-1} m_{ki} + d_{ti}\right)}{\lambda}\right) dx_{ti}\Bigg\}\\
 &\lesssim \sum_{\bm{m}_1,\ldots,\bm{m}_{t-1}=-a}^{a} \prod_{i=1}^d \left\{\prod_{j=1}^{t-1} \ell^{(1)} (c_j(m_{ji}+d_{ji})) \times \ell^{(1)} \left(c_t \left(\sum_{k=1}^{t-1} m_{ki}+d_{ti}\right)\right)\right\},
\end{align*}
where the function $\ell^{(s)}$ for $s\in\N_0$ is defined in \eqref{ell-fct}.
Note that $\ell^{(s)}(v)\leq \ell^{(s)}(w)$ for all $v,w\in\R$ with $|w|\leq |v|$ and $s\in\N_0$ and recall that $\sum_{\bm{m}=-a}^{a} $ denotes the multiple sum $\sum_{m_1=-a}^a \ldots \sum_{m_d=-a}^a$. Since by assumption $c_j\geq 1$ for all $j=1,\ldots,t$, the above term is therefore bounded by
\begin{align*}
\prod_{i=1}^d  \sum_{m_{1i},\ldots,m_{t-1,i}=-a}^a  \left\{ \prod_{j=1}^{t-1}  \ell^{(1)} (m_{ji}+d_{ji})  \times \ell^{(1)} \left(\sum_{k=1}^{t-1} m_{ki}+d_{ti}\right) \right\}.
\end{align*}
Fix $i\in\{1,\ldots,d\}$. We now aim to apply part (ii) of Lemma \ref{properties_of_ell} and write
\begin{align*}
&\phantom{=}\sum_{m_{1i},\ldots,m_{t-1,i}=-a}^a \left\{ \prod_{j=1}^{t-1}  \ell^{(1)} (m_{ji}+d_{ji})  \times \ell^{(1)} \left(\sum_{k=1}^{t-1} m_{ki}+d_{ti}\right)\right\}\\
&=\sum_{m_{1i},\ldots,m_{t-2,i}=-a}^a \left\{ \prod_{j=1}^{t-2} \ell^{(1)} (m_{ji}+d_{ji})  \sum_{m_{t-1,i=-a}}^a \left[ \ell^{(1)}(m_{t-1,i}+d_{t-1,i}) \, \ell^{(1)} \left(\sum_{k=1}^{t-1} m_{ki}+d_{ti}\right)\right]\right\}.
\end{align*}
We make the index shift $m_{t-1,i}+d_{t-1,i}=z_{t-1,i}$ to see that this expression is bounded by 
\begin{align*}
\sum_{m_{1i},\ldots,m_{t-2,i}=-\infty}^{\infty}  \Bigg\{ \prod_{j=1}^{t-2} \ell^{(1)} (m_{ji}+d_{ji})  \sum_{z_{t-1,i}=-\infty}^{\infty}\Bigg[ &\ell^{(1)} (z_{t-1,i})\\
&\ell^{(1)} \left(z_{t-1,i}+\sum_{k=1}^{t-2} m_{ki} + d_{ti}-d_{t-1,i}\right)\Bigg]\Bigg\}.
\end{align*}
By part (ii) of Lemma \ref{properties_of_ell}, we have
\begin{align*}
\sum_{z_{t-1,i}=-\infty}^{\infty} \Bigg[\ell^{(1)} (z_{t-1,i})\, \ell^{(1)} \left(z_{t-1,i}+\sum_{k=1}^{t-2} m_{ki} + d_{ti}-d_{t-1,i}\right)\Bigg] \lesssim \ell^{(3)} \left(\sum_{k=1}^{t-2} m_{ki} + d_{ti} -d_{t-1,i}\right).
\end{align*}
We now apply part (ii) of Lemma \ref{properties_of_ell} $(t-1)$-times and obtain
\begin{align*}
&\phantom{=}\sum_{m_{1i},\ldots,m_{t-2,i}=-\infty} ^{\infty} \Bigg\{ \prod_{j=1}^{t-2} \ell^{(1)} (m_{ji}+d_{ji}) \times \ell^{(3)} \left(\sum_{k=1}^{t-2} m_{ki}+d_{ti}-d_{t-1,i}\right)\Bigg\}\\
&=\sum_{m_{1i},\ldots,m_{t-3,i}=-\infty}^{\infty} \Bigg\{ \prod_{j=1}^{t-3}  \ell^{(1)}(m_{ji}+d_{ji}) \\
&\phantom{==========}\sum_{m_{t-2,i}=-\infty}^{\infty} \left[\ell^{(1)}(m_{t-2,i}+d_{t-2,i})\, \ell^{(3)} \left(\sum_{k=1}^{t-2} m_{ki}+d_{ti}-d_{t-1,i}\right)\right]\Bigg\}\\
&=\sum_{m_{1i},\ldots,m_{t-3,i}=-\infty}^{\infty}\Bigg\{  \prod_{j=1}^{t-3} \ell^{(1)}(m_{ji}+d_{ji}) \\
&\phantom{==========}\sum_{z_{t-2,i}=-\infty}^{\infty} \left[ \ell^{(1)} (z_{t-2,i} )\, \ell^{(3)} \left(z_{t-2,i}+\sum_{k=1}^{t-3} m_{ki} + d_{ti}-\sum_{k=t-2}^{t-1} d_{ki}\right)\right]\Bigg\}\\
&\lesssim \sum_{m_{1i},\ldots,m_{t-3,i}=-\infty}^{\infty}\Bigg\{ \prod_{j=1}^{t-3} \ell^{(1)}(m_{ji}+d_{ji})\times \ell^{(5)} \left(\sum_{k=1}^{t-3} m_{ki} + d_{ti}-\sum_{k=t-2}^{t-1} d_{ki}\right)\Bigg\}\\
&\lesssim \ldots\\
&\lesssim \sum_{m_{1i}=-\infty}^{\infty} \left\{ \ell^{(1)}(m_{1i}+d_{1i})\, \ell^{(2t-3)} \left(m_{1i}+d_{ti}-\sum_{k=2}^{t-1} d_{ki}\right)\right\}\\
&\lesssim \ell^{(2t-1)} \left(d_{ti}-\sum_{k=1}^{t-1} d_{ki}\right)\leq C(t),
\end{align*}
for some bounded constant $C(t)$ which only depends on $t$. Now assume that $0<s\leq t-1$. With the same arguments as given above, it holds that
\begin{align*}
&\phantom{\leq i}\frac{1}{\lambda^{dt}}\sum_{\bm{m}_1,\ldots,\bm{m}_{t-1}=-a}^{a} \int_{\R^{d(t-s)}} \Bigg|\prod_{j=1}^{s} B\left(\frac{c_j(\bm{m}_j+\bm{d}_j)}{\lambda}\right) \prod_{j=s+1}^{t-1} \left[ B(\bm{x}_j) B\left(\bm{x}_j-\frac{c_j(\bm{m}_j+\bm{d}_j)}{\lambda}\right)\right]\\
&\phantom{================} B(\bm{x}_t)B\left(\bm{x}_t+\frac{c_t\left(\sum_{k=1}^{t-1} \bm{m}_k +\bm{d}_t\right)}{\lambda}\right)\Bigg|\prod_{j=s+1}^t d\bm{x}_j\\
 &\lesssim \prod_{i=1}^d \sum_{m_{1i},\ldots,m_{t-1,i}=-a}^a  \Bigg\{\prod_{j=1}^{s} \ell^{(0)}\left(m_{ji}+d_{ji}\right) \times \prod_{j=s+1}^{t-1} \ell^{(1)} \left(m_{ji}+d_{ji}\right)\times \ell^{(1)}\left(\sum_{k=1}^{t-1} m_{ki} + d_{ti} \right)\Bigg\}.
\end{align*}
Now, the proof works exactly as above, since we can again apply part (ii) of Lemma \ref{properties_of_ell}.

\item[(v)] In the same way as in part (iv), we obtain
\begin{align*}
&\phantom{\leq i}\frac{1}{\lambda^{dt}} \sum_{\bm{m}_1,\ldots,\bm{m}_{t-1}=-a}^{a} \Bigg|  \prod_{j=1}^{t-1} B\left(\frac{c_j(\bm{m}_j+\bm{d}_j)}{\lambda}\right) \times B\left(                  \frac{c_t\left(\sum_{k=1}^{t-1} \bm{m}_k +\bm{d}_t\right)}{\lambda}\right)\Bigg|\\                         
&\lesssim \prod_{i=1}^d  \sum_{m_{1i},\ldots,m_{t-1,i}=-\infty}^{\infty}  \Bigg\{\prod_{j=1}^{t-1} \ell^{(0)}\left(m_{ji}+d_{ji}\right) \times  \ell^{(0                                                                                                                                                                                                                                                                                                                                                                                                                    )} \left(\sum_{k=1}^{t-1} m_{ki}+d_{ti}\right) \Bigg\} .
\end{align*}       
Now, the proof works again as in part (iv), iteratively applying part (ii) of Lemma \ref{properties_of_ell}. 

\end{enumerate}

\subsection{Proof of Lemma \ref{orders_of_B}}

\begin{enumerate}
\item[(i)] This follows directly from Lemma \ref{properties_of_ell}, part (i), and Lemma \ref{bounds_for_B}, part (i). 
\item[(ii)]
By partial integration and the convolution theorem, we obtain
\begin{align*}
&\phantom{=i}\left|\int_{\R} B(u) B(\omega_m-u)\, u\, du\right| \\
&=\left|\int_{\R} \Big(\lambda \int_{\frac{1}{2}}^{\frac{1}{2}} h(t_1) \exp(-\im \lambda t_1 u)\, dt_1\Big)\Big(\lambda \int_{\frac{1}{2}}^{\frac{1}{2}} h(t_2) \exp(-\im \lambda t_2(\omega_m-u))\, dt_2\Big) u\, du\right|\\
&=\Big|\int_{\R} \Big(\frac{1}{\im u} \int_{\frac{1}{2}}^{\frac{1}{2}} h'(t_1) \exp(-\im \lambda t_1 u)\, dt_1\Big)\Big(\lambda \int_{\frac{1}{2}}^{\frac{1}{2}} h(t_2) \exp(-\im \lambda t_2(\omega_m-u))\, dt_2\Big) u\, du\Big|\\
&=\Big| \int_{\R} \Big(\int_{\frac{1}{2}}^{\frac{1}{2}} h'(t_1)\, \exp(-\im t_1 z)\, dt_1\Big) \Big(\int_{\frac{1}{2}}^{\frac{1}{2}} h(t_2) \exp(-\im t_2(2\pi m-z))\, dt_2\Big)\, dz\Big|\\
&=\big| (\mathcal{F}h') \circledast (\mathcal{F} h)(2\pi m)\big|=2\pi \left|\mathcal{F}(h'\cdot h)(2\pi m)\right|=2\pi \Big|\int_{\frac{1}{2}}^{\frac{1}{2}} h'(s) h(s) \exp(- 2\pi \im m s) \, ds\Big|.
\end{align*}
Note that for $m=0$, the above expression equals $0$.
%
%
For $m\neq 0$, we apply partial integration once more and obtain
\begin{align*}
\left|\int_{\frac{1}{2}}^{\frac{1}{2}} h'(s) h(s) \exp(-2\pi \im m s)\, ds\right|= \frac{1}{2\pi |m|} \left| \int_{\frac{1}{2}}^{\frac{1}{2}} \left[h''(s) h(s) + (h'(s))^2 \right] \exp(-2\pi \im m s) \, ds \right|.
\end{align*}
This yields the claim.

\item[(iii)] 
Using the same calculation steps as above, we obtain
\begin{align*}
\int_{\R} \left|B(u) B(\omega_m-u) \, u\right| du
=S_1+S_2+S_3,
\end{align*}
where
\begin{align*}
S_1:=\int_{-\infty}^{2\pi m-1} \Big|\int_{\frac{1}{2}}^{\frac{1}{2}} h'(s_1) \exp(-\im s_1 z)\, ds_1 \int_{\frac{1}{2}}^{\frac{1}{2}} h(s_2) \exp(-\im s_2 (2\pi m-z))\, ds_2\Big|\, dz,
\end{align*}
\begin{align*}
S_2:= \int_{2\pi m-1}^{2\pi m+1} \Big|\int_{\frac{1}{2}}^{\frac{1}{2}} h'(s_1) \exp(-\im s_1 z)\, ds_1 \int_{\frac{1}{2}}^{\frac{1}{2}} h(s_2) \exp(-\im s_2 (2\pi m-z))\, ds_2\Big|\, dz,
\end{align*}
and
\begin{align*}
S_3:=\int_{2\pi m+1}^{\infty} \Big|\int_{\frac{1}{2}}^{\frac{1}{2}} h'(s_1) \exp(-\im s_1 z)\, ds_1 \int_{\frac{1}{2}}^{\frac{1}{2}} h(s_2) \exp(-\im s_2 (2\pi m-z))\, ds_2\Big|\, dz.
\end{align*}
The term $S_2$ is obviously bounded by a generic constant. For the first and the third term, we perform integration by parts on the second integral twice (using $h(\frac{1}{2})=h(\frac{1}{2})=h^\prime(\frac{1}{2})=h^\prime(\frac{1}{2})=0$) to obtain
\begin{align*}
S_1 &= \int_{-\infty}^{2\pi m-1} \frac{1}{(2\pi m -z)^2} \Big|\int_{\frac{1}{2}}^{\frac{1}{2}} h'(s_1) \exp(-\im s_1 z)\, ds_1 \\
&\phantom{=========} \int_{\frac{1}{2}}^{\frac{1}{2}} h''(s_2) \exp(-\im s_2 (2\pi m-z))\, ds_2 \Big|\, dz \lesssim \int_{-\infty}^{2\pi m-1} \frac{1}{(2\pi m-z)^2}\, dz  \leq C,
\end{align*}
and in the same way $S_3\leq C$ for some bounded constant $C>0$. This yields the claim.

\item[(iv)] Again using integration by parts, note that the expression of interest equals
\begin{align*}
&\phantom{=i} \lambda^2 \int_{\R} \Big|u^2 \int_{\frac{1}{2}}^{\frac{1}{2}} h(s_1) \exp(-\im \lambda s_1 u)\, ds_1 \int_{\frac{1}{2}}^{\frac{1}{2}} h(s_2) \exp(-\im \lambda s_2 (\omega_m-u)) \,ds_2 \Big|\,  du\\
&= \int_{\R} \Big| \int_{\frac{1}{2}}^{\frac{1}{2}} h''(s_1) \exp(-\im \lambda s_1 u)\, ds_1 \int_{\frac{1}{2}}^{\frac{1}{2}} h(s_2) \exp(-\im s_2 (2\pi m - \lambda u)) \, ds_2 \Big| \,du\\
&=\frac{1}{\lambda} \int_{\R} \Big| \int_{\frac{1}{2}}^{\frac{1}{2}} h''(s_1) \exp(-\im s_1 z)\, ds_1 \int_{\frac{1}{2}}^{\frac{1}{2}} h(s_2) \exp(-\im s_2 (2\pi m - z)) \,ds_2 \Big| \,dz.
\end{align*}
The claim now follows with the same argument as in part (iii) (performing integration by parts on the second integral twice). 
\end{enumerate}

\subsection{Proof of Proposition \ref{Lemma F.2_SSR}}

We make use of the Taylor expansion
\begin{align*}
f(\bm{u}-\bm{v})-f(\bm{v})=f(\bm{v}-\bm{u})-f(\bm{v})=-\sum_{j=1}^d u_j \frac{\partial f}{\partial v_j}(\bm{v}) + \frac{1}{2}\sum_{j,k=1}^d u_j u_k \frac{\partial^2 f}{\partial v_j \partial v_k} (\bm{\xi}),
\end{align*}
where $\bm{\xi}=\bm{v}-\theta \bm{u}$ for some $\theta\in [0,1]$. 
Therefore, 
\begin{align} \label{formula_last_result_taper}
&\phantom{\leq i}\frac{1}{\lambda^d}\left|\int_{\R^d} B(\bm{u}) B(\bm{\omega_m}-\bm{u}) \left(\int_{-b_{1,1}}^{b_{1,2}} \ldots \int_{-b_{d,1}}^{b_{d,2}} g(\bm{v}) \left[f(\bm{u}-\bm{v})-f(\bm{v})\right]\, d\bm{v}\right) d\bm{u} \right|\nonumber\\
&\lesssim \frac{1}{\lambda^d}\sum_{j=1}^d \left(\int_{-b}^b \ldots \int_{-b}^b \left|g(\bm{v})\,\frac{\partial f}{\partial v_j}(\bm{v})\right| d\bm{v}\right) \left|\int_{\R^d} B(\bm{u}) B(\bm{\omega_m}-\bm{u}) \, u_j\, d\bm{u}\right|\nonumber\\
&+\frac{1}{2\lambda^d}\sum_{j,k=1}^d \int_{-b}^b \ldots \int_{-b}^b \left|g(\bm{v})\right| \left( \int_{\R^d} \left|\frac{\partial^2 f}{\partial v_j \partial v_k} (\bm{\xi})\, u_j u_k\, B(\bm{u}) B(\bm{\omega_m}-\bm{u})\right| d\bm{u}\right) d\bm{v}\nonumber\\
&\lesssim \begin{cases}
\sup_{\bm{v}\in \R^d} |g(\bm{v})|\Bigg(\frac{1}{\lambda^d}  \sum_{j=1}^d  \left|\int_{\R^d} B(\bm{u}) B(\bm{\omega_m}-\bm{u})\, u_j\, d\bm{u}\right|\\
\phantom{============}+\frac{b^d}{\lambda^d} \left( \sum_{j,k=1}^d \int_{\R^d} \left|u_j u_k\, B(\bm{u}) B(\bm{\omega_m}-\bm{u})\right| d\bm{u}\right)\Bigg), & \text{ if $g$ bounded},\\
\int_{\R^d} |g(\bm{v})|\, d\bm{v} \Bigg(\frac{1}{\lambda^d}  \sum_{j=1}^d  \left|\int_{\R^d} B(\bm{u}) B(\bm{\omega_m}-\bm{u})\, u_j\, d\bm{u}\right|\\
\phantom{===========}+\frac{1}{\lambda^d} \left( \sum_{j,k=1}^d \int_{\R^d} \left|u_j u_k\, B(\bm{u}) B(\bm{\omega_m}-\bm{u})\right| d\bm{u}\right)\Bigg), & \text{ if $g$ abs. integr.}
\end{cases}
\end{align}
We consider the terms
\begin{align} \label{second_term}
\int_{\R^d} B(\bm{u}) B(\bm{\omega_m}-\bm{u})\, u_j\, d\bm{u} \qquad \text{and} \qquad \int_{\R^d} \left|u_j u_k\, B(\bm{u}) B(\bm{\omega_m}-\bm{u})\right| d\bm{u}
\end{align}
separately and start with the first one. Assume w.l.o.g. that $j=1$ and note that by Lemma \ref{orders_of_B}
\begin{align*} 
\left|\int_{\R^d} B(\bm{u}) B(\bm{\omega_m}-\bm{u}) \, u_1 \, d\bm{u}\right|
&\lesssim \lambda^{d-1} \left|\int_{\R} B(u_1) B(\omega_{m_1}-u_1)\, u_1\, du_1\right|\nonumber\\
&\lesssim \begin{cases}
0, \qquad &m_1=0,\\
\lambda^{d-1}, \qquad &m_1\neq 0.
\end{cases}
\end{align*}
Concerning the second term in \eqref{second_term}, we distinguish the cases $j=k$ and $j\neq k$. First, let $j\neq k$ and assume w.l.o.g. $j=1$, $k=2$. Then, according to Lemma \ref{orders_of_B},
\begin{align*} 
&\phantom{=i}\int_{\R^d} \left|u_1 u_2\, B(\bm{u}) B(\bm{\omega_m}-\bm{u})\right| d\bm{u}\nonumber\\
&=\prod_{i=1}^2 \left(\int_{\R} \left|u_i\, B(u_i) B(\omega_{m_i}-u_i)\right| du_i\right) 
\prod_{i=3}^d \left(\int_{\R} \left|B(u_i) B(\omega_{m_i}-u_i)\right|\, du_i\right)\lesssim \lambda^{d-2}.
\end{align*}

We now consider the case $j=k$ and assume w.l.o.g. that $j=k=1$. Then, Lemma \ref{orders_of_B} gives
\begin{align*}
\int_{\R^d} \left|u_1^2\, B(\bm{u}) B(\bm{\omega_m}-\bm{u})\right| d\bm{u}&\lesssim \lambda^{d-1} \left(\int_{\R} \left|u_1^2 \, B(u_1) B(\omega_{m_1}-u_1)\right|\, du_1\right) \lesssim \lambda^{d-2}.
\end{align*}
Plugging the bounds for the terms in \eqref{second_term} into equation \eqref{formula_last_result_taper} then yields the claim.

\section{Proof of Theorem \ref{corr_first_int}} 

\setcounter{equation}{0}

The assertion of Theorem \ref{corr_first_int} is obtained in several steps. We begin with statement regarding the first and second moment of the statistic $\hat{D}_{1,d,\lambda,a}$
.
\begin{theorem} \label{expectation_theo}
If $d\in\N$ be arbitrary and Assumption \ref{ass_on_h}, Assumption \ref{assumption_on_Z}, Assumption \ref{assumption_on_sampling_scheme}, and Assumption \ref{assumptions_on_a}, part (i) are satisfied, then
\begin{align*}
\E\big[\hat{D}_{1,d,\lambda,a}\big] &=
D_{1,d,\lambda,a} + \Landau\left(\frac{1}{\lambda^2}+\frac{1}{\lambda^d}+\frac{1}{n}\right) \\
\lambda^d\Var\big[\hat{D}_{1,d,\lambda,a}\big]&=\tau_{1,d,\lambda,a}^2+ \Landau\left(\frac{(\log a)^{2d}}{\lambda}+\frac{1}{(\log\lambda)^3}+\frac{\lambda^d}{n}\right) 
\end{align*}
as $a,\lambda,n\rightarrow\infty$,
where and  $\tau_{1,d,\lambda,a}$ is  defined in  \eqref{det3} and  
\begin{align}
 \label{D_1,d,lambda,a}
D_{1,d,\lambda,a} &:=\int_{[-2\pi a/\lambda,2\pi a/\lambda]^d}f^2(\bm{\omega}) \, d\bm{\omega}.
\end{align}
\end{theorem}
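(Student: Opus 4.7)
The plan is to handle the two bounds separately, by expanding the quartic structure of $\hat{D}_{1,d,\lambda,a}$ via Isserlis' formula for the centered Gaussian field $Z$ and then identifying the dominant contributions through the $L$-function machinery of Section \ref{sec_taper_proofs}.

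For the expectation I would fix a tuple $(j_1,j_2,j_3,j_4)\in\mathcal{E}$ and apply Isserlis conditional on the distinct locations, producing three Gaussian pairings. Averaging over the independent uniform locations, each pairing yields a product of two spatial integrals of $c$ against $h$ and the exponential phase $\exp(\im(\bm{s}_{j_1}-\bm{s}_{j_2}+\bm{s}_{j_3}-\bm{s}_{j_4})^\top\tilde{\bm{\omega}}_{\bm{k},\lambda})$. Using \eqref{det300} and Lemma \ref{convolution_of_h}, these are rewritten as convolutions of $|B_{\lambda,d,h}|^2$ against $f$. The two matching pairings $\{j_1,j_2\},\{j_3,j_4\}$ and $\{j_1,j_4\},\{j_2,j_3\}$ align coherently with the phase; applying Proposition \ref{Lemma F.2_SSR} (bounded case, $\bm{m}=\bm{0}$), each yields $f^2(\tilde{\bm{\omega}}_{\bm{k},\lambda})$ up to $O(1/\lambda^2)$, and the factor $1/2$ in \eqref{est_F} cancels their multiplicity. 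The non-matching pairing $\{j_1,j_3\},\{j_2,j_4\}$ generates a phase with twice the oscillation and is of smaller order by Lemma \ref{bounds_for_B}(i) and the tail decay in Assumption \ref{assumption_on_Z}(ii). Summing over $\bm{k}$ gives a Riemann sum for $\int f^2$; the midpoint choice \eqref{shifted_Four_freq} together with Assumption \ref{assumption_on_Z}(ii) annihilates the linear Riemann error and leaves $O(1/\lambda^2)$, while the tail outside $[-2\pi a/\lambda,2\pi a/\lambda]^d$ contributes $O(1/\lambda^d)$ because $\delta>2$. The cost of excluding coincident indices via $\mathcal{E}$ is $O(\lambda^d/n)$ by Assumption \ref{assumption_on_sampling_scheme}(ii).

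For the variance I would expand $\hat{D}_{1,d,\lambda,a}^2$ and apply Isserlis to the resulting octic functional. The pairings of the eight variables split into disconnected ones (which reproduce $(\E\hat{D}_{1,d,\lambda,a})^2$) and connected ones (which form the variance). Conditional on the sampling locations each connected pairing yields a product of four covariance factors; integrating out the independent uniforms and invoking Lemma \ref{convolution_of_h} in the spirit of Example \ref{ex:period}, each $c$-factor becomes a convolution of $|B_{\lambda,d,h}|^2$ against $f$ whose diagonal value is $H_{d,h}(\bm{k}-\bm{\ell})f(\tilde{\bm{\omega}}_{\bm{k}})/H_{d,h}(\bm{0})$. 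The dominant contributions are the pairings in which all four covariances couple the two copies by a common offset $\bm{m}=\bm{k}-\bm{\ell}$; these split into two combinatorial classes according to whether the periodogram pairing produces $|\Cov(J_{\bm{k}},\overline{J_{\bm{\ell}}})|^2$ (giving the coefficient $8$ in \eqref{det3}) or $|\Cov(J_{\bm{k}},J_{\bm{\ell}})|^2$ (giving the coefficient $2$). Summing over $\bm{k}$ inside the box and collecting constants reproduces $\tau_{1,d,\lambda,a}^2$, and the remaining pairings are absorbed into the error terms via Lemma \ref{bounds_for_B}(iii)--(v).

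The $O((\log a)^{2d}/\lambda)$ and $O(1/(\log\lambda)^3)$ errors come from non-diagonal connected pairings: the former is produced by a telescoping sum of $B$-convolutions bounded by Lemma \ref{bounds_for_B}(ii), and the latter combines the smooth-taper bound of Proposition \ref{Lemma F.2_SSR} with the tail decay of $f$. The principal obstacle will be the combinatorial classification of the connected Isserlis pairings in the variance step: one must isolate the two diagonal classes contributing to $\tau_{1,d,\lambda,a}^2$ from the many subleading ones and, at each stage, route the latter through the appropriate part of Lemma \ref{bounds_for_B} to obtain estimates uniform in $\bm{k},\bm{\ell}$ and summable in the offset $\bm{m}$. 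The twice-differentiability requirement in Assumption \ref{ass_on_h}(ii) is crucial here: as Remark \ref{rem_edge_effect} indicates, without it the non-diagonal pairings would fail to be integrable and the argument would not close for $d\ge 2$.
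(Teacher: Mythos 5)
Your route is the paper's own: a Wick/Isserlis expansion conditional on the sampling locations, integration over the independent uniforms to turn each covariance pairing into a convolution of frequency windows, and the $L$-function bounds of Lemma \ref{bounds_for_B} together with Proposition \ref{Lemma F.2_SSR} to sort the resulting terms. Two of your attributions, however, would derail the bookkeeping if carried out as written. For the expectation, the theorem is stated against the truncated integral $D_{1,d,\lambda,a}$, so no tail of $\int f^2$ enters this statement at all (the tail is Proposition \ref{approx_D1}, handled separately and of order $(\lambda/a)^{1+2\delta}$, not $1/\lambda^d$). The $\Landau(1/\lambda^d)$ term you need is produced by the non-matching pairing $\{j_1,j_3\},\{j_2,j_4\}$: after the change of variables the frequencies $\tilde{\bm{\omega}}_{\bm{k}}$ cannot be eliminated from the $B$-factors, and Lemma \ref{bounds_for_B}(iii) gives exactly $\Landau(1/\lambda^d)$. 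Calling this pairing merely "of smaller order" is not enough --- for $d=1$ it is $\Landau(1/\lambda)$, which dominates the $\Landau(1/\lambda^2)$ Riemann and edge-effect errors, and it is precisely what forces the $1/\lambda^d$ in the statement.

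Second, your classification of the dominant variance pairings is wrong as stated. Both leading classes consist only of conjugate-type cross-links $\Cov(J_{\bm{k}},\overline{J_{\bm{\ell}}})$ (a ``$+$'' column of one row paired with a ``$-$'' column of the other); what separates them is whether two of the four Gaussian pairings stay within a single row (giving $H_{d,h}(\bm{m})^2/H_{d,h}(\bm{0})^2$ with multiplicity $32$, hence the coefficient $8$) or all four cross between the rows (giving $H_{d,h}(\bm{m})^4/H_{d,h}(\bm{0})^4$ with multiplicity $8$, hence the coefficient $2$). The pseudo-covariance pairings $\Cov(J_{\bm{k}},J_{\bm{\ell}})$ --- same-sign columns paired across rows --- produce factors of the form $B(\bm{u})B(\bm{\omega}_{2\bm{k}+1}-\bm{u})$ and are exactly the negligible classes (two independent restrictions, order $\Landau(1/\lambda^d)$). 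Sorting by covariance versus pseudo-covariance will not recover the constants in \eqref{det3}. Finally, Lemma \ref{bounds_for_B} requires no smoothness of the taper (the paper notes it applies to the rectangular window); Assumption \ref{ass_on_h}(ii) is needed for Proposition \ref{Lemma F.2_SSR} and Lemma \ref{orders_of_B}(ii)--(iv), i.e.\ to make the replacement errors $f(\bm{u}-\bm{v})\mapsto f(\bm{v})$ of order $1/\lambda^2$ rather than $\log\lambda/\lambda$, not to make the non-diagonal pairings integrable.
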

The next  statement shows the  asymptotic normality of $\hat  D_{1,d,\lambda,a} - D_{1,d,\lambda,a} $ after appropriate scaling.
\begin{theorem} \label{asymptotic_normality}
If $d\leq 3$ and  Assumption \ref{ass_on_h}, \ref{assumption_on_Z},  \ref{assumption_on_sampling_scheme}, and Assumption \ref{assumptions_on_a}(i),(ii) are satisfied,  we have
\begin{align*}
\frac{\lambda^{d/2}}{\tau_{1,d,\lambda,a}} \left(\hat{D}_{1,d,\lambda,a}- D_{1,d,\lambda,a}\right) \dn \mathcal{N}(0,1) \qquad \text{as } a,\lambda,n\rightarrow \infty,
\end{align*}
where $D_{1,d,\lambda,a}$  and  $\tau_{1,d,\lambda,a}$ are  defined in \eqref{D_1,d,lambda,a} and \eqref{det3}, respectively.
\end{theorem}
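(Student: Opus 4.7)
\textbf{Proof plan for Theorem~\ref{asymptotic_normality}.}
The plan is to apply the cumulant-convergence criterion (Leonov--Shiryaev): it suffices to prove that for
\[
T_{n,\lambda,a} := \frac{\lambda^{d/2}}{\tau_{1,d,\lambda,a}}\bigl(\hat D_{1,d,\lambda,a}-D_{1,d,\lambda,a}\bigr),
\]
one has $\E[T_{n,\lambda,a}]\to 0$, $\Var[T_{n,\lambda,a}]\to 1$, and the $p$-th cumulant $\kappa_p(T_{n,\lambda,a})$ vanishes for every $p\ge 3$. The mean statement follows immediately from Theorem~\ref{expectation_theo}: after multiplication by $\lambda^{d/2}$, the bias $\Landau(\lambda^{-2}+\lambda^{-d}+n^{-1})$ becomes $\Landau(\lambda^{d/2-2})+\Landau(\lambda^{-d/2})+\Landau(\lambda^{d/2}/n)$, and this is $o(1)$ precisely because $d\le 3$ and Assumption~\ref{assumptions_on_a}(i)--(ii) holds. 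The variance claim follows from the second assertion of Theorem~\ref{expectation_theo} combined with the fact that $\tau_{1,d,\lambda,a}^2$ is bounded above and below by strictly positive constants, which may be verified exactly as in Proposition~\ref{asymptotic_variances} (using that $f^4$ is integrable thanks to Assumption~\ref{assumption_on_Z}(ii) with $\delta>2$, and that $\sum_{\bm m} H_{d,h}(\bm m)^2 \le \int h^4<\infty$ by Parseval).

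The substantive step is the control of cumulants of order $p\ge 3$. I would exploit that, conditionally on the i.i.d.\ sampling locations $\mathbf S=\{\bm s_j\}$, the tapered discrete Fourier transforms $J_{n,\lambda,d,h}(\tilde{\bm\omega}_{\bm k,\lambda})$ are complex Gaussian (since $Z$ is Gaussian by Assumption~\ref{assumption_on_Z} and the $\bm s_j$ are independent of $Z$). Up to the restriction to $\mathcal E$ (which removes diagonal terms controlled by factors $\lambda^d/n$ under Assumption~\ref{assumptions_on_a}(i)), the statistic equals $\tfrac12(2\pi)^{d}(2\pi/\lambda)^d\sum_{\bm k}I_{n,\lambda,d,h}^2(\tilde{\bm\omega}_{\bm k,\lambda})$. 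The product theorem for cumulants of Gaussian random variables therefore expands the conditional $p$-th cumulant into a finite sum, indexed by indecomposable pairings of $4p$ symbols (one per factor $Z(\bm s_{j_k})$), of products of $2p$ conditional covariances $c(\bm s_{j_k}-\bm s_{j_\ell})$. Taking expectation over the uniform law of the $\bm s_j$ transforms each factor $\E[h(\bm s_j/\lambda)h(\bm s_\ell/\lambda)\exp(\im(\bm s_j-\bm s_\ell)^T\tilde{\bm\omega})c(\bm s_j-\bm s_\ell)]$ into a convolution integral of $f$ against the window $B_{\lambda,d,h}$, exactly of the type computed in Example~\ref{ex:period} and equation~\eqref{eq:cumJ2}. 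The law of total cumulance adds further contributions from cumulants of the locations, each suppressed by a factor $n^{-1}$ or stronger, and hence negligible under Assumption~\ref{assumptions_on_a}(i).

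Each indecomposable pairing thus gives rise to an iterated sum-integral over the $p$ frequency multi-indices $\bm k_1,\ldots,\bm k_p\in\{-a,\ldots,a-1\}^d$ of a product of at most $4p$ window functions $B_{\lambda,d,h}$, convolved in pairs against $f$. Since $f$ is uniformly bounded (Assumption~\ref{assumption_on_Z}(ii)), the structure is precisely the one addressed by Lemma~\ref{bounds_for_B}(iv)--(v), which yields a uniform upper bound of order $\lambda^{-d(p-1)}$ once the Riemann-sum factors $(2\pi/\lambda)^{dp}$ are absorbed. Multiplying by the normalization $\lambda^{dp/2}$ gives $\kappa_p(T_{n,\lambda,a})=\Landau(\lambda^{-d(p-2)/2})$, which tends to $0$ for every $p\ge 3$ since $d\ge 1$. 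Since the number of indecomposable pairings of $4p$ symbols is a constant depending only on $p$, summing these bounds completes the cumulant estimate and the result follows.

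The main obstacle will be the combinatorial bookkeeping: one must verify that every indecomposable pairing arising from the Gaussian product formula, together with the law of total cumulance, produces a window-convolution pattern conforming to the structural form required by Lemma~\ref{bounds_for_B}(iv) (in particular, that the induced shift vectors $\bm d_j$ and the "free"/"constrained" frequency indices match the hypothesis), and that this matching is uniform over all such pairings. This parallels the analogous enumeration carried out by \cite{dahlhaus83} for integrated periodograms of stationary time series, but the bookkeeping is more delicate here because of the interplay between the mixed-increasing-domain scaling, the Riemann approximation at the shifted Fourier frequencies $\tilde{\bm\omega}_{\bm k,\lambda}$, and the need for tapering to control the edge-effect bias as highlighted in Remark~\ref{rem_edge_effect}.
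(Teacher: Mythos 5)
Your overall strategy is the same as the paper's: reduce to the method of cumulants, note that the centering at $D_{1,d,\lambda,a}$ is handled by Theorem \ref{expectation_theo} together with $d\le 3$ and $\lambda^d/n\to 0$, and then control $\cum_q(\hat D_{1,d,\lambda,a})$ for $q\ge 3$ by conditioning on the locations, invoking Gaussianity of $Z$ and the law of total cumulance, expanding via the product theorem for cumulants over indecomposable partitions of the $q\times 4$ table, and bounding the resulting window convolutions with Lemma \ref{bounds_for_B}(iv)--(v). The target rate you state, $\cum_p=\Landau(\lambda^{-d(p-1)})$ before normalization, i.e.\ $\lambda^{dp/2}\cum_p=\Landau(\lambda^{d-dp/2})$, is exactly the paper's Corollary \ref{corollary_proof}.

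The gap is in the sentence asserting that Lemma \ref{bounds_for_B}(iv)--(v) ``yields a uniform upper bound of order $\lambda^{-d(p-1)}$'' per pairing. That bound is not a direct consequence of the boundedness of $f$ plus those lemmas: the contribution of a given partition is $\Landau\big(\lambda^{d(q/2-r)}/n^{2q-G}\big)$, where $r$ is the rank of the linear map sending $(\bm k_1,\dots,\bm k_q)$ to the frequency differences appearing inside the windows (the ``number of restrictions''), $G$ is the number of blocks, and an additional factor $\lambda^d$ (resp.\ $a^d$) can appear each time two location indices coincide. If $r$ were small the bound would blow up like $\lambda^{dq/2}$; what saves the argument is the purely combinatorial fact that indecomposability forces $r\ge s-1$ whenever $G=q+s$ (the paper's Lemma \ref{number_of_rest}), combined with the case analysis of Lemma \ref{order_depending_on_number_of_restr} and Propositions \ref{general_order}--\ref{less_or_equal} that tracks simultaneously the number of blocks, the number of distinct location indices (whence the powers of $\lambda^d/n$ and $a^d/n$), and the rank $r$. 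You correctly flag this bookkeeping as the obstacle, but it is not a routine verification that the hypotheses of Lemma \ref{bounds_for_B}(iv) are met: it is the substantive content of the proof, and without the indecomposability-implies-restrictions lemma the claimed uniform per-pairing bound is false for partitions with many blocks and few cross-row constraints. A complete proof must supply this lemma and the accompanying rank/index accounting.
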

The proof is completed investigating the error of the approximation  of $D_{1,d}$ by $D_{1,d,\lambda,a}$.
\begin{prop} \label{approx_D1}
Assume that $f(\bm{\omega})\leq \beta_{1+\delta}(\bm{\omega})$ for some $\delta>0$ and that
 $a/\lambda\to\infty$, then 
\begin{align*}
|D_{1,d}-D_{1,d,\lambda,a}|=\Landau\left(\frac{\lambda^{1+2\delta}}{a^{1+2\delta}}\right) \qquad \text{as } \lambda,a\to\infty.
\end{align*}
\end{prop}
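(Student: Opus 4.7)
The proof is essentially an estimate of the tail of $f^2$ outside the cube $[-2\pi a/\lambda, 2\pi a/\lambda]^d$, so my plan is short. First, I would note that since $D_{1,d,\lambda,a}$ is the integral of the same integrand as $D_{1,d}$ restricted to the cube $Q_T := [-T,T]^d$ with $T := 2\pi a/\lambda$, we can write
\begin{equation*}
|D_{1,d}-D_{1,d,\lambda,a}| \;=\; \int_{\R^d \setminus Q_T} f^2(\bm{\omega})\, d\bm{\omega}.
\end{equation*}
Using the assumed pointwise bound $f(\bm{\omega}) \le \beta_{1+\delta}(\bm{\omega})$ and the product structure $\beta_{1+\delta}(\bm{\omega}) = \prod_{i=1}^d \beta_{1+\delta}(\omega_i)$, this is dominated by $\int_{\R^d\setminus Q_T}\prod_i \beta_{1+\delta}^2(\omega_i)\, d\bm{\omega}$.

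Next, I would exploit the product structure explicitly. Writing $A := \int_{\R}\beta_{1+\delta}^2(\omega)\,d\omega$ and $A_T := \int_{-T}^{T}\beta_{1+\delta}^2(\omega)\,d\omega$, the tail integral equals $A^d - A_T^d$. Since $\delta > 0$ ensures $\beta_{1+\delta}^2$ is integrable on $\R$, we have $A<\infty$, and the factorization
\begin{equation*}
A^d - A_T^d \;=\; (A - A_T)\sum_{j=0}^{d-1} A^{d-1-j} A_T^{j} \;\le\; d\, A^{d-1} (A - A_T)
\end{equation*}
reduces the problem to the one-dimensional tail $A - A_T$.

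Finally, since $a/\lambda \to \infty$, for $\lambda, a$ sufficiently large we have $T > 1$, so on the tail $|\omega|>T$ the function $\beta_{1+\delta}$ agrees with $C|\omega|^{-(1+\delta)}$, giving
\begin{equation*}
A - A_T \;=\; 2C^2\int_{T}^{\infty} \omega^{-2(1+\delta)}\, d\omega \;=\; \frac{2C^2}{(1+2\delta)}\, T^{-(1+2\delta)}.
\end{equation*}
Substituting $T = 2\pi a/\lambda$ yields $|D_{1,d}-D_{1,d,\lambda,a}| = \Landau(T^{-(1+2\delta)}) = \Landau\bigl(\lambda^{1+2\delta}/a^{1+2\delta}\bigr)$, as claimed. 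There is no real obstacle here beyond carefully handling the product form of $\beta_{1+\delta}$; the computation is a one-dimensional tail estimate dressed up by the factorization above.
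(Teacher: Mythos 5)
Your proof is correct and follows essentially the same route as the paper: write the error as the integral of $f^2$ over the complement of the cube, dominate by the product $\prod_i \beta_{1+\delta}^2(\omega_i)$, and reduce to the one-dimensional tail $\int_{2\pi a/\lambda}^{\infty}\omega^{-2-2\delta}\,d\omega$. The only difference is that you make the dimension-reduction step explicit via the factorization $A^d-A_T^d\le d\,A^{d-1}(A-A_T)$, which the paper handles informally; this is a presentational refinement, not a different argument.
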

Assumption \ref{ass_on_h} and integration by parts yield
\begin{align*}
|H_{d,h}(\bm{m})| = \prod_{i=1}^{d} |H_{1,h}(m_i)| \lesssim \frac{1}{\prod_{i=1}^d m_i^2}
\end{align*}
for $m_1,\ldots,m_d\neq 0$, and it is obvious that $H_{d,h}(\bm{0})$ must be bounded away from $0$.  Consequently, we have
 $$0<\inf_{\lambda,a} \tau_{1,d,\lambda,a}^2 \leq \sup_{\lambda,a} \tau_{1,d,\lambda,a}^2 < \infty.
 $$ 
The assertion of Theorem  \ref{corr_first_int} now follows from   the decomposition 
\begin{align*}
&\phantom{==}\frac{\lambda^{d/2}}{\tau_{1,d,\lambda,a}} \left(\hat{D}_{1,d,\lambda,a}- D_{1,d}\right)= 
\frac{\lambda^{d/2}}{\tau_{1,d,\lambda,a}} \left(\hat{D}_{1,d,\lambda,a}-D_{1,d,\lambda,a}\right)+\frac{\lambda^{d/2}}{\tau_{1,d,\lambda,a}} \left(D_{1,d,\lambda,a}- D_{1,d}\right).
\end{align*}
By Theorem \ref{asymptotic_normality}, the first term on the right hand side converges weakly  to a standard normal distribution. By Proposition \ref{approx_D1} and \eqref{further_ass_int_1}, the 
second term is of order $o_P(1)$.  \hfill $\Box$ 

\bigskip 

In the remaining part of this section we will prove Theorem  \ref{expectation_theo},
\ref{asymptotic_normality}
and Proposition  \ref{approx_D1}.
For the sake of simplicity, we write $\tilde{\bm{\omega}}_{\bm{k},\lambda}=\tilde{\bm{\omega}}_{\bm{k}}$, $\bm{\omega}_{\bm{k},\lambda}=\bm{\omega}_{\bm{k}}$, $H_{d,h}(\bm{m})=H(\bm{m})$ and $B_{\lambda,d,h}(\bm{u})=B(\bm{u})$. 

\subsection{Proof of Theorem \ref{expectation_theo} } 

\paragraph{Asymptotic  bias:}
By the tower property, it holds that
\begin{align*}
\E\big[\hat{D}_{1,d,\lambda,a}\big]& = 
\frac{(2\pi\lambda)^d}{2n^4 H(\bm{0})^2} \sum_{\bm{k}=-a}^{a-1} \sum_{(j_1,j_2,j_3,j_4)\in \mathcal{E}} \E \bigg[h\left(\frac{\bm{s}_{j_1}}{\lambda}\right) h\left(\frac{\bm{s}_{j_2}}{\lambda}\right) h\left(\frac{\bm{s}_{j_3}}{\lambda}\right) h\left(\frac{\bm{s}_{j_4}}{\lambda}\right) \\
&\phantom{====}\exp\big(\im (\bm{s}_{j_1}-\bm{s}_{j_2}+\bm{s}_{j_3} - \bm{s}_{j_4})^T \tilde{\bm{\omega}}_{\bm{k}}\big)\, \E \big[Z(\bm{s}_{j_1}) Z(\bm{s}_{j_2}) Z(\bm{s}_{j_3}) Z(\bm{s}_{j_4}) \big|\, \bm{s}_{j_1},\bm{s}_{j_2},\bm{s}_{j_3},\bm{s}_{j_4}\big]\bigg].
\end{align*}
Since $\{Z(\bm{s}):\bm{s}\in\R^d\}$ is assumed to be a Gaussian process with mean zero, we have
\begin{align*}
\E \big[Z(\bm{s}_{j_1}) Z(\bm{s}_{j_2}) Z(\bm{s}_{j_3}) Z(\bm{s}_{j_4}) \big|\, \bm{s}_{j_1},\bm{s}_{j_2},\bm{s}_{j_3},\bm{s}_{j_4}\big]&=c(\bm{s}_{j_1}-\bm{s}_{j_2})\, c(\bm{s}_{j_3}-\bm{s}_{j_4})+c(\bm{s}_{j_1}-\bm{s}_{j_3})\, c(\bm{s}_{j_2}-\bm{s}_{j_4})\\&+c(\bm{s}_{j_1}-\bm{s}_{j_4})\, c(\bm{s}_{j_2}-\bm{s}_{j_3}),
\end{align*}
and since the locations $\bm{s}_j$ are independently sampled, this yields
\begin{align*}
\E\big[\hat{D}_{1,d,\lambda,a}\big]& = E_1 + E_2 + E_3,
\end{align*}
where
\begin{align*} 
E_1& = \frac{(2\pi\lambda)^d}{2n^4 H(\bm{0})^2} \sum_{\bm{k}=-a}^{a-1} \sum_{(j_1,j_2,j_3,j_4)\in \mathcal{E}} \bigg(  \E\bigg[h\left(\frac{\bm{s}_{j_1}}{\lambda}\right) h\left(\frac{\bm{s}_{j_2}}{\lambda}\right) c(\bm{s}_{j_1}-\bm{s}_{j_2})\exp\big(\im(\bm{s}_{j_1}-\bm{s}_{j_2})^T \tilde{\bm{\omega}}_{\bm{k}}\big)\bigg]\nonumber\\
&\phantom{===============iii=}  \E\bigg[h\left(\frac{\bm{s}_{j_3}}{\lambda}\right) h\left(\frac{\bm{s}_{j_4}}{\lambda}\right) c(\bm{s}_{j_3}-\bm{s}_{j_4})\exp\big(\im(\bm{s}_{j_3}-\bm{s}_{j_4})^T \tilde{\bm{\omega}}_{\bm{k}}\big)\bigg]\bigg),
\end{align*}
\begin{align} \label{E_2}
E_2 &=\frac{(2\pi\lambda)^d}{2n^4 H(\bm{0})^2} \sum_{\bm{k}=-a}^{a-1} \sum_{(j_1,j_2,j_3,j_4)\in \mathcal{E}} \bigg( \E\bigg[h\left(\frac{\bm{s}_{j_1}}{\lambda}\right) h\left(\frac{\bm{s}_{j_3}}{\lambda}\right) c(\bm{s}_{j_1}-\bm{s}_{j_3})\exp\big(\im(\bm{s}_{j_1}+\bm{s}_{j_3})^T \tilde{\bm{\omega}}_{\bm{k}}\big)\bigg]\nonumber\\
&\phantom{===============iii=}  \E\bigg[h\left(\frac{\bm{s}_{j_2}}{\lambda}\right) h\left(\frac{\bm{s}_{j_4}}{\lambda}\right) c(\bm{s}_{j_2}-\bm{s}_{j_4})\exp\big(\im(-\bm{s}_{j_2}-\bm{s}_{j_4})^T \tilde{\bm{\omega}}_{\bm{k}}\big)\bigg] \bigg),
\end{align}
and
\begin{align*} 
E_3 & = \frac{(2\pi\lambda)^d}{2n^4 H(\bm{0})^2} \sum_{\bm{k}=-a}^{a-1} \sum_{(j_1,j_2,j_3,j_4)\in \mathcal{E}} \bigg( \E\bigg[h\left(\frac{\bm{s}_{j_1}}{\lambda}\right) h\left(\frac{\bm{s}_{j_4}}{\lambda}\right) c(\bm{s}_{j_1}-\bm{s}_{j_4})\exp\big(\im(\bm{s}_{j_1}-\bm{s}_{j_4})^T \tilde{\bm{\omega}}_{\bm{k}}\big)\bigg]\nonumber\\
&\phantom{===========iii=====}  \E\bigg[h\left(\frac{\bm{s}_{j_2}}{\lambda}\right) h\left(\frac{\bm{s}_{j_3}}{\lambda}\right) c(\bm{s}_{j_2}-\bm{s}_{j_3})\exp\big(\im(\bm{s}_{j_3}-\bm{s}_{j_2})^T \tilde{\bm{\omega}}_{\bm{k}}\big)\bigg]\bigg).
\end{align*}
We now consider these three terms separately. $E_1$ and $E_3$ can be dealt with analogously and we thus only consider the term $E_1$. To this end, we define the quantity
\begin{align} \label{c_q,n}
c_{q,n}:=\frac{n(n-1)\ldots (n-4q+1)}{n^{4q}}
\end{align}
and obtain
\begin{align*}
E_1&\overset{\text{(1)}}{=} \frac{(2\pi\lambda)^d c_{1,n}}{2 H(\bm{0})^2}\, \sum_{\bm{k}=-a}^{a-1} \E\left[h\left(\frac{\bm{s}_{1}}{\lambda}\right) h\left(\frac{\bm{s}_{2}}{\lambda}\right)  c(\bm{s}_1-\bm{s}_2) \exp\big(\im (\bm{s}_1-\bm{s}_2)^T \tilde{\bm{\omega}}_{\bm{k}}\big)\right]\\
&\phantom{===iiiiiiiiiiii==} \E\left[h\left(\frac{\bm{s}_{3}}{\lambda}\right) h\left(\frac{\bm{s}_{4}}{\lambda}\right)  c(\bm{s}_3-\bm{s}_4) \exp\big(\im (\bm{s}_3-\bm{s}_4)^T \tilde{\bm{\omega}}_{\bm{k}}\big)\right]\\
&\overset{\text{(2)}}{=} \frac{(2\pi\lambda)^d c_{1,n}}{2 H(\bm{0})^2} \sum_{\bm{k}=-a}^{a-1} \frac{1}{\lambda^{4d}} \int_{[-\lambda/2,\lambda/2]^{4d}} h\left(\frac{\bm{s}_{1}}{\lambda}\right) h\left(\frac{\bm{s}_{2}}{\lambda}\right) h\left(\frac{\bm{s}_{3}}{\lambda}\right) h\left(\frac{\bm{s}_{4}}{\lambda}\right)\\
&\phantom{===========}  c(\bm{s}_1-\bm{s}_2) c(\bm{s}_3-\bm{s}_4) \exp\big(\im(\bm{s}_1-\bm{s}_2+\bm{s}_3-\bm{s}_4)^T \tilde{\bm{\omega}}_{\bm{k}}\big) \, d\bm{s}_1 d \bm{s}_2 d\bm{s}_3 d\bm{s}_4 \\
&\overset{\text{(3)}}{=} \frac{(2\pi\lambda)^d c_{1,n}}{2 (2\pi)^{2d} H(\bm{0})^2} \sum_{\bm{k}=-a}^{a-1} \int_{\R^{2d}} \bigg( \frac{1}{\lambda^{4d}}  \int_{[-\lambda/2,\lambda/2]^{4d}} h\left(\frac{\bm{s}_{1}}{\lambda}\right) h\left(\frac{\bm{s}_{2}}{\lambda}\right) h\left(\frac{\bm{s}_{3}}{\lambda}\right) h\left(\frac{\bm{s}_{4}}{\lambda}\right) \exp\big(\im(\bm{s}_1-\bm{s}_2)^T \bm{x}\big) \\
&\phantom{========} \exp\big(\im (\bm{s}_3-\bm{s}_4)^T \bm{y}\big) 
\exp\big(\im (\bm{s}_1-\bm{s}_2+\bm{s}_3-\bm{s}_4)^T \tilde{\bm{\omega}}_{\bm{k}}\big)\, d\bm{s}_1 d\bm{s}_2 d\bm{s}_3 d\bm{s}_4 
\bigg)  f(\bm{x}) f(\bm{y}) \, d\bm{x} d\bm{y}\\
&=\frac{(2\pi\lambda)^d c_{1,n}}{2 (2\pi)^{2d} H(\bm{0})^2} \sum_{\bm{k}=-a}^{a-1} \int_{\R^{2d}}\bigg( \frac{1}{\lambda^{4d}} \int_{[-\lambda/2,\lambda/2]^{4d}}  h\left(\frac{\bm{s}_{1}}{\lambda}\right) h\left(\frac{\bm{s}_{2}}{\lambda}\right) h\left(\frac{\bm{s}_{3}}{\lambda}\right) h\left(\frac{\bm{s}_{4}}{\lambda}\right) \exp\big(\im \bm{s}_1^T(\bm{x}+\tilde{\bm{\omega}}_{\bm{k}})\big)\\
&\phantom{==}  \exp\big(-\im \bm{s}_2^T(\bm{x}+\tilde{\bm{\omega}}_{\bm{k}})\big)\exp\big(\im \bm{s}_3^T(\bm{y}+\tilde{\bm{\omega}}_{\bm{k}})\big)\exp\big(-\im \bm{s}_4^T(\bm{y}+\tilde{\bm{\omega}}_{\bm{k}})\big) \, d\bm{s}_1 d \bm{s}_2 d\bm{s}_3 d\bm{s}_4 \bigg)  f(\bm{x}) f(\bm{y})\, d\bm{x} d\bm{y}.
\end{align*}
For (1), we used that the locations $\{\bm{s}_j\}$ are independent identically distributed and that $(j_1,\ldots,j_4)\in\mathcal{E}$; in (2), we exploited that the locations are uniformly distributed on $[-\lambda/2,\lambda/2]^d$, and in (3) we inserted the definition of the covariance function.
Note that 
\begin{align*}
\int_{[-\lambda/2,\lambda/2]^{2d}} h\left(\frac{\bm{s}_{1}}{\lambda}\right) h\left(\frac{\bm{s}_{2}}{\lambda}\right) \exp\big(\im \bm{s}_1^T(\bm{x}+\tilde{\bm{\omega}}_{\bm{k}})\big)\exp\big(-\im \bm{s}_2^T(\bm{x}+\tilde{\bm{\omega}}_{\bm{k}})\big) \, d\bm{s}_1 d\bm{s}_2&=\left|B(\bm{x}+\tilde{\bm{\omega}}_{\bm{k}})\right|^2,
\end{align*}
with $B$ being the frequency window of $h$, see \eqref{Four_trafo_of_h}. Under Assumption \ref{ass_on_h},  $B$ is a real function, which yields
\begin{align} \label{calc_E1}
E_1&=\frac{c_{1,n}}{2 (2\pi)^{d} H(\bm{0})^2 \lambda^{3d}} \sum_{\bm{k}=-a}^{a-1} 
\int_{\R^{2d}} B(\bm{x}+\tilde{\bm{\omega}}_{\bm{k}})^2 B(\bm{y}+\tilde{\bm{\omega}}_{\bm{k}})^2 f(\bm{x}) f(\bm{y})\, d\bm{x} d\bm{y}\nonumber\\
&=\frac{c_{1,n}}{2(2\pi\lambda)^{2d} H(\bm{0})^2} \int_{\R^{2d}} B(\bm{u})^2 B(\bm{v})^2 \left(\left(\frac{2\pi}{\lambda}\right)^d \sum_{\bm{k}=-a}^{a-1} f(\bm{u}-\tilde{\bm{\omega}}_{\bm{k}}) f(\bm{v}-\tilde{\bm{\omega}}_{\bm{k}})\right) \, d\bm{u} d\bm{v}.
\end{align}
By Lemma  \ref{Riemann_sum}, the last expression equals 
\begin{align*}
\frac{c_{1,n}}{2(2\pi\lambda)^{2d} H(\bm{0})^2}\int_{\R^{2d}} B(\bm{u})^2 B(\bm{v})^2 \left(\int_{[-2\pi a/\lambda,2\pi a/\lambda]^d} f(\bm{u}-\bm{\omega}) f(\bm{v}-\bm{\omega})\, d\bm{\omega} \right) d\bm{u} d\bm{v} + \Landau\left(\frac{1}{\lambda^2}\right)
\end{align*}
as $a,\lambda\rightarrow\infty$.
Then, for the first term we can write
\begin{align*}
&\phantom{=i}\frac{c_{1,n}}{2(2\pi\lambda)^{2d} H(\bm{0})^2} \int_{\R^{2d}} B(\bm{u})^2 B(\bm{v})^2 \left(\int_{[-2\pi a/\lambda,2\pi a/\lambda]^d} f^2(\bm{\omega})\, d\bm{\omega} \right) d\bm{u} d\bm{v}\\
&+\frac{c_{1,n}}{2(2\pi\lambda)^{2d} H(\bm{0})^2} \int_{\R^{2d}} B(\bm{u})^2 B(\bm{v})^2 \Bigg(\int_{[-2\pi a/\lambda,2\pi a/\lambda]^d} f(\bm{u}-\bm{\omega}) f(\bm{v}-\bm{\omega}) \, d\bm{\omega}\\
&\phantom{===========================} - \int_{[-2\pi a/\lambda,2\pi a/\lambda]^d} f^2(\bm{\omega}) \, d\bm{\omega} \Bigg) d\bm{u} d\bm{v}
\end{align*}
Lemma  \ref{convolution_of_h} yields
\begin{align*}
E_1&=\frac{1}{2} \int_{[-2\pi a/\lambda,2\pi a/\lambda]^d} f^2(\bm{\omega})\, d\bm{\omega}\\
& +\frac{1}{2(2\pi\lambda)^{2d} H(\bm{0})^2} \int_{\R^{2d}} B(\bm{u})^2 B(\bm{v})^2 \Bigg(\int_{[-2\pi a/\lambda,2\pi a/\lambda]^d} f(\bm{u}-\bm{\omega}) f(\bm{v}-\bm{\omega}) \, d\bm{\omega}\\
&\phantom{===========================} -  \int_{[-2\pi a/\lambda,2\pi a/\lambda]^d} f^2(\bm{\omega}) \, d\bm{\omega} \Bigg) d\bm{u} d\bm{v}\\
&+\Landau\left(\frac{1}{\lambda^2}\right) + \Landau\left(\frac{1}{n}\right). 
\end{align*}
We now show that the second of these terms is sufficiently small. To this end, we make the expansion
\begin{align*}
&\phantom{=i}\frac{1}{\lambda^{2d}}\int_{\R^{2d}} B(\bm{u})^2 B(\bm{v})^2 \left( \int_{[-2\pi a/\lambda,2\pi a/\lambda]^d} f(\bm{u}-\bm{\omega}) f(\bm{v}-\bm{\omega}) \, d\bm{\omega} -  \int_{[-2\pi a/\lambda,2\pi a/\lambda]^d} f^2(\bm{\omega}) \, d\bm{\omega} \right) d\bm{u} d\bm{v}\\
&=\frac{1}{\lambda^d}\int_{\R^{d}} B(\bm{u})^2 \left(\frac{1}{\lambda^d}\int_{\R^d} B(\bm{v})^2 \left(\int_{[-2\pi a/\lambda,2\pi a/\lambda]^d} f(\bm{u}-\bm{\omega}) \left[f(\bm{v}-\bm{\omega})-f(\bm{\omega})\right] d\bm{\omega}\right) d\bm{v}\right) d\bm{u}\\
&+\frac{1}{\lambda^d}\int_{\R^{d}} B(\bm{v})^2 \left(\frac{1}{\lambda^d} \int_{\R^d} B(\bm{u})^2 \left(\int_{[-2\pi a/\lambda,2\pi a/\lambda]^d} f(\bm{\omega}) \left[f(\bm{u}-\bm{\omega})-f(\bm{\omega})\right]d\bm{\omega}\right) d\bm{u} \right) d\bm{v}.
\end{align*}
Both terms can be dealt with using Proposition \ref{Lemma F.2_SSR}, part (ii): Since the functions $f(\bm{u}-\bm{\omega})$ and $f(\bm{\omega})$ are absolutely integrable with respect to $\bm{\omega}$, we obtain
\begin{align*}
&\phantom{\lesssim}\sup_{\bm{u}\in\R^d}\Bigg\{\frac{1}{\lambda^d}\left| \int_{\R^d} B(\bm{v})^2 \left[\int_{[-2\pi a/\lambda,2\pi a/\lambda]^d} f(\bm{u}-\bm{\omega}) \left[f(\bm{v}-\bm{\omega})-f(\bm{\omega})\right] d\bm{\omega}\right) d\bm{v}\right|\Bigg\}\\
&\lesssim  \frac{1}{\lambda^2}  \sup_{\bm{u}\in\R^d} \int_{\R^d} f(\bm{u}-\bm{\omega})\, d\bm{\omega} \lesssim \frac{1}{\lambda^2}
\end{align*}
and
\begin{align*}
\frac{1}{\lambda^d} \left|\int_{\R^d} B(\bm{u})^2 \left(\int_{[-2\pi a/\lambda,2\pi a/\lambda]^d} f(\bm{\omega}) \left[f(\bm{u}-\bm{\omega})-f(\bm{\omega})\right]d\bm{\omega}\right) d\bm{u}\right|\lesssim \frac{1}{\lambda^2}.
\end{align*}
Since $\int_{\R^d} B(\bm{u})^2\, d\bm{u}\lesssim \lambda^d$ (see e.g. Lemma \ref{convolution_of_h}), we thus obtain 
\begin{align} \label{E1}
E_1=\frac{1}{2} \int_{[-2\pi a/\lambda,2\pi a/\lambda]^d} f^2(\bm{\omega})\, d\bm{\omega}+\Landau\left(\frac{1}{\lambda^2}\right)+\Landau\left(\frac{1}{n}\right),
\end{align}
and in the same way we also get
\begin{align} \label{E3}
E_3=\frac{1}{2} \int_{[-2\pi a/\lambda,2\pi a/\lambda]^d} f^2(\bm{\omega})\, d\bm{\omega}+\Landau\left(\frac{1}{\lambda^2}\right)+\Landau\left(\frac{1}{n}\right)
\end{align}
as $\lambda,a,n\rightarrow\infty$.
Now we deal with the term $E_2$ in \eqref{E_2}.  With the same arguments as given for the calculation of the term $E_1$, it follows that
\begin{align*}
E_2
&=\frac{c_{1,n}}{2(2\pi)^{d} H(\bm{0})^2 \lambda^{3d}} \sum_{\bm{k}=-a}^{a-1} \int_{\R^{2d}} B(\bm{x}+\tilde{\bm{\omega}}_{\bm{k}}) B(-\bm{x}+\tilde{\bm{\omega}}_{\bm{k}}) B(\bm{y}-\tilde{\bm{\omega}}_{\bm{k}}) B(-\bm{y}-\tilde{\bm{\omega}}_{\bm{k}}) \,f(\bm{x}) f(\bm{y})\, d\bm{x} d\bm{y}.
\end{align*}
Comparing this expression with the one in \eqref{calc_E1}, we see that we cannot make a change of variables in a way that all expressions of the form $\tilde{\bm{\omega}}_{\bm{k}}$ vanish from the $B$-functions. Instead, by a change of variables and recalling that by Assumption \ref{ass_on_h} the function $B$ is even, we obtain that the last expression equals
\begin{align*}
\frac{c_{1,n}}{2(2\pi)^{d} H(\bm{0})^2 \lambda^{3d}} \sum_{\bm{k}=-a}^{a-1} \int_{\R^{2d}} B(\bm{u}) B(2\tilde{\bm{\omega}}_{\bm{k}}-\bm{u}) B(\bm{v}) B(\bm{v}+2\tilde{\bm{\omega}}_{\bm{k}})  \,f(\bm{u}-\tilde{\bm{\omega}}_{\bm{k}}) f(\bm{v}+\tilde{\bm{\omega}}_{\bm{k}})\, d\bm{u} d\bm{v}.
\end{align*}
Observing that $2\tilde{\bm{\omega}}_{\bm{k}}=\bm{\omega}_{2\bm{k}+1}$, where $\bm{\omega_k}$ are the usual Fourier frequencies defined in \eqref{Four_freq}, and using that $f$ is bounded, we get 
\begin{align*}
|E_2|\lesssim \frac{1}{\lambda^{3d}} \sum_{\bm{k}=-a}^{a-1} \int_{\R^{2d}} \big| B(\bm{u}) B(-\bm{u}+\bm{\omega}_{2\bm{k}+1}) B(\bm{v}) B(\bm{v}+\bm{\omega}_{2\bm{k}+1}) \big| d\bm{u} d\bm{v}=\Landau\left(\frac{1}{\lambda^d}\right),
\end{align*}
where we have used Lemma \ref{bounds_for_B}, part (iii), in the last step. Taking this result together with \eqref{E1} and \eqref{E3}, we thus obtain
\begin{align*}
\E\big[\hat{D}_{1,d,\lambda,a}\big]=\int_{ [-2\pi a/\lambda,2\pi a/\lambda]^d} f^2(\bm{\omega}) \, d\bm{\omega} + \Landau\left(\frac{1}{\lambda^2}\right)+\Landau\left(\frac{1}{n}\right)+\Landau\left(\frac{1}{\lambda^d}\right)
\end{align*}
as $\lambda,a,n\to\infty$, which yields the statement of Theorem \ref{expectation_theo}.
$\hfill \Box$\\


\paragraph{Asymptotic variance:}
In order for our calculations to make sense, we assume that $n>8$. Given random variables $X_1,\ldots,X_n$, we denote by $\cum_n(X_1,\ldots,X_n)$ their cumulant. \label{page73} For simplicity, we abandon the subscript $n$ from time to time and simply write $\cum(X_1,\ldots,X_n)$.
Since the estimator $\hat{D}_{1,d,\lambda,a}$ is real, we thus have
\begin{align*}
&\lambda^d \Var\big[\hat{D}_{1,d,\lambda,a}\big]=\frac{(2\pi)^{2d}\lambda^{3d}}{4 n^{8} H(\bm{0})^4} \sum_{\bm{k}_1,\bm{k}_2=-a}^{a-1} \sum_{\underline{j}\in \mathcal{D}} \cum\bigg[h\left(\frac{\bm{s}_{j_1}}{\lambda}\right) h\left(\frac{\bm{s}_{j_2}}{\lambda}\right) h\left(\frac{\bm{s}_{j_3}}{\lambda}\right) h\left(\frac{\bm{s}_{j_4}}{\lambda}\right)\\
&\phantom{===========i====i===}  Z(\bm{s}_{j_1}) Z(\bm{s}_{j_2}) Z(\bm{s}_{j_3}) Z(\bm{s}_{j_4}) \exp\big(\im(\bm{s}_{j_1}-\bm{s}_{j_2}+\bm{s}_{j_3}-\bm{s}_{j_4})^T \tilde{\bm{\omega}}_{\bm{k}_1}\big),\\
&\phantom{i=} h\left(\frac{\bm{s}_{j_5}}{\lambda}\right) h\left(\frac{\bm{s}_{j_6}}{\lambda}\right) h\left(\frac{\bm{s}_{j_7}}{\lambda}\right) h\left(\frac{\bm{s}_{j_8}}{\lambda}\right) Z(\bm{s}_{j_5}) Z(\bm{s}_{j_6}) Z(\bm{s}_{j_7}) Z(\bm{s}_{j_8}) \exp\big(\im(\bm{s}_{j_5}-\bm{s}_{j_6}+\bm{s}_{j_7}-\bm{s}_{j_8})^T \tilde{\bm{\omega}}_{\bm{k}_2}\big)\bigg]\\
&=\frac{(2\pi)^{2d}\lambda^{3d}}{4 n^{8} H(\bm{0})^4} \sum_{\bm{k}_1,\bm{k}_2=-a}^{a-1} \sum_{\underline{j}\in \mathcal{D}} \cum\bigg[Y_{1,1}(\underline{j}) \, Y_{1,2}(\underline{j}) \,Y_{1,3}(\underline{j})\, Y_{1,4}(\underline{j})\, ,\,   Y_{2,1}(\underline{j})\, Y_{2,2}(\underline{j})\, Y_{2,3}(\underline{j}) \,Y_{2,4}(\underline{j})\bigg],
\end{align*}
where \begin{align} \label{set_D}
\mathcal{D}:=\{\underline{j}=(j_1,\ldots,j_8)\in\{1,\ldots,n\}^8\,:\, (j_1,j_2,j_3,j_4)\in\mathcal{E} \text{ and } (j_5,j_6,j_7,j_8)\in\mathcal{E}\},
\end{align} 
$\mathcal{E}$ is defined in \eqref{set_E} and $\bm{k}_t=(k_{t1},\ldots,k_{td})^T$ for $t=1,2$. Furthermore, for $t=1,2$ and $c=1,2,3,4,$ the random variables $Y_{t,c}$ are defined as
\begin{align} \label{rv's_Y_tc}
Y_{t,c}(\underline{j}):=h\left(\frac{\bm{s}_{j_{c+4(t-1)}}}{\lambda}\right) Z(\bm{s}_{j_{c+4(t-1)}}) \exp\big((-1)^{c+1}\, \im\, \bm{s}_{j_{c+4(t-1)}}^T \tilde{\bm{\omega}}_{\bm{k}_t} \big).
\end{align}
We need three important results on cumulants:
\begin{itemize}
\item[A)] First, recall the product theorem for cumulants, which states that for a matrix of random variables $X_{i,j}$, $i=1,\ldots,I$, $j=1,\ldots,J_i$ and random variables
\begin{align*}
Y_i=\prod_{j=1}^{J_i} X_{i,j}, \qquad i=1,\ldots,I,
\end{align*}
we have
\begin{align*}
\cum_{I}\big[Y_1,\ldots,Y_I\big]=\sum_{\bm{\nu}} \cum_{|\nu_1|}\big(X_{i,j}\,:\, (i,j)\in\nu_1\big)\times\ldots\times \cum_{|\nu_G|}\big(X_{i,j}\,:\, (i,j)\in\nu_G\big),
\end{align*}
where the sum is over all indecomposable partitions $\bm{\nu}=\{\nu_1, \ldots,\nu_G\}$ of the table
\begin{align*}
\begin{matrix} 
(1,1) & \ldots & (1,J_1)\\
\vdots & \vdots & \vdots \\
(I,1) &\ldots  & (I,J_I)\, 
\end{matrix}
\end{align*}
 \citep[see][]{brillinger81}.
\item[B)] Secondly, note that for any Gaussian process $\{X(\bm{s}):\bm{s}\in\R^d\}$ with mean $0$ and random locations $\{\bm{s}_j\}$, we must have
\begin{align}  \label{result_B}
\cum_{2q+1} \left[X(\bm{s}_{1}),\ldots,X(\bm{s}_{{2q+1}})\right]=0
\end{align}
for all $q\in\N$. This is due to the fact that by the law of total cumulance, it holds that
\begin{align*} 
&\phantom{=i}\cum_q\left[X(\bm{s}_1),\ldots,X(\bm{s}_q)\right]\\
&=\sum_{\bm{\pi}} \cum_b\left[\cum_{|\pi_1|}\left(X_{\pi_1}\,|\, \bm{s}_1,\ldots,\bm{s}_q\right),\ldots,\cum_{|\pi_b|}\left(X_{\pi_b}\,|\, \bm{s}_1,\ldots,\bm{s}_q\right)\right],
\end{align*}
where the sum is over all partitions $\bm{\pi}=\{\pi_1, \ldots, \pi_b\}$ of $\{1,\ldots,q\}$, and 
\begin{align*}
X_{\pi_j}:=\{X(\bm{s}_i)\, :\, i\in\pi_j\}, \qquad j=1,\ldots,b
\end{align*}
[also see \cite{subbarao_suppl}].
For mean zero Gaussian processes, only the second order cumulant is non-zero , which gives \eqref{result_B}.
\item[C)] Thirdly, note that by the same argument as in B), for any Gaussian process $\{X(\bm{s}):\bm{s}\in\R^d\}$ with mean $0$ and random locations $\{\bm{s}_j\}$, we have
\begin{align*}
\cum_{2q}\left[X(\bm{s}_1),\ldots,X(\bm{s}_{2q})\right]=0
\end{align*}
if more than $q+1$ locations of $\bm{s}_1,\ldots,\bm{s}_{2q}$ are independent.
\end{itemize}

We now define $\mathcal{I}$ to be the set of indecomposable partitions of the table 
\begin{align} \label{table_q=2}
\begin{matrix} 
&(1,1) & (1,2) & (1,3) & (1,4)\\
&(2,1) & (2,2) & (2,3) & (2,4).
\end{matrix}
\end{align}

From result A), we thus obtain
\begin{align*} 
\lambda^d \Var\big[\hat{D}_{1,d,\lambda,a}\big] &= \frac{(2\pi)^{2d}\lambda^{3d}}{4 n^{8} H(\bm{0})^4} \sum_{\bm{k}_1,\bm{k}_2=-a}^{a-1} \sum_{\bm{\nu}=\{\nu_1,\ldots,\nu_G\}\in\mathcal{I}} \sum_{\underline{j}\in \mathcal{D}} \cum_{|\nu_1|}\left[Y_{t,c}(\underline{j}):(t,c)\in\nu_1\right]\times\ldots\\
&\phantom{==============ii=i==}\times\cum_{|\nu_G|}\left[Y_{t,c}(\underline{j}):(t,c)\in\nu_G\right].
\end{align*}
From B), we furthermore observe that we can restrict ourselves to the case where
\begin{align*}
|\nu_g|=2p_g \quad \text{ for some } p_g\in\{1,\ldots,4\}, \, \quad \text{for all } g=1,\ldots,G.
\end{align*} 
C) shows that each set $\nu_g$ corresponds to at most $p_g+1$ independent locations, otherwise the corresponding cumulant expression becomes $0$. Therefore, any tupel $\underline{j}\in\mathcal{D}$ contributing a nonzero term to the above sum can have at most
\begin{align*}
\sum_{g=1}^G (p_g+1)=\frac{1}{2}\sum_{g=1}^G |\nu_g|+G= 4+G
\end{align*}
different elements.
Defining the subsets $\mathcal{D}(i)$ of $\mathcal{D}$ by
\begin{align} \label{set_D(i)}
\mathcal{D}(i):=\{\underline{j}\in\mathcal{D}\, :\, i \text{ elements in } \underline{j} \text{ are different}\},
\end{align}
we can thus write 
\begin{align*}
\lambda^d \Var\big[\hat{D}_{1,d,\lambda,a}\big] &= \frac{(2\pi)^{2d}\lambda^{3d}}{4 n^{8} H(\bm{0})^4} \sum_{\bm{k}_1,\bm{k}_2=-a}^{a-1} \sum_{\bm{\nu}=\{\nu_1,\ldots,\nu_G\}\in\mathcal{I}} \sum_{i=4}^{4+G} \sum_{\underline{j}\in\mathcal{D}(i)} \cum_{|\nu_1|}\left[Y_{t,c}(\underline{j}):(t,c)\in\nu_1\right]\times\ldots\\
&\phantom{==========ii=i=i=ii==i=}\times\cum_{|\nu_G|}\left[Y_{t,c}(\underline{j}):(t,c)\in\nu_G\right]\\
&=\frac{(2\pi)^{2d}\lambda^{3d}}{4 n^{8} H(\bm{0})^4} \sum_{\bm{k}_1,\bm{k}_2=-a}^{a-1} \sum_{\bm{\nu}=\{\nu_1,\ldots,\nu_4\}\in\mathcal{I}} \sum_{\underline{j}\in\mathcal{D}(8)} \cum_2\left[Y_{t,c}(\underline{j}):(t,c)\in\nu_1\right]\times\ldots\\
&\phantom{=======iii===i=i==iii=}\times\cum_2\left[Y_{t,c}(\underline{j}):(t,c)\in\nu_4\right]+\Landau\left(\frac{\lambda^d}{n}\right),
\end{align*}
where for the second step we used that by Corollary \ref{result_for_q=2} below, terms with partitions consisting of $G=4$ groups with $i=8$ different elements are of highest order. \\
It will become clear from the following discussion that the indecomposable partitions with $G=4$ groups can be further divided into two sets $\mathcal{I}_1$ and $\mathcal{I}_2$, where partitions from the set $\mathcal{I}_1$ lead to a higher order cumulant and partitions from the set $\mathcal{I}_2$ yield a lower order cumulant. We will consider two examples for illustration, namely the partitions $\bm{\nu}_1^{\ast}=\{\nu_{1,1}^\ast,\ldots,\nu_{1,4}^\ast\}\in \mathcal{I}_1$ and $\bm{\nu}_2^{\ast}=\{\nu_{2,1}^\ast,\ldots,\nu_{2,4}^\ast\}\in \mathcal{I}_2$ given by 
\begin{align} \label{examp_for_part_1}
\bm{\nu}_1^{\ast}=\Big\{\{(1,1),(1,2)\},\{(2,1),(2,2)\},\{(1,3),(2,4)\},\{(1,4),(2,3)\}\Big\}
\end{align}
and
\begin{align} \label{examp_for_part}
\bm{\nu}_2^{\ast}=\Big\{\{(1,1),(1,3)\},\{(1,2),(2,3)\},\{(1,4),(2,1)\},\{(2,2),(2,4)\}\Big\}.
\end{align}
First, we consider the partition $\bm{\nu}_1^{\ast}$ and recall the definition of the quantity $c_{q,n}$ in \eqref{c_q,n}. 
Then, by using that the locations $\{\bm{s}_j\}$ are independent uniformly distributed on $[-\lambda/2,\lambda/2]^d$ and by inserting the definition of the covariance function, we have
\begin{align} \label{only_expression_k_2-k_1}
&\phantom{i=}\frac{(2\pi)^{2d}\lambda^{3d}}{4 n^{8} H(\bm{0})^4} \sum_{\bm{k}_1,\bm{k}_2=-a}^{a-1}  \sum_{\underline{j}\in\mathcal{D}(8)} \cum_2\left[Y_{t,c}(\underline{j}):(t,c)\in\nu_{1,1}^{\ast}\right]\times\ldots\times\cum_2\left[Y_{t,c}(\underline{j}):(t,c)\in\nu_{1,4}^{\ast}\right]\nonumber\\
&=\frac{(2\pi)^{2d}\lambda^{3d}}{4 n^{8} H(\bm{0})^4} \sum_{\bm{k}_1,\bm{k}_2=-a}^{a-1}  \sum_{\underline{j}\in\mathcal{D}(8)} \cum\left[h\left(\frac{\bm{s}_{j_1}}{\lambda}\right) Z(\bm{s}_{j_1})\exp\big(\im \bm{s}_{j_1}^T \tilde{\bm{\omega}}_{\bm{k}_1}\big),h\left(\frac{\bm{s}_{j_2}}{\lambda}\right) Z(\bm{s}_{j_2}) \exp\big(-\im \bm{s}_{j_2}^T \tilde{\bm{\omega}}_{\bm{k}_1}\big)\right]\nonumber\\
&\phantom{=i=i====ii==iiii} \times\cum\left[h\left(\frac{\bm{s}_{j_5}}{\lambda}\right) Z(\bm{s}_{j_5})\exp\big(\im \bm{s}_{j_5}^T \tilde{\bm{\omega}}_{\bm{k}_2}\big),h\left(\frac{\bm{s}_{j_6}}{\lambda}\right) Z(\bm{s}_{j_6}) \exp\big(-\im \bm{s}_{j_6}^T \tilde{\bm{\omega}}_{\bm{k}_2}\big)\right]\nonumber\\
&\phantom{=i=i====ii==iii i}\times\cum\left[h\left(\frac{\bm{s}_{j_3}}{\lambda}\right) Z(\bm{s}_{j_3})\exp\big(\im \bm{s}_{j_3}^T \tilde{\bm{\omega}}_{\bm{k}_1}\big),h\left(\frac{\bm{s}_{j_8}}{\lambda}\right) Z(\bm{s}_{j_8}) \exp\big(-\im \bm{s}_{j_8}^T \tilde{\bm{\omega}}_{\bm{k}_2}\big)\right]\nonumber\\
&\phantom{=i=i====ii==iiii}\times\cum\left[h\left(\frac{\bm{s}_{j_4}}{\lambda}\right) Z(\bm{s}_{j_4})\exp\big(-\im \bm{s}_{j_4}^T \tilde{\bm{\omega}}_{\bm{k}_1}\big),h\left(\frac{\bm{s}_{j_7}}{\lambda}\right) Z(\bm{s}_{j_7}) \exp\big(\im \bm{s}_{j_7}^T \tilde{\bm{\omega}}_{\bm{k}_2}\big)\right]\nonumber\\
&=\frac{(2\pi)^{2d}\lambda^{3d}}{4 n^{8} H(\bm{0})^4} \sum_{\bm{k}_1,\bm{k}_2=-a}^{a-1}  \sum_{\underline{j}\in\mathcal{D}(8)} \E\left[h\left(\frac{\bm{s}_{j_1}}{\lambda}\right) h\left(\frac{\bm{s}_{j_2}}{\lambda}\right)c(\bm{s}_{j_1}-\bm{s}_{j_2})\exp\big(\im (\bm{s}_{j_1}-\bm{s}_{j_2})^T \tilde{\bm{\omega}}_{\bm{k}_1}\big)\right] \nonumber\\
&\phantom{=i=i=iii=ii==iiii}\times \E\left[h\left(\frac{\bm{s}_{j_5}}{\lambda}\right) h\left(\frac{\bm{s}_{j_6}}{\lambda}\right)c(\bm{s}_{j_5}-\bm{s}_{j_6})\exp\big(\im (\bm{s}_{j_5}-\bm{s}_{j_6})^T \tilde{\bm{\omega}}_{\bm{k}_2}\big)\right] \nonumber\\
&\phantom{=i=i=iii=ii==iiii}\times \E\left[h\left(\frac{\bm{s}_{j_3}}{\lambda}\right) h\left(\frac{\bm{s}_{j_8}}{\lambda}\right)c(\bm{s}_{j_3}-\bm{s}_{j_8})\exp\big(\im (\bm{s}_{j_3}^T  \tilde{\bm{\omega}}_{\bm{k}_1}-\bm{s}_{j_8}^T \tilde{\bm{\omega}}_{\bm{k}_2})\big)\right] \nonumber\\
&\phantom{=i=i==iiiiiii==iiii}\times \E\left[h\left(\frac{\bm{s}_{j_4}}{\lambda}\right) h\left(\frac{\bm{s}_{j_7}}{\lambda}\right)c(\bm{s}_{j_7}-\bm{s}_{j_4})\exp\big(-\im (\bm{s}_{j_4}^T \tilde{\bm{\omega}}_{\bm{k}_1}-\bm{s}_{j_7}^T \tilde{\bm{\omega}}_{\bm{k}_2})\big)\right] \nonumber\\
&=\frac{c_{2,n}}{4(2\pi)^{2d} H(\bm{0})^4 \lambda^{5d}} \sum_{\bm{k}_1,\bm{k}_2=-a}^{a-1} \int_{\R^{4d}} B(\bm{s})^2 B(\bm{t})^2 B(\bm{u}) B(\bm{u}+\tilde{\bm{\omega}}_{\bm{k}_2}-\tilde{\bm{\omega}}_{\bm{k}_1}) B(\bm{v}) B(\bm{v}+\tilde{\bm{\omega}}_{\bm{k}_2}-\tilde{\bm{\omega}}_{\bm{k}_1})\nonumber\\
&\phantom{============i=ii=} f(\bm{s}-\tilde{\bm{\omega}}_{\bm{k}_1}) f(\bm{t}-\tilde{\bm{\omega}}_{\bm{k}_2})f(\bm{u}-\tilde{\bm{\omega}}_{\bm{k}_1})f(\bm{v}-\tilde{\bm{\omega}}_{\bm{k}_1})\,d\bm{s} d\bm{t}d\bm{u}d\bm{v},
\end{align}
where we used the same arguments as in the proof of Theorem \ref{expectation_theo}. 
Note that 
\begin{align*} 
\tilde{\bm{\omega}}_{\bm{k}_2}-\tilde{\bm{\omega}}_{\bm{k}_1}=\bm{\omega}_{\bm{k}_2-\bm{k}_1} \qquad \text{and} \qquad \tilde{\bm{\omega}}_{\bm{k}_1+\bm{k}_2}=\tilde{\bm{\omega}}_{\bm{k}_1}+\bm{\omega}_{\bm{k}_2},
\end{align*}
i.e. we can express the shifted Fourier frequencies with the help of the usual Fourier frequencies in \eqref{Four_freq}.
Therefore, setting $\bm{k}_2-\bm{k}_1=\bm{m}$ and $\bm{\ell}=\bm{k}_1$, we obtain that \eqref{only_expression_k_2-k_1} equals
\begin{align*} 
&\phantom{=}\frac{(2\pi)^d c_{2,n}}{4(2\pi \lambda)^{4d} H(\bm{0})^4 } \int_{\R^{4d}} \sum_{\bm{m}=-2a+1}^{2a-1}  B(\bm{s})^2 B(\bm{t})^2 B(\bm{u}) B(\bm{u}+\bm{\omega_m})B(\bm{v}) B(\bm{v}+\bm{\omega_m})\nonumber\\
&\phantom{===ii=} \bigg[ \left(\frac{2\pi}{\lambda}\right)^d \sum_{\bm{\ell}=\max(-a,-a-\bm{m})}^{\min(a-1,a-1-\bm{m})} f(\bm{s}-\tilde{\bm{\omega}}_{\bm{\ell}}) f(\bm{t}-\bm{\omega_m}-\tilde{\bm{\omega}}_{\bm{\ell}})f(\bm{u}-\tilde{\bm{\omega}}_{\bm{\ell}})f(\bm{v}-\tilde{\bm{\omega}}_{\bm{\ell}})\bigg]\, d\bm{s} d\bm{t} d\bm{u} d\bm{v}\nonumber\\
&=\frac{(2\pi)^d c_{2,n}}{4(2\pi\lambda)^{4d} H(\bm{0})^4} \int_{\R^{4d}} \sum_{\bm{m}=-2a+1}^{2a-1} B(\bm{s})^2 B(\bm{t})^2 B(\bm{u}) B(\bm{u}+\bm{\omega_m})B(\bm{v}) B(\bm{v}+\bm{\omega_m})\nonumber\\
& \bigg[\int_{2\pi\max(-a,-a-\bm{m})/\lambda}^{2\pi\min(a,a-\bm{m})/\lambda} f(\bm{s}-\bm{x}) f(\bm{t}-\bm{\omega_m}-\bm{x}) f(\bm{u}-\bm{x}) f(\bm{v}-\bm{x})\, d\bm{x}\bigg] \, d\bm{s} d\bm{t} d\bm{u} d\bm{v}+\Landau\left(\frac{1}{\lambda^2}\right),
\end{align*}
where we furthermore applied Corollary \ref{cor_Riemann_sum} and Lemma \ref{bounds_for_B} (iii).
Setting
\begin{align*}
G_{\bm{m}}(\bm{s},\bm{t},\bm{u},\bm{v}):=\int_{2\pi\max(-a,-a-\bm{m})/\lambda}^{2\pi\min(a,a-\bm{m})/\lambda} f(\bm{s}-\bm{x}) f(\bm{t}-\bm{\omega_m}-\bm{x}) f(\bm{u}-\bm{x}) f(\bm{v}-\bm{x})\, d\bm{x},
\end{align*}
we can then write the above expression as
\begin{align} \label{eq_21}
&\phantom{+i}\frac{(2\pi)^d c_{2,n}}{4(2\pi\lambda)^{4d} H(\bm{0})^4 } \int_{\R^{4d}} \sum_{\bm{m}=-2a+1}^{2a-1} B(\bm{s})^2 B(\bm{t})^2 B(\bm{u}) B(\bm{u}+\bm{\omega_m})B(\bm{v}) B(\bm{v}+\bm{\omega_m})\nonumber\\
&\phantom{==================} G_{\bm{m}}(\bm{0},\bm{0},\bm{0},\bm{0})\, d\bm{s} d\bm{t} d\bm{u} d\bm{v} \nonumber\\
&+\frac{(2\pi)^d c_{2,n}}{4(2\pi \lambda)^{4d} H(\bm{0})^4} \int_{\R^{4d}} \sum_{\bm{m}=-2a+1}^{2a-1} B(\bm{s})^2 B(\bm{t})^2 B(\bm{u}) B(\bm{u}+\bm{\omega_m})B(\bm{v}) B(\bm{v}+\bm{\omega_m})\nonumber\\
&\phantom{=======ii=i==} \left[G_{\bm{m}}(\bm{s},\bm{t},\bm{u},\bm{v})-G_{\bm{m}}(\bm{0},\bm{0},\bm{0},\bm{0})\right] \, d\bm{s} d\bm{t} d\bm{u} d\bm{v}+\Landau\left(\frac{1}{\lambda^2}\right).
\end{align}
In a next step, we show that the second term in \eqref{eq_21} is of lower order than the first one. Note that for quantities $A, B, C, D, E, F, G, H$ we have
\begin{align*}
ABCD-EFGH=ABC(D-E)+EAB(C-F)+EFA(B-G)+EFG(A-H),
\end{align*}
and therefore the second term in \eqref{eq_21} equals
\begin{align} \label{eq_2}
&\phantom{=i}\frac{(2\pi)^d c_{2,n}}{4(2\pi\lambda)^{3d} H(\bm{0})^3} \int_{\R^{3d}} \sum_{\bm{m}=-2a+1}^{2a-1} \Big[ B(\bm{s})^2 B(\bm{t})^2 B(\bm{u}) B(\bm{u}+\bm{\omega_m})\, D_{\bm{m}}^{(1)}(\bm{s},\bm{t},\bm{u})\, d\bm{s} d\bm{t} d\bm{u}\nonumber\\
&\phantom{=================}+B(\bm{s})^2 B(\bm{t})^2 B(\bm{v}) B(\bm{v}+\bm{\omega_m})\, D_{\bm{m}}^{(2)}(\bm{s},\bm{t})\, d\bm{s} d\bm{t} d\bm{v}\nonumber\\
&\phantom{=================}+B(\bm{s})^2 B(\bm{u}) B(\bm{u}+\bm{\omega_m}) B(\bm{v}) B(\bm{v}+\bm{\omega_m}) \,D_{\bm{m}}^{(3)}(\bm{s})\, d\bm{s} d\bm{u} d\bm{v}\nonumber\\
&\phantom{=================}+B(\bm{t})^2 B(\bm{u}) B(\bm{u}+\bm{\omega_m}) B(\bm{v}) B(\bm{v}+\bm{\omega_m}) \,D_{\bm{m}}^{(4)}\, d\bm{t} d\bm{u} d\bm{v}\Big],
\end{align}
where
\begin{align*}
&D_{\bm{m}}^{(1)}(\bm{s},\bm{t},\bm{u}):=\frac{1}{(2\pi\lambda)^d H(\bm{0})} \int_{\R^d} B(\bm{v}) B(\bm{v}+\bm{\omega_m})\\ 
&\phantom{=======} \bigg[ \int_{2\pi \max(-a,-a-\bm{m})/\lambda}^{2\pi \min(a,a-\bm{m})/\lambda} f(\bm{s}-\bm{x}) f(\bm{t}-(\bm{\omega_m}+\bm{x})) f(\bm{u}-\bm{x}) \big[f(\bm{v}-\bm{x})-f(\bm{x})\big]\, d\bm{x} \bigg]d\bm{v},
\end{align*}
\begin{align*}
&D_{\bm{m}}^{(2)}(\bm{s},\bm{t}):=\frac{1}{(2\pi\lambda)^d H(\bm{0})} \int_{\R^d} B(\bm{u}) B(\bm{u}+\bm{\omega_m})\\
&\phantom{======} \bigg[ \int_{2\pi \max(-a,-a-\bm{m})/\lambda}^{2\pi \min(a,a-\bm{m})/\lambda} f(\bm{x}) f(\bm{s}-\bm{x}) f(\bm{t}-(\bm{\omega_m}+\bm{x}))\big[f(\bm{u}-\bm{x})-f(\bm{x})\big]\, d\bm{x} \bigg]d\bm{u},
\end{align*}
\begin{align*}
&D_{\bm{m}}^{(3)}(\bm{s}):=\frac{1}{(2\pi\lambda)^d H(\bm{0})} \int_{\R^d} B(\bm{t})^2 \\
&\phantom{======} \bigg[ \int_{2\pi \max(-a,-a-\bm{m})/\lambda}^{2\pi \min(a,a-\bm{m})/\lambda} f(\bm{x})^2 f(\bm{s}-\bm{x})\big[f(\bm{t}-(\bm{\omega_m}+\bm{x}))-f(\bm{\omega_m}+\bm{x})\big]\, d\bm{x} \bigg]d\bm{t},
\end{align*}
and
\begin{align*}
&D_{\bm{m}}^{(4)}:=\frac{1}{(2\pi\lambda)^d H(\bm{0})} \int_{\R^d} B(\bm{s})^2 \bigg[ \int_{2\pi \max(-a,-a-\bm{m})/\lambda}^{2\pi \min(a,a-\bm{m})/\lambda} f(\bm{x})^2 f(\bm{\omega_m}+\bm{x})\big[f(\bm{s}-\bm{x})-f(\bm{x})\big]\, d\bm{x} \bigg] d\bm{s}.
\end{align*}
Due to part (ii) of Proposition \ref{Lemma F.2_SSR} and by the absolute integrability of $f$, we have
\begin{align*}
&\phantom{==}\max_{i=1,\ldots,d} \max_{|m_i|\leq 2a-1} \sup_{\bm{s},\bm{t},\bm{u}\in\R^d} |D_{\bm{m}}^{(1)}(\bm{s},\bm{t},\bm{u})|\\
&\lesssim \frac{1}{\lambda} \max_{i=1,\ldots,d} \max_{|m_i|\leq 2a-1} \sup_{\bm{s},\bm{t},\bm{u}\in\R^d} \int_{\R^d} f(\bm{s}-\bm{x}) f(\bm{t}-(\bm{\omega_m}+\bm{x})) f(\bm{u}-\bm{x})\, d\bm{x}\lesssim \frac{1}{\lambda},
\end{align*}
and a similar reasoning can also be applied to the quantities $D_{\bm{m}}^{(2)}(\bm{s},\bm{t})$, $D_{\bm{m}}^{(3)}(\bm{s})$, and $D_{\bm{m}}^{(4)}$. 
Therefore, the absolute value of \eqref{eq_2} is (ignoring constants) upper bounded by
\begin{align*}
&\frac{1}{\lambda}\Bigg(\frac{c_{2,n}}{\lambda^{d}} \int_{\R^d} \sum_{\bm{m}=-2a+1}^{2a-1} |B(\bm{u}) B(\bm{u}+\bm{\omega_m})|\, d\bm{u} \\
&\phantom{=i}+\frac{c_{2,n}}{\lambda^{2d}} \int_{\R^{2d}} \sum_{\bm{m}=-2a+1}^{2a-1} |B(\bm{u}) B(\bm{u}+\bm{\omega_m}) B(\bm{v}) B(\bm{v}+\bm{\omega_m})|\, d\bm{u} d\bm{v} \Bigg)
=\Landau\left(\frac{(\log a)^{2d}}{\lambda}\right),
\end{align*}
where we applied Lemma \ref{bounds_for_B}, parts (ii) and (iii). The expression in \eqref{eq_21} thus equals 
\begin{align*} 
\frac{(2\pi)^d}{4} \sum_{\bm{m}=-2a+1}^{2a-1} \frac{H(\bm{m})^2}{H(\bm{0})^2} \int_{2\pi \max(-a,-a-\bm{m})/\lambda}^{2\pi \min(a,a-\bm{m})/\lambda} f^3(\bm{x}) f(\bm{\omega_m}+\bm{x})\, d\bm{x}+\Landau\left(\frac{(\log a)^{2d}}{\lambda}+\frac{1}{n}\right),
\end{align*}
as $a,\lambda,n\to\infty$, where we furthermore applied Lemma \ref{convolution_of_h}. 
This last expression can be further simplified: We rewrite it as
\begin{align*} 
&\phantom{+i}\frac{(2\pi)^d}{4} \sum_{\bm{m}=-2a+1}^{2a-1} \frac{H(\bm{m})^2}{H(\bm{0})^2} \int_{2\pi \max(-a,-a-\bm{m})/\lambda}^{2\pi \min(a,a-\bm{m})/\lambda} f^4(\bm{x})\, d\bm{x} \\
&+\frac{(2\pi)^d}{4} \sum_{\bm{m}=-2a+1}^{2a-1} \frac{H(\bm{m})^2}{H(\bm{0})^2} \int_{2\pi \max(-a,-a-\bm{m})/\lambda}^{2\pi \min(a,a-\bm{m})/\lambda} f^3(\bm{x}) \left[f(\bm{\omega_m}+\bm{x})-f(\bm{x})\right]\, d\bm{x}+\Landau\left(\frac{1}{n}\right),
\end{align*}
and from Lemma \ref{bound_variance} it follows that the second term is of order $\Landau(1/(\log\lambda)^3)$ as $\lambda\to\infty$. 
All in all, the cumulant expression corresponding to the partition $\bm{\nu}_1^{\ast}$ in \eqref{examp_for_part_1} thus amounts to 
\begin{align} \label{result_for_nu_1^ast}
&\frac{(2\pi)^d}{4} \sum_{\bm{m}=-2a+1}^{2a-1} \frac{H(\bm{m})^2}{H(\bm{0})^2} \int_{2\pi \max(-a,-a-\bm{m})/\lambda}^{2\pi \min(a,a-\bm{m})/\lambda} f^4(\bm{x})\, d\bm{x}+\Landau\left(\frac{(\log a)^{2d}}{\lambda}+\frac{1}{n}+\frac{1}{(\log\lambda)^3}
\right)
\end{align}
as $\lambda,a,n\rightarrow\infty$.

We now consider the partition 
\begin{align*}
\bm{\nu}_2^{\ast}=\Big\{\{(1,1),(1,3)\},\{(1,2),(2,3)\},\{(1,4),(2,1)\},\{(2,2),(2,4)\}\Big\}\in\mathcal{I}_2
\end{align*}
in \eqref{examp_for_part}, before we explain more generally what kind of partitions lie in the sets $\mathcal{I}_1$ and $\mathcal{I}_2$, respectively.
Using the same calculation steps as above, we obtain
\begin{align} \label{expressions_k1,k2,k1-k2}
&\phantom{=i}\Bigg|\frac{(2\pi)^{2d}\lambda^{3d}}{4 n^{8} H(\bm{0})^4} \sum_{\bm{k}_1,\bm{k}_2=-a}^{a-1}  \sum_{\underline{j}\in\mathcal{D}(8)} \cum_2\left[Y_{t,c}(\underline{j}):(t,c)\in\nu_{2,1}^{\ast}\right]\times\ldots\times\cum_2\left[Y_{t,c}(\underline{j}):(t,c)\in\nu_{2,4}^{\ast}\right]\Bigg|\nonumber\\
&=\Bigg|\frac{c_{2,n}}{4(2\pi)^{2d} H(\bm{0})^4 \lambda^{5d}} \sum_{\bm{k}_1,\bm{k}_2=-a}^{a-1} \int_{\R^{4d}} B(\bm{s}) B(2\tilde{\bm{\omega}}_{\bm{k}_1}-\bm{s}) B(\bm{t}) B(\tilde{\bm{\omega}}_{\bm{k}_2}-\tilde{\bm{\omega}}_{\bm{k}_1}-\bm{t}) B(\bm{u}) B(\bm{u}+2\tilde{\bm{\omega}}_{\bm{k}_2})\nonumber\\
&\phantom{======i==} B(\bm{v}) B(\tilde{\bm{\omega}}_{\bm{k}_2}-\tilde{\bm{\omega}}_{\bm{k}_1}-\bm{v}) f(\bm{s}-\tilde{\bm{\omega}}_{\bm{k}_1}) f(\bm{t}+\tilde{\bm{\omega}}_{\bm{k}_1}) f(\bm{u}+\tilde{\bm{\omega}}_{\bm{k}_2}) f(\bm{v}+\tilde{\bm{\omega}}_{\bm{k}_1})\,d\bm{s} d\bm{t}d\bm{u}d\bm{v}\Bigg|\nonumber\\
&\lesssim \frac{c_{2,n}}{\lambda^{5d}} \sum_{\bm{k}_1,\bm{k}_2=-a}^{a-1} \int_{\R^{4d}} \Big| B(\bm{s}) B(\bm{\omega}_{2\bm{k}_1+1}-\bm{s}) B(\bm{t}) B(\bm{\omega}_{\bm{k}_2-\bm{k}_1}-\bm{t}) B(\bm{u}) B(\bm{u}+\bm{\omega}_{2\bm{k}_2+1})\nonumber\\
&\phantom{======iiii===}B(\bm{v}) B(\bm{\omega}_{\bm{k}_2-\bm{k}_1}-\bm{v})\Big|\, d\bm{s} d\bm{t}d\bm{u}d\bm{v},
\end{align}
where we again used $2\tilde{\bm{\omega}}_{\bm{k}_1}=\bm{\omega}_{2\bm{k}_1+1}$
and $\tilde{\bm{\omega}}_{\bm{k}_2}-\tilde{\bm{\omega}}_{\bm{k}_1}=\bm{\omega}_{\bm{k}_2-\bm{k}_1}$. 
We set $2\bm{k}_1=\bm{m}_1$, $2\bm{k}_2=\bm{m}_2$. Then, it follows that $\bm{k}_2-\bm{k}_1=\frac{1}{2}(\bm{m}_2-\bm{m}_1)$ and the above is bounded by
\begin{align} \label{eq_4}
&\frac{c_{2,n}}{\lambda^{5d}} \sum_{\bm{m}_1,\bm{m}_2=-2a}^{2a} \int_{\R^{4d}} \Big| B(\bm{s}) B\left(\frac{2\pi(\bm{m}_1+1)}{\lambda}-\bm{s}\right) B(\bm{t}) B\left(\frac{\pi(\bm{m}_2-\bm{m}_1)}{\lambda}-\bm{t}\right)\nonumber \\
&\phantom{===========} B(\bm{u}) B\left(\bm{u}+\frac{2\pi(\bm{m}_2+1)}{\lambda}\right) B(\bm{v}) B\left(\frac{\pi(\bm{m}_2-\bm{m}_1)}{\lambda}-\bm{v}\right)\Big|\, d\bm{s} d\bm{t}d\bm{u}d\bm{v}.
\end{align}
Note that by Lemma \ref{properties_of_ell}, part (i), and Lemma \ref{bounds_for_B}, part (i), we have
\begin{align*}
\frac{1}{\lambda^d} \int_{\R^d} \Big|B(\bm{v}) B\left(\frac{\pi(\bm{m}_2-\bm{m}_1)}{\lambda}-\bm{v}\right)\Big| \, d\bm{v}&\lesssim \frac{1}{\lambda^d} \int_{\R^d} L_{\lambda}^{(0)}(\bm{v}) L_{\lambda}^{(0)}\left(\frac{\pi(\bm{m}_2-\bm{m}_1)}{\lambda}-\bm{v}\right) \, d\bm{v}\\
&\lesssim \frac{1}{\lambda^d}\, L_{\lambda}^{(1)}\left(\frac{\pi(\bm{m}_2-\bm{m}_1)}{\lambda}\right) = \ell^{(1)}\big(\pi(\bm{m}_2-\bm{m}_1)\big),
\end{align*}
where the functions $L_{\lambda}^{(0)}$, $L_{\lambda}^{(1)}$ and $\ell^{(1)}$ are defined as in \eqref{L-fct}, \eqref{ell-fct} and \eqref{ell-fct-multi}, and the last expression is obviously uniformly bounded. Therefore, ignoring constants, \eqref{eq_4} is bounded by
\begin{align*}
&\frac{c_{2,n}}{\lambda^{4d}} \sum_{\bm{m}_1,\bm{m}_2=-2a}^{2a} \int_{\R^{3d}} \Big| B(\bm{s}) B\left(\frac{2\pi(\bm{m}_1+1)}{\lambda}-\bm{s}\right) B(\bm{t}) B\left(\frac{\pi(\bm{m}_2-\bm{m}_1)}{\lambda}-\bm{t}\right)\\
&\phantom{==iiii===i==} B(\bm{u}) B\left(\bm{u}+\frac{2\pi(\bm{m}_2+1)}{\lambda}\right)
 \Big|\, d\bm{s} d\bm{t}d\bm{u},
\end{align*}
and by Lemma \ref{bounds_for_B}, part (iv) for $t=3$ and $s=0$, this expression is of order $\Landau(1/\lambda^d)$ and thus vanishes.\\

In general, whether a partition $\bm{\nu}=\{\nu_1,\ldots,\nu_4\}\in\mathcal{I}$ belongs to the set $\mathcal{I}_1$ or the set $\mathcal{I}_2$ depends on the rank of the linear transformation $K$ which maps $(\bm{k}_1^T,\bm{k}_2^T)^T$ to the vector of linear combinations of $\bm{k}_1$ and $\bm{k}_2$ inside the frequency windows $B$. 
For the partition $\bm{\nu}_1^{\ast}$, the only combination of $\bm{k}_1$ and $\bm{k}_2$ involved in the $B$-functions is $\bm{k}_2-\bm{k}_1$, see equation \eqref{only_expression_k_2-k_1}. The matrix $K$ satisfying 
\begin{align*}
K \times \begin{pmatrix}
\bm{k}_1^T \\
\bm{k}_2^T
\end{pmatrix} = \begin{pmatrix}
\bm{0}^T\\
\bm{0}^T\\
(\bm{k}_2-\bm{k}_1)^T\\
(\bm{k}_2-\bm{k}_1)^T
\end{pmatrix}
\end{align*} 
has rank $1$, and the respective cumulant is of order $\Landau(1)$ (concerning the notation, also see \cite{subbarao_suppl}). On the other hand, the cumulant expression corresponding to the partition $\bm{\nu}_2^{\ast}$ is of lower order: The $B$-functions contain the expressions $\bm{k}_1$, $\bm{k}_2$, and $\bm{k}_2-\bm{k}_1$, see equation \eqref{expressions_k1,k2,k1-k2}, and the matrix $K$ satisfying 
\begin{align*}
K \times \begin{pmatrix}
\bm{k}_1^T \\
\bm{k}_2^T
\end{pmatrix} = \begin{pmatrix}
\bm{k}_1^T\\
\bm{k}_2^T\\
(\bm{k}_2-\bm{k}_1)^T\\
(\bm{k}_2-\bm{k}_1)^T
\end{pmatrix}
\end{align*}
has rank $2$. The order of the respective cumulant is then $\Landau(1/\lambda^d)$.
In general, we can say that the order of a certain cumulant expression depends on the rank of the matrix $K$:
\begin{itemize}
\item If $\text{rank}(K)=1$, then the respective cumulant is of order $\Landau(1)$,
\item If $\text{rank}(K)=2$, then the respective cumulant is of order $\Landau(1/\lambda^d)$.
\end{itemize}
 The rank of $K$ equals the number of independent restrictions between the rows and columns of Table \eqref{table_q=2}. Here, a restriction can appear both by a set connecting the two rows, or by a set that contains two frequencies of the same sign within one single row.
Since we only consider the set of indecomposable partitions of Table \eqref{table_q=2}, there must always be at least one restriction between the two rows. The orders of the cumulant expression depending on the number of restrictions are also made precise in Lemma \ref{order_depending_on_number_of_restr}, setting $q=2$ and $i=8$. \\

We now need to clarify which partitions lead to a higher order and which ones lead to a lower order cumulant. To this end, we distinguish two cases (similar to \cite{dette2011}): Either there exists exactly one set of the partition which consists of two elements of the first row (and thus there exists another set that contains two elements of the second row) (case A), or in each set there is one element of the first row and one element of the second row (case B). \\
First look at case A. One can see that the number of restrictions is $1$ if and only if only those entries $(t,c_1)$ and $(t,c_2)$ of Table \eqref{table_q=2} with $c_1$ even and $c_2$ odd (and vice versa) are in the same set (case A1). One can easily see that there are $32$ possibilities for partitions of this form (compare to \cite{dette2011}). Otherwise, i.e. if there is at least one row $t\in\{1,2\}$ such that $(t,c_1)$ and $(t,c_2)$ are in the same set for some $c_1,c_2\in\{1,...,4\}$ with both $c_1,c_2$ odd or both $c_1,c_2$ even, the number of restrictions is $2$ (case A2). 
A visualization is given in Table \ref{table_partitions_A}. \\
Now consider case B. One can see that the number of restrictions is $1$ if and only if either both odd components of the first row are in the same set with both odd components or with both even components of the second row, respectively (case B1). As in \cite{dette2011}, there are $8$ such possibilities. Otherwise, the corresponding cumulant is of lower order (case B2). This is visualized in Table \ref{table_partitions_B}. \\
In summary, we thus see that the set $\mathcal{I}_1$ (leading to a higher order cumulant) consists of the partitions of the forms A1 and B1, while the set $\mathcal{I}_2$ (leading to a lower order cumulant) comprises all partitions of the forms A2 and B2.

\begin{small} 
 \begin{table} [!htbp] 
 \begin{center}
\begin{tabular}{*4{c}|}\hline
    \multicolumn{1}{|c}{\textbf{+}}   & \multicolumn{1}{c|}{\textbf{$-$}}   & \multicolumn{1}{c|}{\textbf{\textcolor{green}{+}}}   & \textbf{\textcolor{red}{$-$}}    \\ \cline{1-4}
    \multicolumn{1}{|c}{\textbf{+}}   & \multicolumn{1}{c|}{\textbf{$-$}}   & \multicolumn{1}{c|}{\textbf{\textcolor{red}{+}}}  & \textcolor{green}{\textbf{$-$}}   \\  \cline{1-4} 
  \end{tabular}
\qquad \qquad \qquad  \qquad 
\begin{tabular}{*4{c}|}\hline
    \multicolumn{1}{|c|}{\textbf{\textcolor{green}{+}}}   & \multicolumn{1}{c|}{\textbf{\textcolor{cyan}{$-$}}}   & \multicolumn{1}{c|}{\textbf{\textcolor{green}{+}}}   & \textbf{$-$}    \\ \cline{1-4} \cline{3-3}
    \multicolumn{1}{|c|}{\textbf{+}}   & \multicolumn{1}{c|}{\textbf{\textcolor{red}{$-$}}}   & \multicolumn{1}{c|}{\textbf{\textcolor{cyan}{+}}}  & \textbf{\textcolor{red}{$-$}}   \\  \cline{1-4} 
  \end{tabular}
    \caption{Examples for partitions of case A. Left: Example of case A1. Right: Example of case A2. Note that the partition $\bm{\nu}_1^{\ast}$ in \eqref{examp_for_part_1} corresponds to case A1, and the partition $\bm{\nu}_2^{\ast}$ in \eqref{examp_for_part} corresponds to case A2.} \label{table_partitions_A}
     \end{center} 
  \end{table}
\end{small}

\begin{small} 
 \begin{table} [!htbp] 
 \begin{center}
\begin{tabular}{*4{c}|}\hline
    \multicolumn{1}{|c|}{\textbf{+}}   & \multicolumn{1}{c|}{\textbf{$-$}}   & \multicolumn{1}{c|}{\textbf{+}}   & \textbf{$-$}    \\ 
    \multicolumn{1}{|c|}{\textbf{+}}   & \multicolumn{1}{c|}{\textbf{$-$}}   & \multicolumn{1}{c|}{\textbf{+}}  & \textbf{$-$}   \\  \cline{1-4} 
  \end{tabular}
\qquad \qquad \qquad  \qquad 
\begin{tabular}{*4{c}|}\hline
    \multicolumn{1}{|c|}{\textbf{+}}   & \multicolumn{1}{c|}{\textbf{$-$}}   & \multicolumn{1}{c|}{\textbf{\textcolor{green}{+}}}   & \textbf{\textcolor{red}{$-$}}    \\  \cline{3-3} \cline{4-4}
    \multicolumn{1}{|c|}{\textbf{+}}   & \multicolumn{1}{c|}{\textbf{$-$}}   & \multicolumn{1}{c|}{\textbf{\textcolor{red}{+}}}  & \textbf{\textcolor{green}{$-$}}   \\  \cline{1-4} 
  \end{tabular}
    \caption{Examples for partitions of case B. Left: Example of case B1. Right: Example of case B2.} \label{table_partitions_B}
     \end{center} 
  \end{table} 
\end{small}

All partitions of case A1 can be dealt with in the same way as the partition $\bm{\nu}_1^{\ast}$ in \eqref{examp_for_part_1}. 
For partitions of the form B1, the calculation works very similarly as the one for the partition $\bm{\nu}_1^{\ast}$. However, the weights $H(\bm{m})^2/H(\bm{0})^2$ need to be replaced by $H(\bm{m})^4/H(\bm{0})^4$, as can be seen from the following example. Let
\begin{align*}
\bm{\nu}_{1}^{\ast \ast} = \{\nu_{1,1}^{\ast\ast}, \nu_{1,2}^{\ast\ast}, \nu_{1,3}^{\ast\ast}, \nu_{1,4}^{\ast\ast} \}=\Big\{\{(1,1),(2,2)\},\{(1,2),(2,3)\},\{(1,3),(2,4)\},\{(1,4),(2,1)\}\Big\},
\end{align*}
then (analogously to before) the cumulant expression of interest is given by
\begin{align*}
&\phantom{i=}\frac{(2\pi)^{2d} \lambda^{3d}}{4 n^{8} H(\bm{0})^4} \sum_{\bm{k}_1,\bm{k}_2=-a}^{a-1}  \sum_{\underline{j}\in\mathcal{D}(8)} \cum_2\left[Y_{t,c}(\underline{j}):(t,c)\in\nu_{1,1}^{\ast \ast}\right]\times\ldots\times\cum_2\left[Y_{t,c}(\underline{j}):(t,c)\in\nu_{1,4}^{\ast \ast}\right]\\
&=\frac{(2\pi)^{d} c_{2,n}}{4 (2\pi\lambda)^{4d} H(\bm{0})^4} \int_{\R^{4d}} \sum_{\bm{m}=-2a+1}^{2a+1} B(\bm{s}) B(\bm{s}+\bm{\omega_m}) B(\bm{t}) B(\bm{t}+\bm{\omega_m}) B(\bm{u}) B(\bm{u}+\bm{\omega_m}) B(\bm{v})\\ 
& \phantom{=i} B(\bm{v}+\bm{\omega_m})  \Bigg[\int_{2\pi\max(-a,-a-\bm{m})/\lambda}^{2\pi\min(a,a-\bm{m})/\lambda} f(\bm{s}-\bm{x}) f(\bm{t}-\bm{x}) f(\bm{u}-\bm{x}) f(\bm{v}-\bm{x})\, d\bm{x}\Bigg]  d\bm{s} d\bm{t} d\bm{u} d\bm{v}+\Landau\left(\frac{1}{\lambda^2}\right).
\end{align*}
Using the same arguments as in the calculation for the partition $\bm{\nu}_1^{\ast}$, this expression equals
\begin{align*}
&\frac{(2\pi)^{d}}{4 (2\pi\lambda)^{4d} H(\bm{0})^4} \sum_{\bm{m}=-2a+1}^{2a+1} \int_{\R^{4d}}  B(\bm{s}) B(\bm{s}+\bm{\omega_m}) B(\bm{t}) B(\bm{t}+\bm{\omega_m}) B(\bm{u}) B(\bm{u}+\bm{\omega_m}) B(\bm{v}) B(\bm{v}+\bm{\omega_m})\\
&\phantom{===========} d\bm{s} d\bm{t} d\bm{u} d\bm{v}  \Bigg[\int_{2\pi\max(-a,-a-\bm{m})/\lambda}^{2\pi\min(a,a-\bm{m})/\lambda} f(\bm{x})^4 \, d\bm{x}\Bigg]+\Landau\left(\frac{(\log a)^{2d}}{\lambda}+\frac{1}{n}
\right),
\end{align*}
and by Lemma \ref{convolution_of_h} we obtain that this is equal to
\begin{align*}
\frac{(2\pi)^{d}}{4} \sum_{\bm{m}=-2a+1}^{2a+1} \frac{H(\bm{m})^4}{H(\bm{0})^4} \int_{2\pi\max(-a,-a-\bm{m})/\lambda}^{2\pi\min(a,a-\bm{m})/\lambda} f(\bm{x})^4 \, d\bm{x}+\Landau\left(\frac{(\log a)^{2d}}{\lambda}+\frac{1}{n}\right).
\end{align*}
Now all partitions corresponding to the case B1 can be dealt with in this way. Since there are $32$ possibilities for partitions of case A1 and $8$ possibilities for partitions of case B1, we thus obtain 
\begin{align*}
\lambda^d \Var \big[\hat{D}_{1,d,\lambda,a}\big]&=32 \left( \frac{(2\pi)^d}{4} \sum_{\bm{m}=-2a+1}^{2a-1} \frac{H(\bm{m})^2}{H(\bm{0})^2} \int_{2\pi\max(-a,-a-\bm{m})/\lambda}^{2\pi\min(a,a-\bm{m})/\lambda} f(\bm{x})^4 \, d\bm{x}\right)\\
&+8 \left(\frac{(2\pi)^d}{4} \sum_{\bm{m}=-2a+1}^{2a-1} \frac{H(\bm{m})^4}{H(\bm{0})^4} \int_{2\pi\max(-a,-a-\bm{m})/\lambda}^{2\pi\min(a,a-\bm{m})/\lambda} f(\bm{x})^4 \, d\bm{x}\right)\\
&+\Landau\left(\frac{(\log a)^{2d}}{\lambda}+\frac{1}{(\log\lambda)^3}+\frac{\lambda^d}{n}\right)
\end{align*} 
as $a,\lambda,n\to\infty$.
$\hfill \Box$

%
\subsection{Proof of Theorem \ref{asymptotic_normality}}

Considering Theorem \ref{expectation_theo}, it suffices to show that
\begin{align} \label{cumulant_orders}
\lambda^{dq/2}\cum_q\big(\hat{D}_{1,d,\lambda,a}\big)=\begin{cases}
\mathcal{O}(\lambda^{d/2}), &\quad q=1,\\
\mathcal{O}(1), &\quad q=2,\\
o(1), &\quad q\geq 3.
\end{cases}
\end{align}
Note that even if \eqref{cumulant_orders} is satisfied, asymptotic normality does not hold for all $d\in\N$ but only for $d\leq 3$ since the bias term in Theorem \ref{expectation_theo} is of order $o(\lambda^{-d/2})$ for $d\leq 3$ only. \\
We now fix $q\geq 1$ and denote by $\mathcal{I}(q)$ the set of all indecomposable partitions of the table

\begin{equation} 
\begin{matrix} 
(1,1) & (1,2) & (1,3) & (1,4)\\
\vdots & \vdots & \vdots & \vdots\\
(q,1) & (q,2) & (q,3) & (q,4)\, .
\end{matrix}
\label{table}
\end{equation}
For $t=1,\ldots,q$ and $c=1,\ldots,4$ recall the definition of the random variables
\begin{align*}
Y_{t,c}(\underline{j}):=h\left(\frac{\bm{s}_{j_{c+4(t-1)}}}{\lambda}\right) Z(\bm{s}_{j_{c+4(t-1)}}) \exp\big((-1)^{c+1}\, \im\, \bm{s}_{j_{c+4(t-1)}}^T \tilde{\bm{\omega}}_{\bm{k}_t} \big),
\end{align*}
where $\underline{j}=(j_1,\ldots,j_{4q})$ and $\bm{k}_t=(k_{t1},\ldots,k_{td})^T$.
We now generalize the arguments from the proof of Theorem \ref{expectation_theo}, in particular using the cumulant results A), B), and C).
First, we note that due to result A), we have
\begin{align*} 
\lambda^{dq/2} \cum_q\big(\hat{D}_{1,d,\lambda,a}\big) &= \frac{(2\pi)^{dq}\lambda^{3dq/2}}{2^q n^{4q} H(\bm{0})^{2q}} \sum_{\bm{k}_1,\ldots,\bm{k}_q=-a}^{a-1} \sum_{\underline{j}\in \mathcal{D}(q)} \cum\big[Y_{1,1}(\underline{j}) Y_{1,2}(\underline{j})  Y_{1,3}(\underline{j})  Y_{1,4}(\underline{j}),\\
&\phantom{================ii===}  \ldots,\\
&\phantom{================ii===} Y_{q,1}(\underline{j})  Y_{q,2}(\underline{j})  Y_{q,3}(\underline{j})  Y_{q,4}(\underline{j}) \big]\\
&=\frac{(2\pi)^{dq}\lambda^{3dq/2}}{2^q n^{4q}H(\bm{0})^{2q}} \sum_{\bm{k}_1,\ldots,\bm{k}_q=-a}^{a-1}  \sum_{\bm{\nu}=\{\nu_1,\ldots,\nu_G\}\in\mathcal{I}(q)} \sum_{\underline{j}\in \mathcal{D}(q)} \cum_{|\nu_1|}\big[Y_{t,c}(\underline{j})\, : \, (t,c)\in \nu_1\big]  \\
&\phantom{======iii====ii=ii====i}\times \ldots \times \cum_{|\nu_G|}\big[Y_{t,c}(\underline{j})\, :\, (t,c)\in \nu_G\big],
\end{align*}
where
\begin{align} \label{set_D(q)}
\mathcal{D}(q):= \{\underline{j}=(j_1,\ldots,j_{4q})\in\{1,\ldots,n\}^{4q}\, :  \, & (j_1,j_2,j_3,j_4)\in\mathcal{E}, (j_5,j_6,j_7,j_8)\in\mathcal{E}, \ldots,\nonumber\\
& (j_{4q-3},j_{4q-2},j_{4q-1},j_{4q})\in\mathcal{E}\}
\end{align}
is the set of possible indices $\underline{j}$ for a given row length $q$, and $\mathcal{E}$ is defined in \eqref{set_E}. Due to result B), we can assume that $|\nu_g|=2p_g$, $p_g\in\{1,\ldots,2q\}$, for all $g=1,\ldots,G$. For a given partition $\bm{\nu}=\{\nu_1,\ldots,\nu_G\}\in\mathcal{I}(q)$, result C) yields that any tupel $\underline{j}\in\mathcal{D}(q)$ contributing a nonzero term to the above sum can have at most
\begin{align*} 
\sum_{g=1}^G (p_g+1) = \frac{1}{2}\sum_{g=1}^G |\nu_g| + G =2q+G
\end{align*}
different elements.
For $i=4,\ldots,4q$, we thus define the subsets $\mathcal{D}(q,i)$ of $\mathcal{D}(q)$ by
\begin{align} \label{set_D(q,i)}
\mathcal{D}(q,i):=\{\underline{j}\in \mathcal{D}(q) \, : \, i\text{ elements in } \underline{j} \text{ are different}\}
\end{align}
and obtain
\begin{align} \label{cumulant_general}
&\phantom{=i}\lambda^{dq/2} \cum_q\big(\hat{D}_{1,d,\lambda,a}\big)\nonumber \\
&=\frac{(2\pi)^{dq}\lambda^{3dq/2}}{2^q n^{4q}H(\bm{0})^{2q}} \sum_{\bm{k}_1,\ldots,\bm{k}_q=-a}^{a-1}  \sum_{\bm{\nu}=\{\nu_1,\ldots,\nu_G\}\in\mathcal{I}(q)} \sum_{i=4}^{2q+G} \sum_{\underline{j}\in \mathcal{D}(q,i)} \cum_{|\nu_1|}\big[Y_{t,c}(\underline{j})\, : \, (t,c)\in \nu_1\big] \nonumber\\
&\phantom{==========ii=============} \times \ldots  \times \cum_{|\nu_G|}\big[Y_{t,c}(\underline{j})\, :\, (t,c)\in \nu_G\big].
\end{align}
To determine the order of the above expression, it suffices to consider a fixed partition $\bm{\nu}=\{\nu_1,\ldots,\nu_G\}\in\mathcal{I}(q)$.
We first illustrate the procedure for some partition $\bm{\nu}^{\ast}=\{\nu_1^{\ast},\ldots,\nu_{G^{\ast}}^{\ast}\}\in\mathcal{I}(q)$ satisfying $|\nu_g^{\ast}|=2$ for all $g=1,\ldots,G^{\ast}$, i.e. we have $G^{\ast}=2q$ groups. Before considering general values of $i$, we moreover start with the case of maximally many elements allowed, i.e. $i=2q+2q=4q$. After that, we will consider more general group sizes. For now, the cumulant expression of interest is thus given by
\begin{align} \label{order_for_fixed_partition}
\frac{(2\pi)^{dq}\lambda^{3dq/2}}{2^q n^{4q} H(\bm{0})^{2q}} \sum_{\bm{k}_1,\ldots,\bm{k}_q=-a}^{a-1}  \sum_{\underline{j}\in \mathcal{D}(q,4q)} \cum_2\big[Y_{t,c}(\underline{j})\, : \, (t,c)\in \nu_1^{\ast}\big] \times \ldots \times \cum_2\big[Y_{t,c}(\underline{j})\, :\, (t,c)\in \nu_{2q}^{\ast}\big].
\end{align}
Using the notation from Table \eqref{table}, each set $\nu_g^\ast$ with $|\nu_g^\ast|=2$ is a set of two tuples $(t_1,c_1),(t_2,c_2)$ with $t_1,t_2\in\{1,\ldots,q\}$ and $c_1,c_2\in\{1,\ldots,4\}$.
For the following argument, we use a simpler notation for the elements of Table \eqref{table} by identifying each tuple $(t,c)$ with one of the numbers between $1$ and $4q$, respectively. As an example, we identify the element $(1,1)$ from Table \eqref{table} with the number $1$, the element $(2,3)$ with the number $7$, and the element $(q,4)$ with the number $4q$.  
Using this simplified notation, we can then express each set $\nu_g^\ast$ as a single tuple $(a,b)$ for $a,b\in\{1,\ldots,4q\}$ and write 
\begin{align} \label{simplified_notation}
\nu_g^{\ast}=(\nu_{g,1}^{\ast},\nu_{g,2}^{\ast})
\end{align}
for $g=1,\ldots,2q$. Recall the definition of the quantity $c_{q,n}$ in \eqref{c_q,n} 
and assume without loss of generality that $n>4q$.
In the same way as in the proofs of Theorems \ref{expectation_theo}, we then obtain that the expression in \eqref{order_for_fixed_partition} equals
\begin{align} \label{eq8}
&\phantom{=i}\frac{(2\pi)^{dq}\lambda^{3dq/2} c_{q,n}}{2^q H(\bm{0})^{2q}} \sum_{\bm{k}_1,\ldots,\bm{k}_q=-a}^{a-1} \frac{1}{ \lambda^{4dq}} \int_{[-\lambda/2,\lambda/2]^{4dq}}  \prod_{g=1}^{2q} c(\bm{s}_{\nu_{g,1}^{\ast}}-\bm{s}_{\nu_{g,2}^{\ast}})\nonumber \\
&\phantom{=======} \prod_{t=1}^q \left(\prod_{c=1}^4 h\left(\frac{\bm{s}_{c+4(t-1)}}{\lambda}\right) \exp\big(\im\, \tilde{\bm{\omega}}_{\bm{k}_t}^T (\bm{s}_{4t-3}-\bm{s}_{4t-2}+\bm{s}_{4t-1}-\bm{s}_{4t})\big) \right)\prod_{g=1}^{2q} d \bm{s}_{\nu_{g,1}^{\ast}} \, d \bm{s}_{\nu_{g,2}^{\ast}} \nonumber\\
&\eqsim \lambda^{3dq/2} c_{q,n} \sum_{\bm{k}_1,\ldots,\bm{k}_q=-a}^{a-1} \frac{1}{ \lambda^{4dq}}  \int_{\R^{2dq}} \Bigg[\int_{[-\lambda/2,\lambda/2]^{4dq}} \prod_{g=1}^{2q} f(\bm{x}_g) \exp\big(\im \bm{x}^T_g (\bm{s}_{\nu_{g,1}^{\ast}}-\bm{s}_{\nu_{g,2}^{\ast}})\big)\nonumber\\
&\phantom{====} \prod_{t=1}^q \left(\prod_{c=1}^4 h\left(\frac{\bm{s}_{c+4(t-1)}}{\lambda}\right)\exp\big(\im\, \tilde{\bm{\omega}}_{\bm{k}_t}^T (\bm{s}_{4t-3}-\bm{s}_{4t-2}+\bm{s}_{4t-1}-\bm{s}_{4t})\big)\right) \prod_{g=1}^{2q} d \bm{s}_{\nu_{g,1}^{\ast}} \, d \bm{s}_{\nu_{g,2}^{\ast}} \Bigg] \prod_{g=1}^{2q} d \bm{x}_g\nonumber\\
&=\frac{\lambda^{3dq/2} c_{q,n}}{\lambda^{4dq}} \sum_{\bm{k}_1,\ldots,\bm{k}_q=-a}^{a-1} \int_{\R^{2dq}} \prod_{g=1}^{2q} f(\bm{x}_g) B(\bm{x}_g+\tilde{\bm{\omega}}_{\hat{\bm{k}}_{2g-1}}) B(\bm{x}_g+\tilde{\bm{\omega}}_{\hat{\bm{k}}_{2g}}) \, d \bm{x}_g ,
\end{align}
where the function $B$ is defined in \eqref{Four_trafo_of_h}, and for $j=1,\ldots,4q$, the quantities $\hat{\bm{k}}_j$ are $d$-dimensional vectors taken from the set $$\{\bm{k}_1,-\bm{k}_1-1,\bm{k}_2,-\bm{k}_2-1,\ldots,\bm{k}_q,-\bm{k}_q-1\},$$
which are determined by the partition $\bm{\nu}^{\ast}$ under consideration. \\
\noindent\rule{\textwidth}{1pt}
\begin{example} \label{ex_matrix_k}
Let $q=3$ and $\underline{j}\in\mathcal{D}(q,4q)$. 
For
\begin{align*}
\bm{\nu}=\Big\{\{(1,1),(1,2)\},\{(2,1),(2,2)\},\{(3,1),(3,2)\},\{(1,3),(2,3)\},\{(2,4),(3,4)\},\{(1,4),(3,3)\}\Big\},
\end{align*}
it holds that
\begin{align*}
&\phantom{==}\sum_{\bm{k}_1,\bm{k}_2,\bm{k}_3=-a}^{a-1} \cum_2[Y_{t,c}(\underline{j})\, : \, (t,c)\in \nu_1] \times \ldots \times \cum_2[Y_{t,c}(\underline{j})\, :\, (t,c)\in \nu_6]\\
&= \sum_{\bm{k}_1,\bm{k}_2,\bm{k}_3=-a}^{a-1} \int_{\R^{6d}} \frac{1}{\lambda^{12d}}\, f(\bm{x}_1) \ldots f(\bm{x}_6) B^2(\bm{x}_1+\tilde{\bm{\omega}}_{\bm{k}_1}) B^2(\bm{x}_2+\tilde{\bm{\omega}}_{\bm{k}_2}) B^2(\bm{x}_3+\tilde{\bm{\omega}}_{\bm{k}_3}) B(\bm{x}_4+\tilde{\bm{\omega}}_{\bm{k}_1})\\
&\phantom{=========}  B(\bm{x}_4-\tilde{\bm{\omega}}_{\bm{k}_2}) B(\bm{x}_5+\tilde{\bm{\omega}}_{\bm{k}_2}) B(\bm{x}_5-\tilde{\bm{\omega}}_{\bm{k}_3}) B(\bm{x}_6+\tilde{\bm{\omega}}_{\bm{k}_1}) B(\bm{x}_6+\tilde{\bm{\omega}}_{\bm{k}_3}) \, d\bm{x}_1 \ldots\, d\bm{x}_6.
\end{align*}
Since $-\tilde{\bm{\omega}}_{\bm{k}}=\tilde{\bm{\omega}}_{-\bm{k}-1}$, this expression equals
\begin{align*}
&\sum_{\bm{k}_1,\bm{k}_2,\bm{k}_3=-a}^{a-1} \int_{\R^{6d}} \frac{1}{\lambda^{12d}}\, f(\bm{x}_1) \ldots f(\bm{x}_6) B^2(\bm{x}_1+\tilde{\bm{\omega}}_{\bm{k}_1}) B^2(\bm{x}_2+\tilde{\bm{\omega}}_{\bm{k}_2}) B^2(\bm{x}_3+\tilde{\bm{\omega}}_{\bm{k}_3}) B(\bm{x}_4+\tilde{\bm{\omega}}_{\bm{k}_1}) \\
&\phantom{====}  B(\bm{x}_4+\tilde{\bm{\omega}}_{-\bm{k}_2-1}) B(\bm{x}_5+\tilde{\bm{\omega}}_{\bm{k}_2}) B(\bm{x}_5+\tilde{\bm{\omega}}_{-\bm{k}_3-1}) B(\bm{x}_6+\tilde{\bm{\omega}}_{\bm{k}_1}) B(\bm{x}_6+\tilde{\bm{\omega}}_{\bm{k}_3}) \, d\bm{x}_1 \ldots\, d\bm{x}_6,
\end{align*}
and we therefore have
\begin{align*}
&\hat{\bm{k}}_1=\bm{k}_1, \qquad \hat{\bm{k}}_2=\bm{k}_1, \qquad \hat{\bm{k}}_3=\bm{k}_2,  \qquad \hat{\bm{k}}_4=\bm{k}_2, \qquad \hat{\bm{k}}_5=\bm{k}_3, \qquad \hat{\bm{k}}_6=\bm{k}_3,\\
&\hat{\bm{k}}_7=\bm{k}_1, \qquad \hat{\bm{k}}_8=-\bm{k}_2-1, \qquad \hat{\bm{k}}_9=\bm{k}_2, \qquad \hat{\bm{k}}_{10}=-\bm{k}_3-1, \qquad \hat{\bm{k}}_{11}=\bm{k}_1, \qquad \hat{\bm{k}}_{12}=\bm{k}_3.
\end{align*}
\end{example}
\noindent\rule{\textwidth}{1pt}

We now perform a change of variables by setting
\begin{align*} 
\bm{x}_g+\tilde{\bm{\omega}}_{\hat{\bm{k}}_{2g-1}}=\bm{u}_g \qquad g=1,\ldots,2q,
\end{align*}
so the absolute value of \eqref{eq8} is less or equal than
\begin{align} \label{jetzt_hier}
&\phantom{=i}\frac{\lambda^{3dq/2}}{\lambda^{4dq}} \sum_{\bm{k}_1,\ldots,\bm{k}_q=-a}^{a-1} \int_{\R^{2dq}} \prod_{g=1}^{2q} \left|f(\bm{u}_g - \tilde{\bm{\omega}}_{\hat{\bm{k}}_{2g-1}}) B(\bm{u}_g) B(\bm{u}_g+\tilde{\bm{\omega}}_{\hat{\bm{k}}_{2g}} - \tilde{\bm{\omega}}_{\hat{\bm{k}}_{2g-1}})\right| d\bm{u}_g \nonumber\\
&=\frac{\lambda^{3dq/2}}{\lambda^{4dq}} \sum_{\bm{k}_1,\ldots,\bm{k}_q=-a}^{a-1} \int_{\R^{2dq}} \prod_{g=1}^{2q} \left|f(\bm{u}_g - \tilde{\bm{\omega}}_{\hat{\bm{k}}_{2g-1}}) B(\bm{u}_g) B(\bm{u}_g+\bm{\omega}_{\hat{\bm{k}}_{2g}-\hat{\bm{k}}_{2g-1}})\right| d\bm{u}_g.
\end{align}
We now denote by $K\in\{-2,-1,0,1,2\}^{2q\times q}$ and $R\in\{-1,0,1\}^{2q \times d}$ the matrices satisfying the equation
\begin{align} \label{rank}
K \times \begin{pmatrix}
\bm{k}^T_1 \\
\bm{k}^T_2 \\
\vdots\\
\bm{k}^T_q
\end{pmatrix}+R=\begin{pmatrix}
(\hat{\bm{k}}_{2}-\hat{\bm{k}}_1)^T\\
(\hat{\bm{k}}_{4}-\hat{\bm{k}}_3)^T\\
\vdots\\
(\hat{\bm{k}}_{4q}-\hat{\bm{k}}_{4q-1})^T
\end{pmatrix}.
\end{align}
\noindent\rule{\textwidth}{1pt}
\begin{example} \label{ex_matrices_K_R}
Using the same partition as in Example \ref{ex_matrix_k}, we obtain 
\begin{align*}
\begin{pmatrix}
(\hat{\bm{k}}_{2}-\hat{\bm{k}}_1)^T\\
(\hat{\bm{k}}_{4}-\hat{\bm{k}}_3)^T\\
(\hat{\bm{k}}_{6}-\hat{\bm{k}}_5)^T\\
(\hat{\bm{k}}_{8}-\hat{\bm{k}}_7)^T\\
(\hat{\bm{k}}_{10}-\hat{\bm{k}}_9)^T\\
(\hat{\bm{k}}_{12}-\hat{\bm{k}}_{11})^T
\end{pmatrix}=\begin{pmatrix}
\bm{0}^T \\
\bm{0}^T \\
\bm{0}^T \\
(-\bm{k}_2-\bm{k}_1-1)^T \\
(-\bm{k}_3-\bm{k}_2-1)^T \\
(\bm{k}_3-\bm{k}_1)^T 
\end{pmatrix}
\end{align*}
and therefore
\begin{align*}
K=\begin{pmatrix}
0 & 0 & 0 \\
0 & 0 & 0 \\
0 & 0 & 0 \\
-1 & -1 & 0 \\
0 & -1 & -1 \\
-1 & 0 & 1
\end{pmatrix}, \qquad  \qquad R=\begin{pmatrix}
0 & \ldots & 0 \\
0 & \ldots & 0 \\
0 & \ldots & 0 \\
-1 & \ldots & -1 \\
-1 & \ldots & -1 \\
0 & \ldots & 0
\end{pmatrix}.
\end{align*}
\end{example}
\noindent\rule{\textwidth}{1pt}
We claim that the order of the expression \eqref{jetzt_hier} is determined by the rank of the matrix $K$ in \eqref{rank}.
In order to demonstrate this, let $\text{rank}(K)=r$. We assume without loss of generality that $r\leq q-1$ and that
\begin{align*}
\begin{pmatrix}
(\hat{\bm{k}}_{2}-\hat{\bm{k}}_1)^T\\
(\hat{\bm{k}}_{4}-\hat{\bm{k}}_3)^T\\
\vdots\\
(\hat{\bm{k}}_{4q}-\hat{\bm{k}}_{4q-1})^T
\end{pmatrix}=\begin{pmatrix}
(\bm{k}_2-\bm{k}_1)^T\\
(\bm{k}_3-\bm{k}_2)^T\\
\vdots\\
(\bm{k}_{r+1}-\bm{k}_{r})^T\\
(\bm{k}_{1}-\bm{k}_{r+1})^T\\
(\bm{k}_{r+2}-\bm{k}_{r+2})^T\\
\vdots\\
(\bm{k}_q-\bm{k}_q)^T\\
(\bm{k}_1-\bm{k}_1)^T\\
\vdots\\
(\bm{k}_q-\bm{k}_q)^T
\end{pmatrix},
\end{align*}
i.e. it holds that
\begin{align} \label{special_form_of_K_and_R}
K=\begin{pmatrix}
-1 & 1 & 0 & \ldots & \ldots & 0 & 0 & \ldots & 0 \\
0 & -1 & 1 & 0 & \ldots & 0 & 0 &\ldots & 0\\
\vdots & \vdots & \vdots & \vdots & \vdots &  \vdots & \vdots & \vdots &  \vdots\\
0 & \ldots & \ldots & \ldots & -1 & 1&0 &\ldots & 0 \\
1 &  \ldots & \ldots & \ldots & \ldots & -1&0 &\ldots&  0\\
0 & \ldots & \ldots & \ldots & \ldots & 0&0 &\ldots& 0\\
 \vdots & \vdots & \vdots & \vdots & \vdots &  \vdots& \vdots & \vdots &  \vdots\\
0 & \ldots & \ldots & \ldots & \ldots & 0&0 &\ldots &0
\end{pmatrix} \qquad \text{and} \qquad R=\bm{0},
\end{align}
where the matrix $K$ consists of $2q-(r+1)$ rows and $q-(r+1)$ columns with $0$-entries only and $\bm{0}$ denotes the null matrix.
We then set 
\begin{align*}
\bm{k}_1=\bm{k}_1, \qquad \bm{m}_1=\bm{k}_2-\bm{k}_1, \qquad \bm{m}_2=\bm{k}_3-\bm{k}_2, \qquad \ldots, \qquad \bm{m}_{r}=\bm{k}_{r+1}-\bm{k}_{r}, 
\end{align*}
which obviously yields $\bm{k}_{1}-\bm{k}_{r+1}=-\sum_{j=1}^{r} \bm{m}_j$. Ignoring constants, it then follows that the absolute value of \eqref{jetzt_hier} can be bounded by
\begin{align} \label{eq3}
&\frac{\lambda^{3dq/2}}{\lambda^{4dq}} \int_{\R^{2dq}} \sum_{\bm{m}_1,\ldots,\bm{m}_{r}=-2a+1}^{2a-1} G_{\lambda,a} (\bm{u}_1,\bm{u}_{r+2},\ldots,\bm{u}_{q}) \prod_{g=1}^r \Big| B(\bm{u}_g) B(\bm{u}_g+\bm{\omega}_{\bm{m}_g})\Big|\nonumber\\
&\phantom{===============}\Big|B(\bm{u}_{r+1}) B(\bm{u}_{r+1}-\sum_{j=1}^r \bm{\omega}_{\bm{m}_j})\Big| \prod_{g=r+2}^{2q} B^2(\bm{u}_g) \prod_{g=1}^{2q} d\bm{u}_g,
\end{align}
where 
\begin{align*}
G_{\lambda,a} (\bm{u}_1,\bm{u}_{r+2},\ldots,\bm{u}_{q}):= \sum_{\bm{k}_1,\bm{k}_{r+2},\ldots,\bm{k}_q = -a}^{a-1} 
\prod_{g\in\{1,r+2,\ldots,q\}} f(\bm{u}_g-\tilde{\bm{\omega}}_{\bm{k}_g}).
\end{align*}
Note that $G_{\lambda,a} (\bm{u}_1,\bm{u}_{r+2},\ldots,\bm{u}_{q})=\Landau(\lambda^{d(q-r)})$, since
the expression
$
\frac{2\pi}{\lambda} \sum_{k=-a}^{a-1} f(u-\tilde{\omega}_k) 
$
is uniformly bounded.
Besides, we can use Lemma \ref{bounds_for_B} (iv) for $t=r+1$ and $s=0$ and Lemma \ref{orders_of_B} (i) to see that the expression
\begin{align*}
&\int_{\R^{2dq}} \sum_{\bm{m}_1,\ldots,\bm{m}_{r}=-2a+1}^{2a-1}  \prod_{g=1}^r \Big| B(\bm{u}_g) B(\bm{u}_g+\bm{\omega}_{\bm{m}_g})\Big|\nonumber\Big|B(\bm{u}_{r+1}) B(\bm{u}_{r+1}-\sum_{j=1}^r \bm{\omega}_{\bm{m}_j})\Big| \prod_{g=r+2}^{2q} B^2(\bm{u}_g) \prod_{g=1}^{2q} d\bm{u}_g
\end{align*}
is of order $\Landau(\lambda^{2dq})$.
The expression in \eqref{eq3} is thus of order
\begin{align} \label{order_for_2,2,2_and_maximal_elements}
\Landau\left(\lambda^{d\big(3q/2\,-\,4q \,+ \,(q-r)\, +\, 2q\big)}\right)=\Landau\left(\lambda^{d(q/2-r)}\right).
\end{align}

Recall that this order corresponds to the special case where $|\nu_g^{\ast}|=2$ for all $g=1,\ldots,2q$ and $i=4q$, i.e. the maximally allowed number of different elements in $\underline{j}$ is considered. We now interpret the components of this order: While the value $3dq/2$ corresponds to the prefactor in front of the sums over the cumulant expression, the value $4q$ is the number of integrals over the region $[-\lambda/2,\lambda/2]^d$, see \eqref{eq8}. The term $q-r$ is the difference between the number of sums $q$ and the rank of the matrix $K$, and the term $2q$ is the number of sets $G^{\ast}$. It is important to note that due to Lemma \ref{bounds_for_B}, the $dr$ sums in \eqref{eq3} do not need to be bounded separately. 
Note that for the derivation of the above result, we assumed a special form of the matrices $K$ and $R$, see \eqref{special_form_of_K_and_R}. However, it is easy to see that the use of Lemma \ref{bounds_for_B} and Lemma \ref{orders_of_B} always leads to the order in \eqref{order_for_2,2,2_and_maximal_elements} if $|\nu_g^{\ast}|=2$ for all $g=1,\ldots,2q$ and the number of different elements in $\underline{j}$ is $4q$.\\

In a next step, we will explain how the order in \eqref{order_for_2,2,2_and_maximal_elements} changes for partitions $\bm{\nu}=\{\nu_1,\ldots,\nu_{2q}\}$ with $|\nu_g|=2$ for all $g=1,\ldots,2q$, even when considering an arbitrary number of different elements $i$ in $\underline{j}$. We will proceed in two steps: First, we show that setting $i=4q-1$ leads to an order change of at most $\lambda^d/n$ (which by Assumption \ref{assumption_on_sampling_scheme} goes to $0$). Afterwards, we will explain that for $i\leq 4q-1$, setting two additional elements equal leads to a further order change of at most $a^d/n$ (which by Assumption \ref{assumptions_on_a} is bounded), so the order cannot increase any more. In fact, in most cases the change of order is of the form $\lambda^d/n$ instead of $a^d/n$ as well, but this further distinction will not be necessary for our purposes.\\
Before we state our result in form of a lemma, note that the rank $r$ of the matrix $K$ is completely determined by the number of independent restrictions between the rows and the columns of Table \eqref{table}. More precisely, the occurrence of two frequencies $\tilde{\bm{\omega}}_{\bm{k}_i}$ and $\tilde{\bm{\omega}}_{\bm{k}_j}$ with $i\neq j$ within the same set leads to a rank increase. Moreover, if within one set two frequencies of the same row and sign appear twice, the rank is increased as well.
Note that in both cases the rank is naturally only affected if the particular constraint is independent from the already considered ones. In the following, we will use the broader term \textit{restrictions} instead of $\text{rank}(K)$, since it is more natural in terms of Table \eqref{table}. We thus claim the following:

\begin{lemma} \label{order_depending_on_number_of_restr}
For any partition $\bm{\nu}=\{\nu_1,\ldots,\nu_{2q}\}$ of Table \eqref{table} with $|\nu_g|=2$ for $g=1,\ldots,2q$, it holds that
\begin{align*}
&\phantom{==}\frac{\lambda^{3dq/2}}{n^{4q}} \sum_{\bm{k}_1,\ldots,\bm{k}_q=-a}^{a-1} \sum_{\underline{j}\in\mathcal{D}(q,i)} \cum_2\big[Y_{t,c}(\underline{j}):(t,c)\in\nu_1\big] \times \ldots \times \cum_2\big[Y_{t,c}(\underline{j}):(t,c)\in\nu_{2q}\big]\\
&=\begin{cases}  \Landau(\lambda^{d(q/2-\#\text{restrictions})}) \quad &\text{if } i=4q,\\
\Landau\left(\frac{\lambda^{d(q/2+1-\#\text{restrictions})}}{n}\right) \quad &\text{if } i\leq 4q-1.
\end{cases}
\end{align*}
Here, the term $\#\text{restrictions}$ denotes the number of independent restrictions between the rows and columns of Table \eqref{table}, or equivalently the rank of the matrix $K$ in \eqref{rank}. \label{page100}
\end{lemma}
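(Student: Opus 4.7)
The plan is to extend the explicit computation \eqref{eq8}--\eqref{order_for_2,2,2_and_maximal_elements}, which already establishes the first bound of the lemma for one specific pair partition at the maximal index count $i=4q$, in two complementary directions: first to arbitrary pair partitions $\bm{\nu}$, and then to $i \leq 4q-1$.

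For $i=4q$ and an arbitrary pair partition, I will repeat the derivation leading to \eqref{jetzt_hier} line by line; only the concrete matrices $K$ and $R$ of \eqref{rank} and the choice of $\hat{\bm{k}}_j \in \{\bm{k}_t,\, -\bm{k}_t-1 : t=1,\ldots,q\}$ change, not the overall structure of the bound. Setting $r = \mathrm{rank}(K)$ -- which coincides with the number of independent restrictions between the rows and columns of Table~\eqref{table} -- a change of summation indices replaces the $q$ outer sums over $\bm{k}_t$ by $r$ ``difference'' variables $\bm{m}_1,\ldots,\bm{m}_r$ together with $q-r$ ``free'' indices. The free summations produce a Riemann sum of order $\lambda^{d(q-r)}$ which is uniformly bounded because $(2\pi/\lambda)\sum_{k=-a}^{a-1} f(\cdot - \tilde{\omega}_k)$ is uniformly bounded. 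The $r$ coupled pair integrals of the form $\frac{1}{\lambda^{dr}}\sum_{\bm{m}} |B(\bm{u})B(\bm{u}+\bm{\omega}_{\bm{m}})|$ are controlled by Lemma~\ref{bounds_for_B}(iv), while the remaining $2q-r$ decoupled integrals $\int |B|^2$ each contribute $\lambda^d$ by Lemma~\ref{orders_of_B}(i). Collecting powers of $\lambda$ yields $3dq/2 - 4dq + d(q-r) + 2dq = d(q/2 - r)$, giving the first bound.

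To pass from $i=4q$ to $i \leq 4q-1$, I will account for the cost of each coincidence $j_a = j_b$, where $a$ and $b$ must necessarily lie in different row-groups of Table~\eqref{table} by the constraint in $\mathcal{D}(q)$. Combinatorially, $|\mathcal{D}(q,i)|/n^{4q} = \Landau(n^{-(4q-i)})$, so one loses a factor $1/n$ per coincidence. Analytically, merging $\bm{s}_{j_a} = \bm{s}_{j_b}$ collapses one spatial integration over $[-\lambda/2,\lambda/2]^d$ and in the worst case fuses two $B$-windows into a single one whose frequency argument is the sum of the two originals. This worst case corresponds to removing exactly one decoupled $\int |B|^2 \sim \lambda^d$ contribution accounted for in Lemma~\ref{orders_of_B}(i), so a single coincidence inflates the bound by at most $\lambda^d$; combined with the $1/n$ combinatorial loss this is a net factor of $\lambda^d/n$ relative to the $i=4q$ bound. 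Any further coincidences then contribute only factors of at most $a^d/n = \Landau(1)$ by Assumption~\ref{assumptions_on_a}(i), since after the first merge the relevant frequency summations range over $\{-a,\ldots,a-1\}$ rather than the $\lambda$-scale region used in Lemma~\ref{bounds_for_B}. This yields the second bound $\Landau(\lambda^{d(q/2+1-r)}/n)$.

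The hard part will be the uniform bookkeeping in the second step. After a coincidence, the concrete form of the $B$-product inside the integrand depends sensitively on how the merged locations interact with $\bm{\nu}$: the merge can (i) occur within a single pair of $\bm{\nu}$, turning a covariance $c(\bm{s}_a - \bm{s}_b)$ into the constant $c(\bm{0})$ and possibly reducing the rank of $K$; (ii) identify two locations in different pairs, altering the linear combinations $\hat{\bm{k}}_{2g}-\hat{\bm{k}}_{2g-1}$ that appear as frequency arguments in the $B$-products; or (iii) fuse locations lying in different cumulant blocks in a way that changes how Lemma~\ref{bounds_for_B}(iv)--(v) must be applied. In each of these sub-cases I will need to verify that the remaining $B$-integral structure still falls within the scope of Lemma~\ref{bounds_for_B}(iv)--(v) and that the net inflation of the $\lambda$-exponent is at most $d$, uniformly over all sub-cases and uniformly in the partition $\bm{\nu}$. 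Once this case analysis is carried out, combining it with Step~1 immediately yields the lemma.
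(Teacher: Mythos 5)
Your proposal reproduces the paper's own argument essentially step for step: the same rank-of-$K$ power count $3dq/2-4dq+d(q-r)+2dq=d(q/2-r)$ via Lemma \ref{bounds_for_B}(iv)--(v) and Lemma \ref{orders_of_B}(i) for $i=4q$, the same $\lambda^d/n$ accounting for the first coincidence (one spatial integral over $[-\lambda/2,\lambda/2]^d$ is lost precisely when the two merged indices lie in the same set of $\bm{\nu}$, combined with $|\mathcal{D}(q,4q-1)|=\Landau(n^{4q-1})$, while the rank of $K$ and the number of free sums are unchanged), and the same $a^d/n=\Landau(1)$ bound for each further coincidence, whose worst case is a row losing its spectral density so that a frequency sum contributes $a^d$ raw. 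One bookkeeping sentence is garbled --- ``removing a $\lambda^d$ contribution'' would deflate rather than inflate the bound; the $\lambda^d$ gain comes solely from the dropped $\lambda^{-d}$ normalization of the collapsed integral, the fused window $\tilde{B}(\cdot)=\Landau(\lambda^d)$ (the window of $h^2$, not $h$) being absorbed by Lemma \ref{bounds_for_B}(iv) with $s=1$ at no change in order --- but your net factor and conclusion agree with the paper's.
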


\begin{proof}
The part for $i=4q$ has already been proven. 
Now let $i= 4q-1$, i.e. we assume that $j_k=j_l$ for exactly one pair $(k,l)\in\{1,\ldots,4q\}^2$ with $k\neq l$. 
Note that while the value $3dq/2$ in front of the sums over the cumulant expression obviously remains the same for all $i\in\{4,\ldots,4q\}$, the value $4q$ in \eqref{order_for_2,2,2_and_maximal_elements} is the number of integrals over the region $[-\lambda/2,\lambda/2]^d$. 
If $i=4q-1$, we do not necessarily obtain $4q$ of these integrals (as in the case where $i=4q$), but more generally we have
\begin{align*}
\# \{\text{number of integrals over }  [-\lambda/2,\lambda/2]^d \}&= \sum_{g=1}^{2q} \big(\#\, \{\text{different indices} \text{ in } \underline{j}\text{ belonging to } \nu_g\} \big)\\
& = \begin{cases}
4q-1,  &\text{if } \exists\, \nu_g=\{(t_1,c_1),(t_2,c_2)\} \text{ such that}\\ 
&j_{c_1+4(t_1-1)}=j_{c_2+4(t_2-1)},\\
4q,  &\text{if } \nexists\, \nu_g=\{(t_1,c_1),(t_2,c_2)\} \text{ such that}\\ 
&j_{c_1+4(t_1-1)}=j_{c_2+4(t_2-1)}.
\end{cases}
\end{align*}

We thus lose one integral over the region $[-\lambda/2,\lambda/2]^d$ if the two equal indices in $\underline{j}$ belong to the same set, leading to an order change of $\lambda^d$. Here, we say that an index $j_k$ in $\underline{j}$ belongs to $\nu_g$ if $k=c+4(t-1)$ for some $(t,c)\in\nu_g$. Moreover, note that $|\mathcal{D}(q,4q-1)|=\Landau(n^{4q-1})$, i.e. setting two elements in $\underline{j}$ equal causes a further order change of $1/n$.
Also recall that
\begin{align*}
q-r = \#\{ \text{sums over } \bm{k}_1,\ldots,\bm{k}_q \}- \text{rank}(K) \qquad \text{and} \qquad 2q = \# \text{groups}.
\end{align*}
It can be seen easily that the number of sums and the rank of the matrix $K$ are not affected when setting two elements in $\underline{j}$ equal. This shows that setting two elements equal leads to an order change of at most $\lambda^d/n$ (there is an order change of $1/n$ only if two elements in $\underline{j}$ are equal that do not belong to the same set).\\
For illustration, we consider again the partition $\bm{\nu}^{\ast}$ characterized by the matrices $K$ and $R$ in \eqref{special_form_of_K_and_R}. This time however, we set $i=4q-1$ and assume without loss of generality that the indices $j_2$ and $j_5$ in $\underline{j}=(j_1,\ldots,j_{4q})$ coincide and both belong to the set $\nu_1^\ast$, i.e. we have $\nu_1^\ast=\{(1,2),(2,1)\}$. Recalling the alternative notation $\nu_g^\ast=(\nu_{g,1}^\ast,\nu_{g,2}^\ast)$ for $g=1,\ldots,2q$ from \eqref{simplified_notation}, we then obtain
\begin{align*}
&\phantom{=i}\frac{\lambda^{3dq/2}}{n^{4q}} \sum_{\bm{k}_1,\ldots,\bm{k}_q=-a}^{a-1} \sum_{\underline{j}\in\mathcal{D}(q,4q-1)} \cum_2[Y_{t,c}(\underline{j}):(t,c)\in\nu_1^{\ast}] \times \ldots \times \cum_2[Y_{t,c}(\underline{j}):(t,c)\in\nu_{2q}^{\ast}]\\
&=\Landau\Big(\frac{\lambda^{3dq/2}}{n} \sum_{\bm{k}_1,\ldots,\bm{k}_q=-a}^{a-1} \frac{1}{\lambda^{d(4q-1)}} \int_{[-\lambda/2,\lambda/2]^{d(4q-1)}} c(\bm{0})\, \prod_{g=2}^{2q} c(\bm{s}_{\nu_{g,1}^{\ast}}-\bm{s}_{\nu_{g,2}^{\ast}}) \\
&\phantom{=============}  h\left(\frac{\bm{s}_1}{\lambda}\right) h\left(\frac{\bm{s}_5}{\lambda}\right) h\left(\frac{\bm{s}_3}{\lambda}\right) h\left(\frac{\bm{s}_4}{\lambda}\right)    \exp\big(\im \tilde{\bm{\omega}}_{\bm{k}_1}^T(\bm{s}_1-\bm{s}_5+\bm{s}_3-\bm{s}_4)\big)\\
&\phantom{========} \prod_{t=2}^q \left(\prod_{c=1}^4 h\left(\frac{\bm{s}_{c+4(t-1)}}{\lambda}\right) \exp\big(\im \tilde{\bm{\omega}}_{\bm{k}_t}^T (\bm{s}_{4t-3}-\bm{s}_{4t-2}+\bm{s}_{4t-1}-\bm{s}_{4t})\big) \right)  \, d\bm{s}_5 \prod_{g=2}^{2q} d\bm{s}_{\nu_{g,1}^{\ast}} d\bm{s}_{\nu_{g,2}^{\ast}} \Big)\\
&=\Landau\Big(\frac{\lambda^{3dq/2}}{n \lambda^{d(4q-1)}} \sum_{\bm{k}_1,\ldots,\bm{k}_q=-a}^{a-1} \int_{\R^{d(2q-1)}} \prod_{g=2}^{2q} f(\bm{x}_g) B(\bm{x}_g+\tilde{\bm{\omega}}_{\hat{\bm{k}}_{2g-1}}) B(\bm{x}_g +\tilde{\bm{\omega}}_{\hat{\bm{k}}_{2g}}) \tilde{B}(\tilde{\bm{\omega}}_{\bm{k}_2}-\tilde{\bm{\omega}}_{\bm{k}_1})\, d\bm{x}_g \Big)\\
&=\Landau\Big(\frac{\lambda^{3dq/2}}{n \lambda^{d(4q-1)}} \sum_{\bm{k}_1,\ldots,\bm{k}_q=-a}^{a-1} \int_{\R^{d(2q-1)}} \prod_{g=2}^{2q} f(\bm{u}_g-\tilde{\bm{\omega}}_{\hat{\bm{k}}_{2g-1}}) B(\bm{u}_g) B(\bm{u}_g + \bm{\omega}_{\hat{\bm{k}}_{2g}-\hat{\bm{k}}_{2g-1}}) \tilde{B}(\bm{\omega}_{\bm{k}_2-\bm{k}_1})\, d\bm{u}_g \Big),
\end{align*}
where  
\begin{align} \label{B_tilde}
\tilde{B}(\bm{u}):=\int_{[-\lambda/2,\lambda/2]^d} h^2\left(\frac{\bm{s}}{\lambda}\right) \exp(-\im \bm{s}^T \bm{u})\, d\bm{s}
\end{align}
is the frequency window of the squared taper function $h^2$. 
Now, by the choice of the matrices $K$ and $R$ (see above), we can again make the index shift 
\begin{align*}
\bm{k}_1=\bm{k}_1, \qquad \bm{m}_1=\bm{k}_2-\bm{k}_1, \qquad \bm{m}_2=\bm{k}_3-\bm{k}_2, \qquad \ldots, \qquad \bm{m}_{r}=\bm{k}_{r+1}-\bm{k}_{r}
\end{align*}
and obtain that the above expression is of order
\begin{align*}
&\Landau\Big(\frac{\lambda^{3dq/2}}{n \lambda^{d(4q-1)}} \sum_{\bm{m}_1,\ldots,\bm{m}_r=-2a+1}^{2a-1} \int_{\R^{d(2q-1)}} H_{\lambda,a}(\bm{u}_{r+2},\ldots,\bm{u}_{q+1}) \, |\tilde{B}(\bm{\omega}_{\bm{m}_1})|\,\prod_{g=2}^r  \left|B(\bm{u}_g) B(\bm{u}_g+\bm{\omega}_{\bm{m}_g})\right|\\
&\phantom{====================}  \left|B(\bm{u}_{r+1}) B(\bm{u}_{r+1}-\sum_{j=1}^r \bm{\omega}_{\bm{m}_j})\right| \prod_{g=r+2}^{2q} B^2(\bm{u}_g) \, \prod_{g=2}^{2q} d\bm{u}_g \Big),
\end{align*}
where 
\begin{align*}
H_{\lambda,a}(\bm{u}_{r+2},\ldots,\bm{u}_{q+1}):=\sum_{\bm{k}_1,\bm{k}_{r+2},\ldots,\bm{k}_q=-a}^{a-1} \left(\prod_{g=r+2}^{q} f(\bm{u}_g-\tilde{\bm{\omega}}_{
\bm{k}_{g}})\right) f(\bm{u}_{q+1}-\tilde{\bm{\omega}}_{\bm{k}_1}).
\end{align*}
As in the case where $i=4q$, we obtain $H_{\lambda,a}(\bm{u}_{r+2},\ldots,\bm{u}_{q+1})=\Landau(\lambda^{d(q-r)})$.
By Lemma \ref{bounds_for_B}, part (iv), for $t=r+1$ and $s=1$, and Lemma \ref{orders_of_B}, part (i), we furthermore have
\begin{align*}
& \sum_{\bm{m}_1,\ldots,\bm{m}_r=-2a+1}^{2a-1} \int_{\R^{d(2q-1)}} |\tilde{B}(\bm{\omega}_{\bm{m}_1})|\, \prod_{g=2}^r  \left|B(\bm{u}_g) B(\bm{u}_g+\bm{\omega}_{\bm{m}_g})\right|\\
&\phantom{==========}  \left|B(\bm{u}_{r+1}) B(\bm{u}_{r+1}-\sum_{j=1}^r \bm{\omega}_{\bm{m}_j})\right| \prod_{g=r+2}^{2q} B^2(\bm{u}_g) \, \prod_{g=2}^{2q} d\bm{u}_g =\Landau(\lambda^{2dq})
\end{align*}
(since the proof of Lemma \ref{bounds_for_B} also works when replacing the frequency window of $h$ by the frequency window of $h^2$).
The whole expression is thus of order
\begin{align*}
\Landau\left(\frac{\lambda^{d\big(3q/2-(4q-1)+(q-r)+2q\big)}}{n}\right)= \Landau\left(\frac{\lambda^{d(q/2-r+1)}}{n}\right).
\end{align*}
Setting $i=4q-1$ thus leads to an order change of $\lambda^d/n$.\\
We now explain why setting any two additional elements in $\underline{j}$ equal leads to an order change of at most $a^d/n$. While for most partitions the arguments given above (resulting in an additional factor of $\lambda^d/n$) hold in the case of $i< 4q-1$ as well, there can also be situations where the additional factor is of order $a^d/n$.
This happens if a certain row of Table \eqref{table} does not yield an integral over $\R^d$ and therefore no spectral density any more. For illustration, consider the partition 
$$\bm{\nu}=\Big\{\{(1,1),(2,1)\},\{(1,2),(2,2)\},\{(1,3),(2,3)\},\{(1,4),(2,4)\}\Big\}$$
 (i.e. $q=2$) and the cases $i=5$, $i=4$, respectively. Recall the definition of the frequency window $\tilde{B}$ in \eqref{B_tilde}. For $i=5$, since $|\mathcal{D}(2,5)|=\Landau(n^5)$, we thus obtain 
\begin{align*}
&\phantom{==}\frac{\lambda^{3d}}{n^8}\sum_{\bm{k}_1,\bm{k}_2=-a}^{a-1} \sum_{\underline{j}\in\mathcal{D}(2,5)} \cum_2[Y_{t,c}(\underline{j})\, : \, (t,c)\in \nu_1] \times \ldots \times \cum_2[Y_{t,c}(\underline{j})\, :\, (t,c)\in \nu_4]\\
&=\Landau\Bigg(\frac{\lambda^{3d}}{n^3 \lambda^{5d}} \sum_{\bm{k}_1,\bm{k}_2=-a}^{a-1} \int_{[-\lambda/2,\lambda/2]^{5d}} c(\bm{0})^3\, c(\bm{s}_4-\bm{s}_5) \, h^2\left(\frac{\bm{s}_1}{\lambda}\right) h^2\left(\frac{\bm{s}_2}{\lambda}\right) h^2\left(\frac{\bm{s}_3}{\lambda}\right) h\left(\frac{\bm{s}_4}{\lambda}\right) h\left(\frac{\bm{s}_5}{\lambda}\right)\\
&\phantom{==========} \exp\big(\im \tilde{\bm{\omega}}_{\bm{k}_1}^T (\bm{s}_1-\bm{s}_2+\bm{s}_3-\bm{s}_4)\big) \exp\big(\im \tilde{\bm{\omega}}_{\bm{k}_2}^T (\bm{s}_1-\bm{s}_2+\bm{s}_3-\bm{s}_5)\big) \, d\bm{s}_1 d\bm{s}_2 d\bm{s}_3 d\bm{s}_4 d\bm{s}_5 \Bigg)\\
&=\Landau\Bigg(\frac{\lambda^{3d-5d}}{n^3} \int_{\R^d} \sum_{\bm{k}_1,\bm{k}_2=-a}^{a-1} \tilde{B}(\tilde{\bm{\omega}}_{\bm{k}_1}+\tilde{\bm{\omega}}_{\bm{k}_2})^3 B(\bm{x}-\tilde{\bm{\omega}}_{\bm{k}_1}) B(\bm{x}+\tilde{\bm{\omega}}_{\bm{k}_2}) 
f(\bm{x}) \, d\bm{x} \Bigg)\\
&=\Landau\Bigg(\frac{\lambda^{3d-5d}}{n^3} \int_{\R^d} \sum_{\bm{k}_1,\bm{k}_2 =-a}^{a-1} \tilde{B}(\bm{\omega}_{\bm{k}_1+\bm{k}_2+1})^3 B(\bm{u}) B(\bm{u}+\bm{\omega}_{\bm{k}_1+\bm{k}_2+1}) f(\bm{u}+\tilde{\bm{\omega}}_{\bm{k}_1}) \, d\bm{u} \Bigg)\\
&=\Landau\Bigg(\frac{\lambda^{3d-5d}}{n^3} \int_{\R^d}  \sum_{\bm{m}=-2a}^{2a-2} \tilde{B}(\bm{\omega}_{\bm{m}+1})^3 B(\bm{u}) B(\bm{u}+\bm{\omega}_{\bm{m}+1}) \sum_{\bm{k}_1=\max(-a,\bm{m}-a+1)}^{\min(a-1,\bm{m}+a)} f(\bm{u}+\tilde{\bm{\omega}}_{\bm{k}_1}) \, d\bm{u} \Bigg)\\
&=\Landau\left(\frac{\lambda^{3d-5d+4d+d}}{n^3}\right)=\Landau\left(\frac{\lambda^{3d}}{n^3}\right),
\end{align*}
where we again applied Lemma \ref{bounds_for_B}, part (iv), for $t=2$ and $s=1$. Moreover, we used the fact that $\tilde{B}(\bm{u})=\Landau(\lambda^{d})$. 
For $i=4$, since $|\mathcal{D}(2,4)|=\Landau(n^4)$, we have
\begin{align*}
&\phantom{==}\frac{\lambda^{3d}}{n^8}\sum_{\bm{k}_1,\bm{k}_2=-a}^{a-1} \sum_{\underline{j}\in\mathcal{D}(2,4)} \cum_2[Y_{t,c}(\underline{j})\, : \, (t,c)\in \nu_1] \times \ldots \times \cum_2[Y_{t,c}(\underline{j})\, :\, (t,c)\in \nu_4]\\
&=\Landau\Bigg(\frac{\lambda^{3d}}{n^4 \lambda^{4d}} \sum_{\bm{k}_1,\bm{k}_2=-a}^{a-1} \int_{[-\lambda/2,\lambda/2]^{4d}} c(\bm{0})^4 \, h^2\left(\frac{\bm{s}_1}{\lambda}\right) h^2\left(\frac{\bm{s}_2}{\lambda}\right) h^2\left(\frac{\bm{s}_3}{\lambda}\right) h^2\left(\frac{\bm{s}_4}{\lambda}\right) \\
&\phantom{==========} \exp\big(\im \tilde{\bm{\omega}}_{\bm{k}_1}^T (\bm{s}_1-\bm{s}_2+\bm{s}_3-\bm{s}_4)\big) \exp\big(\im \tilde{\bm{\omega}}_{\bm{k}_2}^T (\bm{s}_1-\bm{s}_2+\bm{s}_3-\bm{s}_4)\big) \, d\bm{s}_1 d\bm{s}_2 d\bm{s}_3 d\bm{s}_4 \Bigg)\\
&=\Landau\Bigg(\frac{\lambda^{3d-4d}}{n^4} \sum_{\bm{k}_1,\bm{k}_2=-a}^{a-1} \tilde{B}(\tilde{\bm{\omega}}_{\bm{k}_1}+\tilde{\bm{\omega}}_{\bm{k}_2})^4 \Bigg)=\Landau\Bigg(\frac{\lambda^{3d-4d}}{n^4}  \sum_{\bm{m}=-2a}^{2a-2} \tilde{B}(\bm{\omega}_{\bm{m}+1})^4 \sum_{\bm{k}_1=\max(-a,\bm{m}-a+1)}^{\min(a-1,\bm{m}+a)} 1 \Bigg)\\
&=\Landau\left(\frac{\lambda^{3d-4d+4d} \, a^d}{n^4}\right)=\Landau\left(\frac{\lambda^{3d}}{n^3} \times \frac{a^d}{n}\right).
\end{align*}
Here, we used Lemma \ref{bounds_for_B}, part (v), for $t=2$, and $\tilde{B}(\bm{u})=\Landau(\lambda^{d})$. 
This example illustrates that an order change of $a^d/n$ is possible as well when setting two additional elements equal. Other cases can be treated similarly and the discussion is omitted for the sake of brevity. No order change of higher order than $a^d/n$ can appear when decreasing the value of $i$ by $1$.
\end{proof}

For an indecomposable partition $\bm{\nu}=\{\nu_1,\ldots,\nu_{2q}\}\in\mathcal{I}(q)$, we must have at least $q-1$ restrictions in frequency direction. Therefore, we obtain the following result, which specifies the order of the cumulant expression for an indecomposable partition $\bm{\nu}$ with $G=2q$ groups and $i\leq 4q$ different elements:

\begin{corollary} \label{G=2q}
Assume that $\bm{\nu}=\{\nu_1,\ldots,\nu_{2q}\}\in\mathcal{I}(q)$ is an indecomposable partition of Table \eqref{table} with $|\nu_g|=2$ for $g=1,\ldots,2q$. Then, we have
\begin{align*}
&\phantom{==}\frac{\lambda^{3dq/2}}{n^{4q}} \sum_{\bm{k}_1,\ldots,\bm{k}_q=-a}^{a-1} \sum_{\underline{j}\in\mathcal{D}(q,i)} \cum_2\big[Y_{t,c}(\underline{j}):(t,c)\in\nu_1\big] \times \ldots \times \cum_2\big[Y_{t,c}(\underline{j}):(t,c)\in\nu_{2q}\big]\\
&=\begin{cases}  \Landau(\lambda^{d(1-q/2)}) \quad &\text{if } i=4q,\\
\Landau\left(\frac{\lambda^{d(2-q/2)}}{n}\right) \quad &\text{if } i\leq 4q-1.
\end{cases}
\end{align*}
In particular, for $q=2$ we obtain an order of $\Landau(1)$ in the case of maximally many different elements (i.e. $i=8$), while for $i<8$, we obtain an order of $\Landau(\lambda^d/n)$. For $q\geq 3$, the expression is always of order $o(1)$. 
\end{corollary}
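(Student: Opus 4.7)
The proof is essentially a direct consequence of Lemma \ref{order_depending_on_number_of_restr}, so the plan is to substitute a lower bound for the number of restrictions and then verify the two numerical cases at the end. First, I would invoke Lemma \ref{order_depending_on_number_of_restr}, which gives the order as a function of $\#\text{restrictions}$ (equivalently, $\mathrm{rank}(K)$). The corollary then reduces to proving the combinatorial claim that any indecomposable partition $\bm{\nu}=\{\nu_1,\ldots,\nu_{2q}\}\in\mathcal{I}(q)$ with $|\nu_g|=2$ for all $g$ must contribute at least $q-1$ independent frequency-direction restrictions.

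The key step, which I expect to be the main (but still light) obstacle, is establishing this lower bound $\#\text{restrictions}\geq q-1$. The plan is to view the rows $t=1,\ldots,q$ of Table \eqref{table} as vertices of a graph, and to add an edge between rows $t_1$ and $t_2$ for every block $\nu_g=\{(t_1,c_1),(t_2,c_2)\}$ with $t_1\neq t_2$. Indecomposability of $\bm{\nu}$ is by definition equivalent to connectedness of this graph (no proper subset of rows is saturated by blocks entirely within it), so the graph has at least $q-1$ edges. Each such inter-row edge produces exactly one independent constraint on $(\bm{k}_1,\ldots,\bm{k}_q)$ in the associated exponential factor in the cumulant, i.e., one independent row of the matrix $K$ from \eqref{rank}. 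Hence $\mathrm{rank}(K)\geq q-1$.

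Plugging $\#\text{restrictions}\geq q-1$ into Lemma \ref{order_depending_on_number_of_restr} gives
\begin{align*}
\lambda^{d(q/2-\#\text{restrictions})}&\leq \lambda^{d(q/2-(q-1))}=\lambda^{d(1-q/2)} \qquad (i=4q),\\
\frac{\lambda^{d(q/2+1-\#\text{restrictions})}}{n}&\leq \frac{\lambda^{d(2-q/2)}}{n} \qquad (i\leq 4q-1),
\end{align*}
which is exactly the claimed bound. For the qualitative conclusion, setting $q=2$ gives $\Landau(1)$ and $\Landau(\lambda^d/n)$ directly. For $q\geq 3$, the factor $\lambda^{d(1-q/2)}\leq \lambda^{-d/2}=o(1)$, while in the second case $\lambda^{d(2-q/2)}/n \leq \lambda^{d/2}/n=(\lambda^d/n)\cdot\lambda^{-d/2}=o(1)$ by Assumption \ref{assumption_on_sampling_scheme}(ii). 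This completes the proof, with the only nontrivial input being the standard graph-theoretic characterization of indecomposable partitions used to lower-bound $\mathrm{rank}(K)$.
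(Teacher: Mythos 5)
Your proposal is correct and follows essentially the same route as the paper: the paper likewise obtains the corollary by plugging the lower bound $\#\text{restrictions}\geq q-1$ for indecomposable partitions with $G=2q$ groups into Lemma \ref{order_depending_on_number_of_restr} and then checking the cases $q=2$ and $q\geq 3$. The only difference is that you supply a short graph-theoretic (spanning-tree) justification of the $q-1$ lower bound, whereas the paper takes this as the special case $s=q$ of Lemma \ref{number_of_rest}, which it cites rather than reproves.
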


We now consider a partition $\bm{\nu}=\{\nu_1,\ldots,\nu_G\}$ of Table \eqref{table} with $|\nu_g|>2$ for some $g\in\{1,\ldots,G\}$. Recall that due to the law of total cumulance and the Gaussianity of the locations, each term $\cum_{|\nu_g|}[Y_{t,c}(\underline{j})\, : \, (t,c)\in \nu_g]$ appearing in \eqref{cumulant_general} must be the sum of cumulants of covariances conditioned on the locations. For a set $\nu_g$ with $|\nu_g|=2$, this gives a single expectation of a covariance (as can be seen in our above calculations). For $|\nu_g|>2$ however, we obtain a sum of cumulants of covariances over all subpartitions of $\nu_g$ that are of size $2$. It is then easy to see that the order of the expression
\begin{align*}
\sum_{\bm{k}_1,\ldots,\bm{k}_q=-a}^{a-1} \cum_{|\nu_1|}\big[Y_{t,c}(\underline{j})\, : \, (t,c)\in \nu_1\big] \times \ldots \times \cum_{|\nu_G|}\big[Y_{t,c}(\underline{j})\, : \, (t,c)\in \nu_G\big] 
\end{align*}
is again determined by the number of different elements in $\underline{j}$, the number of restrictions between the variables $\bm{k}_1,\ldots,\bm{k}_q$ in Table \eqref{table}, and the number of groups $G$. Moreover, considering the maximal number of different elements in $\underline{j}$ allowed leads to the highest order, since setting two additional elements equal leads to an order change of at most $a^d/n$. 
Considering the calculations for the case where $|\nu_g|=2$ for all $g=1,\ldots,2q$, we thus obtain 
\begin{align*}
&\phantom{=i}\frac{\lambda^{3dq/2}}{n^{4q}} \sum_{\bm{k}_1,\ldots,\bm{k}_q=-a}^{a-1} \sum_{i=4}^{2q+G} \sum_{\underline{j}\in\mathcal{D}(q,i)} \cum_{|\nu_1|}\big[Y_{t,c}(\underline{j})\, : \, (t,c)\in \nu_1\big] \times \ldots \times \cum_{|\nu_G|}\big[Y_{t,c}(\underline{j})\, : \, (t,c)\in \nu_G\big] \\
&=\Landau\left(\frac{\lambda^{d\big(3q/2-(2q+G)+(q-\#\text{restrictions})+G\big)}}{n^{4q-(2q+G)}}\right)
\end{align*}
[compare to \eqref{order_for_2,2,2_and_maximal_elements}, which is the order of the cumulant expression for a partition with $G=2q$ groups and maximally many different elements $i=4q$]. We have thus shown the following result, which provides the order of the cumulant expression corresponding to an arbitrary partition $\bm{\nu}$ with $G$ groups:

\begin{prop} \label{general_order}
For a fixed partition $\bm{\nu}=\{\nu_1,\ldots,\nu_G\}$ of Table \eqref{table}, we have
\begin{align*}
&\phantom{==}\frac{\lambda^{3dq/2}}{n^{4q}} \sum_{\bm{k}_1,\ldots,\bm{k}_q=-a}^{a-1} \sum_{i=4}^{2q+G} \sum_{\underline{j}\in \mathcal{D}(q,i)} \cum_{|\nu_1|}\big[Y_{t,c}(\underline{j})\, : \, (t,c)\in \nu_1\big] \times \ldots \times \cum_{|\nu_G|}\big[Y_{t,c}(\underline{j})\, :\, (t,c)\in \nu_G\big]\\
&=\mathcal{O}\left(\frac{\lambda^{d(q/2-\#\text{restrictions})}}{n^{2q-G}}\right),
\end{align*}
where the term $\#\text{restrictions}$ denotes the number of independent constraints between the rows and columns of Table \eqref{table}. 
\end{prop}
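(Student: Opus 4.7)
The plan is to reduce the general case to the all-pairs case treated in Lemma \ref{order_depending_on_number_of_restr} via Isserlis' theorem, and then re-use its scaling bookkeeping essentially verbatim.

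First, conditional on the locations $\{\bm{s}_j\}$ the random variables $Z(\bm{s}_{j_k})$ are jointly Gaussian, so the conditional cumulant of order $|\nu_g|=2p_g$ equals the sum over pairings of $\nu_g$ into subsets of size two of the product of the pairwise conditional covariances; odd-order conditional cumulants vanish by the argument B) used in the proof of Theorem \ref{expectation_theo}, and by C) a nonzero contribution requires that at most $p_g+1$ locations within each group are distinct. Iterating this across all $G$ groups and taking the outer expectation decomposes the product $\prod_g\cum_{|\nu_g|}[\,\cdot\,]$ into a finite sum, indexed by refinements $\bm{\pi}=\{\pi_1,\ldots,\pi_{2q}\}$ of $\bm{\nu}$ into $2q$ size-two pairs, of expressions that are structurally identical to those analyzed in Lemma \ref{order_depending_on_number_of_restr}, modulo an additional combinatorial constraint coming from the coarser partition $\bm{\nu}$.

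Second, I would apply the scaling bookkeeping of Lemma \ref{order_depending_on_number_of_restr} to each refinement $\bm{\pi}$. The admissible range for the number of distinct indices $i$ is no longer $\{4,\ldots,4q\}$ but $\{4,\ldots,2q+G\}$, since the group structure forces at most $p_g+1$ distinct indices per $\nu_g$. For each fixed $\bm{\pi}$ and each admissible $i$, Lemma \ref{bounds_for_B}(iv)--(v) together with Corollary \ref{cor_Riemann_sum} applied to the $q-\#\text{restrictions}$ free Fourier variables yields exactly the bound $\lambda^{d(q/2-\#\text{restrictions})}$ in the regime $i=2q+G$, and each forced coincidence of indices below that maximum costs at most a further factor $\lambda^d/n$ or $a^d/n$ (cf.\ the two illustrative computations following Lemma \ref{order_depending_on_number_of_restr}). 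Since $|\mathcal{D}(q,i)|=\Landau(n^i)$ and the prefactor carries $n^{-4q}$, setting $i=2q+G$ produces the extra multiplicative factor $n^{-(2q-G)}$, which dominates the smaller-$i$ contributions under Assumption \ref{assumption_on_sampling_scheme}(ii) and Assumption \ref{assumptions_on_a}(i).

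The main obstacle is to verify that $\#\text{restrictions}$, as computed from the coarse partition $\bm{\nu}$, is a valid lower bound on the rank of the restriction matrix $K$ in \eqref{rank} associated with every refinement $\bm{\pi}$; equivalently, that refining a group into pairs can only impose further equalities among the $\hat{\bm{k}}_j$ and never erase existing ones. This monotonicity is immediate from the definitions, since every row of $K$ arising from $\bm{\nu}$ is also a row of the $K$ arising from $\bm{\pi}$, so $\mathrm{rank}(K_{\bm{\pi}})\geq \mathrm{rank}(K_{\bm{\nu}})$ and the bound from Lemma \ref{order_depending_on_number_of_restr} therefore applies uniformly in $\bm{\pi}$. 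Once this monotonicity is in place, summing the uniform per-refinement bound over the (finite, depending only on $q$ and $G$) number of refinements of $\bm{\nu}$ gives Proposition \ref{general_order} with no further analytic input.
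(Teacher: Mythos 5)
Your proposal follows essentially the same route as the paper: decompose each group cumulant $\cum_{|\nu_g|}$ via Gaussianity into contributions indexed by pairings of $\nu_g$, and then recycle the exponent bookkeeping of Lemma \ref{order_depending_on_number_of_restr} with the range of $i$ shrunk to $\{4,\ldots,2q+G\}$, which is exactly how the paper arrives at $\lambda^{d(q/2-\#\text{restrictions})}/n^{2q-G}$. One imprecision is worth flagging: the decomposition is not Isserlis applied to a \emph{conditional cumulant} (which vanishes for order $>2$), nor does it produce a \emph{product} of pairwise conditional covariances; by the law of total cumulance the unconditional cumulant $\cum_{2p_g}[Y_{t,c}]$ equals a sum over pairings of the $p_g$-th order \emph{cumulant, over the location randomness,} of the conditional covariances. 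This distinction is what actually forces at most $p_g+1$ distinct locations per group (and hence $i\le 2q+G$ rather than $4q$); your argument recovers the correct constraint only because you separately invoke fact C), so the conclusion stands, but the refined terms are not literally "structurally identical" to the all-pairs products of $\cum_2$'s. Your explicit check that refining a group can only increase the rank of the restriction matrix $K$ is a useful addition that the paper leaves implicit.
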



In order to show \eqref{cumulant_orders}, we need the following lemma from \cite{vandelft18}:

\begin{lemma} \label{number_of_rest}
Let $G=q+s$ for some $s\in\{1,\ldots,q\}$. Then, only partitions with at least $s-1$ restrictions in frequency direction are indecomposable.
\end{lemma}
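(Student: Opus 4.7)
The plan is to establish the lemma by a counting argument linking the combinatorial indecomposability of $\bm{\nu}$ to the rank of the restriction matrix $K$ generated by the Wick pairings of its groups.

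First I would set up the bookkeeping. Writing $|\nu_g|=2p_g$ so that $\sum_g|\nu_g|=4q$, and combining with $G=q+s$, yields the identity
\begin{align*}
\sum_{g=1}^G(p_g-1)=2q-G=q-s,
\end{align*}
which constrains how much ``excess size'' the groups can carry. Denote by $R_g\subseteq\{1,\ldots,q\}$ the set of distinct row-indices touched by $\nu_g$ and put $r_g=|R_g|$. The standard combinatorial characterization identifies indecomposability of $\bm{\nu}$ with connectedness of the row-hypergraph $H_{\bm{\nu}}$ on vertex set $\{1,\ldots,q\}$ whose hyperedges are $R_1,\ldots,R_G$; equivalently, $\sum_g(r_g-1)\ge q-1$.

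Second, I would translate the frequency restrictions into linear algebra. By the Gaussianity of $Z$ and the law of total cumulance, every group cumulant $\cum_{|\nu_g|}[Y_{t,c}(\underline{j}):(t,c)\in\nu_g]$ expands into a sum over Wick pairings $\pi_g$ of $\nu_g$, and each resulting pair $\{(t_1,c_1),(t_2,c_2)\}$ contributes the single row $\epsilon_{c_1}\bm{e}_{t_1}+\epsilon_{c_2}\bm{e}_{t_2}\in\mathbb{Z}^q$ to the restriction matrix $K_\pi$, with $\epsilon_c=(-1)^{c+1}$. The leading asymptotic contribution is picked up by the Wick pairing $\pi$ that minimizes $\mathrm{rank}(K_\pi)$, so the lemma reduces to the estimate $\mathrm{rank}(K_\pi)\ge s-1$ for any such minimal pairing.

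Third, I would perform the rank lower bound group by group. The pair vectors arising from $\nu_g$ lie in the coordinate subspace $\mathrm{span}\{\bm{e}_t:t\in R_g\}$ of dimension $r_g$, and their sum is the pairing-independent sign-sum $\sigma_g\in\mathbb{Z}^q$ given by $\sigma_g(t)=\sum_{(t,c)\in\nu_g}\epsilon_c$. A short verification shows that their span has dimension at least $r_g-1$, with an extra unit whenever $\sigma_g\neq 0$. Summing over $g$, applying the connectedness bound $\sum_g(r_g-1)\ge q-1$, and subtracting the cancellation budget $q-s$ available from intra-group dependencies then gives $\mathrm{rank}(K_\pi)\ge(q-1)-(q-s)=s-1$.

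The main obstacle is tracking cross-group cancellations cleanly: while the intra-group dependencies are quantifiable via $\sigma_g$ and $p_g$, pair vectors from different groups can themselves align and collectively cancel out more than the naive count predicts. The crux is showing that such inter-group alignments consume no more than $q-s$ dimensions, leaving at least $s-1$ independent restrictions intact. I expect this to be handled by an induction -- either on $s$ or on the total excess $q-s$ -- in which size-two within-row opposite-sign pairs (which contribute nothing to $K$) are peeled off first, and the base cases are handled by exploiting directly the spanning-tree structure of the connected row-hypergraph $H_{\bm{\nu}}$.
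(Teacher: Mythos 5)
First, note that the paper does not actually prove this lemma: it is imported verbatim from \cite{vandelft18}, so there is no in-paper argument to compare against, and your attempt has to stand on its own. Your overall framing is the right one (indecomposability $\Leftrightarrow$ connectedness of the row-hypergraph, restrictions $=\mathrm{rank}(K_\pi)$ for the rank-minimizing Wick pairing, and the budget identity $\sum_g(p_g-1)=2q-G=q-s$), but the core of your third step is wrong and the step you flag as "the crux" is exactly the part that is missing. Concretely, the claim that the pair vectors of a single group $\nu_g$ span a space of dimension at least $r_g-1$ is false: take $\nu_g=\{(1,1),(1,2),(2,1),(2,2)\}$ with the Wick pairing $\{(1,1),(1,2)\},\{(2,1),(2,2)\}$ --- both pairs sit inside a single row with opposite signs, so both restriction vectors vanish and the span has dimension $0$, while $r_g=2$. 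This is not a pathological case; it is precisely the mechanism by which large groups buy connectivity ``for free'', and it is why the lemma bounds the restrictions by $s-1$ rather than $q-1$. On top of that, summing per-group span dimensions can only give an \emph{upper} bound on $\mathrm{rank}(K_\pi)$, not a lower one, so the inequality chain $\mathrm{rank}(K_\pi)\ge\sum_g\dim(\cdot)\ge\sum_g(r_g-1)-(q-s)$ does not follow even if the per-group claim were repaired; you acknowledge this by deferring the inter-group cancellation bound to an unspecified induction, which leaves the proof incomplete. (A small additional slip: connectedness of the hypergraph is \emph{equivalent} to indecomposability, but it only \emph{implies} $\sum_g(r_g-1)\ge q-1$; you only use that direction, so this is harmless.)

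The gap can be closed without any per-group span computation by working with the union graph directly. Let $\Gamma$ be the multigraph on the rows $\{1,\dots,q\}$ whose edges are the cross-row Wick pairs of all groups, and let $C$ be its number of connected components; then $\mathrm{rank}(K_\pi)\ge q-C$, since a spanning forest of $\Gamma$ yields $q-C$ linearly independent vectors of the form $\epsilon_1\bm{e}_{t_1}+\epsilon_2\bm{e}_{t_2}$ (each new edge introduces a new coordinate). Writing $c_g$ for the number of components of $\Gamma$ restricted to $R_g$, connectedness of the row-hypergraph gives $C\le 1+\sum_g(c_g-1)$, because contracting the hyperedge $R_g$ can merge at most $c_g$ components of $\Gamma$ into one. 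Finally $c_g\le p_g$, since every component of $\Gamma|_{R_g}$ --- whether an isolated row whose elements are all paired internally, or a component containing a cross-row edge --- consumes at least two of the $2p_g$ elements of $\nu_g$. Combining, $\mathrm{rank}(K_\pi)\ge q-1-\sum_g(p_g-1)=q-1-(q-s)=s-1$. This replaces both your false per-group bound and the unproven cross-group cancellation estimate in one stroke.
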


With the help of Lemma \ref{number_of_rest}, we can prove the following.

\begin{prop} \label{less_or_equal}
Let $\bm{\nu}=\{\nu_1,\ldots,\nu_G\}\in\mathcal{I}(q)$ be an arbitrary indecomposable partition of Table \eqref{table}. Then, we have
\begin{align*}
\frac{\lambda^{d(q/2-\#\text{restrictions})}}{n^{2q-G}} = \begin{cases} 
\Landau\left( \lambda^{-dq/2} \left(\frac{\lambda^d}{n}\right)^q\right), &\qquad \text{if } G\leq q,\\
\Landau\left( \left(\frac{\lambda^{d}}{n}\right)^{q-s} \lambda^{d-dq/2}\right), &\qquad \text{if } G=q+s \quad \text{for some } s\in\{1,\ldots,q\}.
\end{cases}
\end{align*}
\end{prop}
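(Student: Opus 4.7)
The plan is to reduce the claim to a careful comparison of exponents of $\lambda$ and $n$, and then invoke Lemma \ref{number_of_rest} to control $\#\text{restrictions}$ from below in terms of $G$. Concretely, in each case I form the ratio of the quantity on the left with the claimed bound and show it is $\Landau(1)$.

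First I would handle the case $G\le q$. Here the target bound is $\lambda^{dq/2}/n^{q}$, so the ratio of interest is
\begin{equation*}
\frac{\lambda^{d(q/2-\#\text{rest})}/n^{2q-G}}{\lambda^{dq/2}/n^q}
= \frac{n^{G-q}}{\lambda^{d\,\#\text{rest}}}.
\end{equation*}
Since $G-q\le 0$ and $n\ge 1$, we have $n^{G-q}\le 1$, and $\#\text{rest}\ge 0$ forces $\lambda^{d\,\#\text{rest}}\ge 1$ for all $\lambda\ge 1$. Hence the ratio is bounded by $1$, which gives the first line.

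For the case $G=q+s$ with $s\in\{1,\dots,q\}$ I would compute, in the same way, the ratio
\begin{equation*}
\frac{\lambda^{d(q/2-\#\text{rest})}/n^{2q-G}}{(\lambda^d/n)^{q-s}\lambda^{d-dq/2}}
= \lambda^{d(s-1-\#\text{rest})}\, n^{G-q-s}.
\end{equation*}
The exponent of $n$ vanishes since $G=q+s$, so the ratio equals $\lambda^{d(s-1-\#\text{rest})}$, and boundedness reduces to the inequality $\#\text{rest}\ge s-1$. This is exactly the conclusion supplied by Lemma \ref{number_of_rest} for indecomposable partitions with $G=q+s$ groups, so the second line follows immediately.

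The plan therefore amounts to two short algebraic computations and one invocation of Lemma \ref{number_of_rest}; there is no real obstacle provided one is careful with the bookkeeping of $d$, $s$, and the passage $G=q+s$. The only mild subtlety is ensuring that the estimate $\#\text{rest}\ge 0$ used in the first case is truly trivial (it is, because $\#\text{rest}$ counts independent linear constraints and is by definition nonnegative). Both cases match at the boundary $G=q$ (i.e.\ $s=0$), which can be used as a consistency check on the exponents.
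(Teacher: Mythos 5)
Your proof is correct and follows essentially the same route as the paper: for $G\le q$ one simply uses $\#\text{restrictions}\ge 0$ together with $2q-G\ge q$, and for $G=q+s$ one invokes Lemma \ref{number_of_rest} to get $\#\text{restrictions}\ge s-1$; the paper writes these as direct inequalities rather than ratios, but the content is identical. One minor slip in your closing aside: setting $s=0$ in the second formula gives $(\lambda^d/n)^q\,\lambda^{d-dq/2}$, which exceeds the first-case bound $\lambda^{-dq/2}(\lambda^d/n)^q$ by a factor $\lambda^d$, so the two lines do not actually coincide at $G=q$ — harmless, since $s=0$ is excluded from the second case, but it is not a valid consistency check.
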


\begin{proof}
We start with the case $G\leq q$. Then, we immediately obtain
\begin{align*}
\frac{\lambda^{d(q/2-\#\text{restrictions})}}{n^{2q-G}} \leq \frac{\lambda^{dq/2}}{n^q} = \lambda^{-dq/2} \left(\frac{\lambda^d}{n}\right)^q.
\end{align*} 
Now assume that $G=q+s$ for some $s\in \{1,\ldots,q\}$. By Lemma \ref{number_of_rest} and the indecomposability assumption, it holds that
\begin{align*}
\frac{q}{2}-\#\text{restrictions} \leq \frac{q}{2}-s+1.
\end{align*}
This yields the claim, since
\begin{align*}
\frac{\lambda^{d(q/2-\#\text{restrictions})}}{n^{2q-G}} \leq \frac{\lambda^{dq/2-ds+d}}{n^{q-s}} =  \left(\frac{\lambda^{d}}{n}\right)^{q-s} \lambda^{d-dq/2}. 
\end{align*}
\end{proof}

Proposition \ref{less_or_equal} directly gives the following result.

\begin{corollary} \label{corollary_proof}
For arbitrary $q\geq 1$, it holds that
\begin{align*}
\max_{\bm{\nu}=\{\nu_1,\ldots,\nu_G\}\in\mathcal{I}(q)}\frac{\lambda^{d(q/2-\#\text{restrictions})}}{n^{2q-G}} = 
\Landau\left(\lambda^{d-dq/2}\right). 
\end{align*}
\end{corollary}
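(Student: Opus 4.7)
The plan is to derive the claim as a direct consequence of Proposition \ref{less_or_equal}, combined with the MID condition $\lambda^d/n \to 0$ from Assumption \ref{assumption_on_sampling_scheme}(ii). First I would note that the maximum in question is taken over the finite collection $\mathcal{I}(q)$, so it suffices to show that each of the two regimes identified in Proposition \ref{less_or_equal} is $\Landau(\lambda^{d-dq/2})$, and that this order is actually attained by some indecomposable partition.

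For the regime $G \le q$, Proposition \ref{less_or_equal} gives the bound $\Landau(\lambda^{-dq/2}(\lambda^d/n)^q)$. Since $\lambda^d/n \to 0$ implies $\lambda^d/n \le 1$ (eventually), one has $(\lambda^d/n)^q \le 1$, so this regime contributes at most $\Landau(\lambda^{-dq/2})$, which for $d,q \ge 1$ is strictly of lower order than $\lambda^{d-dq/2}$. For the regime $G = q+s$ with $s \in \{1,\ldots,q\}$, the bound reads $\Landau((\lambda^d/n)^{q-s}\lambda^{d-dq/2})$; again using $\lambda^d/n \le 1$, the prefactor $(\lambda^d/n)^{q-s}$ is bounded by a constant, so the contribution is $\Landau(\lambda^{d-dq/2})$. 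Taking the maximum over the two regimes therefore yields $\Landau(\lambda^{d-dq/2})$.

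The bound is achieved (up to constants) when $s = q$, i.e.\ when $G = 2q$, which corresponds to the partitions treated in Corollary \ref{G=2q}: for these the cumulant prefactor is exactly $(\lambda^d/n)^0 \cdot \lambda^{d-dq/2} = \lambda^{d(1-q/2)}$. Since $\mathcal{I}(q)$ contains indecomposable partitions of this type (take any all-pair partition with exactly $q-1$ independent restrictions in frequency direction, which exists by the construction used in the proof of Theorem \ref{expectation_theo} for $q=2$ and generalizes immediately by chaining), the asserted maximum is $\Landau(\lambda^{d-dq/2})$ and this is sharp.

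I expect no technical obstacle here, as the argument is purely bookkeeping: the two-case bound in Proposition \ref{less_or_equal} is precisely tailored so that after applying $\lambda^d/n = o(1)$, every term collapses onto the $s = q$ rate. The only minor point to be careful about is verifying that the comparison $(\lambda^d/n)^q \cdot \lambda^{-dq/2} \le \lambda^{d-dq/2}$ in the first regime is genuine under Assumption \ref{assumption_on_sampling_scheme}, which reduces to $(\lambda^d/n)^{q-1} = \Landau(1)$ and is immediate.
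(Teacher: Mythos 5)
Your proposal is correct and follows exactly the route the paper takes: the corollary is stated there as a direct consequence of Proposition \ref{less_or_equal}, obtained by absorbing the factors $(\lambda^d/n)^{q}$ and $(\lambda^d/n)^{q-s}$ via Assumption \ref{assumption_on_sampling_scheme}(ii) and noting that $\lambda^{-dq/2}$ is dominated by $\lambda^{d-dq/2}$. The additional sharpness discussion is harmless but not needed, since the corollary only asserts an upper bound.
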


The above corollary proves the statement in \eqref{cumulant_orders}, since
\begin{align*} 
\lambda^{dq/2} \cum_q\big(\hat{D}_{1,d,\lambda,a}\big)=\mathcal{O}\left(\max_{\bm{\nu}=\{\nu_1,\ldots,\nu_G\}\in\mathcal{I}(q)}\frac{\lambda^{d(q/2-\#\text{restrictions})}}{n^{2q-G}}\right).
\end{align*}
This yields the claim in Theorem \ref{asymptotic_normality}. 
\phantom{==========================}\qed

\vspace{1cm}
For the case $q=2$, a more precise result concerning the rate is needed in order to prove the result for the variance in Theorem \ref{expectation_theo}. More precisely, we state the following:

\begin{corollary} \label{result_for_q=2}
Let $q=2$ and let $\bm{\nu}=\{\nu_1,\ldots,\nu_G\}\in\mathcal{I}(2)$ be arbitrary. We then have 
\begin{align*}
&\phantom{==}\frac{\lambda^{3d}}{n^{8}} \sum_{\bm{k}_1,\bm{k}_2=-a}^{a-1} \sum_{\underline{j}\in\mathcal{D}(2,i)} \cum_{|\nu_1|}\big[Y_{t,c}(\underline{j}):(t,c)\in\nu_1\big] \times \ldots \times \cum_{|\nu_G|}\big[Y_{t,c}(\underline{j}):(t,c)\in\nu_{G}\big]\\
&=\begin{cases}
\Landau(1), &\qquad \text{if } G=4 \text{ and } i=8,\\
\Landau\left(\frac{\lambda^d}{n}\right), &\qquad \text{if } G=4 \text{ and } i<8, \text{ or if } G<4.
\end{cases}
\end{align*}
\end{corollary}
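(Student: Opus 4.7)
The plan is to reduce the corollary to a case analysis that invokes Corollary \ref{G=2q}, Proposition \ref{general_order}, and Proposition \ref{less_or_equal}, all already established in the course of proving Theorem \ref{asymptotic_normality}, specialized to $q=2$. I would split according to the number of groups $G$ in the partition $\bm{\nu}$, noting that $G\in\{1,2,3,4\}$ since each cumulant in the product has even arity $|\nu_g|=2p_g\ge 2$ and $\sum_g|\nu_g|=4q=8$.

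First, I would handle $G=4$. The constraint $\sum_g|\nu_g|=8=2G$ forces $|\nu_g|=2$ for every $g$, so we are in exactly the setting of Corollary \ref{G=2q} with $q=2$. That corollary immediately yields $\Landau(\lambda^{d(1-q/2)})=\Landau(1)$ when $i=4q=8$ and $\Landau(\lambda^{d(2-q/2)}/n)=\Landau(\lambda^d/n)$ when $i\le 4q-1=7$, which reproduces the two lines of the present corollary in the regime $G=4$. Next, for $G<4$ I would invoke Proposition \ref{general_order}, whose general bound $\Landau(\lambda^{d(q/2-\#\text{restrictions})}/n^{2q-G})$ becomes, for $q=2$,
\begin{equation*}
\Landau\!\left(\frac{\lambda^{d(1-\#\text{restrictions})}}{n^{4-G}}\right).
\end{equation*}
To simplify this further I would apply Proposition \ref{less_or_equal}: for $G\le q=2$ it gives the uniform bound $\Landau(\lambda^{-dq/2}(\lambda^d/n)^q)=\Landau(\lambda^d/n^2)$, which under Assumption \ref{assumption_on_sampling_scheme} is $(\lambda^d/n)\cdot(1/n)=o(\lambda^d/n)$, and for $G=3$, corresponding to $s=1$ in the notation of Proposition \ref{less_or_equal}, it gives $\Landau((\lambda^d/n)^{q-s}\lambda^{d-dq/2})=\Landau(\lambda^d/n)$ directly. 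Combining the three subcases covers all $G\le 3$ with the desired $\Landau(\lambda^d/n)$ bound.

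Since the heavy lifting (the combinatorial cumulant expansion, the rank analysis of the matrix $K$ in \eqref{rank}, and the application of Lemma \ref{number_of_rest} that controls the minimal number of restrictions for indecomposable partitions) has already been done, the only remaining task is this bookkeeping. Accordingly, I do not expect any real obstacle: the main delicate point is simply to verify that the case $G=4$, $i=8$ is the unique one achieving the full $\Landau(1)$ rate while every other indecomposable configuration is strictly improved by at least a factor $\lambda^d/n$, and this follows from the sharp form of Corollary \ref{G=2q} together with the indecomposability constraint $\#\text{restrictions}\ge s-1$ of Lemma \ref{number_of_rest}.
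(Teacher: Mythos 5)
Your proposal is correct and follows essentially the same route as the paper: the case $G=4$ (where $\sum_g|\nu_g|=8$ forces $|\nu_g|=2$) is read off from Corollary \ref{G=2q}, and the cases $G<4$ from Proposition \ref{less_or_equal} (via Proposition \ref{general_order}), with $s=1$ giving $\Landau(\lambda^d/n)$ for $G=3$ and $G\le q$ giving the even smaller $\Landau(\lambda^d/n^2)$. The bookkeeping you carry out, including the observation that only $G=4$, $i=8$ attains the $\Landau(1)$ rate, matches the paper's argument.
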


\begin{proof}
The statements for the cases $G=4$, $i=8$ and $G=4$, $i<8$ directly follow from Corollary \ref{G=2q}, while the result in the case $G<4$ follows from Proposition \ref{less_or_equal} for $s<q$. 
\end{proof}

\subsection{Proof of Proposition \ref{approx_D1}}

Note that
\begin{align*}
|D_{1,d}-D_{1,d,\lambda,a}| = \int_{([-2\pi a/\lambda,2\pi a/\lambda]^d)^c} f^2(\bm{\omega})\, d\bm{\omega},
\end{align*}
where $A^c$ denotes the complement of a set $A$. \label{page_compl} 
If $\bm{\omega}=(\omega_1,\ldots,\omega_d)^T$, we thus integrate over an area where $\omega_i \notin [-2\pi a/\lambda,2\pi a/\lambda]$ for at least one $i\in\{1,\ldots,d\}$. Since by assumption $f(\bm{\omega})\leq \prod_{i=1}^d \beta_{1+\delta}(\omega_i)$, this yields
\begin{align*}
|D_{1,d}-D_{1,d,\lambda,a}| = \Landau\left(\int_{2\pi a/\lambda}^{\infty} \beta_{1+\delta}^2(\omega)\, d\omega \right) = \Landau\left(\int_{2\pi a/\lambda}^{\infty} \frac{1}{\omega^{2+2\delta}}\, d\omega \right) = \Landau\left(\frac{\lambda^{1+2\delta}}{a^{1+2\delta}}\right).
\end{align*}
\qed

\section{Proof of Theorem \ref{corr_sec_int}}
\setcounter{equation}{0}

The result is proved in three steps. 
 We begin investigating the  expectation and variance of $\hat{D}_{2,2,\lambda,a} $. 
\begin{theorem} \label{expect_theo_sec_int}
If $d=2$ and  Assumption \ref{ass_on_h},  \ref{assumption_on_Z}, \ref{assumption_on_sampling_scheme} and  \ref{assumptions_on_a}(i)  are satisfied, then 
\begin{align*}
\E\big[\hat{D}_{2,2,\lambda,a}\big] &=  D_{2,2,\lambda,a} + \Landau\Bigg(\frac{a^2}{\lambda^4}+\frac{a^{4}}{\lambda^{8}}+\frac{a^{6}}{\lambda^{10}}+\frac{a^{3}}{\lambda^{6}}+\frac{1}{n}\Bigg) \qquad \text{as } a,\lambda,n\rightarrow \infty. \\
\lambda^2 \Var\big[\hat{D}_{2,2,\lambda,a}\big]&=\tau_{2,2,\lambda,a}^2+\Landau\Bigg(\frac{a^{6}}{\lambda^{10}} + \frac{a^{4}}{\lambda^{6}}\left[1+\frac{(\log a)^4}{\lambda^2}+\frac{a}{\lambda^{2}}+\frac{a^{4}}{\lambda^{6}}\right]\\
&\phantom{====}+\left[\frac{(\log a)^4}{\lambda}+\frac{a^{2}}{\lambda^{4}}\right] \left[1+\frac{a}{\lambda^{2}}+\frac{a^{4}}{\lambda^{6}}\right]^2 + \frac{a\, (\log \lambda)^2}{\lambda^{2}}+ \frac{1}{(\log \lambda)^3}+\frac{\lambda^2}{n}\Bigg)
\end{align*}
as $a,\lambda,n\to\infty$, where
\begin{align}
\label{D_22lambda,a}
D_{2,2,\lambda,a} &=  \frac{1}{2\pi} \int_{0}^{2\pi a /\lambda} \left(\int_{[-2\pi a/\lambda,2\pi a/\lambda]^2} f(\bm{x}) J_0(r\|\bm{x}\|)\, d\bm{x} \right)^2 \, r \, dr 
\end{align}
and $\tau_{2,2,\lambda,a}^2$ is defined in \eqref{det6}.
\end{theorem}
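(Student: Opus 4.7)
The plan is to adapt the blueprint of Theorem \ref{expectation_theo} to the more intricate statistic $\hat{D}_{2,2,\lambda,a}$, keeping track of the additional factors introduced by the Bessel weights $J_0(\tilde{\omega}_{r,\lambda}\|\tilde{\bm{\omega}}_{\bm{k},\lambda}\|)$ and the outer Riemann sum over $r$. First I would rewrite $\hat{D}_{2,2,\lambda,a}$ as $\frac{1}{\lambda}\sum_{r=0}^{a-1}\tilde{\omega}_{r,\lambda}\,[\hat{c}_{0}(\tilde{\omega}_{r,\lambda})]^{2}$ (cf.\ \eqref{alternative_Darstellung_D2_hat}--\eqref{det100}), so that each quantity $\hat{c}_{0}(\tilde{\omega}_{r,\lambda})$ looks like a Bessel-weighted version of the integrated tapered periodogram. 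The expectation then reduces, exactly as in the proof of Theorem \ref{expectation_theo}, to three Isserlis pairings $E_{1,r},E_{2,r},E_{3,r}$ of the four-fold Gaussian moment.

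For the expectation, the pairings $E_{1,r}$ and $E_{3,r}$ after inserting the spectral representation of $c$, applying the uniform sampling on $[-\lambda/2,\lambda/2]^{2}$, and invoking Lemma \ref{convolution_of_h} produce a leading term of the form
\begin{align*}
\frac{1}{(2\pi\lambda)^{4} H_{2,h}(\bm{0})^{2}}\int_{\mathbb{R}^{4}} B(\bm{u})^{2}B(\bm{v})^{2}\!\Big(\!\Big(\tfrac{2\pi}{\lambda}\Big)^{2}\!\!\sum_{\bm{k}}\!f(\bm{u}-\tilde{\bm{\omega}}_{\bm{k}})J_{0}(\tilde{\omega}_{r,\lambda}\|\tilde{\bm{\omega}}_{\bm{k}}\|)\Big)\!\Big(\!\ldots\Big)\,d\bm{u}\,d\bm{v}.
\end{align*}
I would then apply Lemma \ref{Riemann_sum} to convert the inner Riemann sum to an integral over $[-2\pi a/\lambda,2\pi a/\lambda]^{2}$ at cost $\mathcal{O}(a^{2}/\lambda^{4})$ (the extra $a^{2}$ coming from the $r$-weight and the growth of $J_{0}$'s argument), and Proposition \ref{Lemma F.2_SSR}(ii) to replace $f(\bm{u}-\bm{\omega})$ by $f(\bm{\omega})$ at cost $\mathcal{O}(1/\lambda^{2})$; tapering (Assumption \ref{ass_on_h}(ii)) is essential here, as flagged in Remark \ref{rem_edge_effect}. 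The pairing $E_{2,r}$ is again displaced by $2\tilde{\bm{\omega}}_{\bm{k}}$ and vanishes by Lemma \ref{bounds_for_B}(iii). Summing over $r$ with weight $\tilde{\omega}_{r,\lambda}/\lambda$ yields the Riemann approximation of $D_{2,2,\lambda,a}$, producing the stated error terms $\Landau(a^2/\lambda^4+a^4/\lambda^8+a^6/\lambda^{10}+a^3/\lambda^6+1/n)$, with the mixed powers $a^{k}/\lambda^{k+2}$ tracking the accumulated Riemann errors from each successive integration.

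For the variance, the expansion is $\Var[\hat{D}_{2,2,\lambda,a}] = \lambda^{-2}\sum_{r_{1},r_{2}}\tilde{\omega}_{r_{1},\lambda}\tilde{\omega}_{r_{2},\lambda}\,\Cov(\hat{c}_{0}^{2}(\tilde{\omega}_{r_{1},\lambda}),\hat{c}_{0}^{2}(\tilde{\omega}_{r_{2},\lambda}))$. Each covariance involves a cumulant of two products of eight random variables, which I would expand via the product theorem for cumulants into a sum over indecomposable partitions of a $2\times 8$ table. Using result C) from the proof of Theorem \ref{expectation_theo} together with Gaussianity, only partitions of all-pair type ($G = 8$) contribute at leading order; the remaining partitions are controlled by Proposition \ref{general_order} and collectively produce the $\Landau(\lambda^{2}/n)$ and $\Landau(a^{4}/\lambda^{6})$ style remainders, where powers of $a/\lambda^{2}$ come from cascading Riemann-type approximations needed to exchange each $k$-sum for an integral. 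Among the leading partitions, only those whose linear $\bm{k}$-constraint matrix $K$ has rank one survive, and classification analogous to cases A1/B1 of Theorem \ref{expectation_theo} yields exactly the factor $8\,H_{2,h}(\bm{m})^{2}/H_{2,h}(\bm{0})^{2}$ appearing in $\tau_{2,2,\lambda,a}^{2}$.

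The main obstacle I anticipate is the careful bookkeeping of the iterated Riemann approximations for the $r_{1},r_{2},\bm{k},\bm{\ell}$ sums: each replacement of a discrete sum by an integral introduces an error that interacts multiplicatively with the $\tilde\omega_{r,\lambda}$ weights (which scale like $a/\lambda$), giving the characteristic $a^{k}/\lambda^{2k}$ factors in the remainder. Combining these with the Bessel factor $J_0$, whose derivative is bounded but whose argument grows like $a^{2}/\lambda^{2}$, forces simultaneous use of Lemma \ref{bounds_for_B}(iv)--(v) with $t=4$ to close the edge-effect bounds; the analogue of Lemma \ref{bound_variance} will be needed to replace $f^{3}(\bm{x})f(\bm{\omega}_{\bm{m}}+\bm{x})$ by $f^{4}(\bm{x})$ inside the variance at logarithmic cost $\Landau(1/(\log\lambda)^{3})$, accounting for the final error term.
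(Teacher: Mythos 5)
Your overall strategy --- the Gaussian pairing decomposition for the expectation, the indecomposable-partition cumulant machinery for the variance, and the Riemann/edge-effect lemmas --- matches the paper's. However, there is a genuine structural error in your expectation step. You assert that the pairings $E_1$ and $E_3$ \emph{both} produce the leading term, in parallel with Theorem \ref{expectation_theo}. That parallel breaks down here. In $\hat{D}_{1,d,\lambda,a}$ all four points carry the \emph{same} frequency $\tilde{\bm{\omega}}_{\bm{k}}$, so the pairing $(j_1,j_4),(j_2,j_3)$ reproduces the structure of $(j_1,j_2),(j_3,j_4)$ and contributes a second copy of $\tfrac12\int f^2$. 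In $\hat{D}_{2,2,\lambda,a}$ the two DFT factors carry \emph{distinct} frequencies $\tilde{\bm{\omega}}_{\bm{k}}$ and $\tilde{\bm{\omega}}_{\bm{\ell}}$: under the pairing $(j_1,j_4),(j_2,j_3)$ the point $j_1$ carries $+\tilde{\bm{\omega}}_{\bm{k}}$ while $j_4$ carries $-\tilde{\bm{\omega}}_{\bm{\ell}}$, so after inserting the spectral representation one obtains frequency-window products of the form $B(\bm{u})B(\bm{u}-\bm{\omega}_{\bm{k}-\bm{\ell}})$ (and, for $E_2$, shifts by $\bm{\omega}_{\bm{k}+\bm{\ell}+1}$, not $2\tilde{\bm{\omega}}_{\bm{k}}$) which do not concentrate; hence $E_3$, exactly like $E_2$, is only $\Landau(a^{2}/\lambda^{4})$. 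Only $E_1$ yields $D_{2,2,\lambda,a}$ --- consistent with the absence of the prefactor $1/2$ in the definition \eqref{est_F_2} of $\hat{D}_{2,2,\lambda,a}$ (contrast the $1/2$ in \eqref{est_F}). As written, your bookkeeping would produce $2D_{2,2,\lambda,a}$ in the leading term.

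Two secondary points. First, replacing $f(\bm{u}-\bm{x})$ by $f(\bm{x})$ in the expectation cannot be done with Proposition \ref{Lemma F.2_SSR}(ii) at cost $\Landau(1/\lambda^{2})$: the weight is $g(\bm{x})=J_0(r\|\bm{x}\|)$ integrated over the growing box $[-2\pi a/\lambda,2\pi a/\lambda]^{2}$, which is bounded but not integrable, so part (i) applies and the cost is $\Landau(b^{2}/\lambda^{2})=\Landau(a^{2}/\lambda^{4})$ --- which is precisely one of the error terms in the statement you are trying to prove. Second, the covariance of $\hat{c}_0^{2}(\tilde{\omega}_{r_1,\lambda})$ and $\hat{c}_0^{2}(\tilde{\omega}_{r_2,\lambda})$ is a joint cumulant of two products of \emph{four} random variables each, so the relevant object is the $2\times 4$ table \eqref{table_q=2} with leading all-pair partitions of $G=4$ groups, not a $2\times 8$ table with $G=8$; with the correct table, the rank-one (single-restriction) indecomposable partitions number exactly $8$ (two for each of the four ways of tying one of $\bm{k}_1,\bm{k}_2,\bm{\ell}_1,\bm{\ell}_2$ to another), which is where the factor $8$ in $\tau_{2,2,\lambda,a}^{2}$ comes from.
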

The second step consists in the proof of the asymptotic normality of a scaled vsersion of the statistic $\hat{D}_{2,2,\lambda,a}- D_{2,2,\lambda,a}$.
\begin{theorem} \label{asymptotic_normality_secint}
If  $d=2$ and Assumption \ref{ass_on_h}, \ref{assumption_on_Z},  \ref{assumption_on_sampling_scheme} and \ref{assumptions_on_a} are satisfied, we have 
\begin{align*}
\frac{\lambda}{\tau_{2,2,\lambda,a}} \left(\hat{D}_{2,2,\lambda,a}- D_{2,2,\lambda,a}\right) \dn \mathcal{N}(0,1) \qquad \text{as } a,\lambda,n\rightarrow \infty,
\end{align*}
where $D_{2,2,\lambda,a}$ and  $\tau_{2,2,\lambda,a}$ are  defined in \eqref{D_22lambda,a} and  \eqref{det6}, respectively.
\end{theorem}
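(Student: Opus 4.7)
The plan is to prove the result by the method of cumulants, in close analogy to the proof of Theorem \ref{asymptotic_normality}. Writing
$$\frac{\lambda}{\tau_{2,2,\lambda,a}}\big(\hat{D}_{2,2,\lambda,a} - D_{2,2,\lambda,a}\big) = \frac{\lambda}{\tau_{2,2,\lambda,a}}\big(\hat{D}_{2,2,\lambda,a} - \E[\hat{D}_{2,2,\lambda,a}]\big) + \frac{\lambda}{\tau_{2,2,\lambda,a}}\big(\E[\hat{D}_{2,2,\lambda,a}] - D_{2,2,\lambda,a}\big),$$
the deterministic second summand tends to zero by the bias estimate in Theorem \ref{expect_theo_sec_int} combined with Assumption \ref{assumptions_on_a} and condition \eqref{further_assumption_2}; note that Proposition \ref{asymptotic_variances}(ii) keeps $\tau_{2,2,\lambda,a}^2$ bounded away from zero, so the normalizing factor is harmless. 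Thus it suffices to establish a central limit theorem for the centered, rescaled statistic, which by the cumulant convergence theorem reduces to showing $\lambda^2\cum_2(\hat{D}_{2,2,\lambda,a})\to \tau_{2,2,\lambda,a}^2$ together with $\lambda^q\cum_q(\hat{D}_{2,2,\lambda,a}) = o(1)$ for all $q\ge 3$. The first of these is precisely the variance statement in Theorem \ref{expect_theo_sec_int}, so the remaining work concerns the higher order cumulants.

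For $q\ge 3$, I would expand each of the $q$ copies of $\hat{D}_{2,2,\lambda,a}$ according to \eqref{est_F_2} and invoke the product theorem for cumulants on the resulting $4q$-fold product of tapered Gaussian variables. Conditioning on the sampling locations and using the Gaussianity of $Z$ (the same cumulant facts A)--C) used in the variance part of the proof of Theorem \ref{expectation_theo}) reduces the computation to a sum, over indecomposable partitions $\bm{\nu}=\{\nu_1,\ldots,\nu_G\}$ of a $q\times 4$ table analogous to \eqref{table}, of products of conditional second-order cumulants of the form $c(\bm{s}_i-\bm{s}_j)\exp(\cdots)$. Taking Fourier transforms exactly as in Example \ref{ex:period} replaces each such cumulant by a factor built from $B_{\lambda,2,h}$ evaluated at linear combinations of $\tilde{\bm{\omega}}_{\bm{k}_t,\lambda}$ and $\tilde{\bm{\omega}}_{\bm{\ell}_t,\lambda}$, paired with $f$ evaluated at the associated shifts. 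The order is then governed by the rank of the linear map sending $(\bm{k}_1,\bm{\ell}_1,\ldots,\bm{k}_q,\bm{\ell}_q)$ into those linear combinations, as in \eqref{rank}. Since $|J_0|\le 1$ the Bessel weights can be absorbed in absolute value, and the inner Fourier-frequency sums are controlled by Lemma \ref{bounds_for_B}(iv)--(v), yielding bounds structurally analogous to Lemma \ref{order_depending_on_number_of_restr} and Proposition \ref{general_order}. Combining these bounds with Proposition \ref{less_or_equal} would give $\lambda^q\cum_q(\hat{D}_{2,2,\lambda,a})=\Landau(\lambda^{2-q})$, which is $o(1)$ for $q\ge 3$.

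The main technical obstacle lies in handling the extra outer summations $\frac{1}{\lambda}\sum_{r_t=0}^{a-1}\tilde{\omega}_{r_t,\lambda}$ that are not present in the $\hat{D}_{1,d,\lambda,a}$ setting. A crude absolute-value bound would cost a factor of order $(a^2/\lambda^2)^q$ per cumulant, which under Assumption \ref{assumptions_on_a} need not decay and could overwhelm the $\lambda^{2-q}$ gain coming from the rank analysis. The key saving is that, after the Fourier-transform step, these $r$-sums together with the Bessel weights $J_0(\tilde{\omega}_{r_t,\lambda}\|\tilde{\bm{\omega}}_{\bm{k}_t,\lambda}\|)J_0(\tilde{\omega}_{r_t,\lambda}\|\tilde{\bm{\omega}}_{\bm{\ell}_t,\lambda}\|)$ act as uniformly bounded Riemann-sum approximations of the absolutely convergent radial integrals $\int_0^\infty r(\int f(\bm{x})J_0(r\|\bm{x}\|)d\bm{x})^2 dr$ appearing in $D_{2,2}$ (cf.\ Lemma \ref{two_integrals}); so each $r_t$-summation contributes only a bounded factor rather than the crude $\Landau(a^2/\lambda^2)$ bound. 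Making this uniformity precise across all partitions and all rank configurations, and showing that the truncation to $[0,2\pi a/\lambda]$ contributes boundary errors absorbed by \eqref{further_assumption_2}, is where the bulk of the work lies.
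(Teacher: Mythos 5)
Your overall architecture matches the paper's: reduce to the bias estimate of Theorem \ref{expect_theo_sec_int} plus cumulant convergence, expand via the product theorem over indecomposable partitions of the $q\times 4$ table, and control orders through the rank of the linear map \eqref{rank} using Lemma \ref{bounds_for_B}. You also correctly identify the genuinely new obstacle relative to Theorem \ref{asymptotic_normality}, namely the outer sums $\frac{1}{\lambda}\sum_{r_t}\tilde{\omega}_{r_t,\lambda}$ weighted by Bessel factors. However, your proposed resolution of that obstacle is where the argument breaks. It is not true that ``each $r_t$-summation contributes only a bounded factor'': the quantity $\frac{1}{\lambda}\sum_{r=0}^{a-1}\tilde{\omega}_{r}\,J_0(\tilde{\omega}_{r}\|\tilde{\bm{\omega}}_{\bm{k}}\|)J_0(\tilde{\omega}_{r}\|\tilde{\bm{\omega}}_{\bm{\ell}}\|)$ is of order $a^2/\lambda^2$ for small $\|\tilde{\bm{\omega}}_{\bm{k}}\|,\|\tilde{\bm{\omega}}_{\bm{\ell}}\|$, and since $J_0(x)\sim x^{-1/2}$ the integrand $r\,J_0(rc_1)J_0(rc_2)$ does not decay in $r$, so the truncated radial integral is not uniformly bounded in $(\bm{k},\bm{\ell})$. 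The boundedness you invoke only materializes when the frequency sum over $\bm{k}_t$ (or $\bm{\ell}_t$) in that row is \emph{free}, so that summing $f(\bm{u}-\tilde{\bm{\omega}}_{\bm{k}})$ against the Bessel weight reproduces (a Riemann approximation of) the covariance function $c(\tilde{\omega}_r\cos\theta,\tilde{\omega}_r\sin\theta)$, whose decay $\Landau(\|\bm{h}\|^{-(2+\varepsilon)})$ makes the $r$-sum converge; this is exactly Lemma \ref{lemma_for_cumulants}. Rows in which both $\bm{k}_t$ and $\bm{\ell}_t$ are tied down by restrictions incur the full $a^2/\lambda^2$ penalty.

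Consequently your claimed bound $\lambda^q\cum_q(\hat{D}_{2,2,\lambda,a})=\Landau(\lambda^{2-q})$ is too strong and not what the argument delivers. The correct accounting (Lemma \ref{lemma1}) yields an extra factor $(a^2/\lambda^2)^{q-\#\{\text{rows with an unrestricted frequency}\}}$ on top of the $\hat{D}_1$-type bound, and one must then prove the combinatorial fact (Lemma \ref{lemma2}) that every indecomposable partition with $2q$ groups and the minimal $q-1$ restrictions leaves at least two rows with an unrestricted frequency, giving $\Landau\big((a^{2}/\lambda^{3})^{q-2}\big)$ rather than $\Landau(\lambda^{2-q})$. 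This is $o(1)$ for $q\ge 3$ only because Assumption \ref{assumptions_on_a}(iii) imposes $a^2/\lambda^3\to 0$ --- a hypothesis your proposal never uses and which is precisely the reason Theorem \ref{asymptotic_normality_secint} requires all of Assumption \ref{assumptions_on_a} while Theorem \ref{asymptotic_normality} needs only parts (i) and (ii). Without this row-counting lemma and the use of $a^2/\lambda^3\to 0$, the higher-order cumulants are not shown to vanish. (A minor additional point: condition \eqref{further_assumption_2} is not needed for this theorem, which is centered at the truncated quantity $D_{2,2,\lambda,a}$; it only enters when passing to $D_{2,2}$ in Theorem \ref{corr_sec_int}.)
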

Note that the asymptotic result from Theorem \ref{asymptotic_normality_secint} does not hold true for the rectangular taper function $h=h^{\text{rect}}$, since in this case the bias of the test statistic $\hat{D}_{2,2,\lambda,a}$ is at least of order $\Landau(\log \lambda/\lambda + 1/n)$ (see the proof of Theorem \ref{expect_theo_sec_int} and Remark \ref{rem_edge_effect}). In particular, the rectangular taper does not satisfy Assumption \ref{ass_on_h}. 
We now derive the approximation rate of the expression $D_{2,2}$ by its truncated version $D_{2,2,\lambda,a}$.
\begin{prop} \label{approx_D2}
Assume that $f(\bm{\omega})\leq \beta_{1+\delta}(\bm{\omega})$ for some $\delta>0$,  that  $c$ is uniformly bounded and satisfies 
\begin{align*}
c(\bm{h}) = \Landau\left(\|\bm{h}\|^{-(2+\varepsilon)}\right) \qquad \text{as } \|\bm{h}\|\rightarrow \infty,
\end{align*}
for some $\varepsilon> 0$.  If  $a/\lambda\to\infty$, then we have
\begin{align*}
|D_{2,2}-D_{2,2,\lambda,a}|=
\Landau\left(\frac{\lambda^{2+2\varepsilon}}{a^{2+2\varepsilon}}+\frac{\lambda^{2\delta-2}}{a^{2\delta-2}} + \frac{\lambda^\delta}{a^\delta}\right) \qquad \text{as } a,\lambda\to\infty.
\end{align*}
\end{prop}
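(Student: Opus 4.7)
}

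The plan is to write the difference $D_{2,2}-D_{2,2,\lambda,a}$ in terms of the ``inner'' integrands and then split the error into (a) a tail contribution from $r>2\pi a/\lambda$ and (b) a contribution from the truncation of the spectral integral on the cube $[-2\pi a/\lambda,2\pi a/\lambda]^{2}$. For brevity, define
\begin{align*}
A(r):=\int_{\R^2} f(\bm{x})\, J_0(r\|\bm{x}\|)\, d\bm{x},\qquad A_{\lambda,a}(r):=\int_{[-2\pi a/\lambda,2\pi a/\lambda]^2} f(\bm{x})\, J_0(r\|\bm{x}\|)\, d\bm{x},
\end{align*}
and $E(r):=A(r)-A_{\lambda,a}(r)$. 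Then $2\pi(D_{2,2}-D_{2,2,\lambda,a})=\int_{2\pi a/\lambda}^\infty A(r)^2 r\,dr+\int_0^{2\pi a/\lambda} E(r)\bigl(A(r)+A_{\lambda,a}(r)\bigr)r\,dr$, and the two terms are analyzed separately.

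For the tail term, I would use the representation $A(r)=2\pi\int_0^{2\pi}c(r\cos\theta,r\sin\theta)d\theta$ already established in the proof of Lemma \ref{two_integrals}. Combined with boundedness of $c$ and the hypothesis $c(\bm{h})=\Landau(\|\bm{h}\|^{-(2+\varepsilon)})$, this gives $|A(r)|\lesssim\min(1,r^{-(2+\varepsilon)})$, so
\begin{align*}
\int_{2\pi a/\lambda}^{\infty} A(r)^2 r\,dr \lesssim \int_{2\pi a/\lambda}^{\infty} r^{-(3+2\varepsilon)}\,dr = \Landau\!\left(\frac{\lambda^{2+2\varepsilon}}{a^{2+2\varepsilon}}\right),
\end{align*}
which is the first term in the claimed bound. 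This also implies $\int_0^\infty |A(r)|\,r\,dr$ and $\int_0^\infty |A(r)|^2 r\, dr$ are finite, facts I will use below.

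For the truncation term, the key estimate is $|E(r)|\lesssim (\lambda/a)^\delta$ uniformly in $r$. Since $|J_0|\le 1$ and $f(\bm\omega)\le \beta_{1+\delta}(\bm\omega)=\prod_{i=1}^2\beta_{1+\delta}(\omega_i)$, the complement of the cube contributes
\begin{align*}
|E(r)|\le \int_{([-2\pi a/\lambda,2\pi a/\lambda]^2)^c}\beta_{1+\delta}(\bm{x})\,d\bm{x} \;\lesssim\; \int_{|x|>2\pi a/\lambda}|x|^{-(1+\delta)}dx\;=\;\Landau\!\left((\lambda/a)^{\delta}\right),
\end{align*}
using the product structure of $\beta_{1+\delta}$ and the integrability of $\beta_{1+\delta}$ on $\R$. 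Factoring $A^2-A_{\lambda,a}^2=E\cdot(A+A_{\lambda,a})$ and using $|A+A_{\lambda,a}|\le 2|A|+|E|$, I split
\begin{align*}
\Bigl|\int_0^{2\pi a/\lambda}E(r)(A(r)+A_{\lambda,a}(r))\,r\,dr\Bigr| \le 2\sup_r|E(r)|\int_0^{\infty}|A(r)|\,r\,dr + \sup_r|E(r)|^2\int_0^{2\pi a/\lambda}r\,dr.
\end{align*}
The first piece is $\Landau(\lambda^\delta/a^\delta)$ by the tail bound on $A$ above; the second piece is bounded by $C(\lambda/a)^{2\delta}\cdot(a/\lambda)^2=\Landau(\lambda^{2\delta-2}/a^{2\delta-2})$. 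Combining these two contributions with the tail estimate yields the stated rate.

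There is no genuine obstacle here: the main care is in keeping the decay rates separate, since the exponents $2+2\varepsilon$, $2\delta-2$ and $\delta$ come from three different mechanisms (spatial decay of $c$, spectral decay of $f$ combined with the expansion of the truncation region, and a single factor of $E$ against $A$, respectively). The only mild subtlety is the appearance of the $\lambda^{2\delta-2}/a^{2\delta-2}$ term: it is unavoidable because the area of integration $[0,2\pi a/\lambda]$ grows with $a/\lambda$, so the bound $|E|^2$ must be paid against a factor $(a/\lambda)^2$. Under Assumption \ref{assumption_on_Z}(ii) one has $\delta>2$, making all three exponents strictly positive, so each term vanishes as $a/\lambda\to\infty$.
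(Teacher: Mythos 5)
Your proof is correct and follows essentially the same route as the paper: the same split into a tail term over $r>2\pi a/\lambda$ and a truncation term handled via the difference of squares, with the uniform bound $|E(r)|\lesssim(\lambda/a)^{\delta}$ playing exactly the role of the paper's $I_{22}$ and the factor $1+(a/\lambda)^{2-\delta}$ matching its $I_{21}$. The only (immaterial) difference is in the tail term, where you bound $A(r)$ pointwise by $r^{-(2+\varepsilon)}$ via the angular average of $c$, whereas the paper applies Cauchy--Schwarz to reduce to $\int_{\|\bm{y}\|\geq 2\pi a/\lambda}|c(\bm{y})|^{2}\,d\bm{y}$; both yield the rate $(\lambda/a)^{2+2\varepsilon}$.
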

The  assertion of  Theorem \ref{corr_sec_int} now follows, because  condition \eqref{further_assumption_2} allows us to  replace
$D_{2,2,\lambda,a}$ by the quantity of interest  $D_{2,2}$ in Theorem \ref{asymptotic_normality_secint}.
Note that both Assumption \ref{assumptions_on_a} for $d=2$ and the requirement in \eqref{further_assumption_2} can be satisfied simultaneously since Assumption \ref{assumption_on_Z}, (ii), requires $\delta>2$. Under this condition, there exist sequences $a\to\infty$ and $\lambda\to\infty$ with $a/\lambda\to\infty$ such that \eqref{further_assumption_2} and the requirement $a^2/\lambda^3=o(1)$ from Assumption \ref{assumptions_on_a} are both fulfilled.

\vspace{0.5cm}

\subsection{Proof of Theorem \ref{expect_theo_sec_int}}

\paragraph{Asymptotic bias:} 
Similarly to the proof of Theorem \ref{expectation_theo}, we obtain 
\begin{align*}
\E\big[\hat{D}_{2,\lambda,a}\big]&=E_1+E_2+E_3,
\end{align*}
where 
\begin{align*}
E_1&:=\frac{1}{\lambda} \sum_{r=0}^{a-1} \tilde{\omega}_r \Bigg(\left(\frac{2\pi}{\lambda}\right)^4 \sum_{\bm{k}=-a}^{a-1} \sum_{\bm{\ell}=-a}^{a-1}  \frac{\lambda^4}{n^4 H(\bm{0})^2} \sum_{(j_1,\ldots,j_4)\in\mathcal{E}} \E\bigg[h\left(\frac{\bm{s}_{j_1}}{\lambda}\right)h\left(\frac{\bm{s}_{j_2}}{\lambda}\right) h\left(\frac{\bm{s}_{j_3}}{\lambda}\right)  h\left(\frac{\bm{s}_{j_4}}{\lambda}\right)\\
&c(\bm{s}_{j_1}-\bm{s}_{j_2})\, c(\bm{s}_{j_3}-\bm{s}_{j_4})\exp\big(\im(\bm{s}_{j_1}-\bm{s}_{j_2})^T \tilde{\bm{\omega}}_{\bm{k}}\big) \exp\big(\im(\bm{s}_{j_3}-\bm{s}_{j_4})^T \tilde{\bm{\omega}}_{\bm{\ell}}\big) \bigg]  J_0(\tilde{\omega}_r \|\tilde{\bm{\omega}}_{\bm{k}}\|) J_0(\tilde{\omega}_r \|\tilde{\bm{\omega}}_{\bm{\ell}}\|) \Bigg),
\end{align*}
\begin{align} \label{E_2_secint}
E_2&:=\frac{1}{\lambda} \sum_{r=0}^{a-1} \tilde{\omega}_r \Bigg(\left(\frac{2\pi}{\lambda}\right)^4 \sum_{\bm{k}=-a}^{a-1} \sum_{\bm{\ell}=-a}^{a-1}  \frac{\lambda^4}{n^4 H(\bm{0})^2} \sum_{(j_1,\ldots,j_4)\in\mathcal{E}} \E\bigg[h\left(\frac{\bm{s}_{j_1}}{\lambda}\right)h\left(\frac{\bm{s}_{j_2}}{\lambda}\right) h\left(\frac{\bm{s}_{j_3}}{\lambda}\right)  h\left(\frac{\bm{s}_{j_4}}{\lambda}\right)\nonumber\\
&c(\bm{s}_{j_1}-\bm{s}_{j_3})\, c(\bm{s}_{j_2}-\bm{s}_{j_4})\exp\big(\im(\bm{s}_{j_1}-\bm{s}_{j_2})^T \tilde{\bm{\omega}}_{\bm{k}}\big) \exp\big(\im(\bm{s}_{j_3}-\bm{s}_{j_4})^T \tilde{\bm{\omega}}_{\bm{\ell}}\big) \bigg]  J_0(\tilde{\omega}_r \|\tilde{\bm{\omega}}_{\bm{k}}\|) J_0(\tilde{\omega}_r \|\tilde{\bm{\omega}}_{\bm{\ell}}\|) \Bigg),
\end{align}
and
\begin{align} \label{E_3_secint}
E_3&:=\frac{1}{\lambda} \sum_{r=0}^{a-1} \tilde{\omega}_r \Bigg(\left(\frac{2\pi}{\lambda}\right)^4 \sum_{\bm{k}=-a}^{a-1} \sum_{\bm{\ell}=-a}^{a-1}  \frac{\lambda^4}{n^4 H(\bm{0})^2} \sum_{(j_1,\ldots,j_4)\in\mathcal{E}} \E\bigg[h\left(\frac{\bm{s}_{j_1}}{\lambda}\right)h\left(\frac{\bm{s}_{j_2}}{\lambda}\right) h\left(\frac{\bm{s}_{j_3}}{\lambda}\right)  h\left(\frac{\bm{s}_{j_4}}{\lambda}\right)\nonumber\\
&c(\bm{s}_{j_1}-\bm{s}_{j_4})\, c(\bm{s}_{j_2}-\bm{s}_{j_3})\exp\big(\im(\bm{s}_{j_1}-\bm{s}_{j_2})^T \tilde{\bm{\omega}}_{\bm{k}}\big) \exp\big(\im(\bm{s}_{j_3}-\bm{s}_{j_4})^T \tilde{\bm{\omega}}_{\bm{\ell}}\big) \bigg]  J_0(\tilde{\omega}_r \|\tilde{\bm{\omega}}_{\bm{k}}\|) J_0(\tilde{\omega}_r \|\tilde{\bm{\omega}}_{\bm{\ell}}\|) \Bigg).
\end{align}
Using similar arguments as in the proof of Theorem \ref{expectation_theo}, we obtain
\begin{align*}
&\phantom{i=}\frac{\lambda^4}{n^4 H(\bm{0})^2} \sum_{(j_1,\ldots,j_4)\in\mathcal{E}} \E\bigg[h\left(\frac{\bm{s}_{j_1}}{\lambda}\right)h\left(\frac{\bm{s}_{j_2}}{\lambda}\right) h\left(\frac{\bm{s}_{j_3}}{\lambda}\right)  h\left(\frac{\bm{s}_{j_4}}{\lambda}\right)c(\bm{s}_{j_1}-\bm{s}_{j_2})\, c(\bm{s}_{j_3}-\bm{s}_{j_4})\\
&\phantom{==========================} \exp\big(\im (\bm{s}_{j_1}-\bm{s}_{j_2})^T \tilde{\bm{\omega}}_{\bm{k}}\big) \exp\big(\im (\bm{s}_{j_3}-\bm{s}_{j_4})^T \tilde{\bm{\omega}}_{\bm{\ell}}\big)\bigg]\\
&=\frac{c_{1,n}}{(2\pi\lambda)^4 H(\bm{0})^2} \int_{\R^4} B(\bm{u})^2 B(\bm{v})^2 f(\bm{u}-\tilde{\bm{\omega}}_{\bm{k}}) f(\bm{v}-\tilde{\bm{\omega}}_{\bm{\ell}}) \, d\bm{u} d\bm{v},
\end{align*}
which yields
\begin{align*}
E_1 
&=\frac{c_{1,n}}{2\pi(2\pi\lambda)^4 H(\bm{0})^2} \int_{\R^4} B(\bm{u})^2 B(\bm{v})^2 \Bigg[ \left(\frac{2\pi}{\lambda}\right)^4 \sum_{\bm{k}=-a}^{a-1} \sum_{\bm{\ell}=-a}^{a-1} f(\bm{u}-\tilde{\bm{\omega}}_{\bm{k}}) f(\bm{v}-\tilde{\bm{\omega}}_{\bm{\ell}}) \\
&\phantom{==================} \int_{0}^{2\pi a /\lambda} r\,  J_0(r \|\tilde{\bm{\omega}}_{\bm{k}}\|) J_0( r \|\tilde{\bm{\omega}}_{\bm{\ell}}\|)\, dr\Bigg] d\bm{u} d\bm{v} + F_1(a,\lambda)
\end{align*}
for
\begin{align*}
F_1(a,\lambda)&:= \frac{c_{1,n}}{2\pi (2\pi \lambda)^4 H(\bm{0})^2} \int_{\R^4} B(\bm{u})^2 B(\bm{v})^2 \Bigg[ \left(\frac{2\pi}{\lambda}\right)^4 \sum_{\bm{k}=-a}^{a-1} \sum_{\bm{\ell}=-a}^{a-1} f(\bm{u}-\tilde{\bm{\omega}}_{\bm{k}}) f(\bm{v}-\tilde{\bm{\omega}}_{\bm{\ell}})\\
&\Bigg\{\frac{2\pi}{\lambda} \sum_{r=0}^{a-1} \tilde{\omega}_r \,J_0(\tilde{\omega}_r \|\tilde{\bm{\omega}}_{\bm{k}}\|) J_0(\tilde{\omega}_r \|\tilde{\bm{\omega}}_{\bm{\ell}}\|) - \int_{0}^{2\pi a /\lambda} r\, J_0(r\|\tilde{\bm{\omega}}_{\bm{k}}\|) J_0(r\|\tilde{\bm{\omega}}_{\bm{\ell}}\|)\, dr  \Bigg\} \Bigg]\, d\bm{u} d\bm{v}.
\end{align*}
We furthermore obtain
\begin{align*} 
E_1&=\frac{c_{1,n}}{2\pi(2\pi\lambda)^4 H(\bm{0})^2} \int_{\R^4} B(\bm{u})^2 B(\bm{v})^2 \Bigg[ \int_{[-2\pi a/\lambda,2\pi a/\lambda]^4} f(\bm{u}-\bm{x}) f(\bm{v}-\bm{y}) \nonumber\\
&\phantom{========} \Bigg(\int_{0}^{2\pi a/\lambda} r\,  J_0(r \|\bm{x}\|) J_0( r \|\bm{y}\|)\, dr\Bigg) d\bm{x}  d\bm{y}\Bigg] d\bm{u} d\bm{v}+F_1(a,\lambda)+ F_2(a,\lambda),
\end{align*}
where
\begin{align} \label{det2}
F_2(a,\lambda)=F_{21}(a,\lambda)+F_{22}(a,\lambda),
\end{align}
\begin{align*}
F_{21}(a,\lambda)&:=\frac{c_{1,n}}{2\pi(2\pi\lambda)^4 H(\bm{0})^2} \int_{\R^4} B(\bm{u})^2 B(\bm{v})^2 \Bigg(\int_{0}^{2\pi a /\lambda} r\, \Bigg\{ \left(\frac{2\pi}{\lambda}\right)^2 \sum_{\bm{k}=-a}^{a-1} f(\bm{u}-\tilde{\bm{\omega}}_{\bm{k}})\, J_0(r \|\tilde{\bm{\omega}}_{\bm{k}}\|)\nonumber\\
& \Bigg[ \left(\frac{2\pi}{\lambda}\right)^2 \sum_{\bm{\ell}=-a}^{a-1} f(\bm{v}-\tilde{\bm{\omega}}_{\bm{\ell}}) J_0(r \|\tilde{\bm{\omega}}_{\bm{\ell}}\|)-\int_{[-2\pi a/\lambda,2\pi a/\lambda]^2} f(\bm{v}-\bm{y}) J_0(r \|\bm{y}\|) \, d\bm{y}\Bigg]\Bigg\}\,dr \Bigg)d\bm{u}  d\bm{v},
\end{align*}
and
\begin{align*}
F_{22}(a,\lambda)&:=\frac{c_{1,n}}{2\pi(2\pi\lambda)^4 H(\bm{0})^2} \int_{\R^4} B(\bm{u})^2 B(\bm{v})^2 \Bigg(\int_{0}^{2\pi a /\lambda} r\, \Bigg\{ \int_{[-2\pi a/\lambda,2\pi a/\lambda]^2} f(\bm{v}-\bm{y}) J_0(r \|\bm{y}\|)\, d\bm{y} \nonumber\\
& \Bigg[\left(\frac{2\pi}{\lambda}\right)^2 \sum_{\bm{k}=-a}^{a-1} f(\bm{u}-\tilde{\bm{\omega}}_{\bm{k}}) J_0(r \|\tilde{\bm{\omega}}_{\bm{k}}\|)-\int_{[-2\pi a/\lambda,2\pi a/\lambda]^2} f(\bm{u}-\bm{x}) J_0(r \|\bm{x}\|)\, d\bm{x}\Bigg]\Bigg\}\,dr \Bigg)d\bm{u}  d\bm{v}.
\end{align*}
As a third approximation step, we write
\begin{align*} 
E_1&=\frac{c_{1,n}}{2\pi(2\pi\lambda)^4 H(\bm{0})^2} \left(\int_{\R^4} B(\bm{u})^2 B(\bm{v})^2\,d\bm{u} d\bm{v}\right)  \Bigg[  \int_{[-2\pi a/\lambda,2\pi a/\lambda]^4} f(\bm{x}) f(\bm{y}) \nonumber\\
&\phantom{==}\Bigg( \int_{0}^{2\pi a /\lambda} r\,  J_0(r \|\bm{x}\|) J_0( r \|\bm{y}\|)\, dr\Bigg) \,d\bm{x}  d\bm{y}\Bigg]  + F_1(a,\lambda)+ F_2(a,\lambda) +F_3(a,\lambda),
\end{align*}
where
\begin{align*}
F_3(a,\lambda)&:=\frac{c_{1,n}}{2\pi(2\pi\lambda)^4 H(\bm{0})^2} \int_{\R^4} B(\bm{u})^2 B(\bm{v})^2 \Bigg[ \int_{[-2\pi a/\lambda,2\pi a/\lambda]^4} \big[f(\bm{u}-\bm{x}) f(\bm{v}-\bm{y})-f(\bm{x}) f(\bm{y})\big] \nonumber\\
&\phantom{==}\Bigg( \int_{0}^{2\pi a /\lambda} r\,  J_0(r \|\bm{x}\|) J_0( r \|\bm{y}\|)\, dr\Bigg) \,d\bm{x}  d\bm{y}\Bigg] d\bm{u} d\bm{v}.
\end{align*}
Using Lemma \ref{convolution_of_h} and Lemma \ref{D_2_finite} for $\delta>2$, we thus obtain
\begin{align} \label{formula_for_E1}
E_1=D_{2,2,\lambda,a} + \sum_{i=1}^3 F_i(a,\lambda)+ \Landau\left(\frac{1}{n}\right)
\end{align}
as $\lambda,a,n\to\infty$.
{{We start with the term $F_1(a,\lambda)$ 
and obtain from Lemma \ref{Riemann_radius} and Lemma \ref{orders_of_B} (i)
\begin{align*}
|F_1(a,\lambda)|
&\lesssim F_{11}(a,\lambda)+F_{12}(a,\lambda)
\end{align*}
for $a,\lambda$ sufficiently large, where
\begin{align*}
F_{11}(a,\lambda):=\frac{ a^{2}}{\lambda^{6}} \int_{\R^2} B(\bm{u})^2 \left[ \left(\frac{2\pi}{\lambda}\right)^2 \sum_{\bm{k}=-a}^{a-1} f(\bm{u}-\tilde{\bm{\omega}}_{\bm{k}})\, \big(\tilde{\omega}_{k_1}^2+\tilde{\omega}_{k_2}^2\big) \right]  d\bm{u} 
\end{align*}
and
\begin{align*}
F_{12}(a,\lambda):=\frac{a}{\lambda^{5}} \int_{\R^2} B(\bm{u})^2 \left[\left(\frac{2\pi}{\lambda}\right)^2 \sum_{\bm{k}=-a}^{a-1} f(\bm{u}-\tilde{\bm{\omega}}_{\bm{k}}) \big(|\tilde{\omega}_{k_1}|+|\tilde{\omega}_{k_2}|\big)\right] d\bm{u}.
\end{align*}
Lemma \ref{f_times_x^2} (i) directly yields
$ F_{11}(a,\lambda)=\Landau(\frac{a^2}{\lambda^4}[1+\frac{a^2}{\lambda^4}])$.
Moreover, we have
\begin{align*}
F_{12}(a,\lambda)\leq 2\sup_{|k|\leq a} \left|\tilde{\omega}_{k}\right| \left( \frac{a}{\lambda^{5}}  \int_{\R^2} B(\bm{u})^2 \left[\left(\frac{2\pi}{\lambda}\right)^2 \sum_{\bm{k}=-a}^{a-1} f(\bm{u}-\tilde{\bm{\omega}}_{\bm{k}}) \right] d\bm{u}\right)= \Landau\left(\frac{a^{2}}{\lambda^{4}}\right),
\end{align*}
which yields
\begin{align*} 
F_1(a,\lambda)=\Landau\left(\frac{a^{2}}{\lambda^{4}} \left[1+\frac{a^2}{\lambda^4}\right] + \frac{a^{2}}{\lambda^{4}}\right)=
\Landau\left(\frac{a^{2}}{\lambda^{4}}+\frac{a^4}{\lambda^8}\right).
\end{align*}
We now consider the quantity $F_2(a,\lambda)$ and observe the decomposition \eqref{det2}. For the first term we obtain by definition of the Bessel function that
\begin{align*} 
|F_{21}(a,\lambda)| 
&\lesssim \frac{\overline{F}_{21}(a,\lambda)}{\lambda^4} \int_{\R^4} B(\bm{u})^2  B(\bm{v})^2 \left[\int_{\|\bm{x}\|\leq 2\pi a/\lambda} \Bigg| \left(\frac{2\pi}{\lambda}\right)^2 \sum_{\bm{k}=-a}^{a-1} f(\bm{u}-\tilde{\bm{\omega}}_{\bm{k}}) \exp\big(\im  \bm{x}^T \tilde{\bm{\omega}}_{\bm{k}}\big) \Bigg| d\bm{x}\right] d\bm{u} d\bm{v},
\end{align*}
where
\begin{align*}
\overline{F}_{21}(a,\lambda)&:=\sup_{\|\bm{x}\|\leq 2\pi a/\lambda}  \sup_{\bm{v}\in\R^2} \Bigg|\left(\frac{2\pi}{\lambda}\right)^2 \sum_{\bm{\ell}=-a}^{a-1} f(\bm{v}-\tilde{\bm{\omega}}_{\bm{\ell}}) J_0(\|\bm{x}\| \|\tilde{\bm{\omega}}_{\bm{\ell}}\|)\\
&\phantom{=================}- \int_{[-2\pi a/\lambda,2\pi a/\lambda]^2} f(\bm{v}-\bm{y}) J_0(\|\bm{x}\| \|\bm{y}\|) \,d\bm{y}\, \Bigg|.
\end{align*}
By Lemma \ref{Riemann_f_sec_int}, we get
\begin{align*}
\overline{F}_{21}(a,\lambda)
\lesssim \sup_{\|\bm{x}\|\leq 2\pi a/\lambda}  \frac{1+\|\bm{x}\|_1+\|\bm{x}\|^2}{\lambda^2} = \Landau\left(\frac{a^{2}}{\lambda^{4}}\right),
\end{align*}
and furthermore using Lemma \ref{cov_fct_integrable} (i) for $\delta>2$, we thus obtain 
\begin{align*}
F_{21}(a,\lambda)=\Landau\left(\frac{a^{2}}{\lambda^{4}} \left[1+\frac{a}{\lambda^{2}}+\frac{a^{4}}{\lambda^{6}}\right]\right).
\end{align*}
The same arguments (with the only difference that we now apply part (ii) of Lemma \ref{cov_fct_integrable} instead of part (i))
yield
$F_{22}(a,\lambda)=\Landau(\frac{a^{2}}{\lambda^{4}} [1+\frac{a}{\lambda^{2}}])$,
which gives
\begin{align*} 
F_2(a,\lambda) =\Landau\left(\frac{a^{2}}{\lambda^{4}}+\frac{a^{3}}{\lambda^{6}}+ \frac{a^{6}}{\lambda^{10}}\right).
\end{align*}

Finally, we consider the term $F_3(a,\lambda)$ and obtain
\begin{align*}
&\phantom{=i}|F_3(a,\lambda)|\\
&\lesssim \overline{F}_3(a,\lambda)  \Bigg\{ \frac{1}{\lambda^2} \int_{\R^2} B(\bm{u})^2 \Big[ \int_{0}^{2\pi a/\lambda} \int_{0}^{2\pi} r \Big| \int_{[-2\pi a/\lambda,2\pi a/\lambda]^2} f(\bm{u}-\bm{x}) \exp\Big(\im \, r \begin{pmatrix}
\cos \theta \\
\sin \theta
\end{pmatrix}^T \bm{x} \Big) \, d\bm{x}\, \Big| d\theta dr\Big] d\bm{u}\\
&\phantom{=====}+\Big( \frac{1}{\lambda^2} \int_{\R^2} B(\bm{v})^2\, d\bm{v}\Big) \Big[ \int_{0}^{2\pi a/\lambda} \int_{0}^{2\pi} r \Big| \int_{[-2\pi a/\lambda,2\pi a/\lambda]^2} f(\bm{y}) \exp\Big(\im \, r \begin{pmatrix}
\cos \theta \\
\sin \theta
\end{pmatrix}^T \bm{y} \Big) \, d\bm{y}\, \Big|\, d\theta dr\Big]\Bigg\},
\end{align*}
where
\begin{align*}
\overline{F}_3(a,\lambda):=\sup_{r>0}\left|\frac{1}{\lambda^2} \int_{\R^2} B(\bm{u})	^2 \Bigg[ \int_{[-2\pi a/\lambda,2\pi a/\lambda]^2} \big[f(\bm{u}-\bm{x})-f(\bm{x})\big] \, J_0(r\|\bm{x}\|) \, d\bm{x}\Bigg] d\bm{u}\right|.
\end{align*}
By Proposition \ref{Lemma F.2_SSR}, part (i), we must have
\begin{align*}
\overline{F}_3(a,\lambda)&\leq \sup_{\bm{y}\in\R^2}\left| \frac{1}{\lambda^2} \int_{\R^2} B(\bm{u})	^2 \Bigg[ \int_{[-2\pi a/\lambda,2\pi a/\lambda]^2} \big[f(\bm{u}-\bm{x})-f(\bm{x})\big] \exp(\im \bm{y}^T \bm{x}) \, d\bm{x}\Bigg] d\bm{u}\right|=\Landau\left( \frac{a^2}{\lambda^4}\right).
\end{align*}
Furthermore using Lemma \ref{cov_fct_integrable} (ii) for $\delta>2$ and Corollary \ref{cor_of_D2_finite}, we thus obtain
\begin{align*}
F_3(a,\lambda)=\Landau\left(\frac{a^2}{\lambda^4}\left[1+\frac{a}{\lambda^{2}}\right]\right)=
\Landau\left(\frac{a^2}{\lambda^4}+\frac{a^{3}}{\lambda^{6}}\right).
\end{align*}
}}
Considering the expression in \eqref{formula_for_E1}, we thus obtain
\begin{align*}
E_1
= D_{2,2,\lambda,a} + \Landau\Bigg(\frac{a^2}{\lambda^4}+\frac{a^{4}}{\lambda^{8}}+\frac{a^{3}}{\lambda^{6}}+\frac{a^{6}}{\lambda^{10}}+\frac{1}{n}\Bigg)
\end{align*}
as $a,\lambda,n\to\infty$. 
We now show that the terms $E_2$ and $E_3$ in \eqref{E_2_secint} and \eqref{E_3_secint} are sufficiently small. Concerning the term $E_2$, note that the same arguments as above show
\begin{align*}
&\phantom{=i}\frac{\lambda^4}{n^4 H(\bm{0})^2} \sum_{(j_1,\ldots,j_4)\in\mathcal{E}} \E\bigg[h\left(\frac{\bm{s}_{j_1}}{\lambda}\right)h\left(\frac{\bm{s}_{j_2}}{\lambda}\right) h\left(\frac{\bm{s}_{j_3}}{\lambda}\right)  h\left(\frac{\bm{s}_{j_4}}{\lambda}\right)\\
&\phantom{===============} c(\bm{s}_{j_1}-\bm{s}_{j_3})\, c(\bm{s}_{j_2}-\bm{s}_{j_4})
\exp\big(\im(\bm{s}_{j_1}-\bm{s}_{j_2})^T \tilde{\bm{\omega}}_{\bm{k}}\big) \exp\big(\im(\bm{s}_{j_3}-\bm{s}_{j_4})^T \tilde{\bm{\omega}}_{\bm{\ell}}\big) \bigg]\\
&=\frac{c_{1,n}}{(2\pi\lambda)^4 H(\bm{0})^2} \int_{\R^4} B(\bm{u}) B(\bm{u}-(\tilde{\bm{\omega}}_{\bm{k}}+\tilde{\bm{\omega}}_{\bm{\ell}})) B(\bm{v}) B(\bm{v}-(\tilde{\bm{\omega}}_{\bm{k}}+\tilde{\bm{\omega}}_{\bm{\ell}})) \, f(\bm{u}-\tilde{\bm{\omega}}_{\bm{k}}) f(\bm{v}-\tilde{\bm{\omega}}_{\bm{\ell}})\, d\bm{u} d\bm{v}.
\end{align*}
Therefore, using the identity $\tilde{\bm{\omega}}_{\bm{k}}+\tilde{\bm{\omega}}_{\bm{\ell}}=\bm{\omega}_{\bm{k}+\bm{\ell}+1}$ and setting $\bm{k}+\bm{\ell}=\bm{m}$ yields
\begin{align*}
E_2 
&=\frac{c_{1,n}}{2\pi(2\pi\lambda)^4 H(\bm{0})^2} \sum_{\bm{m}=-2a}^{2a-2} \int_{\R^4} B(\bm{u}) B(\bm{u}-\bm{\omega}_{\bm{m}+1}) B(\bm{v}) B(\bm{v}-\bm{\omega}_{\bm{m}+1}) \left(\frac{2\pi}{\lambda}\right)^2\\
& \Bigg[\frac{2\pi}{\lambda} \sum_{r=0}^{a-1} \tilde{\omega}_r \left(\frac{2\pi}{\lambda}\right)^2 \sum_{\bm{k}=\max(-a,\bm{m}-a+1)}^{\min(a-1,\bm{m}+a)} f(\bm{u}-\tilde{\bm{\omega}}_{\bm{k}}) f(\bm{v}-\tilde{\bm{\omega}}_{\bm{m}-\bm{k}}) J_0(\tilde{\omega}_r \|\tilde{\bm{\omega}}_{\bm{k}}\|) J_0(\tilde{\omega}_r \|\tilde{\bm{\omega}}_{\bm{m}-\bm{k}}\|)\Bigg] d\bm{u}  d\bm{v}.
\end{align*}
Since the expression in brackets is of order $\Landau( a^{2}/\lambda^{2})$, Lemma \ref{bounds_for_B} (iii) yields $E_2=\Landau(a^{2}/\lambda^{4})$,
and a similar argument also gives
$
E_3 =\Landau(a^2/\lambda^4)$.
Combining these arguments we finally obtain
\begin{align*}
\E\big[\hat{D}_{2,\lambda,a}\big]&=   D_{2,2,\lambda,a} + \Landau\Big(\frac{a^2}{\lambda^4}+\frac{a^{4}}{\lambda^{8}}+\frac{a^{3}}{\lambda^{6}}+\frac{a^{6}}{\lambda^{10}}+\frac{1}{n}\Big)
\end{align*}
as $a,\lambda,n\to\infty$.

\paragraph{Asymptotic variance:}
For $t=1,2$ and $c=1,2,3,4$, we define the random variables 
\begin{align*}
Y_{t,c}(\underline{j}):=\begin{cases}
h\left(\frac{\bm{s}_{j_{c+4(t-1)}}}{\lambda}\right) Z(\bm{s}_{j_{c+4(t-1)}}) \exp\big((-1)^{c+1}\, \im\, \bm{s}_{j_{c+4(t-1)}}^T \tilde{\bm{\omega}}_{\bm{k}_t}\big), \quad &\text{if } c=1,2,\\
h\left(\frac{\bm{s}_{j_{c+4(t-1)}}}{\lambda}\right) Z(\bm{s}_{j_{c+4(t-1)}}) \exp\big((-1)^{c+1}\, \im\, \bm{s}_{j_{c+4(t-1)}}^T \tilde{\bm{\omega}}_{\bm{\ell}_t}\big), \quad &\text{if }  c=3,4.
\end{cases}
\end{align*}
We now proceed similarly as in the proof of Theorem \ref{expectation_theo} and obtain
\begin{align} \label{variance_general_form}
\lambda^2 \Var\big[\hat{D}_{2,\lambda,a}\big]&= \sum_{r_1,r_2=0}^{a-1} \tilde{\omega}_{r_1} \tilde{\omega}_{r_2}\, \frac{(2\pi)^8}{n^8 H(\bm{0})^4} \sum_{\bm{k}_1,\bm{\ell}_1,\bm{k}_2,\bm{\ell}_2 =-a}^{a-1}\Bigg[\prod_{t=1}^2 \big[ J_0(\tilde{\omega}_{r_t} \|\tilde{\bm{\omega}}_{\bm{k}_t}\|) J_0(\tilde{\omega}_{r_t} \|\tilde{\bm{\omega}}_{\bm{\ell}_t}\|)\big]\nonumber \\
&\sum_{\bm{\nu}=\{\nu_1,\ldots,\nu_4\}\in\mathcal{I}} \sum_{\underline{j}\in\mathcal{D}(8)}\cum_2\big[Y_{t,c}(\underline{j}):(t,c)\in\nu_1\big]\times \ldots \times \cum_2\big[Y_{t,c}(\underline{j}):(t,c)\in\nu_4\big] \Bigg]\nonumber\\
&+\Landau\left(\frac{\lambda^2}{n}+\frac{1}{\lambda^2}\right),
\end{align}
where $\mathcal{I}$ is the set of indecomposable partitions of Table \eqref{table_q=2}, $\underline{j}=(j_1,\ldots,j_8)$, and the set $\mathcal{D}(8)$ is defined in \eqref{set_D(i)} for $i=8$. 
We also used that the overall order is determined by partitions of size $2$ that have maximally many different elements (see Corollary \ref{order_q=2_sec_int_for_variance} below).
We first consider the indecomposable partition
\begin{align*}
\bm{\nu}_{1}^{\ast}=\{\nu_{1,1}^{\ast},\nu_{1,2}^{\ast},\nu_{1,3}^{\ast},\nu_{1,4}^{\ast}\}=\Big\{\{(1,1),(1,2)\},\{(2,1),(2,2)\},\{(1,3),(2,4)\},\{(1,4),(2,3)\}\Big\}
\end{align*}
and recall the definition of $c_{q,n}$ in \eqref{c_q,n}.
Using similar arguments as in the proof of Theorem
\ref{expectation_theo}, 
the cumulant expression corresponding to the partition $\bm{\nu}_1^\ast$ then amounts to
\begin{align} \label{connection_l_only}
&\frac{ c_{2,n}}{(2\pi\lambda)^8 H(\bm{0})^4} \left(\frac{2\pi}{\lambda}\right)^8  \sum_{\bm{k}_1,\bm{\ell}_1,\bm{k}_2,\bm{\ell}_2 =-a}^{a-1} \int_{\R^8} B(\bm{s})^2 B(\bm{t})^2 B(\bm{u})B(\bm{u}+\bm{\omega}_{\bm{\ell}_2-\bm{\ell}_1})\nonumber\\ 
&\phantom{==========}B(\bm{v})B(\bm{v}-\bm{\omega}_{\bm{\ell}_2-\bm{\ell}_1})  f(\bm{s}-\tilde{\bm{\omega}}_{\bm{k}_1}) f(\bm{t}-\tilde{\bm{\omega}}_{\bm{k}_2}) f(\bm{u}-\tilde{\bm{\omega}}_{\bm{\ell}_1}) f(\bm{v}-\tilde{\bm{\omega}}_{\bm{\ell}_2})\nonumber\\
&\phantom{==========}\Bigg[ \sum_{r_1,r_2=0}^{a-1} \tilde{\omega}_{r_1} \tilde{\omega}_{r_2} \prod_{t=1}^2 \big[ J_0(\tilde{\omega}_{r_t} \|\tilde{\bm{\omega}}_{\bm{k}_t}\|) J_0(\tilde{\omega}_{r_t} \|\tilde{\bm{\omega}}_{\bm{\ell}_t}\|)\big]\Bigg] d\bm{s} d\bm{t} d\bm{u} d\bm{v},
\end{align}
and making the index shift $\bm{\ell}_2-\bm{\ell}_1=\bm{m}$, $\bm{\ell}_1=\bm{\ell}$, this expression equals
\begin{align*}
&\phantom{\leq i}\frac{c_{2,n}}{(2\pi \lambda)^8 H(\bm{0})^4} \left(\frac{2\pi}{\lambda}\right)^8 \sum_{\bm{m}=-2a+1}^{2a-1} \int_{\R^8} B(\bm{s})^2 B(\bm{t})^2 B(\bm{u})B(\bm{u}+\bm{\omega}_{\bm{m}}) B(\bm{v})B(\bm{v}-\bm{\omega}_{\bm{m}})\nonumber\\
&\phantom{====} \sum_{\bm{k}_1,\bm{k}_2=-a}^{a-1} \sum_{\bm{\ell}=\max(-a,-a-\bm{m})}^{\min(a-1,a-1-\bm{m})} f(\bm{s}-\tilde{\bm{\omega}}_{\bm{k}_1}) f(\bm{t}-\tilde{\bm{\omega}}_{\bm{k}_2}) f(\bm{u}-\tilde{\bm{\omega}}_{\bm{\ell}}) f(\bm{v}-\tilde{\bm{\omega}}_{\bm{m}+\bm{\ell}})\nonumber\\
& \phantom{====}\Bigg[\sum_{r_1,r_2=0}^{a-1} \tilde{\omega}_{r_1} \tilde{\omega}_{r_2} \prod_{t=1}^2 \big[ J_0(\tilde{\omega}_{r_t} \|\tilde{\bm{\omega}}_{\bm{k}_t}\|) J_0(\tilde{\omega}_{r_t} \|\tilde{\bm{\omega}}_{\bm{\ell}_t}\|)\big]\Bigg] d\bm{s} d\bm{t} d\bm{u} d\bm{v}.
\end{align*}
Tedious calculations now show that the last expression can be rewritten as
\begin{align} \label{variance_with_G_i}
&\sum_{\bm{m}=-2a+1}^{2a-1} \frac{H(\bm{m})^2}{H(\bm{0})^2}  \int_{2\pi\max(-a,-a-\bm{m})/\lambda}^{2\pi\min(a,a-\bm{m})/\lambda} f(\bm{z})^2\nonumber\\
&\phantom{===============} \Bigg( \int_{0}^{2\pi a/\lambda} r \Bigg[\int_{[-2\pi a/\lambda,2\pi a/\lambda]^2} f(\bm{x}) J_0(r \|\bm{x}\|)  J_0(r \|\bm{z}\|) \, d\bm{x}\Bigg]  dr\Bigg)^2  d\bm{z}\nonumber\\
&\phantom{==} + G_1(a,\lambda) + G_2(a,\lambda)+G_3(a,\lambda)+G_4(a,\lambda)+\Landau\left(\frac{1}{n}\right)
\end{align}
for $a,\lambda,n\to\infty$,
where
\begin{align*}
G_1(a,\lambda)&:=\frac{c_{2,n}}{(2\pi \lambda)^8 H(\bm{0})^4} \sum_{\bm{m}=-2a+1}^{2a-1} \int_{\R^8} B(\bm{s})^2 B(\bm{t})^2 B(\bm{u})B(\bm{u}+\bm{\omega}_{\bm{m}}) B(\bm{v})B(\bm{v}-\bm{\omega}_{\bm{m}})\nonumber\\
& \left(\frac{2\pi}{\lambda}\right)^6 \sum_{\bm{k}_1,\bm{k}_2=-a}^{a-1} \sum_{\bm{\ell}=\max(-a,-a-\bm{m})}^{\min(a-1,a-1-\bm{m})} f(\bm{s}-\tilde{\bm{\omega}}_{\bm{k}_1}) f(\bm{t}-\tilde{\bm{\omega}}_{\bm{k}_2}) f(\bm{u}-\tilde{\bm{\omega}}_{\bm{\ell}}) f(\bm{v}-\tilde{\bm{\omega}}_{\bm{m}+\bm{\ell}})\nonumber\\
& \Bigg[\left(\frac{2\pi}{\lambda}\right)^2 \sum_{r_1,r_2=0}^{a-1} \tilde{\omega}_{r_1} \tilde{\omega}_{r_2} J_0(\tilde{\omega}_{r_1} \|\tilde{\bm{\omega}}_{\bm{k}_1}\|) J_0(\tilde{\omega}_{r_1} \|\tilde{\bm{\omega}}_{\bm{\ell}}\|)J_0(\tilde{\omega}_{r_2} \|\tilde{\bm{\omega}}_{\bm{k}_2}\|) J_0(\tilde{\omega}_{r_2} \|\tilde{\bm{\omega}}_{\bm{m}+\bm{\ell}}\|)\\
&-\int_{[0,2\pi a /\lambda]^2} r_1\, r_2 \, J_0(r_1 \|\tilde{\bm{\omega}}_{\bm{k}_1}\|) J_0(r_1 \|\tilde{\bm{\omega}}_{\bm{\ell}}\|) J_0(r_2 \|\tilde{\bm{\omega}}_{\bm{k}_2}\|) J_0(r_2 \|\tilde{\bm{\omega}}_{\bm{m}+\bm{\ell}}\|)\, dr_1 dr_2 \Bigg] d\bm{s} d\bm{t} d\bm{u} d\bm{v},
\end{align*}
\begin{align*}
&G_2(a,\lambda):= \frac{c_{2,n}}{(2\pi\lambda)^8 H(\bm{0})^4} \sum_{\bm{m}=-2a+1}^{2a-1} \int_{\R^8} B(\bm{s})^2 B(\bm{t})^2 B(\bm{u}) B(\bm{u}+\bm{\omega_m}) B(\bm{v}) B(\bm{v}-\bm{\omega_m})\\
&\phantom{==}\Bigg(\int_{[0,2\pi a /\lambda]^2} r_1\, r_2 \Bigg[ \left(\frac{2\pi}{\lambda}\right)^6 \sum_{\bm{k}_1,\bm{k}_2=-a}^{a-1} \sum_{\bm{\ell}=\max(-a,-a-\bm{m})}^{\min(a-1,a-1-\bm{m})} f(\bm{s}-\tilde{\bm{\omega}}_{\bm{k}_1}) f(\bm{t}-\tilde{\bm{\omega}}_{\bm{k}_2}) f(\bm{u}-\tilde{\bm{\omega}}_{\bm{\ell}}) f(\bm{v}-\tilde{\bm{\omega}}_{\bm{m}+\bm{\ell}})\\
&\phantom{===========================}  J_0(r_1 \|\tilde{\bm{\omega}}_{\bm{k}_1}\|) J_0(r_1 \|\tilde{\bm{\omega}}_{\bm{\ell}}\|) J_0(r_2 \|\tilde{\bm{\omega}}_{\bm{k}_2}\|) J_0(r_2 \|\tilde{\bm{\omega}}_{\bm{m}+\bm{\ell}}\|)  \\
&\phantom{========} -\int_{[-2\pi a/\lambda,2\pi a/\lambda]^4} \Bigg\{ \int_{2\pi\max(-a,-a-\bm{m})/\lambda}^{2\pi\min(a,a-\bm{m})/\lambda} f(\bm{s}-\bm{x}) f(\bm{t}-\bm{y}) f(\bm{u}-\bm{z}) f(\bm{v}-(\bm{\omega_m}+\bm{z}))\\
&\phantom{=====================}  J_0(r_1 \|\bm{x}\|) J_0(r_1 \|\bm{z}\|) J_0(r_2 \|\bm{y}\|) J_0(r_2 \|\bm{\omega_m}+\bm{z}\|)\, d\bm{z}\Bigg\} d\bm{y} d\bm{x} \Bigg]\\
&\phantom{====}\, dr_1 dr_2 \Bigg)\, d\bm{s} d\bm{t} d\bm{u} d\bm{v},
\end{align*}
\begin{align*}
&G_3(a,\lambda):=\frac{c_{2,n}}{(2\pi \lambda)^8 H(\bm{0})^4} \sum_{\bm{m}=-2a+1}^{2a-1} \int_{\R^8} B(\bm{s})^2 B(\bm{t})^2 B(\bm{u})B(\bm{u}+\bm{\omega}_{\bm{m}}) B(\bm{v})B(\bm{v}-\bm{\omega}_{\bm{m}})\nonumber\\
& \Bigg(\int_{[0,2\pi a /\lambda]^2} r_1\, r_2 \, \Bigg[\int_{[-2\pi a/\lambda,2\pi a/\lambda]^4} \Bigg\{ \int_{2\pi\max(-a,-a-\bm{m})/\lambda}^{2\pi\min(a,a-\bm{m})/\lambda}\nonumber\\
&\phantom{=====i===}\Big[f(\bm{s}-\bm{x}) f(\bm{t}-\bm{y}) f(\bm{u}-\bm{z}) f(\bm{v}-(\bm{\omega_m}+\bm{z}))-f(\bm{x}) f(\bm{y}) f(\bm{z}) f(\bm{\omega_m}+\bm{z})\Big] \\
&\phantom{====i====} J_0(r_1 \|\bm{x}\|) J_0(r_1 \|\bm{z}\|) J_0(r_2 \|\bm{y}\|) J_0(r_2 \|\bm{\omega_m}+\bm{z}\|)\, d\bm{z}\Bigg\} d\bm{y} d\bm{x} \Bigg] dr_1 dr_2 \Bigg) d\bm{s} d\bm{t} d\bm{u} d\bm{v},\nonumber
\end{align*}
and
\begin{align*}
&G_4(a,\lambda):=c_{2,n} \sum_{\bm{m}=-2a+1}^{2a-1} \frac{H(\bm{m})^2}{H(\bm{0})^2}  \int_{[0,2\pi a /\lambda]^2} r_1\, r_2 \Bigg[\int_{[-2\pi a/\lambda,2\pi a/\lambda]^4} f(\bm{x}) f(\bm{y}) J_0(r_1 \|\bm{x}\|) J_0(r_2 \|\bm{y}\|)\, d\bm{x} d\bm{y} \nonumber\\
&\phantom{===} \int_{2\pi\max(-a,-a-\bm{m})/\lambda}^{2\pi\min(a,a-\bm{m})/\lambda} f(\bm{z})  J_0(r_1 \|\bm{z}\|) \Big\{f(\bm{\omega_m}+\bm{z}) J_0(r_2\|\bm{\omega_m}+\bm{z}\|)-f(\bm{z}) J_0(r_2 \|\bm{z}\|)\Big\} d\bm{z} \Bigg]  dr_1 dr_2.
\end{align*}
In order to obtain the cumulant expression corresponding to the partition $\bm{\nu}_1^{\ast}$, it remains to calculate the order of the remainders $G_1(a,\lambda),\ldots,G_4(a,\lambda)$ in \eqref{variance_with_G_i}.
{{We start with $G_1$ and obtain
\begin{align*}
&\phantom{=i}\Bigg|\left(\frac{2\pi}{\lambda}\right)^2  \sum_{r_1,r_2=0}^{a-1} \tilde{\omega}_{r_1}\, \tilde{\omega}_{r_2}\, J_0(\tilde{\omega}_{r_1} \|\tilde{\bm{\omega}}_{\bm{k}_1}\|) \, J_0(\tilde{\omega}_{r_1} \|\tilde{\bm{\omega}}_{\bm{\ell}}\|)\, J_0(\tilde{\omega}_{r_2} \|\tilde{\bm{\omega}}_{\bm{k}_2}\|)\,  J_0(\tilde{\omega}_{r_2} \|\tilde{\bm{\omega}}_{\bm{m}+\bm{\ell}}\|)\\
&\phantom{====}-\int_{[0,2\pi a /\lambda]^2} r_1 \, r_2 \, J_0(r_1\|\tilde{\bm{\omega}}_{\bm{k}_1}\|) \, J_0(r_1 \|\tilde{\bm{\omega}}_{\bm{\ell}}\|)\, J_0(r_2 \|\tilde{\bm{\omega}}_{\bm{k}_2}\|)\,  J_0(r_2\|\tilde{\bm{\omega}}_{\bm{m}+\bm{\ell}}\|) \, dr_1 dr_2\,\Bigg|\\
&\lesssim \frac{a^{2}}{\lambda^{2}} \times \frac{a^{2}}{\lambda^{4}} \big(\|\tilde{\bm{\omega}}_{\bm{k}_1}\|^2 +\|\tilde{\bm{\omega}}_{\bm{k}_2}\|^2 +\|\tilde{\bm{\omega}}_{\bm{\ell}}\|^2 +\|\tilde{\bm{\omega}}_{\bm{m}+\bm{\ell}}\|^2\big)\\
&\phantom{============}+\frac{a^{2}}{\lambda^{2}} \times \frac{a}{\lambda^{3}} \big(\|\tilde{\bm{\omega}}_{\bm{k}_1}\|_1 +\|\tilde{\bm{\omega}}_{\bm{k}_2}\|_1 +\|\tilde{\bm{\omega}}_{\bm{\ell}}\|_1 +\|\tilde{\bm{\omega}}_{\bm{m}+\bm{\ell}}\|_1\big),
\end{align*}
where we used the triangle inequality and Lemma \ref{Riemann_radius}. Since $\frac{1}{\lambda^2} \sum_{\bm{k}=-a}^{a-1} f(\bm{s}-\tilde{\bm{\omega}}_{\bm{k}})$ is uniformly bounded and $\sup_{|k|\leq a} |\tilde{\omega}_k|=\Landau(a/\lambda)$, Lemma \ref{bounds_for_B} (iii) and Lemma \ref{orders_of_B} (i) give
\begin{align*}
&|G_1(a,\lambda)|
\lesssim \frac{a^{4}}{\lambda^{6}}  \times \frac{1}{\lambda^2} \int_{\R^2} B(\bm{s})^2  \left(\frac{2\pi}{\lambda}\right)^2 \sum_{\bm{k}=-a}^{a-1} f(\bm{s}-\tilde{\bm{\omega}}_{\bm{k}}) \, \tilde{\omega}_{k_1}^2 \, d\bm{s}\\
&+\frac{a^{4}}{\lambda^{6}}  \times \frac{1}{\lambda^4} \sum_{\bm{m}=-2a+1}^{2a-1} \int_{\R^4} |B(\bm{u}) B(\bm{u}+\bm{\omega_m}) B(\bm{v}) B(\bm{v}-\bm{\omega_m})| \left(\frac{2\pi}{\lambda}\right)^2 \sum_{\bm{\ell}=-a}^{a-1} f(\bm{u}-\tilde{\bm{\omega}}_{\bm{\ell}}) \, \tilde{\omega}_{\ell_{1}}^2\, d\bm{u} d\bm{v}+ \Landau\left(\frac{a^4}{\lambda^6}\right)\\
&=\Landau\left(\frac{a^{4}}{\lambda^{6}} \left[1+\frac{(\log a)^4}{\lambda^2}\right]+\frac{a^{6}}{\lambda^{10}} \right),
\end{align*}
as $a,\lambda\to\infty$, where we used Lemma \ref{f_times_x^2} and the assumption $\delta>2$ for the second step. 
We now deal with the term $G_2(a,\lambda)$ in \eqref{variance_with_G_i}. 
An application of the triangle inequality yields
\begin{align*}
|G_2(a,\lambda)|\lesssim G_{21}+G_{22}+G_{23},
\end{align*} 
where 
\begin{align*}
G_{21}&:=\frac{1}{\lambda^8} \sum_{\bm{m}=-2a+1}^{2a-1} \int_{\R^8} B(\bm{s})^2 B(\bm{t})^2 |B(\bm{u}) B(\bm{u}+\bm{\omega_m}) B(\bm{v}) B(\bm{v}-\bm{\omega_m})|\\
& \phantom{===} \Bigg(\int_{0}^{2\pi a /\lambda} \int_{0}^{2\pi a /\lambda} r_1\, r_2\, \Bigg| \left(\frac{2\pi}{\lambda}\right)^4 \sum_{\bm{k}_1,\bm{k}_2 = -a}^{a-1} f(\bm{s}-\tilde{\bm{\omega}}_{\bm{k}_1}) f(\bm{t}-\tilde{\bm{\omega}}_{\bm{k}_2}) J_0(r_1 \|\tilde{\bm{\omega}}_{\bm{k}_1}\|) J_0(r_2 \|\tilde{\bm{\omega}}_{\bm{k}_2}\|) \Bigg|\\
&\phantom{=======}\Bigg| \left(\frac{2\pi}{\lambda}\right)^2 \sum_{\bm{\ell}=\max(-a,-a-\bm{m})}^{\min(a-1,a-1-\bm{m})} f(\bm{u}-\tilde{\bm{\omega}}_{\bm{\ell}}) f(\bm{v}-\tilde{\bm{\omega}}_{\bm{m}+\bm{\ell}}) J_0(r_1 \|\tilde{\bm{\omega}}_{\bm{\ell}}\|) J_0(r_2 \|\tilde{\bm{\omega}}_{\bm{m}+\bm{\ell}}\|)\\
&\phantom{=======} - \int_{2\pi\max(-a,-a-\bm{m})/\lambda}^{2\pi\min(a,a-\bm{m})/\lambda} f(\bm{u}-\bm{z}) f(\bm{v}-(\bm{\omega}_{\bm{m}}+\bm{z}))  J_0(r_1 \|\bm{z}\|) J_0(r_2 \|\bm{\bm{\omega_m}+\bm{z}}\|) \, d\bm{z} \, \Bigg|\\
&\phantom{===}\, dr_1  dr_2\Bigg) d\bm{s} d\bm{t} d\bm{u} d\bm{v},
\end{align*}
\begin{align*}
G_{22}&:=\frac{1}{\lambda^8} \sum_{\bm{m}=-2a+1}^{2a-1} \int_{\R^8} B(\bm{s})^2 B(\bm{t})^2 |B(\bm{u}) B(\bm{u}+\bm{\omega_m}) B(\bm{v}) B(\bm{v}-\bm{\omega_m})|\\
&\phantom{===} \Bigg(\int_{0}^{2\pi a /\lambda} \int_{0}^{2\pi a /\lambda} r_1\, r_2\, \Bigg|\int_{2\pi\max(-a,-a-\bm{m})/\lambda}^{2\pi\min(a,a-\bm{m})/\lambda} f(\bm{u}-\bm{z}) f(\bm{v}-(\bm{\omega}_{\bm{m}}+\bm{z})) \\
&\phantom{=========ii==} J_0(r_1 \|\bm{z}\|) J_0(r_2 \|\bm{\bm{\omega_m}+\bm{z}}\|) \, d\bm{z} \, \left(\frac{2\pi}{\lambda}\right)^2 \sum_{\bm{k}_1=-a}^{a-1}  f(\bm{s}-\tilde{\bm{\omega}}_{\bm{k}_1})  J_0(r_1 \|\tilde{\bm{\omega}}_{\bm{k}_1}\|) \Bigg|\\
&\phantom{=======} \Bigg| \left(\frac{2\pi}{\lambda}\right)^2 \sum_{\bm{k}_2=-a}^{a-1} f(\bm{t}-\tilde{\bm{\omega}}_{\bm{k}_2}) J_0(r_2 \|\tilde{\bm{\omega}}_{\bm{k}_2}\|) - \int_{[-2\pi a/\lambda,2\pi a/\lambda]^2} f(\bm{t}-\bm{y})\, J_0(r_2 \|\bm{y}\|)\, d\bm{y} \Bigg|\\
&\phantom{===}\, dr_1 dr_2 \Bigg)  d\bm{s} d\bm{t} d\bm{u} d\bm{v},
\end{align*}
and
\begin{align*}
G_{23}&:=\frac{1}{\lambda^8} \sum_{\bm{m}=-2a+1}^{2a-1} \int_{\R^8} B(\bm{s})^2 B(\bm{t})^2 |B(\bm{u}) B(\bm{u}+\bm{\omega_m}) B(\bm{v}) B(\bm{v}-\bm{\omega_m})|\\
& \phantom{===} \Bigg(\int_{0}^{2\pi a /\lambda} \int_{0}^{2\pi a /\lambda} r_1\, r_2\, \Bigg| \int_{[-2\pi a/\lambda,2\pi a/\lambda]^2} f(\bm{t}-\bm{y})\, J_0(r_2 \|\bm{y}\|)\, d\bm{y}\\
&\phantom{=====ii===} \int_{2\pi\max(-a,-a-\bm{m})/\lambda}^{2\pi\min(a,a-\bm{m})/\lambda} f(\bm{u}-\bm{z}) f(\bm{v}-(\bm{\omega}_{\bm{m}}+\bm{z}))  J_0(r_1 \|\bm{z}\|) J_0(r_2 \|\bm{\bm{\omega_m}+\bm{z}}\|) \, d\bm{z} \Bigg|\\
&\phantom{========} \Bigg|\left(\frac{2\pi}{\lambda}\right)^2\sum_{\bm{k}_1=-a}^{a-1}  f(\bm{s}-\tilde{\bm{\omega}}_{\bm{k}_1})  J_0(r_1 \|\tilde{\bm{\omega}}_{\bm{k}_1}\|)  - \int_{[-2\pi a/\lambda,2\pi a/\lambda]^2} f(\bm{s}-\bm{x}) J_0(r_1 \|\bm{x}\|)\, d\bm{x} \Bigg| \\
&\phantom{===}\, dr_1 dr_2 \Bigg) d\bm{s} d\bm{t} d\bm{u} d\bm{v}.
\end{align*}
We start with the term $G_{21}$ and note that 
\begin{align*}
G_{21} 
&\lesssim \frac{1}{\lambda^8} \sum_{\bm{m}=-2a+1}^{2a-1} \Bigg(\int_{\R^8} B(\bm{s})^2 B(\bm{t})^2 |B(\bm{u}) B(\bm{u}+\bm{\omega_m}) B(\bm{v}) B(\bm{v}-\bm{\omega_m})|\\
&  \int_{\|\bm{x}\|\leq 2\pi a/\lambda} \int_{\|\bm{y}\|\leq 2\pi a/\lambda} \Bigg|\left(\frac{2\pi}{\lambda}\right)^4 \sum_{\bm{k}_1,\bm{k}_2=-a}^{a-1} f(\bm{s}-\tilde{\bm{\omega}}_{\bm{k}_1}) f(\bm{t}-\tilde{\bm{\omega}}_{\bm{k}_2}) \exp\big(\im  \bm{x}^T \tilde{\bm{\omega}}_{\bm{k}_1}\big)\, \exp\big(\im  \bm{y}^T \tilde{\bm{\omega}}_{\bm{k}_2}\big)\Bigg| d\bm{x} d\bm{y} \\
&\phantom{=========} d\bm{s} d\bm{t} d\bm{u} d\bm{v}\Bigg) \times \overline{G_{21}},
\end{align*}
where
\begin{align*}
\overline{G_{21}}&:=\sup_{\substack{|m_1|\leq 2a-1\\|m_2|\leq 2a-1}}\sup_{\substack{\bm{u}\in\R^2\\ \bm{v}\in\R^2}} \sup_{\substack{\|\bm{x}\|\leq 2\pi a/\lambda\\ \|\bm{y}\|\leq 2\pi a/\lambda}} \Bigg| \left(\frac{2\pi}{\lambda}\right)^2 \sum_{\bm{\ell}=\max(-a,-a-\bm{m})}^{\min(a-1,a-1-\bm{m})} f(\bm{u}-\tilde{\bm{\omega}}_{\bm{\ell}}) f(\bm{v}-(\bm{\omega}_{\bm{m}}+\tilde{\bm{\omega}}_{\bm{\ell}})) \\
&\phantom{=============================} J_0(\|\bm{x}\|  \|\tilde{\bm{\omega}}_{\bm{\ell}}\|) J_0(\|\bm{y}\| \|\bm{\omega}_{\bm{m}}+\tilde{\bm{\omega}}_{\bm{\ell}}\|)\\
& \phantom{=================}-\int_{2\pi\max(-a,-a-\bm{m})/\lambda}^{2\pi\min(a,a-\bm{m})/\lambda} f(\bm{u}-\bm{z}) f(\bm{v}-(\bm{\omega}_{\bm{m}}+\bm{z})) \\
&\phantom{=============================}J_0(\|\bm{x}\| \|\bm{z}\|) J_0(\|\bm{y}\|\|\bm{\omega}_{\bm{m}}+\bm{z}\|) \, d\bm{z}\,\Bigg|. 
\end{align*}
Using the definition of the Bessel function and analogous arguments as in the proof of Lemma \ref{Riemann_f_sec_int}, we obtain
\begin{align*}
\overline{G_{21}}&
\lesssim \sup_{\substack{\|\bm{x}'\|\leq 4\pi a/\lambda}} \frac{1+\|\bm{x}'\|_1+\|\bm{x}'\|^2}{\lambda^2} \lesssim \frac{a^{2}}{\lambda^{4}}.
\end{align*}

Therefore, 
\begin{align*}
G_{21} 
&\lesssim\frac{a^{2}}{\lambda^{4}} \Bigg(\frac{1}{\lambda^2} \int_{\R^2} B(\bm{s})^2 \Bigg[\int_{\|\bm{x}\|\leq 2\pi a/\lambda} \Bigg|\left(\frac{2\pi}{\lambda}\right)^2 \sum_{\bm{k}=-a}^{a-1} f(\bm{s}-\tilde{\bm{\omega}}_{\bm{k}}) \exp\big(\im \bm{x}^T \tilde{\bm{\omega}}_{\bm{k}}\big)\,\Bigg|\, d\bm{x} \Bigg] d\bm{s} \Bigg)^2\\
&=\Landau\left(\frac{a^{2}}{\lambda^{4}} \Bigg[1+\frac{a}{\lambda^{2}}+\frac{a^{4}}{\lambda^{6}}\Bigg]^2\right),
\end{align*}
where we used Lemma \ref{bounds_for_B}, part (iii), Lemma \ref{cov_fct_integrable}, part (i), and the assumption $\delta>2$. 
Using similar arguments as above, we get
\begin{align*}
G_{22} 
&\lesssim \frac{a^{2}}{\lambda^{2}} \times \frac{a^{2}}{\lambda^{4}} \times \frac{1}{\lambda^2} \int_{\R^2} B(\bm{s})^2 \Bigg(\int_{\|\bm{x}\|\leq 2\pi a/\lambda} \Bigg| \left(\frac{2\pi}{\lambda}\right)^2 \sum_{\bm{k}_1=-a}^{a-1} f(\bm{s}-\tilde{\bm{\omega}}_{\bm{k}_1}) \exp\big(\im \bm{x}^T \tilde{\bm{\omega}}_{\bm{k}_1}\big) \Bigg| \, d\bm{x} \Bigg)\, d\bm{s}\\
&=\Landau\left(\frac{a^{4}}{\lambda^{6}} \Bigg[1+\frac{a}{\lambda^{2}}+\frac{a^{4}}{\lambda^{6}}\Bigg]\right),
\end{align*}
and similarly, using Lemma \ref{cov_fct_integrable}, part (ii) instead of part (i), 
$G_{23}=\Landau(\frac{a^{4}}{\lambda^{6}} [1+\frac{a}{\lambda^{2}}]).$
Therefore, it holds that
\begin{align*}
G_2(a,\lambda)=\Landau\left(\frac{a^{2}}{\lambda^{4}}\left[1+\frac{a}{\lambda^{2}}+\frac{a^{4}}{\lambda^{6}}\right]^2+\frac{a^{4}}{\lambda^{6}}  \left[1+\frac{a}{\lambda^{2}}+\frac{a^{4}}{\lambda^{6}}\right]\right)
\end{align*}
as $a,\lambda\to\infty$. 
Concerning the term $G_3(a,\lambda)$ in \eqref{variance_with_G_i}, note that 
\begin{align} \label{eq_3_var_sec_int}
G_3(a,\lambda)&=
\frac{c_{8,n}}{(2\pi\lambda)^6 H(\bm{0})^3} \sum_{\bm{m}=-2a+1}^{2a-1} \Bigg[\int_{\R^6} B(\bm{s})^2 B(\bm{t})^2 B(\bm{u}) B(\bm{u}+\bm{\omega}_{\bm{m}}) D_{\bm{m}}^{(1)}(\bm{s},\bm{t},\bm{u})\, d\bm{s}d\bm{t} d\bm{u}\nonumber\\
&\phantom{==============}+\int_{\R^6} B(\bm{s})^2 B(\bm{t})^2 B(\bm{v}) B(\bm{v}-\bm{\omega}_{\bm{m}}) D_{\bm{m}}^{(2)}(\bm{s},\bm{t})\, d\bm{s}d\bm{t} d\bm{v}\nonumber\\
&\phantom{==============}+\int_{\R^6} B(\bm{s})^2 B(\bm{u}) B(\bm{u}+\bm{\omega}_{\bm{m}}) B(\bm{v}) B(\bm{v}-\bm{\omega}_{\bm{m}}) D_{\bm{m}}^{(3)}(\bm{s})\, d\bm{s}d\bm{u} d\bm{v}\nonumber\\
&\phantom{==============}+\int_{\R^6} B(\bm{t})^2 B(\bm{u}) B(\bm{u}+\bm{\omega}_{\bm{m}}) B(\bm{v}) B(\bm{v}-\bm{\omega}_{\bm{m}}) D_{\bm{m}}^{(4)}\, d\bm{t}d\bm{u} d\bm{v}\Bigg],
\end{align}
where
\begin{align*}
D_{\bm{m}}^{(1)}(\bm{s},\bm{t},\bm{u})&:= \frac{1}{(2\pi\lambda)^2 H(\bm{0})} \int_{\R^2} B(\bm{v}) B(\bm{v}-\bm{\omega}_{\bm{m}}) \Bigg(\int_{[0,2\pi a /\lambda]^2} r_1\, r_2 \Bigg[\int_{[-2\pi a/\lambda,2\pi a/\lambda]^4} \\
&\phantom{:=\times}\Bigg\{\int_{2\pi\max(-a,-a-\bm{m})/\lambda}^{2\pi\min(a,a-\bm{m})/\lambda} f(\bm{s}-\bm{x}) f(\bm{t}-\bm{y})f(\bm{u}-\bm{z})
 \big[f(\bm{v}-\bm{\omega}_{\bm{m}}-\bm{z})-f(\bm{\omega}_{\bm{m}}+\bm{z})\big]\\
 &\phantom{:=\times }   J_0(r_1\|\bm{x}\|) \, J_0(r_1 \|\bm{z}\|)\, J_0(r_2 \|\bm{y}\|)\,  J_0(r_2\|\bm{\omega_m}+\bm{z}\|) \, d\bm{z}\Bigg\}\, d\bm{y} d\bm{x} \Bigg]\, dr_1 dr_2\Bigg)\, d\bm{v},
\end{align*}
\begin{align*}
D_{\bm{m}}^{(2)}(\bm{s},\bm{t})&:= \frac{1}{(2\pi\lambda)^2 H(\bm{0})} \int_{\R^2} B(\bm{u}) B(\bm{u}+\bm{\omega}_{\bm{m}}) \Bigg(\int_{[0,2\pi a /\lambda]^2} r_1\, r_2 \Bigg[\int_{[-2\pi a/\lambda,2\pi a/\lambda]^4} \\
&\phantom{:=\times}\Bigg\{ \int_{2\pi\max(-a,-a-\bm{m})/\lambda}^{2\pi\min(a,a-\bm{m})/\lambda} f(\bm{\omega}_{\bm{m}}+\bm{z}) f(\bm{s}-\bm{x})f(\bm{t}-\bm{y}) \big[f(\bm{u}-\bm{z})-f(\bm{z})\big]\\
&\phantom{:=\times}    J_0(r_1\|\bm{x}\|) \, J_0(r_1 \|\bm{z}\|)\, J_0(r_2 \|\bm{y}\|)\,  J_0(r_2\|\bm{\omega_m}+\bm{z}\|) \, d\bm{z}\Bigg\} \,d\bm{y} d\bm{x} \Bigg]\, dr_1 dr_2 \Bigg)  d\bm{u},
\end{align*}
\begin{align*}
D_{\bm{m}}^{(3)}(\bm{s})&:= \frac{1}{(2\pi\lambda)^2 H(\bm{0})} \int_{\R^2} B(\bm{t})^2 \Bigg(\int_{[0,2\pi a /\lambda]^2} r_1\, r_2 \Bigg[\int_{[-2\pi a/\lambda,2\pi a/\lambda]^4} \Bigg\{\int_{2\pi\max(-a,-a-\bm{m})/\lambda}^{2\pi\min(a,a-\bm{m})/\lambda}\\
&\phantom{:=\times}f(\bm{\omega}_{\bm{m}}+\bm{z}) f(\bm{z})f(\bm{s}-\bm{x}) \big[f(\bm{t}-\bm{y})-f(\bm{y})\big]\\
&\phantom{:=\times}    J_0(r_1\|\bm{x}\|) \, J_0(r_1 \|\bm{z}\|)\, J_0(r_2 \|\bm{y}\|)\,  J_0(r_2\|\bm{\omega_m}+\bm{z}\|) \, d\bm{z}\Bigg\}\, d\bm{y} d\bm{x} \Bigg] \,dr_1 dr_2 \Bigg) d\bm{t},
\end{align*}
\begin{align*}
D_{\bm{m}}^{(4)}&:= \frac{1}{(2\pi\lambda)^2 H(\bm{0})} \int_{\R^2} B(\bm{s})^2 \Bigg(\int_{[0,2\pi a /\lambda]^2} r_1\, r_2 \Bigg[\int_{[-2\pi a/\lambda,2\pi a/\lambda]^4}\Bigg\{ \int_{2\pi\max(-a,-a-\bm{m})/\lambda}^{2\pi\min(a,a-\bm{m})/\lambda}\\
&\phantom{:=\times} f(\bm{\omega}_{\bm{m}}+\bm{z}) f(\bm{z})f(\bm{y}) \big[f(\bm{s}-\bm{x})-f(\bm{x})\big]\\
&\phantom{:=\times}   J_0(r_1\|\bm{x}\|) \, J_0(r_1 \|\bm{z}\|)\, J_0(r_2 \|\bm{y}\|)\,  J_0(r_2\|\bm{\omega_m}+\bm{z}\|) \, d\bm{z}\Bigg\}\, d\bm{y} d\bm{x} \Bigg]\, dr_1 dr_2\Bigg)  d\bm{s}.
\end{align*}
It obviously holds that
\begin{align*}
&\left|D_{\bm{m}}^{(1)}(\bm{s},\bm{t},\bm{u})\right|\leq E_{\bm{m}}^{(1)}(\bm{u})\\
&\phantom{=====}\times \left(\int_{[0,2\pi a/\lambda]^2} r_1 r_2 \,\Bigg|\int_{[-2\pi a/\lambda,2\pi a/\lambda]^4} f(\bm{s}-\bm{x}) f(\bm{t}-\bm{y}) J_0(r_1 \|\bm{x}\|)  J_0(r_2 \|\bm{y}\|)\, d\bm{x} d\bm{y}\Bigg|\, dr_1 dr_2\right),
\end{align*}
where
\begin{align*}
E_{\bm{m}}^{(1)}(\bm{u})&:= \sup_{r_1,r_2>0} \Bigg|\frac{1}{(2\pi\lambda)^2 H(\bm{0})} \int_{\R^2} B(\bm{v}) B(\bm{v}-\bm{\omega}_{\bm{m}})
\Bigg[ \int_{2\pi\max(-a,-a-\bm{m})/\lambda}^{2\pi\min(a,a-\bm{m})/\lambda}
f(\bm{u}-\bm{z}) \, J_0(r_1 \|\bm{z}\|)\\
&\phantom{==============} J_0(r_2 \|\bm{\omega_m}+\bm{z}\|)\big[f(\bm{v}-\bm{\omega}_{\bm{m}}-\bm{z})-f(\bm{\omega}_{\bm{m}}+\bm{z})\big] d\bm{z} \Bigg] d\bm{v}\Bigg|.
\end{align*}
By Proposition \ref{Lemma F.2_SSR}, part (ii), we have
\begin{align*}
&\phantom{==i}\sup_{\substack{|m_1|\leq 2a-1\\ |m_2|\leq 2a-1}} \sup_{\bm{u}\in\R^d} E_{\bm{m}}^{(1)}(\bm{u})
\lesssim\frac{1}{\lambda},
\end{align*}
and therefore
\begin{align*}
&\phantom{\leq ii}\frac{1}{\lambda^6} \sum_{\bm{m}=-2a+1}^{2a-1} \int_{\R^6} B(\bm{s})^2 B(\bm{t})^2 |B(\bm{u}) B(\bm{u}+\bm{\omega_m})\, D_{\bm{m}}^{(1)}(\bm{s},\bm{t},\bm{u})|\, d\bm{s} d\bm{t} d\bm{u}
&=\Landau\left(\frac{(\log a)^4}{\lambda}
\left[1+\frac{a}{\lambda^{2}}\right]^2\right),
\end{align*}
where we furthermore used Lemma \ref{bounds_for_B}, part (ii), Lemma \ref{cov_fct_integrable}, part (ii), and the assumption $\delta>2$. 
 In the same way, we obtain
\begin{align*}
\frac{1}{\lambda^6} \sum_{\bm{m}=-2a+1}^{2a-1} \int_{\R^6} B(\bm{s})^2 B(\bm{t})^2 |B(\bm{v}) B(\bm{v}-\bm{\omega_m})\, D_{\bm{m}}^{(2)}(\bm{s},\bm{t})|\, d\bm{s} d\bm{t} d\bm{v}= \Landau\left(\frac{(\log a)^4}{\lambda}  \left[1+\frac{a}{\lambda^{2}}\right]^2\right).
\end{align*}
Moreover, Proposition \ref{Lemma F.2_SSR} (i) yields
\begin{align*}
|D_{\bm{m}}^{(3)}(\bm{s})|&\lesssim \frac{a^2}{\lambda^4} \int_{[0,2\pi a/\lambda]^2} r_1\, r_2 \Bigg|\int_{[-2\pi a/\lambda,2\pi a/\lambda]^2} \int_{2\pi\max(-a,-a-\bm{m})/\lambda}^{2\pi\min(a,a-\bm{m})/\lambda} f(\bm{\omega_m}+\bm{z}) f(\bm{z}) f(\bm{s}-\bm{x})\\
& \phantom{=============} J_0(r_1\|\bm{x}\|) J_0(r_1\|\bm{z}\|) J_0(r_2 \|\bm{\omega_m}+\bm{z}\|)\, d\bm{z} d\bm{x}\Bigg| \, dr_1 dr_2,
\end{align*}
such that Lemma \ref{bounds_for_B} (iii) and Lemma \ref{cov_fct_integrable} (ii) for $\delta>2$ give
\begin{align*}
&\phantom{\leq =} \frac{1}{\lambda^6} \sum_{\bm{m}=-2a+1}^{2a-1} \int_{\R^6} B(\bm{s})^2 |B(\bm{u}) B(\bm{u}+\bm{\omega_m}) B(\bm{v}) B(\bm{v}-\bm{\omega_m}) \, D_{\bm{m}}^{(3)}(\bm{s})|\, d\bm{s} d\bm{u} d\bm{v}\\
&=\Landau\left(\frac{a^{2}}{\lambda^{4}} \times \frac{a^2}{\lambda^2}\times \left[1+\frac{a}{\lambda^{2}}\right]\right) = \Landau\left(\frac{a^{4}}{\lambda^{6}}  \left[1+\frac{a}{\lambda^{2}}\right]\right).
\end{align*}
Similarly, it follows from Corollary \ref{cor_of_D2_finite} that
\begin{align*}
&\frac{1}{\lambda^6} \sum_{\bm{m}=-2a+1}^{2a-1} \int_{\R^6} B(\bm{t})^2 |B(\bm{u}) B(\bm{u}+\bm{\omega_m}) B(\bm{v}) B(\bm{v}-\bm{\omega_m}) \, D_{\bm{m}}^{(4)}|\, d\bm{t} d\bm{u} d\bm{v}
= \Landau\left(\frac{a^{4}}{\lambda^{6}}\right),
\end{align*}
and therefore
\begin{align*}
G_3(a,\lambda)=\Landau\left(\frac{(\log a)^4}{\lambda}  \left[1+\frac{a}{\lambda^{2}}\right]^2+\frac{a^{4}}{\lambda^{6}}  \left[1+\frac{a}{\lambda^{2}}\right]\right).
\end{align*}
We finally deal with the error term $G_4(a,\lambda)$ in \eqref{variance_with_G_i} and note that
\begin{align*}
|G_4(a,\lambda)|&
\lesssim \sum_{\bm{m}=-2a+1}^{2a-1} H(\bm{m})^2 \left(\int_{\|\bm{y}\|\leq 2\pi a/\lambda} \Bigg| \int_{[-2\pi a/\lambda,2\pi a/\lambda]^2} f(\bm{x}) \exp\big(\im  \bm{y}^T \bm{x}\big) \, d\bm{x}\, \Bigg|\, d\bm{y} \right)^2 \\
&\times \sup_{\|\bm{x}\|\leq 2\pi a/\lambda} \Bigg(\int_{[-2\pi a/\lambda,2\pi a/\lambda]^2} f(\bm{z}) \Big|f(\bm{z}+\bm{\omega_m})\exp\big(\im \bm{x}^T (\bm{\omega_m}+\bm{z})\big) - f(\bm{z})  \exp\big(\im \bm{x}^T \bm{z}\big)\Big|\, d\bm{z}\Bigg)\\
&=\Landau\left(\frac{a\, (\log \lambda)^2}{\lambda^{2}} + \frac{1}{(\log \lambda)^3}\right),
\end{align*}
where we used Corollary \ref{cor_of_D2_finite} for $\delta> 2$ and Lemma \ref{bound_variance_secint}.}}
Observing \eqref{variance_with_G_i} and the orders of the terms $G_i(a,\lambda)$ for $i=1,\ldots,4$, the cumulant expression corresponding to the partition $\bm{\nu}_1^{\ast}$ thus equals
\begin{align*}
&\sum_{\bm{m}=-2a+1}^{2a-1} \frac{H(\bm{m})^2}{H(\bm{0})^2}  \int_{2\pi\max(-a,-a-\bm{m})/\lambda}^{2\pi\min(a,a-\bm{m})/\lambda} f(\bm{z})^2\nonumber\\
&\phantom{===============} \Bigg( \int_{0}^{2\pi a/\lambda} r \Bigg[\int_{[-2\pi a/\lambda,2\pi a/\lambda]^2} f(\bm{x}) J_0(r \|\bm{x}\|)  J_0(r \|\bm{z}\|) \, d\bm{x}\Bigg] \, dr\Bigg)^2 \, d\bm{z}\nonumber\\
&\phantom{=} + \Landau\Bigg(\frac{a^{6}}{\lambda^{10}} + \frac{a^{4}}{\lambda^{6}}\left[1+\frac{(\log a)^4}{\lambda^2}+\frac{a}{\lambda^{2}}+\frac{a^{4}}{\lambda^{6}}\right]\\
&\phantom{===========}+\left[\frac{(\log a)^4}{\lambda}+\frac{a^{2}}{\lambda^{4}}\right] \left[1+\frac{a}{\lambda^{2}}+\frac{a^{4}}{\lambda^{6}}\right]^2 + \frac{a\, (\log \lambda)^2}{\lambda^{2}}+ \frac{1}{(\log \lambda)^3}+\frac{1}{n}\Bigg)
\end{align*}
as $a,\lambda,n\to\infty$.
Recall that in order to calculate the variance of the estimator $\hat{D}_{2,\lambda,a}$, we need to consider all indecomposable partitions of Table \eqref{table_q=2} consisting of $4$ groups with $8$ different elements [see \eqref{variance_general_form}].
As in the proof of Theorem \ref{expectation_theo}, the order corresponding to a certain partition depends on the rank of the linear transformation mapping $(\bm{k}_1^T,\bm{k}_2^T,\bm{\ell}_1^T,\bm{\ell}_2^T)^T$ to the vector of linear combinations of the $\bm{k}'s$ and $\bm{\ell}'s$ inside the $B$-functions. This rank equals the number of independent restrictions between the rows and columns of Table \eqref{table_q=2}. Here, a restriction can appear both by a set connecting the two rows, or by a set that connects the first or second column with the third or fourth column. For the partition $\bm{\nu}_1^{\ast}$ from above, there was 1 restriction between the variables $\bm{\ell}_1$ and $\bm{\ell}_2$ [corresponding to a restriction between the first and the second row of Table \eqref{table_q=2}], since there was one combination of $\bm{\ell}_2-\bm{\ell}_1$ inside the $B$-functions only [see \eqref{connection_l_only}]. The order of the respective cumulant expression was then $\Landau(1)$. Note that there are $8$ indecomposable partitions of Table \eqref{table_q=2} which evoke exactly $1$ restriction, namely 
\begin{enumerate}
\item two partitions restricting $\bm{\ell}_1$ and $\bm{\ell}_2$
\item two partitions restricting $\bm{k}_1$ and $\bm{k}_2$
\item two partitions restricting $\bm{\ell}_1$ and $\bm{k}_2$
\item two partitions restricting $\bm{k}_1$ and $\bm{\ell}_2$.
\end{enumerate}
A straightforward calculation shows that all of these $8$ partitions yield the same cumulant expression as the partition $\bm{\nu}_1^{\ast}$ in \eqref{variance_with_G_i}. 
For partitions with $2$ or more restrictions between the variables $\bm{k}_1$, $\bm{k}_2$, $\bm{\ell}_1$, and $\bm{\ell}_2$ however, we obtain an expression of lower order (note that partitions with $0$ restrictions are not considered since they cannot be indecomposable).
For illustration, consider the indecomposable partition
\begin{align*}
\bm{\nu}_2^{\ast}=\{\nu_{2,1}^\ast,\nu_{2,2}^\ast,\nu_{2,3}^\ast,\nu_{2,4}^\ast\}=\Big\{\{(1,1),(1,3)\},\{(1,2),(2,1)\},\{(1,4),(2,3)\},\{(2,2),(2,4)\}\Big\},
\end{align*}
                                                                                                                                 which evokes $3$ independent restrictions in Table \eqref{table_q=2}.
                                                                                                                                We obtain
\begin{align} \label{rank_3}
&\phantom{=i}\frac{1}{n^8} \sum_{\underline{j}\in\mathcal{D}(8)} \cum_2\big[Y_{t,c}(\underline{j}):(t,c)\in\nu_{2,1}^{\ast}\big]\times \ldots \times \cum_2\big[Y_{t,c}(\underline{j}):(t,c)\in\nu_{2,4}^{\ast}\big]\nonumber \\
&=\frac{c_{8,n}}{(2\pi)^8 \lambda^{16}} \int_{\R^8} B(\bm{s}) B(\bm{s}+\bm{\omega}_{\bm{k}_1+\bm{\ell}_1+1}) B(\bm{t}) B(\bm{t}+\bm{\omega}_{\bm{k}_2+\bm{\ell}_2+1}) B(\bm{u}) B(\bm{u}+\bm{\omega}_{\bm{k}_2-\bm{k}_1}) B(\bm{v}) B(\bm{v}+\bm{\omega}_{\bm{\ell}_2-\bm{\ell}_1})\nonumber\\
&\phantom{=========} f(\bm{s}+\tilde{\bm{\omega}}_{\bm{\ell}_1})  f(\bm{t}+\tilde{\bm{\omega}}_{\bm{\ell}_2}) f(\bm{u}-\tilde{\bm{\omega}}_{\bm{k}_1}) f(\bm{v}-\tilde{\bm{\omega}}_{\bm{\ell}_1})\, d\bm{s} d\bm{t} d\bm{u} d\bm{v}.
\end{align}                      
                                                                                                                                 
                                                                                                                               The matrix $K$ satisfying
\begin{align*}
K \times \begin{pmatrix}
\bm{k}_1^T\\
\bm{k}_2^T\\
\bm{\ell}_1^T\\
\bm{\ell}_2^T\\
\end{pmatrix} = \begin{pmatrix}
(\bm{k}_1+\bm{\ell}_1)^T\\
(\bm{k}_2+\bm{\ell}_2)^T\\
(\bm{k}_2-\bm{k}_1)^T\\
(\bm{\ell}_2-\bm{\ell}_1)^T\\
\end{pmatrix}
\end{align*}                                                                                                                               
has rank $3$,
                                                                                                                             and we set $\bm{k}_1+\bm{\ell}_1+1=\bm{m}_1$, $\bm{k}_2+\bm{\ell}_2+1=\bm{m}_2$, $\bm{k}_2-\bm{k}_1=\bm{m}_3$. This gives $\bm{\ell}_2-\bm{\ell}_1=\bm{m}_2-\bm{m}_1-\bm{m}_3$ and we furthermore set $\bm{\ell}=\bm{\ell}_2$.
                                                                                                                                 Then,
                                                                                                                                  \begin{align*}
                                                                                                                         &\phantom{=}\Bigg|\sum_{r_1,r_2=0}^{a-1} \tilde{\omega}_{r_1} \tilde{\omega}_{r_2}\, \frac{(2\pi)^8}{n^8 H(\bm{0})^4} \sum_{\bm{k}_1,\bm{\ell}_1,\bm{k}_2,\bm{\ell}_2 =-a}^{a-1} \Bigg[\prod_{t=1}^2 \big[ J_0(\tilde{\omega}_{r_t} \|\tilde{\bm{\omega}}_{\bm{k}_t}\|) J_0(\tilde{\omega}_{r_t} \|\tilde{\bm{\omega}}_{\bm{\ell}_t}\|)\big]\\
&\phantom{=================} \sum_{\underline{j}\in\mathcal{D}(8)}\cum_2\big[Y_{t,c}(\underline{j}):(t,c)\in\nu_{2,1}^{\ast}\big] \ldots \cum_2\big[Y_{t,c}(\underline{j}):(t,c)\in\nu_{2,4}^{\ast}\big] \Bigg]\Bigg|\\
&\lesssim \left(\frac{a}{\lambda}\right)^{4} \frac{1}{\lambda^8} \sum_{\bm{m}_1,\bm{m}_2,\bm{m}_3=-3a}^{3a} \int_{\R^8} \Bigg\{\big|B(\bm{s}) B(\bm{s}+\bm{\omega}_{\bm{m}_1}) B(\bm{t}) B(\bm{t}+\bm{\omega}_{\bm{m}_2}) B(\bm{u}) B(\bm{u}+\bm{\omega}_{\bm{m}_3}) \\
&\phantom{=============i==} B(\bm{v}) B(\bm{v}+\bm{\omega}_{\bm{m}_2-\bm{m}_1-\bm{m}_3})\big| \, \Bigg( \frac{1}{\lambda^6} \sum_{\bm{\ell}=-a}^{a-1} f(\bm{t}+\tilde{\bm{\omega}}_{\bm{\ell}})\Bigg)\Bigg\}\, d\bm{s} d\bm{t} d\bm{u} d\bm{v}\\
&=\Landau\left(\frac{a^{4}}{\lambda^{4}} \cdot \frac{1}{\lambda^4}\right) = \Landau\left(\frac{a^4}{\lambda^8}\right),
\end{align*}
where we used Lemma \ref{bounds_for_B}, part (iv) for $d=2$, $t=4$ and $s=0$. 
In general, we obtain an order of at most $\Landau(a^{4}/\lambda^{6})$ for partitions with $2$ restrictions and an order of $\Landau(a^{4}/\lambda^{8})$ in the case of $3$ restrictions (also see Lemma \ref{lemma1}). 
This finally gives 
                                                                                                                                 \begin{align*}
\lambda^2 \Var\big[\hat{D}_{2,\lambda,a}\big]&= 8 \sum_{\bm{m}=-2a+1}^{2a-1} \frac{H(\bm{m})^2}{H(\bm{0})^2}  \int_{2\pi\max(-a,-a-\bm{m})/\lambda}^{2\pi\min(a,a-\bm{m})/\lambda} f(\bm{z})^2\nonumber\\
&\phantom{=========}  \Bigg( \int_{0}^{2\pi a /\lambda} r \Bigg[\int_{[-2\pi a/\lambda,2\pi a/\lambda]^2} f(\bm{x}) J_0(r \|\bm{x}\|)  J_0(r \|\bm{z}\|) \, d\bm{x}\Bigg] \, dr\Bigg)^2 \, d\bm{z}\\
&\phantom{=}+\Landau\Bigg(\frac{a^{6}}{\lambda^{10}} + \frac{a^{4}}{\lambda^{6}}\left[1+\frac{(\log a)^4}{\lambda^2}+\frac{a}{\lambda^{2}}+\frac{a^{4}}{\lambda^{6}}\right]\\
&\phantom{=========}+\left[\frac{(\log a)^4}{\lambda}+\frac{a^{2}}{\lambda^{4}}\right] \left[1+\frac{a}{\lambda^{2}}+\frac{a^{4}}{\lambda^{6}}\right]^2 + \frac{a\, (\log \lambda)^2}{\lambda^{2}}+ \frac{1}{(\log \lambda)^3}+\frac{\lambda^2}{n}\Bigg)
\end{align*}
                                                                                                                               as $a,\lambda,n\to\infty$.

\subsection{Proof of Theorem \ref{asymptotic_normality_secint}}

From Theorem \ref{expect_theo_sec_int}, it directly follows that
\begin{align*}
\lambda\, \E\left[\hat{D}_{2,\lambda,a}-D_{2,2,\lambda,a}\right] \rightarrow 0 \qquad \text{as } \lambda,a,n\rightarrow \infty,
\end{align*}
since by Assumption \ref{assumptions_on_a}, the bias term in Theorem \ref{expect_theo_sec_int} is of order $o(1/\lambda)$. Moreover, under Assumption \ref{assumptions_on_a}, Theorem \ref{expect_theo_sec_int} yields 
\begin{align*}
\frac{\lambda^2 \Var[\hat{D}_{2,\lambda,a}]}{\tau_{2,\lambda,a}^2} \rightarrow 1
\end{align*}
as $\lambda,a,n\to\infty$. In order to prove Theorem \ref{asymptotic_normality_secint}, it thus suffices to show that the higher order cumulants of the estimate $\hat{D}_{2,\lambda,a}$ converge to $0$. 
More precisely, we will prove
\begin{align} \label{to_show_sec_int}
\lambda^q\, \cum_q\big(\hat{D}_{2,\lambda,a}\big) = \begin{cases}
\Landau(1), &\qquad q=2,\\
o(1), &\qquad q\geq 3.
\end{cases}
\end{align}
To this end, let $q\geq 2$ be arbitrary and assume $n>4q$. Analogously to the proofs of Theorem \ref{asymptotic_normality} and Theorem \ref{expect_theo_sec_int}, we obtain
\begin{align} \label{cumulant_formula_general}
&\phantom{i=i}\lambda^q\, \cum_q\big(\hat{D}_{2,\lambda,a} \big)\nonumber\\
&= \sum_{r_1,\ldots,r_q=0}^{a-1} \tilde{\omega}_{r_1} \ldots \tilde{\omega}_{r_q}\, \frac{(2\pi)^{4q}}{n^{4q} H(\bm{0})^{2q}} \sum_{\bm{k}_1,\ldots,\bm{k}_q =-a}^{a-1} \sum_{\bm{\ell}_1,\ldots,\bm{\ell}_q=-a}^{a-1} \Bigg[\prod_{t=1}^q  \left[ J_0(\tilde{\omega}_{r_t} \|\tilde{\bm{\omega}}_{\bm{k}_t}\|)\, J_0(\tilde{\omega}_{r_t} \|\tilde{\bm{\omega}}_{\bm{\ell}_t}\|)\right]\nonumber\\
&\phantom{===}\sum_{\bm{\nu}=\{\nu_1,\ldots,\nu_G\}\in\mathcal{I}(q)} \sum_{i=4}^{2q+G} \sum_{\underline{j}\in\mathcal{D}(q,i)}\cum_{|\nu_1|}\big[Y_{t,c}(\underline{j}):(t,c)\in\nu_1\big]\times \ldots \times \cum_{|\nu_G|}\big[Y_{t,c}(\underline{j}):(t,c)\in\nu_G\big] \Bigg],
\end{align}
where the set $\mathcal{D}(q,i)$ is defined in \eqref{set_D(q,i)}, $\mathcal{I}(q)$ is the set of indecomposable partitions of Table \eqref{table}, and $G$ denotes the number of groups in the partition $\bm{\nu}$. Moreover, for $t=1,\ldots,q$ and $c=1,\ldots,4$, recall the definition of the random variables
\begin{align*}
Y_{t,c}(\underline{j}):=\begin{cases}
h\left(\frac{\bm{s}_{j_{c+4(t-1)}}}{\lambda}\right) Z(\bm{s}_{j_{c+4(t-1)}}) \exp\big((-1)^{c+1} \im \bm{s}_{j_{c+4(t-1)}}^T \tilde{\bm{\omega}}_{\bm{k}_t}\big), \qquad &\text{if } c=1,2\\
h\left(\frac{\bm{s}_{j_{c+4(t-1)}}}{\lambda}\right) Z(\bm{s}_{j_{c+4(t-1)}}) \exp\big((-1)^{c+1} \im \bm{s}_{j_{c+4(t-1)}}^T \tilde{\bm{\omega}}_{\bm{\ell}_t}\big), \qquad &\text{if } c=3,4.
\end{cases}
\end{align*}
We start with an indecomposable partition consisting of $G=2q$ groups and maximally many different elements in $\underline{j}$, i.e. $i=4q$. To this end, let $\bm{\nu}^{\ast}=\{\nu_1^{\ast},\ldots,\nu_{2q}^{\ast}\}\in\mathcal{I}(q)$ with $|\nu_g^\ast|=2$ for $g=1,\ldots,2q$ be arbitrary. As in the proof of Theorem \ref{asymptotic_normality}, we identify each tuple $(t,c)$ in Table \eqref{table} with one of the numbers between $1$ and $4q$, such that each set $\nu_g^\ast$ can be expressed as a single tuple $(a,b)$ for $a,b\in\{1,\ldots,4q\}$. We then write $\nu_g^{\ast}=(\nu_{g,1}^{\ast},\nu_{g,2}^{\ast})$ for all $g=1,\ldots,2q$, and define the quantity $c_{q,n}$ as in \eqref{c_q,n}. 
Then, following the same steps as in the proof of Theorem \ref{asymptotic_normality}, we obtain
\begin{align} \label{cum_int_2}
&\phantom{= i} \sum_{r_1,\ldots,r_q=0}^{a -1} \tilde{\omega}_{r_1} \ldots \tilde{\omega}_{r_q}\, \frac{1}{n^{4q}} \sum_{\bm{k}_1,\ldots,\bm{k}_q =-a}^{a-1} \sum_{\bm{\ell}_1,\ldots,\bm{\ell}_q=-a}^{a-1} \Bigg[\prod_{t=1}^q \left[J_0(\tilde{\omega}_{r_t} \|\tilde{\bm{\omega}}_{\bm{k}_t}\|)\, J_0(\tilde{\omega}_{r_t} \|\tilde{\bm{\omega}}_{\bm{\ell}_t}\|)\right]\nonumber\\
&\phantom{=======} \sum_{\underline{j}\in\mathcal{D}(q,4q)}\cum_2\big[Y_{t,c}(\underline{j}):(t,c)\in\nu_1^{\ast}\big]\times \ldots \times \cum_2\big[Y_{t,c}(\underline{j}):(t,c)\in\nu_{2q}^{\ast}\big] \Bigg]\nonumber\\
&\eqsim c_{q,n} \,  \sum_{r_1,\ldots,r_q=0}^{a-1} \tilde{\omega}_{r_1} \ldots \tilde{\omega}_{r_q}\sum_{\bm{k}_1,\ldots,\bm{k}_q =-a}^{a-1} \sum_{\bm{\ell}_1,\ldots,\bm{\ell}_q=-a}^{a-1} \Bigg[\prod_{t=1}^q \left[ J_0(\tilde{\omega}_{r_t} \|\tilde{\bm{\omega}}_{\bm{k}_t}\|)\, J_0(\tilde{\omega}_{r_t} \|\tilde{\bm{\omega}}_{\bm{\ell}_t}\|)\right] \nonumber\\
&\phantom{================} \frac{1}{\lambda^{8q}} \int_{\R^{4q}} \prod_{g=1}^{2q} f(\bm{u}_g-\tilde{\bm{\omega}}_{\hat{\bm{h}}_{2g-1}}) B(\bm{u}_g) B(\bm{u}_g+\bm{\omega}_{\hat{\bm{h}}_{2g}-\hat{\bm{h}}_{2g-1}}) \, d\bm{u}_g \, \Bigg],
\end{align}
where the frequency window $B$ is defined in \eqref{Four_trafo_of_h}, and for $j=1,\ldots,4q$, the $2$-dimensional vectors $\hat{\bm{h}}_j$ are taken from the set
\begin{align*}
\{\bm{k}_1,-\bm{k}_1-1,\bm{k}_2,-\bm{k}_2-1,\ldots,\bm{k}_q,-\bm{k}_q-1,\bm{\ell}_1,-\bm{\ell}_1-1,\ldots,\bm{\ell}_q,-\bm{\ell}_q-1\}
\end{align*}
and are determined by the partition $\bm{\nu}^{\ast}$ [to see this, note that $-\tilde{\bm{\omega}}_{\bm{k}}=\tilde{\bm{\omega}}_{-\bm{k}-1}$, and recall Example \ref{ex_matrix_k} in the proof of Theorem \ref{asymptotic_normality}]. 
We denote by $K\in\{-1,0,1\}^{2q\times 2q}$ and $R\in\{-1,0,1\}^{2q\times 2}$ the matrices satisfying the equation
\begin{align} \label{matrix_K_sec_int}
K \times \begin{pmatrix}
\bm{k}_1^T \\
\vdots\\
\bm{k}_q^T \\
\bm{\ell}_1^T \\
\vdots\\
\bm{\ell}_q^T
\end{pmatrix} + R = \begin{pmatrix}
(\hat{\bm{h}}_2-\hat{\bm{h}}_1)^T\\
(\hat{\bm{h}}_4-\hat{\bm{h}}_3)^T\\
\vdots\\
(\hat{\bm{h}}_{4q}-\hat{\bm{h}}_{4q-1})^T
\end{pmatrix}
\end{align}
[for illustration, consider Example \ref{ex_matrices_K_R} in the proof of Theorem \ref{asymptotic_normality}].
As in the proof of Theorem \ref{asymptotic_normality}, the order of \eqref{cum_int_2} is determined by the rank of the matrix $K$, which corresponds to the number of independent constraints between the variables $\bm{k}_1,\ldots,\bm{k}_q,\bm{\ell}_1,\ldots,\bm{\ell}_q$ evoked by the partition $\bm{\nu}^\ast$ in Table \eqref{table}. 
In order to demonstrate this, consider the case

\begin{align} \label{partition_example_sec_int}
\begin{pmatrix}
(\hat{\bm{h}}_2-\hat{\bm{h}}_1)^T\\
(\hat{\bm{h}}_4-\hat{\bm{h}}_3)^T\\
\vdots\\
(\hat{\bm{h}}_{4q}-\hat{\bm{h}}_{4q-1})^T
\end{pmatrix} = \begin{pmatrix}
(\bm{k}_1-\bm{k}_1)^T \\
(\bm{k}_2+\bm{k}_3+1)^T\\
(\bm{k}_2+\bm{k}_3+1)^T\\
(\bm{k}_4+\bm{k}_5+1)^T\\
(\bm{k}_4+\bm{k}_5+1)^T\\
\vdots\\
(\bm{k}_{q-1}+\bm{k}_q+1)^T\\
(\bm{k}_{q-1}+\bm{k}_q+1)^T\\
(\bm{\ell}_1+\bm{\ell}_2+1)^T\\
(\bm{\ell}_1+\bm{\ell}_2+1)^T\\
\vdots\\
(\bm{\ell}_{q-2}+\bm{\ell}_{q-1}+1)^T\\
(\bm{\ell}_{q-2}+\bm{\ell}_{q-1}+1)^T\\
(\bm{\ell}_q-\bm{\ell}_q)^T
\end{pmatrix},
\end{align}
if $q$ odd, and 
\begin{align*} 
\begin{pmatrix}
(\hat{\bm{h}}_2-\hat{\bm{h}}_1)^T\\
(\hat{\bm{h}}_4-\hat{\bm{h}}_3)^T\\
\vdots\\
(\hat{\bm{h}}_{4q}-\hat{\bm{h}}_{4q-1})^T
\end{pmatrix} = \begin{pmatrix}
(\bm{k}_1-\bm{k}_1)^T\\
(\bm{k}_2+\bm{k}_3+1)^T\\
(\bm{k}_2+\bm{k}_3+1)^T\\
(\bm{k}_4+\bm{k}_5+1)^T\\
(\bm{k}_4+\bm{k}_5+1)^T\\
\vdots\\
(\bm{k}_{q-2}+\bm{k}_{q-1}+1)^T\\
(\bm{k}_{q-2}+\bm{k}_{q-1}+1)^T\\
(\bm{k}_q-\bm{k}_q)^T\\
(\bm{\ell}_1+\bm{\ell}_2+1)^T\\
(\bm{\ell}_1+\bm{\ell}_2+1)^T\\
\vdots\\
(\bm{\ell}_{q-1}+\bm{\ell}_{q}+1)^T\\
(\bm{\ell}_{q-1}+\bm{\ell}_{q}+1)^T
\end{pmatrix},
\end{align*}
if $q$ even. Note that these particular representations correspond to a matrix $K$ with a rank of $q-1$, which we choose for simplicity and since it naturally makes sense due to indecomposability. 
Assume without loss of generality that $q$ is odd and set
\begin{align*}
&\bm{k}_2+\bm{k}_3=\bm{m}_1, \qquad \bm{k}_4+\bm{k}_5=\bm{m}_2, \qquad \ldots, \qquad \bm{k}_{q-1}+\bm{k}_q=\bm{m}_{(q-1)/2}, \\
&\bm{\ell}_1+\bm{\ell}_2=\bm{m}_{(q-1)/2+1}, \qquad \ldots, \qquad \bm{\ell}_{q-2}+\bm{\ell}_{q-1}=\bm{m}_{q-1}.
\end{align*}
Ignoring constants, the absolute value of the expression in \eqref{cum_int_2} is then bounded by
\begin{align} \label{eq12}
&\phantom{\lesssim \times}\lambda^{q-8q} \int_{\R^{4q}} \sum_{\bm{m}_1,\ldots,\bm{m}_{q-1}=-2a}^{2a-2} \Big| B(\bm{u}_1)^2 B(\bm{u}_{2q})^2 B(\bm{u}_2) B(\bm{u}_2+\bm{\omega}_{\bm{m}_1+1})  B(\bm{u}_3) B(\bm{u}_3+\bm{\omega}_{\bm{m}_1+1})\times \ldots \nonumber \\
&\phantom{============ii=}\times B(\bm{u}_{2q-2}) B(\bm{u}_{2q-2}+\bm{\omega}_{\bm{m}_{q-1}+1})  B(\bm{u}_{2q-1}) B(\bm{u}_{2q-1}+\bm{\omega}_{\bm{m}_{q-1}+1})\Big|\nonumber\\
&\phantom{\lesssim}\times \Bigg( \frac{1}{\lambda^q} \sum_{r_1,\ldots,r_q=0}^{a-1} \tilde{\omega}_{r_1} \ldots \tilde{\omega}_{r_q} \Bigg|\sum_{\bm{k}_1=-a}^{a-1} J_0(\tilde{\omega}_{r_1} \|\tilde{\bm{\omega}}_{\bm{k}_1}\|) f(\bm{u}_1-\tilde{\bm{\omega}}_{\bm{k}_1}) \sum_{\bm{\ell}_q=-a}^{a-1} J_0(\tilde{\omega}_{r_q} \|\tilde{\bm{\omega}}_{\bm{\ell}_q}\|) f(\bm{u}_{2q}-\tilde{\bm{\omega}}_{\bm{\ell}_q}) \Bigg|\Bigg)\nonumber \\
&\phantom{\lesssim}\times \Bigg(\sum_{\substack{\bm{k}_3,\bm{k}_5,\ldots,\bm{k}_{q}=-a\\\bm{\ell}_2,\bm{\ell}_4,\ldots,\bm{\ell}_{q-1}=-a}}^{a-1}  \Big|f(\bm{u}_2+\tilde{\bm{\omega}}_{\bm{k}_3}) f(\bm{u}_3+\tilde{\bm{\omega}}_{\bm{k}_3})\times \ldots \times f(\bm{u}_{2q-2}+\tilde{\bm{\omega}}_{\bm{\ell}_{q-1}}) f(\bm{u}_{2q-1}+\tilde{\bm{\omega}}_{\bm{\ell}_{q-1}})\Big|\Bigg)\nonumber\\
&\phantom{=======} \, d\bm{u}_1 \ldots d\bm{u}_{2q}\nonumber\\
&\lesssim  \lambda^{q-8q+2(2q-(q-1))+4q} \times \Bigg( \frac{1}{\lambda^{4q-4}} \int_{\R^{4q-4}} \sum_{\bm{m}_1,\ldots,\bm{m}_{q-1}=-2a}^{2a-2} \Big|B(\bm{u}_2) B(\bm{u}_2+\bm{\omega}_{\bm{m}_1+1})\nonumber \\
&\phantom{=}\times B(\bm{u}_3) B(\bm{u}_3+\bm{\omega}_{\bm{m}_1+1})\times \ldots \times B(\bm{u}_{2q-2}) B(\bm{u}_{2q-2}+\bm{\omega}_{\bm{m}_{q-1}+1})  B(\bm{u}_{2q-1}) B(\bm{u}_{2q-1}+\bm{\omega}_{\bm{m}_{q-1}+1})\Big|\nonumber\\ 
&\phantom{==================}d\bm{u}_2 \ldots d\bm{u}_{2q-1}\Bigg)\nonumber\\
&\phantom{=}\times \left(\frac{a^{2}}{\lambda^{2}}\right)^{q-2}\times  \left(\frac{1}{\lambda^2} \int_{\R^2} B(\bm{u})^2 \times \frac{1}{\lambda} \sum_{r=0}^{a -1} \tilde{\omega}_r \, \Big|\frac{1}{\lambda^2} \sum_{\bm{k}=-a}^{a-1} J_0(\tilde{\omega}_r \|\tilde{\bm{\omega}}_{\bm{k}}\|) f(\bm{u}-\tilde{\bm{\omega}}_{\bm{k}})\Big|\, d\bm{u}\right)^2.
\end{align}
Due to Lemma \ref{bounds_for_B} (iii)
and Lemma \ref{lemma_for_cumulants} for $\delta>2$, 
the expression in \eqref{eq12} is thus of order 
\begin{align*}
\Landau\left(\lambda^{q-8q+2(2q-(q-1))+4q} \times \left(\frac{a^{2}}{\lambda^{2}}\right)^{q-2}\right).
\end{align*}
We now interpret the components of this order. Since the partition $\bm{\nu}^{\ast}=\{\nu_1^{\ast},\ldots,\nu_{2q}^{\ast}\}$ consists of $G^{\ast}=2q$ groups and since we assumed maximally many different elements in $\underline{j}$ (i.e. $i=4q$), we obtain in the same way as in the proof of Theorem \ref{asymptotic_normality}
\begin{align*}
&8q 
= 2\, \sum_{g=1}^{G^\ast} \#\{\text{different indices in $\underline{j}$ belonging to } \nu_g^\ast\},\\
& 2(2q-(q-1)) = 2(\#\text{\{sums over } \bm{k}_1,\ldots,\bm{k}_q, \bm{\ell}_1,\ldots,\bm{\ell}_q \} - \#\text{restrictions}),\\
&4q = 2G^{\ast}.
\end{align*}
Recall that the number of restrictions equals the rank of the matrix $K$ in \eqref{matrix_K_sec_int} and note that the prefactor $2$ corresponds to the dimension $d=2$. Moreover, the exponent $q-2$ of the expression $a^{2}/\lambda^{2}$ can be interpreted as
\begin{align*}
q-2 = q- \#\{ \text{rows with either $\tilde{\bm{\omega}}_{\bm{k}}$ or $\tilde{\bm{\omega}}_{\bm{\ell}}$ (or both) unrestricted}\},
\end{align*}
since for the partition under consideration two variables (namely $\bm{k}_1$ and $\bm{\ell}_q$) do not evoke a restriction leading to a rank increase of the matrix $K$ in \eqref{matrix_K_sec_int} [compare to \eqref{partition_example_sec_int}]. Taking the above observations together, we can more generally express the order of the expression in \eqref{eq12} as
\begin{align} \label{general_order_q=2_sec_int}
&\Landau\Bigg(\lambda^{q+2\big(-\sum_{g=1}^{G^\ast} \#\{\text{different indices in $\underline{j}$ belonging to } \nu_g^{\ast} \}+(\#\text{sums}-\#\text{restrictions}) + G^\ast\big)}\nonumber\\
&\phantom{============}\times \left(\frac{a^{2}}{\lambda^{2}}\right)^{q- \#\{ \text{rows with either $\tilde{\bm{\omega}}_{\bm{k}}$ or $\tilde{\bm{\omega}}_{\bm{\ell}}$ (or both) unrestricted}\}}\Bigg).
\end{align}

It is easy to see that this order will be the same for all partitions $\bm{\nu}=\{\nu_1,\ldots,\nu_{2q}\}$ and $i=4q$ different elements in $\underline{j}$. In particular, the order in \eqref{general_order_q=2_sec_int} holds for partitions evoking an arbitrary number of restrictions. Note that in the case $G=2q$, $i=4q$ we furthermore have
\begin{align*}
\#\text{groups}-\sum_{g=1}^G \#\{\text{different indices in $\underline{j}$ belonging to } \nu_g\}+\#\text{sums}=0.
\end{align*}
Therefore, we obtain the following result, which specifies the order of the cumulant expression corresponding to an arbitrary (not necessarily indecomposable) partition $\bm{\nu}$ with $G=2q$ groups and $i=4q$ different elements:

\begin{lemma} \label{lemma1}
For $q\geq 2$ and any partition $\bm{\nu}=\{\nu_1,\ldots,\nu_{2q}\}$ of Table \eqref{table}, we have
\begin{align*}
&\frac{1}{n^{4q}} \sum_{r_1,\ldots,r_q=0}^{a-1} \tilde{\omega}_{r_1} \ldots \tilde{\omega}_{r_q}\,  \sum_{\bm{k}_1,\ldots,\bm{k}_q =-a}^{a-1} \sum_{\bm{\ell}_1,\ldots,\bm{\ell}_q=-a}^{a-1} \Bigg[\prod_{t=1}^q \left[ J_0(\tilde{\omega}_{r_t} \|\tilde{\bm{\omega}}_{\bm{k}_t}\|)\, J_0(\tilde{\omega}_{r_t} \|\tilde{\bm{\omega}}_{\bm{\ell}_t}\|)\right] \nonumber\\
&\phantom{=========} \sum_{\underline{j}\in\mathcal{D}(q,4q)}\cum_2\big[Y_{t,c}(\underline{j}):(t,c)\in\nu_1\big]\times \ldots \times \cum_2\big[Y_{t,c}(\underline{j}):(t,c)\in\nu_{2q}\big] \Bigg]\\
&=\Landau\left(\lambda^{q-2(\#\text{restrictions})} \times \left(\frac{a^{2}}{\lambda^{2}}\right)^{q-\#\{ \text{rows with either $\tilde{\bm{\omega}}_{\bm{k}}$ or $\tilde{\bm{\omega}}_{\bm{\ell}}$ (or both) unrestricted}\}}\right),
\end{align*}
where the term $\#\text{restrictions}$ denotes the number of independent restrictions between the rows and columns of Table \eqref{table}, or equivalently the rank of the matrix $K$ in \eqref{matrix_K_sec_int}. 
\end{lemma}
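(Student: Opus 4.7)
The plan is to generalize the calculation behind \eqref{eq12} to an arbitrary partition $\bm\nu=\{\nu_1,\ldots,\nu_{2q}\}$ into pairs, tracking how each power of $\lambda$ and of $a$ corresponds to the two structural quantities of the partition: the rank $r:=\text{rank}(K)$ of the matrix in \eqref{matrix_K_sec_int}, and the number $u$ of rows of Table \eqref{table} in which at least one of $\bm k_t,\bm\ell_t$ is unrestricted. Once the integral representation \eqref{cum_int_2} is reproduced for an arbitrary $\bm\nu$, the $\lambda^{q-2r}$ factor stems from the $B$-integrals and the $\bm m_j$-sums controlled by Lemma \ref{bounds_for_B}(iii)--(iv), while the $(a^2/\lambda^2)^{q-u}$ factor stems from the $r_t$-sums on the $q-u$ restricted rows.

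The first step is to reproduce the integral representation \eqref{cum_int_2} for a general $\bm\nu$. By the product theorem for cumulants, the Gaussianity of $Z$, the Fourier representation of $c$, and the uniform distribution of the locations on $[-\lambda/2,\lambda/2]^2$, each pair-cumulant $\cum_2[Y_{t,c}:(t,c)\in\nu_g]$ reduces to a single integral whose integrand, after the change of variables $\bm u_g=\bm x_g+\tilde{\bm\omega}_{\hat{\bm h}_{2g-1}}$, is $f(\bm u_g-\tilde{\bm\omega}_{\hat{\bm h}_{2g-1}})\,B(\bm u_g)\,B(\bm u_g+\bm\omega_{\hat{\bm h}_{2g}-\hat{\bm h}_{2g-1}})$, with $\hat{\bm h}_j\in\{\pm\bm k_t,\pm\bm\ell_t\}$ determined by $\bm\nu$ via $-\tilde{\bm\omega}_{\bm k}=\tilde{\bm\omega}_{-\bm k-1}$.

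Next, row-reduce $K$ and pick $r$ linearly independent non-trivial rows of \eqref{matrix_K_sec_int}; introduce new summation indices $\bm m_1,\ldots,\bm m_r\in\{-2a,\ldots,2a\}^2$ absorbing these linear combinations. The $2q-r$ trivial groups reduce to $B(\bm u_g)^2$ each integrating to $\Landau(\lambda^2)$, while the $r$ non-trivial groups, by iterated application of Lemma \ref{bounds_for_B}(iii)--(iv), satisfy
\[
\frac{1}{\lambda^{2r}}\sum_{\bm m_1,\ldots,\bm m_r=-2a}^{2a}\int_{\R^{2r}}\prod_{j=1}^{r}\bigl|B(\bm u_j)\,B(\bm u_j+\bm\omega_{\bm m_j+1})\bigr|\,d\bm u_j=\Landau(1).
\]
Combined with the $\lambda^q$ prefactor from $\lambda^q\cum_q$ and the $\lambda^{-4q}$ normalization from $(2\pi\lambda)^{-4q}$, the $B$-integrations contribute an overall factor of $\lambda^{q-2r}$.

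Finally, classify row $t\in\{1,\ldots,q\}$ as unrestricted if at least one of $\bm k_t,\bm\ell_t$ is a free summation variable not absorbed in any $\bm m_j$. For an unrestricted row the free sum $\tfrac{1}{\lambda^2}\sum_{\bm k_t}f(\bm u-\tilde{\bm\omega}_{\bm k_t})J_0(\tilde\omega_{r_t}\|\tilde{\bm\omega}_{\bm k_t}\|)$ is a Riemann sum for the Hankel-type integral $\int f(\bm u-\bm x)J_0(r\|\bm x\|)\,d\bm x$, which decays in $r$ by the smoothness of $f$ imposed in Assumption \ref{assumption_on_Z}; the subsequent $r_t$-sum therefore converges to a bounded limit and contributes only $\Landau(1)$. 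For a restricted row both frequency variables are consumed by the $\bm m_j$'s and no Hankel-type decay is available, so the $r_t$-sum is bounded only by $\tfrac{1}{\lambda}\sum_{r_t=0}^{a-1}\tilde\omega_{r_t}=\Landau(a^2/\lambda^2)$ through $|J_0|\leq 1$ and $\sup|\tilde\omega_{r_t}|=\Landau(a/\lambda)$. Multiplying across all rows yields the factor $(a^2/\lambda^2)^{q-u}$, completing the bound. The main obstacle lies in the $B$-integral step when $K$ has no clean chain structure: one must reorder groups and adjust signs so that the non-trivial $\bm m_j$-shifts fit either Lemma \ref{bounds_for_B}(iv) or a disjoint decomposition handled by iterated part (iii), and indecomposability of $\bm\nu$ is \emph{not} assumed, so the linear-algebraic reduction must be verified uniformly across all partitions into pairs.
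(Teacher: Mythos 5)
Your overall strategy is the same as the paper's: reproduce the integral representation \eqref{cum_int_2} for a general pair-partition, change variables according to $\mathrm{rank}(K)$ from \eqref{matrix_K_sec_int}, bound the $B$-integrations via Lemma \ref{bounds_for_B}, and treat the $r_t$-sums differently on restricted and unrestricted rows. The last step is essentially right, though the precise tool for the unrestricted rows is Lemma \ref{lemma_for_cumulants}, which needs the polynomial covariance decay in Assumption \ref{assumption_on_Z}(i) (so that $\frac{1}{\lambda}\sum_r \tilde\omega_r\,|c(\tilde\omega_r\cos\theta,\tilde\omega_r\sin\theta)|$ stays bounded), not merely smoothness of $f$.

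The genuine gap is in your central display. With exactly one $B$-pair per independent shift, the relevant estimate is Lemma \ref{bounds_for_B}(ii), which for $d=2$ gives, for each $j$ separately,
\begin{align*}
\frac{1}{\lambda^{2}}\sum_{\bm m_j=-2a}^{2a}\int_{\R^{2}}\bigl|B(\bm u_j)\,B(\bm u_j+\bm\omega_{\bm m_j+1})\bigr|\,d\bm u_j=\Landau\bigl((\log a)^{4}\bigr),
\end{align*}
so your product is $\Landau((\log a)^{4r})$, not $\Landau(1)$, and the lemma's conclusion carries no such logarithmic factor. The $\Landau(1)$ bounds in Lemma \ref{bounds_for_B} require extra structure: either each independent shift occurs in \emph{two} $B$-pairs (part (iii), which is exactly what happens in the paper's worked example \eqref{partition_example_sec_int}, where every combination $\bm k_{2i}+\bm k_{2i+1}+1$ appears in two groups), or a tying factor $B(\bm x_t)B\bigl(\bm x_t+\frac{c_t}{\lambda}(\sum_k\bm m_k+\bm d_t)\bigr)$ linking the shifts (part (iv)). Your closing paragraph correctly identifies that forcing an arbitrary pair-partition into one of these two configurations is the crux, but then leaves it unverified, while your display presupposes precisely the isolated-shift configuration that neither part covers. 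To close this you must argue combinatorially that, because each frequency variable occupies exactly one positive and one negative slot of Table \eqref{table}, the nonzero group shifts organize into cycles on the restricted variables, so that within each cycle the shifts are either repeated or expressible through the others in the chain form of part (iv); this is the step the paper compresses into ``it is easy to see,'' and it is the step your write-up must actually supply.
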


We now aim to further specify the exponent of the term $a^{2}/\lambda^{2}$ in the case of indecomposability. Recall that any indecomposable partition of Table \eqref{table} with $G=2q$ groups must evoke at least $q-1$ restrictions. Furthermore note that in case of indecomposability, there cannot be any row of Table \eqref{table} with both $\tilde{\bm{\omega}}_{\bm{k}}$ and $\tilde{\bm{\omega}}_{\bm{\ell}}$ unrestricted.

\begin{lemma} \label{lemma2}
For $q\geq 2$ and any indecomposable partition $\bm{\nu}=\{\nu_1,\ldots,\nu_{2q}\}\in\mathcal{I}(q)$ evoking exactly $q-1$ restrictions in Table \eqref{table}, we have
\begin{align*}
\#\{ \text{rows with either $\tilde{\bm{\omega}}_{\bm{k}}$ or $\tilde{\bm{\omega}}_{\bm{\ell}}$ unrestricted}\} \geq 2.
\end{align*}
\end{lemma}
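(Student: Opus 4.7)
The plan is to establish a bijection between \emph{unrestricted variables} and \emph{self-paired variables} (variables whose two table entries are matched together into one trivial pair), and then argue the claim combinatorially using the structure of an auxiliary multigraph $\Gamma'$ together with indecomposability.

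First, I would observe that a variable $v\in\{\bm{k}_1,\ldots,\bm{k}_q,\bm{\ell}_1,\ldots,\bm{\ell}_q\}$ has a zero column in the matrix $K$ of \eqref{matrix_K_sec_int} if and only if its two entries in Table~\eqref{table}, which carry opposite signs, are paired with each other by $\bm{\nu}$ (producing a trivial equation $+v-v=0$). Call such $v$ self-paired and let $S$ be their number; each variable that is not self-paired appears in exactly two distinct non-trivial pairs of $\bm{\nu}$. Let $\Gamma'$ be the multigraph whose vertices are the $V'=2q-S$ non-self-paired variables and whose $E'=2q-S$ edges are the non-trivial pairs of $\bm{\nu}$. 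Every vertex of $\Gamma'$ then has degree exactly $2$, so $\Gamma'$ decomposes into $c'$ vertex-disjoint cycles of length at least two, whence $V'\geq 2c'$.

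Next, I would show that every cycle of $\Gamma'$ contributes precisely one linear dependency to the restriction system, so that $\mathrm{rank}(K)=E'-c'=(2q-S)-c'$. The key point is that at each vertex of a cycle, the two incident edges use up the two entries of the corresponding variable, which by construction carry opposite signs; hence summing the restriction equations around the cycle telescopes to $0=\text{const}$. Setting $\mathrm{rank}(K)=q-1$ yields $c'=q+1-S$, and combining with $V'\geq 2c'$ forces $S\geq 2$. Verifying this rank computation under the precise sign pattern inherited from the table is the step I expect to require the most care, since a priori one might worry that some cycles produce an additional relation of the form $2v=\text{const}$ or contribute no dependency at all; controlling this uses that the two entries of every single variable always have opposite signs.

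Finally, I would promote $S\geq 2$ to the conclusion that the unrestricted variables are distributed over at least two rows. If $S\geq 3$, this is immediate from pigeonhole, since each row carries only the two variables $\bm{k}_t,\bm{\ell}_t$. If $S=2$, suppose for contradiction that both self-paired variables lie in the same row $t_0$. Then both trivial pairs $\{(t_0,1),(t_0,2)\}$ and $\{(t_0,3),(t_0,4)\}$ belong to $\bm{\nu}$, exhausting all four entries of row $t_0$ and leaving row $t_0$ paired entirely within itself; this makes row $t_0$ disconnected from the remaining rows in the hypergraph associated with $\bm{\nu}$, contradicting the assumption that $\bm{\nu}$ is indecomposable. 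Hence at least two rows contain an unrestricted variable, as claimed.
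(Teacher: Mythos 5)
Your proof is correct, and its skeleton matches the paper's: show that exactly $q-1$ restrictions force at least two unrestricted (self-paired) variables, then use indecomposability to place them in distinct rows. Where you differ is in how the counting step is justified. The paper simply asserts that ``at most $2(q-1)$ of the $2q$ variables can be restricted'' on the grounds that each restriction involves at most two variables, and likewise asserts without elaboration that indecomposability forces the two unrestricted variables into different rows. You instead prove the count exactly: identifying unrestricted variables with self-paired ones, organizing the non-trivial pairs into a $2$-regular multigraph $\Gamma'$, decomposing it into cycles, and verifying via the opposite-sign telescoping that each cycle of length $m$ contributes rank exactly $m-1$, which yields $\mathrm{rank}(K)=(2q-S)-c'$ and hence $S\geq 2$ from $V'\geq 2c'$. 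You also spell out the final step the paper leaves implicit (the $S=2$ same-row case would isolate that row and contradict indecomposability, and $S\geq 3$ is handled by pigeonhole over the two variables per row). The paper's version buys brevity; yours buys a rigorous rank identity that in particular confirms the bound $V'\leq 2\,\mathrm{rank}(K)$ the paper takes for granted, at the cost of the (correctly handled) bookkeeping about signs and possible $2$-cycles in the multigraph.
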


\begin{proof}
By assumption, we have $q-1$ restrictions between the variables $\bm{k}_1,\ldots,\bm{k}_q,\bm{\ell}_1,\ldots,\bm{\ell}_q$. In this case, at most $2(q-1)=2q-2$ of the $2q$ variables $\bm{k}_1,\ldots,\bm{k}_q,\bm{\ell}_1,\ldots,\bm{\ell}_q$ can be restricted, i.e. at least $2$ of the $2q$ variables remain unrestricted. By the indecomposability assumption, the $2$ unrestricted variables must be in different rows of Table \eqref{table}, which yields the claim.
\end{proof}

We will use Lemma \ref{lemma1} and \ref{lemma2} to prove the following result, which provides the order of the cumulant expression corresponding to an indecomposable partition $\bm{\nu}$ with $G=2q$ groups and $i\leq 4q$ different elements: 

\begin{prop} \label{order_q=2_sec_int}
For $q\geq 2$ and any indecomposable partition $\bm{\nu}=\{\nu_1,\ldots,\nu_{2q}\}\in\mathcal{I}(q)$ of Table \eqref{table}, it holds that
\begin{align*}
& \frac{1}{n^{4q}} \sum_{r_1,\ldots,r_q=0}^{a-1} \tilde{\omega}_{r_1} \ldots \tilde{\omega}_{r_q}\, \sum_{\bm{k}_1,\ldots,\bm{k}_q =-a}^{a-1} \sum_{\bm{\ell}_1,\ldots,\bm{\ell}_q=-a}^{a-1} \Bigg[\prod_{t=1}^q \left[ J_0(\tilde{\omega}_{r_t} \|\tilde{\bm{\omega}}_{\bm{k}_t}\|)\, J_0(\tilde{\omega}_{r_t} \|\tilde{\bm{\omega}}_{\bm{\ell}_t}\|)\right] \nonumber\\
&\phantom{==========} \sum_{\underline{j}\in\mathcal{D}(q,i)}\cum_2\big[Y_{t,c}(\underline{j}):(t,c)\in\nu_1\big]\times \ldots \times \cum_2\big[Y_{t,c}(\underline{j}):(t,c)\in\nu_{2q}\big] \Bigg]\\
&=\begin{cases}
\Landau\left(\left(\frac{a^{2}}{\lambda^{3}}\right)^{q-2}\right), &\qquad \text{if } i=4q,\\
\Landau\left(\left(\frac{a^{2}}{\lambda^{3}}\right)^{q-2}\times \frac{\lambda^2}{n}\right), &\qquad \text{if } i\leq 4q-1.
\end{cases}
\end{align*}
\end{prop}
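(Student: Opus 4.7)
The strategy is to derive the claimed order by combining Lemma \ref{lemma1} with a careful accounting of the number of restrictions evoked by an indecomposable partition and the resulting number of unrestricted rows.

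First, I would apply Lemma \ref{lemma1} directly. For any partition $\bm{\nu} = \{\nu_1,\ldots,\nu_{2q}\}$ of Table \eqref{table} with $|\nu_g|=2$ for all $g$, the cumulant expression in the case $i=4q$ is bounded by
\[
\lambda^{q-2R}\,\left(\tfrac{a^{2}}{\lambda^{2}}\right)^{q-U},
\]
where $R$ denotes the number of independent restrictions (i.e.\ the rank of the matrix $K$ in \eqref{matrix_K_sec_int}) and $U$ the number of rows of Table \eqref{table} with at least one of $\tilde{\bm{\omega}}_{\bm{k}_t}, \tilde{\bm{\omega}}_{\bm{\ell}_t}$ unrestricted. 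The heart of the argument is then to minimize this bound over all indecomposable partitions.

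Next I would use the indecomposability assumption to obtain the structural constraints on $(R,U)$. For $G=2q$, indecomposability forces $R \geq q-1$; when $R=q-1$, Lemma \ref{lemma2} gives $U\geq 2$. Substituting $R=q-1$ and $U=2$ into the bound yields
\[
\lambda^{q-2(q-1)}\left(\tfrac{a^{2}}{\lambda^{2}}\right)^{q-2} = \lambda^{2-q}\cdot \frac{a^{2(q-2)}}{\lambda^{2(q-2)}} = \left(\frac{a^{2}}{\lambda^{3}}\right)^{q-2},
\]
which is the required order for $i=4q$. It remains to argue that partitions with additional restrictions $R=q-1+s$ (for $s\geq 1$) cannot produce a larger order. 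Writing the corresponding bound as $\lambda^{-2s}(a^{2}/\lambda^{2})^{2-U'}(a^{2}/\lambda^{3})^{q-2}$ with $U'\in\{0,1,2\}$, one checks that the worst case is $U'=0$, giving ratio $a^{4}/\lambda^{4+2s}$ to the claimed bound. Since Assumption \ref{assumptions_on_a}(iii) forces $a^{2}/\lambda^{3}=o(1)$, one has $a^{4}/\lambda^{6}=o(1)$ and hence $a^{4}/\lambda^{4+2s}=\lambda^{2-2s}\cdot a^{4}/\lambda^{6}=o(1)$ for every $s\geq 1$. Thus the extremal indecomposable configuration truly is $R=q-1$, $U=2$, and the bound $(a^{2}/\lambda^{3})^{q-2}$ is attained.

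For the case $i\leq 4q-1$ I would reproduce verbatim the argument from the proof of Lemma \ref{order_depending_on_number_of_restr}: identifying any additional coincidence $j_k=j_l$ removes one integral over $[-\lambda/2,\lambda/2]^{d}$ (contributing $\lambda^{d}=\lambda^{2}$) while reducing $|\mathcal{D}(q,i)|$ by a factor of $n$, producing an overall gain of $\lambda^{2}/n$; the alternative occurrence of the shrinking factor $a^{2}/n$ (when an entire row loses its spectral-density integral, as in the $G_{22}$-style example in the proof of Lemma \ref{order_depending_on_number_of_restr}) is dominated by $\lambda^{2}/n$ under Assumption \ref{assumptions_on_a}(i),(iii). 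This yields the second branch of the claim. The main obstacle is the bookkeeping in the previous paragraph: the detailed verification that among all indecomposable $(R,U)$-configurations with $R\geq q-1$ and $U\leq \min(2q-R,\,2q)$ the pair $(q-1,2)$ is the one maximizing $\lambda^{q-2R}(a^{2}/\lambda^{2})^{q-U}$ under Assumption \ref{assumptions_on_a}(iii), which requires considering separately whether a newly added restriction converts a previously unrestricted row into a restricted one.
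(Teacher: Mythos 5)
Your proposal is correct and follows essentially the same route as the paper: apply Lemma \ref{lemma1} to get the bound $\lambda^{q-2R}(a^{2}/\lambda^{2})^{q-U}$, use indecomposability and Lemma \ref{lemma2} to identify the extremal configuration $R=q-1$, $U=2$, check that each extra restriction only costs a factor $\lambda^{-2}(a^{2}/\lambda^{2})^{2}=o(1)$ under Assumption \ref{assumptions_on_a}, and handle $i\leq 4q-1$ via the $\lambda^{2}/n$ argument of Lemma \ref{order_depending_on_number_of_restr}. One small wording slip: the factor $a^{2}/n$ arising from further coincidences is not dominated by $\lambda^{2}/n$ (since $a/\lambda\to\infty$); rather it is merely $\Landau(1)$ by Assumption \ref{assumptions_on_a}(i), which is exactly what the paper uses and suffices for the stated second branch.
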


\begin{proof}
We first consider the case $i=4q$. Note that any indecomposable partition of Table \eqref{table} must evoke at least $q-1$ restrictions.
If there are exactly $q-1$ restrictions, then we directly obtain from Lemma \ref{lemma1} and Lemma \ref{lemma2} that the order of the expression on the left hand side is given by
\begin{align*}
\Landau\left(\lambda^{2-q} \times \left(\frac{a^{2}}{\lambda^{2}}\right)^{q-2}\right) = \Landau\left(\left(\frac{a^{2}}{\lambda^{3}}\right)^{q-2}\right).
\end{align*}
Furthermore, note that as soon as we introduce one more restriction, it follows from Lemma \ref{lemma1} that there is an order change of at most
\begin{align*}
\frac{1}{\lambda^2} \times \left(\frac{a^{2}}{\lambda^{2}}\right)^2,
\end{align*}
which converges to $0$ by Assumption \ref{assumptions_on_a}.\\ 
Now let $i\leq 4q-1$, then with the same arguments as in the proof of Lemma \ref{order_depending_on_number_of_restr}, we see that decreasing the value $i$ by $1$ leads to an order change of at most $\lambda^2/n$. This yields the claim.
\end{proof}

We now consider a partition $\bm{\nu}=\{\nu_1,\ldots,\nu_G\}$ of Table \eqref{table} with $|\nu_g|>2$ for some $g\in\{1,\ldots,G\}$. With exactly the same arguments as in the proof of Theorem \ref{asymptotic_normality} [i.e. using the law of total cumulance and the Gaussianity of the locations], each term $\cum_{|\nu_g|}[Y_{t,c}(\underline{j})\, : \, (t,c)\in \nu_g]$ in \eqref{cumulant_formula_general} with $|\nu_g|>2$ is the sum of cumulants of covariances over all subpartitions of $\nu_g$ that are of size $2$, conditioned on the locations $\bm{s}_j$. Since the highest order is again determined by the case of maximally many different elements in $\underline{j}$ (i.e. $i=2q+G$), we thus obtain by the same reasoning as in the proof of Theorem \ref{asymptotic_normality}
\begin{align} \label{order_sec_int}
&\phantom{==}\frac{1}{n^{4q}} \sum_{r_1,\ldots,r_q=0}^{a-1} \tilde{\omega}_{r_1} \ldots \tilde{\omega}_{r_q}\, \sum_{\bm{k}_1,\ldots,\bm{k}_q =-a}^{a-1} \sum_{\bm{\ell}_1,\ldots,\bm{\ell}_q=-a}^{a-1} \Bigg[\prod_{t=1}^q \left[ J_0(\tilde{\omega}_{r_t} \|\tilde{\bm{\omega}}_{\bm{k}_t}\|)\, J_0(\tilde{\omega}_{r_t} \|\tilde{\bm{\omega}}_{\bm{\ell}_t}\|)\right] \nonumber\\
&\phantom{==========} \sum_{i=4}^{2q+G} \sum_{\underline{j}\in\mathcal{D}(q,i)}\cum_{|\nu_1|}\big[Y_{t,c}(\underline{j}):(t,c)\in\nu_1\big]\times \ldots \times \cum_{|\nu_G|}\big[Y_{t,c}(\underline{j}):(t,c)\in\nu_{G}\big] \Bigg]\nonumber\\
&=\Landau\bigg(\lambda^{q+2\big(-(2q+G)+(2q-\#\text{restrictions})+G\big)} \nonumber\\
&\phantom{========}\times \left(\frac{a^{2}}{\lambda^{2}}\right)^{q-\#\{\text{rows with either $\tilde{\bm{\omega}}_{\bm{k}}$ or $\tilde{\bm{\omega}}_{\bm{\ell}}$ (or both) unrestricted}\}}\times \frac{1}{n^{4q-(2q+G)}}\bigg)\nonumber\\
&=\Landau\bigg(\frac{\lambda^{q-2(\#\text{restrictions})}}{n^{2q-G}} \times \left(\frac{a^{2}}{\lambda^{2}}\right)^{q-\#\{\text{rows with either $\tilde{\bm{\omega}}_{\bm{k}}$ or $\tilde{\bm{\omega}}_{\bm{\ell}}$ (or both) unrestricted}\}}\bigg)
\end{align}
[compare to \eqref{general_order_q=2_sec_int}, i.e. the order of the cumulant expression for a partition with $G=2q$ groups and $i=4q$]. 
We now specify the order in \eqref{order_sec_int} for the case of indecomposability. 

\begin{prop} \label{prop_sec_int}
For $q\geq 2$ and any indecomposable partition $\bm{\nu}=\{\nu_1,\ldots,\nu_{G}\}\in\mathcal{I}(q)$ of Table \eqref{table}, we have
\begin{align*}
&\frac{1}{n^{4q}} \sum_{r_1,\ldots,r_q=0}^{a-1} \tilde{\omega}_{r_1} \ldots \tilde{\omega}_{r_q}\, \sum_{\bm{k}_1,\ldots,\bm{k}_q =-a}^{a-1} \sum_{\bm{\ell}_1,\ldots,\bm{\ell}_q=-a}^{a-1} \Bigg[\prod_{t=1}^q \left[ J_0(\tilde{\omega}_{r_t} \|\tilde{\bm{\omega}}_{\bm{k}_t}\|)\, J_0(\tilde{\omega}_{r_t} \|\tilde{\bm{\omega}}_{\bm{\ell}_t}\|)\right] \nonumber\\
&\phantom{=====}\sum_{i=4}^{2q+G} \sum_{\underline{j}\in\mathcal{D}(q,i)}\cum_{|\nu_1|}\big[Y_{t,c}(\underline{j}):(t,c)\in\nu_1\big]\times \ldots \times \cum_{|\nu_G|}\big[Y_{t,c}(\underline{j}):(t,c)\in\nu_{G}\big] \Bigg]\\
&=\begin{cases}
\Landau\left(\left(\frac{\lambda^2}{n}\right)^{q-1} \lambda^{2-q}\right), &\qquad \text{if } G=q+1 \text{ and } \#\text{restrictions}=0,\\
\Landau\left(\left(\frac{\lambda^2}{n}\right)^{q-s} \left(\frac{a^{2}}{\lambda^{3}}\right)^{q-2} \left(\frac{\lambda^{2}}{a^{2}}\right)^{q-s}\right), &\qquad \text{if } G=q+s,\, 2\leq s \leq q \text{ and } \#\text{restrictions}=s-1,\\
\Landau\left(\left(\frac{\lambda^2}{n}\right)^{q-s} \left(\frac{a^2}{\lambda^3}\right)^q\right), &\qquad \text{if } G=q+s,\, 1\leq s \leq q \text{ and } \#\text{restrictions}\geq s,\\
\Landau\left(\lambda^{-q}\right), &\qquad \text{if } G\leq q.
\end{cases}
\end{align*}
In particular, we obtain an order of $\Landau(1)$ for partitions $\bm{\nu}=\{\nu_1,\ldots,\nu_{4}\}\in\mathcal{I}(2)$ evoking exactly $1$ restriction in Table \eqref{table_q=2}, while the expression moreover converges to $0$ in all remaining cases. 
\end{prop}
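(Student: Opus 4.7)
The plan is to derive the four stated orders from the general bound \eqref{order_sec_int},
$$
\Landau\!\left(\frac{\lambda^{q-2r}}{n^{2q-G}}\Big(\frac{a^{2}}{\lambda^{2}}\Big)^{q-u}\right),
$$
where $r$ denotes the number of independent restrictions (the rank of $K$ in \eqref{matrix_K_sec_int}) and $u$ denotes the number of rows of Table \eqref{table} containing at least one unrestricted variable, by combining Lemma \ref{number_of_rest} to bound $r$ from below in terms of $G$ with an indecomposability argument controlling $u$.

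The three routine cases I would dispatch first. For $G\le q$ (fourth case), the factor $n^{-(2q-G)}\le n^{-q}$ together with the trivial bounds $r\ge 0$ and $q-u\le q$ and Assumption \ref{assumptions_on_a}(i) (which gives $a^{2}/n=\Landau(1)$) yields $\Landau((a^{2}/n)^{q}\lambda^{-q})=\Landau(\lambda^{-q})$. For $s=1,r=0$ (first case), no variable is restricted, so $u=q$ and the master bound collapses to $\lambda^{q}/n^{q-1}=(\lambda^{2}/n)^{q-1}\lambda^{2-q}$. For $G=q+s$ with $r\ge s$ (third case), the factor $\lambda^{q-2r}$ is maximized at $r=s$ and $(a^{2}/\lambda^{2})^{q-u}$ is maximized at $u=0$ since $a^{2}/\lambda^{2}\to\infty$; substituting $r=s$, $u=0$ yields $\lambda^{-q-2s}a^{2q}/n^{q-s}=(\lambda^{2}/n)^{q-s}(a^{2}/\lambda^{3})^{q}$.

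The delicate case is $G=q+s$ with $r=s-1$ and $s\ge 2$, where I would establish the refined bound $u\ge q-s+2$, i.e.\ that at most $s-2$ rows of Table \eqref{table} can have both $\tilde{\bm{\omega}}_{\bm{k}_{t}}$ and $\tilde{\bm{\omega}}_{\bm{\ell}_{t}}$ restricted. On numerical grounds, the $s-1$ independent restrictions involve at most $2(s-1)$ of the $2q$ frequency variables, so at most $s-1$ rows could in principle be fully restricted. I claim that saturating this count is incompatible with indecomposability: if exactly $s-1$ rows carried both a restricted $\bm{k}$ and a restricted $\bm{\ell}$, then the entire restriction budget would be exhausted within those $s-1$ rows, and the groups supporting the remaining $q-s+1$ rows would form a subcollection of $\bm{\nu}$ disjoint, in the linear structure of \eqref{matrix_K_sec_int}, from the subcollection generating the restrictions. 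This yields a decomposition of $\bm{\nu}$ in the sense of Theorem~2.3.2 of \cite{brillinger81}, contradicting $\bm{\nu}\in\mathcal{I}(q)$. Hence $u\ge q-s+2$, and inserting $r=s-1$, $u=q-s+2$ into the master bound and rearranging gives exactly $(\lambda^{2}/n)^{q-s}(a^{2}/\lambda^{3})^{q-2}(\lambda^{2}/a^{2})^{q-s}$.

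The main obstacle is formalizing the disconnection claim of the previous paragraph. I would carry this out by associating to $\bm{\nu}$ the bipartite incidence graph whose vertex classes are the $q$ rows of Table \eqref{table} and the $G$ classes of $\bm{\nu}$, with an edge drawn whenever a row contains an element of the given class; the saturation hypothesis then forces this graph to split into the component spanned by the $s-1$ fully restricted rows together with their cross-row classes and the component spanned by the remaining $q-s+1$ rows together with their within-row classes, which directly contradicts indecomposability. Once the four orders are in hand, the concluding assertion (that the rate is $\Landau(1)$ only for $q=2$, $G=3$, $r=1$ and $o(1)$ in every other indecomposable configuration) follows from Assumption \ref{assumptions_on_a} by inspection of the explicit rates, and in combination with Proposition \ref{order_q=2_sec_int} for the pairwise partitions this delivers the cumulant decay \eqref{to_show_sec_int} and thereby the theorem.
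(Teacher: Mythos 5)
Your overall route coincides with the paper's: both proofs start from the master bound \eqref{order_sec_int}, split into the same four cases, and reduce everything to controlling the number of restrictions $r$ and the number $u$ of rows with an unrestricted frequency. Your treatment of the cases $G\le q$, of $G=q+1$ with $r=0$, and of $G=q+s$ with $r\ge s$ matches the paper's and is fine. The problem is the delicate case $G=q+s$, $r=s-1$, $s\ge 2$, where the target inequality $u\ge q-s+2$ is correct but your justification does not hold up. You argue that if $s-1$ rows were fully restricted, the restriction budget would be exhausted inside those rows, so the groups supporting the remaining $q-s+1$ rows would be disconnected from the rest and the partition would decompose. This inference is false: a group can connect two rows of Table \eqref{table} \emph{without} contributing an independent restriction, namely when it contains both signed copies $(t,1),(t,2)$ (or $(t,3),(t,4)$) of every frequency it touches, since the dominant pairing in the law-of-total-cumulance expansion then pairs $+$ with $-$ within each row and adds nothing to the rank of $K$ in \eqref{matrix_K_sec_int}. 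Indeed the paper's own ``typical'' extremal configuration is exactly of this form: one large set covering the first two columns of $q-s+1$ rows glues those rows together while generating zero restrictions. So your bipartite incidence graph need not split under the saturation hypothesis, and the contradiction with indecomposability does not follow. The paper closes this case differently: it fixes the structure of an indecomposable partition with $q+s$ groups and $s-1$ restrictions (one large restriction-free set covering $q-s+1$ rows, pairs elsewhere), observes that the $s-1$ restriction-generating pairs involve at most $2(s-1)$ of the $2s$ frequency variables sitting in the remaining $s$ rows, and concludes that at least one of those rows retains an unrestricted variable beyond the $q-s+1$ already counted. To repair your argument you would need to track which \emph{elements} of the allegedly fully restricted rows are consumed by the restriction-generating sets and show that the leftover elements cannot simultaneously connect to the unrestricted rows, avoid creating a new independent relation, and respect the group count $G=q+s$; the disconnection claim alone does not do this.

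A minor but real slip in your concluding sentence: for $q=2$ the $\Landau(1)$ configuration is $G=4$ groups (i.e.\ $s=2$) with exactly one restriction, as stated in the proposition, not $G=3$; with $G=3$ every indecomposable configuration already gives $o(1)$ under Assumption \ref{assumptions_on_a}.
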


\begin{proof}
We first consider the case where $G=q+s$ for some $s\in\{1,\ldots,q\}$. By the indecomposability assumption and Lemma \ref{number_of_rest}, the number of restrictions in Table \eqref{table} is then lower bounded by $s-1$. 
If there are exactly $s-1$ restrictions, then 
\begin{align} \label{lower_bound_for_unrestricted_rows}
\#\{\text{rows with either $\tilde{\bm{\omega}}_{\bm{k}}$ or $\tilde{\bm{\omega}}_{\bm{\ell}}$ (or both) unrestricted}\} \geq \min\{q-s+2,q\}:
\end{align}
To see this, note that if $q-s+2>q$, then there are no restrictions at all and the left hand side of \eqref{lower_bound_for_unrestricted_rows} equals the number of rows $q$. 
Now, let $q-s+2\leq q$, or equivalently $s>1$. Arguing similarly as in the proof of Proposition 4.1 in \cite{vandelft18}, a typical indecomposable partition with $q+s$ groups and $s-1$ restrictions has one big set covering the first two (or last two) columns and the first $q-s+1$ rows of Table \eqref{table}, and otherwise sets of size $2$ only. Thus, there are no restrictions for either $\tilde{\bm{\omega}}_{\bm{k}}$ or $\tilde{\bm{\omega}}_{\bm{\ell}}$ in $q-s+1$ rows, but $s-1$ restrictions from row $q-s+1$ to row $q$. These $s-1$ restrictions can involve at most $2(s-1)=2s-2$ variables from the set $\{\bm{k}_1,\ldots,\bm{\ell}_q\}$, since they are created by sets of size $2$ only. Therefore, at least $2s-(2s-2)=2$ variables from row $q-s+1$ to row $q$ remain unrestricted. One of them has already been counted in the large set covering $q-s+1$ rows, the other one introduces a further row that contains either unrestricted $\tilde{\bm{\omega}}_{\bm{k}}$ or $\tilde{\bm{\omega}}_{\bm{\ell}}$. The argumentation for other indecomposable partitions with $q+s$ groups evoking exactly $s-1$ restrictions in Table \eqref{table} works similarly, which proves \eqref{lower_bound_for_unrestricted_rows}. The order in \eqref{order_sec_int} in the case of $G=q+s$ groups and $s-1$ restrictions thus amounts to
\begin{align*}
\Landau\left(\frac{\lambda^{q-2(s-1)}}{n^{2q-(q+s)}} \left(\frac{a^{2}}{\lambda^{2}}\right)^{q-\min\{q,q-s+2\}}\right) &= \Landau\left(\left(\frac{\lambda^2}{n}\right)^{q-s} \lambda^{2-q} \left(\frac{a^{2}}{\lambda^{2}}\right)^{\max\{0,s-2\}}\right)\\
&=\begin{cases}
\Landau\left(\left(\frac{\lambda^2}{n}\right)^{q-1} \lambda^{2-q}\right), \quad &\text{if } s=1,\\
\Landau\left(\left(\frac{\lambda^2}{n}\right)^{q-s} \left(\frac{a^{2}}{\lambda^{3}}\right)^{q-2} \left(\frac{a^{2}}{\lambda^{2}}\right)^{s-q}\right), \quad &\text{if } 2\leq s \leq q.
\end{cases}
\end{align*}
If $G=q+s$ for some $s\in\{1,\ldots,q\}$ and $\#\text{restrictions}\geq s$, we directly obtain 
\begin{align*}
&\phantom{=i}\frac{\lambda^{q-2(\#\text{restrictions})}}{n^{2q-G}} \times \left(\frac{a^{2}}{\lambda^{2}}\right)^{q-\#\{\text{rows with either $\tilde{\bm{\omega}}_{\bm{k}}$ or $\tilde{\bm{\omega}}_{\bm{\ell}}$ (or both) unrestricted}\}}\\
&=\Landau\left(\frac{\lambda^{q-2s}}{n^{q-s}} \left(\frac{a^2}{\lambda^2}\right)^q\right) = \Landau\left(\left(\frac{\lambda^2}{n}\right)^{q-s} \left(\frac{a^2}{\lambda^3}\right)^q\right).
\end{align*}
Finally, if $G\leq q$, we can use the rough estimate
\begin{align*}
\frac{\lambda^{q-2(\#\text{restrictions})}}{n^{2q-G}} \times \left(\frac{a^{2}}{\lambda^{2}}\right)^{q-\#\{\text{rows with either $\tilde{\bm{\omega}}_{\bm{k}}$ or $\tilde{\bm{\omega}}_{\bm{\ell}}$ (or both) unrestricted}\}}=\Landau\left(\frac{\lambda^q}{n^q}  \left(\frac{a^2}{\lambda^2}\right)^q\right) = \Landau(\lambda^{-q}),
\end{align*}
where we used $a^2/n=\Landau(1)$ (see Assumption \ref{assumptions_on_a}). This completes the proof of the proposition.
\end{proof}

\begin{corollary} \label{cor_sec_int}
For $q\geq 2$, we have
\begin{align*}
\lambda^q \,\cum_q\big(\hat{D}_{2,\lambda,a}\big) = \Landau\left(\left(\frac{a^{2}}{\lambda^{3}}\right)^{q-2}\right).
\end{align*}
\end{corollary}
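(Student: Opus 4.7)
My plan is to read the statement as a direct consequence of the decomposition \eqref{cumulant_formula_general} together with the case-by-case order bounds already established in Proposition \ref{prop_sec_int}. Indeed, \eqref{cumulant_formula_general} expresses $\lambda^q\cum_q(\hat{D}_{2,\lambda,a})$ as a sum over $\bm{\nu}\in\mathcal{I}(q)$ of exactly the cumulant-partition sums that Proposition \ref{prop_sec_int} bounds. Since $|\mathcal{I}(q)|$ depends only on $q$ (not on $\lambda$, $a$, or $n$), it suffices to take the maximum of the four order estimates in Proposition \ref{prop_sec_int} and show that each is $\Landau((a^2/\lambda^3)^{q-2})$ under Assumptions~\ref{assumption_on_sampling_scheme} and~\ref{assumptions_on_a}. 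That reduces the proof to a finite check.

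Next I would identify the dominant case. In Proposition~\ref{prop_sec_int}, taking $G=q+s$ with $s=q$ (so $G=2q$) and $\#\text{restrictions}=s-1=q-1$ gives an order of exactly $(a^2/\lambda^3)^{q-2}$, so this saturates the claim; it remains only to check all other cases are asymptotically smaller. For $G=q+s$ with $2\le s<q$ and $s-1$ restrictions, the ratio to $(a^2/\lambda^3)^{q-2}$ equals $(\lambda^4/(na^2))^{q-s}=((\lambda^2/n)(\lambda/a)^2)^{q-s}$, which tends to zero since $\lambda^2/n\to 0$ by Assumption~\ref{assumption_on_sampling_scheme}(ii) and $\lambda/a\to 0$ by Assumption~\ref{assumptions_on_a}(i). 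For $G=q+1$ with no restriction, the ratio is $(\lambda^2/n)^{q-1}(\lambda/a)^{2q-4}$, which is $o(1)$ by the same two facts. For $G=q+s$ with at least $s$ restrictions, the ratio reduces to $(\lambda^2/n)^{q-s}(a^2/\lambda^3)^2$, and $a^2/\lambda^3\to 0$ by Assumption~\ref{assumptions_on_a}(iii) together with $\lambda^2/n=\Landau(1)$ makes this $o(1)$. Finally, the catch-all bound $\Landau(\lambda^{-q})$ for $G\le q$ compared to $(a^2/\lambda^3)^{q-2}$ gives the relative factor $\lambda^{-2}(\lambda/a)^{2q-4}$, which again tends to zero.

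Combining the above, the triangle inequality over the finitely many indecomposable partitions yields
\begin{equation*}
\lambda^q\cum_q(\hat{D}_{2,\lambda,a})=\Landau\!\left(\Big(\tfrac{a^2}{\lambda^3}\Big)^{q-2}\right)
\end{equation*}
as $a,\lambda,n\to\infty$, completing the proof. The only delicate point is bookkeeping: I must confirm that the dominant term in Proposition~\ref{prop_sec_int}, arising from $G=2q$ with minimal rank $q-1$, is genuinely attained by at least one indecomposable partition (which is clear: take any partition pairing columns $(c_1,c_2)=(1,2)$ within each row for the $\bm{k}$-frequencies and linking rows through the $\bm{\ell}$-frequencies, analogous to $\bm{\nu}_1^\ast$ from the variance computation generalised to $q$ rows), so that the bound is sharp and the statement cannot be improved to $o((a^2/\lambda^3)^{q-2})$. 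All other work is mechanical verification of the asymptotic inequalities from the assumptions on $a$, $\lambda$, and $n$.
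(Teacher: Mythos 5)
Your proposal is correct and follows essentially the same route as the paper: both reduce the claim via the decomposition \eqref{cumulant_formula_general} to a finite maximum over indecomposable partitions, invoke the case-by-case bounds of Proposition \ref{prop_sec_int}, and verify that each case is $\Landau((a^2/\lambda^3)^{q-2})$ using $\lambda^2/n=\Landau(1)$, $\lambda/a\to 0$ and $a^2/\lambda^3=o(1)$. Your additional remark on sharpness of the dominant $G=2q$ case is not needed for the $\Landau$ statement but is harmless.
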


\begin{proof}
Note that \eqref{cumulant_formula_general} implies
\begin{align*}
\lambda^q \,\cum_q\big(\hat{D}_{2,\lambda,a}\big) &=\Landau\Bigg(\max_{\bm{\nu}=\{\nu_1,\ldots,\nu_G\}\in\mathcal{I}(q)} \Bigg\{\frac{1}{n^{4q}} \sum_{r_1,\ldots,r_q=0}^{a-1} \tilde{\omega}_{r_1} \ldots \tilde{\omega}_{r_q}\, \sum_{\bm{k}_1,\ldots,\bm{k}_q =-a}^{a-1} \sum_{\bm{\ell}_1,\ldots,\bm{\ell}_q=-a}^{a-1}\\
&\phantom{====} \Bigg[\prod_{t=1}^q \left[ J_0(\tilde{\omega}_{r_t} \|\tilde{\bm{\omega}}_{\bm{k}_t}\|)\, J_0(\tilde{\omega}_{r_t} \|\tilde{\bm{\omega}}_{\bm{\ell}_t}\|)\right] \sum_{i=4}^{2q+G} \sum_{\underline{j}\in\mathcal{D}(q,i)}\nonumber\\
&\phantom{====}\cum_{|\nu_1|}\big[Y_{t,c}(\underline{j}):(t,c)\in\nu_1\big]\times \ldots \times \cum_{|\nu_G|}\big[Y_{t,c}(\underline{j}):(t,c)\in\nu_{G}\big] \Bigg]\Bigg\}\Bigg),
\end{align*}
and recall the result from Proposition \ref{prop_sec_int}. For $1\leq s \leq q$, Assumptions \ref{assumption_on_sampling_scheme} and \ref{assumptions_on_a} give $$\left(\frac{\lambda^2}{n}\right)^{q-s}=\Landau(1) \qquad \text{and} \qquad \left(\frac{\lambda^2}{a^2}\right)^{q-s}=\Landau(1).$$ Moreover, Assumption \ref{assumptions_on_a} yields
\begin{align*}
\lambda^{2-q} = \Landau\left(\left(\frac{a^{2}}{\lambda^{3}}\right)^{q-2}\right)
\end{align*}
for $q\geq 2$.
Finally recalling the assumption $a^2/\lambda^3=o(1)$ yields the claim.
\end{proof}

Corollary \ref{cor_sec_int} yields \eqref{to_show_sec_int} and therefore the claim of Theorem \ref{asymptotic_normality_secint}, since 
\begin{align*}
\left(\frac{a^{2}}{\lambda^{3}}\right)^{q-2} = \begin{cases}
\Landau(1), &\qquad q=2,\\
o(1), &\qquad q>2.
\end{cases}
\end{align*}
\qed 

For the case $q=2$, we need a more precise result in order to prove  the second part of Theorem \ref{expect_theo_sec_int}:

\begin{corollary} \label{order_q=2_sec_int_for_variance}
Let $\bm{\nu}=\{\nu_1,\ldots,\nu_G\}\in\mathcal{I}(2)$ be arbitrary. Then, 
\begin{align*}
&\frac{1}{n^8} \sum_{r_1,r_2=0}^{a-1} \tilde{\omega}_{r_1}  \tilde{\omega}_{r_2}\, \sum_{\bm{k}_1,\bm{k}_2 =-a}^{a-1} \sum_{\bm{\ell}_1,\bm{\ell}_2=-a}^{a-1} \Bigg[\prod_{t=1}^2 \left[ J_0(\tilde{\omega}_{r_t} \|\tilde{\bm{\omega}}_{\bm{k}_t}\|)\, J_0(\tilde{\omega}_{r_t} \|\tilde{\bm{\omega}}_{\bm{\ell}_t}\|)\right] \nonumber\\
&\phantom{======} \sum_{\underline{j}\in\mathcal{D}(2,i)}\cum_{|\nu_1|}\big[Y_{t,c}(\underline{j}):(t,c)\in\nu_1\big]\times \ldots \times \cum_{|\nu_G|}\big[Y_{t,c}(\underline{j}):(t,c)\in\nu_{G}\big] \Bigg]\\
&=\begin{cases}
\Landau(1), &\qquad \text{if } G=4 \text{ and } i=8,\\
\Landau\left(\frac{\lambda^2}{n}\right), &\qquad \text{if } G=4 \text{ and } i<8,\\
\Landau\left(\frac{\lambda^2}{n}\right), &\qquad \text{if } G=3,\\
\Landau\left(\frac{1}{\lambda^2}\right), &\qquad \text{if } G\leq 2.
\end{cases}
\end{align*}
\end{corollary}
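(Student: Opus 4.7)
The plan is to obtain this corollary as a direct specialization of Proposition \ref{prop_sec_int} to the case $q=2$, with an additional bookkeeping step accounting for the reduction from $i = 2q+G$ to $i < 2q+G$. Since the argument required for this reduction has already been carried out in full generality in the proof of Proposition \ref{order_q=2_sec_int} (and in its time-series analogue Lemma \ref{order_depending_on_number_of_restr}), namely that replacing two distinct indices in $\underline{j}$ by a single index introduces at most one additional factor of $\lambda^{2}/n$, I would first cite that observation and then handle each value of $G$ in turn.

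First I would treat $G=4$. Here $|\nu_g|=2$ for all four groups, so the cumulant expression falls exactly under Proposition \ref{order_q=2_sec_int}. Indecomposability forces the number of restrictions to be at least $1$. When it equals $1$, Lemma \ref{lemma2} and Lemma \ref{lemma1} give the bound $\Landau((a^2/\lambda^3)^{q-2}) = \Landau(1)$ for $i=4q=8$, and an additional factor $\lambda^2/n$ for $i<8$. When the number of restrictions is $\geq 2$, Lemma \ref{lemma1} together with $a^2/\lambda^3 = o(1)$ from Assumption \ref{assumptions_on_a} gives an order $o(1)$, which is swallowed by the two claimed bounds $\Landau(1)$ and $\Landau(\lambda^2/n)$ respectively.

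Next I would treat $G=3$, which corresponds to $s=1$ in Proposition \ref{prop_sec_int}. The first branch of that proposition yields $\Landau((\lambda^2/n)^{q-1} \lambda^{2-q}) = \Landau(\lambda^2/n)$ in the case of $0$ restrictions. For $\#\text{restrictions}\geq 1$ we fall into the third branch, which produces $\Landau((\lambda^2/n)(a^2/\lambda^3)^2)$; since $a^2/\lambda^3 \to 0$ this is $o(\lambda^2/n)$ and hence again $\Landau(\lambda^2/n)$. For $i< 2q+G = 7$ the extra factor $\lambda^2/n$ enters, but $\lambda^2/n = o(1)$ under Assumption \ref{assumption_on_sampling_scheme}, so we remain within $\Landau(\lambda^2/n)$. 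Finally, $G\leq 2$ falls into the last branch of Proposition \ref{prop_sec_int}, giving $\Landau(\lambda^{-q}) = \Landau(1/\lambda^2)$ when $i=2q+G$; further decreasing $i$ only multiplies by bounded factors $\lambda^2/n$ coming from Assumption \ref{assumptions_on_a}, so the $\Landau(1/\lambda^2)$ bound persists.

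The only genuine subtlety in this plan is verifying that in each case the extra factors arising from $i < 2q+G$ (or from additional restrictions beyond the minimum forced by indecomposability) can indeed be absorbed into the stated orders using Assumption \ref{assumptions_on_a}, in particular the two key relations $a^2/\lambda^3 \to 0$ and $\lambda^2/n \to 0$ (together with $a^2/n = \Landau(1)$). This is routine arithmetic given the general cumulant bound in Proposition \ref{prop_sec_int}, so I would present the proof as a short case analysis with explicit invocation of the relevant branches, rather than redoing any of the cumulant calculations.
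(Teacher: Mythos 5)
Your proposal is correct and follows essentially the same route as the paper: the published proof simply cites Proposition \ref{order_q=2_sec_int} for the two $G=4$ branches and Proposition \ref{prop_sec_int} (specialized to $q=2$, i.e. $s=1$ for $G=3$ and the $G\leq q$ branch for $G\leq 2$) for the rest, which is exactly the case analysis you lay out. Your additional arithmetic checks — absorbing the factors $a^2/\lambda^3=o(1)$ and $\lambda^2/n=o(1)$ into the stated orders — are the same routine verifications implicit in the paper's two-line proof.
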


\begin{proof}
The orders for the cases $G=4$, $i=8$ and $G=4$, $i<8$ have been derived in Proposition \ref{order_q=2_sec_int}, while the statements for $G<4$ follow from Proposition \ref{prop_sec_int}.
\end{proof}

\subsection{Proof of Proposition \ref{approx_D2}}

Note that 
\begin{align*}
(2\pi)^3 (D_{2,2}-D_{2,2,\lambda,a}) &=I_1+I_2,
\end{align*}
where
\begin{align*}
I_1&:=\int_{2\pi a/\lambda}^{\infty} \left(\int_{0}^{2\pi} \int_{\R^2} f(\bm{x}) \exp\Bigg(\im \, r \begin{pmatrix}
\cos \theta\\
\sin \theta
\end{pmatrix}^T \bm{x} \Bigg) d\bm{x}\, d\theta \right)^2 r \, dr
\end{align*}
and
\begin{align*}
I_2&:=\int_{0}^{2\pi a/\lambda} \Bigg[\left(\int_{0}^{2\pi} \int_{\R^2} f(\bm{x}) \exp\Bigg(\im \, r \begin{pmatrix}
\cos \theta\\
\sin \theta
\end{pmatrix}^T \bm{x} \Bigg) d\bm{x}\, d\theta \right)^2 \\
&\phantom{======}-\left( \int_{0}^{2\pi} \int_{[-2\pi a/\lambda,2\pi a/\lambda]^2} f(\bm{x}) \exp\Bigg(\im \, r \begin{pmatrix}
\cos \theta\\
\sin \theta
\end{pmatrix}^T \bm{x} \Bigg) d\bm{x}\, d\theta \right)^2 \Bigg] \,r \, dr.
\end{align*}
Now, due to the Cauchy-Schwarz inequality, 
\begin{align*}
|I_1|
&\leq 2\pi \int_{2\pi a/\lambda}^{\infty} \int_{0}^{2\pi} \left|\int_{\R^2} f(\bm{x}) \exp\Bigg(\im \, r \begin{pmatrix}
\cos \theta\\
\sin \theta
\end{pmatrix}^T \bm{x} \Bigg) d\bm{x} \right|^2 d\theta\, r\, dr 
\eqsim  \int_{\|\bm{y}\|\geq 2\pi a/\lambda} |c(\bm{y})|^2 \, d\bm{y}.
\end{align*}
Since by assumption $c(\bm{y})=\Landau(\|\bm{y}\|^{-(2+\varepsilon)})$ as $\|\bm{y}\|\rightarrow\infty$, we have 
\begin{align*}
\int_{\|\bm{y}\|\geq 2\pi a/\lambda} \left|c(\bm{y})\right|^2 \, d\bm{y}  
&\lesssim \left(\frac{\lambda}{a}\right)^{2+\varepsilon} \int_{\|\bm{y}\|\geq 2\pi a/\lambda} \|\bm{y}\|^{2+\varepsilon}\, |c(\bm{y})|^2\, d\bm{y}\\
&= \Landau\left(\left(\frac{\lambda}{a}\right)^{2+\varepsilon} \int_{2\pi a/\lambda}^\infty \frac{1}{r^{1+\varepsilon}}\, dr\right)=\Landau\left(\frac{\lambda^{2+2\varepsilon}}{a^{2+2\varepsilon}}\right) 
\end{align*}
as $a,\lambda\to\infty$, which is the order of the term $I_1$.
Using $a^2-b^2=(a+b)(a-b)$, we moreover obtain
\begin{align*}
|I_2| \leq I_{21}(a,\lambda)+I_{22}(a,\lambda),
\end{align*}
where
\begin{align*}
I_{21}(a,\lambda)&:=\int_{0}^{2\pi a/\lambda} \int_{0}^{2\pi} \Bigg| \int_{\R^2} f(\bm{x}) \exp \Bigg(\im \, r \begin{pmatrix}
\cos \theta\\
\sin \theta
\end{pmatrix}^T \bm{x} \Bigg) d\bm{x}\\
 &\phantom{===============}+ \int_{[-2\pi a/\lambda,2\pi a/\lambda]^2} f(\bm{x}) \exp \Bigg(\im \, r \begin{pmatrix}
\cos \theta\\
\sin \theta
\end{pmatrix}^T \bm{x} \Bigg) d\bm{x}\Bigg| \, r \,d\theta \, dr
\end{align*}
and
\begin{align*}
I_{22}(a,\lambda)&:=\sup_{0\leq r \leq 2\pi a/\lambda} \int_{0}^{2\pi} \Bigg| \int_{\R^2} f(\bm{x}) \exp \Bigg(\im \, r \begin{pmatrix}
\cos \theta\\
\sin \theta
\end{pmatrix}^T \bm{x} \Bigg) d\bm{x}\\
&\phantom{============}- \int_{[-2\pi a/\lambda,2\pi a/\lambda]^2} f(\bm{x}) \exp \Bigg(\im \, r \begin{pmatrix}
\cos \theta\\
\sin \theta
\end{pmatrix}^T \bm{x} \Bigg) d\bm{x}\Bigg|\, d\theta.
\end{align*}
First, note that
\begin{align*}
I_{21}(a,\lambda)
&\lesssim  \int_{\|\bm{y}\|\leq 2\pi a/\lambda} \left|c(\bm{y})\right|\, d\bm{y} + \int_{\|\bm{y}\|\leq 2\pi a/\lambda} \left|\int_{[-2\pi a/\lambda,2\pi a/\lambda]^2} f(\bm{x}) \exp \big(\im \bm{y}^T \bm{x} \big) d\bm{x}\right|\,d\bm{y}\\
&=\Landau\left(1+\frac{a^{2-\delta}}{\lambda^{2-\delta}}\right),
\end{align*}
where we used the fact that the covariance function $c$ is absolutely integrable and Corollary \ref{cor_of_D2_finite}. Moreover, we have
\begin{align*}
I_{22}(a,\lambda)
&\lesssim \sup_{\|\bm{y}\|\leq 2\pi a/\lambda} \left|\int_{([-2\pi a/\lambda,2\pi a/\lambda]^2)^c} f(\bm{x}) \exp\big(\im \bm{y}^T \bm{x}\big) \, d\bm{x} \right|\\
&\leq \int_{([-2\pi a/\lambda,2\pi a/\lambda]^2)^c} \beta_{1+\delta}(\bm{x})\, d\bm{x}
=\Landau\left(\int_{2\pi a/\lambda}^{\infty} \beta_{1+\delta}(x)\, dx\right) = \Landau\left(\int_{2\pi a/\lambda}^{\infty} \frac{1}{x^{1+\delta}}\, dx \right) = \Landau\left(\frac{\lambda^\delta}{a^\delta}\right).
\end{align*}
This yields 
\begin{align*}
I_2 = \Landau\left(\frac{\lambda^\delta}{a^\delta}\left[1+\frac{a^{2-\delta}}{\lambda^{2-\delta}}\right]\right) = \Landau\left(\frac{\lambda^\delta}{a^\delta}+\frac{\lambda^{2\delta-2}}{a^{2\delta-2}}\right).
\end{align*}
\qed

\section{Proof of Theorem \ref{asymptotic_normality_M}}
\setcounter{equation}{0}

We need to show the following statements: 
\begin{align} \label{erstens}
\lambda\,\E\big[\hat{M}_{\lambda,a}-M\big] = o(1), 
\end{align}
\begin{align} \label{zweitens}
\frac{\lambda^2}{\tau_{\lambda,a}^2}\Var\big[\hat{M}_{\lambda,a}\big] = 1 + o(1), 
\end{align}
and
\begin{align} \label{drittens}
\lambda^q \, \cum_q\big[\hat{M}_{\lambda,a}\big] = o(1) \qquad \text{for } q \geq 3,
\end{align}
as $\lambda,a,n\rightarrow\infty$ in the sense of Assumption \ref{assumption_on_sampling_scheme} (iii) and Assumption \ref{assumptions_on_a}.
Statement \eqref{erstens} directly follows from the definition $\hat{M}_{\lambda,a}=\hat{D}_{1,\lambda,a}-\hat{D}_{2,\lambda,a}$, Lemma \ref{two_integrals}, and Theorem  \ref{corr_first_int} and  \ref{corr_sec_int}. Concerning statement \eqref{zweitens}, we obtain from Theorems \ref{expectation_theo} and \ref{expect_theo_sec_int} and since the estimators $\hat{D}_{1,\lambda,a}$ and $\hat{D}_{2,\lambda,a}$ are real valued. This follows from the representation 
\begin{align*}
\lambda^2 \, \Var[\hat{M}_{\lambda,a}]&=\lambda^2 \, \Var[\hat{D}_{1,\lambda,a}]+\lambda^2 \, \Var[\hat{D}_{2,\lambda,a}]-2\lambda^2 \Cov[\hat{D}_{1,\lambda,a},\hat{D}_{2,\lambda,a}]\\
&=\tau_{1,\lambda,a}^2+\tau_{2,\lambda,a}^2-2\lambda^2 \cum[\hat{D}_{1,\lambda,a},\hat{D}_{2,\lambda,a}]+o(1).
\end{align*}
It thus suffices to show that
\begin{align} \label{to_show_covariance}
\lambda^2\, \cum[\hat{D}_{1,\lambda,a},\hat{D}_{2,\lambda,a}]&= \tau_{1,2,\lambda,a} + o(1),
\end{align}
where $\tau_{1,2,\lambda,a}$ is defined in \eqref{covariance_of_M}.
Concerning statement \eqref{drittens}, note that by the multilinearity property of cumulants we have
\begin{align*}
\cum_q[\hat{M}_{\lambda,a}]= \cum_q[\hat{D}_{1,\lambda,a}-\hat{D}_{2,\lambda,a}]=\sum_{p=0}^q (-1)^p \, \binom{q}{p} \,\cum_{q-p,p} [\hat{D}_{1,\lambda,a},\hat{D}_{2,\lambda,a}],
\end{align*}
where $\cum_{q-p,p} [\hat{D}_{1,\lambda,a},\hat{D}_{2,\lambda,a}]$ denotes the joint cumulant
\begin{align*}
\cum_q[\underbrace{\hat{D}_{1,\lambda,a},\ldots,\hat{D}_{1,\lambda,a}}_{q-p \text{ times}},\underbrace{\hat{D}_{2,\lambda,a},\ldots,\hat{D}_{2,\lambda,a}}_{p \text{ times}}].
\end{align*}
In order to prove \eqref{drittens}, we thus need to show that
\begin{align} \label{to_show_cum}
\lambda^q \,\cum_{q-p,p} [\hat{D}_{1,\lambda,a},\hat{D}_{2,\lambda,a}] \rightarrow 0 \qquad \text{for } q \geq 3 \text{ and } 0\leq p\leq q.
\end{align}
Note that the cases $p=0$ and $p=q$ have already been treated in the proofs of Theorems \ref{asymptotic_normality} and \ref{asymptotic_normality_secint}. We now show statements \eqref{to_show_covariance} and \eqref{to_show_cum}.\\


\underline{Proof of statement \eqref{to_show_covariance}}\\
Using the same arguments as in the proofs of Theorems \ref{expectation_theo} and \ref{expect_theo_sec_int}, we obtain
\begin{align*}
\lambda^2\, \cum[\hat{D}_{1,\lambda,a},\hat{D}_{2,\lambda,a}]
&=\frac{\pi (2\pi\lambda)^4}{n^8 H(\bm{0})^4} \sum_{\bm{g},\bm{k},\bm{\ell}=-a}^{a-1} \frac{2\pi}{\lambda} \sum_{r=0}^{a-1} \tilde{\omega}_r \, J_0(\tilde{\omega}_r \|\tilde{\bm{\omega}}_{\bm{k}}\|) J_0(\tilde{\omega}_r \|\tilde{\bm{\omega}}_{\bm{\ell}}\|)\\ 
&\phantom{=}\sum_{\bm{\nu}=\{\nu_1,\ldots,\nu_4\}\in\mathcal{I}} \sum_{\underline{j}\in\mathcal{D}(8)} \cum_2\big[Y_{t,c}(\underline{j}): (t,c)\in\nu_1 \big] \times \ldots\\
&\phantom{======iiii=} \times  \cum_2\big[Y_{t,c}(\underline{j}): (t,c)\in\nu_4 \big]+ \Landau\left(\frac{\lambda^2}{n}+\frac{1}{\lambda^2}\right),
\end{align*}
where $\mathcal{I}$ is the set of indecomposable partitions of Table \eqref{table_q=2}, the set $\mathcal{D}(i)$ is defined in \eqref{set_D(i)}, and
\begin{align*}
Y_{t,c}(\underline{j}):=\begin{cases}
h\left(\frac{\bm{s}_{j_{c+4(t-1)}}}{\lambda}\right) Z(\bm{s}_{j_{c+4(t-1)}}) \exp\big((-1)^{c+1}\, \im\, \bm{s}_{j_{c+4(t-1)}}^T \tilde{\bm{\omega}}_{\bm{g}}\big), &\qquad \text{if } t=1,\\
h\left(\frac{\bm{s}_{j_{c+4(t-1)}}}{\lambda}\right) Z(\bm{s}_{j_{c+4(t-1)}}) \exp\big((-1)^{c+1}\, \im\, \bm{s}_{j_{c+4(t-1)}}^T \tilde{\bm{\omega}}_{\bm{k}}\big), &\qquad \text{if } t=2 \text{ and } c=1,2,\\
h\left(\frac{\bm{s}_{j_{c+4(t-1)}}}{\lambda}\right) Z(\bm{s}_{j_{c+4(t-1)}}) \exp\big((-1)^{c+1}\, \im\, \bm{s}_{j_{c+4(t-1)}}^T \tilde{\bm{\omega}}_{\bm{\ell}}\big), &\qquad \text{if } t=2 \text{ and } c=3,4.
\end{cases}
\end{align*}
For the above equality, we used that the order of the cumulant expression is determined by partitions consisting of $G=4$ groups and maximally many different elements in $\underline{j}$, while it is $\Landau(\lambda^2/n+1/\lambda^2)=o(1)$ otherwise [see the proof of statement \eqref{to_show_cum}]. We need to consider each indecomposable partition $\bm{\nu}\in\mathcal{I}$ consisting of $G=4$ groups separately and, for illustration, deal with the partition 
\begin{align*}
\bm{\nu}^\ast=\Big\{\{(1,1),(1,2)\},\{(2,1),(2,2)\},\{(1,3),(2,4)\},\{(1,4),(2,3)\}\Big\}.
\end{align*}
Then, with the same arguments as in the proofs of 
the second part Theorems \ref{expectation_theo} and \ref{expect_theo_sec_int},
\begin{align*}
&\phantom{=i}\frac{1}{n^8} \sum_{\underline{j}\in\mathcal{D}(8)} \cum_2\big[Y_{t,c}(\underline{j}): (t,c)\in\nu_1^\ast \big] \times \ldots \times  \cum_2\big[Y_{t,c}(\underline{j}): (t,c)\in\nu_4^\ast \big]\\
&=\frac{c_{2,n}}{\lambda^{16} (2\pi)^8} \int_{\R^8} B(\bm{s})^2 B(\bm{t})^2 B(\bm{u}) B(\bm{u}+\bm{\omega}_{\bm{\ell}-\bm{g}}) B(\bm{v}) B(\bm{v}+\bm{\omega}_{\bm{g}-\bm{\ell}})\\
&\phantom{=======i=}  f(\bm{s}-\tilde{\bm{\omega}}_{\bm{g}}) f(\bm{t}+\tilde{\bm{\omega}}_{\bm{k}}) f(\bm{u}-\tilde{\bm{\omega}}_{\bm{g}}) f(\bm{v}+\tilde{\bm{\omega}}_{\bm{g}})\, d\bm{s}d\bm{t}d\bm{u}d\bm{v}.
\end{align*}
Therefore, by making the index shift $\bm{\ell}-\bm{g}=\bm{m}$, the cumulant expression corresponding to the partition $\bm{\nu}^\ast$ equals
\begin{align} \label{eq_with_K_i}
&\phantom{=iii}
\frac{c_{2,n}\, \pi}{(2\pi\lambda)^8 H(\bm{0})^4} \sum_{\bm{m}=-2a+1}^{2a-1} \int_{\R^8} \Big[ B(\bm{s})^2 B(\bm{t})^2 B(\bm{u}) B(\bm{u}+\bm{\omega_m}) B(\bm{v}) B(\bm{v}-\bm{\omega_m})\nonumber\\
&\phantom{==============}  \left(\frac{2\pi}{\lambda}\right)^4 \sum_{\bm{k}=-a}^{a-1} \sum_{\bm{g}=\max(-a,-a-\bm{m})}^{\min(a-1,a-1-\bm{m})}  f(\bm{s}-\tilde{\bm{\omega}}_{\bm{g}}) f(\bm{t}+\tilde{\bm{\omega}}_{\bm{k}}) f(\bm{u}-\tilde{\bm{\omega}}_{\bm{g}}) f(\bm{v}+\tilde{\bm{\omega}}_{\bm{g}})\nonumber\\
&\phantom{==============}  \frac{2\pi}{\lambda} \sum_{r=0}^{a-1}\, \tilde{\omega}_r\, J_0(\tilde{\omega}_r \|\tilde{\bm{\omega}}_{\bm{k}}\|) J_0(\tilde{\omega}_r \|\tilde{\bm{\omega}}_{\bm{m}+\bm{g}}\|)\Big] \, d\bm{s}  d\bm{t}d\bm{u}d\bm{v}\nonumber\\
&=\pi \sum_{\bm{m}=-2a+1}^{2a-1} \frac{H(\bm{m})^2}{H(\bm{0})^2} \int_{0}^{2\pi a/\lambda} r \Bigg[\int_{[-2\pi a/\lambda,2\pi a/\lambda]^2} f(\bm{y}) J_0(r\|\bm{y}\|)\, d\bm{y}\nonumber\\
&\phantom{=================}  \int_{2\pi\max(-a,-a-\bm{m})/\lambda}^{2\pi\min(a,a-\bm{m})/\lambda} f(\bm{x})^3 J_0(r\|\bm{x}\|) \,d\bm{x} \Bigg] \, dr + \sum_{i=1}^4 K_i(a,\lambda)+\Landau\left(\frac{1}{n}\right),
\end{align}
where 
\begin{align*}
K_1(a,\lambda)&:=\frac{c_{2,n}\, \pi}{(2\pi\lambda)^8 H(\bm{0})^4} \sum_{\bm{m}=-2a+1}^{2a-1} \int_{\R^8} \bigg[ B(\bm{s})^2 B(\bm{t})^2 B(\bm{u}) B(\bm{u}+\bm{\omega_m}) B(\bm{v}) B(\bm{v}-\bm{\omega_m})\\
&\phantom{=========}  \left(\frac{2\pi}{\lambda}\right)^4 \sum_{\bm{k}=-a}^{a-1} \sum_{\bm{g}=\max(-a,-a-\bm{m})}^{\min(a-1,a-1-\bm{m})}  f(\bm{s}-\tilde{\bm{\omega}}_{\bm{g}}) f(\bm{t}+\tilde{\bm{\omega}}_{\bm{k}}) f(\bm{u}-\tilde{\bm{\omega}}_{\bm{g}}) f(\bm{v}+\tilde{\bm{\omega}}_{\bm{g}})\\
&\phantom{=========}\Bigg(\frac{2\pi}{\lambda} \sum_{r=0}^{a-1} \tilde{\omega}_r J_0(\tilde{\omega}_r \|\tilde{\bm{\omega}}_{\bm{k}}\|) J_0(\tilde{\omega}_r \|\tilde{\bm{\omega}}_{\bm{m}+\bm{g}}\|)\\
&\phantom{================}- \int_{0}^{2\pi a/\lambda} r\, J_0(r \|\tilde{\bm{\omega}}_{\bm{k}}\|) J_0(r \|\tilde{\bm{\omega}}_{\bm{m}+\bm{g}}\|)\, dr\Bigg)\bigg] \, d\bm{s}  d\bm{t}d\bm{u}d\bm{v},
\end{align*}
\begin{align*}
K_2(a,\lambda)&:= \frac{c_{2,n}\, \pi}{(2\pi\lambda)^8 H(\bm{0})^4} \sum_{\bm{m}=-2a+1}^{2a-1} \int_{\R^8}  B(\bm{s})^2 B(\bm{t})^2 B(\bm{u}) B(\bm{u}+\bm{\omega_m}) B(\bm{v}) B(\bm{v}-\bm{\omega_m})\\
& \Bigg(\int_{0}^{2\pi a/\lambda} r \Bigg[\left(\frac{2\pi}{\lambda}\right)^4 \sum_{\bm{k}=-a}^{a-1} \sum_{\bm{g}=\max(-a,-a-\bm{m})}^{\min(a-1,a-1-\bm{m})} f(\bm{s}-\tilde{\bm{\omega}}_{\bm{g}})  f(\bm{t}+\tilde{\bm{\omega}}_{\bm{k}})\\
&\phantom{=====================}  f(\bm{u}-\tilde{\bm{\omega}}_{\bm{g}})  f(\bm{v}+\tilde{\bm{\omega}}_{\bm{g}}) J_0(r\|\tilde{\bm{\omega}}_{\bm{k}}\|) J_0(r\|\tilde{\bm{\omega}}_{\bm{m}+\bm{g}}\|)\\
&\phantom{===}-\int_{[-2\pi a/\lambda,2\pi a/\lambda]^2} \int_{2\pi\max(-a,-a-\bm{m})/\lambda}^{2\pi\min(a,a-\bm{m})/\lambda} f(\bm{s}-\bm{x}) f(\bm{t}+\bm{y})\\
&\phantom{===========} f(\bm{u}-\bm{x}) f(\bm{v}+\bm{x}) J_0(r\|\bm{y}\|) J_0(r\|\bm{\omega_m}+\bm{x}\|) \,d\bm{x} d\bm{y}\Bigg]\,dr \Bigg) \, d\bm{s}d\bm{t}d\bm{u}d\bm{v},
\end{align*}
\begin{align*}
K_3(a,\lambda)&:=\frac{c_{2,n}\, \pi}{(2\pi\lambda)^8 H(\bm{0})^4} \sum_{\bm{m}=-2a+1}^{2a-1} \int_{\R^8}  B(\bm{s})^2 B(\bm{t})^2 B(\bm{u}) B(\bm{u}+\bm{\omega_m}) B(\bm{v}) B(\bm{v}-\bm{\omega_m})\\
&\Bigg(\int_{0}^{2\pi a/\lambda} r \Bigg[\int_{[-2\pi a/\lambda,2\pi a/\lambda]^2} \int_{2\pi\max(-a,-a-\bm{m})/\lambda}^{2\pi\min(a,a-\bm{m})/\lambda} \Big\{f(\bm{s}-\bm{x}) f(\bm{t}+\bm{y}) f(\bm{u}-\bm{x}) f(\bm{v}+\bm{x}) \\
&\phantom{========}-f(\bm{x})^3 f(\bm{y}) \Big\} J_0(r\|\bm{y}\|) J_0(r\|\bm{\omega_m}+\bm{x}\|)\, d\bm{x} d\bm{y} \Bigg] dr \Bigg) d\bm{s} d\bm{t} d\bm{u} d\bm{v},
\end{align*}
and
\begin{align*}
K_4(a,\lambda)&:= c_{2,n}\, \pi \sum_{\bm{m}=-2a+1}^{2a-1} \frac{H(\bm{m})^2}{H(\bm{0})^2} \int_{0}^{2\pi a/\lambda} r \Bigg[ \int_{[-2\pi a/\lambda,2\pi a/\lambda]^2} f(\bm{y}) J_0(r\|\bm{y}\|) d\bm{y}\\
&\phantom{==========} \int_{2\pi\max(-a,-a-\bm{m})/\lambda}^{2\pi\min(a,a-\bm{m})/\lambda} f^3(\bm{x}) \Big\{J_0(r\|\bm{\omega_m}+\bm{x}\|) - J_0(r\|\bm{x}\|) \Big\}\, d\bm{x} \Bigg] \, dr.
\end{align*}
Here, we also used Corollary \ref{cor_of_D2_finite} for $\delta>2$ and Lemma \ref{convolution_of_h}.
{{We now derive the orders of the terms $K_i(a,\lambda)$ for $i=1,\ldots,4$. We start with $K_1(a,\lambda)$ and assume that $a>\lambda\geq 1$. Lemma \ref{Riemann_radius} implies
\begin{align*}
|K_1(a,\lambda)| 
&\lesssim\frac{a^2}{\lambda^4}\times\frac{1}{\lambda^8} \sum_{\bm{m}=-2a+1}^{2a-1} \int_{\R^8} \Bigg(\big| B(\bm{s})^2 B(\bm{t})^2 B(\bm{u}) B(\bm{u}+\bm{\omega_m}) B(\bm{v}) B(\bm{v}-\bm{\omega_m})\big|\\
&\phantom{========} \left(\frac{2\pi}{\lambda}\right)^4 \sum_{\bm{k}=-a}^{a-1} \sum_{\bm{g}=\max(-a,-a-\bm{m})}^{\min(a-1,a-1-\bm{m})}  f(\bm{s}-\tilde{\bm{\omega}}_{\bm{g}}) f(\bm{t}+\tilde{\bm{\omega}}_{\bm{k}}) f(\bm{u}-\tilde{\bm{\omega}}_{\bm{g}}) f(\bm{v}+\tilde{\bm{\omega}}_{\bm{g}})\\
&\phantom{===========0==}\big(\tilde{\omega}_{k_1}^2+\tilde{\omega}_{k_2}^2+\tilde{\omega}_{m_1+g_1}^2+\tilde{\omega}_{m_2+g_2}^2\big)\Bigg) \, d\bm{s} d\bm{t} d\bm{u} d\bm{v}+ \Landau\left(\frac{a^2}{\lambda^4}\right)
\end{align*}
as $a,\lambda\to\infty$, where we moreover used $\sup_{|k|\leq a} |\tilde{\omega}_k|=\Landau(a/\lambda)$, $\frac{1}{\lambda^2} \sum_{\bm{k}=-a}^{a-1} f(\bm{t}+\tilde{\bm{\omega}}_{\bm{k}})=\Landau(1)$, and Lemmas \ref{bounds_for_B} and \ref{orders_of_B}. 
One can easily verify that the first of these two terms can (ignoring constants) be bounded by
\begin{align*}
&\phantom{\lesssim} \frac{a^2}{\lambda^4} \times \frac{1}{\lambda^2} \int_{\R^2} \left( B(\bm{t})^2 \, \left(\frac{2\pi}{\lambda}\right)^2 \sum_{\bm{k}=-a}^{a-1} f(\bm{t}+\tilde{\bm{\omega}}_{\bm{k}}) \big(\tilde{\omega}_{k_1}^2+\tilde{\omega}_{k_2}^2\big) \right) d\bm{t} \\
&+\frac{a^2}{\lambda^4} \times \frac{1}{\lambda^4} \sum_{\bm{m}=-2a+1}^{2a-1} \int_{\R^4} \Bigg( \big|B(\bm{u}) B(\bm{u}-\bm{\omega_m}) B(\bm{v}) B(\bm{v}+\bm{\omega_m})\big|\\
&\phantom{================} \left(\frac{2\pi}{\lambda}\right)^2 \sum_{\bm{g}=\max(-a,-a-\bm{m})}^{\min(a-1,a-1-\bm{m})} f(\bm{u}-\tilde{\bm{\omega}}_{\bm{m}+\bm{g}}) (\tilde{\omega}_{m_1+g_1}^2 + \tilde{\omega}_{m_2+g_2}^2)\Bigg)\, d\bm{u} d\bm{v}.
\end{align*}
Using Lemma \ref{f_times_x^2}, it thus follows that
\begin{align*}
K_1(a,\lambda)=\Landau\left(\frac{a^2}{\lambda^4}\left[1+\frac{a^2}{\lambda^4}+\frac{(\log a)^4}{\lambda^2}\right]\right)=\Landau\left(\frac{a^2}{\lambda^4}\right)
\end{align*}
as $a,\lambda\to\infty$. We now deal with the term $K_2(a,\lambda)$ and it is easy to see that
\begin{align*}
|K_2(a,\lambda)|
&\lesssim \frac{a^2}{\lambda^2} \times \frac{1}{\lambda^8} \sum_{\bm{m}=-2a+1}^{2a-1} \int_{\R^8} \big|B(\bm{s})^2 B(\bm{t})^2 B(\bm{u}) B(\bm{u}+\bm{\omega_m}) B(\bm{v}) B(\bm{v}-\bm{\omega_m})\big|\\
&\Bigg( \sup_{\|\bm{y}\|\leq 2\pi a/\lambda} \Bigg|\left(\frac{2\pi}{\lambda}\right)^2 \sum_{\bm{g}=\max(-a,-a-\bm{m})}^{\min(a-1,a-1-\bm{m})} f(\bm{s}-\tilde{\bm{\omega}}_{\bm{g}}) f(\bm{u}-\tilde{\bm{\omega}}_{\bm{g}}) f(\bm{v}+\tilde{\bm{\omega}}_{\bm{g}}) \exp\big(\im \bm{y}^T (\bm{\omega_m}+\tilde{\bm{\omega}}_{\bm{g}})\big)\\
&\phantom{=}-\int_{2\pi\max(-a,-a-\bm{m})/\lambda}^{2\pi\min(a,a-\bm{m})/\lambda} f(\bm{s}-\bm{x}) f(\bm{u}-\bm{x}) f(\bm{v}+\bm{x}) \exp\big(\im \bm{y}^T (\bm{\omega_m}+\bm{x})\big)\, d\bm{x} \Bigg| \Bigg)\, d\bm{s} d\bm{t} d\bm{u} d\bm{v}\\
&+\frac{a^2}{\lambda^2} \times \frac{1}{\lambda^8} \sum_{\bm{m}=-2a+1}^{2a-1} \int_{\R^8} \big|B(\bm{s})^2 B(\bm{t})^2 B(\bm{u}) B(\bm{u}+\bm{\omega_m}) B(\bm{v}) B(\bm{v}-\bm{\omega_m})\big|\\
&\phantom{\lesssim}\Bigg(\sup_{\|\bm{y}\|\leq 2\pi a/\lambda} \Bigg| \left(\frac{2\pi}{\lambda}\right)^2 \sum_{\bm{k}=-a}^{a-1} f(\bm{t}+\tilde{\bm{\omega}}_{\bm{k}}) \exp(\im \bm{y}^T \tilde{\bm{\omega}}_{\bm{k}}) \\
&\phantom{==============}- \int_{[-2\pi a/\lambda,2\pi a/\lambda]^2} f(\bm{t}+\bm{x}) \exp(\im \bm{y}^T \bm{x})\, d\bm{x}\Bigg|\Bigg)\, d\bm{s} d\bm{t} d\bm{u} d\bm{v}\\
&=\Landau\left(\frac{a^4}{\lambda^6}\right),
\end{align*}
where we used Lemma \ref{Riemann_f_sec_int} for the second step.
Next, note that
\begin{align*}
|K_3(a,\lambda)| &\lesssim \frac{1}{\lambda^6} \sum_{\bm{m}=-2a+1}^{2a-1} \Bigg[\int_{\R^6} B(\bm{s})^2 B(\bm{t})^2 \big|B(\bm{u}) B(\bm{u}+\bm{\omega_m})\big| \big|D_{\bm{m}}^{(1)}(\bm{s},\bm{t},\bm{u})\big|\, d\bm{s} d\bm{t} d\bm{u}\\
&\phantom{\lesssim \frac{1}{\lambda^6} \sum_{\bm{m}=-2a+1}^{2a-1}}+ \int_{\R^6} B(\bm{s})^2 B(\bm{t})^2 \big|B(\bm{v}) B(\bm{v}-\bm{\omega_m})\big| \big|D_{\bm{m}}^{(2)}(\bm{s},\bm{t})\big|\, d\bm{s} d\bm{t} d\bm{v}\\
&\phantom{\lesssim \frac{1}{\lambda^6} \sum_{\bm{m}=-2a+1}^{2a-1}}+ \int_{\R^6} B(\bm{s})^2 \big|B(\bm{u}) B(\bm{u}+\bm{\omega_m}) B(\bm{v}) B(\bm{v}-\bm{\omega_m})\big| \big|D_{\bm{m}}^{(3)}(\bm{s})\big|\, d\bm{s} d\bm{u} d\bm{v}\\
&\phantom{\lesssim \frac{1}{\lambda^6} \sum_{\bm{m}=-2a+1}^{2a-1}}+ \int_{\R^6} B(\bm{t})^2 \big|B(\bm{u}) B(\bm{u}+\bm{\omega_m}) B(\bm{v}) B(\bm{v}-\bm{\omega_m})\big| \big|D_{\bm{m}}^{(4)}\big|\, d\bm{t} d\bm{u} d\bm{v}\Bigg],
\end{align*}
where
\begin{align*}
D_{\bm{m}}^{(1)}(\bm{s},\bm{t},\bm{u})&:=\frac{1}{\lambda^2} \int_{\R^2} B(\bm{v}) B(\bm{v}-\bm{\omega_m}) \Bigg(\int_{0}^{2\pi a/\lambda} r \Bigg[\int_{[-2\pi a/\lambda,2\pi a/\lambda]^2} \int_{2\pi\max(-a,-a-\bm{m})/\lambda}^{2\pi\min(a,a-\bm{m})/\lambda}\\
&\phantom{================} f(\bm{s}-\bm{x}) f(\bm{t}+\bm{y}) f(\bm{u}-\bm{x}) \big[f(\bm{v}+\bm{x})-f(\bm{x})\big]\\
&\phantom{================} J_0(r\|\bm{y}\|) J_0(r\|\bm{\omega_m}+\bm{x}\|)\, d\bm{x} d\bm{y} \Bigg] \,dr \Bigg)\, d\bm{v},
\end{align*}
\begin{align*}
D_{\bm{m}}^{(2)}(\bm{s},\bm{t})&:=\frac{1}{\lambda^2} \int_{\R^2} B(\bm{u}) B(\bm{u}+\bm{\omega_m}) \Bigg(\int_{0}^{2\pi a/\lambda} r \Bigg[\int_{[-2\pi a/\lambda,2\pi a/\lambda]^2} \int_{2\pi\max(-a,-a-\bm{m})/\lambda}^{2\pi\min(a,a-\bm{m})/\lambda}\\
&\phantom{==} f(\bm{x}) f(\bm{s}-\bm{x}) f(\bm{t}+\bm{y}) \big[f(\bm{u}-\bm{x})-f(\bm{x})\big] J_0(r\|\bm{y}\|) J_0(r\|\bm{\omega_m}+\bm{x}\|)\, d\bm{x} d\bm{y} \Bigg] dr \Bigg) d\bm{u},
\end{align*}
\begin{align*}
D_{\bm{m}}^{(3)}(\bm{s})&:=\frac{1}{\lambda^2} \int_{\R^2} B(\bm{t})^2  \Bigg(\int_{0}^{2\pi a/\lambda} r \Bigg[\int_{[-2\pi a/\lambda,2\pi a/\lambda]^2} \int_{2\pi\max(-a,-a-\bm{m})/\lambda}^{2\pi\min(a,a-\bm{m})/\lambda}\\
&\phantom{===} f(\bm{x})^2 f(\bm{s}-\bm{x}) \big[f(\bm{t}+\bm{y})-f(\bm{y})\big] J_0(r\|\bm{y}\|) J_0(r\|\bm{\omega_m}+\bm{x}\|)\, d\bm{x} d\bm{y} \Bigg] dr \Bigg) d\bm{t},
\end{align*}
and 
\begin{align*}
D_{\bm{m}}^{(4)}&:=\frac{1}{\lambda^2} \int_{\R^2} B(\bm{s})^2  \Bigg(\int_{0}^{2\pi a/\lambda} r \Bigg[\int_{[-2\pi a/\lambda,2\pi a/\lambda]^2} \int_{2\pi\max(-a,-a-\bm{m})/\lambda}^{2\pi\min(a,a-\bm{m})/\lambda}\\
&\phantom{===} f(\bm{x})^2 f(\bm{y}) \big[f(\bm{s}-\bm{x})-f(\bm{x})\big] J_0(r\|\bm{y}\|) J_0(r\|\bm{\omega_m}+\bm{x}\|)\, d\bm{x} d\bm{y} \Bigg] dr \Bigg) d\bm{s}.
\end{align*}
Proposition \ref{Lemma F.2_SSR} (ii) and similar arguments as in the proof of the second part of Theorem \ref{expect_theo_sec_int} yield
\begin{align*}
\max_{i=1,2} \max_{|m_i|\leq 2a-1} \sup_{\bm{s},\bm{u}\in\R^2}|D_{\bm{m}}^{(1)}(\bm{s},\bm{t},\bm{u})|
\lesssim \frac{1}{\lambda}  \int_{\|\bm{x}\|\leq 2\pi a/\lambda} \Bigg|\int_{[-2\pi a/\lambda,2\pi a/\lambda]^2} f(\bm{t}+\bm{y}) \exp(\im \bm{x}^T \bm{y})\, d\bm{y} \Bigg|\, d\bm{x},
\end{align*}
and in the same way 
\begin{align*}
\max_{i=1,2} \max_{|m_i|\leq 2a-1} \sup_{\bm{s}\in\R^2}|D_{\bm{m}}^{(2)}(\bm{s},\bm{t})|\lesssim \frac{1}{\lambda}\int_{\|\bm{x}\|\leq 2\pi a/\lambda} \Bigg|\int_{[-2\pi a/\lambda,2\pi a/\lambda]^2} f(\bm{t}+\bm{y}) \exp(\im \bm{x}^T \bm{y})\, d\bm{y} \Bigg|\, d\bm{x}.
\end{align*}
Moreover, using Proposition \ref{Lemma F.2_SSR} (i), we obtain
\begin{align*}
&\phantom{\lesssim i}\max_{i=1,2} \max_{|m_i|\leq 2a-1} \sup_{\bm{s}\in\R^2} |D_{\bm{m}}^{(3)}(\bm{s})|= \Landau\left(\frac{a^4}{\lambda^6}\right),
\end{align*}
and Proposition \ref{Lemma F.2_SSR} (ii) and Corollary \ref{cor_of_D2_finite} for $\delta>2$ yield
\begin{align*}
\max_{i=1,2} \max_{|m_i|\leq 2a-1} |D_{\bm{m}}^{(4)}|
=\Landau\left(\frac{1}{\lambda^2}\right).
\end{align*}
Therefore, we get
\begin{align*}
|K_3(a,\lambda)|
&=\Landau\left(\frac{(\log a)^4}{\lambda}  + \frac{a^4}{\lambda^6}\right)
\end{align*}
as $a,\lambda\to\infty$, where we applied Lemma \ref{cov_fct_integrable}, part (ii), Lemma \ref{bounds_for_B}, parts (ii) and (iii), and Lemma \ref{orders_of_B}, part (i). Furthermore, we used Assumption \ref{assumptions_on_a} and the fact that $\delta>2$.
It remains to deal with the term $K_4(a,\lambda)$. Using Corollary \ref{cor_of_D2_finite} for $\delta>2$ and exactly the same arguments as in the proof of Lemma \ref{bound_variance_secint}, we finally obtain
\begin{align*}
|K_4(a,\lambda)|
&= \Landau\Bigg(  \sum_{\bm{m}=-2a+1}^{2a-1} H(\bm{m})^2  \sup_{\|\bm{z}\|\leq 2\pi a/\lambda} \int_{[-2\pi a/\lambda,2\pi a/\lambda]^2} f(\bm{x})^3 \big|\exp(\im \bm{z}^T (\bm{\omega_m}+\bm{x}))-\exp(\im \bm{z}^T \bm{x})\big|\, d\bm{x}\Bigg)\\
&=\Landau\left(\frac{(\log \lambda)^2\, a}{\lambda^2} + \frac{1}{(\log\lambda)^3}\right)
\end{align*}
as $a,\lambda\to\infty$. }}
Therefore, the cumulant expression corresponding to the partition $\bm{\nu}^\ast$ amounts to
\begin{align*}
&\pi \sum_{\bm{m}=-2a+1}^{2a-1} \frac{H(\bm{m})^2}{H(\bm{0})^2} \int_{0}^{2\pi a/\lambda} r \Bigg[\int_{[-2\pi a/\lambda,2\pi a/\lambda]^2} f(\bm{y}) J_0(r\|\bm{y}\|)\, d\bm{y}\nonumber\\
&\phantom{=} \int_{2\pi\max(-a,-a-\bm{m})/\lambda}^{2\pi\min(a,a-\bm{m})/\lambda} f(\bm{x})^3 J_0(r\|\bm{x}\|) \,d\bm{x} \Bigg] \, dr +\Landau\left(\frac{a^4}{\lambda^6}+\frac{(\log a)^4}{\lambda}+\frac{(\log \lambda)^2\, a}{\lambda^2} + \frac{1}{(\log\lambda)^3}+\frac{1}{n}\right)
\end{align*}
as $a,\lambda\to\infty$, and the error term converges to $0$ by Assumption \ref{assumptions_on_a}.

In order to calculate $\lambda^2\, \cum[\hat{D}_{1,\lambda,a},\hat{D}_{2,\lambda,a}]$, we need to sum over all indecomposable partitions of Table \eqref{table_q=2} and can restrict ourselves to those partitions consisting of $4$ groups and $8$ different elements in $\underline{j}$ (see above). Moreover, the order corresponding to a certain partition depends on the number of independent restrictions in Table \eqref{table_q=2}, and it is maximal for partitions evoking exactly $1$ restriction (see Proposition \ref{prop_mix_cum} below). 
There are exactly $16$ indecomposable partitions with exactly $1$ restriction, namely 
\begin{enumerate}
\item four partitions restricting $\bm{g}$ and $\bm{\ell}$ via $\bm{g}-\bm{\ell}$
\item four partitions restricting $\bm{g}$ and $\bm{\ell}$ via $\bm{g}+\bm{\ell}$
\item four partitions restricting $\bm{g}$ and $\bm{k}$ via $\bm{g}-\bm{k}$
\item four partitions restricting $\bm{g}$ and $\bm{k}$ via $\bm{g}+\bm{k}$.
\end{enumerate}
The partition $\bm{\nu}^\ast$ from above represented the case where $\bm{g}$ and $\bm{\ell}$ are restricted via $\bm{g}-\bm{\ell}$ and it is obvious that the other three partitions restricting $\bm{g}$ and $\bm{\ell}$ via $\bm{g}-\bm{\ell}$ yield the same cumulant expression. In the same way, the partitions restricting $\bm{g}$ and $\bm{k}$ via $\bm{g}-\bm{k}$ yield the same expression since in this case, only the variables $\bm{k}$ and $\bm{\ell}$ are interchanged. Moreover, it is easy to show that the cases 2. and 4. also lead to the same cumulant expression. Since restricting two additional elements leads to an order change of at most $\Landau(a^4/\lambda^6)$ (see Proposition \ref{prop_mix_cum} below), this yields
\begin{align*}
\lambda^2 \cum[\hat{D}_{1,\lambda,a},\hat{D}_{2,\lambda,a}]&= 16\pi \sum_{\bm{m}=-2a+1}^{2a-1} \frac{H(\bm{m})^2}{H(\bm{0})^2} \int_{0}^{2\pi a/\lambda} r \Bigg[\int_{[-2\pi a/\lambda,2\pi a/\lambda]^2} f(\bm{y}) J_0(r\|\bm{y}\|)\, d\bm{y}\nonumber\\
&\phantom{========} \int_{2\pi\max(-a,-a-\bm{m})/\lambda}^{2\pi\min(a,a-\bm{m})/\lambda} f(\bm{x})^3 J_0(r\|\bm{x}\|) \,d\bm{x} \Bigg] \, dr\\
& +\Landau\left(\frac{a^4}{\lambda^6}+\frac{(\log a)^4}{\lambda}+\frac{(\log \lambda)^2\, a}{\lambda^2} + \frac{1}{(\log\lambda)^3}+\frac{\lambda^2}{n}\right),
\end{align*}
which proves \eqref{to_show_covariance}. \\

\underline{Proof of statement \eqref{to_show_cum}}
\\
For simplicity, we write $\hat{D}_1=\hat{D}_{1,\lambda,a}$, $\hat{D}_2=\hat{D}_{2,\lambda,a}$. 
We assume $1\leq p\leq q-1$ and define the sets $\mathcal{D}(q)$ and $\mathcal{D}(q,i)$ as in \eqref{set_D(q)} and \eqref{set_D(q,i)}, respectively. Moreover, let $\mathcal{I}(q)$ denote the set of indecomposable partitions of Table \eqref{table}. We then obtain
\begin{align*}
\lambda^q \,\cum_{q-p,p}[\hat{D}_{1},\hat{D}_{2}]
&\eqsim \lambda^{3q-2p} \sum_{\bm{g}_1,\ldots,\bm{g}_{q-p}=-a}^{a-1} \sum_{\bm{k}_1,\ldots,\bm{k}_p=-a}^{a-1} \sum_{\bm{\ell}_1,\ldots,\bm{\ell}_p=-a}^{a-1} \frac{1}{\lambda^p} \sum_{r_1,\ldots,r_p=0}^{a-1} \tilde{\omega}_{r_1} \ldots\tilde{\omega}_{r_p} \\
&\phantom{========} J_0(\tilde{\omega}_{r_1} \|\tilde{\bm{\omega}}_{\bm{k}_1}\|)J_0(\tilde{\omega}_{r_1} \|\tilde{\bm{\omega}}_{\bm{\ell}_1}\|)\ldots J_0(\tilde{\omega}_{r_p} \|\tilde{\bm{\omega}}_{\bm{k}_p}\|)J_0(\tilde{\omega}_{r_p} \|\tilde{\bm{\omega}}_{\bm{\ell}_p}\|)\\
&\phantom{==} \frac{1}{n^{4q}} \sum_{\substack{\bm{\nu}=\{\nu_1,\ldots,\nu_G\}\\\in\mathcal{I}(q)}} \sum_{i=4}^{2q+G} \sum_{\underline{j}\in\mathcal{D}(q,i)} \cum_{|\nu_1|}[Y_{t,c}(\underline{j}) \, : \, (t,c)\in\nu_1] \times \ldots\\
&\phantom{========ii=====} \times \cum_{|\nu_G|}[Y_{t,c}(\underline{j}) \, : \, (t,c)\in\nu_G],
\end{align*}
where for $t=1,\ldots,q$ and $c=1,\ldots,4$
\begin{align*}
Y_{t,c}(\underline{j}):=\begin{cases}
h\left(\frac{\bm{s}_{j_{c+4(t-1)}}}{\lambda}\right) Z(\bm{s}_{j_{c+4(t-1)}}) \exp\big((-1)^{c+1}\, \im \, \bm{s}_{j_{c+4(t-1)}}^T \tilde{\bm{\omega}}_{\bm{g}_t}\big), &\quad\text{if } t=1,\ldots,q-p,\\
h\left(\frac{\bm{s}_{j_{c+4(t-1)}}}{\lambda}\right) Z(\bm{s}_{j_{c+4(t-1)}}) \exp\big((-1)^{c+1}\, \im \, \bm{s}_{j_{c+4(t-1)}}^T \tilde{\bm{\omega}}_{\bm{k}_{t-q+p}}\big), &\quad\text{if } t=q-p+1,\ldots,q\\
&\phantom{\quad} \text{and } c=1,2,\\
h\left(\frac{\bm{s}_{j_{c+4(t-1)}}}{\lambda}\right) Z(\bm{s}_{j_{c+4(t-1)}}) \exp\big((-1)^{c+1}\, \im \, \bm{s}_{j_{c+4(t-1)}}^T \tilde{\bm{\omega}}_{\bm{\ell}_{t-q+p}}\big), &\quad\text{if } t=q-p+1,\ldots,q \\
&\phantom{\quad} \text{and } c=3,4.
\end{cases}
\end{align*}
As in the proofs of Theorems \ref{asymptotic_normality} and \ref{asymptotic_normality_secint}, we consider an indecomposable partition $\bm{\nu}^\ast=\{\nu_1^\ast,\ldots,\nu_{2q}^\ast\}$ with $i=4q$ different elements in $\underline{j}$ for illustration. Using the same arguments as in the proofs of Theorems \ref{asymptotic_normality} and \ref{asymptotic_normality_secint}, the respective cumulant term then equals
\begin{align*}
&\lambda^{3q-2p} \sum_{\bm{g}_1,\ldots,\bm{g}_{q-p}=-a}^{a-1} \sum_{\substack{\bm{k}_1,\ldots,\bm{k}_p=-a\\\bm{\ell}_1,\ldots,\bm{\ell}_p=-a}}^{a-1} \frac{1}{\lambda^p} \sum_{r_1,\ldots,r_p=0}^{a-1} \tilde{\omega}_{r_1} \ldots\tilde{\omega}_{r_p}  J_0(\tilde{\omega}_{r_1} \|\tilde{\bm{\omega}}_{\bm{k}_1}\|)J_0(\tilde{\omega}_{r_1} \|\tilde{\bm{\omega}}_{\bm{\ell}_1}\|)\times\ldots\\
&\phantom{=========================}\times  J_0(\tilde{\omega}_{r_p} \|\tilde{\bm{\omega}}_{\bm{k}_p}\|)J_0(\tilde{\omega}_{r_p} \|\tilde{\bm{\omega}}_{\bm{\ell}_p}\|)\\
&\phantom{======}\frac{1}{\lambda^{8q}} \int_{\R^{4q}} \prod_{g=1}^{2q} f(\bm{u}_g-\tilde{\bm{\omega}}_{\hat{\bm{h}}_{2g-1}}) B(\bm{u}_g) B(\bm{u}_g+\bm{\omega}_{\hat{\bm{h}}_{2g}-\hat{\bm{h}}_{2g-1}})\, d\bm{u}_g,
\end{align*}
where the $2$-dimensional vectors $\hat{\bm{h}}_j$ are taken from the set
\begin{align*}
\{\bm{k}_1,-\bm{k}_1-1,\ldots,\bm{k}_p,-\bm{k}_p-1,\bm{\ell}_1,-\bm{\ell}_1-1,\ldots,\bm{\ell}_p,-\bm{\ell}_p-1, \bm{g}_1,-\bm{g}_1-1,\ldots,\bm{g}_{q-p},-\bm{g}_{q-p}-1\}
\end{align*}
and are determined by the partition $\bm{\nu}^\ast$ under consideration. Using analogous arguments as for the calculations of $\lambda^q\, \cum_q[\hat{D}_1]$ and $\lambda^q \,\cum_q[\hat{D}_2]$, the order of the above expression is given by
\begin{align*}
&\Landau\Bigg(\lambda^{3q-2p+2\big(-\sum_{g=1}^{G^\ast} \#\{\text{different indices in $\underline{j}$ belonging to } \nu_g^{\ast} \}+(\#\{\text{sums over } \bm{g}_1,\ldots,\bm{\ell}_p\}-\#\text{restrictions}) + G^\ast\big)}\nonumber\\
&\phantom{============}\times \left(\frac{a^{2}}{\lambda^{2}}\right)^{p- \#\{ \text{rows belonging to $\hat{D}_2$ with either $\tilde{\bm{\omega}}_{\bm{k}}$ or $\tilde{\bm{\omega}}_{\bm{\ell}}$ unrestricted}\}}\Bigg)
\end{align*}
(compare to \eqref{general_order_q=2_sec_int} and note that due to indecomposability, there cannot be any row in Table \eqref{table} which belongs to the estimator $\hat{D}_2$ and has both $\tilde{\bm{\omega}}_{\bm{k}}$ and $\tilde{\bm{\omega}}_{\bm{\ell}}$ unrestricted). Note that since we assumed $G^\ast=2q$ and $i=4q$, we have
\begin{align*}
&\phantom{=i}\#\{\text{sums over } \bm{g}_1,\ldots,\bm{\ell}_p\}-\sum_{g=1}^{G^\ast} \#\{\text{different indices in $\underline{j}$ belonging to } \nu_g^{\ast} \}+G^\ast \\
&=q-p+2p -4q+2q = p-q.
\end{align*}
Therefore, the above order simplifies to
\begin{align*}
\Landau\left(\lambda^{q-2(\#\text{restrictions})} \times  \left(\frac{a^{2}}{\lambda^{2}}\right)^{p- \#\{ \text{rows belonging to $\hat{D}_2$ with either $\tilde{\bm{\omega}}_{\bm{k}}$ or $\tilde{\bm{\omega}}_{\bm{\ell}}$ unrestricted}\}}\right)
\end{align*}
(compare to Lemma \ref{lemma1}).
As in the proof of Theorem \ref{asymptotic_normality_secint}, it suffices to consider the case of $q-1$ restrictions (the minimal number of restrictions ensuring indecomposability in the case $G=2q$), since when imposing a further restriction, there is an order change of at most $\Landau(a^4/\lambda^6)=o(1)$.
In the case of $q-1$ restrictions, using the same arguments as in the proof of Theorem \ref{asymptotic_normality_secint}, we obtain 
\begin{align*}
p- \#\{ \text{rows belonging to $\hat{D}_2$ with either $\tilde{\bm{\omega}}_{\bm{k}}$ or $\tilde{\bm{\omega}}_{\bm{\ell}}$ unrestricted}\} \leq q-2:
\end{align*}
If $p\leq q-2$, this follows immediately, while for $p=q-1$, we follow the same argumentation as in the proof of Lemma \ref{lemma2} to see that at least one of the $q-1$ rows of Table \eqref{table} corresponding to the estimator $\hat{D}_2$ must have either $\tilde{\bm{\omega}}_{\bm{k}}$ or $\tilde{\bm{\omega}}_{\bm{\ell}}$ unrestricted (otherwise there would be more than $q-1$ restrictions in total, or the partition would not be indecomposable). Since setting two elements equal evokes an order change of $\lambda^2/n$, for the case of $G=2q$ groups we obtain the same order as in Proposition \ref{order_q=2_sec_int}:

\begin{prop} \label{order_q=2_mixed_cum}
For $q\geq 2$ and any indecomposable partition $\bm{\nu}=\{\nu_1,\ldots,\nu_{2q}\}\in\mathcal{I}(q)$ of Table \eqref{table}, we have
\begin{align*}
&\phantom{=i}\lambda^{3q-2p} \sum_{\bm{g}_1,\ldots,\bm{g}_{q-p}=-a}^{a-1} \sum_{\bm{k}_1,\ldots,\bm{k}_p=-a}^{a-1} \sum_{\bm{\ell}_1,\ldots,\bm{\ell}_p=-a}^{a-1} \Bigg[ \frac{1}{\lambda^p} \sum_{r_1,\ldots,r_p=0}^{a-1} \tilde{\omega}_{r_1} \ldots\tilde{\omega}_{r_p}  J_0(\tilde{\omega}_{r_1} \|\tilde{\bm{\omega}}_{\bm{k}_1}\|)J_0(\tilde{\omega}_{r_1} \|\tilde{\bm{\omega}}_{\bm{\ell}_1}\|)\times\\
&\phantom{===============================} \ldots \times J_0(\tilde{\omega}_{r_p} \|\tilde{\bm{\omega}}_{\bm{k}_p}\|)J_0(\tilde{\omega}_{r_p} \|\tilde{\bm{\omega}}_{\bm{\ell}_p}\|)\\
&\phantom{=iiii} \frac{1}{n^{4q}} \sum_{\underline{j}\in\mathcal{D}(q,i)} \cum_2[Y_{t,c}(\underline{j}) \, : \, (t,c)\in\nu_1] \times \ldots \times \cum_2[Y_{t,c}(\underline{j}) \, : \, (t,c)\in\nu_{2q}]\Bigg]\\
&=\begin{cases}
\Landau\left(\left(\frac{a^{2}}{\lambda^{3}}\right)^{q-2}\right), &\qquad \text{if } i=4q,\\
\Landau\left(\left(\frac{a^{2}}{\lambda^{3}}\right)^{q-2}\times \frac{\lambda^2}{n}\right), &\qquad \text{if } i\leq 4q-1.
\end{cases}
\end{align*}
\end{prop}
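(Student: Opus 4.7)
The plan is to mirror the strategy of Proposition \ref{order_q=2_sec_int}, adapting it to the mixed cumulant setting in which the first $q-p$ rows of Table \eqref{table} are of $\hat{D}_{1,\lambda,a}$-type (only the Fourier variable $\bm{g}_t$ appears) while the last $p$ rows are of $\hat{D}_{2,\lambda,a}$-type (both $\bm{k}_t$ and $\bm{\ell}_t$ appear, together with the Bessel factor and the radial sum over $\tilde{\omega}_{r_t}$). First I would re-run the cumulant reduction carried out in the proofs of Theorem \ref{asymptotic_normality} and Theorem \ref{asymptotic_normality_secint}: apply the product theorem for cumulants, use Gaussianity to discard partitions with an odd-size block, and rewrite each pairwise conditional expectation as an integral of the spectral density against a product of frequency windows $B$ via Lemma \ref{convolution_of_h}. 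Keeping track of the prefactor $\lambda^{3q-2p}$, the Riemann sums over $\bm{g}, \bm{k}, \bm{\ell}, r$, the combinatorial factor $1/n^{4q}$ and the size of $\mathcal{D}(q,i)$, and invoking Lemma \ref{bounds_for_B}(iv) and Lemma \ref{orders_of_B}, I would obtain a master bound of the form
\begin{align*}
\Landau\!\left(\frac{\lambda^{\,q - 2R}}{n^{2q - G}} \cdot \Big(\frac{a^{2}}{\lambda^{2}}\Big)^{p - U}\right),
\end{align*}
where $R$ is the rank of the linear map sending $(\bm{g}_1,\ldots,\bm{g}_{q-p},\bm{k}_1,\ldots,\bm{k}_p,\bm{\ell}_1,\ldots,\bm{\ell}_p)$ to the vector of constraints read off the partition, $G$ is the number of sets, and $U$ counts the $\hat{D}_{2,\lambda,a}$-rows in which at least one of $\tilde{\bm{\omega}}_{\bm{k}_t}$, $\tilde{\bm{\omega}}_{\bm{\ell}_t}$ is left unrestricted. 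Compared with the bound used in the proof of Theorem \ref{asymptotic_normality_secint}, the exponent of $a^{2}/\lambda^{2}$ is $p-U$ rather than $q-U$: only $\hat{D}_{2,\lambda,a}$-rows generate factors of $a^{2}/\lambda^{2}$, while each $\hat{D}_{1,\lambda,a}$-row contributes a uniformly bounded Riemann sum of $f(\bm{s} - \tilde{\bm{\omega}}_{\bm{g}_t})$ regardless of whether $\bm{g}_t$ is tied.

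Specialising to $G = 2q$ (the hypothesis of the proposition), Lemma \ref{number_of_rest} and indecomposability force $R \geq q - 1$. The main technical obstacle is to show that, in the equality case $R = q-1$,
\begin{align*}
U \;\geq\; \max\{0,\; p - (q-2)\},
\end{align*}
so that $p - U \leq q - 2$. For $p \leq q - 2$ this is automatic. The critical case is $p = q - 1$, where one needs $U \geq 1$. I would establish this by a counting argument parallel to Lemma \ref{lemma2}: with only $q-1$ restrictions, and every block of the partition of size two, at most $2(q-1) = 2q-2$ of the $p + q = 2q - 1$ frequency variables can be tied, leaving at least one unrestricted. Crucially, that free variable cannot be one of the $\bm{g}_t$'s, since the four entries of a $\hat{D}_{1,\lambda,a}$-row all carry $\pm\tilde{\bm{\omega}}_{\bm{g}_t}$; if $\bm{g}_t$ were unrestricted, the corresponding row would share no nontrivial tie with the remaining rows, contradicting indecomposability. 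Hence the free variable is some $\bm{k}_t$ or $\bm{\ell}_t$, which produces the required unrestricted $\hat{D}_{2,\lambda,a}$-row.

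Combining these two ingredients, the case $R = q-1$, $G = 2q$, $i = 4q$ yields the bound $\lambda^{q - 2(q-1)} (a^{2}/\lambda^{2})^{q-2} = (a^{2}/\lambda^{3})^{q-2}$, which is the first asserted rate. Raising $R$ above $q-1$ only tightens the bound: one additional restriction shifts the master expression by a factor $\lambda^{-2} (a^{2}/\lambda^{2})^{c}$ with $c \in \{0,1,2\}$ measuring how many previously free $\hat{D}_{2,\lambda,a}$-rows become tied, and the worst case $c=2$ yields an extra $a^{4}/\lambda^{6} = o(1)$ under Assumption \ref{assumptions_on_a}. Finally, the case $i \leq 4q - 1$ is handled verbatim as in the proof of Lemma \ref{order_depending_on_number_of_restr}: each coincidence among the indices $j_1,\ldots,j_{4q}$ removes one integration over $[-\lambda/2,\lambda/2]^{2}$ (a factor $\lambda^{2}$) while trimming $|\mathcal{D}(q,i)|$ by one power of $n$, so that the announced extra factor $\lambda^{2}/n$ is picked up, concluding the proof.
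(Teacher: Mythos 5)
Your proposal is correct and follows essentially the same route as the paper: the same master bound $\lambda^{q-2R}n^{-(2q-G)}(a^2/\lambda^2)^{p-U}$, the same reduction to the minimal-restriction case $R=q-1$ via the observation that each extra restriction costs at most a factor $a^4/\lambda^6=o(1)$, the same Lemma-\ref{lemma2}-style counting argument to show $p-U\le q-2$ (with the critical case $p=q-1$ settled by noting an unrestricted $\bm{g}_t$ would disconnect its row, contradicting indecomposability), and the same $\lambda^2/n$ bookkeeping for $i\le 4q-1$ as in Lemma \ref{order_depending_on_number_of_restr}. No gaps.
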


If $G<2q$, we obtain analogously to the proof of Theorem \ref{asymptotic_normality_secint} that the order of the respective cumulant expression is given by
\begin{align*}
&\phantom{=i}\Landau\left(\frac{\lambda^{q-2(\#\text{restrictions})}}{n^{2q-G}} \times \left(\frac{a^2}{\lambda^2}\right)^{p-\#\{ \text{rows belonging to $\hat{D}_2$ with either $\tilde{\bm{\omega}}_{\bm{k}}$ or $\tilde{\bm{\omega}}_{\bm{\ell}}$ (or both) unrestricted}\}}\right).
\end{align*}
We now show that the result from Proposition \ref{prop_sec_int} holds again:

\begin{prop} \label{prop_mix_cum}
For $q\geq 2$ and any indecomposable partition $\bm{\nu}=\{\nu_1,\ldots,\nu_{G}\}\in\mathcal{I}(q)$ of Table \eqref{table}, we have
\begin{align*}
&\lambda^{3q-2p} \sum_{\bm{g}_1,\ldots,\bm{g}_{q-p}=-a}^{a-1} \sum_{\bm{k}_1,\ldots,\bm{k}_p=-a}^{a-1} \sum_{\bm{\ell}_1,\ldots,\bm{\ell}_p=-a}^{a-1} \Bigg[ \frac{1}{\lambda^p} \sum_{r_1,\ldots,r_p=0}^{a-1} \tilde{\omega}_{r_1} \ldots\tilde{\omega}_{r_p} J_0(\tilde{\omega}_{r_1} \|\tilde{\bm{\omega}}_{\bm{k}_1}\|)J_0(\tilde{\omega}_{r_1} \|\tilde{\bm{\omega}}_{\bm{\ell}_1}\|)\times\\
&\phantom{=============================}\ldots \times J_0(\tilde{\omega}_{r_p} \|\tilde{\bm{\omega}}_{\bm{k}_p}\|)J_0(\tilde{\omega}_{r_p} \|\tilde{\bm{\omega}}_{\bm{\ell}_p}\|)\\
&
\phantom{===} \frac{1}{n^{4q}}  \sum_{i=4}^{2q+G} \sum_{\underline{j}\in\mathcal{D}(q,i)} \cum_{|\nu_1|}[Y_{t,c}(\underline{j}) \, : \, (t,c)\in\nu_1] \times \ldots \times \cum_{|\nu_G|}[Y_{t,c}(\underline{j}) \, : \, (t,c)\in\nu_G]\Bigg]\\
&=\begin{cases}
\Landau\left(\left(\frac{\lambda^2}{n}\right)^{q-1} \lambda^{2-q}\right), &\qquad \text{if } G=q+1 \text{ and } \#\text{restrictions}=0,\\
\Landau\left(\left(\frac{\lambda^2}{n}\right)^{q-s} \left(\frac{a^{2}}{\lambda^{3}}\right)^{q-2} \left(\frac{\lambda^{2}}{a^{2}}\right)^{q-s}\right), &\qquad \text{if } G=q+s, \, 2\leq s\leq q \text{ and } \#\text{restrictions}=s-1,\\
\Landau\left(\left(\frac{\lambda^2}{n}\right)^{q-s} \left(\frac{a^2}{\lambda^3}\right)^q\right), &\qquad \text{if } G=q+s, \, 1\leq s\leq q \text{ and } \#\text{restrictions}\geq s,\\
\Landau\left(\lambda^{-q}\right), &\qquad \text{if } G\leq q.
\end{cases}
\end{align*}
\end{prop}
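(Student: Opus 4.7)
The approach is to exploit the order formula
\begin{align*}
\Landau\!\left(\frac{\lambda^{q-2(\#\mathrm{restrictions})}}{n^{2q-G}}\,\Big(\frac{a^{2}}{\lambda^{2}}\Big)^{p-u}\right),\qquad u:=\#\{\hat D_2\text{-rows with }\tilde{\bm\omega}_{\bm k}\text{ or }\tilde{\bm\omega}_{\bm\ell}\text{ unrestricted}\},
\end{align*}
established in the paragraph preceding the proposition; the task reduces to bounding $u$ from below for an indecomposable partition with a prescribed number of groups $G$ and a prescribed number of restrictions. This is a combinatorial extension of the pure-$\hat D_2$ case treated in Proposition \ref{prop_sec_int}, the only new feature being that the rows of the cumulant table \eqref{table} are now split into $q-p$ rows of \emph{$\hat D_1$-type} (each carrying a single frequency variable $\bm g_t$) and $p$ rows of \emph{$\hat D_2$-type} (each carrying two frequency variables $\bm k_t,\bm\ell_t$, together with the integration over $r_t$ which is responsible for the $a^2/\lambda^2$ factor).

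\paragraph*{Critical case $G=q+s$, $\#\mathrm{restrictions}=s-1$.} By Lemma \ref{number_of_rest} and indecomposability, at least $s-1$ restrictions are forced, so this is the critical situation. Repeating the structural analysis used around \eqref{lower_bound_for_unrestricted_rows}, each of the $s-1$ independent restrictions ties together at most two variables from the set $\{\bm g_1,\ldots,\bm g_{q-p},\bm k_1,\ldots,\bm k_p,\bm\ell_1,\ldots,\bm\ell_p\}$, so at most $2(s-1)$ variables are constrained. An $\hat D_2$-row contributes to $p-u$ only when \emph{both} its variables are restricted, which consumes at least two restriction slots per such row; hence $p-u\le s-1$. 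The additional subtraction of one unit comes from the analogue of Lemma \ref{lemma2}: indecomposability forces at least one restriction to link some $\hat D_1$-row to the remainder of the table (else the partition would split off the $\hat D_1$-block), so one slot is necessarily spent outside the $\hat D_2$-rows, giving $p-u\le\max\{0,s-2\}$. Substituting into the order formula reproduces the first two lines of the claim. When $s=1$ there are no restrictions at all, so $p-u=0$ and the $(a^2/\lambda^2)^{\,p-u}$ factor drops out, matching the bound $\Landau((\lambda^2/n)^{q-1}\lambda^{2-q})$.

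\paragraph*{Remaining cases and main obstacle.} For $G=q+s$ with $\#\mathrm{restrictions}\ge s$, the trivial bound $p-u\le p\le q$ together with $a^2/n=\Landau(1)$ (Assumption \ref{assumptions_on_a}) gives the third line; for $G\le q$, combining $a^2/n=\Landau(1)$ with $\lambda^{q}/n^{q}\le 1$ yields
\begin{align*}
\frac{\lambda^{q-2(\#\mathrm{restrictions})}}{n^{2q-G}}\Big(\frac{a^{2}}{\lambda^{2}}\Big)^{p-u}\le \frac{\lambda^{q}}{n^{q}}\Big(\frac{a^{2}}{\lambda^{2}}\Big)^{q}=\Landau(\lambda^{-q}),
\end{align*}
which is the fourth line. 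The main obstacle is the combinatorial lower bound on $u$ in the critical case: one must verify that the indecomposability constraint, which in Proposition \ref{prop_sec_int} kept at least two $\hat D_2$-rows unrestricted, still does so after relabelling $q-p$ rows as $\hat D_1$-type. The key observation making this go through is that an $\hat D_1$-row contains a single frequency variable and therefore cannot ``absorb'' a second restriction slot on its behalf; consequently any rearrangement of restrictions that could reduce $u$ below $\max\{0,p-(s-2)\}$ would either introduce a further independent restriction (contradicting $\#\mathrm{restrictions}=s-1$) or decouple the table (contradicting indecomposability). No new analytic estimates on $B(\cdot)$ or $f(\cdot)$ are required beyond those already invoked in the proofs of Theorems \ref{asymptotic_normality} and \ref{asymptotic_normality_secint}.
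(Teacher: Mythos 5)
Your proposal is correct and follows essentially the same route as the paper: reduce to the order formula with exponent $p-u$, establish the combinatorial bound $p-u\le\max\{0,s-2\}$ in the critical case $G=q+s$ with exactly $s-1$ restrictions by the same slot-counting/indecomposability argument that underlies \eqref{anzahl_rows} and Lemma \ref{lemma2}, and dispatch the cases $\#\text{restrictions}\ge s$ and $G\le q$ with the trivial bounds $p-u\le q$ and $a^2/n=\Landau(1)$. The only cosmetic difference is that you phrase the extra unit as a restriction slot forced onto a $\bm{g}$-variable, whereas the paper counts an additional ``good'' row directly; these are the two sides of the same double count.
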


\begin{proof}
We apply similar arguments as in the proof of Proposition \ref{prop_sec_int}. 
If $G=q+s$ for some $s\in\{1,\ldots,q\}$, in order to ensure indecomposability, there must be at least $s-1$ restrictions in frequency direction. In the case of exactly $s-1$ restrictions, it holds that
\begin{align} \label{anzahl_rows}
&\phantom{=i}\#\{ \text{rows belonging to } \hat{D}_1\}\nonumber\\
&+\#\{ \text{rows belonging to $\hat{D}_2$ with either $\tilde{\bm{\omega}}_{\bm{k}}$ or $\tilde{\bm{\omega}}_{\bm{\ell}}$ (or both) unrestricted}\} \geq \min\{q-s+2,q\}:
\end{align}
Without loss of generality, let $q-s+2\leq q$, since the result is obvious otherwise. As in the proof of Proposition \ref{prop_sec_int}, a typical indecomposable partition of Table \eqref{table} with a minimal number of restrictions is given by the case of one large set covering the first two (or last two) columns and $q-s+1$ rows, and sets of size $2$ otherwise. It is obvious that the rows covered by the large set either belong to the estimator $\hat{D}_1$ or have $\tilde{\bm{\omega}}_{\bm{k}}$ or $\tilde{\bm{\omega}}_{\bm{\ell}}$ unrestricted. Moreover, with the same arguments as in the proof of Proposition \ref{prop_sec_int}, at least one of the remaining $s-1$ rows must either belong to $\hat{D}_1$ or have $\tilde{\bm{\omega}}_{\bm{k}}$ or $\tilde{\bm{\omega}}_{\bm{\ell}}$ unrestricted, since otherwise there would be more than $s-1$ restrictions in total, or the partition would not be indecomposable. Observing 
\begin{align*}
p=q-\#\{\text{rows belonging to $\hat{D}_1$}\},
\end{align*}
\eqref{anzahl_rows} shows that
\begin{align*}
p-\#\{ \text{rows belonging to $\hat{D}_2$ with either $\tilde{\bm{\omega}}_{\bm{k}}$ or $\tilde{\bm{\omega}}_{\bm{\ell}}$ (or both) unrestricted}\} \leq \max\{0,s-2\}.
\end{align*}
Now, the result for the case $G=q+s$ with $1\leq s\leq q$ and a minimal number of restrictions in Table \eqref{table} follows with the same calculation as in the proof of Proposition \ref{prop_sec_int}. For the remaining cases, we can directly apply the same arguments as in the proof of Proposition \ref{prop_sec_int}. 
\end{proof}

Using the same arguments as in the proof of Corollary \ref{cor_sec_int}, Proposition \ref{prop_mix_cum} implies
\begin{align*}
\lambda^q \, \cum_{q-p,p}[\hat{D}_1,\hat{D}_2]=\Landau\left(\left(\frac{a^2}{\lambda^3}\right)^{q-2}\right) = \begin{cases}
\Landau(1), &\qquad q=2,\\
o(1), &\qquad q\geq 3.
\end{cases}
\end{align*}
Moreover, we see from Proposition \ref{order_q=2_mixed_cum} that if $q=2$, then the cumulant expression for indecomposable partitions consisting of $G=4$ groups with $i<8$ different elements in $\underline{j}$ is of order $\Landau(\lambda^2/n)$, while for $G<4$, Proposition \ref{prop_mix_cum} yields an order of $\Landau(\lambda^2/n+1/\lambda^2)$. 
\qed

\section{Proof of Proposition \ref{asymptotic_variances}}

\setcounter{equation}{0}

 Recall the notation $\tilde{\bm{\omega}}_{\bm{k},\lambda}=\tilde{\bm{\omega}}_{\bm{k}}$, $\bm{\omega}_{\bm{k},\lambda}=\bm{\omega}_{\bm{k}}$, $H_{d,h}(\bm{m})=H_d(\bm{m})$ and $B_{\lambda,d,h}(\bm{u})=B(\bm{u})$. 

{\it Proof of \eqref{(i)}} For the sake of brevity we will only prove the result in the one-dimensional case. The case $d=2$ can be shown exactly by the same arguments. We thus need to show that
\begin{align*}
&\sum_{m=-\infty}^{\infty} \left( \frac{8\, H_1(m)^2}{H_1(0)^2}+\frac{2\, H_1(m)^4}{H_1(0)^4}\right) \int_{-\infty}^{\infty} f^4(\omega) \, d\omega \\
&\phantom{====}- \sum_{m=-2a+1}^{2a-1} \left( \frac{8\, H_1(m)^2}{H_1(0)^2}+\frac{2\, H_1(m)^4}{H_1(0)^4}\right) \int_{2\pi \max(-a,-a-m)/\lambda}^{2\pi\min(a,a-m)/\lambda}    f^4(\omega)\, d\omega = o(1). 
\end{align*}
Using the triangle inequality, the absolute of the left hand side can be bounded by
\begin{align*}
I_1 + I_2,
\end{align*}
where $I_1$ and $I_2$ are defined by
\begin{align*}
I_1:=\sum_{|m|>2a-1} \left( \frac{8\, H_1(m)^2}{H_1(0)^2}+\frac{2\, H_1(m)^4}{H_1(0)^4}\right) \int_{-\infty}^{\infty} f^4(\omega)\, d\omega
\end{align*}
and
\begin{align*}
I_2:=\sum_{m=-2a+1}^{2a-1} \left( \frac{8\, H_1(m)^2}{H_1(0)^2}+\frac{2\, H_1(m)^4}{H_1(0)^4}\right) \Bigg|\int_{-\infty}^{\infty} f^4(\omega)\, d\omega - \int_{2\pi\max(-a,-a-m)/\lambda}^{2\pi\min(a,a-m)/\lambda} f^4(\omega)\, d\omega \Bigg|.
\end{align*}
Since by Assumption \ref{ass_on_h} (ii) it holds that $|H_1(m)|\lesssim 1/m^2$ for $m\neq 0$, we immediately obtain
\begin{align*}
I_1 = \Landau\left(\frac{1}{a^3}\right) \qquad \text{as } a\to\infty.
\end{align*}
For the term $I_2$, we get
\begin{align*}
I_2 &\leq \sum_{|m|\leq a/2} \left( \frac{8\, H_1(m)^2}{H_1(0)^2}+\frac{2\, H_1(m)^4}{H_1(0)^4}\right) \Bigg|\int_{-\infty}^{2\pi\max(-a,-a-m)/\lambda} f^4(\omega)\, d\omega \\
&\phantom{===================}+ \int_{2\pi\min(a,a-m)/\lambda}^{\infty} f^4(\omega)\, d\omega\Bigg|\\
&+ \sum_{|m|>a/2} \left( \frac{8\, H_1(m)^2}{H_1(0)^2}+\frac{2\, H_1(m)^4}{H_1(0)^4}\right) \Bigg|\int_{-\infty}^{\infty} f^4(\omega)\, d\omega - \int_{2\pi\max(-a,-a-m)/\lambda}^{2\pi\min(a,a-m)/\lambda} f^4(\omega)\, d\omega \Bigg|\\
& \lesssim \int_{-\infty}^{-\pi a/\lambda} f^4(\omega)\, d\omega + \int_{\pi a/\lambda}^{\infty} f^4(\omega)\, d\omega + \sum_{|m|>a/2}  \left( \frac{8\, H_1(m)^2}{H_1(0)^2}+\frac{2\, H_1(m)^4}{H_1(0)^4}\right)\\
&\lesssim \int_{\pi a/\lambda}^{\infty} \frac{1}{\omega^{4+4\delta}} \, d\omega + \sum_{|m|>a/2} \left( \frac{8\, H_1(m)^2}{H_1(0)^2}+\frac{2\, H_1(m)^4}{H_1(0)^4}\right)\\
&= \Landau\left(\left(\frac{\lambda}{a}\right)^{3+4\delta}+\frac{1}{a^3}\right) \qquad \text{as } \lambda,a\to\infty,
\end{align*}
where we again used $|H_1(m)|\lesssim 1/m^2$ for $m\neq 0$ and that by Assumption \ref{assumption_on_Z}, $f(\omega)\lesssim |\omega|^{-\delta-1}$ for $|\omega|>1$ and some $\delta>0$. 
Since $\lambda,a\to\infty$ with $a/\lambda\to\infty$, this yields the claim of part (i).
\medskip

{\it Proof of \eqref{(ii)}}  Note that 
\begin{align*}
&\phantom{i=}\sum_{\bm{m}=-\infty}^{\infty} H_2(\bm{m})^2 \int_{\R^2} f^2(\bm{z})\Bigg(\int_{0}^{\infty} \Bigg[\int_{\R^2} f(\bm{x})  J_0(r\|\bm{x}\|) J_0(r \|\bm{z}\|)\, d\bm{x}\Bigg] \, r\, dr\Bigg)^2  \, d\bm{z} 
\\&- \sum_{\bm{m}=-2a+1}^{2a-1} H_2(\bm{m})^2 \int_{2\pi \max(-a,-a-\bm{m})/\lambda}^{2\pi\min(a,a-\bm{m})/\lambda} f^2(\bm{z})\\
&\phantom{=========} \Bigg(\int_{0}^{2\pi a /\lambda} \Bigg[\int_{[-2\pi a/\lambda,2\pi a/\lambda]^2} f(\bm{x})  J_0(r\|\bm{x}\|) J_0(r \|\bm{z}\|)\, d\bm{x}\Bigg] \, r\, dr\Bigg)^2  \, d\bm{z}\\
&=K_1+K_2+K_3+K_4,
\end{align*}
where the quantities $K_j$ are defined by
\begin{align*}
K_1&:=\sum_{\bm{m}=-\infty}^{\infty} H_2(\bm{m})^2 \int_{\R^2} f^2(\bm{z})\Bigg(\int_{0}^{\infty} \Bigg[\int_{\R^2} f(\bm{x})  J_0(r\|\bm{x}\|) J_0(r \|\bm{z}\|)\, d\bm{x}\Bigg] \, r\, dr\Bigg)^2  \, d\bm{z} \\
&-\sum_{\bm{m}=-2a+1}^{2a-1} H_2(\bm{m})^2 \int_{\R^2} f^2(\bm{z})\Bigg(\int_{0}^{\infty} \Bigg[\int_{\R^2} f(\bm{x})  J_0(r\|\bm{x}\|)  J_0(r \|\bm{z}\|)\, d\bm{x}\Bigg] \, r\, dr\Bigg)^2  \, d\bm{z},
\end{align*}
\begin{align*}
K_2&:=\sum_{\bm{m}=-2a+1}^{2a-1} H_2(\bm{m})^2 \int_{\R^2} f^2(\bm{z})\Bigg(\int_{0}^{\infty} \Bigg[\int_{\R^2} f(\bm{x})  J_0(r\|\bm{x}\|) J_0(r \|\bm{z}\|)\, d\bm{x}\Bigg] \, r\, dr\Bigg)^2  \, d\bm{z}\\
&-\sum_{\bm{m}=-2a+1}^{2a-1} H_2(\bm{m})^2 \int_{2\pi\max(-a,-a-\bm{m})/\lambda}^{2\pi\min(a,a-\bm{m})/\lambda} f^2(\bm{z})\\
&\phantom{===================}\Bigg(\int_{0}^{\infty} \Bigg[\int_{\R^2} f(\bm{x})  J_0(r\|\bm{x}\|)  J_0(r \|\bm{z}\|)\, d\bm{x}\Bigg] \, r\, dr\Bigg)^2  \, d\bm{z},
\end{align*}
\begin{align*}
K_3&:=\sum_{\bm{m}=-2a+1}^{2a-1} H_2(\bm{m})^2 \int_{2\pi\max(-a,-a-\bm{m})/\lambda}^{2\pi\min(a,a-\bm{m})/\lambda} f^2(\bm{z})\\
&\phantom{===================}\Bigg(\int_{0}^{\infty} \Bigg[\int_{\R^2} f(\bm{x})  J_0(r\|\bm{x}\|)  J_0(r \|\bm{z}\|)\, d\bm{x}\Bigg] \, r\, dr\Bigg)^2  \, d\bm{z}\\
&-\sum_{\bm{m}=-2a+1}^{2a-1} H_2(\bm{m})^2 \int_{2\pi\max(-a,-a-\bm{m})/\lambda}^{2\pi\min(a,a-\bm{m})/\lambda} f^2(\bm{z}) \\
&\phantom{===================}\Bigg(\int_{0}^{2\pi a/\lambda} \Bigg[\int_{\R^2} f(\bm{x})  J_0(r\|\bm{x}\|)  J_0(r \|\bm{z}\|)\, d\bm{x}\Bigg] \, r\, dr\Bigg)^2  \, d\bm{z} ,
\end{align*}
and
\begin{align*}
K_4&:=\sum_{\bm{m}=-2a+1}^{2a-1} H_2(\bm{m})^2 \int_{2\pi\max(-a,-a-\bm{m})/\lambda}^{2\pi\min(a,a-\bm{m})/\lambda} f^2(\bm{z})\\
&\phantom{=============} \Bigg(\int_{0}^{2\pi a/\lambda} \Bigg[\int_{\R^2} f(\bm{x})  J_0(r\|\bm{x}\|)  J_0(r \|\bm{z}\|)\, d\bm{x}\Bigg] \, r\, dr\Bigg)^2  \, d\bm{z} \\
&-\sum_{\bm{m}=-2a+1}^{2a-1} H_2(\bm{m})^2 \int_{2\pi\max(-a,-a-\bm{m})/\lambda}^{2\pi\min(a,a-\bm{m})/\lambda} f^2(\bm{z}) \\
&\phantom{=============}\Bigg(\int_{0}^{2\pi a/\lambda} \Bigg[\int_{[-2\pi a/\lambda,2\pi a/\lambda]^2} f(\bm{x})  J_0(r\|\bm{x}\|)  J_0(r \|\bm{z}\|)\, d\bm{x}\Bigg] \, r\, dr\Bigg)^2  \, d\bm{z}.
\end{align*}
We now have
\begin{align*}
|K_1| &\lesssim \sum_{i=1}^2 \sum_{\substack{m_j=-\infty\\ j\in\{1,\ldots,i-1\}}}^{\infty} \sum_{\substack{m_k=-2a+1\\ k\in\{i+1,\ldots,2\}}}^{2a-1} \sum_{|m_i|>2a-1} H_2(m_1,m_2)^2 \int_{\R^2} f^2(\bm{z})\, d\bm{z}\\
&\phantom{==================} \Bigg(\int_{0}^{\infty} \Bigg|\int_{\R^2} f(\bm{x}) J_0(r\|\bm{x}\|)\, d\bm{x} \Bigg| \, r\, dr\Bigg)^2.
\end{align*}
By definition of the Bessel function and the covariance function, it holds that
\begin{align} \label{finite}
\int_{0}^{\infty} \Bigg|\int_{\R^2} f(\bm{x}) J_0(r\|\bm{x}\|) d\bm{x} \Bigg|  \, r\, dr \lesssim \int_{\R^2} |c(\bm{x})|\, d\bm{x} \leq C
\end{align}
for some generic constant $C>0$. 
Since 
$|H_1(m)|\lesssim 1/m^2$ for $m\neq 0$, this yields
\begin{align*}
|K_1| \lesssim \sum_{i=1}^2 \sum_{\substack{m_j=-\infty\\ j\in\{1,\ldots,i-1\}}}^{\infty} \sum_{\substack{m_k=-2a+1\\ k\in\{i+1,\ldots,2\}}}^{2a-1} \sum_{|m_i|>2a-1} H_2(m_1,m_2)^2 = \Landau\left(\frac{1}{a^3}\right).
\end{align*}
Concerning the term $K_2$, note that
\begin{align*}
|K_2|&\leq K_{21}+K_{22},
\end{align*}
where $K_{21}$ and $K_{22}$ are defined by
\begin{align*}
K_{21}&:= \sum_{\bm{m}=-2a+1}^{2a-1} H_2(\bm{m})^2 \Bigg|\int_{-\infty}^{\infty} \Bigg\{\int_{-\infty}^{\infty} f^2(\bm{z}) \Bigg(\int_{0}^{\infty} \Bigg[\int_{\R^2} f(\bm{x}) J_0(r\|\bm{x}\|)\, d\bm{x} \Bigg] J_0(r\|\bm{z}\|) \, r\, dr\Bigg)^2 dz_2\\
&\phantom{==}-\int_{2\pi\max(-a,-a-m_2)/\lambda}^{2\pi\min(a,a-m_2)/\lambda} f^2(\bm{z}) \Bigg(\int_{0}^{\infty} \Bigg[\int_{\R^2} f(\bm{x}) J_0(r\|\bm{x}\|)\, d\bm{x} \Bigg] J_0(r\|\bm{z}\|) \, r\, dr\Bigg)^2 dz_2\Bigg\}\, dz_1\, \Bigg|
\end{align*}
and
\begin{align*}
K_{22}&:=\sum_{\bm{m}=-2a+1}^{2a-1} H_2(\bm{m})^2 \Bigg|\int_{2\pi\max(-a,-a-m_2)/\lambda}^{2\pi\min(a,a-m_2)/\lambda} \Bigg\{\int_{-\infty}^{\infty} f^2(\bm{z})\\
&\phantom{====================} \Bigg(\int_{0}^{\infty} \Bigg[\int_{\R^2} f(\bm{x}) J_0(r\|\bm{x}\|) \, d\bm{x} \Bigg] J_0(r\|\bm{z}\|) \, r\, dr\Bigg)^2 dz_1\\
&\phantom{==}-\int_{2\pi\max(-a,-a-m_1)/\lambda}^{2\pi\min(a,a-m_1)/\lambda} f^2(\bm{z}) \Bigg(\int_{0}^{\infty} \Bigg[\int_{\R^2} f(\bm{x}) J_0(r\|\bm{x}\|)\, d\bm{x} \Bigg] J_0(r\|\bm{z}\|) \, r\, dr\Bigg)^2 dz_1\Bigg\} \,dz_2\, \Bigg|.
\end{align*}
For the sake of brevity we only consider the term $K_{21}$, since the second term $K_{22}$ can be treated exactly in the same way.
\eqref{finite} yields
\begin{align*}
K_{21}
&\lesssim \sum_{|m_2|\leq a/2} H_1(m_2)^2 \int_{-\infty}^{\infty} \Bigg[\int_{-\infty}^{2\pi\max(-a,-a-m_2)/\lambda} f^2(\bm{z}) \,dz_2 + \int_{2\pi\min(a,a-m_2)/\lambda}^{\infty} f^2(\bm{z})\, dz_2 \Bigg] \,dz_1\\
&+\sum_{|m_2|>a/2} H_1(m_2)^2 \int_{-\infty}^{\infty} \Bigg[\int_{-\infty}^{2\pi\max(-a,-a-m_2)/\lambda} f^2(\bm{z}) \,dz_2 + \int_{2\pi\min(a,a-m_2)/\lambda}^{\infty} f^2(\bm{z})\, dz_2 \Bigg] \,dz_1\\
&\lesssim  \int_{-\infty}^{\infty} \Bigg[\int_{-\infty}^{-\pi a/\lambda} f^2(\bm{z})\, dz_2 + \int_{\pi a/\lambda}^{\infty} f^2(\bm{z})\, dz_2 \Bigg]\, dz_1 + \sum_{|m_2|>a/2} H_1(m_2)^2\\
&= \Landau\left(\left(\frac{\lambda}{a}\right)^{1+2\delta} + \frac{1}{a^3}\right),
\end{align*}
where we used that by Assumption \ref{assumption_on_Z} $f(\bm{z})\lesssim \beta_{1+\delta}(\bm{z})$ for some $\delta>0$ and $\beta_{\delta}$ defined in \eqref{beta_fct}. 
Using $a^2-b^2=(a-b)(a+b)$ and \eqref{finite}, we furthermore obtain
\begin{align*}
|K_3|& \lesssim \sum_{\bm{m}=-2a+1}^{2a-1} H_2(\bm{m})^2 \int_{2\pi\max(-a,-a-\bm{m})/\lambda}^{2\pi\min(a,a-\bm{m})/\lambda} f^2(\bm{z}) \left(\int_{2\pi a/\lambda}^{\infty} \left| \int_{\R^2} f(\bm{x}) J_0(r\|\bm{x}\|)\, d\bm{x} \right| \, r\, dr\right)\, d\bm{z}\\
&\lesssim \int_{2\pi a/\lambda}^{\infty} \int_{0}^{2\pi} \left| c(r\cos \theta,r \sin \theta) \right| \, r\,d\theta\, dr\lesssim \int_{2\pi a/\lambda}^{\infty}  \frac{1}{r^{1+\varepsilon}}\, dr =  \Landau\left(\left(\frac{\lambda}{a}\right)^{\varepsilon}\right).
\end{align*}
It remains to deal with the term $K_4$ and similarly to above, we obtain
\begin{align*}
|K_4|
&\lesssim \Bigg(\int_{0}^{2\pi a/\lambda} \Bigg| \int_{\R^2} f(\bm{x}) J_0(r\|\bm{x}\|)\, d\bm{x} \Bigg| \, r\, dr \\
&\phantom{==================}+ \int_{0}^{2\pi a/\lambda} \Bigg|\int_{[-2\pi a/\lambda,2\pi a/\lambda]^2} f(\bm{x}) J_0(r\|\bm{x}\|)\, d\bm{x} \Bigg|\, r\, dr\Bigg)\\
&\phantom{====}\times\Bigg( \int_{0}^{2\pi a/\lambda} \Bigg|\int_{([-2\pi a/\lambda,2\pi a/\lambda]^2)^c} f(\bm{x}) J_0(r\|\bm{x}\|)\, d\bm{x}\Bigg|\, r\, dr\Bigg).
\end{align*}
Note that
\begin{align*}
&= \int_{0}^{2\pi a/\lambda} \Bigg|\int_{([-2\pi a/\lambda,2\pi a/\lambda]^2)^c} f(\bm{x}) J_0(r\|\bm{x}\|)\, d\bm{x}\Bigg|\, r\, dr\\
&\lesssim \int_{\|\bm{y}\|\leq 2\pi a/\lambda} \Bigg( \int_{([-2\pi a/\lambda,2\pi a/\lambda]^2)^c} \beta_{1+\delta}(\bm{x})\, d\bm{x} \Bigg) \, d\bm{y}
=\Landau\left(\frac{a^2}{\lambda^2} \left( \int_{2\pi a/\lambda}^{\infty} \frac{1}{x^{1+\delta}}\, dx \right) \right) =\Landau\left(\frac{\lambda^{\delta-2}}{a^{\delta-2}}\right),
\end{align*}
which (in combination with \eqref{finite} and the assumption $\delta>2$) yields
\begin{align*}
K_4 = \Landau\left(\frac{\lambda^{\delta-2}}{a^{\delta-2}}\right)
\end{align*}
as $a,\lambda\to\infty$. Combining this estimate with the estimates for the terms $K_1$, $K_2$ and $K_3$ yields
\begin{align*}
\tau_2^2 = \tau_{2,\lambda,a}^2 + \Landau\left(\frac{1}{a^3}+\left(\frac{\lambda}{a}\right)^{1+2\delta}+\left(\frac{\lambda}{a}\right)^{\varepsilon} + \left(\frac{\lambda}{a}\right)^{\delta-2}\right),
\end{align*}
which proves assertion (ii).
\medskip

{\it Proof of \eqref{(iii)}}  Note that
\begin{align*}
&\phantom{i=}\sum_{\bm{m}=-\infty}^{\infty} H_2(\bm{m})^2 \int_{0}^{\infty} r \Bigg[\int_{\R^2} f(\bm{y}) J_0(r\|\bm{y}\|)\, d\bm{y} \int_{\R^2} f(\bm{x})^3 J_0(r\|\bm{x}\|)\, d\bm{x}\Bigg] \, dr\\
&-\sum_{\bm{m}=-2a+1}^{2a-1} H_2(\bm{m})^2 \int_{0}^{2\pi a/\lambda} r \Bigg[\int_{[-2\pi a/\lambda,2\pi a/\lambda]^2} f(\bm{y}) J_0(r\|\bm{y}\|)\, d\bm{y}\\
&\phantom{========================}\times \int_{2\pi\max(-a,-a-\bm{m})/\lambda}^{2\pi\min(a,a-\bm{m})/\lambda} f(\bm{x})^3 J_0(r\|\bm{x}\|)\, d\bm{x} \Bigg] \, dr\\
&=L_1+L_2+L_3+L_4,
\end{align*}
where the terms $L_j$ are defined by
\begin{align*}
L_1&:=\sum_{\bm{m}=-\infty}^{\infty} H_2(\bm{m})^2 \int_{0}^{\infty} r \Bigg[\int_{\R^2} f(\bm{y}) J_0(r\|\bm{y}\|)\, d\bm{y} \int_{\R^2} f(\bm{x})^3 J_0(r\|\bm{x}\|) \,d\bm{x} \Bigg]\, dr\\
&-\sum_{\bm{m}=-2a+1}^{2a-1} H_2(\bm{m})^2 \int_{0}^{\infty} r \Bigg[\int_{\R^2} f(\bm{y}) J_0(r\|\bm{y}\|) \,d\bm{y} \int_{\R^2} f(\bm{x})^3 J_0(r\|\bm{x}\|)\, d\bm{x} \Bigg]\, dr,
\end{align*}
\begin{align*}
L_2&:=\sum_{\bm{m}=-2a+1}^{2a-1} H_2(\bm{m})^2 \int_{0}^{\infty} r \Bigg[\int_{\R^2} f(\bm{y}) J_0(r\|\bm{y}\|) \,d\bm{y} \int_{\R^2} f(\bm{x})^3 J_0(r\|\bm{x}\|)\, d\bm{x} \Bigg]\, dr\\
&-\sum_{\bm{m}=-2a+1}^{2a-1} H_2(\bm{m})^2 \int_{0}^{2\pi a/\lambda} r \Bigg[\int_{\R^2} f(\bm{y}) J_0(r\|\bm{y}\|)\, d\bm{y} \int_{\R^2} f(\bm{x})^3 J_0(r\|\bm{x}\|)\, d\bm{x} \Bigg]\, dr,
\end{align*}
\begin{align*}
L_3&:=\sum_{\bm{m}=-2a+1}^{2a-1} H_2(\bm{m})^2 \int_{0}^{2\pi a/\lambda} r \Bigg[\int_{\R^2} f(\bm{y}) J_0(r\|\bm{y}\|) \,d\bm{y} \int_{\R^2} f(\bm{x})^3 J_0(r\|\bm{x}\|)\, d\bm{x} \Bigg]\, dr \\
&-\sum_{\bm{m}=-2a+1}^{2a-1} H_2(\bm{m})^2 \int_{0}^{2\pi a/\lambda} r \Bigg[\int_{[-2\pi a/\lambda,2\pi a/\lambda]^2} f(\bm{y}) J_0(r\|\bm{y}\|)\, d\bm{y} \int_{\R^2} f(\bm{x})^3 J_0(r\|\bm{x}\|)\, d\bm{x} \Bigg]\, dr
\end{align*}
and
\begin{align*}
L_4&:=\sum_{\bm{m}=-2a+1}^{2a-1} H_2(\bm{m})^2 \int_{0}^{2\pi a/\lambda} r \Bigg[\int_{[-2\pi a/\lambda,2\pi a/\lambda]^2} f(\bm{y}) J_0(r\|\bm{y}\|) \,d\bm{y} \int_{\R^2} f(\bm{x})^3 J_0(r\|\bm{x}\|)\, d\bm{x} \Bigg]\, dr\\
&-\sum_{\bm{m}=-2a+1}^{2a-1} H_2(\bm{m})^2 \int_{0}^{2\pi a/\lambda} r \Bigg[\int_{[-2\pi a/\lambda,2\pi a/\lambda]^2} f(\bm{y}) J_0(r\|\bm{y}\|) \,d\bm{y}\\
&\phantom{===============} \int_{2\pi\max(-a,-a-\bm{m})/\lambda}^{2\pi\min(a,a-\bm{m})/\lambda} f(\bm{x})^3 J_0(r\|\bm{x}\|)\, d\bm{x} \Bigg]\, dr.
\end{align*}
Using exactly the same arguments as in the proof of part (ii), we get
\begin{align*}
|L_1|
&\lesssim \sum_{i=1}^2 \sum_{\substack{m_j=-\infty\\ j\in\{1,\ldots,i-1\}}}^{\infty} \sum_{\substack{m_k=-2a+1\\ k\in\{i+1,\ldots,2\}}}^{2a-1} \sum_{|m_i|>2a-1} H_2(m_1,m_2)^2=\Landau\left(\frac{1}{a^3}\right).
\end{align*}
Moreover, analogously to the proof of part (ii) it follows that
\begin{align*}
|L_2|&\lesssim \sum_{\bm{m}=-2a+1}^{2a-1} H_2(\bm{m})^2 \int_{2\pi a/\lambda}^{\infty} r \Bigg|\int_{\R^2} f(\bm{y}) J_0(r\|\bm{y}\|)\, d\bm{y} \Bigg|\, dr\\
&\lesssim \int_{2\pi a/\lambda}^{\infty} \int_{0}^{2\pi} |c(r\cos \theta,r\sin \theta)|\, r\, d\theta\,dr = \Landau\left(\left(\frac{\lambda}{a}\right)^{\varepsilon}\right)
\end{align*}
and
\begin{align*}
|L_3|&\lesssim \sum_{\bm{m}=-2a+1}^{2a-1} H_2(\bm{m})^2 \int_{0}^{2\pi a/\lambda} r \left|\int_{([-2\pi a/\lambda,2\pi a/\lambda]^2)^c} f(\bm{y}) J_0(r\|\bm{y}\|)\, d\bm{y} \right|\, dr\\
&\lesssim \int_{0}^{2\pi a/\lambda} r \left(\int_{([-2\pi a/\lambda,2\pi a/\lambda]^2)^c} f(\bm{y})\, d\bm{y}\right)\, dr = \Landau\left(\left(\frac{\lambda}{a}\right)^{\delta-2}\right).
\end{align*}
Finally, again using \eqref{finite},
\begin{align*}
|L_4|
&\lesssim \sup_{0\leq r\leq 2\pi a/\lambda} \sum_{m_2=-2a+1}^{2a-1} H_1(m_2)^2  \Bigg|\int_{-\infty}^{\infty} \Bigg(\int_{-\infty}^{\infty} f(x_1,x_2)^3 J_0(r\|\bm{x}\|) \, dx_2 \\
&\phantom{===================}- \int_{2\pi\max(-a,-a-m_2)/\lambda}^{2\pi\min(a,a-m_2)/\lambda} f(x_1,x_2)^3 J_0(r\|\bm{x}\|) \, dx_2 \Bigg) \, dx_1 \Bigg|\\
&+\sup_{0\leq r\leq 2\pi a/\lambda} \sum_{\bm{m}=-2a+1}^{2a-1} H_2(\bm{m})^2 \Bigg| \int_{2\pi\max(-a,-a-m_2)/\lambda}^{2\pi\min(a,a-m_2)/\lambda} \Bigg(\int_{-\infty}^{\infty} f(x_1,x_2)^3 J_0(r\|\bm{x}\|)\, dx_1 \\
&\phantom{===================}- \int_{2\pi\max(-a,-a-m_1)/\lambda}^{2\pi\min(a,a-m_1)/\lambda} f(x_1,x_2)^3 J_0(r\|\bm{x}\|)\, dx_1 \Bigg)\, dx_2 \Bigg|.
\end{align*}
Both summands can be dealt with analogously and we only consider the first one. It is bounded by
\begin{align*}
&\phantom{=i}\sup_{0\leq r\leq 2\pi a/\lambda}\sum_{|m_2|\leq a/2} H_1(m_2)^2 \Bigg|\int_{-\infty}^{\infty} \Bigg(\int_{-\infty}^{2\pi\max(-a,-a-m_2)/\lambda} f(x_1,x_2)^3 J_0(r\|\bm{x}\|)\, dx_2 \\
&\phantom{================}+ \int_{2\pi\min(a,a-m_2)/\lambda}^{\infty} f(x_1,x_2)^3 J_0(r\|\bm{x}\|)\, dx_2 \Bigg)\, dx_1\Bigg|\\
&+\sup_{0\leq r\leq 2\pi a/\lambda}\sum_{|m_2|> a/2} H_1(m_2)^2 \Bigg|\int_{-\infty}^{\infty} \Bigg(\int_{-\infty}^{2\pi\max(-a,-a-m_2)/\lambda} f(x_1,x_2)^3 J_0(r\|\bm{x}\|)\, dx_2 \\
&\phantom{================}+ \int_{2\pi\min(a,a-m_2)/\lambda}^{\infty} f(x_1,x_2)^3 J_0(r\|\bm{x}\|)\, dx_2 \Bigg)\, dx_1\Bigg|\\
& \lesssim \sup_{0\leq r\leq 2\pi a/\lambda} \Bigg|\int_{-\infty}^{\infty} \Bigg(\int_{-\infty}^{-\pi a/\lambda} f(x_1,x_2)^3 J_0(r\|\bm{x}\|)\, dx_2 + \int_{\pi a/\lambda}^{\infty} f(x_1,x_2)^3 J_0(r\|\bm{x}\|)\, dx_2\Bigg)\, dx_1 \Bigg|\\
&+\sum_{|m_2|>a/2} H_1(m_2)^2=\Landau\left(\left(\frac{\lambda}{a}\right)^{2+3\delta} + \frac{1}{a^3}\right),
\end{align*}
where we again used $f(\bm{x})\lesssim \beta_{1+\delta}(\bm{x})$ for some $\delta>0$. 
We thus obtain
\begin{align*}
\kappa_{1,2}=\tau_{1,2,\lambda,a}+\Landau\left(\frac{1}{a^3}+\left(\frac{\lambda}{a}\right)^{\varepsilon} + \left(\frac{\lambda}{a}\right)^{\delta-2} + \left(\frac{\lambda}{a}\right)^{2+3\delta}\right),
\end{align*}
which proves part \eqref{(iii)} of Proposition \ref{asymptotic_variances}.
\medskip

{\it Proof of \eqref{asymptotic_variance}}
Define the \textit{Hankel transform (of order 0)} of a function $g:\R^{+}\to\R$ as \label{page162}
\begin{align*}
(\mathcal{H}g)(x):=\int_{0}^{\infty} g(r) \, J_0(rx) \,r\, dr.
\end{align*}
After standardization with an appropriate constant, the Hankel transform of a radial-symmetric function equals its Fourier transform [see e.g. \cite{bracewell65}]. For illustration, 
let $f:\R^2\to\R$ be even and radial-symmetric, i.e. assume that $f(\bm{\omega})=f_0(\|\bm{\omega}\|)$ for some function $f_0:\R^{+}\to\R$. In this case, the Fourier transform of $f$ is radial-symmetric as well, i.e. $(\mathcal{F}f)(\bm{x})=(\mathcal{F}f)_0(\|\bm{x}\|)$ for some $(\mathcal{F}f)_0:\R^{+}\to\R$. \
Given any $x\in\R$ and $\alpha\in[0,2\pi)$, the Hankel transform of $f_0$ thus equals
\begin{align*}
(\mathcal{H} f_0)(x) 
&= \frac{1}{2\pi} \int_{0}^{\infty} \int_{0}^{2\pi} f_0(r) \exp(\im r x \cos \theta)\, r\, d\theta\, dr\nonumber\\
&=\frac{1}{2\pi} \int_{0}^{\infty} \int_{0}^{2\pi} f(r \cos \theta, r \sin \theta) \exp(\im r x \cos(\alpha-\theta))\,r\, d\theta\, dr\nonumber\\
&=\frac{1}{2\pi} \int_{\R^2} f(\bm{y}) \exp\Bigg(\im \bm{y}^T \begin{pmatrix}
x \cos \alpha\\
x \sin \alpha
\end{pmatrix}\Bigg)\, d\bm{y}\nonumber\\
&=\frac{1}{2\pi} (\mathcal{F}f)(x\cos \alpha, x\sin \alpha) = \frac{1}{2\pi} (\mathcal{F} f)_0(x).
\end{align*}
Using exactly the same arguments, we obtain
\begin{align*} 
(\mathcal{H} (\mathcal{F} f)_0)(x) = \frac{1}{2\pi} \int_{\R^2} (\mathcal{F}f)(\bm{y}) \exp\Bigg(\im \bm{y}^T \begin{pmatrix}
x \cos \alpha\\
x \sin \alpha
\end{pmatrix}\Bigg) d\bm{y} = 2\pi f(x \cos \alpha, x \sin \alpha) = 2\pi f_0(x).
\end{align*}
Using these equalities, we can now calculate the asymptotic variance $\tau_2^2$ and the asymptotic covariance $\kappa_{1,2}$ under the assumption that $f(\bm{\omega})=f_0(\|\bm{\omega}\|)$ for all $\bm{\omega}\in\R^2$. In this case, it holds that
\begin{align*}
\tau_2^2
&=8 \sum_{\bm{m}=-\infty}^{\infty} \frac{H_2(\bm{m})^2}{H_2(\bm{0})^2} \int_{0}^{\infty} \int_{0}^{2\pi} f^2(s_1 \cos \theta_1, s_1 \sin \theta_1)\\ &\phantom{========}\Bigg(\int_{0}^{\infty} \Bigg[\int_{0}^{\infty} \int_{0}^{2\pi} f(s_2 \cos \theta_2, s_2 \sin \theta_2) J_0(rs_2) \, s_2 \, d\theta_2 \, ds_2 \Bigg] J_0(rs_1) \, r\, dr\Bigg)^2 s_1\, d\theta_1 \, ds_1\\
&=8\,(2\pi)^3  \sum_{\bm{m}=-\infty}^{\infty} \frac{H_2(\bm{m})^2}{H_2(\bm{0})^2} \int_{0}^{\infty} f_0^2(s_1) \Bigg(\int_{0}^{\infty} (\mathcal{H}f_0)(r) J_0(rs_1)\, r\, dr\Bigg)^2 s_1\, ds_1\\
&=8\,(2\pi)^3  \sum_{\bm{m}=-\infty}^{\infty} \frac{H_2(\bm{m})^2}{H_2(\bm{0})^2} \int_{0}^{\infty} f_0^2(s_1) \Bigg(\frac{1}{2\pi}\int_{0}^{\infty} (\mathcal{F}f)_0(r) J_0(rs_1)\, r\, dr\Bigg)^2 s_1 \, ds_1\\
&=8\, (2\pi) \sum_{\bm{m}=-\infty}^{\infty} \frac{H_2(\bm{m})^2}{H_2(\bm{0})^2} \int_{0}^{\infty} f_0^2(s_1) \big[(\mathcal{H} (\mathcal{F}f)_0)(s_1) \big]^2 s_1\, ds_1\\
&=8 \, (2\pi) \sum_{\bm{m}=-\infty}^{\infty} \frac{H_2(\bm{m})^2}{H_2(\bm{0})^2} \int_{0}^{\infty} f_0^2(s_1) \big[ 2\pi f_0(s_1)\big]^2 s_1\, ds_1\\
&=8\, (2\pi)^3 \sum_{\bm{m}=-\infty}^{\infty} \frac{H_2(\bm{m})^2}{H_2(\bm{0})^2} \int_{0}^{\infty} f_0^4(s_1)\, s_1\, ds_1 = 8\, (2\pi)^2 \sum_{\bm{m}=-\infty}^{\infty} \frac{H_2(\bm{m})^2}{H_2(\bm{0})^2} \int_{\R^2} f^4(\bm{\omega})\, d\bm{\omega}
\end{align*}
and
\begin{align*}
\kappa_{1,2} 
&=16\pi \sum_{\bm{m}=-\infty}^{\infty} \frac{H_2(\bm{m})^2}{H_2(\bm{0})^2} \int_{0}^{\infty} r \Bigg[\int_{0}^{\infty} \int_{0}^{2\pi} f_0(s_1) J_0(rs_1)\, s_1\, d\theta_1 \, ds_1\int_{0}^{\infty} \int_{0}^{2\pi} f_0^3(s_2) J_0(rs_2)\, s_2\, d\theta_2\, ds_2\Bigg]\, dr\\
&=16\pi\, (2\pi)^2 \sum_{\bm{m}=-\infty}^{\infty} \frac{H_2(\bm{m})^2}{H_2(\bm{0})^2} \int_{0}^{\infty} r \, (\mathcal{H} f_0)(r)\, (\mathcal{H}f_0^3)(r)\, dr\\
&=16\pi \sum_{\bm{m}=-\infty}^{\infty} \frac{H_2(\bm{m})^2}{H_2(\bm{0})^2}\int_{0}^{\infty} r \, (\mathcal{F}f)_0(r)\, (\mathcal{F}f^3)_0(r)\, dr\\
&= 16\pi \sum_{\bm{m}=-\infty}^{\infty} \frac{H_2(\bm{m})^2}{H_2(\bm{0})^2} \times \frac{1}{2\pi} \int_{\R^2} (\mathcal{F}f)(\bm{\omega}) (\mathcal{F}f^3)(\bm{\omega})\, d\bm{\omega}\\
&=16\pi \sum_{\bm{m}=-\infty}^{\infty} \frac{H_2(\bm{m})^2}{H_2(\bm{0})^2} \times \frac{1}{2\pi} \times (2\pi)^2 \int_{\R^2} f(\bm{\omega}) f^3(\bm{\omega})\, d\bm{\omega}\\
&=8\, (2\pi)^2 \sum_{\bm{m}=-\infty}^{\infty} \frac{H_2(\bm{m})^2}{H_2(\bm{0})^2} \int_{\R^2} f^4(\bm{\omega})\, d\bm{\omega},
\end{align*}
where in the last step we used the theorem of Plancherel. Observing \eqref{asymptotic_variance}, we thus obtain
\begin{align*}
\tau_{\text{H}_0}^2 
&=(2\pi)^2 \sum_{\bm{m}=-\infty}^{\infty} \left( \frac{8\, H_2(\bm{m})^2}{H_2(\bm{0})^2}+\frac{2\, H_2(\bm{m})^4}{H_2(\bm{0})^4}\right) \int_{\R^2}    f^4(\bm{\omega})\, d\bm{\omega}\\
&+8\, (2\pi)^2 \sum_{\bm{m}=-\infty}^{\infty} \frac{H_2(\bm{m})^2}{H_2(\bm{0})^2} \int_{\R^2} f^4(\bm{\omega})\, d\bm{\omega} -2\times 8\, (2\pi)^2 \sum_{\bm{m}=-\infty}^{\infty} \frac{H_2(\bm{m})^2}{H_2(\bm{0})^2} \int_{\R^2} f^4(\bm{\omega})\, d\bm{\omega}.
\end{align*}
This proves the remaining statement \eqref{asymptotic_variance} and completes the proof of Proposition \ref{asymptotic_variances}.
\qed

\section{Proof of Theorem \ref{estimator_variance_H0}}

\setcounter{equation}{0}

It is sufficient to show  the statement \eqref{det8}, then the consistency of the estimate $\hat{\tau}_{\text{H}_0,\lambda,a}^2$
in \eqref{var_estimate} follows from the defintion 
of $
\tau_{\text{H}_0}^2$ in \eqref{det7}.
For a proof of \eqref{det8} we show the statements
\begin{align} \label{to_show_1}
\E[\hat{F}_{\lambda,a}]=\int_{\R^2} f^4(\bm{\omega})\, d\bm{\omega} + o(1)
\end{align}
and
\begin{align} \label{to_show_2}
\Var[\hat{F}_{\lambda,a}]=o(1)
\end{align}
as $a,\lambda,n\to\infty$. 
For the proof of \eqref{to_show_1}, note that
\begin{align*}
\hat{F}_{\lambda,a} &=I_{1,\lambda,a}+I_{2,\lambda,a},
\end{align*}
where 
\begin{align*}
I_{1,\lambda,a}&:= \frac{(2\pi)^2 \lambda^6}{24 n^8 H_2(\bm{0})^4} \sum_{\bm{k}=-a}^{a-1} \sum_{\substack{j_1,\ldots,j_8\in\{1,\ldots,n\}:\\ j_1,\ldots,j_8\, \text{pairwise different}}} h\left(\frac{\bm{s}_{j_1}}{\lambda}\right) \ldots h\left(\frac{\bm{s}_{j_8}}{\lambda}\right) Z(\bm{s}_{j_1})\ldots Z(\bm{s}_{j_8}) \\
&\phantom{================}\times \exp\big(\im(\bm{s}_{j_1}-\bm{s}_{j_2}+\bm{s}_{j_3}-\bm{s}_{j_4}+\bm{s}_{j_5}-\bm{s}_{j_6}+\bm{s}_{j_7}-\bm{s}_{j_8})^T \tilde{\bm{\omega}}_{\bm{k}}\big)
\end{align*}
and
\begin{align*}
I_{2,\lambda,a}&:=\frac{(2\pi)^2 \lambda^6}{24 n^8 H_2(\bm{0})^4} \sum_{\bm{k}=-a}^{a-1} \sum_{\substack{j_1,\ldots,j_8\in\{1,\ldots,n\}:\\j_r=j_t \text{ for some } r,t\in\{1,\ldots,8\}}} h\left(\frac{\bm{s}_{j_1}}{\lambda}\right) \ldots h\left(\frac{\bm{s}_{j_8}}{\lambda}\right) Z(\bm{s}_{j_1})\ldots Z(\bm{s}_{j_8}) \\
&\phantom{================}\times \exp\big(\im(\bm{s}_{j_1}-\bm{s}_{j_2}+\bm{s}_{j_3}-\bm{s}_{j_4}+\bm{s}_{j_5}-\bm{s}_{j_6}+\bm{s}_{j_7}-\bm{s}_{j_8})^T \tilde{\bm{\omega}}_{\bm{k}}\big).
\end{align*}
We will show that
\begin{align} \label{I}
\E[I_{1,\lambda,a}] = \int_{\R^2} f^4(\bm{\omega})\, d\bm{\omega} + \Landau\left(\frac{1}{n}+\frac{1}{\lambda^2}+\left(\frac{\lambda}{a}\right)^{3+4\delta}\right)
\end{align}
and
\begin{align} \label{II}
\E[I_{2,\lambda,a}]=\Landau\left(\frac{\lambda^2}{n}\right),
\end{align}
from which \eqref{to_show_1} follows by Assumptions \ref{assumption_on_sampling_scheme} and \ref{assumptions_on_a}. 
For $t\in\N$ and $c=1,\ldots,8$, we set 
\begin{align} \label{Y_tc_for_variance_estimator}
Y_{t,c}(\underline{j}):=h\left(\frac{\bm{s}_{j_{c+8(t-1)}}}{\lambda}\right) Z(\bm{s}_{j_{c+8(t-1)}}) \exp\big((-1)^{c+1}\, \im\, \bm{s}_{j_{c+8(t-1)}}^T \tilde{\bm{\omega}}_{\bm{k}_t} \big)
\end{align}
and obtain
\begin{align*}
\E[I_{1,\lambda,a}]&=\frac{(2\pi)^2 \lambda^6}{24 n^8 H_2(\bm{0})^4} \sum_{\bm{k}=-a}^{a-1} \sum_{\substack{j_1,\ldots,j_8\in\{1,\ldots,n\}:\\ j_1,\ldots,j_8\, \text{pairwise different}}} \sum_{\bm{\nu}} \cum_2\big[Y_{1,c}(\underline{j})\, :\, (1,c)\in\nu_1\big]\times \ldots \\
&\phantom{======================}\times \cum_2\big[Y_{1,c}(\underline{j})\, :\, (1,c)\in\nu_4\big],
\end{align*}
where the sum is taken over all partitions $\bm{\nu}=\{\nu_1,\ldots,\nu_4\}$ of the table
\begin{align} \label{table_variance}
\begin{matrix} 
&(1,1) & (1,2) & (1,3) & (1,4) & (1,5) & (1,6) & (1,7) & (1,8).
\end{matrix}
\end{align}
We claim that the terms with partitions evoking no restriction at all are of highest order. More precisely, those are the partitions where each of the four sets consists of two frequencies of different signs, respectively. It is easy to see that there are $24$ possibilities for this situation. For illustration, we consider the partition
\begin{align*}
\bm{\nu}=\Big\{\{(1,1),(1,2)\},\{(1,3),(1,4)\},\{(1,5),(1,6)\},\{(1,7),(1,8)\}\Big\}.
\end{align*}
Recalling the definition of the quantity $c_{8,n}$ in \eqref{c_q,n}, the respective summand is then given by
\begin{align*}
&\phantom{=i}\frac{(2\pi)^2 \lambda^6}{24 n^8 H_2(\bm{0})^4} \sum_{\bm{k}=-a}^{a-1} \sum_{\substack{j_1,\ldots,j_8\in\{1,\ldots,n\}:\\ j_1,\ldots,j_8\, \text{pairwise different}}} \E\Bigg[h\left(\frac{\bm{s}_{j_1}}{\lambda}\right) \ldots h\left(\frac{\bm{s}_{j_8}}{\lambda}\right) c(\bm{s}_{j_1}-\bm{s}_{j_2}) c(\bm{s}_{j_3}-\bm{s}_{j_4})\\
&\phantom{====================}\times c(\bm{s}_{j_5}-\bm{s}_{j_6}) c(\bm{s}_{j_7}-\bm{s}_{j_8}) \\
&\phantom{====================}\times \exp\big(\im(\bm{s}_{j_1}-\bm{s}_{j_2}+\bm{s}_{j_3}-\bm{s}_{j_4}+\bm{s}_{j_5}-\bm{s}_{j_6}+\bm{s}_{j_7}-\bm{s}_{j_8})^T \tilde{\bm{\omega}}_{\bm{k}}\big)\Bigg]\\
&=\frac{c_{8,n}}{24(2\pi)^6 H_2(\bm{0})^4 \lambda^{10}} \sum_{\bm{k}=-a}^{a-1} \int_{\R^8} B(\bm{x}_1+\tilde{\bm{\omega}}_{\bm{k}})^2 \ldots B(\bm{x}_4+\tilde{\bm{\omega}}_{\bm{k}})^2 f(\bm{x}_1) \ldots f(\bm{x}_4)\, d\bm{x}_1 \ldots d\bm{x}_4\\
&=\frac{1}{24(2\pi\lambda)^8 H_2(\bm{0})^4} \int_{\R^8} B(\bm{u}_1)^2 \ldots B(\bm{u}_4)^2  \left(\int_{[-2\pi a/\lambda,2\pi a/\lambda]^2} f^4(\bm{\omega}) \, d\bm{\omega}\right)\, d\bm{u}_1 \ldots d\bm{u}_4+ \Landau\left(\frac{1}{n}+\frac{1}{\lambda^2}\right)\\
&=\frac{1}{24}\int_{\R^2} f^4(\bm{\omega})\, d\bm{\omega} + \Landau\left(\frac{1}{n}+\frac{1}{\lambda^2}+\left(\frac{\lambda}{a}\right)^{3+4\delta}\right),
\end{align*}
where we used analogous arguments as in the proof of Theorem \ref{expectation_theo}.
The same calculation works for all $24$ partitions evoking no restriction at all, so in order to prove \eqref{I}, it suffices to show that all terms corresponding to the remaining partitions are of lower order. More precisely, it will become clear that as soon as we introduce a restriction, i.e. if at least one set of the partition contains two frequencies of the same sign, we obtain an order of $\Landau(1/\lambda^2)$. For illustration, consider the partition
\begin{align*}
\bm{\nu}=\Big\{\{(1,1),(1,3)\},\{(1,2),(1,4)\},\{(1,5),(1,6)\},\{(1,7),(1,8)\}\Big\}.
\end{align*}
With the same arguments as above, the respective summand is then given by
\begin{align*}
&\phantom{=i}\frac{(2\pi)^2 \lambda^6}{24 n^8 H_2(\bm{0})^4} \sum_{\bm{k}=-a}^{a-1} \sum_{\substack{j_1,\ldots,j_8\in\{1,\ldots,n\}:\\ j_1,\ldots,j_8\, \text{pairwise different}}} \E\Bigg[h\left(\frac{\bm{s}_{j_1}}{\lambda}\right) \ldots h\left(\frac{\bm{s}_{j_8}}{\lambda}\right) c(\bm{s}_{j_1}-\bm{s}_{j_3}) c(\bm{s}_{j_2}-\bm{s}_{j_4})\\
&\phantom{====================}\times c(\bm{s}_{j_5}-\bm{s}_{j_6}) c(\bm{s}_{j_7}-\bm{s}_{j_8}) \\
&\phantom{====================}\times \exp\big(\im(\bm{s}_{j_1}-\bm{s}_{j_2}+\bm{s}_{j_3}-\bm{s}_{j_4}+\bm{s}_{j_5}-\bm{s}_{j_6}+\bm{s}_{j_7}-\bm{s}_{j_8})^T \tilde{\bm{\omega}}_{\bm{k}}\big)\Bigg]\\
&=\frac{c_{8,n}}{24(2\pi)^6 H_2(\bm{0})^4 \lambda^{10}} \sum_{\bm{k}=-a}^{a-1} \int_{\R^8} B(\bm{u}_1) B(\bm{u}_1-2\tilde{\bm{\omega}}_{\bm{k}}) B(\bm{u}_2) B(\bm{u}_2-2\tilde{\bm{\omega}}_{\bm{k}}) B(\bm{u}_3)^2 B(\bm{u}_4)^2\\
&\phantom{============ii===}  f(\bm{u}_1-\tilde{\bm{\omega}}_{\bm{k}}) \ldots f(\bm{u}_4-\tilde{\bm{\omega}}_{\bm{k}})  \, d\bm{u}_1 \ldots d\bm{u}_4.
\end{align*}
Taking absolutes and using Lemma \ref{bounds_for_B}, this expression is (ignoring constants) bounded by
\begin{align*}
&\phantom{=i}\frac{1}{\lambda^{10}} \sum_{\bm{k}=-a}^{a-1} \int_{\R^8} \big| B(\bm{u}_1) B(\bm{u}_1-\bm{\omega}_{2\bm{k}+1}) B(\bm{u}_2) B(\bm{u}_2-\bm{\omega}_{2\bm{k}+1}) B(\bm{u}_3)^2 B(\bm{u}_4)^2 \big| \, d\bm{u}_1 \ldots d\bm{u}_4 \\
&= \frac{1}{\lambda^2} \times \frac{1}{\lambda^4} \int_{\R^4} B(\bm{u}_3)^2 B(\bm{u}_4)^2\, d\bm{u}_3 d\bm{u}_4\\
&\phantom{===============} \times \frac{1}{\lambda^4} \sum_{\bm{k}=-a}^{a-1} \int_{\R^4} \big|B(\bm{u}_1) B(\bm{u}_1-\bm{\omega}_{2\bm{k}+1}) B(\bm{u}_2) B(\bm{u}_2-\bm{\omega}_{2\bm{k}+1}) \big|\, d\bm{u}_1 d\bm{u}_2\\
&=\Landau\left(\frac{1}{\lambda^2}\right).
\end{align*}
The same calculation works for all other partitions evoking at least one restriction as well and we have thus shown \eqref{I}. 
The proof of \eqref{II} works exactly in the same way as the proof of Lemma \ref{order_depending_on_number_of_restr}, where we have shown for a very similar cumulant expression that setting two or more elements in $\underline{j}$ equal leads to an order change of at most $\lambda^d/n$, where $d$ is the dimension. The details are omitted for the sake of brevity. Therefore, we have proven \eqref{to_show_1} and we will now deal with the variance of the estimator $\hat{F}_{\lambda,a}$. By definition of the random variables $Y_{t,c}(\underline{j})$ in \eqref{Y_tc_for_variance_estimator}, we obtain
\begin{align*}
\lambda^2 \Var\big[\hat{F}_{\lambda,a}\big]&= \frac{(2\pi)^4 \lambda^{14}}{24^2 n^{16} H_2(\bm{0})^8} \sum_{\bm{k}_1,\bm{k}_2 = -a}^{a-1} \sum_{j_1,\ldots,j_{16}=1}^n \cum\bigg[h\left(\frac{\bm{s}_{j_1}}{\lambda}\right) \ldots h\left(\frac{\bm{s}_{j_8}}{\lambda}\right) Z(\bm{s}_{j_1}) \ldots Z(\bm{s}_{j_8})\\
&\phantom{============} \exp\big(\im (\bm{s}_{j_1}-\bm{s}_{j_2}+\bm{s}_{j_3}-\bm{s}_{j_4}+\bm{s}_{j_5}-\bm{s}_{j_6}+\bm{s}_{j_7}-\bm{s}_{j_8})^T \tilde{\bm{\omega}}_{\bm{k}_1}\big),\\
&\phantom{============}  h\left(\frac{\bm{s}_{j_9}}{\lambda}\right) \ldots h\left(\frac{\bm{s}_{j_{16}}}{\lambda}\right) Z(\bm{s}_{j_9}) \ldots Z(\bm{s}_{j_{16}})\\
&\phantom{============} \exp\big(\im (\bm{s}_{j_9}-\bm{s}_{j_{10}}+\bm{s}_{j_{11}}-\bm{s}_{j_{12}}+\bm{s}_{j_{13}}-\bm{s}_{j_{14}}+\bm{s}_{j_{15}}-\bm{s}_{j_{16}})^T \tilde{\bm{\omega}}_{\bm{k}_2}\big)\bigg]\\
&=\frac{(2\pi)^4 \lambda^{14}}{24^2 n^{16} H_2(\bm{0})^8} \sum_{\bm{k}_1,\bm{k}_2 = -a}^{a-1} \sum_{\bm{\nu}=\{\nu_1,\ldots,\nu_G\}\in\mathcal{I}} \sum_{i=1}^{8+G} \sum_{\underline{j}\in\mathcal{D}(i)} \cum_{|\nu_1|}\big[Y_{t,c}(\underline{j}) \, : \, (t,c)\in\nu_1\big] \\
&\phantom{=============ii======}\times \ldots \times \cum_{|\nu_G|}\big[Y_{t,c}(\underline{j}) \, : \, (t,c)\in\nu_G\big],
\end{align*}
where $\mathcal{I}$ is the set of indecomposable partitions of the table 
\begin{align*}
\begin{matrix}
(1,1) & (1,2) & (1,3) & (1,4) & (1,5) & (1,6) & (1,7) & (1,8)\\
(2,1) & (2,2) & (2,3) & (2,4) & (2,5) & (2,6) & (2,7) & (2,8)
\end{matrix}
\end{align*}
and 
\begin{align*}
\mathcal{D}(i):=\{\underline{j}=(j_1,\ldots,j_{16})\in\{1,\ldots,n\}^{16}\, : \, i \text{ elements in } \underline{j} \text{ are different}\}.
\end{align*}
Here, we applied results A), B) and C) from the proof of the second part of  Theorem \ref{expectation_theo}. Furthermore, we used that any tupel $\underline{j}=(j_1,\ldots,j_{16})\in\{1,\ldots,n\}^{16}$ contributing a nonzero term to the above sum can have at most
\begin{align*}
\sum_{g=1}^G \left(\frac{|\nu_g|}{2}+1\right) = 8+G
\end{align*}
different elements.
The proof now works analogously as the proof of the second part of Theorem \ref{expectation_theo}: The above expression is of highest order for partitions consisting of $G=8$ sets and $i=16$ different elements that evoke exactly one restriction, which is necessary to make the partition indecomposable. For illustration, we consider the partition
\begin{align*}
\bm{\nu}=\Big\{&\{(1,1),(1,2)\},\{(1,3),(1,4)\},\{(1,5),(1,6)\},\{(1,7),(2,8)\},\\
&\{(1,8),(2,7)\},\{(2,1),(2,2)\},\{(2,3),(2,4)\},\{(2,5),(2,6)\}\Big\}.
\end{align*}
Recalling the definition of the quantity $c_{16,n}$ in \eqref{c_q,n}, the respective term equals
\begin{align*}
&\phantom{=i}\frac{(2\pi)^4 \lambda^{14} c_{16,n}}{24^2 (2\pi)^{16} H_2(\bm{0})^8} \sum_{\bm{k}_1,\bm{k}_2=-a}^{a-1} \frac{1}{\lambda^{32}} \int_{\R^{16}} B(\bm{x}_1+\tilde{\bm{\omega}}_{\bm{k}_1})^2 B(\bm{x}_2+\tilde{\bm{\omega}}_{\bm{k}_1})^2 B(\bm{x}_3+\tilde{\bm{\omega}}_{\bm{k}_1})^2 B(\bm{x}_4+\tilde{\bm{\omega}}_{\bm{k}_2})^2\\
& \phantom{====================}B(\bm{x}_5+\tilde{\bm{\omega}}_{\bm{k}_2})^2 B(\bm{x}_6+\tilde{\bm{\omega}}_{\bm{k}_2})^2 B(\bm{x}_7+\tilde{\bm{\omega}}_{\bm{k}_1}) B(\bm{x}_7+\tilde{\bm{\omega}}_{\bm{k}_2}) \\
& \phantom{====================}B(\bm{x}_8+\tilde{\bm{\omega}}_{\bm{k}_1}) B(\bm{x}_8+\tilde{\bm{\omega}}_{\bm{k}_2})  f(\bm{x}_1)\ldots f(\bm{x}_8) \, d\bm{x}_1 \ldots d\bm{x}_8\\
&=\frac{c_{16,n}}{24^2 (2\pi)^{14} \lambda^{16} H_2(\bm{0})^8} \sum_{\bm{m}=-2a+1}^{2a-1} \int_{\R^{16}} B(\bm{u}_1)^2 \ldots B(\bm{u}_6)^2 B(\bm{u}_7) B(\bm{u}_7+\bm{\omega}_{\bm{m}}) B(\bm{u}_8) B(\bm{u}_8+\bm{\omega}_{\bm{m}})\\
&\phantom{=======}\Bigg[\left(\frac{2\pi}{\lambda}\right)^2 \sum_{\bm{k}_1 = \max(-a,-a-\bm{m})}^{\min(a-1,a-1-\bm{m})} f(\bm{u}_1-\tilde{\bm{\omega}}_{\bm{k}_1}) f(\bm{u}_2-\tilde{\bm{\omega}}_{\bm{k}_1}) f(\bm{u}_3-\tilde{\bm{\omega}}_{\bm{k}_1}) f(\bm{u}_4-(\tilde{\bm{\omega}}_{\bm{k}_1} + \bm{\omega_m})) \\
&\phantom{=========} f(\bm{u}_5-(\tilde{\bm{\omega}}_{\bm{k}_1} + \bm{\omega_m}))  f(\bm{u}_6-(\tilde{\bm{\omega}}_{\bm{k}_1} + \bm{\omega_m}))  f(\bm{u}_7-\tilde{\bm{\omega}}_{\bm{k}_1}) f(\bm{u}_8-\tilde{\bm{\omega}}_{\bm{k}_1})\Bigg]\, d\bm{u}_1 \ldots d\bm{u}_8,
\end{align*}
and using Lemma \ref{bounds_for_B}, it can immediately be seen that this term is of order $\Landau(1)$. Introducing any further restriction evokes an order change of $1/\lambda^2$, while setting two elements equal leads to an order change of at most $\lambda^2/n$. Moreover, considering any partition consisting of less than $G=8$ groups leads to an order change of $\lambda^2/n$ as well (see Corollary \ref{result_for_q=2}, which can be proven analogously in the present case). This shows that  
\begin{align*}
\Var(\hat{F}_{\lambda,a}) = \Landau\left(\frac{1}{\lambda^2}\right), 
\end{align*}
which proves \eqref{to_show_2} and completes the proof of Theorem \ref{estimator_variance_H0}.
\qed

\section{Proofs of the results in Section \ref{sec5}}

\setcounter{equation}{0}

\subsection{Proof of Proposition \ref{prop_umschreiben_D1}}

First, note that for any quantities $A_1,\ldots,A_n$ 
\begin{align*}
\sum_{(j_1,\ldots,j_4)\in\tilde{\mathcal{E}}} A_{j_1} A_{j_2} A_{j_3} A_{j_4} = \sum_{\substack{j_1\neq j_2\\ j_3\neq j_4}} A_{j_1} A_{j_2} A_{j_3} A_{j_4}  - \sum_{\substack{j_1\neq j_2\\ j_3\neq j_4\\ j_2=j_3}} A_{j_1} A_{j_2} A_{j_3} A_{j_4}  -  \sum_{\substack{j_1\neq j_2\\ j_3\neq j_4\\ j_2\neq j_3 \\ j_1=j_4}} A_{j_1} A_{j_2} A_{j_3} A_{j_4}.
\end{align*}
Setting
\begin{align*}
X(\bm{s}_j)=h\left(\frac{\bm{s}_j}{\lambda}\right) Z(\bm{s}_j),
\end{align*}
we thus obtain for fixed $\bm{k}=(k_1,k_2)^T$ with $k_i\in\{-a,\ldots,a-1\}$ for $i=1,2$:
\begin{align*}
&\phantom{=i}\sum_{(j_1,\ldots,j_4)\in\tilde{\mathcal{E}}} h\left(\frac{\bm{s}_{j_1}}{\lambda}\right)\ldots h\left(\frac{\bm{s}_{j_4}}{\lambda}\right)  Z(\bm{s}_{j_1}) \ldots Z(\bm{s}_{j_4}) \exp\big(\im (\bm{s}_{j_1}-\bm{s}_{j_2}+\bm{s}_{j_3}-\bm{s}_{j_4})^T \tilde{\bm{\omega}}_{\bm{k}}\big) \\
&=\Big (\sum_{j_1\neq j_2} X(\bm{s}_{j_1}) X(\bm{s}_{j_2}) \exp\big(\im (\bm{s}_{j_1}-\bm{s}_{j_2})^T \tilde{\bm{\omega}}_{\bm{k}}\big) \Big )^2 - \sum_{\substack{j_1\neq j_2 \\ j_2 \neq j_4}} X(\bm{s}_{j_1}) X^2(\bm{s}_{j_2}) X(\bm{s}_{j_4}) \exp\big(\im (\bm{s}_{j_1}-\bm{s}_{j_4})^T \tilde{\bm{\omega}}_{\bm{k}}\big)\\
&\phantom{=i}-\sum_{\substack{j_1\neq j_2\\ j_1 \neq j_3 \\ j_2\neq j_3}} X^2(\bm{s}_{j_1}) X(\bm{s}_{j_2}) X(\bm{s}_{j_3}) \exp\big(\im (\bm{s}_{j_3}-\bm{s}_{j_2})^T \tilde{\bm{\omega}}_{\bm{k}}\big)\\
&=\Big (\Big |\sum_{j=1}^n X(\bm{s}_j) \exp\big(\im \bm{s}_j^T \tilde{\bm{\omega}}_{\bm{k}}\big)\Big |^2 - \sum_{j=1}^n X^2(\bm{s}_j)\Big )^2 - \sum_{j_2=1}^n X^2(\bm{s}_{j_2}) \Big | \sum_{j\neq j_2} X(\bm{s}_j) \exp\big(\im \bm{s}_j^T \tilde{\bm{\omega}}_{\bm{k}}\big) \Big |^2\\
&\phantom{=i}-\sum_{j_1=1}^n X^2(\bm{s}_{j_1}) \Big (\Big |\sum_{j\neq j_1} X(\bm{s}_j) \exp\big(\im \bm{s}_j^T \tilde{\bm{\omega}}_{\bm{k}}\big)\Big |^2 - \sum_{j\neq j_1} X^2(\bm{s}_j)\Big ). 
\end{align*}
This yields the claim.

\subsection{Proof of Proposition \ref{prop_eff_D1}}

By Proposition \ref{approx_D1} it is sufficient to show
\begin{align*}
\frac{\lambda}{\tau_{1,2,\lambda,a}} \big (\hat{D}_{1,\lambda,a}^{\mathrm{eff}}- D_{1,2,\lambda,a } \big ) \dn \mathcal{N}(0,1) \qquad \text{as } a,\lambda,n\rightarrow \infty.
\end{align*}
It is easy to see that 
\begin{align*}
\lambda^{q}\, \cum_q\big(\hat{D}_{1,\lambda,a}^{\text{eff}}\big)=\begin{cases}
\Landau(\lambda), &\qquad q=1,\\
\Landau(1), &\qquad q=2,\\
o(1), &\qquad q\geq 3,
\end{cases}
\end{align*}
since for the proof of equation \eqref{cumulant_orders} in the proof of Theorem \ref{asymptotic_normality}, we do not make use of the assumption that $j_1,\ldots,j_4$ are pairwise different. Moreover, with the same argument we obtain
\begin{align*}
\lambda^2 \Var\left(\hat{D}_{1,\lambda,a}^{\mathrm{eff}}\right) = \tau_{1,\lambda,a}^2 + o(1),
\end{align*}
so it remains to show that 
$
\lambda \, \E\big[\hat{D}_{1,\lambda,a}^{\text{eff}}-
D_{1,2,\lambda,a}
\big]\rightarrow 0 
$
as $a,\lambda,n\to\infty.$
Observing the decomposition
\begin{align*} 
\hat{D}_{1,\lambda,a}^{\text{eff}}=\hat{D}_{1,2,\lambda,a}+R_{1,\lambda,a}
\end{align*}
for
\begin{align*}
R_{1,\lambda,a}&:=\frac{(2\pi\lambda)^2}{2n^4 H_2(\bm{0})^2} \sum_{\bm{k}=-a}^{a-1} \sum_{(j_1,j_2,j_3,j_4)\in\tilde{\mathcal{E}}\setminus \mathcal{E}} h\left(\frac{\bm{s}_{j_1}}{\lambda}\right) \ldots h\left(\frac{\bm{s}_{j_4}}{\lambda}\right) Z(\bm{s}_{j_1}) \ldots Z(\bm{s}_{j_4})\\
&\phantom{==================} \exp\big(\im (\bm{s}_{j_1}-\bm{s}_{j_2}+\bm{s}_{j_3}-\bm{s}_{j_4})^T \tilde{\bm{\omega}}_{\bm{k}}\big)
\end{align*}
and using Theorem \ref{expectation_theo}, it thus suffices to prove that
\begin{align} \label{efficient_D1}
\E\big[R_{1,\lambda,a}\big]=o(\lambda^{-1}).
\end{align}

To see why \eqref{efficient_D1} holds, first note that in general,
the expected value of any term with $j_k=j_l$ for some $k\neq l$ is of order $\Landau(\lambda^2/n)$.
However, the term $R_{1,\lambda,a}$ only allows for the equalities $j_1=j_3$ or $j_2=j_4$ (or both).  
Since both $j_1$ and $j_3$ correspond to a frequency of positive sign, any set of the partition containing both $j_1$ and $j_3$ will evoke a restriction and therefore a further order change of $\lambda^{-2}$ (see Lemma \ref{order_depending_on_number_of_restr}). 
The same holds for any set containing both $j_2$ and $j_4$, since they both correspond to a frequency of negative sign. If $j_k=j_l$ with $j_k$ and $j_l$ not belonging to the same set, we immediately obtain an order of $O(1/n)$ (see the proof of Lemma \ref{order_depending_on_number_of_restr}). 
Therefore, it holds that
\begin{align*} 
\E\big[R_{1,\lambda,a}\big]=\Landau\left(\frac{1}{n}\right),
\end{align*}
and since by assumption $\lambda^2/n\to 0$, 
this shows \eqref{efficient_D1} and completes the proof of Proposition \ref{prop_eff_D1}.

\subsection{Proof of Proposition \ref{prop_eff_D2}}

By Proposition \eqref{approx_D2} it is sufficient to show
\begin{align*}
\frac{\lambda}{\tau_{2,\lambda,a}} \big (\hat{D}_{2,\lambda,a}^{\mathrm{eff}}- D_{2,2,\lambda,a}\big ) \dn \mathcal{N}(0,1) \qquad \text{as } a,\lambda,n\rightarrow \infty .
\end{align*}
Note that equation \eqref{to_show_sec_int} in the proof of Theorem \ref{asymptotic_normality_secint} is not only satisfied for the estimate $\hat{D}_{2,\lambda,a}$, but also for $\hat{D}_{2,\lambda,a}^{\text{eff}}$, since in the proof of \eqref{to_show_sec_int} we do not make use of the assumption that $j_1,\ldots,j_4$ are pairwise different. Moreover, with the same argument we obtain
\begin{align*}
\lambda^2 \Var\big (\hat{D}_{2,\lambda,a}^{\mathrm{eff}}\big )=\tau_{2,\lambda,a}^2 + o(1).
\end{align*}

What remains to be shown is 
\begin{align*} 
\lambda \, \E\big[\hat{D}_{2,\lambda,a}^{\text{eff}}-D_{2,2,\lambda,a}\big] \rightarrow 0 \qquad \text{as } \lambda,a,n\to\infty.
\end{align*}
It obviously holds that
 $
\hat{D}_{2,\lambda,a}^{\text{eff}} = \hat{D}_{2,2,\lambda,a} + R_{2,\lambda,a},
$
where
\begin{align*}
R_{2,\lambda,a}&:=\frac{1}{\lambda} \sum_{r=0}^{a-1} \tilde{\omega}_r \Big[\Big (\frac{2\pi}{\lambda}\Big  )^4 \sum_{\bm{k}=-a}^{a-1} \sum_{\bm{\ell}=-a}^{a-1} \frac{\lambda^4}{n^4 H_2(\bm{0})^2} \sum_{(j_1,j_2,j_3,j_4)\in \tilde{\tilde{\mathcal{E}}}\setminus \mathcal{E}} h\Big (\frac{\bm{s}_{j_1}}{\lambda}\Big )\ldots h\Big (\frac{\bm{s}_{j_4}}{\lambda}\Big  ) \\
&Z(\bm{s}_{j_1}) \ldots Z(\bm{s}_{j_4}) \exp\big(\im (\bm{s}_{j_1}-\bm{s}_{j_2})^T \tilde{\bm{\omega}}_{\bm{k}}\big) \exp\big(\im(\bm{s}_{j_3}-\bm{s}_{j_4})^T \tilde{\bm{\omega}}_{\bm{\ell}}\big) J_0(\tilde{\omega}_r \|\tilde{\bm{\omega}}_{\bm{k}}\|) J_0(\tilde{\omega}_r \|\tilde{\bm{\omega}}_{\bm{\ell}}\|) \Big].
\end{align*}
By Theorem \ref{expect_theo_sec_int}, it thus suffices to prove
\begin{align}  \label{efficient_D2}
\E\big[R_{2,\lambda,a}\big]=o(\lambda^{-1}),
\end{align}
which follows from similar arguments as \eqref{efficient_D1}: First recall that $\E[\hat{D}_{2,2,\lambda,a}]=\Landau(1)$, and by definition of the set $\mathcal{E}$ in the estimator $\hat{D}_{2,2,\lambda,a}$, this order corresponds to terms where all $j_1,\ldots,j_4$ are pairwise different. In general, decreasing the number of different elements in $(j_1,\ldots,j_4)$ by $1$ leads to an order change of $\lambda^2/n$, which follows with the same arguments as in Lemma \ref{order_depending_on_number_of_restr}. 
However, the estimator $\hat{D}_{2,2,\lambda,a}^{\text{eff}}$ only allows for combinations of $j_1,\ldots,j_4$ with $j_1\neq j_2$ and $j_3\neq j_4$, so the term $R_{2,\lambda,a}$ requires $j_1\neq j_2$ and $j_3\neq j_4$, but at least one of the equalities $j_1=j_3$, $j_1=j_4$, $j_2=j_3$, or $j_2=j_4$.
If appearing within the same set, each of these constraints necessarily leads to a restriction between the frequencies $\tilde{\bm{\omega}}_{\bm{k}}$ and $\tilde{\bm{\omega}}_{\bm{\ell}}$, which then yields a further order change of $a^2/\lambda^4$ (compare to the proof of Theorem \ref{expect_theo_sec_int}). Otherwise, we directly obtain an order of $\Landau(1/n)$. We thus obtain
\begin{align*}
\E\big[R_{2,\lambda,a}\big]=\Landau\left(\frac{\lambda^2}{n}\times \frac{a^2}{\lambda^4}\right),
\end{align*}
which is of order $o(\lambda^{-1})$ since we assumed $a^2/\lambda^3\to 0$ and $\lambda^2/n\to 0$. This shows \eqref{efficient_D2} and thus the statement of Proposition \ref{prop_eff_D2}.

\setcounter{equation}{0}
\section{Auxiliary results}


\begin{lemma} \label{Riemann_sum}
Assume that $f:\R^d\rightarrow \R$ is twice differentiable and satisfies 
\begin{align} \label{second_deriv_of_f}
|f(\bm{\omega})|\leq \beta_{1+\delta}(\bm{\omega}), \qquad \left|\frac{\partial f(\bm{\omega})}{\partial \omega_i}\right|\leq \beta_{1+\delta}(\bm{\omega}), \quad \text{and} \quad  \left|\frac{\partial^2 f(\bm{\omega})}{\partial^2 \omega_i}\right|\leq \beta_{1+\delta}(\bm{\omega}),\qquad i=1,\ldots,d,
\end{align}
for some $\delta>0$, where the function $\beta_{1+\delta}$ is defined in \eqref{beta_fct}, and let $a,\lambda\to\infty$ with $a/\lambda\to\infty$. 
Then, for arbitrary $d\geq 1$ and uniformly over $\bm{u}=(u_1,\ldots,u_d)^T\in\R^d$, $\bm{v}=(v_1,\ldots,v_d)^T\in\R^d$ and $a\in\N$, we have
\begin{align*}
&\phantom{i=}\Bigg|\left(\frac{2\pi}{\lambda}\right)^d \sum_{k_1,\ldots,k_d=-a}^{a-1} f\left(\tilde{\omega}_{k_1}-u_1,\ldots,\tilde{\omega}_{k_d}-u_d\right) f(\tilde{\omega}_{k_1}-v_1,\ldots,\tilde{\omega}_{k_d}-v_d)\\
&\phantom{=====}- \int_{[-2\pi a/\lambda,2\pi a/\lambda]^d} f(\omega_1-u_1,\ldots,\omega_d-u_d) f(\omega_1-v_1,\ldots,\omega_d-v_d)\, d\omega_1\ldots d\omega_d \Bigg| \lesssim \frac{1}{\lambda^2},
\end{align*} 
where the vector of shifted Fourier frequencies $(\tilde{\omega}_{k_1},\ldots,\tilde{\omega}_{k_d})^T=(\tilde{\omega}_{k_1,\lambda},\ldots,\tilde{\omega}_{k_d,\lambda})^T$ is defined in \eqref{shifted_Four_freq}. 
\end{lemma}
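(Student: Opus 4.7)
\textbf{Plan for the proof of Lemma \ref{Riemann_sum}.} The key observation is that the shifted Fourier frequencies $\tilde{\omega}_{k,\lambda}=2\pi k/\lambda+\pi/\lambda$ are exactly the midpoints of the intervals $I_k := [2\pi k/\lambda,\, 2\pi(k+1)/\lambda]$ of length $2\pi/\lambda$, and $\bigcup_{k=-a}^{a-1} I_k = [-2\pi a/\lambda,\, 2\pi a/\lambda]$. Setting $F(\bm{\omega}):= f(\bm{\omega}-\bm{u})\, f(\bm{\omega}-\bm{v})$ and $C_{\bm{k}}:=\prod_{i=1}^d I_{k_i}$, the quantity to be bounded is exactly the $d$-dimensional midpoint-rule approximation error
\begin{equation*}
\sum_{\bm{k}=-a}^{a-1}\Big[\Big(\tfrac{2\pi}{\lambda}\Big)^d F(\tilde{\bm{\omega}}_{\bm{k}})-\int_{C_{\bm{k}}}F(\bm{\omega})\,d\bm{\omega}\Big].
\end{equation*}
So the plan is: (i) control the error on each cube $C_{\bm{k}}$ by Taylor expansion of $F$ at the midpoint $\tilde{\bm{\omega}}_{\bm{k}}$, exploiting the symmetry of $C_{\bm{k}}$ around $\tilde{\bm{\omega}}_{\bm{k}}$ to kill the linear term; and (ii) sum the resulting bounds using the decay of $\beta_{1+\delta}$.

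For step (i), I would write $F(\bm{\omega})=F(\tilde{\bm{\omega}}_{\bm{k}})+\nabla F(\tilde{\bm{\omega}}_{\bm{k}})^T (\bm{\omega}-\tilde{\bm{\omega}}_{\bm{k}})+\frac12 (\bm{\omega}-\tilde{\bm{\omega}}_{\bm{k}})^T \nabla^2 F(\xi_{\bm{\omega}})(\bm{\omega}-\tilde{\bm{\omega}}_{\bm{k}})$ for some $\xi_{\bm{\omega}} \in C_{\bm{k}}$ by the integral form of Taylor's theorem. Integrating over $C_{\bm{k}}$, the constant term reproduces $(2\pi/\lambda)^d F(\tilde{\bm{\omega}}_{\bm{k}})$ and the linear term vanishes by oddness over the centred cube. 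Thus the error on $C_{\bm{k}}$ is bounded by
\begin{equation*}
\frac12\int_{C_{\bm{k}}}\big|(\bm{\omega}-\tilde{\bm{\omega}}_{\bm{k}})^T \nabla^2 F(\xi_{\bm{\omega}})(\bm{\omega}-\tilde{\bm{\omega}}_{\bm{k}})\big|\,d\bm{\omega}\leq C\,\Big(\tfrac{2\pi}{\lambda}\Big)^{d+2}\sup_{\xi\in C_{\bm{k}}}\|\nabla^2 F(\xi)\|.
\end{equation*}
By the product rule the entries of $\nabla^2 F$ are sums of terms of the form $\partial^{\alpha} f(\bm{\omega}-\bm{u})\,\partial^{\beta}f(\bm{\omega}-\bm{v})$ with $|\alpha|+|\beta|\le 2$, and by assumption \eqref{second_deriv_of_f} each such term is bounded in absolute value by $\beta_{1+\delta}(\bm{\omega}-\bm{u})\beta_{1+\delta}(\bm{\omega}-\bm{v})$.

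For step (ii), summing the per-cube bound yields
\begin{equation*}
\text{total error}\;\lesssim\;\frac{1}{\lambda^2}\cdot\Big(\tfrac{2\pi}{\lambda}\Big)^d\sum_{\bm{k}}\sup_{\xi\in C_{\bm{k}}}\beta_{1+\delta}(\xi-\bm{u})\beta_{1+\delta}(\xi-\bm{v}).
\end{equation*}
Because $\beta_{1+\delta}$ is bounded, monotone away from the origin, and Lipschitz on compact annuli with constants of the same polynomial order, $(2\pi/\lambda)^d \sup_{\xi \in C_{\bm{k}}} \beta_{1+\delta}(\xi-\bm{u})\beta_{1+\delta}(\xi-\bm{v})$ is comparable (up to a constant independent of $\bm{k},\bm{u},\bm{v},\lambda$) to $\int_{C_{\bm{k}}} \beta_{1+\delta}(\bm{\omega}-\bm{u})\beta_{1+\delta}(\bm{\omega}-\bm{v})\,d\bm{\omega}$, so the whole sum is bounded by $\int_{\R^d}\beta_{1+\delta}(\bm{\omega}-\bm{u})\beta_{1+\delta}(\bm{\omega}-\bm{v})\,d\bm{\omega}$. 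Using the tensor structure $\beta_{1+\delta}(\bm{\omega})=\prod_i\beta_{1+\delta}(\omega_i)$ this factorises into $d$ one-dimensional convolutions $(\beta_{1+\delta}*\beta_{1+\delta})(v_i-u_i)$, each uniformly bounded since $\beta_{1+\delta}\in L^1(\R)\cap L^\infty(\R)$. This gives the desired $\Landau(\lambda^{-2})$ bound uniformly in $\bm{u},\bm{v},a$.

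The main obstacle I anticipate is the technical step of replacing the supremum of $\beta_{1+\delta}(\xi-\bm{u})\beta_{1+\delta}(\xi-\bm{v})$ over the small cube $C_{\bm{k}}$ by an integral average, uniformly in $\bm{k}$ and in $\bm{u},\bm{v}$. Near the origins $\xi\approx\bm{u}$ or $\xi\approx\bm{v}$ the function $\beta_{1+\delta}$ is merely bounded but has a cusp at $|\omega|=1$, and in the tails it decays polynomially; one has to split $C_{\bm{k}}$ according to the location of $\bm{u},\bm{v}$ and use that at scale $1/\lambda\ll 1$ the variation is controlled. Once this comparison lemma is in place, the rest of the argument is routine.
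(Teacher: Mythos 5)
Your plan is essentially the paper's argument: the paper also recognises the sum as a midpoint rule over the cells $[2\pi k/\lambda,2\pi(k+1)/\lambda]$, Taylor-expands the integrand around the midpoint so that the linear term integrates to zero, bounds the second derivative of the product by $\beta_{1+\delta}$ via \eqref{second_deriv_of_f} and the boundedness of $\beta_{1+\delta}$, and then controls the resulting Riemann sum $\frac{1}{\lambda}\sum_k \beta_{1+\delta}(\xi+2\pi k/\lambda - u)$ uniformly using the monotonicity and integrability of $\beta_{1+\delta}$ (your sup-to-average comparison is the same step in different clothing; the paper simplifies it slightly by absorbing one of the two $\beta$-factors into a constant).

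One caveat for $d\ge 2$: your direct multivariate Taylor expansion of $F=f(\cdot-\bm{u})f(\cdot-\bm{v})$ requires control of the full Hessian of $F$, whose off-diagonal entries contain the mixed partials $\partial^2 f/\partial\omega_i\partial\omega_j$. The hypothesis \eqref{second_deriv_of_f} as stated only bounds the pure second derivatives $\partial^2 f/\partial\omega_i^2$, so strictly speaking your step (i) is not covered by the lemma's assumptions when $d>1$. The paper avoids this by proving the $d=1$ case and handling higher dimensions by iterating the one-dimensional midpoint argument coordinate by coordinate (compare the proof of Lemma \ref{Riemann_f_sec_int}), which only ever invokes second derivatives in a single variable. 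The gap is harmless in context, since Assumption \ref{assumption_on_Z}(ii) does bound the mixed partials by $\beta_{1+\delta}$, but you should either add that hypothesis explicitly or switch to the coordinate-wise iteration to stay within \eqref{second_deriv_of_f}.
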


\begin{proof}
For the ease of exposition, we only prove the result for the case $d=1$, since the other cases can be proven similarly.
Set
\begin{align*}
A(u,v,\lambda,a):=\frac{2\pi}{\lambda} \sum_{k=-a}^{a-1} f\left(\tilde{\omega}_k-u\right) f\left(\tilde{\omega}_k-v\right)- \int_{-2\pi a/\lambda}^{2\pi a/\lambda} f(\omega-u) f(\omega-v)\, d\omega
\end{align*}
and note that
\begin{align*}
\int_{-2\pi a/\lambda}^{2\pi a/\lambda} f(\omega-u) f(\omega-v)\, d\omega=\sum_{k=-a}^{a-1} \int_0^{2\pi/\lambda} f\left(\omega+\frac{2\pi k}{\lambda}-u\right)f\left(\omega+\frac{2\pi k}{\lambda}-v\right)\, d\omega.
\end{align*}
We thus obtain
\begin{align}  \label{Riemann_approx}
&A(u,v,\lambda,a)
=\sum_{k=-a}^{a-1} \int_0^{2\pi/\lambda} \left[h_{k,\lambda,u,v}\left(\frac{\pi}{\lambda}\right)-h_{k,\lambda,u,v}(\omega)\right] d\omega,
\end{align}
where $h_{k,\lambda,u,v}(\omega):=f\left(\omega+\frac{2\pi k}{\lambda}-u\right)f\left(\omega+\frac{2\pi k}{\lambda}-v\right)$. 
Using a Taylor expansion and assumption \eqref{second_deriv_of_f}, this gives
\begin{align*}
|A(u,v,\lambda,a)|&\lesssim \sum_{k=-a}^{a-1} \int_0^{2\pi/\lambda} \left|h_{k,\lambda,u,v}''(\xi) \right| \left(\omega-\frac{\pi}{\lambda}\right)^2 \, d\omega\\
&\lesssim \sum_{k=-a}^{a-1} \int_{0}^{2\pi/\lambda} \beta_{1+\delta}\left(\xi + \frac{2\pi k}{\lambda}-u \right)  \left(\omega-\frac{\pi}{\lambda}\right)^2 d\omega
\end{align*}
for some intermediate point $\xi$ between $\omega$ and $\pi/\lambda$.
Due to the monotonicity and integrability of the function $\beta_{1+\delta}$, the expression
\begin{align*}
\frac{1}{\lambda} \sum_{k=-a}^{a-1} \beta_{1+\delta} \left(\xi+\frac{2\pi k}{\lambda}-u\right) 
\end{align*}
is uniformly bounded, which immediately yields $|A(u,v,\lambda,a)|\lesssim 1/\lambda^2$.
\end{proof}
\medskip

Note that the proof of Lemma \ref{Riemann_sum} works analogously when considering a product of four terms: 

\begin{corollary} \label{cor_Riemann_sum}
Under the same assumptions as in Lemma \ref{Riemann_sum}, the following holds: 
For arbitrary $d\geq 1$ and uniformly over $\bm{u}=(u_1,\ldots,u_d)^T\in\R^d$, $\bm{v}=(v_1,\ldots,v_d)^T\in\R^d$, $\bm{w}=(w_1,\ldots,w_d)^T\in\R^d$, $\bm{x}=(x_1,\ldots,x_d)^T\in\R^d$ and $a\in\N$, we have
\begin{align*}
&\phantom{i=}\Bigg|\left(\frac{2\pi}{\lambda}\right)^d \sum_{k_1,\ldots,k_d=-a}^{a-1} f\left(\tilde{\omega}_{k_1}-u_1,\ldots,\tilde{\omega}_{k_d}-u_d\right) f(\tilde{\omega}_{k_1}-v_1,\ldots,\tilde{\omega}_{k_d}-v_d)\\
&\phantom{===i==i===}\times f\left(\tilde{\omega}_{k_1}-w_1,\ldots,\tilde{\omega}_{k_d}-w_d\right) f(\tilde{\omega}_{k_1}-x_1,\ldots,\tilde{\omega}_{k_d}-x_d)\\
&\phantom{=====}- \int_{[-2\pi a/\lambda,2\pi a/\lambda]^d} f(\omega_1-u_1,\ldots,\omega_d-u_d) f(\omega_1-v_1,\ldots,\omega_d-v_d)\\
&\phantom{======i===ii=}\times f(\omega_1-w_1,\ldots,\omega_d-w_d) f(\omega_1-x_1,\ldots,\omega_d-x_d)\, d\omega_1\ldots d\omega_d \Bigg| \lesssim \frac{1}{\lambda^2}.
\end{align*} 
\end{corollary}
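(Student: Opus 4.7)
\textbf{Proof plan for Corollary \ref{cor_Riemann_sum}.} The strategy is essentially the same as for Lemma \ref{Riemann_sum}, with the product of two factors replaced by a product of four. I will set
\begin{align*}
h_{\bm{k},\lambda,\bm{u},\bm{v},\bm{w},\bm{x}}(\bm{\omega}) := f\bigl(\bm{\omega}+\tfrac{2\pi\bm{k}}{\lambda}-\bm{u}\bigr)\,
f\bigl(\bm{\omega}+\tfrac{2\pi\bm{k}}{\lambda}-\bm{v}\bigr)\,
f\bigl(\bm{\omega}+\tfrac{2\pi\bm{k}}{\lambda}-\bm{w}\bigr)\,
f\bigl(\bm{\omega}+\tfrac{2\pi\bm{k}}{\lambda}-\bm{x}\bigr),
\end{align*}
and write, exactly as in \eqref{Riemann_approx}, the difference of interest as
\begin{align*}
\sum_{k_1,\ldots,k_d=-a}^{a-1} \int_{[0,2\pi/\lambda]^d} \Bigl[h_{\bm{k},\lambda,\bm{u},\bm{v},\bm{w},\bm{x}}\bigl(\tfrac{\pi}{\lambda}\bm{1}\bigr) - h_{\bm{k},\lambda,\bm{u},\bm{v},\bm{w},\bm{x}}(\bm{\omega})\Bigr]\, d\bm{\omega},
\end{align*}
which expresses the error as a sum of midpoint quadrature errors over cubes of volume $(2\pi/\lambda)^d$.

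The next step is to apply a second-order Taylor expansion of $h_{\bm{k},\lambda,\bm{u},\bm{v},\bm{w},\bm{x}}$ around the midpoint $\tfrac{\pi}{\lambda}\bm{1}$. As in the two-factor case, the first-order term integrates to zero by symmetry, so the error on each cube is controlled by the second partial derivatives of $h_{\bm{k},\lambda,\bm{u},\bm{v},\bm{w},\bm{x}}$. Applying the product rule to the product of four factors yields a finite sum of terms, each of which is a product of four functions, where two of the factors are derivatives (either one $f''$ and three $f$'s, or two $f'$'s and two $f$'s) and the other two are copies of $f$ itself, evaluated at the arguments $\bm{\omega}+\tfrac{2\pi\bm{k}}{\lambda}-\bm{c}$ with $\bm{c}\in\{\bm{u},\bm{v},\bm{w},\bm{x}\}$.

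Using assumption \eqref{second_deriv_of_f}, each such term is bounded (up to a constant) by a product of four $\beta_{1+\delta}$'s evaluated at the four shifts. Since $\beta_{1+\delta}$ is uniformly bounded on $\R^d$, I can retain only one $\beta_{1+\delta}(\bm{\xi}+\tfrac{2\pi\bm{k}}{\lambda}-\bm{c})$ per term and absorb the remaining three factors into the constant, obtaining a bound by $\beta_{1+\delta}(\bm{\xi}+\tfrac{2\pi\bm{k}}{\lambda}-\bm{u}) + \beta_{1+\delta}(\bm{\xi}+\tfrac{2\pi\bm{k}}{\lambda}-\bm{v}) + \beta_{1+\delta}(\bm{\xi}+\tfrac{2\pi\bm{k}}{\lambda}-\bm{w}) + \beta_{1+\delta}(\bm{\xi}+\tfrac{2\pi\bm{k}}{\lambda}-\bm{x})$, uniformly in the intermediate point $\bm{\xi}$. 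By the monotonicity and integrability of $\beta_{1+\delta}$ (which requires $\delta>0$), each sum $\tfrac{1}{\lambda^d}\sum_{\bm{k}}\beta_{1+\delta}(\bm{\xi}+\tfrac{2\pi\bm{k}}{\lambda}-\bm{c})$ is uniformly bounded in $\bm{\xi}$, $\bm{c}$, $a$ and $\lambda$. Combining this with the $O(1/\lambda^{d+2})$ volume weight from the midpoint quadrature error on each cube then yields the claimed bound $O(1/\lambda^2)$.

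The main obstacle is purely bookkeeping: the product rule applied twice to a product of four factors generates more cross-terms than in the two-factor case, and one must verify that \emph{each} of them can be absorbed into a single $\beta_{1+\delta}$-factor after using the uniform boundedness of $f, f'$ and $f''$ guaranteed by \eqref{second_deriv_of_f}. Once this reduction is carried out, the remainder of the argument is identical to that of Lemma \ref{Riemann_sum}, and the final estimate is uniform in $\bm{u},\bm{v},\bm{w},\bm{x}\in\R^d$ and $a\in\N$.
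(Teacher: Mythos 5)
Your proposal is correct and is exactly the argument the paper intends: the paper's "proof" of this corollary is the single remark that the proof of Lemma \ref{Riemann_sum} works analogously for a product of four terms, and you have simply carried out that analogous midpoint-rule/Taylor argument, with the product rule producing extra cross-terms that are absorbed into a single $\beta_{1+\delta}$-factor via the uniform boundedness of $f$, $f'$ and $f''$. No gaps; the approach matches the paper's.
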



\begin{lemma} \label{Riemann_radius}
Let $\bm{k}, \bm{\ell}\in\Z^2$ be arbitrary and assume $a>\lambda\geq 1$.
It then holds that
\begin{align*}
&\phantom{=i}\left|\frac{2\pi}{\lambda} \sum_{r=0}^{a-1} \tilde{\omega}_r\, J_0(\tilde{\omega}_r  \|\tilde{\bm{\omega}}_{\bm{k}}\|) J_0(\tilde{\omega}_r  \|\tilde{\bm{\omega}}_{\bm{\ell}}\|) -\int_{0}^{2\pi a /\lambda} r\, J_0(r \|\tilde{\bm{\omega}}_{\bm{k}}\|) J_0(r  \|\tilde{\bm{\omega}}_{\bm{\ell}}\|)\, dr\right|\\
& \lesssim \frac{a^{2}}{\lambda^{4}} \left(\|\tilde{\bm{\omega}}_{\bm{k}}\|^2+\|\tilde{\bm{\omega}}_{\bm{\ell}}\|^2\right)+\frac{a}{\lambda^{3}} \left(\|\tilde{\bm{\omega}}_{\bm{k}}\|_1+\|\tilde{\bm{\omega}}_{\bm{\ell}}\|_1\right).
\end{align*}
\end{lemma}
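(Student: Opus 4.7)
Set $\alpha := \|\tilde{\bm{\omega}}_{\bm{k}}\|$, $\beta := \|\tilde{\bm{\omega}}_{\bm{\ell}}\|$ and $g(r) := r\, J_0(\alpha r)\, J_0(\beta r)$. Recalling that $\tilde{\omega}_r = 2\pi r/\lambda + \pi/\lambda$ is precisely the midpoint of the interval $I_r := [2\pi r/\lambda, 2\pi(r+1)/\lambda]$ of length $2\pi/\lambda$, the quantity we need to bound is exactly the error committed by the midpoint Riemann rule when approximating $\int_0^{2\pi a/\lambda} g(r)\, dr$ by $\frac{2\pi}{\lambda}\sum_{r=0}^{a-1} g(\tilde{\omega}_r)$. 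The strategy is thus the textbook analysis of the midpoint rule via a second-order Taylor expansion on each subinterval, followed by a careful accounting of $\sup_{I_r}|g''|$ which produces the $\alpha,\beta$-dependence on the right-hand side.

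More concretely, the first step is to Taylor-expand $g$ around $\tilde{\omega}_r$ inside each $I_r$: the zeroth-order term reproduces $\tfrac{2\pi}{\lambda} g(\tilde{\omega}_r)$, the first-order term vanishes by symmetry of $I_r$ about its midpoint, and the remainder yields the per-interval bound $\frac{1}{24}(2\pi/\lambda)^3 \sup_{I_r}|g''|$. Summing over $r = 0, \dots, a-1$ then gives
\begin{equation*}
\left|\tfrac{2\pi}{\lambda}\sum_{r=0}^{a-1} g(\tilde{\omega}_r) - \int_0^{2\pi a/\lambda} g(r)\, dr\right| \;\lesssim\; \frac{1}{\lambda^3}\sum_{r=0}^{a-1} \sup_{I_r} |g''|.
\end{equation*}

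The next step is to compute $g''$. Using $J_0' = -J_1$ and differentiating $g(r) = r J_0(\alpha r) J_0(\beta r)$ twice, all terms involve $J_0$, $J_1$, or $J_1'$ evaluated at $\alpha r$ or $\beta r$, each multiplied by a coefficient of the form $\alpha$, $\beta$, $\alpha^2 r$, $\beta^2 r$, or $\alpha\beta r$. Since $J_0$, $J_1$, and $J_1'$ are uniformly bounded on $[0,\infty)$, we obtain the clean pointwise estimate $|g''(r)| \lesssim (\alpha + \beta) + r(\alpha^2 + \alpha\beta + \beta^2)$. For $r \in I_r$ we have $r \leq 2\pi(r+1)/\lambda$, so $\sum_{r=0}^{a-1} \sup_{I_r}|g''| \lesssim a(\alpha + \beta) + (a^2/\lambda)(\alpha^2 + \alpha\beta + \beta^2)$. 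Combining this with the earlier display and using $\alpha\beta \leq \tfrac{1}{2}(\alpha^2 + \beta^2)$ together with $\|\tilde{\bm{\omega}}_{\bm{k}}\| \leq \|\tilde{\bm{\omega}}_{\bm{k}}\|_1$ (and likewise for $\bm{\ell}$) delivers the asserted bound $\tfrac{a^2}{\lambda^4}(\|\tilde{\bm{\omega}}_{\bm{k}}\|^2 + \|\tilde{\bm{\omega}}_{\bm{\ell}}\|^2) + \tfrac{a}{\lambda^3}(\|\tilde{\bm{\omega}}_{\bm{k}}\|_1 + \|\tilde{\bm{\omega}}_{\bm{\ell}}\|_1)$.

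There is no real obstacle here: everything reduces to a standard midpoint-rule error estimate, and the only care required is to keep track of the $\alpha,\beta$ dependence when differentiating $g$. The assumption $a > \lambda \geq 1$ only serves to ensure that no edge effects appear at $r=0$ (the integrand $g$ is smooth there since the factor $r$ vanishes) and that the $a/\lambda^3$ term is genuinely smaller than the $a^2/\lambda^4$ term when the latter dominates, but both regimes need to be retained since the lemma must hold uniformly in the magnitudes of $\alpha$ and $\beta$.
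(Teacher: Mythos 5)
Your proposal is correct and follows essentially the same route as the paper's proof: decompose the integral into the subintervals of length $2\pi/\lambda$ centered at the shifted frequencies, Taylor-expand to second order about the midpoint so that the first-order term cancels upon integration, and bound the second derivative of $r\,J_0(\alpha r)J_0(\beta r)$ by $(\alpha+\beta)+r(\alpha^2+\beta^2)$ using the uniform boundedness of the Bessel functions and their derivatives. The final summation and the passage from $\|\cdot\|$ to $\|\cdot\|_1$ in the first-order term match the paper's argument exactly.
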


\begin{proof}
Note that
\begin{align*}
B(\bm{k},\bm{\ell},\lambda,a)&:=\frac{2\pi}{\lambda} \sum_{r=0}^{a-1} \tilde{\omega}_r\, J_0(\tilde{\omega}_r  \|\tilde{\bm{\omega}}_{\bm{k}}\|) J_0(\tilde{\omega}_r  \|\tilde{\bm{\omega}}_{\bm{\ell}}\|) -\int_{0}^{2\pi a /\lambda} r\, J_0(r \|\tilde{\bm{\omega}}_{\bm{k}}\|) J_0(r  \|\tilde{\bm{\omega}}_{\bm{\ell}}\|)\, dr\\
&= \sum_{r=0}^{a-1} \int_{0}^{2\pi/\lambda} \left[h_{r,\lambda,\bm{k},\bm{\ell}}\left(\frac{\pi}{\lambda}\right)-h_{r,\lambda,\bm{k},\bm{\ell}}(x)\right] dx,
\end{align*}
where
\begin{align*}
h_{r,\lambda,\bm{k},\bm{\ell}}(x):= \left(x+\frac{2\pi r}{\lambda}\right)  J_0\left(\left(x+\frac{2\pi r}{\lambda}\right) \|\tilde{\bm{\omega}}_{\bm{k}}\|\right)  J_0\left(\left(x+\frac{2\pi r}{\lambda}\right) \|\tilde{\bm{\omega}}_{\bm{\ell}}\|\right).
\end{align*}
Using a Taylor expansion, it follows that
\begin{align} \label{Riemann_radius_eq1}
\left|B(\bm{k},\bm{\ell},\lambda,a) \right|\lesssim \sum_{r=0}^{a-1} \int_{0}^{2\pi/\lambda} \left|h_{r,\lambda,\bm{k},\bm{\ell}}^{''}(\xi)\right| \left(x-\frac{\pi}{\lambda}\right)^2 \, dx
\end{align}
for some intermediate point $\xi$ between $x$ and $\pi/\lambda$. By definition of the Bessel function, it is easy to see that
\begin{align*}
\left|h_{r,\lambda,\bm{k},\bm{\ell}}^{''}(\xi)\right| &\lesssim \big|\tilde{\omega}_{k_1}\big|+\big|\tilde{\omega}_{k_2}\big|+\big|\tilde{\omega}_{\ell_1}\big|+\big|\tilde{\omega}_{\ell_2}\big|+\left|\xi+\frac{2\pi r}{\lambda}\right| \big(\tilde{\omega}_{k_1}^2+\tilde{\omega}_{k_2}^2+ \tilde{\omega}_{\ell_1}^2 + \tilde{\omega}_{\ell_2}^2\big)\\
&\leq \|\tilde{\bm{\omega}}_{\bm{k}}\|_1+\|\tilde{\bm{\omega}}_{\bm{\ell}}\|_1+\frac{2\pi (r+1)}{\lambda} \big(\|\tilde{\bm{\omega}}_{\bm{k}}\|^2+\|\tilde{\bm{\omega}}_{\bm{\ell}}\|^2\big),
\end{align*}
which yields
\begin{align*}
\left|B(\bm{k},\bm{\ell},\lambda,a) \right|\lesssim \frac{1}{\lambda^3} \sum_{r=0}^{a-1} \left[ \|\tilde{\bm{\omega}}_{\bm{k}}\|_1+\|\tilde{\bm{\omega}}_{\bm{\ell}}\|_1+ \frac{2\pi (r+1)}{\lambda} \left(\|\tilde{\bm{\omega}}_{\bm{k}}\|^2+\|\tilde{\bm{\omega}}_{\bm{\ell}}\|^2\right)  \right].
\end{align*}
\end{proof}

\begin{lemma} \label{Riemann_f_sec_int}
Let $\bm{y}\in\R^2$ be arbitrary. 
Under the same assumptions as in Lemma \ref{Riemann_sum} for $d=2$, 
we then have
\begin{align*}
&\phantom{\lesssim i} \left|\left(\frac{2\pi}{\lambda}\right)^2 \sum_{\bm{k}=-a}^{a-1} f(\tilde{\bm{\omega}}_{\bm{k}}-\bm{u}) \exp(\im \bm{y}^T \tilde{\bm{\omega}}_{\bm{k}})-\int_{[-2\pi a/\lambda,2\pi a/\lambda]^2} f(\bm{\omega}-\bm{u}) \exp(\im \bm{y}^T \bm{\omega})\, d\bm{\omega}\right|\\
 & \lesssim \frac{1+\|\bm{y}\|_1+\|\bm{y}\|^2}{\lambda^2},
\end{align*}
uniformly over $\bm{u}\in\R^2$ and $a\in\N$. 
\end{lemma}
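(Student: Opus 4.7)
The plan is to adapt the midpoint Riemann sum argument used in Lemma \ref{Riemann_sum} to the oscillatory integrand $g_{\bm{u},\bm{y}}(\bm{\omega}):=f(\bm{\omega}-\bm{u})\exp(\im \bm{y}^T\bm{\omega})$. Writing each Riemann cell as the product $\prod_{i=1}^2[\tilde\omega_{k_i}-\pi/\lambda,\tilde\omega_{k_i}+\pi/\lambda]$ and using that the shifted Fourier frequencies are precisely the cell midpoints, the quantity inside the absolute value equals
\[
\sum_{\bm{k}=-a}^{a-1}\int_{[-\pi/\lambda,\pi/\lambda]^2}\bigl[g_{\bm{u},\bm{y}}(\tilde{\bm{\omega}}_{\bm{k}})-g_{\bm{u},\bm{y}}(\tilde{\bm{\omega}}_{\bm{k}}+\bm{x})\bigr]\,d\bm{x}.
\]

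Next I would Taylor-expand $g_{\bm{u},\bm{y}}$ to second order around each midpoint $\tilde{\bm{\omega}}_{\bm{k}}$. Because the domain of integration is symmetric around $\bm{0}$, the first order term integrates to zero and only the second order remainder contributes. By the product rule,
\[
\partial_{i}\partial_{j} g_{\bm{u},\bm{y}}(\bm{\omega})=\bigl[\partial_{i}\partial_{j}f(\bm{\omega}-\bm{u})+\im y_{i}\partial_{j}f(\bm{\omega}-\bm{u})+\im y_{j}\partial_{i}f(\bm{\omega}-\bm{u})-y_{i}y_{j}f(\bm{\omega}-\bm{u})\bigr]\exp(\im \bm{y}^T\bm{\omega}),
\]
so by \eqref{second_deriv_of_f} (the version stated for $d=2$ in Lemma \ref{Riemann_sum}),
\[
\bigl|\partial_{i}\partial_{j} g_{\bm{u},\bm{y}}(\bm{\omega})\bigr|\lesssim (1+\|\bm{y}\|_1+\|\bm{y}\|^2)\,\beta_{1+\delta}(\bm{\omega}-\bm{u}),
\]
uniformly in $\bm{u},\bm{y}$.

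Bounding the remainder on each cell $[-\pi/\lambda,\pi/\lambda]^2$ by $C\lambda^{-2}$ times the supremum of these second derivatives over the cell, and then summing over $|k_1|,|k_2|\le a$, the contribution becomes
\[
\lesssim (1+\|\bm{y}\|_1+\|\bm{y}\|^2)\cdot\frac{1}{\lambda^{2}}\cdot\frac{1}{\lambda^{2}}\sum_{\bm{k}=-a}^{a-1}\sup_{\bm{x}\in[-\pi/\lambda,\pi/\lambda]^2}\beta_{1+\delta}(\tilde{\bm{\omega}}_{\bm{k}}+\bm{x}-\bm{u}),
\]
where the outer factor $\lambda^{-2}$ comes from the volume of the Riemann cell and the second $\lambda^{-2}$ converts the outer sum to a Riemann sum for the integral $\int_{\R^2}\beta_{1+\delta}(\bm{\omega}-\bm{u})\,d\bm{\omega}$, which is finite and uniform in $\bm{u}$ because $\beta_{1+\delta}$ is integrable on $\R^2$ (the product structure makes the two-dimensional integral a product of one-dimensional integrals of $|\omega|^{-(1+\delta)}$ at infinity with a bounded contribution on $[-1,1]$). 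This gives the claimed bound $(1+\|\bm{y}\|_1+\|\bm{y}\|^2)/\lambda^{2}$.

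The only mildly delicate point is uniformity in $\bm{u}$ of the Riemann-sum bound $\lambda^{-2}\sum_{\bm{k}}\sup_{\bm{x}}\beta_{1+\delta}(\tilde{\bm{\omega}}_{\bm{k}}+\bm{x}-\bm{u})$. I would handle this exactly as in the proof of Lemma \ref{Riemann_sum}: by monotonicity of $\beta_{1+\delta}$ along each coordinate away from the origin, the supremum over the cell is comparable to $\beta_{1+\delta}$ evaluated at an interior point, so the sum is bounded by $C\int_{\R^2}\beta_{1+\delta}(\bm{\omega})\,d\bm{\omega}$ up to a constant independent of $\bm{u}$ and $a$. I do not anticipate a genuine obstacle; the argument is essentially bookkeeping once one observes that differentiating the oscillatory factor pulls out at most two powers of $\|\bm{y}\|$.
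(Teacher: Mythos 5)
Your argument follows the same basic midpoint-rule idea as the paper -- write the difference as a sum over cells centred at the shifted Fourier frequencies, kill the first-order Taylor term by symmetry of the cell about its midpoint, and bound the second-order remainder by $(1+\|\bm{y}\|_1+\|\bm{y}\|^2)\,\beta_{1+\delta}$ -- but you carry out the Taylor expansion directly in two dimensions, whereas the paper reduces the two-dimensional error to two \emph{one-dimensional} midpoint estimates by a triangle inequality in the coordinates (first comparing the sum over $k_2$ with an integral for each fixed $k_1$, then the sum over $k_1$ with an integral for each fixed $\omega_2$). The bookkeeping and the final rate are identical in both versions, and your uniform-in-$\bm{u}$ treatment of the Riemann sum $\lambda^{-2}\sum_{\bm{k}}\sup_{\mathrm{cell}}\beta_{1+\delta}$ is the same device the paper uses.

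The one place where your route is not covered by the stated hypotheses is the mixed second derivative. The second-order remainder of the bivariate expansion contains the cross term $x_1x_2\,\partial_1\partial_2 g_{\bm{u},\bm{y}}(\bm{\xi}(\bm{x}))$; this does \emph{not} integrate to zero over the symmetric cell, because the intermediate point $\bm{\xi}$ depends on $\bm{x}$, so you genuinely need a pointwise bound on $\partial_1\partial_2 f$. Condition \eqref{second_deriv_of_f}, which is what Lemma \ref{Riemann_sum} assumes and what the present lemma inherits, controls only the pure second partials $\partial^2 f/\partial\omega_i^2$. The paper's coordinate-by-coordinate splitting is arranged precisely so that only pure second partials ever appear. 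Your proof is still valid in every application in the paper, since Assumption \ref{assumption_on_Z}(ii) does bound $|\partial^2 f/\partial\omega_i\,\partial\omega_j|$ for all $i,j$; but as a proof of the lemma under its literal hypotheses you should either add the mixed-partial bound to the assumptions or switch to the iterated one-dimensional argument. The remaining steps -- the vanishing of the first-order term, the product-rule bound producing the factor $1+\|\bm{y}\|_1+\|\bm{y}\|^2$, and the summation over cells -- are all sound.
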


\begin{proof}
For arbitrary $\bm{u}=(u_1,u_2)^T\in\R^2$, we obtain 
\begin{align} \label{two_dim_lemma_6.3}
&\phantom{==}\left|\left(\frac{2\pi}{\lambda}\right)^2 \sum_{\bm{k}=-a}^{a-1} f(\tilde{\bm{\omega}}_{\bm{k}}-\bm{u})\exp(\im \bm{y}^T \tilde{\bm{\omega}}_{\bm{k}}) - \int_{[-2\pi a/\lambda,2\pi a/\lambda]^2} f(\bm{\omega}-\bm{u}) \exp(\im \bm{y}^T \bm{\omega})\, d\bm{\omega}\right|\\
&\leq \frac{2\pi}{\lambda} \sum_{k_1=-a}^{a-1} A_{\lambda,u_1,u_2,y_2}(k_1)+\int_{-2\pi a/\lambda}^{2\pi a/\lambda} B_{\lambda,u_1,u_2,y_1}(\omega_2)\, d\omega_2\nonumber,
\end{align}
where
\begin{align*}
A_{\lambda,u_1,u_2,y_2}(k_1)&:=\bigg|\frac{2\pi}{\lambda} \sum_{k_2=-a}^{a-1} f\left(\tilde{\omega}_{k_1}-u_1,\tilde{\omega}_{k_2}-u_2\right) \exp(\im y_2 \tilde{\omega}_{k_2})\nonumber\\
&\phantom{===========}-\int_{-2\pi a/\lambda}^{2\pi a/\lambda} f\left(\tilde{\omega}_{k_1}-u_1,\omega_2-u_2\right) \exp(\im y_2 \omega_2)\, d\omega_2 \bigg|
\end{align*}
and
\begin{align*}
B_{\lambda,u_1,u_2,y_1}(\omega_2)&:=\bigg|\frac{2\pi}{\lambda} \sum_{k_1=-a}^{a-1} f\left(\tilde{\omega}_{k_1}-u_1,\omega_2-u_2\right) \exp(\im y_1 \tilde{\omega}_{k_1})\nonumber\\
&\phantom{===========}-\int_{-2\pi a/\lambda}^{2\pi a/\lambda} f\left(\omega_1-u_1,\omega_2-u_2\right) \exp(\im y_1 \omega_1)\, d\omega_1 \bigg|.
\end{align*}
Setting
\begin{align*}
h_{k_1,\lambda,u_1,u_2,y_2}(\omega):=f\left(\tilde{\omega}_{k_1}-u_1,\omega+\frac{2\pi k_2}{\lambda}-u_2\right) \exp\left(\im y_2 \left(\omega+\frac{2\pi k_2}{\lambda}\right)\right)
\end{align*}
and using a Taylor expansion, we obtain
\begin{align*}
A_{\lambda,u_1,u_2,y_2}(k_1)
 &=\Bigg|\sum_{k_2=-a}^{a-1} \int_{0}^{2\pi/\lambda} \left[h_{k_1,\lambda,u_1,u_2,y_2}\left(\frac{\pi}{\lambda}\right)-h_{k_1,\lambda,u_1,u_2,y_2}(\omega)\right]\, d\omega \Bigg|\\
&\lesssim (1+|y_2|+y_2^2) \sum_{k_2=-a}^{a-1} \int_{0}^{2\pi/\lambda} \beta_{1+\delta}\left(\tilde{\omega}_{k_1}-u_1,\xi+\frac{2\pi k_2}{\lambda}-u_2\right) \left(\omega_2-\frac{\pi}{\lambda}\right)^2 d\omega_2\\
&\lesssim \frac{1+|y_2|+y_2^2}{\lambda^2} \, \beta_{1+\delta} (\tilde{\omega}_{k_1}-u_1)
\end{align*}
for some intermediate point $\xi$ between $\omega_2$ and $\pi/\lambda$.
The same argument yields
\begin{align*}
B_{\lambda,u_1,u_2,y_1}(\omega_2)\lesssim \frac{1+|y_1|+y_1^2}{\lambda^2}\, \beta_{1+\delta}(\omega_2-u_2),
\end{align*}
and observing \eqref{two_dim_lemma_6.3}, this yields the claim of the lemma.
\end{proof}

\begin{lemma} \label{bound_variance}
Let the taper function $h:\R^d\to\R$ satisfy Assumption \ref{ass_on_h}, part (ii). Moreover, let $f:\R^d \rightarrow \R$ be a positive and bounded function with 
$\int_{\R^d} f^3(\bm{x})\, d\bm{x} <\infty$, and let the partial derivatives of $f$ exist and be bounded. Then, uniformly over $a\in\N$, it holds that
\begin{align*}
\sum_{\bm{m}=-2a+1}^{2a-1} H_d(\bm{m})^2 \int_{2\pi \max(-a,-a-\bm{m})/\lambda}^{2\pi \min(a,a-\bm{m})/\lambda} f^3(\bm{x}) \left|f(\bm{\omega_m}+\bm{x})-f(\bm{x})\right|\, d\bm{x}
\lesssim \frac{1}{(\log \lambda)^3}
\end{align*}
for $\lambda$ sufficiently large, where $\bm{\omega_m}=\bm{\omega}_{\bm{m},\lambda}$ and $H_d(\bm{m})=H_{d,h}(\bm{m})$ are defined in \eqref{Four_freq} and \eqref{H(m)}, respectively. 
\end{lemma}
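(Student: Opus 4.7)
The plan is to split the outer sum over $\bm{m}$ at a cutoff $M := \lfloor \log \lambda \rfloor$, using smoothness of $f$ in the ``diagonal'' block $|\bm{m}|_\infty \leq M$ and the tail decay of $H_d(\bm{m})^2$ in the ``off-diagonal'' block $|\bm{m}|_\infty > M$. In both blocks the inner integral will be bounded crudely by a quantity that is uniform in the truncation region, so the uniformity in $a$ will be automatic.

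First I would handle the small-$\bm{m}$ block. For $|\bm{m}|_\infty \leq M$ the mean value theorem combined with the hypothesis that the partial derivatives of $f$ are bounded yields
\[
|f(\bm{\omega_m}+\bm{x}) - f(\bm{x})| \leq C\|\bm{\omega_m}\| \leq C'\,\frac{M}{\lambda},
\]
uniformly in $\bm{x}$, since $\|\bm{\omega_m}\| = 2\pi\|\bm{m}\|/\lambda \leq 2\pi\sqrt{d}\,M/\lambda$. The truncated inner integral is then bounded by $C'(M/\lambda)\int_{\R^d} f^3(\bm{x})\,d\bm{x}$, which is $O(M/\lambda)$ by hypothesis. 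Summing against $H_d(\bm{m})^2$ and using $\sum_{\bm{m}\in\Z^d} H_d(\bm{m})^2 < \infty$ (established below), this block contributes $O(M/\lambda) = O(\log\lambda/\lambda)$.

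Next I would handle the large-$\bm{m}$ block by brute force. For $|\bm{m}|_\infty > M$, use $|f(\bm{\omega_m}+\bm{x}) - f(\bm{x})| \leq 2\|f\|_\infty$ and bound the integral by $2\|f\|_\infty \int f^3 = O(1)$. It then suffices to show $\sum_{|\bm{m}|_\infty > M} H_d(\bm{m})^2 = O(1/M^3)$. Under Assumption \ref{ass_on_h} part (ii), integration by parts twice in \eqref{H(m)} (with vanishing boundary terms because $h_i$ and $h_i'$ are continuous with support in $[-1/2,1/2]$) gives $|H_{1,h_i}(m)| \leq C_i/m^2$ for $m \neq 0$, hence $H_d(\bm{m})^2 \leq C \prod_i \min(1, 1/m_i^4)$. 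Writing $A = \sum_{m\in\Z} \min(1,1/m^4) < \infty$ and $B_M = \sum_{|m|\leq M}\min(1,1/m^4)$, a telescoping identity gives $A^d - B_M^d \leq d A^{d-1}(A-B_M) \lesssim 1/M^3$, because $A - B_M = \sum_{|m|>M} m^{-4} \lesssim 1/M^3$. In particular $\sum_{\bm{m}} H_d(\bm{m})^2 \leq CA^d < \infty$, which justifies the bound used in the first block.

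Combining the two blocks with the choice $M = \lfloor \log\lambda \rfloor$ gives an overall bound of $O\!\left(\frac{\log\lambda}{\lambda}\right) + O\!\left(\frac{1}{(\log\lambda)^3}\right) = O\!\left(\frac{1}{(\log\lambda)^3}\right)$ for $\lambda$ sufficiently large, which is the claimed estimate. The only step requiring any real care is the tail estimate on $\sum H_d(\bm{m})^2$, but given Assumption \ref{ass_on_h}(ii) this is a textbook integration-by-parts argument, so no step presents a serious obstacle; the main thing is to choose the threshold $M$ exactly so that the Lipschitz gain from the diagonal block and the polynomial decay from the off-diagonal block balance to give the stated logarithmic rate.
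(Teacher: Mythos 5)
Your proposal is correct and follows essentially the same route as the paper: split the sum over $\bm{m}$ at the threshold $\log\lambda$, use the mean value theorem with the bounded partial derivatives on the diagonal block, and use $|H_{1,h_i}(m)|\lesssim 1/m^2$ (via double integration by parts, boundary terms vanishing since $h_i$ and $h_i'$ vanish at $\pm 1/2$) to get the $1/(\log\lambda)^3$ tail on the off-diagonal block. The only cosmetic difference is that you treat general $d$ directly through the product structure and the telescoping bound $A^d-B_M^d\le dA^{d-1}(A-B_M)$, whereas the paper writes out $d=1$ and handles $d>1$ by a coordinate-wise decomposition; both give the same estimate.
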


\begin{proof}
For $d=1$, the expression of interest is bounded by $R_1(a,\lambda)+R_2(a,\lambda)$,
where
\begin{align*}
R_1(a,\lambda):=\sum_{|m|\leq \log(\lambda)} H_1(m)^2 \int_{-2\pi a/\lambda}^{2\pi a/\lambda} f^3(x) \left|f(\omega_m+x)-f(x)\right|\, dx
\end{align*}
and
\begin{align*}
R_2(a,\lambda):=\sum_{|m|>\log(\lambda)} H_1(m)^2 \int_{-2\pi a/\lambda}^{2\pi a/\lambda} f^3(x) \left|f(\omega_m+x)-f(x)\right|\, dx.
\end{align*}
Using the mean value theorem and the fact that $H_1(m)$ is uniformly bounded gives
\begin{align*}
R_1(a,\lambda)\lesssim \sum_{|m|\leq \log(\lambda)} \frac{2\pi |m|}{\lambda}\int_{-2\pi a/\lambda}^{2\pi a/\lambda} f^3(x) \, dx \lesssim \frac{(\log \lambda)^2}{\lambda}.
\end{align*}
Concerning the term $R_2(a,\lambda)$, note that integration by parts for $m\neq 0$ yields
\begin{align} \label{abschaetzung_fuer_H_1}
|H_1(m)|
&=\left| \frac{1}{2\pi^2 m^2} \int_{\frac{1}{2}}^{\frac{1}{2}} [h^{\prime\prime}(s) h(s) + h^{\prime}(s) h^{\prime}(s)] \exp(-2\pi \im m s)\, ds\right|\lesssim \frac{1}{m^2},
\end{align}
which gives
\begin{align*}
R_2(a,\lambda)
&\lesssim \sum_{|m|>\log(\lambda)} \frac{1}{m^4}\int_{-2\pi a/\lambda}^{2\pi a/\lambda} f^3(x) \left|f(\omega_m+x)-f(x)\right|\, dx \lesssim \frac{1}{(\log\lambda)^3}.
\end{align*}
The argumentation for $d>1$ works similarly, noting that
\begin{align*} 
&\phantom{==}\sum_{\bm{m}=-2a+1}^{2a-1} H_d(\bm{m})^2 \int_{[-2\pi a/\lambda,2\pi a/\lambda]^d} f^3(\bm{x}) \big|f(\bm{\omega_m}+\bm{x})-f(\bm{x})\big|\, d\bm{x}\nonumber\\
&\leq \sum_{i=1}^d \Bigg\{\sum_{\substack{m_j=-2a+1,\\ j\in\{1,\ldots,d\}\setminus i}}^{2a-1}
 H_{d-1}(m_1,\ldots,m_{i-1},m_{i+1},\ldots,m_d)^2  \Bigg[\sum_{m_i=-2a+1}^{2a-1} H_1(m_i)^2\nonumber\\
 &\phantom{====}\int_{[-2\pi a/\lambda,2\pi a/\lambda]^{d}}  f^3(x_1,\ldots,x_d) \big|f(x_1,\ldots,x_{i-1},\omega_{m_i}+x_i,\ldots,\omega_{m_d}+x_d)\nonumber\\
 &\phantom{==============}-f(x_1,\ldots,x_i,\omega_{m_{i+1}}+x_{i+1},\ldots,\omega_{m_d}+x_d)\big|\, dx_1 \ldots dx_d\Bigg] \Bigg\}.
\end{align*}
\end{proof}

\begin{lemma} \label{bound_variance_secint}
Let the taper function $h:\R^2\to\R$ satisfy Assumption \ref{ass_on_h}, part (ii). Moreover, let $f:\R^2 \rightarrow \R$ be a positive and bounded function with 
$\int_{\R^2} f(\bm{x})\, d\bm{x} <\infty$, and let the partial derivatives of $f$ exist and be bounded. Then, for $\bm{x}\in \R^2$ with $\|\bm{x}\|\leq 2\pi a/\lambda$ and $\lambda$ sufficiently large, it follows that
\begin{align*}
&\phantom{=i}\sum_{\bm{m}=-2a+1}^{2a-1} H_2(\bm{m})^2  \left(\int_{[-2\pi a/\lambda,2\pi a/\lambda]^2} f(\bm{z})\, \Big|\, f(\bm{z}+\bm{\omega_m})\, \exp(\im \bm{x}^T (\bm{z}+\bm{\omega_m}))-f(\bm{z}) \exp(\im \bm{x}^T \bm{z})\Big|\, d\bm{z}\right) \\
&\lesssim \frac{(\log \lambda)^2 \, a}{\lambda^{2}} +\frac{1}{(\log \lambda)^3}.
\end{align*}
\end{lemma}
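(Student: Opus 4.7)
The plan is to follow the structure of the proof of Lemma \ref{bound_variance}, adapted to handle the extra exponential factor and the dependence on $\|\bm{x}\|$. First, apply the triangle inequality to split the integrand:
\begin{align*}
&\big|f(\bm{z}+\bm{\omega_m})\exp(\im \bm{x}^T(\bm{z}+\bm{\omega_m}))-f(\bm{z})\exp(\im \bm{x}^T \bm{z})\big|\\
&\qquad \leq \big|f(\bm{z}+\bm{\omega_m})-f(\bm{z})\big| + f(\bm{z})\big|\exp(\im \bm{x}^T \bm{\omega_m})-1\big|.
\end{align*}
This decomposes the quantity of interest into two sums, say $R_1(a,\lambda,\bm{x})$ and $R_2(a,\lambda,\bm{x})$, each handled separately. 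As a second step, split the index range $\{-2a+1,\ldots,2a-1\}^2$ into a low-frequency region $\|\bm{m}\|_\infty \leq \log\lambda$ and a high-frequency region $\|\bm{m}\|_\infty > \log\lambda$, just as in the one-dimensional argument.

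For the low-frequency regime, use the mean value theorem and the boundedness of the partial derivatives of $f$ to obtain $|f(\bm{z}+\bm{\omega_m})-f(\bm{z})|\lesssim \|\bm{\omega_m}\|\lesssim |\bm{m}|/\lambda$. For the second summand apply $|e^{\im u}-1|\leq |u|$ together with $\|\bm{x}\|\leq 2\pi a/\lambda$ and $\|\bm{\omega_m}\|\lesssim |\bm{m}|/\lambda$ to get $|\exp(\im \bm{x}^T\bm{\omega_m})-1|\lesssim a|\bm{m}|/\lambda^2$. After integrating against $f$, which is bounded and integrable, the low-frequency contributions are controlled by the summability $\sum_{\bm{m}\in\Z^2} H_2(\bm{m})^2|\bm{m}|<\infty$, which follows from $H_2(\bm{m})=H_1(m_1)H_1(m_2)$ together with the estimate $|H_1(m)|\lesssim 1/m^2$ derived in \eqref{abschaetzung_fuer_H_1}. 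This yields contributions of order $1/\lambda$ from $R_1$ and $a/\lambda^2$ from $R_2$.

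For the high-frequency regime, use the trivial uniform bounds $|f(\bm{z}+\bm{\omega_m})-f(\bm{z})|\leq 2\|f\|_\infty$ and $|\exp(\im \bm{x}^T\bm{\omega_m})-1|\leq 2$, together with $\int f(\bm{z})\,d\bm{z}<\infty$, reducing matters to the tail estimate
\begin{align*}
\sum_{\|\bm{m}\|_\infty>\log\lambda} H_2(\bm{m})^2 \;\leq\; 2\Big(\sum_{|m_1|>\log\lambda} H_1(m_1)^2\Big)\Big(\sum_{m_2\in\Z} H_1(m_2)^2\Big) \;\lesssim\; \frac{1}{(\log\lambda)^3},
\end{align*}
where the last step uses $\sum_{|m|>\log\lambda}1/m^4\lesssim 1/(\log\lambda)^3$. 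Combining both regimes produces a bound of order $1/\lambda+a/\lambda^2+1/(\log\lambda)^3$; since $a/\lambda\to\infty$, the first two terms are absorbed into $(\log\lambda)^2 a/\lambda^2$, yielding the claim.

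The main obstacle is keeping track of where the factor $a$ enters: it arises solely from the exponential difference $|\exp(\im \bm{x}^T\bm{\omega_m})-1|$, for which the size of $\bm{x}$ (up to $2\pi a/\lambda$) combines with the size of $\bm{\omega_m}$ (of order $|\bm{m}|/\lambda$) to produce $a|\bm{m}|/\lambda^2$. The choice of cutoff at $\log\lambda$ is then dictated by balancing the polynomial-in-$|\bm{m}|$ growth inside the low-frequency sum against the need for the high-frequency tail of $H_2(\bm{m})^2$ to supply the $1/(\log\lambda)^3$ factor.
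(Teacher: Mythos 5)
Your proof is correct and follows essentially the same route as the paper: split the sum over $\bm{m}$ at the cutoff $\log\lambda$, use the mean value theorem together with $\|\bm{x}\|\leq 2\pi a/\lambda$ to extract the factor $a|\bm{m}|/\lambda^2$ in the low-frequency regime, and use the integration-by-parts bound $|H_1(m)|\lesssim 1/m^2$ from \eqref{abschaetzung_fuer_H_1} to get the $1/(\log\lambda)^3$ tail. Your triangle-inequality separation of the $f$-increment from the exponential increment, combined with the summability of $\sum_{\bm{m}}H_2(\bm{m})^2|\bm{m}|$, in fact gives a marginally sharper low-frequency bound ($a/\lambda^2$ without the $(\log\lambda)^2$ factor) than the paper's direct mean-value argument, but this is a refinement rather than a different method.
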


\begin{proof}
We proceed similarly as in the proof of Lemma \ref{bound_variance}. In the one-dimensional case, let $|x|\leq 2\pi a/\lambda$ be arbitrary.
The expression of interest can then be bounded by
$R_1(a,\lambda,x)+R_2(a,\lambda,x)$,
where
\begin{align*}
R_1(a,\lambda,x):=\sum_{|m|\leq \log (\lambda)} H_1(m)^2 \int_{-2\pi a/\lambda}^{2\pi a/\lambda} f(z) 
\Big|f(z+\omega_m) \exp(\im x (z+\omega_m))-f(z) \exp(\im   x z)\Big|\, dz
\end{align*}
and
\begin{align*}
R_2(a,\lambda,x):=\sum_{|m|> \log (\lambda)}  H_1(m)^2 \int_{-2\pi a/\lambda}^{2\pi a/\lambda} f(z) 
\Big|f(z+\omega_m) \exp(\im x (z+\omega_m))-f(z) \exp(\im   x z)\Big|\, dz.
\end{align*}
By the mean value theorem, for some intermediate point $\xi$ between $z$ and $z+\omega_m$, we have
\begin{align*}
R_1(a,\lambda,x)&\lesssim \sup_{x\in\R:\, |x|\leq 2\pi a/\lambda} \sum_{|m|\leq \log(\lambda)} \frac{2\pi |m|}{\lambda} \int_{-2\pi a/\lambda}^{2\pi a/\lambda} f(z)\, \left|f^{'}(\xi) \exp(\im x \xi) +\im\, x f(\xi)  \exp(\im x \xi)\right| dz\\
&\lesssim \frac{a}{\lambda} \sum_{|m|\leq \log(\lambda)} \frac{2\pi |m|}{\lambda} \int_{-2\pi a/\lambda}^{2\pi a/\lambda} f(z)\, dz \lesssim \frac{(\log \lambda)^2 \, a}{\lambda^{2}},
\end{align*}
Using \eqref{abschaetzung_fuer_H_1}, it moreover follows that
\begin{align*}
R_2(a,\lambda,x) &\lesssim \sum_{|m|>\log(\lambda)} \frac{1}{m^4} \int_{-2\pi a/\lambda}^{2\pi a/\lambda} f(z) \big|f(z+\omega_m) \exp(\im x (z+\omega_m))-f(z) \exp(\im x z)\big|\, dz\lesssim \frac{1}{(\log\lambda)^3}.
\end{align*}
The two-dimensional case can be proven analogously and is obmitted for the sake of brevity. 
\end{proof}

The following result shows that the quantity $D_{2,2,\lambda,a}$ from \eqref{D_22lambda,a} is finite as long as the decay parameter $\delta$ from Assumption \ref{assumption_on_Z} is sufficiently large.

\begin{lemma} \label{D_2_finite}
Assume that the covariance function $c:\R^2\to\R$ and the spectral density $f:\R^2\to\R$ satisfy
\begin{align*}
\int_{\R^2} |c(\bm{h})|^2\, d\bm{h} <\infty \qquad \text{and} \qquad  f(\bm{\omega}) \leq \beta_{1+\delta}(\bm{\omega})
\end{align*}
for some $\delta>0$ and assume that $a,\lambda\rightarrow\infty$ with $a/\lambda\rightarrow\infty$. We then have
\begin{align*}
D_{2,2,\lambda,a}=\Landau\left(1+\frac{a^{2-2\delta}}{\lambda^{2-2\delta}} \right) \qquad \text{as } \lambda,a\rightarrow \infty.
\end{align*}
\end{lemma}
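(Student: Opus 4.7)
The plan is to split the truncated inner integral into a full‑space term, which connects to the covariance function via Fourier inversion, and a tail term, which is handled by the polynomial decay assumption on $f$. For $r>0$ set
\begin{align*}
I_1(r):=\int_{\R^2} f(\bm{x})\,J_0(r\|\bm{x}\|)\,d\bm{x},
\qquad
I_2(r):=\int_{([-2\pi a/\lambda,2\pi a/\lambda]^2)^c} f(\bm{x})\,J_0(r\|\bm{x}\|)\,d\bm{x},
\end{align*}
so that the integrand in $D_{2,2,\lambda,a}$ equals $(I_1(r)-I_2(r))^2$, and by $(u-v)^2\leq 2u^2+2v^2$ it is enough to bound $\int_0^{2\pi a/\lambda} I_1^2(r)\,r\,dr$ and $\int_0^{2\pi a/\lambda} I_2^2(r)\,r\,dr$ separately.

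For the main term $I_1(r)$, I will use the integral representation $J_0(r\|\bm{x}\|)=\frac{1}{2\pi}\int_0^{2\pi}\exp(\im r(\cos\theta,\sin\theta)^T\bm{x})\,d\theta$ together with Fourier inversion $\int_{\R^2} f(\bm{\omega})\exp(\im \bm{\omega}^T\bm{h})\,d\bm{\omega}=(2\pi)^2 c(\bm{h})$ (which follows from \eqref{det300}) to obtain the identity $I_1(r)=2\pi\int_0^{2\pi} c(r\cos\theta,r\sin\theta)\,d\theta$. Applying Cauchy–Schwarz in $\theta$, then switching to polar coordinates, yields
\begin{align*}
\int_0^{\infty} I_1^2(r)\,r\,dr \;\leq\; (2\pi)^3\int_0^{\infty}\!\!\int_0^{2\pi}|c(r\cos\theta,r\sin\theta)|^2\,r\,d\theta\,dr \;=\;(2\pi)^3\int_{\R^2}|c(\bm{h})|^2\,d\bm{h}\;<\;\infty,
\end{align*}
so this contribution is $\Landau(1)$.

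For the tail term $I_2(r)$ I will use $|J_0|\leq 1$ and the bound $f(\bm{\omega})\leq \beta_{1+\delta}(\bm{\omega})$ together with the product structure $\beta_{1+\delta}(\bm{\omega})=\prod_i\beta_{1+\delta}(\omega_i)$. A direct computation of the univariate tail $\int_{|x|>2\pi a/\lambda}\beta_{1+\delta}(x)\,dx=\Landau((\lambda/a)^{\delta})$ together with the inclusion–exclusion decomposition of $([-2\pi a/\lambda,2\pi a/\lambda]^2)^c$ gives $|I_2(r)|=\Landau((\lambda/a)^{\delta})$ uniformly in $r$. Integrating this bound yields
\begin{align*}
\int_0^{2\pi a/\lambda} I_2^2(r)\,r\,dr \;=\; \Landau\Bigl(\bigl(\tfrac{\lambda}{a}\bigr)^{2\delta}\cdot\bigl(\tfrac{a}{\lambda}\bigr)^{2}\Bigr) \;=\; \Landau\Bigl(\tfrac{a^{2-2\delta}}{\lambda^{2-2\delta}}\Bigr),
\end{align*}
and combining the two bounds gives the claim.

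The argument is essentially routine; the only step that requires a moment of care is the Fourier‑inversion identity for $I_1(r)$ (the prefactor $(2\pi)^2$ comes from the unnormalised convention in \eqref{det300}) and verifying that the Cauchy–Schwarz reduction to $\int|c|^2$ is permissible under only the $L^2$ integrability assumption on $c$. No further assumption on the size of $\delta$ is needed, and the proof makes transparent why the rate $(a/\lambda)^{2-2\delta}$ is the natural one: it is precisely the square of the aperture of the disc $\{r\leq 2\pi a/\lambda\}$ in $r$ times the squared tail of $f$.
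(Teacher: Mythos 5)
Your proof is correct and follows essentially the same route as the paper: the paper likewise writes the truncated integral as the full-space term minus the tail, converts the full-space Bessel integral to the covariance via the $\theta$-representation of $J_0$ and bounds it by $\int_{\R^2}|c|^2$ after Cauchy--Schwarz and a polar change of variables, and bounds the tail term uniformly by $(\lambda/a)^{\delta}$ before integrating over the disc of radius $2\pi a/\lambda$. The only difference is the order in which the split and the Cauchy--Schwarz step are applied, which is immaterial.
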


\begin{proof}
Note that by the Cauchy-Schwarz inequality and by definition of the covariance function, we have
\begin{align*}
\left|D_{2,2,\lambda,a}\right|
&\lesssim \int_{0}^{2\pi a/\lambda} \int_{0}^{2\pi} \left|\int_{[-2\pi a/\lambda,2\pi a/\lambda]^2} f(\bm{x}) \exp\left(\im \, r \begin{pmatrix}
\cos \theta\\
\sin \theta
\end{pmatrix}^T \bm{x} \right) d\bm{x} \right|^2 r \, d\theta \,  dr\\
&\lesssim \int_{\|\bm{y}\|\leq 2\pi a/\lambda} |c(\bm{y})|^2\, d\bm{y} + \int_{\|\bm{y}\|\leq 2\pi a/\lambda} \left|\int_{([-2\pi a/\lambda,2\pi a/\lambda]^2)^c} f(\bm{x}) \exp(\im  \bm{y}^T \bm{x})\, d\bm{x}\right|^2\, d\bm{y}.
\end{align*}
By assumption, the first term is uniformly bounded. Concerning the second term, we have
\begin{align*}
\sup_{\|\bm{y}\|\leq 2\pi a/\lambda} \left|\int_{([-2\pi a/\lambda,2\pi a/\lambda]^2)^c} f(\bm{x}) \exp(\im  \bm{y}^T \bm{x})\, d\bm{x}\right| 
& = \Landau\left(\int_{2\pi a/\lambda}^{\infty} \beta_{1+\delta}(x)\, dx\right) = \Landau\left(\frac{\lambda^\delta}{a^\delta}\right),
\end{align*}
which yields
\begin{align*}
|D_{2,2,\lambda,a}| =\Landau\left( 1+\frac{a^{2}}{\lambda^{2}} \left(\frac{\lambda^\delta}{a^\delta}\right)^2\right) = \Landau\left(1+\frac{a^{2-2\delta}}{\lambda^{2-2\delta}}\right).
\end{align*}
\end{proof}

\begin{corollary} \label{cor_of_D2_finite}
Under the same assumptions as in Lemma \ref{D_2_finite}, we have
\begin{align*}
\int_{\|\bm{y}\|\leq 2\pi a/\lambda} \Bigg|\int_{[-2\pi a/\lambda,2\pi a/\lambda]^2} f(\bm{x})\, \exp(\im \bm{y}^T \bm{x} )\, d\bm{x}\, \Bigg|\, d\bm{y} = \Landau\left(1+\frac{a^{2-\delta}}{\lambda^{2-\delta}} \right) \qquad \text{as } \lambda,a\rightarrow \infty.
\end{align*}
In particular, in order for this expression to be bounded we need to assume that $\delta\geq 2$. 
\end{corollary}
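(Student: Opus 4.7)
The plan is to mirror the structure of the proof of Lemma \ref{D_2_finite}, but to work directly at the $L^1$ level rather than squaring. The starting observation is that, by \eqref{det300},
\begin{align*}
\int_{[-2\pi a/\lambda,2\pi a/\lambda]^2} f(\bm{x})\,\exp(\im \bm{y}^T \bm{x})\,d\bm{x}
= (2\pi)^{2} c(\bm{y}) - G_{a,\lambda}(\bm{y}),
\end{align*}
where $G_{a,\lambda}(\bm{y}) := \int_{([-2\pi a/\lambda,2\pi a/\lambda]^2)^c} f(\bm{x})\,\exp(\im \bm{y}^T \bm{x})\,d\bm{x}$. Applying the triangle inequality splits the quantity of interest into $T_1 + T_2$ with $T_1 := (2\pi)^2 \int_{\|\bm{y}\|\leq 2\pi a/\lambda} |c(\bm{y})|\, d\bm{y}$ and $T_2 := \int_{\|\bm{y}\|\leq 2\pi a/\lambda} |G_{a,\lambda}(\bm{y})|\, d\bm{y}$.

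For $T_1$, I would use absolute integrability of $c$ (which is present in the framework in which the corollary is invoked, and which the authors tacitly rely on when coupling the corollary with Lemma~\ref{D_2_finite}) to bound $T_1 \leq (2\pi)^2 \|c\|_{L^1(\R^2)} = \Landau(1)$. For $T_2$, the inner integrand is estimated pointwise via
\begin{align*}
|G_{a,\lambda}(\bm{y})| \leq \int_{([-2\pi a/\lambda,2\pi a/\lambda]^2)^c} f(\bm{x})\,d\bm{x} \leq \int_{([-2\pi a/\lambda,2\pi a/\lambda]^2)^c} \beta_{1+\delta}(x_1)\beta_{1+\delta}(x_2)\,d\bm{x},
\end{align*}
using the hypothesis $f \leq \beta_{1+\delta}$. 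Decomposing the complement into the two symmetric strips $\{|x_1|>2\pi a/\lambda\}$ and $\{|x_2|>2\pi a/\lambda\}$ and exploiting the tail estimate $\int_{|x|>2\pi a/\lambda} \beta_{1+\delta}(x)\,dx = \Landau((\lambda/a)^{\delta})$ together with $\int_{\R}\beta_{1+\delta} < \infty$ (see the reasoning already carried out in the proof of Proposition \ref{approx_D2}) yields $\sup_{\bm{y}} |G_{a,\lambda}(\bm{y})| = \Landau((\lambda/a)^{\delta})$. Multiplying by the volume $\pi (2\pi a/\lambda)^2 = \Landau(a^2/\lambda^2)$ of the disk over which we are integrating gives $T_2 = \Landau(a^{2-\delta}/\lambda^{2-\delta})$. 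Adding the two contributions produces the bound $\Landau(1 + a^{2-\delta}/\lambda^{2-\delta})$. The final assertion on boundedness is an immediate consequence: when $\delta \geq 2$ the remainder $(\lambda/a)^{\delta-2}$ stays bounded as $a/\lambda\to\infty$, while for $\delta<2$ the factor $(a/\lambda)^{2-\delta}$ diverges.

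The main technical subtlety, rather than an obstacle in the derivation itself, is securing absolute integrability of $c$ to conclude $T_1=\Landau(1)$; under $\int |c|^{2} < \infty$ alone, only a Cauchy-Schwarz bound $\Landau(a/\lambda)$ is immediately available, which would be too weak. In the ambient framework of the paper this is not an issue because $c$ is absolutely integrable under Assumption \ref{assumption_on_Z}\,(i), which is in force wherever the corollary is actually applied; this is the point that should be highlighted when presenting the proof.
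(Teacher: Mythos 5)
Your proof is correct and follows essentially the same route as the paper's: the paper likewise writes the truncated Fourier integral as $(2\pi)^2 c(\bm{y})$ minus the tail integral over the complement of the square, bounds the first contribution by $\int_{\|\bm{y}\|\leq 2\pi a/\lambda}|c(\bm{y})|\,d\bm{y}=\Landau(1)$, and bounds the second by the volume $\Landau(a^2/\lambda^2)$ of the disk times the uniform tail estimate $\Landau((\lambda/a)^{\delta})$ obtained from $f\leq\beta_{1+\delta}$. Your observation that the literal hypothesis $\int|c|^2<\infty$ of Lemma \ref{D_2_finite} would only give a Cauchy--Schwarz bound of order $a/\lambda$ for the first term, so that absolute integrability of $c$ (guaranteed by Assumption \ref{assumption_on_Z}(i) wherever the corollary is used) is what is really needed, is a fair and worthwhile point that the paper's one-line proof passes over silently.
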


\begin{proof} 
Using the same arguments as in the proof of Lemma \ref{D_2_finite}, we obtain
\begin{align*}
&\phantom{==}\int_{\|\bm{y}\|\leq 2\pi a/\lambda} \Bigg|\int_{[-2\pi a/\lambda,2\pi a/\lambda]^2} f(\bm{x})\, \exp(\im \bm{y}^T \bm{x} )\, d\bm{x}\, \Bigg|\, d\bm{y}\\
& \lesssim \int_{\|\bm{y}\|\leq 2\pi a/\lambda} \left|c(\bm{y})\right|\, d\bm{y} + \int_{\|\bm{y}\|\leq 2\pi a/\lambda} \Bigg|\int_{([-2\pi a/\lambda,2\pi a/\lambda]^2)^c} f(\bm{x})\, \exp(\im \bm{y}^T \bm{x} )\, d\bm{x}\Bigg|\, d\bm{y} \\
&=\Landau\left( 1+\frac{a^{2}}{\lambda^{2}} \times \frac{\lambda^{\delta}}{a^{\delta}}\right) = \Landau\left(1+\frac{a^{2-\delta}}{\lambda^{2-\delta}}\right).
\end{align*}
\end{proof}

\begin{lemma} \label{cov_fct_integrable}
Assume that the covariance function $c:\R^2\to\R$ satisfies
\begin{align*}
\int_{\R^2} |c(\bm{h})|\, d\bm{h} <\infty 
\end{align*}
and let $a,\lambda\rightarrow\infty$ with $a/\lambda\rightarrow\infty$. Assume that the taper function $h:\R^2\to\R$ satisfies Assumption \ref{ass_on_h}, part (i), and define the frequency window $B(\bm{u})=B_{\lambda,2,h}(\bm{u})$ as in \eqref{Four_trafo_of_h}.
\begin{enumerate}
\item[(i)]
Under assumption \eqref{second_deriv_of_f} for $d=2$ and some $\delta>0$, we have
\begin{align*}
&\phantom{=i}\frac{1}{\lambda^2} \int_{\R^2} B(\bm{u})^2 \left[\int_{\|\bm{y}\|\leq 2\pi a/\lambda}  \left|\left(\frac{2\pi}{\lambda}\right)^2 \sum_{\bm{k}=-a}^{a-1} f(\bm{u}-\tilde{\bm{\omega}}_{\bm{k}}) \exp(\im  \bm{y}^T \tilde{\bm{\omega}}_{\bm{k}}) \right|\, d\bm{y}\, \right] \, d\bm{u}\\
& =\Landau\left(1+\frac{a^{2-\delta}}{\lambda^{2-\delta}}+\frac{a}{\lambda^{2}}+\frac{a^{4}}{\lambda^{6}}\right)  \qquad \text{as } \lambda,a\rightarrow\infty.
\end{align*}
\item[(ii)] If $f(\bm{\omega})\leq \beta_{1+\delta}(\bm{\omega})$ for some $\delta>0$, then
\begin{align*}
&\phantom{=i}\frac{1}{\lambda^2} \int_{\R^2} B(\bm{u})^2 \left[\int_{\|\bm{y}\|\leq 2\pi a/\lambda}  \left|\int_{[-2\pi a/\lambda,2\pi a/\lambda]^2} f(\bm{u}-\bm{x}) \exp(\im \bm{y}^T \bm{x}) \, d\bm{x}\right|\, d\bm{y}\, \right] \, d\bm{u}\\
& =\Landau\left(1+\frac{a^{2-\delta}}{\lambda^{2-\delta}}+\frac{a}{\lambda^{2}}\right)  \qquad \text{as } \lambda,a\rightarrow\infty.
\end{align*}
\end{enumerate}
\end{lemma}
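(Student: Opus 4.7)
Plan. The cleanest route is to prove part (ii) first and then reduce part (i) to it via Riemann-sum approximation. For part (ii), the key observation is that under the change of variables $\bm{z}=\bm{x}-\bm{u}$, combined with the symmetry of $f$ and of the box $[-2\pi a/\lambda,2\pi a/\lambda]^2$,
\[
\int_{[-2\pi a/\lambda,2\pi a/\lambda]^2}f(\bm{u}-\bm{x})e^{\im\bm{y}^T\bm{x}}\,d\bm{x}
=e^{\im\bm{y}^T\bm{u}}\int_{-\bm{u}+[-2\pi a/\lambda,2\pi a/\lambda]^2}f(\bm{z})e^{\im\bm{y}^T\bm{z}}\,d\bm{z},
\]
which after adding and subtracting the extension to $\R^{2}$ becomes $e^{\im\bm{y}^T\bm{u}}\bigl[(2\pi)^{2}c(\bm{y})-T(\bm{u},\bm{y})\bigr]$, where $T$ is the tail integral over the complement of $-\bm{u}+[-2\pi a/\lambda,2\pi a/\lambda]^2$, and I have used the Fourier pair \eqref{det300}. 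This decouples the two regimes: the main piece is $\bm{u}$-free, and only the tail interacts with the outer average.

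For the main piece, $\int_{\|\bm{y}\|\le2\pi a/\lambda}|c(\bm{y})|\,d\bm{y}\le\|c\|_{1}<\infty$ and Lemma~\ref{convolution_of_h} with $\bm{m}=\bm{0}$ yields $\lambda^{-2}\int B^{2}d\bm{u}=(2\pi)^{2}H_{2}(\bm{0})$, giving the $\Landau(1)$ contribution. For the tail, bound $|e^{\im\bm{y}^T\bm{z}}|\le1$ and $\int_{\|\bm{y}\|\le2\pi a/\lambda}d\bm{y}\lesssim a^{2}/\lambda^{2}$, then swap the $\bm{u}$- and $\bm{z}$-integrations via Fubini. The resulting inner $\bm{u}$-integral is $\int_{\|\bm{u}+\bm{z}\|_{\infty}>2\pi a/\lambda}B(\bm{u})^{2}d\bm{u}$, and I split according to $\|\bm{z}\|_{\infty}\lessgtr\pi a/\lambda$. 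In the near range $\|\bm{u}\|_{\infty}>\pi a/\lambda$ is forced, and Lemma~\ref{bounds_for_B}(i) together with a direct computation of $\int_{|u|>\pi a/\lambda}L_{\lambda}^{(0)}(u)^{2}du=\Landau(\lambda/a)$ (tensored over coordinates) gives $\Landau(\lambda^{2}/a)$; in the far range I use the global bound $\int B^{2}=\Landau(\lambda^{2})$. Multiplying through and using $\int f<\infty$ for the first piece and $f\le\beta_{1+\delta}$ with $\int_{\|\bm{z}\|_{\infty}>\pi a/\lambda}\beta_{1+\delta}=\Landau((\lambda/a)^{\delta})$ for the second produces exactly the $a/\lambda^{2}$ and $a^{2-\delta}/\lambda^{2-\delta}$ terms.

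Part (i) then follows by comparing the Riemann sum in the inner bracket to its integral counterpart via Lemma~\ref{Riemann_f_sec_int}, which controls the pointwise error by $(1+\|\bm{y}\|_{1}+\|\bm{y}\|^{2})/\lambda^{2}$ uniformly in $\bm{u}$. Integrating the error bound over $\|\bm{y}\|\le2\pi a/\lambda$ gives $\Landau(a^{2}/\lambda^{4}+a^{3}/\lambda^{5}+a^{4}/\lambda^{6})=\Landau(a^{4}/\lambda^{6})$ under $a/\lambda\to\infty$, and the outer $\lambda^{-2}\int B^{2}$ is a constant, yielding the extra term $a^{4}/\lambda^{6}$. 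The remaining integral-form expression is exactly the object bounded in~(ii), so its estimate is inherited verbatim. The main technical obstacle is the two-scale split in the tail estimate of (ii): one must simultaneously exploit integrability of $f$ (or rather of its envelope $\beta_{1+\delta}$) on a compact region and the slow polynomial decay of $B$ via $L_{\lambda}^{(0)}$ on the complement, while keeping track of how the dominating term switches from $a/\lambda^{2}$ to $a^{2-\delta}/\lambda^{2-\delta}$ depending on the value of $\delta$; choosing the split at radius $\pi a/\lambda$ is what balances the two contributions.
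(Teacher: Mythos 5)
Your proof is correct and follows essentially the same route as the paper's: the same three-way decomposition into the exact Fourier transform of $c$ (giving the $\Landau(1)$ term), the box-complement tail (giving $a/\lambda^{2}+a^{2-\delta}/\lambda^{2-\delta}$ via a two-scale split at radius $\pi a/\lambda$ against the $L_{\lambda}^{(0)}$ decay of $B$), and the Riemann error controlled by Lemma~\ref{Riemann_f_sec_int} (giving $a^{4}/\lambda^{6}$). The only differences are presentational: you prove (ii) first and reduce (i) to it, and you organize the tail estimate by a change of variables and Fubini rather than the paper's pointwise-in-$\bm{u}$ split, but the resulting bounds are identical.
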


\begin{proof}
We first prove part (i) and note that
\begin{align*}
\int_{\|\bm{y}\|\leq 2\pi a/\lambda} \left|\left(\frac{2\pi}{\lambda}\right)^2 \sum_{\bm{k}=-a}^{a-1} f(\bm{u}-\tilde{\bm{\omega}}_{\bm{k}}) \exp(\im  \bm{y}^T \tilde{\bm{\omega}}_{\bm{k}}) \right|\, d\bm{y} \leq E_1(a,\lambda,\bm{u})+E_2(a,\lambda,\bm{u})+E_3(a,\lambda,\bm{u}),
\end{align*}
where
\begin{align*}
E_1(a,\lambda,\bm{u}):=\int_{\|\bm{y}\|\leq 2\pi a/\lambda}  \left|\int_{\R^2} f(\bm{u}-\bm{x}) \exp(\im \bm{y}^T \bm{x}) \, d\bm{x}\right|\, d\bm{y},
\end{align*}
\begin{align*}
E_2(a,\lambda,\bm{u}):=\int_{\|\bm{y}\|\leq 2\pi a/\lambda}  \left|\int_{([-2\pi a/\lambda,2\pi a/\lambda]^2)^c} f(\bm{u}-\bm{x}) \exp(\im \bm{y}^T \bm{x}) \, d\bm{x}\right|\, d\bm{y},
\end{align*}
and
\begin{align*}
E_3(a,\lambda,\bm{u})&:=\int_{\|\bm{y}\|\leq 2\pi a/\lambda}  \Bigg| \left(\frac{2\pi}{\lambda}\right)^2 \sum_{\bm{k}=-a}^{a-1} f(\bm{u}-\tilde{\bm{\omega}}_{\bm{k}}) \exp(\im  \bm{y}^T \tilde{\bm{\omega}}_{\bm{k}}) \\
&\phantom{=============}- \int_{[-2\pi a/\lambda,2\pi a/\lambda]^2} f(\bm{u}-\bm{x}) \exp(\im \bm{y}^T \bm{x}) \, d\bm{x} \Bigg|\, d\bm{y}.
\end{align*}
We start with the term $E_1(a,\lambda,\bm{u})$ and immediately get from Lemma \ref{orders_of_B}, part (i), 
\begin{align*}
\frac{1}{\lambda^2} \int_{\R^2} B(\bm{u})^2\, E_1(a,\lambda,\bm{u})\, d\bm{u} 
&= \left(\frac{1}{\lambda^2} \int_{\R^2} B(\bm{u})^2\, d\bm{u}\right) \left( \int_{\|\bm{y}\|\leq 2\pi a/\lambda}  \left|\int_{\R^2} f(\bm{\omega}) \exp(-\im \bm{y}^T \bm{\omega}) \, d\bm{\omega}\right|\, d\bm{y} \right)\\
&\eqsim \left(\frac{1}{\lambda^2} \int_{\R^2} B(\bm{u})^2 \, d\bm{u}\right) \left( \int_{\|\bm{y}\|\leq 2\pi a/\lambda}   \left|c(\bm{y})\right|\, d\bm{y}\right)= \Landau(1).
\end{align*} 
Concerning the term $E_2(a,\lambda,\bm{u})$, note that
\begin{align*} 
\frac{1}{\lambda^2}\int_{\R^2} B(\bm{u})^2 \, E_2(a,\lambda,\bm{u})\, d\bm{u} 
&\lesssim  \left(\frac{a}{\lambda}\right)^{2} B(a,\lambda,\delta),
\end{align*}
where
\begin{align*}
B(a,\lambda,\delta)&:= \frac{1}{\lambda^2} \int_{\R^2} B(\bm{u})^2\left(\int_{([-2\pi a/\lambda,2\pi a/\lambda]^2)^c} \beta_{1+\delta}(\bm{x}-\bm{u})\, d\bm{x}  \right) d\bm{u}\\
&\lesssim  \frac{1}{\lambda} \int_{\R} B(u)^2 \left(\int_{|x|>2\pi a/\lambda} \beta_{1+\delta}(x-u)\, dx\right) du=\frac{2}{\lambda} \int_{\R} B(u)^2 \left(\int_{2\pi a/\lambda - u}^{\infty} \beta_{1+\delta}(\omega)\, d\omega\right) du.
\end{align*}
We broadly use the idea that if $|u|$ is small, then the inner integral will be small, whereas if $|u|$ is large, then $B(u)^2$ will be small. More precisely, recall that due to Lemma \ref{bounds_for_B}, part (i), $|B(u)|\lesssim 1/|u|$ for $|u|>1/\lambda$. This gives
\begin{align*}
B(a,\lambda,\delta)
&\lesssim \int_{2\pi a/\lambda }^{\infty} \beta_{1+\delta}(\omega)\, d\omega + \frac{1}{\lambda} \int_{|u|>\pi a/\lambda} B(u)^2\, du= \Landau\left(\frac{\lambda^\delta}{a^\delta}+ \frac{1}{a}\right),
\end{align*}
and therefore
\begin{align*}
\frac{1}{\lambda^2}\int_{\R^2} B(\bm{u})^2 \, E_2(a,\lambda,\bm{u})\, d\bm{u} = \Landau\left(\frac{a^{2}}{\lambda^{2}} \left[\frac{\lambda^\delta}{a^\delta}+\frac{1}{a}\right]\right) = \Landau\left(\frac{a^{2-\delta}}{\lambda^{2-\delta}}+ \frac{a}{\lambda^{2}}\right).
\end{align*}
We finally deal with the term $E_3(a,\lambda,\bm{u})$ and note that due to Lemma \ref{Riemann_f_sec_int}, it holds that
\begin{align*}
&\sup_{\|\bm{y}\|\leq 2\pi a/\lambda} \left| \left(\frac{2\pi}{\lambda}\right)^2 \sum_{\bm{k}=-a}^{a-1} f(\bm{u}-\tilde{\bm{\omega}}_{\bm{k}}) \exp(\im  \bm{y}^T \tilde{\bm{\omega}}_{\bm{k}}) - \int_{[-2\pi a/\lambda,2\pi a/\lambda]^2} f(\bm{u}-\bm{x}) \exp(\im \bm{y}^T \bm{x}) \, d\bm{x} \right| = \Landau\left(\frac{a^{2}}{\lambda^{4}}\right),
\end{align*}
uniformly over $\bm{u}\in\R^2$. This yields the claim of part (i), since
\begin{align*}
\frac{1}{\lambda^2} \int_{\R^2} B(\bm{u})^2\, E_3(a,\lambda,\bm{u}) \, d\bm{u} = \Landau\left(\frac{a^{2}}{\lambda^{2}}\times \frac{a^{2}}{\lambda^{4}}\right) = \Landau\left(\frac{a^{4}}{\lambda^{6}}\right).
\end{align*}
The proof of part (ii) works with exactly the same arguments as used for the proof of part (i).
\end{proof}

\begin{lemma} \label{f_times_x^2}
Let the assumption in \eqref{second_deriv_of_f} for $d=2$ and some $\delta>2$ be satisfied and assume that $a,\lambda\rightarrow\infty$ with $a/\lambda\rightarrow\infty$. Furthermore, let the taper function $h:\R^2\to\R$ satisfy Assumption \ref{ass_on_h}, part (ii). 
We then have for $i=1,2$
\begin{enumerate}
\item[(i)] 
\begin{align*}
\frac{1}{\lambda^2} \int_{\R^2} B(\bm{u})^2 \left[\left(\frac{2\pi}{\lambda}\right)^2 \sum_{\bm{k}=-a}^{a-1} f(\bm{u}-\tilde{\bm{\omega}}_{\bm{k}}) \, \tilde{\omega}_{k_i}^2 \right]  d\bm{u} = \Landau\left(1+\frac{a^2}{\lambda^4}\right) \qquad \text{as } \lambda,a\rightarrow \infty,
\end{align*}
\item[(ii)]
\begin{align*}
&\phantom{=}\frac{1}{\lambda^4} \sum_{\bm{m}=-2a+1}^{2a-1} \int_{\R^4} \big|B(\bm{u}) B(\bm{u}+\bm{\omega_m}) B(\bm{v}) B(\bm{v}-\bm{\omega_m})\big| \left[\left(\frac{2\pi}{\lambda}\right)^2 \sum_{\bm{k}=-a}^{a-1} f(\bm{u}-\tilde{\bm{\omega}}_{\bm{k}}) \, \tilde{\omega}_{k_i}^2 \right]  d\bm{u} d\bm{v}\\
&=\Landau\left(1+\frac{a^2}{\lambda^4} + \frac{(\log a)^4}{\lambda^2}\right) \qquad \text{as } \lambda,a\rightarrow \infty.
\end{align*}
\end{enumerate}
\end{lemma}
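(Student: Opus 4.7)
The strategy for both parts is to replace the inner Riemann sum by the corresponding truncated integral plus a uniform remainder, bound the resulting integral by a function of $u_i$ that has finite $B^2$--moments, and then integrate against the taper in the way dictated by Lemmas \ref{bounds_for_B} and \ref{orders_of_B}. Writing $g(\bm{\omega})=f(\bm{u}-\bm{\omega})\omega_i^2$, I first plan to show that
$\bigl(\tfrac{2\pi}{\lambda}\bigr)^2\sum_{\bm{k}=-a}^{a-1} g(\tilde{\bm{\omega}}_{\bm{k}})=\int_{[-2\pi a/\lambda,2\pi a/\lambda]^2} g(\bm{\omega})\,d\bm{\omega}+R(\bm{u},a,\lambda)$,
with $|R(\bm{u},a,\lambda)|\lesssim a^2/\lambda^4$ uniformly in $\bm{u}$, and then separately control the integral.

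\paragraph*{The Riemann error.} I would follow the midpoint-rule Taylor argument from the proof of Lemma \ref{Riemann_sum}. A direct computation gives
$\partial_{\omega_i}^2 g=f_{ii}(\bm{u}-\bm{\omega})\omega_i^2-4 f_i(\bm{u}-\bm{\omega})\omega_i+2 f(\bm{u}-\bm{\omega})$
and analogous expressions for the other second partial derivatives. Applying the bounds $|f|,|f_i|,|f_{ij}|\le \beta_{1+\delta}$ from Assumption \ref{assumption_on_Z}(ii) and $|\omega_i|\le 2\pi a/\lambda$ one gets $\|\nabla^2 g(\bm{\omega})\|\lesssim (a/\lambda)^2\,\beta_{1+\delta}(\bm{u}-\bm{\omega})$. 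Since the vanishing of the linear Taylor term at the midpoint leaves an $\Landau(1/\lambda^2)$ prefactor and the remaining Riemann sum of $\beta_{1+\delta}(\bm{u}-\tilde{\bm{\omega}}_{\bm{k}})(2\pi/\lambda)^2$ is bounded uniformly in $\bm{u}$, this yields $|R(\bm{u},a,\lambda)|\lesssim a^2/\lambda^4$ as claimed.

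\paragraph*{Bound on the integral and assembly of (i).} Substituting $\bm{y}=\bm{u}-\bm{\omega}$ and using $(u_i-y_i)^2\le 2(u_i^2+y_i^2)$ gives
$\int_{[-2\pi a/\lambda,2\pi a/\lambda]^2}f(\bm{u}-\bm{\omega})\omega_i^2\,d\bm{\omega}\lesssim 1+u_i^2$,
where $\int f(\bm{y})y_i^2 d\bm{y}<\infty$ requires exactly $\delta>2$. For part (i), Lemma \ref{convolution_of_h} gives $\lambda^{-2}\int B(\bm{u})^2\,d\bm{u}=\Landau(1)$, while the symmetry of $B$ together with Lemma \ref{orders_of_B}(iv) at $m=0$ yields $\int B(u)^2 u^2\,du\lesssim 1/\lambda$, so $\lambda^{-2}\int B(\bm{u})^2 u_i^2\,d\bm{u}\lesssim 1/\lambda^2$. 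Combining with the uniform remainder bound produces $\Landau(1+a^2/\lambda^4)$, matching the claim.

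\paragraph*{Part (ii) and main obstacle.} The remainder contribution in (ii) is $(a^2/\lambda^4)\cdot \lambda^{-4}\sum_{\bm{m}}\int|B(\bm{u})B(\bm{u}+\bm{\omega_m})B(\bm{v})B(\bm{v}-\bm{\omega_m})|\,d\bm{u}d\bm{v}=\Landau(a^2/\lambda^4)$ by Lemma \ref{bounds_for_B}(iii), and the constant ``$1$'' in the integral estimate produces $\Landau(1)$ for the same reason. The delicate piece is the $u_i^2$ term, which I plan to factorise coordinatewise: using the evenness of $B$ to rewrite $B(u_i+\omega_{m_i})=B(\omega_{-m_i}-u_i)$, Lemma \ref{orders_of_B}(iv) in coordinate $i$ and Lemma \ref{orders_of_B}(i) in the other coordinate give $\int u_i^2|B(\bm{u})B(\bm{u}+\bm{\omega_m})|\,d\bm{u}\lesssim \ell^{(1)}(m_j)$, and similarly $\int |B(\bm{v})B(\bm{v}-\bm{\omega_m})|\,d\bm{v}\lesssim \lambda^2\ell^{(1)}(m_1)\ell^{(1)}(m_2)$; summing in $\bm{m}$ with Lemma \ref{properties_of_ell}(ii) produces the $(\log a)^4/\lambda^2$ bound. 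The main obstacle is the sharpness of the remainder estimate: the factor $(a/\lambda)^2$ from $\|\nabla^2 g\|$ must be absorbed by the midpoint-rule $1/\lambda^2$ without leaking extra powers of $a/\lambda$, and this hinges essentially on the twice-differentiability of the taper in Assumption \ref{ass_on_h}(ii) together with $\delta>2$ to ensure the supporting $\beta_{1+\delta}$-moments are finite.
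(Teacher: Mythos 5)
Your proposal is correct and follows essentially the same route as the paper: a midpoint-rule Taylor bound giving a uniform $\Landau(a^2/\lambda^4)$ Riemann error, a bound of the truncated integral by $1+u_i^2$ using $\delta>2$ (the paper keeps the cross term and writes $1+|u|+u^2$), and then integration against the taper windows via Lemma \ref{orders_of_B} for part (i) and Lemmas \ref{bounds_for_B} and \ref{orders_of_B} for part (ii). Your coordinatewise factorisation in part (ii), summed with Lemma \ref{properties_of_ell}(ii), in fact yields the slightly sharper $(\log a)^2/\lambda^2$, which is of course still within the stated $\Landau((\log a)^4/\lambda^2)$.
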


\begin{proof}
Assume without loss of generality that $i=1$. 
Using analogous arguments as in the proof of Lemma \ref{Riemann_f_sec_int}, we obtain
\begin{align} \label{riemann_lemma6.9}
&\phantom{\lesssim} \left|\left(\frac{2\pi}{\lambda}\right)^2 \sum_{\bm{k}=-a}^{a-1} f(\bm{u}-\tilde{\bm{\omega}}_{\bm{k}}) \, \tilde{\omega}_{k_1}^2 - \int_{[-2\pi a/\lambda,2\pi a/\lambda]^2} f(\bm{u}-\bm{x}) \, x_1^2 \, d\bm{x} \right|\nonumber\\
&\lesssim \frac{a^2}{\lambda^4}\left(\frac{2\pi}{\lambda} \sum_{k_2=-a}^{a-1} \beta_{1+\delta}(u_2-\tilde{\omega}_{k_2}) + \int_{-2\pi a/\lambda}^{2\pi a/\lambda} \beta_{1+\delta}(u_1-x_1)\, dx_1\right)
\lesssim \frac{a^2}{\lambda^4}
\end{align}
for $a>\lambda\geq 1$, uniformly over $\bm{u}\in\R^2$. 
We now prove statement (i) of the lemma and note that \eqref{riemann_lemma6.9} gives
\begin{align*}
\frac{1}{\lambda^2} \int_{\R^2} B(\bm{u})^2 \left[\left(\frac{2\pi}{\lambda}\right)^2 \sum_{\bm{k}=-a}^{a-1} f(\bm{u}-\tilde{\bm{\omega}}_{\bm{k}}) \, \tilde{\omega}_{k_1}^2 \right]  d\bm{u} = \Landau\Bigg(\frac{1}{\lambda} \int_{\R} B(u)^2 \Bigg[\int_{\R} \beta_{1+\delta}(x-u)\, x^2\, dx \Bigg] du + \frac{a^2}{\lambda^4}\Bigg).
\end{align*}
For $u\in\R$ arbitrary, note that
\begin{align*}
\int_{\R} \beta_{1+\delta} (x-u)\, x^2\, dx 
&\lesssim \int_{\R} \beta_{1+\delta}(x)\,x^2\, dx + |u| \int_{\R} \beta_{1+\delta}(x) \,|x|\, dx + u^2 \int_{\R} \beta_{1+\delta}(x) \, dx.
\end{align*}
By definition of the function $\beta_{1+\delta}$ and since we assumed $\delta>2$, we obtain
\begin{align*}
\int_{\R} \beta_{1+\delta}(x)\, x^2\, dx \lesssim \int_{|x|\leq 1} \beta_{1+\delta}(x)\, dx + \int_{|x|>1} \frac{1}{|x|^{\delta-1}} \, dx \leq C_1
\end{align*}
and in the same way 
$\int_{\R} \beta_{1+\delta}(x)\, |x|\, dx \leq C_2$
for some generic constants $C_1,C_2>0$. 
Lemma \ref{orders_of_B} then gives
\begin{align*}
\frac{1}{\lambda} \int_{\R} B(u)^2 \left[\int_{\R} \beta_{1+\delta} (x-u)\, x^{2}\, dx\right] du &\lesssim \frac{1}{\lambda} \int_{\R} B(u)^2 \left[1+|u|+u^2\right]\, du =\Landau(1),
\end{align*}
which yields the claim in (i). For the proof of (ii), note that with the same arguments as in the proof of part (i) and by Lemma \ref{bounds_for_B}, part (iii), 
\begin{align} \label{allerletzte_gleichung}
&\phantom{=i}\frac{1}{\lambda^4} \sum_{\bm{m}=-2a+1}^{2a-1} \int_{\R^4} \big|B(\bm{u}) B(\bm{u}+\bm{\omega_m}) B(\bm{v}) B(\bm{v}-\bm{\omega_m})\big| \left[\left(\frac{2\pi}{\lambda}\right)^2 \sum_{\bm{k}=-a}^{a-1} f(\bm{u}-\tilde{\bm{\omega}}_{\bm{k}}) \, \tilde{\omega}_{k_1}^2 \right] \, d\bm{u} d\bm{v}\nonumber\\
&=\Landau\Bigg(1+\frac{1}{\lambda^4} \sum_{\bm{m}=-2a+1}^{2a-1} \int_{\R^4} \big|B(\bm{u}) B(\bm{u}+\bm{\omega_m}) B(\bm{v}) B(\bm{v}-\bm{\omega_m})\big| \big[|u_1|+u_1^2 \big] \, d\bm{u} d\bm{v}+\frac{a^2}{\lambda^4}\Bigg).
\end{align}
From Lemma \ref{orders_of_B}, part (iv), we obtain
\begin{align*}
&\phantom{=i}  \int_{\R} \big|B(u_1) B(u_1+\omega_{m_1})\big|  \left[\big|u_1\big|+u_1^2 \right] du_1\\
&\lesssim \int_{|u_1|\leq 1} \big|B(u_1) B(u_1+\omega_{m_1})\big|\, du_1 + \int_{|u_1|>1} \big|B(u_1) B(u_1+\omega_{m_1})\big|\, u_1^2\, du_1\\
&\lesssim\int_{\R} \big|B(u_1) B(u_1+\omega_{m_1})\big|\, du_1 + \frac{1}{\lambda}.
\end{align*}
Therefore, the expression in \eqref{allerletzte_gleichung} is of order
\begin{align*}
&\Landau\Bigg(1+\frac{1}{\lambda^4} \sum_{\bm{m}=-2a+1}^{2a-1} \int_{\R^4} \big|B(\bm{u}) B(\bm{u}+\bm{\omega_m}) B(\bm{v}) B(\bm{v}-\bm{\omega_m})\big|\, d\bm{u} d\bm{v} \\
&\phantom{======}+ \frac{1}{\lambda^5} \sum_{\bm{m}=-2a+1}^{2a-1} \int_{\R^3} \big|B(u_2) B(u_2+\omega_{m_2}) B(\bm{v}) B(\bm{v}-\bm{\omega_m})\big| du_2\, d\bm{v}\Bigg)=\Landau\left(1+\frac{(\log a)^4}{\lambda^2}\right),
\end{align*}
where we used Lemma \ref{bounds_for_B}, parts (ii) and (iii), and Lemma \ref{orders_of_B}, part (i), in the last step.
\end{proof}

\begin{lemma} \label{lemma_for_cumulants}
Let the spectral density $f:\R^2\to\R$
satisfy the assumption in \eqref{second_deriv_of_f} for $d=2$ and some $\delta>0$ and
assume that the covariance function $c:\R^2\to\R$ is uniformly bounded and satisfies 
\begin{align*}
c(\bm{h})=\Landau\left(\|\bm{h}\|^{-(2+\varepsilon)}\right), \qquad \text{as } \|\bm{h}\|\rightarrow\infty,
\end{align*} 
for some $\varepsilon>0$. Let $a,\lambda\rightarrow\infty$ with $a/\lambda\to\infty$ and let the taper function $h:\R^2\to\R$ satisfy Assumption \ref{ass_on_h}, part (i).
Then, we have
\begin{align} \label{formula_lemmaD11}
&\phantom{=i}\frac{1}{\lambda^2} \int_{\R^2} B(\bm{u})^2 \times \frac{2\pi}{\lambda} \sum_{r=0}^{a-1} \tilde{\omega}_r \int_{0}^{2\pi} \left|\left(\frac{2\pi}{\lambda}\right)^2 \sum_{\bm{k}=-a}^{a-1}  f(\bm{u}-\tilde{\bm{\omega}}_{\bm{k}}) \exp\bigg(\im\, \tilde{\omega}_r \begin{pmatrix}
\cos \theta\\
\sin \theta
\end{pmatrix}^T \tilde{\bm{\omega}}_{\bm{k}}\bigg)\right|d\theta\, d\bm{u}\\
&=\Landau\left(1+\frac{a^{2-\delta}}{\lambda^{2-\delta}} + \frac{a}{\lambda^{2}} + \frac{a^{4}}{\lambda^{6}}\right) \qquad \text{as } \lambda,a\to\infty.\nonumber
\end{align}
\end{lemma}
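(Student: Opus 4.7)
The plan is to identify the left-hand side of \eqref{formula_lemmaD11}, via polar coordinates, with a midpoint Riemann approximation of a Lebesgue integral over the disk $\{\bm{y}\in\R^{2}:\|\bm{y}\|\le 2\pi a/\lambda\}$, and then to apply Lemma \ref{cov_fct_integrable}, part (i). Setting
$$G_{\lambda,a}(\bm{u},\bm{y}):=\left|\left(\frac{2\pi}{\lambda}\right)^{2}\sum_{\bm{k}=-a}^{a-1} f(\bm{u}-\tilde{\bm{\omega}}_{\bm{k}})\exp(\im\,\bm{y}^{\top}\tilde{\bm{\omega}}_{\bm{k}})\right|,$$
and writing $\bm{y}=r(\cos\theta,\sin\theta)^{\top}$ so that $d\bm{y}=r\,dr\,d\theta$, the outer sum-integral $\frac{2\pi}{\lambda}\sum_{r=0}^{a-1}\tilde{\omega}_{r}\int_{0}^{2\pi} G_{\lambda,a}(\bm{u},\tilde{\omega}_{r}(\cos\theta,\sin\theta)^{\top})\,d\theta$ appearing in \eqref{formula_lemmaD11} is exactly the midpoint Riemann rule in $r$ — with step $2\pi/\lambda$ at the shifted nodes $\tilde{\omega}_{r}=(2r+1)\pi/\lambda\in(0,2\pi a/\lambda)$ — approximating the polar-coordinate integral $\int_{\|\bm{y}\|\le 2\pi a/\lambda} G_{\lambda,a}(\bm{u},\bm{y})\,d\bm{y}$.

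Consequently, the left-hand side of \eqref{formula_lemmaD11} decomposes as
$$\frac{1}{\lambda^{2}}\int_{\R^{2}} B(\bm{u})^{2}\int_{\|\bm{y}\|\le 2\pi a/\lambda} G_{\lambda,a}(\bm{u},\bm{y})\,d\bm{y}\,d\bm{u}\;+\;\Delta_{\lambda,a},$$
where $\Delta_{\lambda,a}$ denotes the Riemann-approximation remainder. The first summand is precisely the quantity bounded in Lemma \ref{cov_fct_integrable}(i), which under assumption \eqref{second_deriv_of_f} delivers the order $\Landau(1+a^{2-\delta}/\lambda^{2-\delta}+a/\lambda^{2}+a^{4}/\lambda^{6})$ claimed in \eqref{formula_lemmaD11}.

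It remains to verify that $\Delta_{\lambda,a}$ is absorbed into this bound. Taylor-expanding at the midpoints as in the proof of Lemma \ref{Riemann_radius}, the per-interval midpoint error is controlled by a constant multiple of $(2\pi/\lambda)^{3}$ times the second $r$-derivative of $r\mapsto r\int_{0}^{2\pi}G_{\lambda,a}(\bm{u},r(\cos\theta,\sin\theta)^{\top})d\theta$. Each radial derivative of the exponential inside $G_{\lambda,a}$ produces a factor $(\cos\theta,\sin\theta)^{\top}\tilde{\bm{\omega}}_{\bm{k}}=\Landau(a/\lambda)$, while $\sup_{\bm{y}}G_{\lambda,a}(\bm{u},\bm{y})=\Landau(1)$ uniformly in $\bm{u}$ by boundedness of $f$ combined with the Riemann approximation of Lemma \ref{Riemann_sum}. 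Summing these per-interval errors over the $a$ radial subintervals and integrating against $B(\bm{u})^{2}/\lambda^{2}$ (of order $\Landau(1)$ by Lemma \ref{convolution_of_h}) yields $\Delta_{\lambda,a}=\Landau(a\cdot(1/\lambda)^{3}\cdot(a/\lambda)^{3})=\Landau(a^{4}/\lambda^{6})$, which is already present in the target bound.

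The main obstacle is the limited smoothness of $G_{\lambda,a}$: as the modulus of a smooth complex-valued function $H$, it may fail to be $C^{2}$ on the zero set $\{H=0\}$, where the midpoint-rule second-derivative estimate breaks down. Since $H$ is complex-valued, its zero set is generically of codimension two and hence intersects radial rays only on a Lebesgue-negligible set, so the Taylor argument can be carried out away from this set without affecting the leading-order error bound. The oscillation-rate scaling $\Landau(a/\lambda)$ per differentiation, inherited from the support of $\tilde{\bm{\omega}}_{\bm{k}}$, then matches exactly the $\Landau(a^{4}/\lambda^{6})$ term in the target estimate, so no further control is needed.
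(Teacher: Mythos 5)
Your overall strategy is different from the paper's: you convert the radial Riemann sum into the genuine area integral over the disk $\{\|\bm{y}\|\le 2\pi a/\lambda\}$ and then invoke Lemma \ref{cov_fct_integrable}(i) for the main term. That reduction is in principle attractive, and the identification of the outer sum as a midpoint rule at the nodes $\tilde{\omega}_r=(2r+1)\pi/\lambda$ is correct. However, there is a genuine gap in your control of the remainder $\Delta_{\lambda,a}$, and it is exactly the issue you flag and then wave away. The integrand whose radial integral you discretize is $g(\rho)=\rho\int_0^{2\pi}|H(\rho,\theta)|\,d\theta$ with $H$ complex-valued, and the second-order midpoint error bound requires $g\in C^2$ on \emph{each} subinterval. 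The modulus $|H|$ is not $C^2$ at zeros of $H$, and worse, its second derivative blows up like $|H|^{-1}$ near simple zeros; so the per-interval Taylor estimate fails on (and near) any interval meeting the zero set. Saying the zero set is "generically of codimension two" and "Lebesgue-negligible" does not repair this: the midpoint error bound is interval-wise, not measure-theoretic, and on a bad interval you only retain the Lipschitz bound, which gives a per-interval error of order $\lambda^{-2}\sup|g'|=\Landau(\lambda^{-2}\,a^2/\lambda^2)$ and hence a total of $\Landau(a^3/\lambda^4)$. Under Assumption \ref{assumptions_on_a} one has $a^2/\lambda^3\to 0$, so $a^3/\lambda^4$ is \emph{not} dominated by $a^4/\lambda^6$ (their ratio is $\lambda^2/a\to\infty$) and is in fact unbounded; the second-order cancellation you need is therefore essential and unavailable.

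A further symptom that something is off: your argument never uses the hypothesis $c(\bm{h})=\Landau(\|\bm{h}\|^{-(2+\varepsilon)})$, which the lemma assumes. The paper's proof avoids discretizing a non-smooth integrand altogether: it bounds the inner modulus by the triangle inequality as $D_1+D_2+D_3$, where $D_1=(2\pi)^2|c(\tilde{\omega}_r\cos\theta,\tilde{\omega}_r\sin\theta)|$ comes from extending the box to $\R^2$, $D_2$ is the tail integral (handled as in the proof of Lemma \ref{cov_fct_integrable}, giving $a^{2-\delta}/\lambda^{2-\delta}+a/\lambda^2$), and $D_3$ is the Riemann error of the \emph{smooth} integrand $f(\bm{u}-\bm{x})\exp(\im\bm{y}^T\bm{x})$ on the box, which Lemma \ref{Riemann_f_sec_int} bounds by $\Landau(a^2/\lambda^4)$ uniformly; the radial sum then contributes a factor $\Landau(a^2/\lambda^2)$ to this last piece, producing $a^4/\lambda^6$. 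The covariance decay is then used to show $\frac1\lambda\sum_r\tilde{\omega}_r|c(\tilde{\omega}_r\cos\theta,\tilde{\omega}_r\sin\theta)|=\Landau(1)$ by splitting the sum at $r\asymp\lambda$. To salvage your route you would either have to prove a midpoint-rule estimate valid for $\rho\mapsto\int_0^{2\pi}|H(\rho,\theta)|\,d\theta$ with quantitative control near the zero set of $H$, or, more simply, move the triangle-inequality decomposition inside the modulus before discretizing, which is precisely the paper's argument.
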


\begin{proof}
Note that 
\begin{align*}
&\phantom{iii}\left|\left(\frac{2\pi}{\lambda}\right)^2 \sum_{\bm{k}=-a}^{a-1}  f(\bm{u}-\tilde{\bm{\omega}}_{\bm{k}}) \exp\bigg(\im\, \tilde{\omega}_r \begin{pmatrix}
\cos \theta\\
\sin \theta
\end{pmatrix}^T \tilde{\bm{\omega}}_{\bm{k}}\bigg)\right|\leq D_1(r,\theta,\bm{u})+D_2(r,\theta,\bm{u})+D_3(r,\theta,\bm{u}),
\end{align*}
where
\begin{align*}
D_1(r,\theta,\bm{u}):=\left|\int_{\R^2} f(\bm{u}-\bm{x}) \exp\bigg(\im\, \tilde{\omega}_r \begin{pmatrix}
\cos \theta\\
\sin \theta
\end{pmatrix}^T \bm{x} \bigg)\,d\bm{x}\right|,
\end{align*}
\begin{align*}
D_2(r,\theta,\bm{u}):=\left|\int_{([-2\pi a/\lambda,2\pi a/\lambda]^2)^c} f(\bm{u}-\bm{x}) \exp\bigg(\im\, \tilde{\omega}_r \begin{pmatrix}
\cos \theta\\
\sin \theta
\end{pmatrix}^T \bm{x} \bigg)\,d\bm{x}\right|,
\end{align*}
and
\begin{align*}
D_3(r,\theta,\bm{u})&:=\Bigg|\left(\frac{2\pi}{\lambda}\right)^2 \sum_{\bm{k}=-a}^{a-1}  f(\bm{u}-\tilde{\bm{\omega}}_{\bm{k}}) \exp\bigg(\im\, \tilde{\omega}_r \begin{pmatrix}
\cos \theta\\
\sin \theta
\end{pmatrix}^T \tilde{\bm{\omega}}_{\bm{k}}\bigg) \\
&\phantom{===========}- \int_{[-2\pi a/\lambda,2\pi a/\lambda]^2} f(\bm{u}-\bm{x}) \exp\bigg(\im\, \tilde{\omega}_r \begin{pmatrix}
\cos \theta\\
\sin \theta
\end{pmatrix}^T \bm{x} \bigg)\,d\bm{x}\Bigg|.
\end{align*}
It obviously holds that
\begin{align*}
D_1(r,\theta,\bm{u})=(2\pi)^2\, c\left(\tilde{\omega}_r \cos\theta, \tilde{\omega}_r \sin \theta\right),
\end{align*}
and using Lemma \ref{Riemann_f_sec_int}, we furthermore obtain
\begin{align*}
\sup_{0\leq r \leq a-1} \sup_{\theta\in [0,2\pi)} \sup_{\bm{u}\in\R^2}\, D_3(r,\theta,\bm{u}) \lesssim \sup_{0\leq r \leq a-1} \frac{1+|\tilde{\omega}_r|+\tilde{\omega}_r^2}{\lambda^2} = \Landau\left(\frac{a^{2}}{\lambda^{4}}\right).
\end{align*}
Therefore, the left hand side of \eqref{formula_lemmaD11} is of order
\begin{align*}
&\phantom{=i}\Landau\bigg(\frac{1}{\lambda^2} \int_{\R^2} B(\bm{u})^2\, d\bm{u} \times \frac{2\pi}{\lambda} \sum_{r=0}^{a-1} \tilde{\omega}_r \int_{0}^{2\pi} \left|c\left(\tilde{\omega}_r \begin{pmatrix}
\cos \theta\\
\sin \theta
\end{pmatrix}\right)\right|\, d\theta\\
&\phantom{i}+\frac{1}{\lambda^2} \int_{\R^2} B(\bm{u})^2 \times \frac{2\pi}{\lambda} \sum_{r=0}^{a-1} \tilde{\omega}_r \left[ \int_{([-2\pi a/\lambda,2\pi a/\lambda]^2)^c} f(\bm{u}-\bm{x})\, d\bm{x}\right]\, d\bm{u} + \frac{a^2}{\lambda^6} \int_{\R^2} B(\bm{u})^2  \, d\bm{u}\times \frac{2\pi}{\lambda} \sum_{r=0}^{a-1} \tilde{\omega}_r \bigg)\\
&= \Landau\bigg(\sup_{\theta\in[0,2\pi)} \frac{2\pi}{\lambda} \sum_{r=0}^{a-1} \tilde{\omega}_r \left|c\left(\tilde{\omega}_r \begin{pmatrix}
\cos \theta\\
\sin \theta
\end{pmatrix}\right)\right|+ \frac{a^{2-\delta}}{\lambda^{2-\delta}} + \frac{a}{\lambda^{2}} + \frac{a^{4}}{\lambda^{6}}\bigg),
\end{align*}
where we furthermore used that 
\begin{align*}
\frac{1}{\lambda^2} \int_{\R^2} B(\bm{u})^2 \left[\int_{([-2\pi a/\lambda,2\pi a/\lambda]^2)^c} f(\bm{u}-\bm{x})\, d\bm{x}\right] d\bm{u} = \Landau\left(\frac{\lambda^\delta}{a^\delta} + \frac{1}{a}\right)
\end{align*} 
(see the proof of Lemma \ref{cov_fct_integrable}). In order to finalize the proof, note that by assumption
\begin{align} \label{Landau_cond}
\exists\, C>0 \quad \exists \,h_0>0 \quad \forall \, \bm{h}\in\R^2 \text{ with } \|\bm{h}\|\geq h_0: |c(\bm{h})|\leq C \|\bm{h}\|^{-(2+\varepsilon)}.
\end{align}
Define $K:=h_0/2\pi$. Note that for $r\leq K\lambda$, we have
$\tilde{\omega}_r \leq h_0 + \frac{\pi}{\lambda}$,
while for $r>K\lambda$, it holds that
$\tilde{\omega}_r >  h_0$.
By \eqref{Landau_cond} and since $c$ is uniformly bounded, this yields
\begin{align*}
\frac{1}{\lambda} \sum_{r=0}^{a-1} \tilde{\omega}_r \left|c\left(\tilde{\omega}_r \begin{pmatrix}
\cos \theta\\
\sin \theta
\end{pmatrix}\right)\right| &\leq \frac{1}{\lambda} \sum_{r=0}^{\lfloor K \lambda \rfloor}  \tilde{\omega}_r \left|c\left(\tilde{\omega}_r \begin{pmatrix}
\cos \theta\\
\sin \theta
\end{pmatrix}\right)\right| + \frac{1}{\lambda} \sum_{r=\lceil K\lambda \rceil}^{\infty} \tilde{\omega}_r \left|c\left(\tilde{\omega}_r \begin{pmatrix}
\cos \theta\\
\sin \theta
\end{pmatrix}\right)\right|\\
&\lesssim \frac{1}{\lambda} \sum_{r=0}^{\lfloor K \lambda \rfloor} \left(h_0 + \frac{\pi}{\lambda}\right) + \frac{1}{\lambda} \sum_{r=\lceil K \lambda \rceil}^{\infty} \left(\tilde{\omega}_r\right)^{-(1+\varepsilon)}\\
&=\Landau\left(1+\lambda^{\varepsilon} \sum_{r=\lceil K \lambda \rceil}^{\infty} \left(\frac{1}{r}\right)^{1+\varepsilon}\right)=\Landau(1).
\end{align*}
\end{proof}

\cleardoublepage
\end{document}